\def\eqref#1{\textcolor{darkblue}{(\ref{#1})}}
\Crefname{hypothesis}{hypothesis}{hypotheses}
\Crefname{hypothesis}{Hypothesis}{Hypotheses}
\crefname{equation}{}{}
\Crefname{equation}{}{}
\crefname{item}{}{}
\Crefname{item}{}{}
\numberwithin{equation}{subsection}
\newcommand{\hooklongrightarrow}{\lhook\joinrel\longrightarrow}
\newcommand{\Hom}[4][]{\operatorname{Hom}^{#1}_{#2}(#3,#4)}
\newcommand{\HomT}[3][]{\operatorname{Hom}^{#1}_{\mathsf{T}}(#2,#3)}
\newcommand{\Yon}[2]{\mathcal{Y}(#1)|_{\mathcal{G}_{#2}}}
\newcommand{\hocolim}{\operatornamewithlimits{\underset{\boldsymbol{\xrightarrow{\hspace{0.85cm}}}}{{Hocolim}}}}
\newcommand{\colim@}[2]{%
  \vtop{\m@th\ialign{##\cr
    \hfil$#1\operator@font colim$\hfil\cr
    \noalign{\nointerlineskip\kern1.5\ex@}#2\cr
    \noalign{\nointerlineskip\kern-\ex@}\cr}}%
}
\newcommand{\colim}{%
  \mathop{\mathpalette\colim@{\rightarrowfill@\textstyle}}\nmlimits@
}
\let\oldequation\equation
\let\oldendequation\endequation
\renewenvironment{equation}{\linenomathNonumbers\oldequation}{\oldendequation\endlinenomath}
\let\expandafter\oldequationstar\csname equation*\endcsname
\let\expandafter\oldendequationstar\csname endequation*\endcsname
\renewenvironment{equation*}{\linenomathNonumbers\oldequationstar}{\oldendequationstar\endlinenomath}
\let\oldalign\align
\let\oldendalign\endalign
\let\expandafter\oldalignstar\csname align*\endcsname
\let\expandafter\oldendalignstar\csname endalign*\endcsname
\renewenvironment{align*}{\linenomathNonumbers\oldalignstar}{\oldendalignstar\endlinenomath}
\newcounter{HypCounter}
\newtheorem{theoremintro}{Theorem}
\theoremstyle{definition}
\newtheorem{definitionintro}{Definition}
\newtheorem{exampleintro}{Example}
\newtheorem{ideaintro}{Idea}
\newtheorem*{remarkintro}{Remark}
\theoremstyle{plain}
\newtheorem{theorem}{Theorem}[section]
\newtheorem*{theorem*}{Theorem}
\newtheorem{lemma}[theorem]{Lemma}
\newtheorem{corollary}[theorem]{Corollary}
\newtheorem{proposition}[theorem]{Proposition}
\theoremstyle{definition}
\newtheorem{convention}[theorem]{Convention}
\newtheorem{definition}[theorem]{Definition}
\newtheorem*{definition*}{Definition}
\newtheorem{example}[theorem]{Example}
\newtheorem{hypothesis}[HypCounter]{Hypothesis}
\newtheorem*{hypothesis*}{Hypothesis}
\newtheorem{notation}[theorem]{Notation}
\newtheorem{remark}[theorem]{Remark}
\newtheorem{setup}[theorem]{Setup}
\numberwithin{equation}{section}
\numberwithin{theorem}{section}
\title{Representability theorems via metric techniques}
\author[K. ~Manali Rahul]{Kabeer Manali Rahul}
\address{K. ~Manali Rahul,
Center for Mathematics and its Applications, 
Mathematical Science Institute, Building 145, 
The Australian National University, 
Canberra, ACT 2601, Australia}
\email{kabeer.manalirahul@anu.edu.au}
\date{\today}
\subjclass[2020]{18G80(primary), 14A30} 
\keywords{Brown representability, metric techniques, approximable triangulated categories, cohomological functors}
\begin{document}
    
\begin{abstract}
    We prove new Brown representability theorems for triangulated categories using metric techniques as introduced in the work of Neeman. In the setting of algebraic geometry, this gives us new representability theorems for homological and cohomological functors on the bounded derived category of coherent sheaves. 
    
    To prove this result, we introduce the notion of a $\mathcal{G}$-approximable triangulated category, which generalises the notion of an approximable triangulated category. 
\end{abstract}

\maketitle

\section{Introduction}
The importance and the ubiquity of triangulated categories in modern mathematics cannot be overstated. Neeman has recently initiated the use of metrics, and approximations via metrics, to study triangulated categories. These have been used to prove many important results, including three conjectures in algebraic geometry, see \cite{Neeman:2021a,Neeman:2024}. The notion of a metric allows us to utilize analytic ideas, such as that of completions of metric spaces, to study triangulated categories.

The idea of a metric, which goes back to \cite{Lawvere:1973,Lawvere:2002}, is that it assigns lengths to morphisms, and the length function satisfies the triangle inequality. In the world of triangulated categories, the metrics we are interested in are translation invariant, that is, the length of a morphism is the same as the length of the unique map from the zero object to the cone of the morphism, see \cite[Heuristic 9]{Neeman:2020}. 

Even though the theory is still in its infancy, it has already been used by Neeman and others to prove many remarkable results, see for example \cite{Neeman:2021a,Neeman:2018,Neeman:2024,Canonaco/Neeman/Stellari:2024,Neeman:2021b,Biswas/Chen/ManaliRahul/Parker/Zheng:2024,DeDeyn/Lank/ManaliRahul:2024a,DeDeyn/Lank/ManaliRahul:2024b}.

In this work, we add to this growing theory in the following ways:
\begin{enumerate}
    \item We introduce the notion of a (weakly) $\mathcal{G}$-approximable triangulated category, which is a generalisation of a (weakly) approximable triangulated category. Further, we show that most of the basic results on approximable triangulated categories have corresponding $\mathcal{G}$-approximable analogues.
    \item Using the notion of $\mathcal{G}$-approximability, we prove new representability results. As an application, we get new representability theorems in algebraic geometry.
\end{enumerate}

We go over each of these aspects in more detail in the following subsections.
\subsection[$\mathcal{G}$-approximability]{$\mathcal{G}$-approximable triangulated categories}

Neeman formally introduced the notion of (weakly) approximable triangulated categories in \cite{Neeman:2021b}, although the idea of approximability already appears in \cite{Neeman:2021a}. We state the definition first, and then discuss a simple example which should help in understanding the definition. The definition itself involves some technical notation, which we do not define precisely in the introduction, see \Cref{Notation from Neeman's paper}.
\begin{definition*}\label{Definition of approximability introduction}\cite[Definition 0.21]{Neeman:2021b}
    A triangulated category $\mathsf{T}$ is said to be \emph{weakly approximable} if there exists a compact generator $G$, a positive integer $A$, and a t-structure $(\mathsf{T}^{\leq 0},\mathsf{T}^{\geq 0})$ on $\mathsf{T}$ such that,
    \begin{enumerate}
        \item $\Sigma^A G \in \mathsf{T}^{\leq 0}$ and $\HomT{\Sigma^{-i}G}{\mathsf{T}^{\leq 0}} = 0$ for all $i \geq A$.
        \item For all $F \in \mathsf{T}^{\leq 0}$, there exists a triangle $E \to F \to D \to \Sigma E$ with $E \in \overline{\langle G \rangle}^{[-A,A]}$ and $D \in \mathsf{T}^{\leq -1}$.
    \end{enumerate}
    We say that $\mathsf{T}$ is \emph{approximable}, if further,
    \begin{enumerate}
        \item[(2)$'$] For all $F \in \mathsf{T}^{\leq 0}$, there exists a triangle $E \to F \to D \to \Sigma E$ with $E \in \overline{\langle G \rangle}^{[-A,A]}_{A}$ and $D \in \mathsf{T}^{\leq -1}$.
    \end{enumerate}
\end{definition*}
Here $\overline{\langle G \rangle}^{[-A,A]}$ denotes the full subcategory of $\mathsf{T}$ generated from the object $G$ using coproducts, summands, extensions, and at most $A$ suspensions and desuspensions. Further, $\overline{\langle G \rangle}^{[-A,A]}_{A}$ denotes the full subcategory of $\mathsf{T}$ generated from the object $G$ using coproducts, summands, at most $A$ extensions, and at most $A$ suspensions and desuspensions. The example of the derived category of a ring below should give some intuition on these subcategories, see \Cref{Example to explain approximability}. 

Important examples of approximable triangulated categories to keep in mind are the derived category of a ring, and the derived category of quasicoherent sheaves over a noetherian, separated scheme, see \cite[Example 3.5]{Neeman:2021b}.

The first condition in the definition of approximability is a technical assumption on the compatibility of the compact generator and the t-structure, but it is the other condition which is the heart of this definition. The idea is that we can \say{approximate} any objects in the aisle of the t-structure $\mathsf{T}^{\leq 0}$ by \say{simple objects}, that is, objects built in \say{finitely many steps} from the compact generator, up to an error term which is \say{smaller} than the object we started with. To understand this condition better, we look at the example of the derived category of a ring.

\begin{exampleintro}\label{Example to explain approximability}
    Let $R$ be a ring. We denote by $\mathbf{D}(R)$ the derived category of the abelian category of (right) $R$-modules. It has a compact generator $R$, and the standard t-structure $(\mathbf{D}(R)^{\leq 0},\mathbf{D}(R)^{\geq 0})$. We now discuss the second condition of the definition of approximability in this particular case. Consider a complex $F$ in $\mathbf{D}(R)^{\leq 0}$, that is, its positive cohomologies vanish. Then, up to quasi-isomorphism, we can write the complex as,
    \[\cdots \to P^{-n-2} \to P^{-n-1}\to P^{-n}\to \cdots \to P^{-1} \to P^0 \to 0 \to \cdots\]
    with $P^{-i}$ projective modules for all $i \geq 0$. We get the following commutative diagram for any $n \geq 0$,
    \[\begin{tikzcd}[column sep=18]
        \cdots & 0 & 0 & P^{-n} & \cdots & P^{-1} & P^{0} & 0 & \cdots \\
        \cdots & P^{-n-2} & P^{-n-1} & P^{-n} & \cdots & P^{-1} & P^{0} & 0 & \cdots \\
        \cdots & P^{-n-2} & P^{-n-1} & 0 & \cdots & 0 & 0 & 0 & \cdots
	   \arrow[from=1-1, to=1-2]
	   \arrow[from=1-2, to=2-2]
	   \arrow[from=1-2, to=1-3]
	   \arrow[from=1-3, to=1-4]
	   \arrow[from=1-3, to=2-3]
	   \arrow[from=1-4, to=1-5]
	   \arrow[from=1-4, to=2-4]
	   \arrow[from=1-5, to=1-6]
	   \arrow[from=1-6, to=1-7]
	   \arrow[from=1-6, to=2-6]
	   \arrow[from=1-7, to=1-8]
	   \arrow[from=1-7, to=2-7]
	   \arrow[from=1-8, to=1-9]
	   \arrow[from=1-8, to=2-8]
	   \arrow[from=2-1, to=2-2]
	   \arrow[from=2-2, to=2-3]
	   \arrow[from=2-2, to=3-2]
	   \arrow[from=2-3, to=2-4]
	   \arrow[from=2-3, to=3-3]
	   \arrow[from=2-4, to=2-5]
	   \arrow[from=2-4, to=3-4]
	   \arrow[from=2-5, to=2-6]
	   \arrow[from=2-6, to=2-7]
	   \arrow[from=2-6, to=3-6]
	   \arrow[from=2-7, to=2-8]
	   \arrow[from=2-7, to=3-7]
	   \arrow[from=2-8, to=2-9]
	   \arrow[from=2-8, to=3-8]
	   \arrow[from=3-1, to=3-2]
	   \arrow[from=3-2, to=3-3]
	   \arrow[from=3-3, to=3-4]
	   \arrow[from=3-4, to=3-5]
	   \arrow[from=3-5, to=3-6]
	   \arrow[from=3-6, to=3-7]
	   \arrow[from=3-7, to=3-8]
	   \arrow[from=3-8, to=3-9]
    \end{tikzcd}\]
    This gives us maps of complexes, $E_n \to F \to D_n$, with $E_n$ a bounded complex of projectives, and $D_n$ with vanishing cohomology for all degrees greater than or equal to $-n$. As the maps form a degree-wise split exact sequence, $E_n \to F \to D_n$ extends to a distinguished triangle, $E_n \to F \to D_n \to \Sigma E_n$. The complexes $D_n$ represent \say{error terms} of sorts. They get smaller and smaller, in the sense that $D_{n+1} \in \mathbf{D}(R)^{\leq -n-2} \subseteq \mathbf{D}(R)^{\leq -n-1}$.

    What remains to show is that the complexes $E_n$ are built from the compact generator $R$ in finitely many steps. The object $E_n$ is a complex of projective modules in degrees $-n$ to $0$. Let $P$ be any projective module. Then, it is a summand of a free module, that is, it is built from $R$ by direct sums and summands. Now, for any complex $E_{n+1}$ of projective modules in degrees $-n-1$ to $0$, we have the following commutative diagram,
    \[\begin{tikzcd}
	   \cdots & 0 & 0 & {P^n} & \cdots & {P^0} & 0 & \cdots \\
	   \cdots & 0 & {P^{-n-1}} & {P^n} & \cdots & {P^0} & 0 & \cdots \\
        \cdots & 0 & {P^{-n-1}} & 0 & \cdots & 0 & 0 & \cdots
	   \arrow[from=1-1, to=1-2]
	   \arrow[from=1-2, to=1-3]
	   \arrow[from=1-2, to=2-2]
	   \arrow[from=1-3, to=1-4]
	   \arrow[from=1-3, to=2-3]
	   \arrow[from=1-4, to=1-5]
	   \arrow[from=1-5, to=1-6]
	   \arrow[from=1-4, to=2-4]
          \arrow[from=1-6, to=1-7]
          \arrow[from=1-7, to=1-8]
          \arrow[from=1-7, to=2-7]
          \arrow[from=1-6, to=2-6]
	   \arrow[from=2-1, to=2-2]
	   \arrow[from=2-2, to=2-3]
	   \arrow[from=2-2, to=3-2]
	   \arrow[from=2-3, to=2-4]
	   \arrow[from=2-3, to=3-3]
	   \arrow[from=2-4, to=2-5]
	   \arrow[from=2-5, to=2-6]
          \arrow[from=2-6, to=2-7]
          \arrow[from=2-6, to=3-6]
          \arrow[from=2-7, to=2-8]
          \arrow[from=2-7, to=3-7]
	   \arrow[from=2-4, to=3-4]
	   \arrow[from=3-1, to=3-2]
	   \arrow[from=3-2, to=3-3]
	   \arrow[from=3-3, to=3-4]
	   \arrow[from=3-4, to=3-5]
	   \arrow[from=3-5, to=3-6]
          \arrow[from=3-6, to=3-7]
          \arrow[from=3-7, to=3-8]
    \end{tikzcd}\]
    Again, this is degree-wise split, so it extends to a triangle $E_{n} \to E_{n+1} \to \Sigma^{n+1}P^{-n-1} \to \Sigma E_{n}$, where $E_n$ is a complex of projective modules in degrees $-n$ to $0$. So, inductively we get that $E_n$ can be constructed from $R$ using direct sums, summands and at most $n$ shifts and $n$ extensions. This is what we mean by saying that an object is \say{built in finitely many steps from the compact generator $R$}. Note that the argument above for $n=1$ gives us the second condition of approximability for $\mathbf{D}(R)$.
\end{exampleintro}

\begin{ideaintro}\label{Intuition for approximability}
    We see that the notion of approximability requires the following : 
    \begin{enumerate}
        \item A nice set of compacts which generates, which is given by the compact generator $G$.
        \item A notion of \say{smallness}, that is, the notion of a metric, which will allow us to make sense of the \say{error term} being smaller than the object we started with. We get this from the aisle of the t-structure in the case of approximability.
    \end{enumerate}
    We generalise this by 
    \begin{enumerate}
        \item Replacing a single compact generator, by a \emph{generating sequence} $\mathcal{G}$, see \Cref{Definition of generating sequence}
        \item Replacing the t-structure metric, by the more general notion of an \emph{orthogonal metric}, see \Cref{Good Metric}.
    \end{enumerate}
    This helps us define the notion of (weak) $\mathcal{G}$-approximability, see \Cref{Definition of G-approximability}. 
\end{ideaintro}

\begin{remarkintro}\label{Remark introduction on G-quasiapproximability}
    We will in fact define a notion slightly more general than that of $\mathcal{G}$-approximability, namely that of $\mathcal{G}$-quasiapproximability, see \Cref{Definition of G-quasiapproximability}. It turns out that the notion of $\mathcal{G}$-quasiapproximability suffices to prove the representability theorems we want, but for the sake of simplicity, we will only discuss $\mathcal{G}$-approximability in the introduction. Note that the practical reason for considering $\mathcal{G}$-quasiapproximability is that there are examples of $\mathcal{G}$-quasiapproximable triangulated categories, which the author has not been able to prove are $\mathcal{G}$-approximable, see \Cref{Theorem Kmproj co-approx}. 
\end{remarkintro}
To further understand this notion, we now state the definition of a \emph{(weakly) co-approximable} triangulated category. Co-approximability is a special case of $\mathcal{G}$-approximability, which utilises a co-t-structure to define the metric instead of t-structures as in the notion of approximability.
\begin{definitionintro}\label{Definition co-approx intro}[\Cref{Definition of co-approx}]
    A triangulated category $\mathsf{T}$ is said to be \emph{weakly co-approximable} if there exists a compact generator $G$, a positive integer $A$, and a co-t-structure $(\mathsf{T}^{\geq 0},\mathsf{T}^{\leq 0})$ on $T$ such that,
    \begin{enumerate}
        \item $\Sigma^{-A} G \in \mathsf{T}^{\geq 0}$ and $\HomT{\Sigma^{i}G}{\mathsf{T}^{\geq 0}} = 0$ for all $i \geq A$.
        \item For all $F \in \mathsf{T}^{\geq 0}$, there exists a triangle $E \to F \to D \to \Sigma E$ with $E \in \overline{\langle G \rangle}^{[-A,A]}$ and $D \in \mathsf{T}^{\geq 1}$.
    \end{enumerate}
    We say that $\mathsf{T}$ is \emph{co-approximable}, if further,
    \begin{enumerate}
        \item[(2)$'$] For all $F \in \mathsf{T}^{\geq 0}$, there exists a triangle $E \to F \to D \to \Sigma E$ with $E \in \overline{\langle G \rangle}^{[-A,A]}_{A}$ and $D \in \mathsf{T}^{\geq 1}$.
    \end{enumerate}
\end{definitionintro}
\begin{remarkintro}
    One of the motivations to define $\mathcal{G}$-approximability is that certain naturally occurring triangulated categories are not (weakly) approximable. One such important example is the homotopy category of injectives $\mathbf{K}(\operatorname{Inj}\text{-}X)$ for a noetherian scheme $X$, which we can easily show cannot be weakly approximable if $X$ is not regular. This category agrees with the category of \emph{ind-coherent sheaves} $\mathbf{IndCoh}(X)$, see \cite[Proposition A.1]{Krause:2005}. It turns out that it is more natural to work with co-t-structures in this situation.
\end{remarkintro}
The following result shows that $\mathbf{K}(\operatorname{Inj}\text{-}X)$ is (weakly) co-approximable for a large class of schemes.
\begin{theoremintro}\label{Theorem Intro Examples of co-approximability}
    Let $X$ be a noetherian finite-dimensional scheme. Then,
    \begin{itemize}
        \item If $X$ is a J-2 scheme, then $\mathbf{K}(\operatorname{Inj}\text{-}X)$ is weakly co-approximable, see \cite[Theorem 4.13]{ManaliRahul:2025}. In fact, it is enough to assume that the regular loci of all integral closed subschemes of $X$ are open.
        \item $\mathbf{K}(\operatorname{Inj}\text{-}X)$ is co-approximable and $\mathbf{K}_m(\operatorname{Proj}\text{-}X)$ is co-quasiapproximable for any of the following cases,
        \begin{enumerate}
            \item $X$ is regular.
            \item $X$ is separated and quasiexcellent.
        \end{enumerate}
        see \Cref{Theorem Kinj co-approx}, \Cref{Theorem Kmproj co-approx}, and \Cref{Theorem Examples of co-approximability}. 
    \end{itemize}
\end{theoremintro}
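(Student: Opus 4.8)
The plan is to treat all three assertions by a single scheme: realise the category as a compactly generated triangulated category, endow it with a co-t-structure (hence an orthogonal metric) built from brutal truncations, check the compatibility of the generator(s) with this metric, and finally establish the approximation property — the last being the real content. For the input on the category: for $X$ noetherian the category $\mathbf{K}(\operatorname{Inj}\text{-}X)$ is compactly generated with $\mathbf{K}(\operatorname{Inj}\text{-}X)^c\simeq\mathbf{D}^b(\operatorname{coh}X)$, the equivalence sending a complex $M$ to an injective resolution $\iota M$ \cite{Krause:2005}; a classical generator $G_0$ of $\mathbf{D}^b(\operatorname{coh}X)$ then produces a compact generator $G=\iota G_0$, and I take $\mathcal{G}$ to be the generating sequence formed from the brutal truncations of $G$ inside the recollement $\mathbf{K}_{ac}(\operatorname{Inj}X)\to\mathbf{K}(\operatorname{Inj}X)\to\mathbf{D}(\operatorname{Qcoh}X)$, which collapses to the single object $G$ exactly when $X$ is regular. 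The analogous facts about Murfet's mock homotopy category of projectives supply the input for $\mathbf{K}_m(\operatorname{Proj}\text{-}X)$.

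I equip $\mathbf{K}(\operatorname{Inj}\text{-}X)$ with the co-t-structure whose co-heart consists of the injective quasi-coherent sheaves placed in a single cohomological degree; its metric balls are the subcategories of complexes homotopy-equivalent to ones concentrated in a half-line of degrees. That this defines an orthogonal metric in the sense of \Cref{Good Metric} is routine: the orthogonality reduces to vanishing of $\operatorname{Hom}$ between complexes supported in disjoint degree ranges, and computing $\operatorname{Hom}$ out of a bounded-below complex of injectives in $\mathbf{D}(\operatorname{Qcoh}X)$ supplies the cohomological amplitudes needed for the first co-approximability axiom — concretely, if $G_0$ is a coherent sheaf placed in degree $0$ then for $F$ in the co-aisle one has $\operatorname{Hom}_{\mathbf{K}(\operatorname{Inj}X)}(\Sigma^iG,F)=\operatorname{Ext}^{-i}_{\mathbf{D}(\operatorname{Qcoh}X)}(G_0,F)=0$ for $i\geq 1$. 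When $X$ is regular, $\iota G_0$ is itself a bounded complex of injectives and the remaining part of this axiom is immediate; in general $\iota G_0$ is cohomologically unbounded, and accommodating this is exactly the reason one must work with a generating sequence inside the $\mathcal{G}$-approximable framework rather than with a single generator — the truncations are chosen so that the compatibility across the recollement holds.

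For the approximation axiom, given $F$ in the co-aisle — say represented by a complex of injectives concentrated in degrees $\leq 0$ — the brutal truncation triangle $\sigma^{\geq 0}F\to F\to\sigma^{\leq -1}F\to\Sigma\sigma^{\geq 0}F$ has quotient $\sigma^{\leq -1}F$ in a strictly smaller ball while $\sigma^{\geq 0}F$ is the injective sheaf $F^0$ placed in degree $0$, so the whole problem reduces to the structural statement that \emph{every injective quasi-coherent sheaf on $X$, placed in degree $0$, lies in $\overline{\langle\mathcal{G}\rangle}^{[-A,A]}$ for a uniform $A$, and, for the co-approximable rather than merely weakly co-approximable conclusion, even in $\overline{\langle\mathcal{G}\rangle}^{[-A,A]}_A$ with a uniform bound on the number of extensions.} To prove this I decompose an injective, via Matlis theory on the noetherian scheme $X$, into a coproduct of indecomposable injectives $\mathcal{J}(x)$, $x\in X$, reducing by closure under coproducts to a single $\mathcal{J}(x)$; with $Z=\overline{\{x\}}$, the sheaf $\mathcal{J}(x)$ is a countable increasing union of quasi-coherent subsheaves, each of which becomes, after resolving, a bounded iteration of coproducts and shifts of coherent pieces (here one uses that the injective hull of the residue field $\kappa(x)$ over $\mathcal{O}_{X,x}$ is a countable union of finite-length modules). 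Applying the exact injective-resolution functor to the Milnor presentations of these colimits — and using that on the noetherian $X$ a coproduct of injective resolutions resolves the coproduct — exhibits $\mathcal{J}(x)$ as a bounded number of extensions away from coproducts of shifts of the coherent sheaves involved, and each coherent sheaf is built from $\mathcal{G}$ by coproducts, summands, extensions and shifts in a fixed window. For the weakly co-approximable conclusion it suffices that $G_0$ classically generates $\mathbf{D}^b(\operatorname{coh}X)$ with bounded shifts, available under the J-2 hypothesis — and whenever the regular loci of all integral closed subschemes of $X$ are open — by an induction on dimension along the stratification by regular loci; whereas for the co-approximable conclusion one needs the stronger fact that $\mathbf{D}^b(\operatorname{coh}X)$ is strongly generated, known for $X$ regular of finite Krull dimension and, more generally, for $X$ separated quasiexcellent of finite Krull dimension by the recent strong-generation theorems of Neeman and others.

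The category $\mathbf{K}_m(\operatorname{Proj}\text{-}X)$ is handled dually, with the co-t-structure built from brutal truncations of complexes of flats; the reduction is now to flat rather than injective quasi-coherent sheaves, and since these carry no Matlis-type decomposition into indecomposables the colimit bookkeeping only lands them in a slightly enlarged subcategory — which is exactly why one obtains co-\emph{quasi}approximability instead of co-approximability here. The main obstacle throughout is the structural statement of the third paragraph: realising the large indecomposable injectives $\mathcal{J}(x)$ for non-closed $x$ as controlled, iterated homotopy colimits \emph{inside} $\mathbf{K}(\operatorname{Inj}\text{-}X)$ — and not merely inside $\mathbf{D}(\operatorname{Qcoh}X)$ — and extracting the uniform constants; this is where, and essentially only where, the hypotheses on $X$ enter, through strong generation of $\mathbf{D}^b(\operatorname{coh}X)$. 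A secondary but genuine point, and the structural reason for the whole $\mathcal{G}$-approximable formalism, is that for non-regular $X$ no compact generator of $\mathbf{K}(\operatorname{Inj}\text{-}X)$ is cohomologically bounded, so the first co-approximability axiom cannot be met with a single generator and one must pass to a generating sequence.
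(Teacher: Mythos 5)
Your overall skeleton (standard co-t-structure via brutal truncations, reduce the approximation axiom to controlling a single ``slice'', feed in strong generation) matches the paper's, but the two steps that carry the actual content are done differently and, as written, do not work. First, the truncation. With the paper's orientation the co-aisle $\mathsf{U}=\mathbf{K}(\operatorname{Inj}\text{-}X)^{\geq 0}$ consists of the \emph{bounded-below} complexes, and for $F$ concentrated in degrees $\geq 0$ the brutal truncation produces the triangle $\sigma^{\geq 1}F \to F \to F^0 \to \Sigma\sigma^{\geq 1}F$: the deep piece is the \emph{sub}object and the single injective $F^0$ is the \emph{quotient}, which is the wrong shape for \Cref{Definition of co-approx}(3) (you need $E \to F \to D$ with $D\in\Sigma^{-1}\mathsf{U}$, and you cannot in general swap the order of the $\star$-product here). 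The paper instead passes through the equivalence $\mathbf{K}(\operatorname{Inj}\text{-}X)^{\geq 0}\simeq\mathbf{D}(\operatorname{Qcoh}(X))^{\geq 0}$ of \Cref{Lemma coaisle of derived category and "aisle" of homotopy category} and uses the \emph{canonical} truncation $\tau^{\leq 0}F\to F\to\tau^{\geq 1}F$, which has the right shape and whose small piece is the arbitrary quasicoherent sheaf $H^0(F)$, not the injective $F^0$. Second, the uniform bound. Your Matlis-theoretic programme (decompose injectives into $\mathcal{J}(x)$, filter by finite-length pieces, claim a ``bounded iteration'') is exactly the hard part and is not substantiated: the finite-length submodules of $E(\kappa(x))$ have unbounded resolutions over a non-regular local ring, and nothing in your sketch produces an $A$ independent of $x$ and of the injective. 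What actually delivers the bound is the big-category strong generation statement $\mathbf{D}(\operatorname{Qcoh}(X))=\overline{\langle G\rangle}_N$ (Rouquier/Neeman for $X$ regular, Aoki for $X$ separated quasiexcellent, as in \Cref{Theorem Examples of co-approximability}) together with \cite[Lemma 2.4]{Neeman:2018b}, which places every object of the heart in $\overline{\langle G\rangle}^{[-A,A]}_A$ uniformly; strong generation of $\mathbf{D}^b(\operatorname{coh}X)$ alone does not suffice, since the sheaves to be controlled are not coherent.

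Two further points. Your structural claim that for non-regular $X$ no single compact generator can satisfy the first two axioms, forcing a genuine ``sequence of truncations'', is a misreading: \Cref{Definition of co-approx} uses a single compact generator, and for $G$ the injective resolution of a classical generator of $\mathbf{D}^b_{\operatorname{coh}}(X)$ the conditions $\Sigma^{-A}G\in\mathsf{U}$ and $\operatorname{Hom}(\Sigma^AG,\mathsf{U})=0$ hold for any noetherian $X$ because $G$ is bounded below with bounded coherent cohomology; the generating sequence in the paper is just $\mathcal{G}^n=\{\Sigma^nG\}$, and the role of co-t-structures (as opposed to t-structures) is what rescues the non-regular case. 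Finally, your explanation of why $\mathbf{K}_m(\operatorname{Proj}\text{-}X)$ only achieves co-\emph{quasi}approximability (flat sheaves lacking a Matlis decomposition) is not the paper's: the point is that the approximation is only carried out for objects of $\mathsf{U}\cap\mathsf{T}^+_c\simeq U_\lambda(\mathbf{D}^-(\operatorname{coh}X))^{\circ}$, where canonical truncations of coherent complexes are again available (\Cref{Closure of compacts for the mock homotopy category of projectives}, \Cref{Theorem Kmproj co-approx}); for general objects of the co-aisle no approximation is claimed. The J-2 bullet is simply cited from \cite[Theorem 4.13]{ManaliRahul:2025} and is not reproved in this paper.
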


Similar result holds for the mock homotopy category of projectives, see \Cref{Theorem Kmproj co-approx} and \Cref{Theorem Examples of co-approximability}. Further, noncommutative and stacky versions of the result also hold, see \Cref{Remark Kinj co-approx for stacks and algebras}, \Cref{Theorem Examples of co-approximability}, and \Cref{Theorem Examples of co-approximability for stacks}.
\subsection{Representability results}
One of the most important applications of the theory of approximable triangulated categories are the representability theorems proved in \cite{Neeman:2021b}. We first give an idea of what we mean by representability. Let $\mathsf{T}$ be a $R$-linear triangulated category for a commutative ring $R$. Let $\mathsf{S}$ be a triangulated subcategory. A cohomological functor $H : \mathsf{S}^{\operatorname{op}} \to \operatorname{Mod}(R)$ is \emph{representable} if there exists an object $t \in \mathsf{T}$ such that $H(-) \cong \HomT{-}{t}|_{\mathsf{S}}$. There is a long history of representability theorems, that is, results which prove that $H$ is representable under conditions on $\mathsf{T}$, $\mathsf{S}$, and $H$. We recall a few of them for the readers convenience, although this is by no means an exhaustive list. One of the first results of this flavour was by Brown, in the context of algebraic topology, see \cite[Theorem I]{Brown:1962}. This was vastly generalised to the context of (co)product preserving functors on compactly generated triangulated categories by Neeman, see \cite[Theorem 3.1]{Neeman:1996}. This was even further generalised to the context of well generated triangulated categories, see \cite[Theorem 1.17]{Neeman:2001}. Note that these result requires the existence of arbitrary coproducts in the triangulated category. When the triangulated category does not have all coproducts, typically more stringent conditions are required on the category. Two important results in this situation are the theorem by Bondal and Van den Bergh, see \cite[Theorem 1.3]{Bondal/VanDenBergh:2003}, and the improvement on their result by Rouquier, see \cite[Theorem 4.16]{Rouquier:2008}. Both of these results require that the category $\mathsf{S}$ be strongly generated. 

With this background, we now come to the result by Neeman in \cite{Neeman:2021b}, and the result proven in this work.
To state them, we first need to discuss the notion of the \say{closure of compacts}.
\begin{ideaintro}
    As we have discussed, approximability (and $\mathcal{G}$-approximability) involves a \emph{metric} on a category, which is used to make sense of \say{approximations}. Given the notion of a  metric, it makes sense to talk about the \emph{closure} of the compact objects with respect to a metric. The closure of the compacts will be denoted by $\overline{\mathsf{T}^c}$ when in the general context, borrowing the notation for the closure from point set topology. The notation will be $\mathsf{T}^-_c$ in the context of approximability (and in general when the metric arises from a t-structure), and $\mathsf{T}^+_c$ when the metric comes from a co-t-structure. In all the cases we are interested in, there is a notion of bounded objects, and the full subcategory of all bounded objects in the closure of the compacts will be denoted by $\mathsf{T}^b_c$.

    In the example of the derived category of a ring we discussed before, these give rise to familiar categories. For $\mathsf{T} = \mathbf{D}(R)$ for a (right) noetherian ring $R$, $\mathsf{T}^-_c = \mathbf{D}^{-}(\textbf{mod}(R))$ and $\mathsf{T}^b_c = \mathbf{D}^{b}(\textbf{mod}(R))$ where $\mathbf{mod}(R)$ denotes the abelian category of finitely generated (right) $R$-modules, see \cite[Example 3.1]{Neeman:2021b}.
    In the context of algebraic geometry, the following example is also useful to keep in mind. For $\mathsf{T} = \mathbf{D}_{\operatorname{Qcoh}}(X)$ for a noetherian scheme $X$, $\mathsf{T}^-_c = \mathbf{D}^{-}_{\operatorname{coh}}(X)$ and $\mathsf{T}^b_c = \mathbf{D}^{b}_{\operatorname{coh}}(X)$, see \cite[Example 3.4]{Neeman:2021b}. 
\end{ideaintro}
With the notion of the \say{closure of compacts}, we are now in position to explain the main representability result of \cite{Neeman:2021b}. For the rest of this section $R$ denotes a commutative ring. The following is the result in question, after omitting some of the technicalities for the ease of understanding.
\begin{theorem*}\cite[Theorem 0.3]{Neeman:2021b}
    For a compactly generated $R$-linear triangulated category $\mathsf{T}$, we define the restricted Yoneda functor $\mathcal{Y} : \mathsf{T} \to \Hom{}{[\mathsf{T}^c]^{\operatorname{op}}}{\operatorname{Mod}(R)}$ by sending an object $F$ to the functor $\HomT{-}{F}|_{\mathsf{T}^c}$. Then, if $\mathsf{T}$ is a \say{nice} approximable triangulated category, we have that, 
    \begin{itemize}
        \item The restriction of $\mathcal{Y}$ to the closure of the compacts $\mathsf{T}^-_c$ is full, and we have an explicit description of its essential image.
        \item The restriction of $\mathcal{Y}$ to the bounded objects in the closure of the compacts $\mathsf{T}^b_c$ is fully faithful, and we have an explicit description of its essential image.
    \end{itemize}
    Note that as the explicit description of the essential image is known, if we have a cohomological functor on $\mathsf{T}^c$ satisfying those conditions, we know that it is representable. 
\end{theorem*}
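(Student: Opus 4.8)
The plan is to treat the two assertions separately, leaning on the dictionary of \cite{Neeman:2021b} that identifies $\mathsf{T}^-_c$ with the objects $F$ admitting, for every $n$, an \emph{approximation triangle} $C_n \to F \to D_n \to \Sigma C_n$ with $C_n \in \mathsf{T}^c$ and $D_n \in \mathsf{T}^{\leq -n}$, and which moreover can be arranged into a tower $C_0 \to C_1 \to \cdots$ over $F$ with $F = \operatorname{hocolim}_n C_n$ (this is what the Example in the introduction illustrates); such triangles exist by approximability applied to the compact generator $G$ and its t-structure. The ``niceness'' hypotheses are that $R$ is noetherian and $\mathsf{T}^c$ is of finite type over $R$ --- concretely, $G$ may be chosen with $\HomT{G}{\Sigma^i G}$ a finitely generated $R$-module for all $i$ and vanishing for $i \gg 0$ --- which is exactly what makes the essential images ``explicit'': they will be the cohomological functors $H \colon (\mathsf{T}^c)^{\mathrm{op}} \to \operatorname{Mod}(R)$ with $H(\Sigma^i G)$ finitely generated over $R$ for every $i$ and vanishing for $i \gg 0$ in the $\mathsf{T}^-_c$ case, and vanishing for $|i| \gg 0$ in the $\mathsf{T}^b_c$ case.

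To prove fullness of $\mathcal{Y}|_{\mathsf{T}^-_c}$, I would take $F, F' \in \mathsf{T}^-_c$ and a natural transformation $\phi \colon \mathcal{Y}(F) \to \mathcal{Y}(F')$, write $F = \operatorname{hocolim}_n C_n$, and note that compactness of the $C_n$ gives $\mathcal{Y}(F) = \varinjlim_n \mathcal{Y}(C_n)$ as functors on $\mathsf{T}^c$; hence $\phi$ is the same as a compatible family $\mathcal{Y}(C_n) \to \mathcal{Y}(F')$, i.e.\ by Yoneda morphisms $g_n \colon C_n \to F'$ with $g_{n+1}|_{C_n} = g_n$. The Milnor triangle $\coprod_n C_n \to \coprod_n C_n \to F \to \Sigma\!\coprod_n C_n$ shows $\HomT{F}{F'} \to \varprojlim_n \HomT{C_n}{F'}$ is surjective, so there is $g \colon F \to F'$ with $g|_{C_n} = g_n$ for all $n$ (in fact $\varprojlim^1_n \HomT{\Sigma C_n}{F'} = 0$, since the cones $D_n$ sink into $\mathsf{T}^{\leq -n}$ while $F'$ is bounded above, so the inverse system stabilizes degreewise). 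A short diagram chase --- using compactness to write every element of $\HomT{D}{F}$, $D \in \mathsf{T}^c$, as $(C_n \to F)\circ v$ and invoking naturality of $\phi$ --- then gives $\mathcal{Y}(g) = \phi$.

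For full faithfulness on $\mathsf{T}^b_c$ the fullness is already covered, so I would only check faithfulness. Suppose $f \colon F \to F'$ with $F, F' \in \mathsf{T}^b_c$ and $\mathcal{Y}(f) = 0$. Choosing $n$ large and an approximation triangle $C_n \to F \to D_n \to \Sigma C_n$ with $D_n \in \mathsf{T}^{\leq -n}$, the vanishing $\mathcal{Y}(f) = 0$ forces $f \circ (C_n \to F) = 0$, so $f$ factors as $F \to D_n \xrightarrow{h} F'$; but $F' \in \mathsf{T}^b_c$ is bounded, hence $F' \in \mathsf{T}^{\geq -m}$ for some fixed $m$, and for $n \gg m$ orthogonality of the t-structure gives $\HomT{D_n}{F'} = 0$, whence $h = 0$ and $f = 0$. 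This is the one point where boundedness, rather than mere membership in $\mathsf{T}^-_c$, is essential.

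It remains to pin down the essential images, and this is the step I expect to be the main obstacle. The inclusions ``essential image $\subseteq$ finite functors'' are immediate, since $\mathcal{Y}(F)(\Sigma^i G) = \HomT{\Sigma^i G}{F}$ is finitely generated by the finite-type hypothesis and vanishes outside the prescribed range by boundedness. The content is the converse: that every finite cohomological functor $H$ is representable by an object of $\mathsf{T}^b_c$ (resp.\ $\mathsf{T}^-_c$). When $H$ has finite support I would induct on the width of the support; the base case represents a functor which is, up to a shift, a finitely generated $R$-module in a single degree, built as a cone of a map between finite coproducts of shifts of $G$ from a finite presentation of that module (here noetherianity of $R$ enters), corrected by further such cones to cancel the extraneous contributions of the self-$\mathrm{Ext}$ algebra of $G$; the inductive step peels off the top cohomology in a short exact sequence of functors $0 \to H' \to H \to H'' \to 0$ and realizes the connecting map as an actual morphism in $\mathsf{T}$ between representing objects of $H'$ and $H''$ --- possible because $\HomT{-}{-}$ of representables computes the relevant $\mathrm{Ext}$ groups --- whose cone represents $H$. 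For a general $H$ in the $\mathsf{T}^-_c$ class I would then form $F = \operatorname{hocolim}_n F_n$, where $F_n \in \mathsf{T}^b_c$ represents a finite truncation $H_n$ agreeing with $H$ on $\Sigma^i G$ for $i \geq -n$ and the tower $F_n \to F_{n+1}$ realizes the maps $H_{n+1} \to H_n$; then $\mathcal{Y}(F)(\Sigma^i G) = \varinjlim_n \HomT{\Sigma^i G}{F_n} = \varinjlim_n H_n(\Sigma^i G) = H(\Sigma^i G)$ because the system stabilizes degreewise, and $F \in \mathsf{T}^-_c$ provided the amplitudes of the $F_n$ stay bounded above uniformly while their successive cones descend --- which is precisely the quantitative content approximability supplies. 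The genuine difficulty lies in the two places where a construction carried out ``at the level of functors'' must be upgraded to an honest diagram in $\mathsf{T}$: making the inductive choices in the bounded case coherent enough to iterate, and assembling the $F_n$ into a bona fide tower (not merely a compatible sequence of objects) so the homotopy colimit is well defined; that coherence bookkeeping, together with the amplitude estimates keeping everything inside the closure of the compacts, is where the real work is.
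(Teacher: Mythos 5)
Your fullness argument for $\mathcal{Y}|_{\mathsf{T}^-_c}$ and your faithfulness argument on $\mathsf{T}^b_c$ are essentially the standard ones (they correspond to \cite[Lemma 5.8]{Neeman:2021b} and to \Cref{Lemma 6 of Section 6.2} in the present paper), and they work. One small correction there: your claim that $\varprojlim^1_n \HomT{\Sigma C_n}{F'}=0$ because ``$F'$ is bounded above'' is false --- orthogonality against the cones, which live in $\mathsf{T}^{\leq -n}$, would require $F'$ to be bounded \emph{below}, and for a general $F'\in\mathsf{T}^-_c$ the system $\HomT{C_n}{F'}$ does not stabilize. This is harmless only because the Milnor sequence already gives surjectivity onto $\varprojlim_n\HomT{C_n}{F'}$ without any $\varprojlim^1$ vanishing.

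The genuine gap is the essential image, which is the heart of the theorem, and your proposed route would not go through as described. A cohomological functor on $\mathsf{T}^c$ is not determined by its values $H(\Sigma^iG)$ together with the action of the graded endomorphism ring of $G$, so ``representing a functor concentrated in one degree from a finite presentation of the module'' and then ``cancelling the extraneous contributions of the self-$\operatorname{Ext}$ algebra'' is not a finite base case: the correction process is exactly the infinite tower that has to be controlled. Likewise, the cokernel of a map of cohomological functors need not be cohomological, so ``peeling off the top cohomology in a short exact sequence $0\to H'\to H\to H''\to 0$ and realizing the connecting map'' presupposes representability of the pieces and a coherence you have not established; and the amplitude bounds you invoke at the end (``approximability supplies the quantitative content'') apply to approximations of \emph{objects}, not of functors, so they do not by themselves keep the representing objects $F_n$ of truncations of $H$ inside a uniform window. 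The actual proof (see \Cref{Main Theorem on Tc-}, \Cref{Main Theorem on Tbc} and \cite[\S7]{Neeman:2021b}) is organized quite differently: one first builds, by induction on the generation index rather than on the support of $H$, a tower $F_1\to F_2\to\cdots$ of objects with strong approximating systems together with compatible \emph{epimorphisms} $\mathcal{Y}(F_i)\to H$ whose successive kernels are absorbed (\Cref{Lemma 1 of Section 6.2}, using weak triangles and \cite[Lemma 6.5]{Neeman:2021b}); the homotopy colimit then represents $H$ because the induced system $\mathcal{Y}(F_i)|_{\mathcal{G}_1}$ is ind-constant (\Cref{Proposition 3 of Section 6.2}); and membership of the representing object in the closure of the compacts is forced by an argument with the ideal of phantom maps and Christensen's projective classes \cite{Christensen:1998} (\Cref{Corollary 7.17}, \Cref{Lemma 5 of Section 6.2}, and the thickness statement \Cref{Proposition Closure of compacts is thick}). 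None of this machinery appears in your proposal, and, as you yourself note, the coherence bookkeeping you defer is precisely where the proof lives.
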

In this work, we prove the obvious generalisation of this theorem to the setting of $\mathcal{G}$-approximable triangulated categories, see \Cref{Main Theorem on Tc-} and \Cref{Main Theorem on Tbc}. We state the results below, again omitting some of the technicalities.
\begin{theoremintro}
    Let $\mathsf{T}$ be a \say{nice} $\mathcal{G}$-approximable $R$-linear triangulated category. Then,
    \begin{itemize}
        \item The restriction of $\mathcal{Y}$ to the closure of the compacts $\overline{\mathsf{T}^c}$ is full, and we have an explicit description of its essential image.
        \item The restriction of $\mathcal{Y}$ to the bounded objects in the closure of the compacts $\mathsf{T}^b_c$ is fully faithful, and we have an explicit description of its essential image.
    \end{itemize}
    Note that the closure of the compacts here is computed with respect to the metric in the definition of $\mathcal{G}$-approximability.
\end{theoremintro}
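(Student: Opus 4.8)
The plan is to follow the architecture of Neeman's proof of \cite[Theorem 0.3]{Neeman:2021b}, substituting the generating sequence $\mathcal{G}$ for the single compact generator and the orthogonal metric of \Cref{Good Metric} for the t-structure metric, and checking that the axioms of (weak) $\mathcal{G}$-approximability in \Cref{Definition of G-approximability} are exactly what each step requires. The first step is the completion machinery: one shows that every object $F$ of the closure of the compacts $\overline{\mathsf{T}^c}$ can be written as a homotopy colimit $F \simeq \operatorname{hocolim} E_n$ of a Cauchy sequence $E_1 \to E_2 \to \cdots$, with each $E_n$ compact --- or, in the bounded case, lying in $\overline{\langle\mathcal{G}\rangle}^{[-A,A]}_A$ --- and with the cones $D_n$ of the canonical maps $E_n \to F$ tending to zero in the metric. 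This is the $\mathcal{G}$-approximable analogue of \Cref{Example to explain approximability}, and it is precisely condition $(2)'$ of $\mathcal{G}$-approximability, iterated, that yields such presentations with control on the number of extensions used.

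The second step is fullness of $\mathcal{Y}$ restricted to $\overline{\mathsf{T}^c}$. The key lemma is that for any compact $C$ and any $F \in \overline{\mathsf{T}^c}$ presented as above, the natural map $\varinjlim_n \HomT{C}{E_n} \to \HomT{C}{F}$ is an isomorphism: compactness of $C$ handles the colimit, and the compatibility condition between $\mathcal{G}$ and the metric guarantees that $C$, being built from finitely many shifts of finitely many members of $\mathcal{G}$ by finitely many extensions, is orthogonal to all sufficiently small objects, so that the error terms $D_n$ eventually contribute nothing. Given a natural transformation $\phi\colon \mathcal{Y}(F) \to \mathcal{Y}(F')$ with $F = \operatorname{hocolim} E_n$ and $F' = \operatorname{hocolim} E'_n$, one uses this isomorphism to lift the images $\phi(E_n \to F) \in \HomT{E_n}{F'}$ to a compatible system of maps $E_n \to E'_{m(n)}$, passes to homotopy colimits to obtain $f\colon F \to F'$, and verifies $\mathcal{Y}(f) = \phi$ by evaluating on each compact and invoking the Milnor exact sequence for $\operatorname{hocolim}$.

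The third step is faithfulness on $\mathsf{T}^b_c$. Since $\mathsf{T}^b_c \subseteq \overline{\mathsf{T}^c}$, fullness is already available, so it remains to show that a phantom map $f\colon F \to F'$ with $F \in \mathsf{T}^b_c$ must vanish; boundedness is what makes this work, since a bounded object of the closure is \say{almost compact} and one can choose its approximating sequence so the errors are bounded, not merely small, which forces $\varprojlim^1_n \HomT{\Sigma E_n}{F'} = 0$, and $f$ lives in this group. The last step is the essential image: one characterises the functors arising from $\overline{\mathsf{T}^c}$ (respectively $\mathsf{T}^b_c$) as the cohomological functors $[\mathsf{T}^c]^{\operatorname{op}} \to \operatorname{Mod}(R)$ that are suitably \say{locally finite} over $\mathcal{G}$ and satisfy a one-sided (respectively two-sided) boundedness condition relative to the metric. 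Given such an $H$, one builds a representing object inductively as a homotopy colimit of compacts, at each stage splitting off the part of $H$ detected by $\overline{\langle\mathcal{G}\rangle}^{[-A,A]}_A$ using $\mathcal{G}$-approximability, and the colimit-of-Homs isomorphism of Step 2 then recovers $\mathcal{Y}$ of this object as $H$. Assembling these steps gives \Cref{Main Theorem on Tc-} and \Cref{Main Theorem on Tbc}.

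I expect the main obstacle to lie in the fullness step, and specifically in upgrading from a single compact generator to a generating sequence: with one generator $G$ it suffices to check the colimit-of-Homs isomorphism on $G$ and its shifts, whereas here it must be controlled uniformly over all of $\mathsf{T}^c$, which is exactly the point where the precise formulation of the orthogonal metric and its compatibility with $\mathcal{G}$ must be pinned down. The $\varprojlim^1$-vanishing used for faithfulness is the other delicate point, since in the general orthogonal-metric setting there is no ambient t-structure or abelian category to fall back on, so this vanishing has to be extracted directly from the boundedness hypothesis together with the axioms of \Cref{Definition of G-approximability}.
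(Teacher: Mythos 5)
Your steps 1--2 (presenting objects of $\overline{\mathsf{T}^c}$ as homotopy colimits of Cauchy sequences of compacts, and deducing fullness from the colimit-of-Homs isomorphism together with orthogonality of compacts to the small balls of the metric) match the paper's route through \Cref{Corollary 4 of section 5}, \Cref{Lemma on strong approximating sequences} and \cite[Lemma 5.8]{Neeman:2021b}. The genuine gap is in your treatment of the essential image, which is where almost all of the work in the paper lives. Building a representing object ``inductively as a homotopy colimit of compacts, at each stage splitting off the part of $H$ detected by $\overline{\langle\mathcal{G}\rangle}^{[-A,A]}_A$'' produces an object of $\overline{\langle \mathcal{K}\rangle}_4$ (a finite number of extensions of coproducts of compacts), but it does \emph{not} show that this object lies in the closure $\overline{\mathsf{T}^c}$: membership in the closure requires the approximating sequence to be Cauchy, i.e.\ the error terms must tend to zero in the metric, and the naive construction from $H$ gives no such control. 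The paper resolves this with a two-stage argument: first \Cref{Lemma 1 of Section 6.2} and \Cref{Proposition 3 of Section 6.2} represent $H$ by some $F \in \overline{\langle\mathcal{K}\rangle}_4$ via the weak-triangle machinery of \cite[Lemmas 6.4--6.6]{Neeman:2021b}; then, separately, \Cref{Lemma 3 of Section 6.2}, \Cref{Lemma 4 of Section 6.2} and \Cref{Lemma 5 of Section 6.2} produce an \emph{epimorphism} $\mathcal{Y}(F')\twoheadrightarrow H$ with $F'$ genuinely in $\overline{\mathsf{T}^c}$ --- this is the hard technical core, requiring the uniform bound of \Cref{Lemma 2 of section 5} (which is exactly where the quantitative condition $(3)'$ of $\mathcal{G}$-approximability enters); finally \Cref{Corollary 7.17} and Christensen's projective classes (\Cref{Christensen 1.1}, applied to the phantom ideal $\mathcal{I}$ and its power $\mathcal{I}^4$) force $F$ to be a summand of an object of $\overline{\mathsf{T}^c}$, whence $F\in\overline{\mathsf{T}^c}$ by thickness. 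None of this is visible in your sketch, and without the phantom-ideal argument (or a substitute) the representing object you construct cannot be placed in the closure of the compacts.

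A secondary point: your faithfulness argument via $\varprojlim^1$-vanishing is not what the paper does and is more delicate than necessary. In \Cref{Lemma 6 of Section 6.2} the boundedness hypothesis sits on the \emph{target}: if $f\colon F\to F'$ with $F\in\overline{\mathsf{T}^c}$ and $F'\in\mathsf{T}^b_c$ satisfies $\mathcal{Y}(f)=0$, one approximates $F$ by a compact $E$ with error $D\in\mathcal{R}^{\mathcal{G}}_n$, observes that $f$ kills $E$ and hence factors through $D$, and then uses $F'\in(\mathcal{R}^{\mathcal{G}}_n)^{\perp}$ to conclude $f=0$. This is a one-step orthogonality argument with no derived-limit input; your proposed route would additionally require justifying the $\varprojlim^1$-vanishing in a setting with no ambient abelian category, which you correctly flag as delicate but do not supply.
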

As concrete applications of this abstract result, we have the following two results.
\begin{theoremintro}[\Cref{Theorem on representability on DbcohX}]\label{Introduction Theorem on representability on DbcohX}
    Let $X$ be a quasiexcellent finite-dimensional scheme which is proper over a noetherian ring $R$. Let $\mathcal{Y}:\mathbf{D}^{+}_{\operatorname{coh}}(X) \to \operatorname{Hom}([\mathbf{D}^{b}_{\operatorname{coh}}(X)]^{\operatorname{op}},\operatorname{Mod}(R))$ be the restricted Yoneda functor.
    \begin{itemize}
        \item The restricted Yoneda functor $\mathcal{Y}$ is full on $\mathbf{D}^{+}_{\operatorname{coh}}(X)$, and the essential image is all the cohomological functors $H : \mathbf{D}^{b}_{\operatorname{coh}}(X)^{\operatorname{op}} \to \operatorname{Mod}(R)$ such that for any $F \in \mathbf{D}^{b}_{\operatorname{coh}}(X)$, $H(\Sigma^n F)$ is a finitely generated $R$-module for all $n \in \mathbb{Z}$ and vanishes for $n >> 0$.
        \item The functor $\mathcal{Y}$ gives an equivalence between $\mathbf{D}^{b}(\operatorname{Inj}\text{-}X) \colonequals \mathbf{D}^{b}_{\operatorname{coh}}(X) \cap \mathbf{K}^{b}(\operatorname{Inj}\text{-}X)$ and all the cohomological functors $H : \mathbf{D}^{b}_{\operatorname{coh}}(X)^{\operatorname{op}} \to \operatorname{Mod}(R)$ such that for any $F \in \mathbf{D}^{b}_{\operatorname{coh}}(X)$, $H(\Sigma^n F)$ is a finitely generated $R$-module for all $n \in \mathbb{Z}$ and vanishes for $|n| >> 0$.
    \end{itemize}

\end{theoremintro}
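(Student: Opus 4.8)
The plan is to realise both $\mathbf{D}^{+}_{\operatorname{coh}}(X)$ and $\mathbf{D}^{b}_{\operatorname{coh}}(X)$ inside a single ambient $\mathcal{G}$-approximable category and then invoke \Cref{Main Theorem on Tc-} and \Cref{Main Theorem on Tbc}. The ambient category should be $\mathsf{T}\colonequals\mathbf{K}(\operatorname{Inj}\text{-}X)$. Since $X$ is noetherian, separated and finite-dimensional, $\mathsf{T}$ is compactly generated with $\mathsf{T}^{c}\simeq\mathbf{D}^{b}_{\operatorname{coh}}(X)$; since $X$ is moreover quasiexcellent, \Cref{Theorem Intro Examples of co-approximability} (more precisely \Cref{Theorem Kinj co-approx}) shows that $\mathsf{T}$ is co-approximable --- in particular it comes with a compact generator $G$ --- hence $\mathcal{G}$-approximable for the generating sequence and orthogonal metric attached to the co-t-structure used there. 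Properness of $X$ over the noetherian ring $R$ makes $\mathsf{T}$ an $R$-linear category in which $\HomT{F}{F'}$ is a finitely generated $R$-module for all $F,F'\in\mathsf{T}^{c}$, by the coherence theorem for proper pushforward. So the first task is to package these facts into a verification that $\mathsf{T}$ satisfies every hypothesis (the \say{niceness}) demanded by \Cref{Main Theorem on Tc-} and \Cref{Main Theorem on Tbc}.

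The second and central task is to identify the metric closures with the categories in the statement. I would show that the closure of the compacts $\overline{\mathsf{T}^{c}}$ with respect to the co-t-structure metric is precisely the essential image of the fully faithful functor $\mathbf{D}^{+}_{\operatorname{coh}}(X)\to\mathbf{K}^{+}(\operatorname{Inj}\text{-}X)\hooklongrightarrow\mathbf{K}(\operatorname{Inj}\text{-}X)$ (bounded-below complexes of injectives being K-injective), and that the bounded objects $\mathsf{T}^{b}_{c}$ inside it are exactly $\mathbf{D}^{b}_{\operatorname{coh}}(X)\cap\mathbf{K}^{b}(\operatorname{Inj}\text{-}X)=\mathbf{D}^{b}(\operatorname{Inj}\text{-}X)$. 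One inclusion is an approximation argument in the spirit of \Cref{Example to explain approximability}: a bounded-below complex of injectives with coherent cohomology can be cut off, leaving an error term lying arbitrarily deep in the metric, so it lies in $\overline{\mathsf{T}^{c}}$. The reverse inclusion --- that an object approximable by compacts in this metric is automatically bounded below with coherent cohomology --- is exactly where co-approximability, and through it quasiexcellence, enters, and I expect this to be the technical heart of the argument. Granting these identifications, the restricted Yoneda functor $\mathcal{Y}\colon\mathsf{T}\to\operatorname{Hom}([\mathsf{T}^{c}]^{\operatorname{op}},\operatorname{Mod}(R))$ of the abstract setup restricts to the functor $\mathcal{Y}\colon\mathbf{D}^{+}_{\operatorname{coh}}(X)\to\operatorname{Hom}([\mathbf{D}^{b}_{\operatorname{coh}}(X)]^{\operatorname{op}},\operatorname{Mod}(R))$ of the statement.

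Finally I would translate the abstract descriptions of the essential images into the stated finiteness conditions. \Cref{Main Theorem on Tc-} yields fullness of $\mathcal{Y}$ on $\overline{\mathsf{T}^{c}}$ with essential image the cohomological functors $H$ that are \say{locally finite} over $R$ on $\mathsf{T}^{c}$ and one-sidedly bounded in the metric, while \Cref{Main Theorem on Tbc} yields full faithfulness on $\mathsf{T}^{b}_{c}$ with essential image those $H$ which are locally finite and bounded on both sides. Unwinding: local finiteness says $H(\Sigma^{n}F)$ is a finitely generated $R$-module for every $F\in\mathbf{D}^{b}_{\operatorname{coh}}(X)$ and every $n$; the one-sided bound says $H(\Sigma^{n}F)=0$ for $n\gg 0$; and the two-sided bound says $H(\Sigma^{n}F)=0$ for $|n|\gg 0$; the reduction of each of these conditions to the single generator $G$ (then propagated through shifts, cones and summands) is the routine dévissage matching the normalisations of the abstract theorems. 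The one point to keep track of throughout is that the two hypotheses on $X$ are doing different jobs: quasiexcellence, together with finite-dimensionality and separatedness, is what makes $\mathbf{K}(\operatorname{Inj}\text{-}X)$ co-approximable and what pins down the closures, whereas properness over $R$ is what supplies the $R$-linear finiteness behind \say{niceness} and forces the values of the representable functors to be finitely generated --- so neither hypothesis can be dropped.
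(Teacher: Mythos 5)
Your proposal is correct and follows essentially the same route as the paper: take $\mathsf{T}=\mathbf{K}(\operatorname{Inj}\text{-}X)$, use quasiexcellence and finite-dimensionality to get co-approximability (hence $\mathcal{G}$-approximability), use properness for the $R$-finiteness of Homs between compacts, identify $\overline{\mathsf{T}^c}\cong\mathbf{D}^{+}_{\operatorname{coh}}(X)$ and $\mathsf{T}^b_c\cong\mathbf{D}^{b}(\operatorname{Inj}\text{-}X)$, and apply the two abstract representability theorems. The only minor difference of emphasis is that in the paper the identification of the Cauchy-completion-style category $\mathfrak{L}'(\mathsf{T}^c)$ with $\mathbf{D}^{+}_{\operatorname{coh}}(X)$ is a direct computation valid for any noetherian scheme, and quasiexcellence enters only in showing this agrees with the metric closure $\mathsf{T}^+_c$ via co-approximability.
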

Noncommutative and stacky analogues to \Cref{Introduction Theorem on representability on DbcohX} also exist, see \Cref{Remark on representability on DbcohX for stacks} and \Cref{Remark on representability on DbcohA for algebra}.

\begin{theoremintro}[\Cref{Theorem on representability on DbcohXop}]
    Let $X$ be a quasiexcellent finite-dimensional scheme which is proper over a noetherian ring $R$. Let $\mathcal{Y}:\mathbf{D}^{-}_{\operatorname{coh}}(X)^{\operatorname{op}} \to \operatorname{Hom}(\mathbf{D}^{b}_{\operatorname{coh}}(X),\operatorname{Mod}(R))$ be the restricted Yoneda functor. Then,
    \begin{itemize}
        \item The restricted Yoneda functor $\mathcal{Y}$ is full on $\mathbf{D}^{-}_{\operatorname{coh}}(X)^{\operatorname{op}}$, and the essential image is all the homological functors $H : \mathbf{D}^{b}_{\operatorname{coh}}(X) \to \operatorname{Mod}(R)$ such that for any $F \in \mathbf{D}^{b}_{\operatorname{coh}}(X)$, $H(\Sigma^n F)$ is a finitely generated $R$-module for all $n \in \mathbb{Z}$ and vanishes for $n >> 0$.
        \item The functor $\mathcal{Y}$ gives an equivalence between $\mathbf{D}^{\operatorname{perf}}(X)$ and all the homological functors $H : \mathbf{D}^{b}_{\operatorname{coh}}(X) \to \operatorname{Mod}(R)$ such that for any $F \in \mathbf{D}^{b}_{\operatorname{coh}}(X)$, $H(\Sigma^n F)$ is a finitely generated $R$-module for all $n \in \mathbb{Z}$ and vanishes for $|n| >> 0$.
    \end{itemize}

\end{theoremintro}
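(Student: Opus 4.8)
The strategy parallels that of \Cref{Theorem on representability on DbcohX}, with the homotopy category of injectives replaced by the mock homotopy category of projectives. Set $\mathsf{T} = \mathbf{K}_m(\operatorname{Proj}\text{-}X)$, equipped with its natural co-t-structure. A scheme proper over a noetherian ring is itself noetherian and separated, so the hypotheses of \Cref{Theorem Kmproj co-approx} (see also \Cref{Theorem Examples of co-approximability}) are satisfied, and $\mathsf{T}$ is co-quasiapproximable for a suitable generating sequence $\mathcal{G}$ and an orthogonal metric built from the co-t-structure; this is exactly the setting in which \Cref{Main Theorem on Tc-} and \Cref{Main Theorem on Tbc} apply, and it is here that one genuinely uses $\mathcal{G}$-quasiapproximability rather than $\mathcal{G}$-approximability, cf.\ \Cref{Remark introduction on G-quasiapproximability}. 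The $R$-linear structure is induced by $X \to \operatorname{Spec} R$, and the finiteness (``niceness'') hypothesis demanded by the abstract theorems holds because $X$ is \emph{proper} over $R$: by Grothendieck's coherence theorem, $\operatorname{Hom}_{\mathbf{D}_{\operatorname{Qcoh}}(X)}(P,F)$ is a finitely generated $R$-module whenever $P$ is perfect and $F \in \mathbf{D}^b_{\operatorname{coh}}(X)$.

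The heart of the argument is to identify the categories produced by the abstract theorems with the concrete ones in the statement. First, the compact objects of $\mathbf{K}_m(\operatorname{Proj}\text{-}X)$ are anti-equivalent to $\mathbf{D}^b_{\operatorname{coh}}(X)$, via an equivalence $\Phi\colon \mathsf{T}^c \xrightarrow{\ \sim\ } \mathbf{D}^b_{\operatorname{coh}}(X)^{\operatorname{op}}$ of the standard $\mathbb{R}\mathcal{H}om$/Pontryagin type; in particular $[\mathsf{T}^c]^{\operatorname{op}} \simeq \mathbf{D}^b_{\operatorname{coh}}(X)$, so the target of the restricted Yoneda functor becomes $\operatorname{Hom}(\mathbf{D}^b_{\operatorname{coh}}(X),\operatorname{Mod}(R))$. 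Extending $\Phi$ along the metric completion, one identifies the closure of the compacts with $\overline{\mathsf{T}^c} = \mathsf{T}^+_c \simeq \mathbf{D}^{-}_{\operatorname{coh}}(X)^{\operatorname{op}}$ and its bounded part with $\mathsf{T}^b_c \simeq \mathbf{D}^{\operatorname{perf}}(X)^{\operatorname{op}} \simeq \mathbf{D}^{\operatorname{perf}}(X)$, the last equivalence being the self-duality $\mathbb{R}\mathcal{H}om(-,\mathcal{O}_X)$ of the perfect complexes (which is why the statement can be phrased without an opposite). Under these identifications $\mathcal{Y}$ sends an object $t \in \mathbf{D}^{-}_{\operatorname{coh}}(X)$ to the homological functor $\operatorname{Hom}_{\mathbf{D}_{\operatorname{Qcoh}}(X)}(t,-)|_{\mathbf{D}^b_{\operatorname{coh}}(X)}$, which is the functor named in the theorem.

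With this dictionary, \Cref{Main Theorem on Tc-} gives at once that $\mathcal{Y}$ is full on $\mathbf{D}^{-}_{\operatorname{coh}}(X)^{\operatorname{op}}$ with a concrete essential image, and \Cref{Main Theorem on Tbc} gives that its restriction to $\mathsf{T}^b_c \simeq \mathbf{D}^{\operatorname{perf}}(X)$ is fully faithful, hence an equivalence onto its image. It remains to match the abstract description of the essential image with the homological functors $H$ on $\mathbf{D}^b_{\operatorname{coh}}(X)$ for which $H(\Sigma^n F)$ is a finitely generated $R$-module for all $n$ and vanishes for $n \gg 0$ (resp.\ for $|n| \gg 0$). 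Finite generation over $R$ is, once again, Grothendieck coherence for the proper morphism $X \to \operatorname{Spec} R$; the one-sided vanishing is precisely the abstract condition distinguishing objects of $\overline{\mathsf{T}^c}$ from arbitrary objects of $\mathsf{T}$, transported through $\Phi$ --- and since $\Phi$ reverses the shift, ``becoming small in the co-t-structure direction'' turns into vanishing for $n \gg 0$ relative to the shift of $\mathbf{D}^b_{\operatorname{coh}}(X)$; the two-sided vanishing characterising $\mathsf{T}^b_c$ is then the further requirement of lying in the bounded part, which under the identification is exactly perfection.

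The main obstacle is the second paragraph: establishing, for $X$ as in the hypotheses, that the metric completion of the compacts of $\mathbf{K}_m(\operatorname{Proj}\text{-}X)$ is $\mathbf{D}^{-}_{\operatorname{coh}}(X)^{\operatorname{op}}$ with bounded part $\mathbf{D}^{\operatorname{perf}}(X)$, compatibly with the duality $\Phi$ and with the co-t-structure metric. The subtlety is genuine because $\mathbf{K}_m(\operatorname{Proj}\text{-}X)$ is a \emph{mock} category --- there need be no honest projective sheaves globally --- so one must work with Murfet's construction rather than naive complexes of projectives, and track the opposite-category shift conventions with care. Once this identification is secured, the remainder is a formal transcription of \Cref{Main Theorem on Tc-} and \Cref{Main Theorem on Tbc}, with properness of $X$ over $R$ entering only through the classical finiteness theorem.
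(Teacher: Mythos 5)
Your proposal is correct and follows essentially the same route as the paper: pass to the co-quasiapproximable category $\mathbf{K}_m(\operatorname{Proj}\text{-}X)$, identify $\mathsf{T}^c \simeq \mathbf{D}^b_{\operatorname{coh}}(X)^{\operatorname{op}}$, $\mathsf{T}^+_c \simeq \mathbf{D}^-_{\operatorname{coh}}(X)^{\operatorname{op}}$ and $\mathsf{T}^b_c \simeq \mathbf{D}^{\operatorname{perf}}(X)$, and then quote \Cref{Main Theorem on Tc-} and \Cref{Main Theorem on Tbc}, with properness supplying the $R$-finiteness. The identification you flag as the main obstacle is exactly what the paper supplies in \Cref{Closure of compacts for the mock homotopy category of projectives} and \Cref{Theorem completion of compacts for for Kmproj}, so nothing further is missing.
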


\subsection*{Acknowledgements} This work is part of the author's PhD work at the Australian National University under the supervision of Amnon Neeman. The author would like to thank him for
generously sharing his knowledge and his help throughout the PhD. The author would also like to thank Janina Letz and Greg Stevenson for some useful discussions regarding this work. Further, the author thanks BIREP group at Universit\"{a}t Bielefeld and the Algebraic Geometry group at Universit\`{a} degli Studi di Milano for their hospitality during the author's stay there.

Finally, the author was supported by an Australian Government Research Training Program scholarship, and was supported under the ERC Advanced
Grant 101095900-TriCatApp, the Deutsche Forschungsgemeinschaft (SFB-TRR 358/1 2023 - 491392403), and the Australian Research Council Grant DP200102537 at different points during the PhD.

\section{Background and notation}
Throughout this work, we will use $\Sigma$ to denote the autoequivalence in the definition of triangulated categories. For any functor $F : \mathsf{S} \to \mathsf{T}$, $F(\mathsf{S})$ will denote the essential image of $\mathsf{S}$ under $F$. Further, all the gradings involved in this work will be cohomological.

We begin by setting up some basic notation.

\begin{notation}\label{Notation star operation}
    Let $\mathsf{T}$ be a triangulated category, with full subcategories $\mathsf{A}$ and $\mathsf{C}$. Then, we define the following full subcategories of $\mathsf{T}$,
    \begin{itemize}
        \item $\mathsf{A}^{\perp} \colonequals \{c \in \mathsf{T} : \HomT{a}{c} = 0 \text{ for all } a \in \mathsf{A}\}$. 
        \item Similarly, $^{\perp}\mathsf{A} \colonequals \{c \in \mathsf{T} : \HomT{c}{a} = 0 \text{ for all } a \in \mathsf{A}\}$
        \item If $\mathsf{C} \subseteq \mathsf{A}^{\perp}$, we write $\HomT{\mathsf{A}}{\mathsf{C}}=0$.
        \item $\mathsf{A} \star \mathsf{C}$ is the full subcategories of all extensions of $\mathsf{A}$ by $\mathsf{C}$, that is, it is given by objects $B$ such that there exists a triangle  $A \to B \to C \to \Sigma A \text{ with } A \in \mathsf{A} \text{ and } C \in \mathsf{C}$
        \item We define $\mathsf{A}^{\star n}$ inductively. $\mathsf{A}^{\star 1} \colonequals \mathsf{A}$ and $\mathsf{A}^{\star (n + 1)} \colonequals \mathsf{A}^{\star n} \star \mathsf{A}$.
    \end{itemize}
    It follows from the octahedral axiom that the $\star$ operation is associative, that is, for any full subcategories $\mathsf{A}$, $\mathsf{B}$, and $\mathsf{C}$ of $\mathsf{T}$, we have $\mathsf{A} \star (\mathsf{B} \star \mathsf{C}) = (\mathsf{A} \star \mathsf{B})\star \mathsf{C}$. 
\end{notation}

We now recall some notation from \cite{Bondal/VanDenBergh:2003,Neeman:2021b} on generating full subcategories from a given set of objects.

\begin{notation}\label{Notation from Neeman's paper}\cite[Reminder 0.8]{Neeman:2021b}\label{Notation Subcategories of a triangulated category}
    Let $\mathsf{T}$ be a triangulated category, and $\mathsf{U}$ a set of objects in $\mathsf{T}$. Then, we define the following full subcategories of $\mathsf{T}$,
    \begin{itemize}
        \item $\operatorname{smd}(\mathsf{U})$ is the closure of $\mathsf{U}$ under direct summands.
        \item $\operatorname{add}(\mathsf{U})$ is the closure of $\mathsf{U}$ under finite direct sums.
        \item For integers $A \leq B$, $\mathsf{U}[A,B]\colonequals \{\Sigma^{-i}U : U \in \mathsf{U} \text{ and } A \leq i \leq B \}$. We extend this definition to $A = -\infty$ and $B = \infty$ in the natural way. For example, we have that $\mathsf{U}(-\infty,B]\colonequals \{\Sigma^{-i}U : U \in \mathsf{U} \text{ and } i \leq B \}$.
        \item  $\operatorname{coprod}_{n}(\mathsf{U}[A,B])$ is defined inductively. For $n=1$, $\operatorname{coprod}_1(\mathsf{U}[A,B]) \colonequals \operatorname{add}(\mathsf{U}[A,B])$. Then, we inductively define $\operatorname{coprod}_{n + 1}(\mathsf{U}[A,B]) \colonequals \operatorname{coprod}_{1}(\mathsf{U}[A,B]) \star \operatorname{coprod}_{n}(\mathsf{U}[A,B])$.
        \item  $\operatorname{coprod}(\mathsf{U}[A,B])$ is the closure of $\mathsf{U}[A,B]$ under finite direct sums and extensions. Note that $\operatorname{coprod}(\mathsf{U}[A,B]) = \bigcup_{n \geq 1} \operatorname{coprod}_{n}(\mathsf{U}[A,B])$.
        \item For any $n \geq 0$, let $\langle\mathsf{U}\rangle^{[A,B]}_{n} \colonequals \operatorname{smd}(\operatorname{coprod}_n(\mathsf{U}[A,B]))$.
        \item $\langle\mathsf{U}\rangle^{[A,B]} \colonequals \operatorname{smd}(\operatorname{coprod}(\mathsf{U}[A,B])$. 
        \item When $A=-\infty$ and $B=\infty$ in the previous definitions, we drop the superscript. That is, $\langle\mathsf{U}\rangle_n \colonequals \operatorname{smd}(\operatorname{coprod}_n(\mathsf{U}(-\infty,\infty)))$ and $\langle\mathsf{U}\rangle \colonequals \operatorname{smd}(\operatorname{coprod}(\mathsf{U}(-\infty,\infty)))$. Note that $\langle\mathsf{U}\rangle$ is the thick triangulated category generated by $\mathsf{U}$.
    \end{itemize}
    If $\mathsf{T}$ has coproducts, we can define the corresponding \say{big} categories too.
    \begin{itemize}
        \item $\operatorname{Add}(\mathsf{U})$ is the closure of $\mathsf{U}$ under arbitrary coproducts.
        \item We first define $\operatorname{Coprod}_1(\mathsf{U}[A,B]) \colonequals \operatorname{Add}(\mathsf{U}[A,B])$. Then, we inductively define $\operatorname{Coprod}_{n + 1}(\mathsf{U}[A,B]) \colonequals \operatorname{Coprod}_{1}(\mathsf{U}[A,B]) \star \operatorname{Coprod}_{n}(\mathsf{U}[A,B])$.
        \item  $\operatorname{Coprod}(\mathsf{U}[A,B])$ is the closure of $\mathsf{U}[A,B]$ under arbitrary coproducts and extensions. 
        \item For any $n \geq 0$, $\overline{\langle \mathsf{U}\rangle}_n^{[A,B]} \colonequals \operatorname{smd}(\operatorname{Coprod}_n(\mathsf{U}[A,B]))$. 
        \item $\overline{\langle U\rangle}^{[A,B]} \colonequals \operatorname{smd}(\operatorname{Coprod}(\mathsf{U}[A,B])$. 
        \item When $A=-\infty$ and $B=\infty$ in the previous definition, we drop the superscript. That is, $\overline{\langle U \rangle}_n \colonequals \operatorname{smd}(\operatorname{Coprod}_n(\mathsf{U}(-\infty,\infty)))$ and $\overline{\langle U \rangle} \colonequals \operatorname{smd}(\operatorname{Coprod}(\mathsf{U}(-\infty,\infty)))$.
    \end{itemize}
\end{notation}
\begin{remark}
    In \Cref{Notation from Neeman's paper}, if $\mathsf{U}=\{G\}$ for an object $G \in \mathsf{T}$, then we denote $\langle U \rangle$ by $\langle G \rangle$, and similarly for the other categories defined in \Cref{Notation from Neeman's paper}. Further,
    \begin{itemize}
        \item An object $G \in \mathsf{T}$ is said to be a \emph{classical generator} for $\mathsf{T}$ if $\mathsf{T} = \langle G \rangle$.
        \item An object $G \in \mathsf{T}$ is said to be a \emph{strong generator} for $\mathsf{T}$ if $\mathsf{T} = \langle G \rangle_n$ for some $n \geq 1$.
        \item The \emph{Rouquier dimension} of a triangulated category $\mathsf{T}$ is the infimum over $n$ such that there exists an object $G \in \mathsf{T}$ with $\mathsf{T} = \langle G \rangle_{n+1}$, see \cite[Definition 3.2]{Rouquier:2008}. 
    \end{itemize}
\end{remark}

We recall the notions of t-structures and co-t-structures now, mostly to fix the notation.

\begin{definition}\cite[D\'{e}finition 1.3.1]{Beilinson/Bernstein/Deligne:1982} Let $\mathsf{T}$ be a triangulated category. A t-structure on $\mathsf{T}$ is a pair of strictly full subcategories $(\mathsf{T}^{\leq 0},\mathsf{T}^{\geq 0})$ such that,
    \begin{enumerate}
        \item $\Sigma \mathsf{T}^{\leq 0} \subseteq \mathsf{T}^{\leq 0}$ and $\mathsf{T}^{\geq 0} \subseteq \Sigma \mathsf{T}^{\geq 0}$. 
        \item $\HomT{\mathsf{T}^{\leq 0}}{\Sigma^{-1}\mathsf{T}^{\geq 0}} = 0$.
        \item For all $t \in \mathsf{T}$, there exists a triangle $u \to t \to v \to \Sigma u$ with $u \in \mathsf{T}^{\leq 0}$ and $v \in \Sigma^{-1}\mathsf{T}^{\geq 0}$.
    \end{enumerate}
    By \cite[Proposition 1.3.3]{Beilinson/Bernstein/Deligne:1982}, the triangle in (3) is unique.
    
    We define $\mathsf{T}^{\leq n} \colonequals \Sigma^{-n}\mathsf{T}^{\leq 0}$ and $\mathsf{T}^{\geq n} \colonequals \Sigma^{-n}\mathsf{T}^{\geq 0}$.
        
    We shall sometimes also denote a t-structure by the pair $(\mathsf{U}, \mathsf{V})$, with $\mathsf{U}=\mathsf{T}^{\leq 0}$ and $\mathsf{V}=\mathsf{T}^{\geq 0}$. Further, we define the \emph{heart of the t-structure} to be $\mathsf{T}^{\heartsuit} \colonequals \mathsf{T}^{\leq 0} \cap \mathsf{T}^{\geq 0}$. Note that the heart of any t-structure is an abelian category, see \cite[Th\'eor\`eme 1.3.6]{Beilinson/Bernstein/Deligne:1982}.
\end{definition}

The following is an important method of constructing t-structures.
\begin{theorem}\label{Theorem Compactly generated t-structure}
    \cite[Theorem A.1]{AlonsoTarrio/JeremiasLopez/SoutoSalorio:2003}Let $\mathsf{C}$ be a set of compact objects in a triangulated category $\mathsf{T}$ with coproducts. Then, $(\operatorname{Coprod}(\cup_{i\geq 0}\Sigma^i\mathsf{C}),(\cup_{i\geq 1}\Sigma^i\mathsf{C})^{\perp})$ is a t-structure on $\mathsf{T}$, see \Cref{Notation from Neeman's paper}. We call this t-structure the \emph{t-structure generated by $\mathsf{C}$}, and such t-structures are called \emph{compactly generated t-structures}.
\end{theorem}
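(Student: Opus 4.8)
The plan is to verify the three axioms of a t-structure directly for the pair $(\mathsf{U},\mathsf{V})$ with $\mathsf{U}:=\operatorname{Coprod}(\mathsf{S})$ and $\mathsf{V}:=(\mathsf{S}')^{\perp}$, where I abbreviate $\mathsf{S}:=\cup_{i\geq 0}\Sigma^i\mathsf{C}$ and $\mathsf{S}':=\cup_{i\geq 1}\Sigma^i\mathsf{C}$. Two cheap observations set things up. First, every object of $\mathsf{S}$ is compact (a suspension of a compact object is compact) and $\Sigma\mathsf{S}\subseteq\mathsf{S}$, so by construction $\mathsf{U}=\operatorname{Coprod}(\mathsf{S})$ is closed under arbitrary coproducts, extensions and $\Sigma$. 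Second, $\Sigma^{-1}\mathsf{V}=(\Sigma^{-1}\mathsf{S}')^{\perp}=\mathsf{S}^{\perp}$, since $\Sigma^{-1}\mathsf{S}'=\mathsf{S}$ and $\Sigma^{-1}(\mathsf{A}^{\perp})=(\Sigma^{-1}\mathsf{A})^{\perp}$. So the truncation triangle that must be produced is one of the form $u\to t\to v\to\Sigma u$ with $u\in\operatorname{Coprod}(\mathsf{S})$ and $v\in\mathsf{S}^{\perp}$.

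Axioms (1) and (2) are then purely formal. For (1): $\Sigma\mathsf{U}\subseteq\mathsf{U}$ was noted above, and $\mathsf{V}\subseteq\Sigma\mathsf{V}$ (equivalently $\Sigma^{-1}\mathsf{V}\subseteq\mathsf{V}$) holds because $\mathsf{S}=\Sigma^{-1}\mathsf{S}'\supseteq\mathsf{S}'$, so $\mathsf{S}^{\perp}\subseteq(\mathsf{S}')^{\perp}$. For (2), i.e.\ $\HomT{\mathsf{U}}{\Sigma^{-1}\mathsf{V}}=0$: fix $w\in\mathsf{S}^{\perp}$; the class $\{X\in\mathsf{T}:\HomT{X}{w}=0\}$ contains $\mathsf{S}$ and is closed under arbitrary coproducts (as $\HomT{\coprod X_\alpha}{w}=\prod\HomT{X_\alpha}{w}$) and under extensions (the long exact sequence of a triangle together with the five lemma), hence it contains $\operatorname{Coprod}(\mathsf{S})=\mathsf{U}$. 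Note this step uses no compactness.

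Axiom (3) is the real content, and I would prove it by the triangulated small-object argument. Given $t\in\mathsf{T}$, set $t_0:=t$; having built $t_n$, let $P_n:=\coprod_k s_k$ be the coproduct indexed by all morphisms $s_k\to t_n$ with $s_k\in\mathsf{S}$ (a legitimate set since $\mathsf{C}$ is a set and $\mathsf{T}$ has small Hom-sets), let $P_n\to t_n$ have these morphisms as its components, and complete to a triangle $P_n\to t_n\to t_{n+1}\to\Sigma P_n$. Put $v:=\operatorname{hocolim} t_n$ and $u_n:=\operatorname{fib}(t\to t_n)$; choosing the maps $u_n\to u_{n+1}$ compatibly, $(u_n\to t\to t_n)$ becomes a tower of distinguished triangles, and since homotopy colimits of towers of triangles are triangles and the constant tower on $t$ has homotopy colimit $t$, we obtain a triangle $u\to t\to v\to\Sigma u$ with $u:=\operatorname{hocolim} u_n$. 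That $v\in\mathsf{S}^{\perp}$: for $s\in\mathsf{S}$ compact, the defining triangle of $v$ gives $\HomT{s}{v}=\operatorname{colim}_n\HomT{s}{t_n}$, and any $\psi:s\to t_n$ maps to zero in $t_{n+1}$ because it factors through $P_n\to t_n$ while the composite $P_n\to t_n\to t_{n+1}$ vanishes, so the colimit is $0$. That $u\in\mathsf{U}$: the octahedral axiom on $t\to t_n\to t_{n+1}$ yields a triangle $u_n\to u_{n+1}\to P_n\to\Sigma u_n$, whence $u_{n+1}\in\{u_n\}\star\operatorname{Add}(\mathsf{S})$; since $u_1=P_0\in\operatorname{Add}(\mathsf{S})=\operatorname{Coprod}_1(\mathsf{S})$, the associativity of $\star$ gives $u_n\in\operatorname{Coprod}_n(\mathsf{S})$ for all $n$; finally the defining triangle $\coprod_n u_n\to\coprod_n u_n\to u\to\Sigma\coprod_n u_n$ of the homotopy colimit exhibits $u$ as an extension inside $\operatorname{Coprod}(\mathsf{S})$, using that $\operatorname{Coprod}(\mathsf{S})$ is closed under coproducts, under $\Sigma$, and under extensions.

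The main obstacle is carrying out the construction of the last paragraph cleanly: one must keep the set-theoretic bookkeeping honest so that the $P_n$ are genuine coproducts, invoke compactness of the objects of $\cup_{i\geq 0}\Sigma^i\mathsf{C}$ precisely where it is needed — to pull $\HomT{s}{-}$ through the homotopy colimit — and use the (mildly fiddly) fact that a tower of distinguished triangles gives rise to a distinguished triangle of homotopy colimits, in tandem with the octahedral induction, to control the membership of $u$. Everything else is routine manipulation with the $\star$-calculus of \Cref{Notation star operation} and the closure properties of the categories defined in \Cref{Notation from Neeman's paper}.
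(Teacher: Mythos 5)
Your argument is correct. The paper gives no proof of its own here — it simply cites \cite[Theorem A.1]{AlonsoTarrio/JeremiasLopez/SoutoSalorio:2003} — and your verification of axioms (1)–(2) plus the small-object/telescope construction for the truncation triangle (with the octahedral induction placing $u_n$ in $\operatorname{Coprod}_n(\cup_{i\geq 0}\Sigma^i\mathsf{C})$ and compactness used exactly to compute $\HomT{s}{\hocolim t_n}$ as a vanishing colimit) is precisely the standard proof underlying that reference; the only points requiring care, which you correctly flag, are the set-indexing of $P_n$ and the lemma that a tower of distinguished triangles induces a distinguished triangle on homotopy colimits.
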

We now define a co-t-structure or a weight structure. This notion was defined independently by Paukztello and Bondarko around the same time.
\begin{definition}\cite[Definition 1.1.1]{Bondarko:2010} and \cite[Definition 1.4]{Pauksztello:2008}
     Let $\mathsf{T}$ be a triangulated category. A co-t-structure on $\mathsf{T}$ is a pair of strictly full subcategories $(\mathsf{T}^{\geq 0},\mathsf{T}^{\leq 0})$ closed under direct summands such that,
    \begin{enumerate}
        \item $\Sigma \mathsf{T}^{\leq 0} \subseteq \mathsf{T}^{\leq 0}$ and $\mathsf{T}^{\geq 0} \subseteq \Sigma \mathsf{T}^{\geq 0}$.
        \item $\HomT{\mathsf{T}^{\geq 0}}{\Sigma\mathsf{T}^{\leq 0}} = 0$.
        \item For all $t \in \mathsf{T}$, there exists a triangle $u \to t \to v \to \Sigma u$ with $u \in \mathsf{T}^{\geq 0}$ and $v \in \Sigma\mathsf{T}^{\leq 0}$.
    \end{enumerate}
     We define $\mathsf{T}^{\leq n} \colonequals \Sigma^{-n}\mathsf{T}^{\leq 0}$ and $\mathsf{T}^{\geq n} \colonequals \Sigma^{-n}\mathsf{T}^{\geq 0}$. Further, we define the \emph{heart of the co-t-structure} to be $\mathsf{T}^{\heartsuit} \colonequals \mathsf{T}^{\leq 0} \cap \mathsf{T}^{\geq 0}$. 
     
    We shall sometimes also denote a co-t-structure by the pair $(\mathsf{U}, \mathsf{V})$, with $\mathsf{U}=\mathsf{T}^{\geq 0}$ and $\mathsf{V}=\mathsf{T}^{\leq 0}$.
\end{definition}

\begin{theorem}\label{Theorem Compactly generated co-t-structures}
    \cite[Theorem 5]{Pauksztello:2012}Let $\mathsf{C}$ be a set of compact objects in a triangulated category $\mathsf{T}$ with coproducts, such that $\Sigma^{-1}\mathsf{C}\subseteq \mathsf{C}$. Let $\mathsf{V} \colonequals \Sigma^{-1} \mathsf{C}^{\perp}$. Then, $(^{\perp}(\Sigma \mathsf{V}),\mathsf{V})$ is a co-t-structure on $\mathsf{T}$. Such co-t-structures are called \emph{compactly generated co-t-structures}.
\end{theorem}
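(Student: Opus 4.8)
The plan is to verify the four defining conditions of a co-t-structure for the pair $(\mathsf{U},\mathsf{V})$ with $\mathsf{V}\colonequals\Sigma^{-1}\mathsf{C}^{\perp}$ and $\mathsf{U}\colonequals{}^{\perp}(\Sigma\mathsf{V})$. The one observation that makes everything run smoothly is that the hypothesis $\Sigma^{-1}\mathsf{C}\subseteq\mathsf{C}$ forces $\Sigma\mathsf{C}^{\perp}\subseteq\mathsf{C}^{\perp}$: for $x\in\mathsf{C}^{\perp}$ and $C\in\mathsf{C}$ one has $\HomT{C}{\Sigma x}\cong\HomT{\Sigma^{-1}C}{x}=0$ since $\Sigma^{-1}C\in\mathsf{C}$. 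In particular $\Sigma\mathsf{V}=\mathsf{C}^{\perp}$, so the pair to be studied is $\bigl({}^{\perp}(\mathsf{C}^{\perp}),\,\Sigma^{-1}\mathsf{C}^{\perp}\bigr)$.

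With this in hand, I expect closure under summands and axioms (1) and (2) to be purely formal. Every right orthogonal $\mathsf{C}^{\perp}$ and every left orthogonal ${}^{\perp}(-)$ is closed under direct summands, so $\mathsf{U}$ and $\mathsf{V}$ both are; and $\HomT{\mathsf{U}}{\Sigma\mathsf{V}}=0$ holds by the very definition of $\mathsf{U}$, which is axiom (2). For axiom (1), the inclusion $\Sigma\mathsf{V}\subseteq\mathsf{V}$ unwinds to $\Sigma\mathsf{C}^{\perp}\subseteq\mathsf{C}^{\perp}$, already established, while $\mathsf{U}\subseteq\Sigma\mathsf{U}$ follows because for $u\in\mathsf{U}$ and $y\in\mathsf{C}^{\perp}$ one has $\HomT{\Sigma^{-1}u}{y}\cong\HomT{u}{\Sigma y}=0$, using $\Sigma y\in\mathsf{C}^{\perp}$.

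The substantive step will be axiom (3): for each $t\in\mathsf{T}$ I must produce a triangle $u\to t\to v\to\Sigma u$ with $u\in{}^{\perp}(\mathsf{C}^{\perp})$ and $v\in\mathsf{C}^{\perp}$. I would let $\mathsf{L}$ be the localizing subcategory of $\mathsf{T}$ generated by $\mathsf{C}$, i.e.\ the smallest triangulated subcategory containing $\mathsf{C}$ and closed under coproducts. Since $\mathsf{C}$ is a set of compact objects, $\mathsf{L}$ is compactly generated and the inclusion $\mathsf{L}\hookrightarrow\mathsf{T}$ preserves coproducts, so by Brown representability for compactly generated triangulated categories \cite{Neeman:1996} it admits a right adjoint; the counit of this adjunction, completed to a triangle, gives a Bousfield-localization triangle $u\to t\to v\to\Sigma u$ with $u\in\mathsf{L}$ and $v\in\mathsf{L}^{\perp}$. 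It then remains to identify $\mathsf{L}^{\perp}=\mathsf{C}^{\perp}$ and $\mathsf{L}={}^{\perp}(\mathsf{C}^{\perp})$. The first holds because, for any fixed $x$, the full subcategory $\{y:\HomT{y}{x}=0\}$ is triangulated and closed under coproducts, hence contains $\mathsf{L}$ once it contains $\mathsf{C}$. For the second, $\mathsf{L}\subseteq{}^{\perp}(\mathsf{L}^{\perp})={}^{\perp}(\mathsf{C}^{\perp})$ is immediate, and conversely, given $x\in{}^{\perp}(\mathsf{C}^{\perp})$ I would apply its localization triangle $u'\to x\to v'\to\Sigma u'$ (with $u'\in\mathsf{L}$, $v'\in\mathsf{C}^{\perp}$); then $\HomT{x}{v'}=0$ makes the triangle split, exhibiting $v'$ as a direct summand of $\Sigma u'\in\mathsf{L}$, so $v'\in\mathsf{L}\cap\mathsf{L}^{\perp}=0$ and hence $x\cong u'\in\mathsf{L}$.

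The hard part is the existence half of axiom (3): this is where compactness of the objects of $\mathsf{C}$, together with the existence of all coproducts in $\mathsf{T}$, is genuinely used, through Brown representability supplying the adjoint to $\mathsf{L}\hookrightarrow\mathsf{T}$. One should keep in mind that the hypothesis $\Sigma^{-1}\mathsf{C}\subseteq\mathsf{C}$ is needed only for the $\Sigma$-stability in axiom (1) and plays no role in axiom (3), since the localization triangle attached to $\operatorname{Loc}(\mathsf{C})$ exists for an arbitrary set of compact objects.
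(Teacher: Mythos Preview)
Your argument for axiom (3) contains a genuine gap. The assertion that, for fixed $x$, the subcategory $\{y:\HomT{y}{x}=0\}={}^{\perp}\{x\}$ is triangulated is false: it is closed under extensions, coproducts, and summands, but not under $\Sigma$ or $\Sigma^{-1}$. Consequently your claimed equality $\mathsf{L}^{\perp}=\mathsf{C}^{\perp}$ fails. Since $\mathsf{L}$ is triangulated, $\mathsf{L}^{\perp}$ is closed under all shifts; by contrast $\mathsf{C}^{\perp}$ is only closed under $\Sigma$, so in general $\mathsf{L}^{\perp}\subsetneq\mathsf{C}^{\perp}$. For a concrete witness take $\mathsf{T}=\mathbf{D}(k)$ with $k$ a field and $\mathsf{C}=\{\Sigma^{-n}k:n\geq 0\}$: then $\mathsf{L}=\mathsf{T}$, hence $\mathsf{L}^{\perp}=0$, yet $\Sigma k\in\mathsf{C}^{\perp}$. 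Correspondingly, the Bousfield triangle places $u$ in $\mathsf{L}$, but $\mathsf{L}\not\subseteq{}^{\perp}(\mathsf{C}^{\perp})$: in the same example $\Sigma k\in\mathsf{L}$ while $\HomT{\Sigma k}{\Sigma k}\neq 0$ with $\Sigma k\in\mathsf{C}^{\perp}$.

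The obstruction is structural: Bousfield localization yields a decomposition with $u$ lying in a \emph{triangulated} subcategory, but the aisle $\mathsf{U}={}^{\perp}(\mathsf{C}^{\perp})$ of a genuine co-t-structure is only closed under $\Sigma^{-1}$, not under $\Sigma$, and so cannot be a localizing subcategory unless the co-t-structure degenerates to a semiorthogonal decomposition. The paper simply cites this result without proof; the argument in \cite{Pauksztello:2012} (and similarly in Bondarko's work) instead produces the decomposition by an iterated small-object-type construction: given $t$, form $u_1\to t\to t_1$ with $u_1$ a coproduct of objects of $\mathsf{C}$ indexed by maps to $t$, iterate to obtain $t\to t_1\to t_2\to\cdots$, and let $v$ be the homotopy colimit. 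Compactness of the objects of $\mathsf{C}$ forces $v\in\mathsf{C}^{\perp}$, while the fibre $u$ of $t\to v$ lies in the extension-and-coproduct closure of $\mathsf{C}$, hence in ${}^{\perp}(\mathsf{C}^{\perp})$.
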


Recall that a collection of maps $\mathcal{I}$ is called an ideal if the sum of two maps in $\mathcal{I}$ is also in $\mathcal{I}$, and for any triple of composable maps $g$,$f$, and $h$, the map $hfg$ is in $\mathcal{I}$ if $f$ is in $\mathcal{I}$.
We now recall some notions and results from \cite{Christensen:1998}, as we will need them in \Cref{Section Representability results}.

\begin{definition}\label{Definition of a projective class}\cite[Definition 2.5]{Christensen:1998}
    Let $\mathsf{T}$ be a triangulated category.
    Now, let $\mathcal{P}$ be a collection of objects, and $\mathcal{J}$ a collection of maps in $\mathsf{T}$. Then, we define, 
    \begin{enumerate}
        \item \textbf{null}$(\mathcal{P}) = \{f \in \HomT{X}{Y}  : X,Y \in \mathsf{T} \text{ and } fg=0 \text{ for all }g \in \HomT{\mathcal{P}}{X} \}$
        \item \textbf{proj}$(\mathcal{J}) = \{P \in \mathsf{T} : (\HomT{P}{X} \xrightarrow{\HomT{-}{f}}\HomT{P}{Y})=0 \text{ for all } f \in \mathcal{J}\}$
        \item $(\mathcal{P},\mathcal{J})$ is a \emph{projective class} if $\mathcal{J}=\textbf{null}(\mathcal{P})$ and $\mathcal{P} = \textbf{proj}(\mathcal{J})$ and every object $X \in \mathsf{T}$ admits a triangle $P \to X \xrightarrow{f} Y \to \Sigma P$ with $P \in \mathcal{P}$ and $f \in \mathcal{J}$.
    \end{enumerate}
\end{definition}
\begin{definition}\label{Definition of a Phantom map}\cite[Section 5]{Christensen:1998}
    Let $\mathsf{T}$ be a triangulated category with coproducts. A map $f:X \to Y$ is called a \emph{phantom map} if for any compact object $K$, and any map $K \to X$, the composite $K \to X \xrightarrow{f} Y$ vanishes. Note that the collection of phantom maps forms an ideal $\mathcal{I}$. Then, if $\big(\overline{\langle\mathsf{T}^c \rangle}_1,\mathcal{I}\big)$ is a projective class, it is called the \emph{phantom projective class}, see \Cref{Notation from Neeman's paper}. This is always the case if $\mathsf{T}$ has all coproducts, and $\mathsf{T}^c$ is essentially small. As we noted before, this is in particular the case if $\mathsf{T}$ is compactly generated.
\end{definition}

\begin{theorem}\label{Christensen 1.1}\cite[Theorem 1.1]{Christensen:1998}
    Let $(\mathcal{P},\mathcal{J})$ be a projective class. Define $\mathcal{P}_n$ for all $n \geq 1$ inductively by $\mathcal{P}_1 = \mathcal{P}$ and $\mathcal{P}_{n+1} \colonequals \operatorname{smd}(\mathcal{P}_{n}\star \mathcal{P}_1)$. Then, $(\mathcal{P}_n,\mathcal{J}^n)$ is a projective class for all $n \geq 1$.
\end{theorem}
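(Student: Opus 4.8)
The plan is to induct on $n$. The base case $n = 1$ is precisely the hypothesis that $(\mathcal{P}_1, \mathcal{J}^1) = (\mathcal{P}, \mathcal{J})$ is a projective class. For the inductive step I would isolate a general \emph{composition} statement: if $(\mathcal{P}, \mathcal{J})$ and $(\mathcal{Q}, \mathcal{K})$ are projective classes on $\mathsf{T}$, then $(\operatorname{smd}(\mathcal{P} \star \mathcal{Q}),\, \mathcal{K}\mathcal{J})$ is again a projective class, where $\mathcal{K}\mathcal{J}$ denotes the ideal generated by all composites $k \circ j$ with $k \in \mathcal{K}$ and $j \in \mathcal{J}$. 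Granting this, apply it with $(\mathcal{P}, \mathcal{J}) = (\mathcal{P}_n, \mathcal{J}^n)$ and $(\mathcal{Q}, \mathcal{K}) = (\mathcal{P}_1, \mathcal{J}^1) = (\mathcal{P}, \mathcal{J})$: the objects produced are $\operatorname{smd}(\mathcal{P}_n \star \mathcal{P}_1) = \mathcal{P}_{n+1}$, and the maps produced are the ideal generated by $\{k \circ j : k \in \mathcal{J},\, j \in \mathcal{J}^n\}$, which equals $\mathcal{J}^{n+1}$ since $\mathcal{J}$ is an ideal (an $(n+1)$-fold composite from $\mathcal{J}$ regroups as one from $\mathcal{J}$ post-composed with one from $\mathcal{J}^n$, and conversely). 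This closes the induction.

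For the composition statement itself I would check the three defining conditions of a projective class in turn, the point being that one resolves an object first by $(\mathcal{P}, \mathcal{J})$ and then by $(\mathcal{Q}, \mathcal{K})$ and composes the two resolution maps. \emph{Existence of triangles}: given $X$, pick $P \to X \xrightarrow{j} X' \to \Sigma P$ with $P \in \mathcal{P}$, $j \in \mathcal{J}$, then $Q \to X' \xrightarrow{k} X'' \to \Sigma Q$ with $Q \in \mathcal{Q}$, $k \in \mathcal{K}$; the octahedral axiom applied to $X \xrightarrow{j} X' \xrightarrow{k} X''$ yields a triangle $W \to X \xrightarrow{kj} X'' \to \Sigma W$ together with a triangle $P \to W \to Q \to \Sigma P$, so $W \in \mathcal{P} \star \mathcal{Q}$ and $kj \in \mathcal{K}\mathcal{J}$. \emph{The identity $\mathcal{K}\mathcal{J} = \textbf{null}(\mathcal{P} \star \mathcal{Q})$}: for $\subseteq$, given $T \in \mathcal{P} \star \mathcal{Q}$ in a triangle $P \xrightarrow{a} T \xrightarrow{b} Q \to \Sigma P$ and any $\phi \colon T \to X_0$, the composite $j \phi a$ vanishes because $j \in \textbf{null}(\mathcal{P})$ and $\phi a$ starts at $P \in \mathcal{P}$, so $j\phi$ factors through $b$ as $\psi b$, whence $(kj)\phi = k\psi b = 0$ since $k \in \textbf{null}(\mathcal{Q})$ and $\psi$ starts at $Q$; for $\supseteq$, given $f$ killing every map out of an object of $\mathcal{P} \star \mathcal{Q}$, use the triangle $W \xrightarrow{w} X \xrightarrow{kj} X''$ above — since $fw = 0$, $f$ factors through $kj$, say $f = f' \circ (kj) = (f'k) \circ j \in \mathcal{K}\mathcal{J}$. \emph{Finally $\textbf{proj}(\mathcal{K}\mathcal{J}) = \operatorname{smd}(\mathcal{P} \star \mathcal{Q})$}: $\supseteq$ is the same left-multiplication computation (objects of $\mathcal{P} \star \mathcal{Q}$, hence their summands, are annihilated by $\mathcal{K}\mathcal{J}$), while for $\subseteq$, if $R$ is $\mathcal{K}\mathcal{J}$-projective then in the triangle $W \to R \xrightarrow{kj} R'' \to \Sigma W$ the map $kj$ must be zero (apply projectivity to $\operatorname{id}_R$), so the triangle splits and $R$ is a summand of $W \in \mathcal{P} \star \mathcal{Q}$.

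The only ingredients beyond the definitions are standard triangulated-category facts — a map out of the base of a triangle factoring through the next map once it kills the previous one, and a triangle with a zero map splitting off its terms — together with the observation that $\textbf{proj}(-)$ of a class of maps is closed under summands. The part I expect to require the most care is the octahedral bookkeeping in the existence step: checking that the fiber of the composite $kj$ is an extension in the order $\mathcal{P} \star \mathcal{Q}$ (and not $\mathcal{Q} \star \mathcal{P}$) and that all the triangles are oriented so that $kj$ is genuinely the resolution map for $X$ in the glued class, since getting this backwards would break the match with the definition $\mathcal{P}_{n+1} = \operatorname{smd}(\mathcal{P}_n \star \mathcal{P}_1)$. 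Everything else is formal, and the induction then follows at once.
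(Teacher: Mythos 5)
The paper does not prove this statement -- it is quoted verbatim from \cite[Theorem 1.1]{Christensen:1998} -- so the only meaningful comparison is with Christensen's argument, and your reconstruction is essentially that argument: the composition lemma for two projective classes $(\mathcal{P},\mathcal{J})$ and $(\mathcal{Q},\mathcal{K})$, proved by resolving twice, gluing with the octahedron, and checking the two \textbf{null}/\textbf{proj} identities exactly as you do. Your bookkeeping is correct throughout (the fiber of $kj$ does land in $\mathcal{P}\star\mathcal{Q}$ in the right order, and $\mathcal{J}\cdot\mathcal{J}^n=\mathcal{J}^{n+1}$ because $\mathcal{J}$ is an ideal), so the induction closes as claimed.
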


\subsection{Metrics and completions for triangulated categories}\label{Section Metrics and Completions}
 In this subsection we will go over the definitions and results we need related to metrics and completions of triangulated categories à la Neeman. See \cite{Neeman:2020} for a 
 discussion on how the notion of good metrics relates to the notion of metrics on a category as introduced in \cite[Page 139-140]{Lawvere:1973}. All of the results in this subsection are from \cite{Neeman:2018} and \cite{Neeman:2020}.
 \begin{definition}\label{Definition Good Metric}\cite[Definition 10]{Neeman:2020}
     Let $\mathsf{T}$ be a triangulated category. A \emph{good metric} is a sequence $\{\mathcal{M}_n\}_{n \geq 1}$ of strictly full subcategories containing 0 such that,
     \begin{enumerate}
         \item $\mathcal{M}_n \star \mathcal{M}_n = \mathcal{M}_n$ for all $n \geq 1$.
         \item $\Sigma^{-1} \mathcal{M}_{n+1} \cup \mathcal{M}_{n+1} \cup \Sigma \mathcal{M}_{n+1} \subseteq \mathcal{M}_{n}$ for all $n \geq 1$.
     \end{enumerate}
 \end{definition}
 \begin{remark}\label{Remark on intution of metric}
     In \Cref{Definition Good Metric}, the intuition to keep in mind is that $\mathcal{M}_n$ represents the ball of radius $2^{-n}$ around $0$. To be more precise, given a good metric $\{\mathcal{M}_n\}_{n \geq 1}$, one can define the length of a morphism $f$ to be $2^{-N_f}$ where we define $N_f \colonequals \sup\{n : \operatorname{Cone}(f) \in \mathcal{M}_n\} $. In fact, this does give a metric in the sense of Lawvere, see \cite[Heuristic 9]{Neeman:2020}. 
 \end{remark}

\begin{definition}\label{Definition equivalence of metrics}
    Let $\mathsf{T}$ be a triangulated category. A good metric $\{\mathcal{M}_n\}_{n \geq 1}$ on $\mathsf{T}$ is said to be \emph{finer} than a good metric $\{\mathcal{N}_n\}_{n \geq 1}$ if for every $i \geq 1$, there exists $j \geq 1$ such that $\mathcal{M}_j \subseteq \mathcal{N}_i$. We say the metrics  $\{\mathcal{M}_n\}_{n \geq 1}$ and  $\{\mathcal{N}_n\}_{n \geq 1}$ are \emph{equivalent} if $\{\mathcal{M}_n\}_{n \geq 1}$ is finer than $\{\mathcal{N}_n\}_{n \geq 1}$ and $\{\mathcal{N}_n\}_{n \geq 1}$ is also finer than $\{\mathcal{M}_n\}_{n \geq 1}$
\end{definition}

\begin{definition}\label{Definition Cauchy sequence}
    Let $\mathsf{T}$ be a triangulated category with a good metric $\{\mathcal{M}_n\}_{n \geq 1}$. A \emph{Cauchy sequence} is a sequence $E_1 \to E_2 \to E_3 \to E_4 \to \cdots$ in $\mathsf{T}$ such that for each $i\geq 1$, there exists $n \geq 1$ such that $\operatorname{Cone}(E_j \to E_{j+1}) \in \mathcal{M}_i$ for all $j \geq n$. Note that given a Cauchy sequence of the form above, there exists a subsequence $E'_1 \to E'_2 \to E'_3 \to E'_4 \to \cdots$ such that $\operatorname{Cone}(E'_i \to E'_{i+1}) \in \mathcal{M}_{i+1}$ for each $i \geq 1$.
\end{definition}

\begin{definition}\label{Definition completions}\cite[Definition 1.10]{Neeman:2018}
    Let $\mathsf{T}$ be an essentially small triangulated category with a good metric $\{\mathcal{M}_n\}_{n \geq 1}$. We denote the Yoneda embedding by $\mathcal{Y}:\mathsf{T} \to \operatorname{Mod}(\mathsf{T})$, where $\operatorname{Mod}(\mathsf{T})$ denote the functor category $\Hom{}{\mathsf{T}^{\operatorname{op}}}{\operatorname{Mod}(\mathbb{Z})}$. Then, we can define the following full subcategories of $\operatorname{Mod}(\mathsf{T})$,
    \begin{enumerate}
        \item $\mathfrak{L}(\mathsf{T}) \colonequals \{\colim \mathcal{Y}(E_i) : E_1 \to E_2 \to \cdots \text{ is a Cauchy sequence in } \mathsf{T}\}$.
        \item $\mathfrak{C}(\mathsf{T}) \colonequals \bigcup_{n \geq 1}\mathcal{Y}(\mathcal{M}_n)^{\perp}$.
        \item $\mathfrak{S}(\mathsf{T}) \colonequals \mathfrak{L}(\mathsf{T}) \cap \mathfrak{C}(\mathsf{T})$. This is a triangulated category by \cite[Theorem 2.11]{Neeman:2018}
    \end{enumerate}
    We note here that equivalent metrics (\Cref{Definition equivalence of metrics}) give rise to the same $\mathfrak{L}(\mathsf{T})$, $\mathfrak{C}(\mathsf{T})$, and $\mathfrak{S}(\mathsf{T})$.
\end{definition}
The computation of the $\mathfrak{S}(\mathsf{T})$ is difficult in general. The following result, which was proven in a more general setting in \cite{Neeman:2018}, helps us in the computation in a special situation. 
\begin{theorem}\label{Theorem Tc to T is a good extension}\cite[Theorem 3.15]{Neeman:2018}
    Let $\mathsf{T}$ be triangulated category with coproducts and let $\mathsf{T}^c$ be the full subcategory of compact objects. Assume $\mathsf{T}^c$ has a good metric $\{\mathcal{M}_n\}_{n \geq 1}$. Let $\mathcal{Y} : \mathsf{T} \to \Hom{}{[\mathsf{T}^c]^{\operatorname{op}}}{\operatorname{Mod}(\mathbb{Z})}$ be the restricted Yoneda functor. Then, we can define the following full subcategories of $\mathsf{T}$, 
    \begin{enumerate}
        \item $\mathfrak{L}'(\mathsf{T}^c) \colonequals \{F \in \mathsf{T} : F \cong \hocolim E_n \text{ for a Cauchy sequence } E_1 \to E_2 \to \cdots \text{ in } \mathsf{T}^c \} $.
        \item $\mathcal{Y}^{-1}(\mathfrak{C}(\mathsf{T}^c)) = \bigcup_{n \geq 1} \mathcal{M}_n^{\perp}$, where the orthogonal is taken in $\mathsf{T}$, see \Cref{Definition completions}(2).
        \item $\mathfrak{S}'(\mathsf{T}^c) \colonequals \mathfrak{L}'(\mathsf{T}^c) \cap \mathcal{Y}^{-1}(\mathfrak{C}(\mathsf{T}^c)) $
    \end{enumerate}
    Then, the obvious functor from $\mathfrak{L}'(\mathsf{T}^c) \to \mathfrak{L}(\mathsf{T}^c)$ is an essential surjection which restricts to an equivalence $\mathfrak{S}'(\mathsf{T}^c) \to \mathfrak{S}(\mathsf{T}^c)$, see \Cref{Definition completions}. 
\end{theorem}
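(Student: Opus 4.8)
The plan is to build the comparison map from the universal properties of $\hocolim$ and $\colim$, to observe that its deviation from being bijective is measured by a single $\varprojlim^{1}$-term, and to annihilate that term using the orthogonality built into $\mathfrak{C}(\mathsf{T}^c)$. The first point to establish is that $\mathcal{Y}$ intertwines the two colimit constructions: for a Cauchy sequence $E_1 \to E_2 \to \cdots$ in $\mathsf{T}^c$ with $F = \hocolim E_n$, compactness of each $K \in \mathsf{T}^c$ gives $\HomT{K}{F} \cong \colim \HomT{K}{E_n}$, and since colimits in the functor category $\operatorname{Mod}(\mathsf{T}^c)$ are computed pointwise this is a natural isomorphism $\mathcal{Y}(F) \cong \colim \mathcal{Y}(E_n)$ carrying each $\mathcal{Y}(E_n \to F)$ to the corresponding structure morphism of the colimit. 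In particular $\mathcal{Y}$ maps $\mathfrak{L}'(\mathsf{T}^c)$ into $\mathfrak{L}(\mathsf{T}^c)$, and this functor is essentially surjective because every object of $\mathfrak{L}(\mathsf{T}^c)$ is by definition such a $\colim \mathcal{Y}(E_n)$. Running the same computation with $K$ replaced by an object of $\mathcal{M}_n$ (still compact, so $\mathcal{Y}$ of it is representable) identifies $\mathcal{Y}^{-1}(\mathfrak{C}(\mathsf{T}^c))$ with $\bigcup_{n\geq 1}\mathcal{M}_n^{\perp}$, the orthogonal taken in $\mathsf{T}$; hence $\mathcal{Y}$ sends $\mathfrak{S}'(\mathsf{T}^c) = \mathfrak{L}'(\mathsf{T}^c)\cap\mathcal{Y}^{-1}(\mathfrak{C}(\mathsf{T}^c))$ into $\mathfrak{S}(\mathsf{T}^c) = \mathfrak{L}(\mathsf{T}^c)\cap\mathfrak{C}(\mathsf{T}^c)$, and a preimage in $\mathfrak{L}'(\mathsf{T}^c)$ of an object of $\mathfrak{S}(\mathsf{T}^c)\subseteq\mathfrak{C}(\mathsf{T}^c)$ automatically lands in $\mathcal{Y}^{-1}(\mathfrak{C}(\mathsf{T}^c))$, so this restriction is again essentially surjective.

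Next I would reduce full faithfulness on $\mathfrak{S}'(\mathsf{T}^c)$ to the vanishing of a $\varprojlim^{1}$. Fix $F, F'\in\mathfrak{S}'(\mathsf{T}^c)$ and, using the subsequence remark in \Cref{Definition Cauchy sequence}, write $F\cong\hocolim E_n$ for a Cauchy sequence in $\mathsf{T}^c$ with $\operatorname{Cone}(E_n\to E_{n+1})\in\mathcal{M}_{n+1}$ for every $n$. On the one hand, the first paragraph together with the universal property of colimits gives $\operatorname{Hom}_{\operatorname{Mod}(\mathsf{T}^c)}(\mathcal{Y}F,\mathcal{Y}F') \cong \varprojlim_{n}\operatorname{Hom}_{\operatorname{Mod}(\mathsf{T}^c)}(\mathcal{Y}E_n,\mathcal{Y}F') \cong \varprojlim_{n}\HomT{E_n}{F'}$, the last isomorphism by Yoneda. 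On the other hand, applying the cohomological functor $\HomT{-}{F'}$ to the defining triangle $\coprod_{n}E_n \xrightarrow{1-\mathrm{shift}} \coprod_{n}E_n \to F \to \Sigma\coprod_{n}E_n$ of the homotopy colimit yields the Milnor exact sequence
\[
0 \longrightarrow {\varprojlim_{n}}^{1}\,\HomT{\Sigma E_n}{F'} \longrightarrow \HomT{F}{F'} \longrightarrow \varprojlim_{n}\HomT{E_n}{F'} \longrightarrow 0,
\]
whose right-hand arrow is, after unwinding the identifications above, exactly the map $\HomT{F}{F'}\to\operatorname{Hom}_{\operatorname{Mod}(\mathsf{T}^c)}(\mathcal{Y}F,\mathcal{Y}F')$ induced by $\mathcal{Y}$. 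Thus $\mathcal{Y}$ is fully faithful at $(F,F')$ provided ${\varprojlim_{n}}^{1}\,\HomT{\Sigma E_n}{F'}=0$.

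Finally I would kill that $\varprojlim^{1}$ using the hypothesis on $F'$. Since $F'\in\mathcal{Y}^{-1}(\mathfrak{C}(\mathsf{T}^c))=\bigcup_{n}\mathcal{M}_n^{\perp}$, choose $N$ with $\HomT{\mathcal{M}_N}{F'}=0$; then $\HomT{\mathcal{M}_n}{F'}=0$ for all $n\geq N$ because $\mathcal{M}_{n+1}\subseteq\mathcal{M}_n$. Put $C_{n+1}\colonequals\operatorname{Cone}(E_n\to E_{n+1})\in\mathcal{M}_{n+1}$; the metric axiom $\Sigma\mathcal{M}_{n+1}\subseteq\mathcal{M}_n$ also gives $\Sigma C_{n+1}\in\mathcal{M}_n$. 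Applying $\HomT{-}{F'}$ to the rotated distinguished triangle $C_{n+1}\to\Sigma E_n\to\Sigma E_{n+1}\to\Sigma C_{n+1}$ produces an exact sequence $\HomT{\Sigma C_{n+1}}{F'}\to\HomT{\Sigma E_{n+1}}{F'}\to\HomT{\Sigma E_n}{F'}\to\HomT{C_{n+1}}{F'}$, and for $n\geq N$ the two outer groups vanish, so every transition map of the tower $\{\HomT{\Sigma E_n}{F'}\}_{n}$ of index $\geq N$ is an isomorphism. A tower that is eventually an isomorphism tower has vanishing $\varprojlim^{1}$, whence ${\varprojlim_{n}}^{1}\,\HomT{\Sigma E_n}{F'}=0$; with the second paragraph this shows $\mathcal{Y}$ is fully faithful on $\mathfrak{S}'(\mathsf{T}^c)$, and with the essential surjectivity from the first paragraph it yields the asserted equivalence $\mathfrak{S}'(\mathsf{T}^c)\xrightarrow{\sim}\mathfrak{S}(\mathsf{T}^c)$.

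The part I expect to need the most care is the bookkeeping concealed in the phrase \say{unwinding the identifications} in the second paragraph: one must check that the isomorphism $\mathcal{Y}(F)\cong\colim\mathcal{Y}(E_n)$ matches each $\mathcal{Y}(\iota_n)$ --- for $\iota_n\colon E_n\to F$ the canonical morphisms into the homotopy colimit --- with the structure morphisms of the colimit, so that the map induced by $\mathcal{Y}$ sends $\phi$ to the compatible family $(\phi\circ\iota_n)_n$ and therefore agrees with the right-hand edge of the Milnor sequence. Once that is pinned down, the remaining ingredients are purely formal --- compactness, the Yoneda lemma, and the long exact sequence obtained by applying $\HomT{-}{F'}$ to a distinguished triangle --- or are immediate from the axioms of \Cref{Definition Good Metric}.
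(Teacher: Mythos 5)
The paper does not prove this statement; it is quoted verbatim from \cite[Theorem 3.15]{Neeman:2018}, so there is no in-paper proof to compare against. Your argument is correct and is essentially the standard proof of that result: compactness plus pointwise colimits give $\mathcal{Y}(\hocolim E_n)\cong\colim\mathcal{Y}(E_n)$ and hence essential surjectivity of $\mathfrak{L}'\to\mathfrak{L}$ (and, via Yoneda on the compact objects of $\mathcal{M}_n$, the identification $\mathcal{Y}^{-1}(\mathfrak{C})=\bigcup_n\mathcal{M}_n^{\perp}$), while the Milnor sequence reduces full faithfulness on $\mathfrak{S}'$ to the vanishing of $\varprojlim^1\HomT{\Sigma E_n}{F'}$, which follows from Mittag--Leffler once the orthogonality $F'\in\mathcal{M}_N^{\perp}$ forces the transition maps of the tower to be eventually isomorphisms. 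The only difference from the cited source is one of generality: Neeman proves the statement for an arbitrary \say{good extension} $\mathsf{S}\subseteq\mathsf{T}$, whereas you argue directly for the inclusion $\mathsf{T}^c\subseteq\mathsf{T}$, which is all this paper uses; the bookkeeping point you flag (that the isomorphism $\mathcal{Y}(F)\cong\colim\mathcal{Y}(E_n)$ identifies the right-hand edge of the Milnor sequence with the map induced by $\mathcal{Y}$) is genuine but holds because that isomorphism is by construction induced by the canonical maps $\iota_n\colon E_n\to F$.
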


\subsection{Approximable triangulated categories}
In this subsection, we recall the definition of an approximable triangulated category, and review some related notions and results. All of the definitions and results are from \cite{Neeman:2021b}. We begin by recalling an equivalence relation on the collection of t-structures on a triangulated category.
\begin{definition}\label{Definition equivalence of t-structures}\cite[Definition 0.10]{Neeman:2021b}
    Let $\mathsf{T}$ be a triangulated category. Then, two t-structures $(\mathsf{U},\mathsf{V})$ and $(\mathsf{U}',\mathsf{V}')$ are said to be equivalent if there exists an integer $n \geq 0$ such that,
    \[\Sigma^n \mathsf{U} \subseteq \mathsf{U}' \subseteq \Sigma^{-n}\mathsf{U}\]
    This is an equivalence relation on the collection of t-structures on $\mathsf{T}$.
\end{definition}

\begin{definition}\label{Definition preferred equivalence class of t-structures}\cite[Definition 0.14]{Neeman:2021b}
    Let $\mathsf{T}$ be a triangulated category with a single compact generator $G$. Then, we define,
    \begin{enumerate}
        \item The \emph{t-structure generated by $G$} to be $(\mathsf{T}_{G}^{\leq 0},\mathsf{T}_{G}^{\geq 0})$, where $\mathsf{T}_{G}^{\leq 0} = \operatorname{Coprod}\big(\{\Sigma^i G\}_{i \geq 0}\big)$ and $\mathsf{T}_G^{\geq 0} = \big(\Sigma \mathsf{T}_{G}^{\leq 0}\big)^{\perp}$, see \Cref{Notation from Neeman's paper}. This is a t-structure by \Cref{Theorem Compactly generated t-structure}.
        \item The \emph{preferred equivalence class of t-structures} is the equivalence class of t-structures which contains the t-structure $(\mathsf{T}_{G}^{\leq 0},\mathsf{T}_{G}^{\geq 0})$, under the equivalence relation of \Cref{Definition equivalence of t-structures}.
    \end{enumerate}
   
\end{definition}
\begin{definition}\label{Definition T^-c and T^b_c}\cite[Definition 0.16]{Neeman:2021b}
    Let $\mathsf{T}$ be a triangulated category with coproducts and let $(\mathsf{U},\mathsf{V})$ be a t-structure on $\mathsf{T}$. Then, we define,
    \begin{enumerate}
        \item $\mathsf{T}^- = \bigcup_{n \in \mathbb{Z}} \mathsf{U}$, $\mathsf{T}^+ = \bigcup_{n \in \mathbb{Z}} \mathsf{V}$ and $\mathsf{T}^b = \mathsf{T}^- \cap \mathsf{T}^+$
        \item $\mathsf{T}^-_c = \bigcap_{n \in \mathbb{Z}}\big( \mathsf{T}^c \star \Sigma^n \mathsf{U} \big)$
        \item $\mathsf{T}^b_c = \mathsf{T}^-_c \cap \mathsf{T}^b$
    \end{enumerate}
    Note that all of these categories remain the same if we replace the t-structure by an equivalent one. If $\mathsf{T}$ has a single compact generator, we will always compute these categories with respect to a t-structure in the preferred equivalence class unless explicitly stated otherwise.
\end{definition}
We now recall the definition of approximability.
\begin{definition}\label{Definition approximability}\cite[Definition 0.21]{Neeman:2021b}
    Let $\mathsf{T}$ be a triangulated category with a single compact generator $G$. Then, we say that $\mathsf{T}$ is \emph{weakly approximable} if there exists a positive integer $A$ and a t-structure $(\mathsf{U},\mathsf{V})$ such that,
    \begin{enumerate}
        \item $\Sigma^A G \in \mathsf{U}$.
        \item $\HomT{\Sigma^{-i}G}{\mathsf{U}} = 0$ for all $i \geq A$.
        \item For all $F \in \mathsf{U}$, there exists a triangle $E \to F \to D \to \Sigma E$ with $E \in \overline{\langle G \rangle}^{[-A,A]}$ and $D \in \Sigma \mathsf{U}$, see \Cref{Notation from Neeman's paper}.
    \end{enumerate}
    We say that $\mathsf{T}$ is \emph{approximable}, if further,
    \begin{enumerate}
        \item[(3)$'$] For all $F \in \mathsf{U}$, there exists a triangle $E \to F \to D \to \Sigma E$ with $E \in \overline{\langle G \rangle}^{[-A,A]}_{A}$ and $D \in \Sigma \mathsf{U}$, see \Cref{Notation from Neeman's paper}.
    \end{enumerate}
    It is easy to verify that if $\mathsf{T}$ is weakly approximable (resp. approximable) with respect to one compact generator, then it is weakly approximable (resp. approximable) with respect to any other compact generator up to increasing the integer $A$ if necessary, see \cite[Proposition 2.6]{Neeman:2021b}.
\end{definition}

\subsection{On categories coming from algebraic geometry}\label{Section on notation for algebraic geometry}
We start by fixing the notation for some abelian categories arising from  algebra and algebraic geometry.
\begin{notation}
    Let $R$ be a ring. Then,
    \begin{enumerate}
        \item $\operatorname{Mod}(R)$ denotes the category of (right) $R$-modules.
        \item If $R$ is a coherent ring, $\mathbf{mod}(R)$ denotes the category of finitely presented modules. In particular, if the ring is noetherian, these coincide with the category of finitely generated modules.
    \end{enumerate}
\end{notation}
\begin{notation}
    Let $X$ be a quasicompact and quasiseparated scheme, and $Z \subseteq X$ a closed subset with a quasicompact complement. Then,
    \begin{enumerate}
        \item $\operatorname{Qcoh}(X)$ is the abelian category of all quasicoherent $\mathcal{O}_X$-modules.
        \item $\operatorname{Qcoh}_Z(X)$ is the abelian category of all quasicoherent $\mathcal{O}_X$-modules with support contained in the closed subset $Z$.
    \end{enumerate}
    If $X$ is further a noetherian scheme, we also define the following categories,
    \begin{enumerate}
    \setcounter{enumi}{2}
        \item $\operatorname{coh}(X)$ is the abelian category of all coherent $\mathcal{O}_X$-modules.
        \item $\operatorname{coh}_{Z}(X)$ is the abelian category of all coherent $\mathcal{O}_X$-modules with support contained in the closed subset $Z$.
    \end{enumerate}
\end{notation}

The primary examples and applications in this text will be to triangulated categories arising from algebraic geometry. Below we list some of the important ones, both as a reminder, and also to fix the notation. 
\begin{notation}\label{Notation triangulated categories for schemes}
    Let $X$ be a quasicompact and quasiseparated scheme, and $Z \subseteq X$ a closed subset with a quasicompact complement. Then,
    \begin{enumerate}
        \item $\mathbf{D}_{\operatorname{Qcoh}}(X)$ denotes the derived category of sheaves of $\mathcal{O}_X$-modules with quasicoherent cohomology. It is a compactly generated triangulated category, with the subcategory of compact objects being exactly $\mathbf{D}^{\operatorname{perf}}(X)$ defined below, see \cite[Theorem 3.2(i)]{Neeman:2024}.
        \item $\mathbf{D}_{\operatorname{Qcoh},Z}(X) \subseteq \mathbf{D}_{\operatorname{Qcoh}}(X)$ denotes the derived category of sheaves of $\mathcal{O}_X$-modules with quasicoherent cohomology which are supported on $Z$. That is, the restriction to $\mathbf{D}_{\operatorname{Qcoh}}(U)$ is isomorphic to 0, where $U$ is the complement of $Z$ in $X$. It is also a compactly generated triangulated category, with the subcategory of compact objects being exactly $\mathbf{D}^{\operatorname{perf}}_{Z}(X)$ defined below, see \cite[Theorem 3.2(i)]{Neeman:2024}.
        \item $\mathbf{D}^{\operatorname{perf}}(X) \subseteq \mathbf{D}_{\operatorname{Qcoh}}(X)$ denotes the strictly full subcategory of perfect complexes. If $X = \operatorname{Spec}(R)$ is an affine scheme, then there is an equivalence $\mathbf{D}^{\operatorname{perf}}(X) \cong \mathbf{K}^{b}(\operatorname{proj}\text{-}R)$, that is, the bounded homotopy category of finitely generated projective modules. Throughout this document, we will confuse $\mathbf{K}^{b}(\operatorname{proj}\text{-}R)$ with its essential image in $\mathbf{D}(R)$.
        \item $\mathbf{D}^{\operatorname{perf}}_{Z}(X)\subseteq \mathbf{D}_{\operatorname{Qcoh},Z}(X)$ denotes the strictly full subcategory of perfect complexes with  cohomologies supported on $Z$.
        \item $\mathbf{D}^{p}_{\operatorname{Qcoh},Z}(X)\subseteq \mathbf{D}_{\operatorname{Qcoh}}(X)$ denotes the strictly full subcategory of pseudocoherent complexes supported on $Z$. A complex is pseudocoherent if its restriction to the derived category of any affine open subscheme is isomorphic to a bounded above complex of finitely generated projective modules. 
        \item $\mathbf{D}^{p,b}_{\operatorname{Qcoh},Z}(X) \subseteq \mathbf{D}^{p}_{\operatorname{Qcoh},Z}(X)$ denotes the strictly full subcategory of cohomologically bounded pesudeocoherent complexes supported on $Z$. 
        \item $\mathbf{D}(\operatorname{Qcoh}(X))$ denotes the derived category of quasicoherent sheaves on $X$. If $X$ is separated, then $\mathbf{D}(\operatorname{Qcoh}(X))$ is equivalent to $\mathbf{D}_{\operatorname{Qcoh}}(X)$.
    \end{enumerate}
    If $X$ is further a noetherian scheme, then,
    \begin{enumerate}
     \setcounter{enumi}{7}
        \item $\mathbf{D}^{b}_{\operatorname{coh}}(X) \subseteq \mathbf{D}_{\operatorname{Qcoh}}(X)$ denotes the strictly full subcategory of cohomologically bounded complexes of sheaves $\mathcal{O}_X$-modules with coherent cohomology. Note that if $X$ is separated, $\mathbf{D}^{b}_{\operatorname{coh}}(X)$ is equivalent to $ \mathbf{D^b}(\operatorname{coh}(X)) \subseteq \mathbf{D}(\operatorname{Qcoh}(X))$, which is the full subcategory of cohomologically bounded complexes of coherent sheaves. 
        \item Similarly, $\mathbf{D}^{b}_{\operatorname{coh},Z}(X) \subseteq \mathbf{D}_{\operatorname{Qcoh},Z}(X)$ denotes the strictly full subcategory of cohomologically bounded complexes of sheaves $\mathcal{O}_X$-modules with coherent cohomology. Note that it agrees with $\mathbf{D}^{p,b}_{\operatorname{Qcoh},Z}(X)$.
        \item $\mathbf{D}^{+}_{\operatorname{coh}}(X)$ denotes the derived category of sheaves of $\mathcal{O}_X$-modules with bounded below and coherent cohomology.
        \item $\mathbf{K}(\operatorname{Inj}\text{-}X)$ denotes the homotopy category of injective quasicoherent sheaves. It is a compactly generated triangulated category. The subcategory of compact objects is equivalent to $\mathbf{D^{b}}(\operatorname{coh}(X))$ via the functor sending a complex in $\mathbf{D^{b}}(\operatorname{coh}(X))$ to its injective resolution, see \cite[Proposition 2.3]{Krause:2005}. 
        \item If $X$ is separated, $\mathbf{K}_{m}(\operatorname{Proj}\text{-}X)$ denotes the mock homotopy category of projectives, see \cite{Murfet:2008} for details.
    \end{enumerate}

\end{notation}

\begin{theorem}\label{Theorem Tc- Tbc for schemes Neeman}\cite[Example 3.4]{Neeman:2021b}
    Let $X$ be a quasicompact and quasiseparated scheme, and $Z\subseteq X$ a closed subset with quasicompact complement. Then, $\mathbf{D}_{\operatorname{Qcoh},Z}(X)$ is weakly approximable, see \Cref{Definition approximability}. Further,
    \[\big(\mathbf{D}_{\operatorname{Qcoh},Z}(X)\big)^-_c = \mathbf{D}^{p}_{\operatorname{Qcoh},Z}(X) \text{ and } \big(\mathbf{D}_{\operatorname{Qcoh},Z}(X)\big)^b_c = \mathbf{D}^{p,b}_{\operatorname{Qcoh},Z}(X)\]
    see \Cref{Definition T^-c and T^b_c}.
    Finally, from \cite[Proposition 0.15(i)]{Neeman:2018}, we get that,
    \[\mathfrak{S}(\mathbf{D}^{\operatorname{perf}}_{Z}(X)) = \mathbf{D}^{p,b}_{\operatorname{Qcoh},Z}(X)\]
    with respect to the metric on $\mathbf{D}^{\operatorname{perf}}_{Z}(X)$ given by $\mathbf{D}^{\operatorname{perf}}_{Z}(X) \cap \mathbf{D}_{\operatorname{Qcoh}}(X)^{\leq -n}\}_{n \geq 1}$. See \cite[Remark 4.7]{Neeman:2018} for other equivalent metrics.
    We recall here that $\mathbf{D}^{p,b}_{\operatorname{Qcoh},Z}(X) = \mathbf{D}^{b}_{\operatorname{coh}}(X)$ for any noetherian scheme $X$.
\end{theorem}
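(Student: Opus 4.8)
The plan is to reprove the first three assertions (which are, in essence, Neeman's Example~3.4 of \cite{Neeman:2021b} together with \cite[Proposition~0.15]{Neeman:2018}, and could alternatively just be cited) and then deduce the noetherian reformulation. Write $\mathsf{T} = \mathbf{D}_{\operatorname{Qcoh},Z}(X)$. For the t-structure I would take the restriction to $\mathsf{T}$ of the standard t-structure on $\mathbf{D}_{\operatorname{Qcoh}}(X)$; this is well defined because the cohomology sheaves of a complex supported on $Z$ are again supported on $Z$, and one checks it lies in the preferred equivalence class of \Cref{Definition preferred equivalence class of t-structures}. For the compact generator I would invoke the Thomason--Neeman theory: $\mathbf{D}_{\operatorname{Qcoh}}(X)$ has a single perfect generator, and a Koszul-type construction relative to a finite affine cover produces a perfect complex $G$ supported on $Z$ that generates $\mathsf{T}$; being perfect, $G$ has bounded, affine-locally finite Tor-amplitude, which furnishes an integer $A$ with $\Sigma^A G \in \mathsf{T}^{\leq 0}$ and $\HomT{\Sigma^{-i}G}{\mathsf{T}^{\leq 0}} = 0$ for all $i \geq A$, that is, conditions (1) and (2) of \Cref{Definition approximability}. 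The approximation condition (3) — producing, for $F \in \mathsf{T}^{\leq 0}$, a triangle $E \to F \to D$ with $E \in \overline{\langle G\rangle}^{[-A,A]}$ and $D \in \mathsf{T}^{\leq -1}$ — I would establish by devissage over a finite affine cover: the affine case $X = \operatorname{Spec}(R)$ is exactly the truncation of a projective resolution carried out in \Cref{Example to explain approximability}, and the inductive step glues two quasicompact opens via the Mayer--Vietoris triangle together with a standard stability argument for the approximation property under such localizations.

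For the closure of the compacts, recall from \Cref{Definition T^-c and T^b_c} that $\mathsf{T}^-_c = \bigcap_{n \in \mathbb{Z}}\big(\mathsf{T}^c \star \Sigma^n \mathsf{T}^{\leq 0}\big)$, with $\mathsf{T}^c = \mathbf{D}^{\operatorname{perf}}_Z(X)$. Since pseudocoherence is Zariski-local and membership in $\mathsf{T}^-_c$ restricts well to affine opens, the identity $\mathsf{T}^-_c = \mathbf{D}^p_{\operatorname{Qcoh},Z}(X)$ reduces to the affine statement that, for $\mathsf{T}=\mathbf{D}(R)$, the category $\mathsf{T}^-_c$ consists of the pseudocoherent complexes — which is \cite[Example~3.1]{Neeman:2021b}. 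The more straightforward inclusion there is that every $F \in \mathsf{T}^-_c$ is pseudocoherent: taking the truncation error negative enough, $F$ is isomorphic to a brutal truncation of a perfect complex in all sufficiently high degrees, hence $m$-pseudocoherent for every $m$, hence pseudocoherent. The reverse inclusion is the substantive one: from a pseudocoherent complex one must manufacture, for every $n$, an approximation by a genuinely \emph{compact} object with error in $\mathbf{D}(R)^{\leq -n}$, upgrading the brutal truncation of a bounded-above complex of finitely generated projectives; in the global setting one must moreover patch such local perfect approximations together using the compact generation of $\mathbf{D}^{\operatorname{perf}}_Z(X)$. I would follow Neeman's argument in \cite{Neeman:2021b} here. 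Granting it, since $\mathsf{T}^b_c = \mathsf{T}^-_c \cap \mathsf{T}^b$ and $\mathsf{T}^b$ is the subcategory of cohomologically bounded complexes, we obtain $\mathsf{T}^b_c = \mathbf{D}^{p,b}_{\operatorname{Qcoh},Z}(X)$ directly from the definition of the latter.

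The computation of $\mathfrak{S}$ is then formal. The metric $\{\mathbf{D}^{\operatorname{perf}}_Z(X) \cap \mathbf{D}_{\operatorname{Qcoh}}(X)^{\leq -n}\}_{n \geq 1}$ on $\mathsf{T}^c$ is precisely the restriction to $\mathsf{T}^c$ of the t-structure metric on $\mathsf{T}$, so \Cref{Theorem Tc to T is a good extension} identifies $\mathfrak{S}(\mathsf{T}^c)$ with the subcategory $\mathfrak{L}'(\mathsf{T}^c) \cap \mathcal{Y}^{-1}(\mathfrak{C}(\mathsf{T}^c))$ of $\mathsf{T}$; by \cite[Proposition~0.15(i)]{Neeman:2018}, whose hypotheses are exactly the weak approximability and compatibilities already verified, this subcategory equals $\mathsf{T}^b_c = \mathbf{D}^{p,b}_{\operatorname{Qcoh},Z}(X)$. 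Finally, $\mathbf{D}^{p,b}_{\operatorname{Qcoh},Z}(X) = \mathbf{D}^b_{\operatorname{coh},Z}(X)$ over a noetherian $X$ because, for a cohomologically bounded complex on a noetherian scheme, pseudocoherence is equivalent to having coherent cohomology sheaves: affine-locally this is the fact that a bounded-above complex with finitely generated cohomology over a noetherian ring admits a quasi-isomorphism from a bounded-above complex of finitely generated free modules, constructed by killing cohomology one degree at a time.

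The genuine difficulty is concentrated in two places: establishing the approximation condition (3) of weak approximability, and proving the inclusion $\mathbf{D}^p_{\operatorname{Qcoh},Z}(X) \subseteq \mathsf{T}^-_c$ — equivalently, globalizing the brutal truncation argument to an approximation of pseudocoherent complexes by \emph{compact} objects. Everything else is bookkeeping with t-structures and the $\star$-operation, or a direct appeal to the completion machinery of \cite{Neeman:2018}.
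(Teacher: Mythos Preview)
The paper does not prove this theorem at all: it is stated as background, with the weak approximability and the computations of $\mathsf{T}^-_c$ and $\mathsf{T}^b_c$ attributed to \cite[Example~3.4]{Neeman:2021b}, and the identification of $\mathfrak{S}(\mathbf{D}^{\operatorname{perf}}_Z(X))$ attributed to \cite[Proposition~0.15(i)]{Neeman:2018}. There is no proof environment following the statement; the next item in the paper is the start of Section~3.

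Your proposal is therefore not wrong, but it is doing work the paper explicitly delegates to the cited references. You yourself note in your first sentence that the assertions ``could alternatively just be cited''; that is exactly what the paper does. If you want to include a sketch for expository completeness, your outline is reasonable and hits the right points (Thomason--Neeman compact generation, bounded Tor-amplitude for conditions (1)--(2), affine devissage for condition (3), local-to-global for the pseudocoherence identification), and you correctly flag the two substantive steps. But be aware that in the paper's economy this is a black-box citation, so any comparison of approaches is vacuous.
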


\section{Generating sequence and orthogonal metrics}\label{Section Generating sequence and orthogonal metrics}

Recall that a good metric is $\mathbb{N}$-graded, see \Cref{Definition Good Metric}. In this work, we will require metrics to be $\mathbb{Z}$-graded at times, instead of $\mathbb{N}$-graded. We give the definition below.
\begin{definition}\label{Good Metric}
    Let $\mathsf{S}$ be a triangulated category. An \emph{extended good metric} on $\mathsf{S}$ is a sequence of strictly full subcategories $\mathcal{M} = \{\mathcal{M}_n\}_{n \in \mathbb{Z}}$ each containing 0, such that, 
    \begin{enumerate}
        \item $\mathcal{M}_n \star \mathcal{M}_n \subseteq \mathcal{M}_n $ for all $n \in \mathbb{Z}$.
        \item $ \Sigma \mathcal{M}_{n + 1} \cup \mathcal{M}_{n + 1} \cup \Sigma^{-1}\mathcal{M}_{n + 1} \subseteq \mathcal{M}_n$ for all $n \in \mathbb{Z}$.
    \end{enumerate}
    We say an extended good metric is an \emph{orthogonal metric} if $^{\perp}(\mathcal{M}_n^{\perp}) = \mathcal{M}_n$ for all $n \in \mathbb{Z}$. Note that if $\mathsf{S}$ has coproducts and $\mathcal{M}$ is an orthogonal metric, then $\mathcal{M}_n$ is closed under coproducts for all $n \in \mathbb{Z}$.
\end{definition}
\begin{remark}
    Note that every good metric $\{\mathcal{M}_n\}_{n \geq 1 }$ can be made into an extended good metric by setting $\mathcal{M}_{-n}= \mathsf{S}$ for all $n \geq 0$. Conversely, if $\{\mathcal{M}_n\}_{n \in \mathbb{Z} }$ is an extended good metric, then $\{\mathcal{M}_n\}_{n \geq 1 }$ is a good metric. So, from now on, when we write metric, we would mean an extended good metric, or a good metric, depending on the context. Observe that some notions only depend on the $\mathcal{M}_n$ for $n > > 0$, for example the notion of a Cauchy sequence, see \Cref{Definition Cauchy sequence}. 
\end{remark}

We extend the definition of equivalence of metrics to extended good metrics in the obvious manner, see \cite[Definition 2]{Neeman:2020}. Further, we introduce the notion of $\mathbb{N}$-equivalence of extended good metrics.

\begin{definition}\label{Definition equivalence relation on extended good metrics}
    Let $\mathsf{S}$ be a triangulated category. Then, 
    \begin{enumerate}
        \item We say an extended good metric $\mathcal{M}$ is \emph{finer} than an extended good metric $\mathcal{N}$ if for all $i \in \mathbb{Z}$, there exists $j \in \mathbb{Z}$ such that $\mathcal{M}_j \subseteq \mathcal{N}_i$. We denote this by $\mathcal{M} \preceq \mathcal{N}$.
        \item We say an extended good metric $\mathcal{M}$ is $\mathbb{N}$-\emph{finer} than an extended good metric $\mathcal{N}$ if there exists $n \geq 0$ such that $\mathcal{M}_{i + n} \subseteq \mathcal{N}_{i}$ for all $i \in \mathbb{Z} $. We denote this by $\mathcal{M} \preceq_{\mathbb{N}} \mathcal{N}$.
    \end{enumerate}
    see \Cref{Good Metric} for the definition of an extended good metric.
    We say that $\mathcal{M}$ and $\mathcal{N}$ are equivalent if $\mathcal{M} \preceq \mathcal{N} \preceq \mathcal{M}$. Similarly, we say that they are $\mathbb{N}$-equivalent if $\mathcal{M} \preceq_{\mathbb{N}} \mathcal{N} \preceq_{\mathbb{N}} \mathcal{M}$.
\end{definition}
The two notions coincide in some cases, for example when the metrics arise from t-structures or co-t-structures. The following remark states this in a slightly more general setting. 
\begin{remark}
    Let $\mathsf{T}$ be a triangulated category, with extended good metrics $\mathcal{M}$ and $\mathcal{N}$ (\Cref{Good Metric}) such that one of the following two conditions hold,
    \begin{enumerate}
        \item There exist two strictly full subcategories $\mathsf{U}_1$ and $\mathsf{U}_2$ such that $\mathcal{M}_n = \Sigma^n\mathsf{U}_1$, and $\mathcal{N}_n = \Sigma^n \mathsf{U}_2$ for all $n \in \mathbb{Z}$.
        \item There exist two strictly full subcategories $\mathsf{U}_1$ and $\mathsf{U}_2$ such that $\mathcal{M}_n = \Sigma^{-n}\mathsf{U}_1$, and $\mathcal{N}_n = \Sigma^{-n} \mathsf{U}_2$ for all $n \in \mathbb{Z}$.
    \end{enumerate}
     Then, $\mathcal{M}\preceq \mathcal{N}$ if and only if $\mathcal{M}\preceq_{\mathbb{N}} \mathcal{N}$, see \Cref{Definition equivalence relation on extended good metrics}.
\end{remark}

We now introduce the notion of a pre-generating sequence.

\begin{definition}\label{Definition pre-generating sequence}
    Let $\mathsf{T}$ be a compactly generated triangulated category. A \emph{pre-generating sequence} is a sequence of full subcategories $\{\mathcal{G}^i\}_{i \in \mathbb{Z}}$ of $\mathsf{T}^c$ such that the smallest triangulated subcategory of $\mathsf{T}$ containing the union of $\mathcal{G}^i$ which is closed under finite direct sums, extensions, and summands is exactly $\mathsf{T}^c$. In other words, $\operatorname{smd}(\operatorname{coprod}(\cup_{i \in \mathbb{Z}} \mathcal{G}^i )) = \mathsf{T}^c$, see \Cref{Notation from Neeman's paper}. Note that this implies that $\big(\bigcup_{i \in \mathbb{Z}}\mathcal{G}^i \big)^{\perp} = 0$ as $\mathsf{T}$ is compactly generated.
\end{definition}

Given a pre-generating sequence, we define some full subcategories of $\mathsf{T}$ in a similar spirit to \Cref{Notation Subcategories of a triangulated category}.

\begin{notation}\label{Notation Subcategories for (pre)-generating sequences}
    Let $\mathsf{T}$ be a compactly generated triangulated category with a pre-generating sequence $\mathcal{G} \colonequals \{\mathcal{G}^i\}_{i \in \mathbb{Z}}$, see \Cref{Definition pre-generating sequence}. We define the following full subcategories of $\mathsf{T}$ for any integer $i \leq j$,
    \begin{itemize}
        \item $\mathcal{G}{[i,j]} \colonequals \bigcup_{i \leq n \leq j}\mathcal{G}^n$
        \item $\mathcal{G}{(-\infty,j]} \colonequals \bigcup_{ n \leq j}\mathcal{G}^n$
        \item $\mathcal{G}{[i, \infty)} \colonequals \bigcup_{ i \leq n}\mathcal{G}^n$
        \item $\mathcal{G}{(-\infty,\infty)} \colonequals \bigcup_{ n \in \mathbb{Z } }\mathcal{G}^n$
    \end{itemize}
    And the following full subcategories of $\mathsf{T}$ for any integer $i \leq j$ and $n \geq 1$,
    \begin{itemize}
        \item $\mathcal{G}^{[i,j]}_n \colonequals \operatorname{smd}\left(\operatorname{coprod}_n\left( \mathcal{G}{[i,j]} \right) \right) $ 
        \item $\mathcal{G}^{(-\infty,j]}_n \colonequals \operatorname{smd}\left(\operatorname{coprod}_n\left( \mathcal{G}{(-\infty,j]}  \right) \right)$
        \item $\mathcal{G}^{[i, \infty)}_n \colonequals \operatorname{smd}\left(\operatorname{coprod}_n\left(   \mathcal{G}{[i, \infty)}  \right) \right)$
        \item $\mathcal{G}^{[i,j]}_{\infty } = \mathcal{G}^{[i,j]} \colonequals \operatorname{smd}\left(\operatorname{coprod}\left( \mathcal{G}{[i,j]} \right) \right) $ 
        \item $\mathcal{G}^{(-\infty,j]}_{\infty} = \mathcal{G}^{(-\infty,j]} \colonequals \operatorname{smd}\left(\operatorname{coprod}\left( \mathcal{G}{(-\infty,j]}  \right) \right)$
        \item $\mathcal{G}^{[i, \infty)}_{\infty} = \mathcal{G}^{[i, \infty)} \colonequals \operatorname{smd}\left(\operatorname{coprod}\left(   \mathcal{G}{[i, \infty)}  \right) \right)$
        \item $ \mathcal{G}_n = \mathcal{G}^{(-\infty, \infty)}_{n} \colonequals \operatorname{smd}\left(\operatorname{coprod}\left(\mathcal{G}(-\infty, \infty) \right) \right)$
    \end{itemize}
    see \Cref{Notation from Neeman's paper}.
    We also define the analogous big categories. For any integer $i \leq j$ and $n \geq 1$,
    \begin{itemize}
        \item $\overline{\mathcal{G}}^{[i,j]}_n \colonequals \operatorname{smd}\left(\operatorname{Coprod}_n\left( \mathcal{G}{[i,j]} \right) \right) $ 
        \item $\overline{\mathcal{G}}^{(-\infty,j]}_n \colonequals \operatorname{smd}\left(\operatorname{Coprod}_n\left( \mathcal{G}{(-\infty,j]}  \right) \right)$
        \item $\overline{\mathcal{G}}^{[i, \infty)}_n \colonequals \operatorname{smd}\left(\operatorname{Coprod}_n\left(   \mathcal{G}{[i, \infty)}  \right) \right)$
        \item $\overline{\mathcal{G}}^{[i,j]} \colonequals \operatorname{smd}\left(\operatorname{Coprod}\left( \mathcal{G}{[i,j]} \right) \right) $
        \item $\overline{\mathcal{G}}^{(-\infty,j]} \colonequals \operatorname{smd}\left(\operatorname{Coprod}\left( \mathcal{G}{(-\infty,j]}  \right) \right)$
        \item $\overline{\mathcal{G}}^{[i, \infty)} \colonequals \operatorname{smd}\left(\operatorname{Coprod}\left(   \mathcal{G}{[i, \infty)}  \right) \right)$
    \end{itemize}
\end{notation}
\begin{remark}\label{Remark big categories intersection compacts gives small categories}
    We note here that for a pre-generating sequence (\Cref{Definition pre-generating sequence}), we recover the small categories from the big ones on taking the intersection with the compacts by \cite[Proposition 1.9]{Neeman:2021a}. That is, $\overline{\mathcal{G}}^{?}_{!} \cap \mathsf{T}^c = \mathcal{G}^{?}_{!} $ for any choice of $?$ and $!$ as in \Cref{Notation Subcategories for (pre)-generating sequences}.
\end{remark}
We now give the definition of a generating sequence.

\begin{definition}\label{Definition of generating sequence}
    Let $\mathsf{T}$ be a compactly generated triangulated category. A \emph{generating sequence} is a sequence of full subcategories $\mathcal{G} = \{\mathcal{G}^i\}_{i \in \mathbb{Z}}$ of $\mathsf{T}^c$ such that 
    \begin{enumerate}
        \item $\mathcal{G}$ is a pre-generating sequence (\Cref{Definition pre-generating sequence}), that is, $\operatorname{smd}(\operatorname{coprod}(\cup_{i \in \mathbb{Z}} \mathcal{G}^i )) = \mathsf{T}^c$.
        \item $\Sigma^{-1}\mathcal{G}^n \cup \mathcal{G}^n \cup \Sigma\mathcal{G}^n \subseteq \mathcal{G}^{[n-1,n+1]}$ for all $n \in \mathbb{Z}$, see \Cref{Notation Subcategories for (pre)-generating sequences}. In particular, we get an extended good metric (see \Cref{Good Metric}) $\mathcal{M}^{\mathcal{G}}$ on $\mathsf{T}^c$ with $\mathcal{M}^{\mathcal{G}}_n$ defined as follows,
        \[\mathcal{M}^{\mathcal{G}}_n \colonequals \mathcal{G}^{(-\infty,- n]} \]
    \end{enumerate}
    
    Finally, we call a generating sequence a \emph{finite generating sequence} if,
    \begin{enumerate}
        \setcounter{enumi}{2}
        \item Each of the subcategories $\mathcal{G}^i$ have only finitely many objects.
    \end{enumerate}

    Given any generating sequence $\mathcal{G}$, we define the orthogonal to $\mathcal{G}$ to be the full subcategory of $\mathsf{T}$ given by $\mathcal{G}^{\perp} = \bigcup_{n \in \mathbb{Z}}(\mathcal{M}^{\mathcal{G}}_n)^{\perp}$. We define the extended good metric $\mathcal{R}^{\mathcal{G}}$ on $\mathsf{T}$ by,
    \[\mathcal{R}^{\mathcal{G}}_n \colonequals{}^\perp\big[\big({\mathcal{M}_n^{\mathcal{G}}}\big)^{\perp}\big] \text{ for all } n \in \mathbb{Z} \]
    where the orthogonals are taken in $\mathsf{T}$. We show below that $\mathcal{R}^{\mathcal{G}}$ is an orthogonal metric on $\mathsf{T}$, see \Cref{Good Metric}.
\end{definition}
\begin{lemma}\label{Lemma R^G is an orthgonal good metric}
    Let $\mathsf{T}$ be a triangulated category.
    Then,
    \begin{enumerate}
        \item Let $\mathcal{M} \subseteq \mathsf{T}$ be a full subcategory. We define $\mathcal{R} = {}^\perp\big(\mathcal{M}^{\perp}\big)$. Then, $^\perp\big(\mathcal{R}^{\perp}\big) = \mathcal{R}$.
        \item If $\mathsf{T}$ has a generating sequence $\mathcal{G}$, then, $\mathcal{R}^{\mathcal{G}}$ is an orthogonal good metric on $\mathsf{T}$, see \Cref{Definition of generating sequence} for the definition of $\mathcal{R}^{\mathcal{G}}$ and \Cref{Good Metric} for the definition of an orthogonal metric.
    \end{enumerate}
\end{lemma}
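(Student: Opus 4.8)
The plan is to prove (1) first as a purely formal fact about orthogonality operators, and then to use it together with the combinatorial structure of the generating sequence to verify that $\mathcal{R}^{\mathcal{G}}$ satisfies the two axioms of \Cref{Good Metric} plus the orthogonality condition. For (1), I would use the standard adjunction-like inequalities for the operators $(-)^{\perp}$ and ${}^{\perp}(-)$: for any full subcategories one has $\mathcal{A} \subseteq {}^{\perp}(\mathcal{A}^{\perp})$, and the operators are inclusion-reversing. Applying $(-)^{\perp}$ to $\mathcal{M} \subseteq {}^{\perp}(\mathcal{M}^{\perp}) = \mathcal{R}$ gives $\mathcal{R}^{\perp} \subseteq \mathcal{M}^{\perp}$, hence ${}^{\perp}(\mathcal{M}^{\perp}) \subseteq {}^{\perp}(\mathcal{R}^{\perp})$, i.e. $\mathcal{R} \subseteq {}^{\perp}(\mathcal{R}^{\perp})$; the reverse inclusion is the general fact $\mathcal{R} \subseteq {}^{\perp}(\mathcal{R}^{\perp})$ applied with $\mathcal{R}$ in place of $\mathcal{A}$, so in fact one only needs $\mathcal{R}^{\perp} = \mathcal{M}^{\perp}$, which follows from the two inclusions $\mathcal{M} \subseteq \mathcal{R}$ and $\mathcal{R}^{\perp} \subseteq \mathcal{M}^{\perp}$ above together with $\mathcal{M}^{\perp} \subseteq \mathcal{R}^{\perp}$ (again inclusion-reversal applied to $\mathcal{M} \subseteq \mathcal{R}$). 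This is the clean way to organize it, and it is entirely formal.

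For (2), the orthogonality condition ${}^{\perp}((\mathcal{R}^{\mathcal{G}}_n)^{\perp}) = \mathcal{R}^{\mathcal{G}}_n$ is immediate from part (1) with $\mathcal{M} = \mathcal{M}^{\mathcal{G}}_n$. So the work is checking the two axioms of an extended good metric. For axiom (2), the shift-containment $\Sigma \mathcal{R}^{\mathcal{G}}_{n+1} \cup \mathcal{R}^{\mathcal{G}}_{n+1} \cup \Sigma^{-1}\mathcal{R}^{\mathcal{G}}_{n+1} \subseteq \mathcal{R}^{\mathcal{G}}_n$, I would first establish the corresponding containment downstairs for $\mathcal{M}^{\mathcal{G}}$: since $\mathcal{M}^{\mathcal{G}}_n = \mathcal{G}^{(-\infty,-n]}$ and condition (2) of \Cref{Definition of generating sequence} gives $\Sigma^{\pm 1}\mathcal{G}^m \subseteq \mathcal{G}^{[m-1,m+1]}$, one checks $\Sigma^{\pm 1}\mathcal{M}^{\mathcal{G}}_{n+1} \subseteq \mathcal{M}^{\mathcal{G}}_n$ directly from the index bookkeeping (a desuspension of something generated in degrees $\le -(n+1)$ lands in degrees $\le -n$). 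Then I would transport this up through the operator ${}^{\perp}((-)^{\perp})$, using that this operator is monotone and commutes with $\Sigma$ (because $\Sigma$ is an auto-equivalence, so $(\Sigma\mathcal{A})^{\perp} = \Sigma(\mathcal{A}^{\perp})$ and likewise on the left), to deduce the same containment for $\mathcal{R}^{\mathcal{G}}$.

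Axiom (1), $\mathcal{R}^{\mathcal{G}}_n \star \mathcal{R}^{\mathcal{G}}_n \subseteq \mathcal{R}^{\mathcal{G}}_n$, is the step I expect to be the main obstacle, since it is not purely formal: one needs that a subcategory of the form ${}^{\perp}(\mathsf{S})$ is always closed under extensions. This is true — if $X \to Y \to Z \to \Sigma X$ is a triangle with $X, Z \in {}^{\perp}(\mathsf{S})$, then for any $s \in \mathsf{S}$ the long exact sequence obtained by applying $\operatorname{Hom}_{\mathsf{T}}(-, s)$ sandwiches $\operatorname{Hom}_{\mathsf{T}}(Y,s)$ between $\operatorname{Hom}_{\mathsf{T}}(Z,s) = 0$ and $\operatorname{Hom}_{\mathsf{T}}(X,s) = 0$, forcing it to vanish — so $\mathcal{R}^{\mathcal{G}}_n = {}^{\perp}((\mathcal{M}^{\mathcal{G}}_n)^{\perp})$ is closed under extensions, which is exactly axiom (1). (One also notes in passing that $0 \in \mathcal{R}^{\mathcal{G}}_n$ trivially, and that $\mathcal{R}^{\mathcal{G}}_n$ is strictly full since it is defined by a vanishing condition, so it is a legitimate candidate for a term of an extended good metric.) Finally, the remark that $\mathcal{R}^{\mathcal{G}}_n$ is closed under coproducts when $\mathsf{T}$ has coproducts follows because $\operatorname{Hom}_{\mathsf{T}}(\coprod_i X_i, s) = \prod_i \operatorname{Hom}_{\mathsf{T}}(X_i, s)$, a product of zero groups. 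Assembling these pieces — formal identity in (1), extension-closure and shift-bookkeeping in (2) — completes the proof.
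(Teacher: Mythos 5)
Your proof is correct and follows essentially the same route as the paper: part (1) is the formal Galois-connection argument showing ${}^{\perp}\bigl((-)^{\perp}\bigr)$ is idempotent, and part (2) reduces to it plus routine verifications. In fact you supply more detail than the paper, which dispatches (2) with the single remark that (1) implies it; your explicit checks of extension-closure of left orthogonals and of the shift containments for $\mathcal{M}^{\mathcal{G}}$ transported through the monotone, $\Sigma$-equivariant operator ${}^{\perp}\bigl((-)^{\perp}\bigr)$ are exactly the steps the paper leaves implicit.
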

\begin{proof}
   
    We just need to show (1), as it implies (2). For any full subcategory $\mathcal{N} \subseteq \mathsf{T}$, we have the following inclusions,
    \[\begin{tikzcd}
	{({}^{\perp}\mathcal{N})^{\perp}} & & {\mathcal{N}} & & {{}^{\perp}(\mathcal{N}^{\perp})}
	\arrow["{i(\mathcal{N})}"', hook', from=1-3, to=1-1]
	\arrow["{j(\mathcal{N})}", hook, from=1-3, to=1-5]
    \end{tikzcd}\]
    Note that taking the orthogonal of $i(\mathcal{N})$ gives us the inclusion $l(\mathcal{N}) :  {}^{\perp}\big([{}^{\perp}\mathcal{N}]^{\perp}\big) \hooklongrightarrow {}^{\perp}\mathcal{N}$.  
    This immediately gives us,
    \[\begin{tikzcd}
	{^{\perp}(\mathcal{R}^{\perp})={}^{\perp}\big([\mathcal{{}^{\perp}(\mathcal{M}^{\perp})]}^{\perp}\big)} & & {\mathcal{R}={}^{\perp}(\mathcal{M}^{\perp})} & & {{}^{\perp}\big([\mathcal{{}^{\perp}(\mathcal{M}^{\perp})]}^{\perp}\big) = {}^{\perp}(\mathcal{R}^{\perp}) }
	\arrow["l(\mathcal{M}^{\perp})", hook, from=1-1, to=1-3]
	\arrow["{j({}^{\perp}[\mathcal{M}^{\perp}])}", hook,from=1-3, to=1-5]
\end{tikzcd}\]
\end{proof}

The trivial example of a generating sequence would of course be to let each $\mathcal{G}^i = \mathsf{T}^c$. But non-trivial generating sequences also frequently occur naturally as the following example shows.
\begin{example}
    Let $\mathsf{T}$ be a triangulated category with a single compact generator $G$, then, the following are finite generating sequences (\Cref{Definition of generating sequence}),
    \begin{enumerate}
        \item $\mathcal{G}^i = \{\Sigma^i{G}\}$ for each $i \in \mathbb{Z}$.
        \item $\mathcal{G}^i = \{\Sigma^{-i}{G}\}$ for each $i \in \mathbb{Z}$.
    \end{enumerate}
    If there is a countable generating set $\{G_n\}_{n  \in \mathbb{N}}$, then we have the following finite generating sequence,
    \begin{enumerate}
    \setcounter{enumi}{2}
        \item $\mathcal{G}^i \colonequals \{\Sigma^{i-j} G_j : j \text{ lies between 0 and }i \}$
    \end{enumerate}
\end{example}

We record the following easy lemma for later use.

\begin{lemma}\label{Lemma for lemma on implicatiopn of strong generating sequence}
    Let $\mathsf{T}$ be a triangulated category with a generating sequence $\mathcal{G}$, see \Cref{Definition of generating sequence}. Suppose there exists an integer $A >0$ such that for each integer $i$, we have that, $\mathcal{M}^{\mathcal{G}}_{i} \subseteq \bigcup_{|n|\leq A}\Sigma^n\mathcal{M}^{\mathcal{G}}_{i+1}$, see \Cref{Definition of generating sequence}(2) for the definition of $\mathcal{M}^{\mathcal{G}}$. Then,
    \begin{enumerate}
        \item For any integers $i$ and positive integer $j$, we have that $\mathcal{M}^{\mathcal{G}}_{i} \subseteq \bigcup_{|n| \leq jA}\Sigma^n\mathcal{M}^{\mathcal{G}}_{i + j}$.
        \item For any integer $i$, $\mathsf{T}^c = \bigcup_{n \in \mathbb{Z}}\Sigma^n \mathcal{M}^{\mathcal{G}}_i$.
    \end{enumerate}
\end{lemma}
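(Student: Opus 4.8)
The plan is to prove (1) by induction on the positive integer $j$, and then to deduce (2) from (1) together with the elementary observation that $\mathsf{T}^c = \bigcup_{j \in \mathbb{Z}} \mathcal{M}^{\mathcal{G}}_j$.

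For part (1), the case $j = 1$ is precisely the standing hypothesis, so suppose inductively that $\mathcal{M}^{\mathcal{G}}_i \subseteq \bigcup_{|n| \le jA} \Sigma^n \mathcal{M}^{\mathcal{G}}_{i+j}$ holds for every integer $i$. Applying the hypothesis with $i$ replaced by $i+j$ gives $\mathcal{M}^{\mathcal{G}}_{i+j} \subseteq \bigcup_{|m| \le A} \Sigma^m \mathcal{M}^{\mathcal{G}}_{i+j+1}$. Since $\Sigma$ is an autoequivalence it commutes with unions, so for each $n$ with $|n| \le jA$ we get $\Sigma^n \mathcal{M}^{\mathcal{G}}_{i+j} \subseteq \bigcup_{|m| \le A} \Sigma^{n+m} \mathcal{M}^{\mathcal{G}}_{i+j+1}$; taking the union over $|n| \le jA$ and using $|n+m| \le |n| + |m| \le (j+1)A$ yields $\mathcal{M}^{\mathcal{G}}_i \subseteq \bigcup_{|k| \le (j+1)A} \Sigma^k \mathcal{M}^{\mathcal{G}}_{i+j+1}$, which closes the induction.

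For part (2), the inclusion $\bigcup_n \Sigma^n \mathcal{M}^{\mathcal{G}}_i \subseteq \mathsf{T}^c$ is immediate, since each $\mathcal{M}^{\mathcal{G}}_i$ is contained in $\mathsf{T}^c$ and $\mathsf{T}^c$ is closed under $\Sigma^{\pm 1}$. For the reverse inclusion I would first record that $\mathsf{T}^c = \bigcup_{j \in \mathbb{Z}} \mathcal{M}^{\mathcal{G}}_j$: by \Cref{Definition pre-generating sequence} we have $\mathsf{T}^c = \operatorname{smd}(\operatorname{coprod}(\cup_k \mathcal{G}^k))$, so any compact object is a direct summand of an object built from finitely many objects of $\cup_k \mathcal{G}^k$ by finitely many finite coproducts and extensions; all of those finitely many objects lie in $\mathcal{G}(-\infty,-j]$ once $j$ is sufficiently negative, and since $\mathcal{M}^{\mathcal{G}}_j = \operatorname{smd}(\operatorname{coprod}(\mathcal{G}(-\infty,-j]))$ is closed under finite coproducts, extensions and summands, the object lies in $\mathcal{M}^{\mathcal{G}}_j$ for such $j$. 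One also notes that the subcategories $\mathcal{M}^{\mathcal{G}}_j$ are nested decreasingly in $j$, since $\mathcal{G}(-\infty,-j] \subseteq \mathcal{G}(-\infty,-j']$ whenever $j' \le j$. Now fix $i$, take $X \in \mathsf{T}^c$, and choose $j$ with $X \in \mathcal{M}^{\mathcal{G}}_j$. If $j \ge i$ then $X \in \mathcal{M}^{\mathcal{G}}_j \subseteq \mathcal{M}^{\mathcal{G}}_i = \Sigma^0 \mathcal{M}^{\mathcal{G}}_i$; if $j < i$ then part (1), applied with $j$ replaced by the positive integer $i-j$, gives $X \in \mathcal{M}^{\mathcal{G}}_j \subseteq \bigcup_{|n| \le (i-j)A} \Sigma^n \mathcal{M}^{\mathcal{G}}_i$. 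In both cases $X \in \bigcup_n \Sigma^n \mathcal{M}^{\mathcal{G}}_i$, which gives the claim.

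There is no genuine obstacle in this argument — the lemma is as routine as its billing suggests. The only two points that deserve a line of care are the bookkeeping of absolute values in the inductive step of (1), and the justification that $\mathsf{T}^c = \bigcup_j \mathcal{M}^{\mathcal{G}}_j$, which uses that an object of $\operatorname{smd}(\operatorname{coprod}(-))$ is assembled from only finitely many of the generating objects and is therefore captured by some $\mathcal{M}^{\mathcal{G}}_j$.
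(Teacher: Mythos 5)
Your proof is correct and follows essentially the same route as the paper: part (1) by the same induction on $j$ with the same bookkeeping of shifts, and part (2) by combining the identity $\mathsf{T}^c = \bigcup_{j}\mathcal{M}^{\mathcal{G}}_j$ with the nesting of the $\mathcal{M}^{\mathcal{G}}_j$ and part (1). Your explicit justification that a compact object is built from only finitely many generators, and hence lies in some $\mathcal{M}^{\mathcal{G}}_j$, is a detail the paper leaves implicit, but it is the right one.
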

\begin{proof}
    We prove (1) first. Fix an integer $i$. We will prove the claim by induction on $j \geq 1$. The base case $j=1$ is true from the hypothesis. Suppose we know the result up to some positive integer $j$. So, we have that $\mathcal{M}^{\mathcal{G}}_{i} \subseteq \bigcup_{|n| \leq jA}\Sigma^n\mathcal{M}^{\mathcal{G}}_{i + j}$. Again, by the hypothesis, we have that $\mathcal{M}^{\mathcal{G}}_{i+j} \subseteq \bigcup_{|n|\leq A}\Sigma^n\mathcal{M}^{\mathcal{G}}_{i+j+1}$. So, we get that $\mathcal{M}^{\mathcal{G}}_{i} \subseteq \bigcup_{|n| \leq (j+1)A}\Sigma^n\mathcal{M}^{\mathcal{G}}_{i + j + 1}$, which is what we needed for induction. 
    
    Now we prove (2). Fix an integer $i$. By \Cref{Definition of generating sequence}(2), $\mathcal{M}^{\mathcal{G}}_i \supseteq \bigcup_{j \geq i} \mathcal{M}^{\mathcal{G}}_j$. Further, by (1), we have that $\mathcal{M}^{\mathcal{G}}_{j} \subseteq \bigcup_{|n| \leq (i-j)A}\Sigma^n\mathcal{M}^{\mathcal{G}}_{i}$ for any $j \leq i$. And so, 
    \[\mathsf{T}^c = \mathcal{G}^{(-\infty,\infty)} = \bigcup_{j \in \mathbb{Z}}\mathcal{G}^{(-\infty,-j]} = \bigcup_{j \in \mathbb{Z}}\mathcal{M}^{\mathcal{G}}_j \subseteq \bigcup_{n \in \mathbb{Z}}\Sigma^n \mathcal{M}^{\mathcal{G}}_i\]
    with notation as in \Cref{Notation Subcategories for (pre)-generating sequences}.
\end{proof}

We now define a preorder, and a corresponding equivalence relation on the collection of generating sequences, which restricts to a preorder and an equivalence relation on the subcollection of finite generating sequences too, see \Cref{Definition of generating sequence}.
\begin{definition}\label{Definition equivalence class of generating sequences}
    Let $\mathsf{T}$ be a compactly generated triangulated category. We say that a generating sequence $\mathcal{G}_1$ is $\mathbb{N}$-\emph{finer} than $\mathcal{G}_2$ if there exists a integer $N \geq 0$ such that for all $n \in \mathbb{Z}$, $\mathcal{G}_1^n \subseteq \mathcal{G}_2^{[n-N, n+N]}$, see \Cref{Notation Subcategories for (pre)-generating sequences}. We denote this by $\mathcal{G}_1 \preceq_{\mathbb{N}} \mathcal{G}_2$. We say two generating sequences $\mathcal{G}_1$ and $\mathcal{G}_2$ are $\mathbb{N}$-equivalent if $\mathcal{G}_1 \preceq_{\mathbb{N}} \mathcal{G}_2 \preceq_{\mathbb{N}} \mathcal{G}_1$. 
\end{definition}

\begin{example}\label{Examples of N-equivalent generating sequences}
    Let $\mathsf{T}$ be a triangulated category with a compact generator $G$. Suppose $H$ is another compact generator for $\mathsf{T}$. This implies that, 
    \[\langle G \rangle = \bigcup_{n \geq 0} \langle G \rangle^{[-n,n]}  = \mathsf{T}^c = \bigcup_{n \geq 0} \langle H \rangle^{[-n,n]} = \langle H \rangle\] 
    see \Cref{Notation from Neeman's paper}. And hence $H \in \langle G \rangle^{[-n, n]}$ and $G \in \langle H \rangle^{[-n, n]}$ for some integer $n \geq 0$. This immediately gives us that,
    \begin{enumerate}
        \item The generating sequence (\Cref{Definition of generating sequence}) $\mathcal{G}_G$ given by $\mathcal{G}_G^i = \{\Sigma^{-i}G\}$ is $\mathbb{N}$-equivalent (see \Cref{Definition equivalence class of generating sequences}) to the generating sequence $\mathcal{G}_H$ given by $\mathcal{G}_H^i = \{\Sigma^{-i}H\}$.
        \item The generating sequence (\Cref{Definition of generating sequence}) ${\widetilde{\mathcal{G}}}_G$ given by ${\widetilde{\mathcal{G}}_G}^i = \{\Sigma^{i} G\}$ is $\mathbb{N}$-equivalent to the generating sequence ${\widetilde{\mathcal{G}}}_H$ given by ${\widetilde{\mathcal{G}}}_H^{i} = \{\Sigma^{i} H\}$.
    \end{enumerate}
     
\end{example}

We now show that $\mathbb{N}$-equivalent generating sequences give rise to $\mathbb{N}$-equivalent metrics.

\begin{lemma}\label{Lemma eq. class}
    Let $\mathsf{T}$ be a compactly generated triangulated category with $\mathbb{N}$-equivalent generating sequences $\mathcal{G}$ and $\mathcal{G}'$, see \Cref{Definition equivalence class of generating sequences} for the definition of $\mathbb{N}$-equivalence of generating sequences. Then,
    \begin{enumerate}
        \item The corresponding extended good metrics $\mathcal{M}^{\mathcal{G}}$ and $\mathcal{M}^{\mathcal{G}'}$ on $\mathsf{T}^c$ are $\mathbb{N}$-equivalent, see \Cref{Definition of generating sequence}(2) for the definition of $\mathcal{M}^{\mathcal{G}}$.
        \item The corresponding orthogonal metrics $\mathcal{R}^{\mathcal{G}}$ and $\mathcal{R}^{\mathcal{G}'}$ on $\mathsf{T}$ are $\mathbb{N}$-equivalent, see \Cref{Definition of generating sequence} for the definition of $\mathcal{R}^{\mathcal{G}}$.
    \end{enumerate}
    See \Cref{Definition equivalence relation on extended good metrics} for the definition of $\mathbb{N}$-equivalence of extended good metrics
\end{lemma}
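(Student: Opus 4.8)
The plan is to reduce everything to two elementary monotonicity facts: that the closure operator $\operatorname{smd}\!\circ\operatorname{coprod}(-)$ is monotone in its argument, and that the operation $\mathcal{A}\mapsto{}^{\perp}(\mathcal{A}^{\perp})$ is monotone. Granting these, part (2) becomes a purely formal consequence of part (1), and part (1) becomes a direct index computation.

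First I would unwind the hypothesis: by \Cref{Definition equivalence class of generating sequences}, $\mathbb{N}$-equivalence of $\mathcal{G}$ and $\mathcal{G}'$ gives an integer $N\geq 0$ (take the maximum of the two constants) with $\mathcal{G}^n\subseteq\mathcal{G}'^{[n-N,n+N]}$ and $\mathcal{G}'^n\subseteq\mathcal{G}^{[n-N,n+N]}$ for all $n\in\mathbb{Z}$. For part (1) it then suffices, by the symmetry of the two containments, to show that $\mathcal{G}^n\subseteq\mathcal{G}'^{[n-N,n+N]}$ for all $n$ implies $\mathcal{M}^{\mathcal{G}}_{i+N}\subseteq\mathcal{M}^{\mathcal{G}'}_{i}$ for every $i$; applying this in both directions then yields $\mathbb{N}$-equivalence of the metrics in the sense of \Cref{Definition equivalence relation on extended good metrics}. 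Unravelling \Cref{Definition of generating sequence}(2) and \Cref{Notation Subcategories for (pre)-generating sequences}, one has $\mathcal{M}^{\mathcal{G}}_{i+N}=\operatorname{smd}\big(\operatorname{coprod}\big(\bigcup_{m\leq -i-N}\mathcal{G}^m\big)\big)$. For each $m\leq -i-N$ the hypothesis gives $\mathcal{G}^m\subseteq\bigcup_{m-N\leq l\leq m+N}\mathcal{G}'^l$, and since $m+N\leq -i$ this union is contained in $\bigcup_{l\leq -i}\mathcal{G}'^l=\mathcal{G}'(-\infty,-i]$. Hence $\bigcup_{m\leq -i-N}\mathcal{G}^m\subseteq\mathcal{G}'(-\infty,-i]$, and applying the (monotone) closure operator $\operatorname{smd}\!\circ\operatorname{coprod}(-)$ to both sides gives $\mathcal{M}^{\mathcal{G}}_{i+N}\subseteq\mathcal{M}^{\mathcal{G}'}_{i}$, as wanted.

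For part (2) I would transport the inclusions from part (1) through $\mathcal{A}\mapsto{}^{\perp}(\mathcal{A}^{\perp})$. The key observation, already implicit in the proof of \Cref{Lemma R^G is an orthgonal good metric}, is that if $\mathcal{A}\subseteq\mathcal{B}$ then $\mathcal{B}^{\perp}\subseteq\mathcal{A}^{\perp}$ and hence ${}^{\perp}(\mathcal{A}^{\perp})\subseteq{}^{\perp}(\mathcal{B}^{\perp})$, so this double-orthogonal operation is monotone. Applying it to $\mathcal{M}^{\mathcal{G}}_{i+N}\subseteq\mathcal{M}^{\mathcal{G}'}_{i}$ gives $\mathcal{R}^{\mathcal{G}}_{i+N}={}^{\perp}\big((\mathcal{M}^{\mathcal{G}}_{i+N})^{\perp}\big)\subseteq{}^{\perp}\big((\mathcal{M}^{\mathcal{G}'}_{i})^{\perp}\big)=\mathcal{R}^{\mathcal{G}'}_{i}$, and the symmetric inclusion follows in the same way, so $\mathcal{R}^{\mathcal{G}}\preceq_{\mathbb{N}}\mathcal{R}^{\mathcal{G}'}\preceq_{\mathbb{N}}\mathcal{R}^{\mathcal{G}}$.

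There is no genuine obstacle here; the content is entirely formal. The only points requiring mild care are the index bookkeeping induced by the sign convention $\mathcal{M}^{\mathcal{G}}_n=\mathcal{G}^{(-\infty,-n]}$ (which is why the shift in the metric is again exactly $N$), and the verification that the single shift constant coming from the generating sequences propagates unchanged to both the metric on $\mathsf{T}^c$ and the orthogonal metric on $\mathsf{T}$. The same argument, ignoring the shift, also shows that ordinary equivalence of generating sequences yields ordinary equivalence of the associated metrics, though that is not needed for the stated lemma.
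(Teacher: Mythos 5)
Your proof is correct and follows essentially the same route as the paper: translate the shift $N$ between the generating sequences into the inclusion $\mathcal{M}^{\mathcal{G}}_{i+N}\subseteq\mathcal{M}^{\mathcal{G}'}_{i}$, then transport it through the monotone operation $\mathcal{A}\mapsto{}^{\perp}(\mathcal{A}^{\perp})$ and conclude by symmetry. One pedantic point: the hypothesis places $\mathcal{G}^m$ inside the closure $\operatorname{smd}(\operatorname{coprod}(\bigcup_{m-N\leq l\leq m+N}\mathcal{G}'^l))$ rather than the plain union you wrote, but since that closure is contained in $\mathcal{M}^{\mathcal{G}'}_{i}$ and $\operatorname{smd}\circ\operatorname{coprod}$ is idempotent and monotone, the argument is unaffected.
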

\begin{proof}
    By \Cref{Definition equivalence class of generating sequences}, there exists an integer $N \geq 0$ such that $\mathcal{G}'^n \subseteq \mathcal{G}^{[n-N,n+N]}$ for all $n \in \mathbb{Z}$, see \Cref{Notation Subcategories for (pre)-generating sequences}. And so, $\mathcal{G}'^{(-\infty, -n]} \subseteq \mathcal{G}^{(-\infty,-n+N]}$ for all $n \in \mathbb{Z}$. In particular, this shows that $\mathcal{M}^{\mathcal{G}'} \preceq_{\mathbb{N}} \mathcal{M}^{\mathcal{G}}$. By symmetry, we get that they are in fact $\mathbb{N}$-equivalent
    
    Further, this gives us that the right orthogonals have the reverse inclusion, that is, $(\mathcal{M}^{\mathcal{G}}_{n+N})^{\perp} \subseteq (\mathcal{M}^{\mathcal{G}'}_n)^{\perp}$. Taking left orthogonals, we get $\mathcal{R}^{\mathcal{G}'} \preceq_{\mathbb{N}} \mathcal{R}^{\mathcal{G}}$. Again, by symmetry, we get that they are in fact $\mathbb{N}$-equivalent
\end{proof}

Inspired by this lemma, we can define the preferred equivalence class of orthogonal metrics for a triangulated category with a given generating sequence.

\begin{definition}\label{Definition preferred equivalence class of orthogonal metrics}
    Let $\mathsf{T}$ be a compactly generated triangulated category with a generating sequence $\mathcal{G}$, see \Cref{Definition of generating sequence}. Then, 
    \begin{enumerate}
        \item An orthogonal metric $\mathcal{R}$ lies in the \emph{preferred equivalence class of orthogonal metrics} if it is equivalent to $\mathcal{R}^{\mathcal{G}}$. 
        \item An orthogonal metric $\mathcal{R}$ lies in the \emph{preferred $\mathbb{N}$-equivalence class of orthogonal metrics} if it is $\mathbb{N}$-equivalent to $\mathcal{R}^{\mathcal{G}}$.
    \end{enumerate} 
    See \Cref{Definition of generating sequence} for the definition of $\mathcal{R}^{\mathcal{G}}$, \Cref{Good Metric} for the definition of an orthogonal metric, and \Cref{Definition equivalence relation on extended good metrics} for the definition of the equivalence relations on metrics. Recall from \Cref{Lemma R^G is an orthgonal good metric} that $\mathcal{R}^{\mathcal{G}}$ is an orthogonal metric.
    
    Note that by \Cref{Lemma eq. class} the preferred equivalence class and the preferred $\mathbb{N}$-equivalence class of orthogonal metrics remain the same if $\mathcal{G}$ is replaced by a $\mathbb{N}$-equivalent generating sequence, see \Cref{Definition equivalence class of generating sequences}.
    
\end{definition}
 \begin{remark}\label{Remark on preferred equivalence class of orthogonal metrics}
 Let $\mathsf{T}$ be a compactly generated triangulated category with a generating sequence $\mathcal{G}$, see \Cref{Definition of generating sequence}. Then, 
     \begin{enumerate}
         \item If an orthogonal metric $\mathcal{R}$ lies in the preferred equivalence class of orthogonal metrics then $\mathcal{M}^{\mathcal{G}} \preceq \mathcal{R} \cap \mathsf{T}^c$.
         \item If an orthogonal metric $\mathcal{R}$ lies in the preferred $\mathbb{N}$-equivalence class of orthogonal metrics then $\mathcal{M}^{\mathcal{G}} \preceq_{\mathbb{N}} \mathcal{R} \cap \mathsf{T}^c$.
     \end{enumerate}
     See \Cref{Definition equivalence relation on extended good metrics} for the definition of the order relation on metrics, \Cref{Definition of generating sequence}(2) for the definition of $\mathcal{M}^{\mathcal{G}}$, and \Cref{Good Metric} for the definition of an orthogonal metric.
 \end{remark}
We now define the closure of the compacts with respect to a metric in the setup of a triangulated category with coproducts.

\begin{definition}\label{Definition closure of compacts}
    Let $\mathsf{T}$ be a triangulated category with coproducts, let $\mathcal{R}$ be an extended good metric (\Cref{Good Metric}) on $\mathsf{T}$, and $\mathcal{M}$ be an extended good metric on the subcategory of compacts, $\mathsf{T}^c$. Then, the closure of the compacts with respect to $\mathcal{R}$ is defined to be,
    \[ \overline{\mathsf{T}^c} \colonequals \bigcap_{n \in \mathbb{Z}} \mathsf{T}^c \star \mathcal{R}_n\]
    We define the orthogonal to the metric $\mathcal{M}$ to be,
    \[\mathcal{M}^{\perp} \colonequals \bigcup_{n \in \mathbb{Z}} \mathcal{M}_n^{\perp}\]
    where the orthogonal is taken in the category $\mathsf{T}$.
    Finally, we define the bounded objects of the closure of the compacts to be,
    \[\mathsf{T}^b_c  \colonequals \overline{\mathsf{T}^c} \cap \mathcal{M}^{\perp}\]
    We are mostly interested in the case when $\bigcup_{n \in \mathbb{Z}} \mathcal{M}_n^{\perp} = \bigcup_{n \in \mathbb{Z}} \mathcal{R}_n^{\perp}$.
    
     Note that all of these categories remain the same if we replace $\mathcal{R}$ and $\mathcal{M}$ with equivalent extended good metrics, see \Cref{Definition equivalence relation on extended good metrics}. Unless mentioned otherwise, if we are working with a compactly generated triangulated category $\mathsf{T}$ with a generating sequence $\mathcal{G}$, then we will compute the category $\overline{\mathsf{T}^c}$ with respect to the metric $\mathcal{R}^{\mathcal{G}}$, see \Cref{Definition of generating sequence}. Further, we will define $\mathsf{T}^b_c$ in this case by, $\mathsf{T}^b_c = \overline{\mathsf{T}^c} \cap \left(\mathcal{M}^{\mathcal{G}}\right)^{\perp} $. Note that $(\mathcal{M}^{\mathcal{G}})^{\perp} = (\mathcal{R}^{\mathcal{G}})^{\perp}$ with both orthogonals being taken in $\mathsf{T}$.
\end{definition}
\begin{remark}
    The reason for calling $\overline{\mathsf{T}^c}$ the closure of the compacts comes from metric spaces.  Let $\mathsf{T}$ be a triangulated category with coproducts, and let $\mathcal{M}$ be an extended good metric on the subcategory of compacts, $\mathsf{T}^c$. Then, any object $F$ in $\overline{\mathsf{T}^c}$ is an \say{arbitrarily small distance} from the category of compacts, as for any $n$, there exists a triangle $E \to F \to D \to \Sigma E$ with $E$ a compact object and $D$ in $\mathcal{R}_n$. We recall from \Cref{Remark on intution of metric} that $\mathcal{R}_n$ represents the ball of radius $2^{-n}$. Hence, the object $E$ is less than $2^{-n}$ distance away from $F$ for all $n$. This exactly mimics the closure of a subspace of a metric space. 
\end{remark}

We now define the notion of a $\mathcal{G}$-preapproximable triangulated category, which will be a prequel to the notion of $\mathcal{G}$-approximability, which is the main new definition of this work. We also show that the closure of the compacts is thick for a $\mathcal{G}$-preapproximable triangulated category.

\begin{definition}\label{Definition of G-preapproximability}
    Let $\mathsf{T}$ be a compactly generated triangulated category with a generating sequence $\mathcal{G}$ (see \Cref{Definition of generating sequence}),
   \emph{$\mathcal{G}$-preapproximable} if there exists an orthogonal metric $\mathcal{R}$ (see \Cref{Good Metric}), and an integer $A > 0$ such that,
    \begin{enumerate}
        \setcounter{enumi}{-1}
        \item For each integer $i$, we have that, $\mathcal{M}^{\mathcal{G}}_{i} \subseteq \bigcup_{|n|\leq A}\Sigma^n\mathcal{M}^{\mathcal{G}}_{i+1}$.
        \item $\mathcal{M}^{\mathcal{G}}_{i + A} \subseteq \mathcal{R}_{i}\cap \mathsf{T}^c$ for all $i \in \mathbb{Z}$.
        \item For each $n \in \mathbb{Z}$, $\HomT{\mathcal{G}^n}{\mathcal{R}_i} = 0$ for all $i >> 0 $.
    \end{enumerate}
\end{definition}

We begin with a technical lemma which will be useful later.

\begin{lemma}\label{Lemma Definition of G-approximability (1)}
    Let $\mathsf{T}$ be a compactly generated triangulated category with a generating sequence $\mathcal{G}$ (\Cref{Definition of generating sequence}), an orthogonal metric $\mathcal{R}$ (\Cref{Good Metric}), and an integer $A > 0$ satisfying the conditions of $\mathcal{G}$-preapproximability, see \Cref{Definition of G-preapproximability}. Then,
    \begin{enumerate}
        \item $\mathsf{T}^c \subseteq \bigcup_{n \in \mathbb{Z}}\Sigma^n \mathcal{R}_i $  for all $i \in \mathbb{Z}$.
        \item $\mathsf{T}^c \subseteq \overline{\mathsf{T}^c} \subseteq \bigcup_{n \in \mathbb{Z}}\mathcal{R}_n$, see \Cref{Definition closure of compacts} for the definition of $\overline{\mathsf{T}^c}$.
    \end{enumerate}
\end{lemma}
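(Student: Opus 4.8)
The plan is to deduce both statements from \Cref{Lemma for lemma on implicatiopn of strong generating sequence}, whose hypothesis is precisely condition~(0) of $\mathcal{G}$-preapproximability, combined with the defining inclusions in \Cref{Definition of G-preapproximability} and the two elementary axioms of an extended good metric from \Cref{Good Metric}. Note in advance that condition~(2) of $\mathcal{G}$-preapproximability plays no role here.

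For part~(1), fix $i \in \mathbb{Z}$. First I would invoke \Cref{Lemma for lemma on implicatiopn of strong generating sequence}(2) with the index $i+A$ to obtain $\mathsf{T}^c = \bigcup_{n \in \mathbb{Z}} \Sigma^n \mathcal{M}^{\mathcal{G}}_{i+A}$. Then condition~(1) of \Cref{Definition of G-preapproximability} gives $\mathcal{M}^{\mathcal{G}}_{i+A} \subseteq \mathcal{R}_i \cap \mathsf{T}^c \subseteq \mathcal{R}_i$, and applying the autoequivalences $\Sigma^n$ and taking the union over $n \in \mathbb{Z}$ yields $\mathsf{T}^c \subseteq \bigcup_{n \in \mathbb{Z}} \Sigma^n \mathcal{R}_i$, as wanted.

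For part~(2), the inclusion $\mathsf{T}^c \subseteq \overline{\mathsf{T}^c}$ is formal: for a compact object $F$ and any $n \in \mathbb{Z}$, the distinguished triangle $F \to F \to 0 \to \Sigma F$ on the identity, together with $0 \in \mathcal{R}_n$, shows $F \in \mathsf{T}^c \star \mathcal{R}_n$, hence $F \in \bigcap_{n \in \mathbb{Z}} (\mathsf{T}^c \star \mathcal{R}_n) = \overline{\mathsf{T}^c}$. For the remaining inclusion, let $F \in \overline{\mathsf{T}^c}$; taking $n=0$ in the intersection defining $\overline{\mathsf{T}^c}$ produces a triangle $E \to F \to D \to \Sigma E$ with $E \in \mathsf{T}^c$ and $D \in \mathcal{R}_0$. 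By part~(1) there is $m \in \mathbb{Z}$ with $E \in \Sigma^m \mathcal{R}_0$; iterating the containments $\Sigma^{\pm 1}\mathcal{R}_k \subseteq \mathcal{R}_{k-1}$ coming from \Cref{Good Metric}(2) gives $\Sigma^m \mathcal{R}_0 \subseteq \mathcal{R}_{-|m|}$, so $E \in \mathcal{R}_{-|m|}$, and likewise $D \in \mathcal{R}_0 \subseteq \mathcal{R}_{-|m|}$ since the subcategories $\mathcal{R}_k$ are nested decreasingly in $k$ (again by \Cref{Good Metric}(2)). Finally \Cref{Good Metric}(1) gives $F \in \mathcal{R}_{-|m|} \star \mathcal{R}_{-|m|} \subseteq \mathcal{R}_{-|m|} \subseteq \bigcup_{n \in \mathbb{Z}} \mathcal{R}_n$.

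The argument is essentially bookkeeping with the metric axioms; the only step requiring a moment's care is tracking how the index degrades under suspension, namely the containment $\Sigma^m \mathcal{R}_0 \subseteq \mathcal{R}_{-|m|}$, which I regard as the (mild) crux of the proof.
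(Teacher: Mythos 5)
Your proof is correct and follows essentially the same route as the paper: part (1) is identical (condition (1) of $\mathcal{G}$-preapproximability plus \Cref{Lemma for lemma on implicatiopn of strong generating sequence}(2)), and part (2) is the same bookkeeping with the metric axioms, except that you deduce $\mathsf{T}^c\subseteq\bigcup_i\mathcal{R}_i$ from part (1) rather than directly from $\mathcal{M}^{\mathcal{G}}_i\subseteq\mathcal{R}_{i-A}$, and you spell out the extension step $F\in\mathcal{R}_{-|m|}\star\mathcal{R}_{-|m|}\subseteq\mathcal{R}_{-|m|}$ that the paper compresses into ``immediately implies.'' You also supply the (trivial but omitted in the paper) verification that $\mathsf{T}^c\subseteq\overline{\mathsf{T}^c}$.
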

\begin{proof}
    By \Cref{Definition of G-preapproximability}(1), $\mathcal{R}_{i}\cap \mathsf{T}^c \supseteq \mathcal{M}^{\mathcal{G}}_{i + A}$ for all $i$. But, we have that 
    \[\bigcup_{n \in \mathbb{Z}}\Sigma^n \mathcal{R}_i \supseteq \bigcup_{n \in \mathbb{Z}}\Sigma^n \mathcal{M}^{\mathcal{G}}_{i + A} = \mathsf{T}^c\]
    where the equality follows from \Cref{Lemma for lemma on implicatiopn of strong generating sequence}(2) using \Cref{Definition of G-preapproximability}(0), which proves (1).

    For (2), note that,
    \[\mathsf{T}^c = \bigcup_{i \in \mathbb{Z}} \mathcal{G}^{(-\infty,i]} = \bigcup_{i \in \mathbb{Z}} \mathcal{M}^{\mathcal{G}}_i \subseteq \bigcup_{i \in \mathbb{Z}} \mathcal{R}_{i-A}\]
    see \Cref{Notation Subcategories for (pre)-generating sequences} and \Cref{Definition of generating sequence}(2) for the definition of $\mathcal{M}^{\mathcal{G}}$.
    This immediately implies that $\overline{\mathsf{T}^c} = \bigcap_{n \in \mathbb{Z}} (\mathsf{T}^c \star \mathcal{R}_n) \subseteq \bigcup_{i \in \mathbb{Z}} \mathcal{R}_i$.
\end{proof}

We now prove an easy lemma which will be useful to show that $\overline{\mathsf{T}^c}$ is triangulated.

\begin{lemma}\label{Lemma Compacts orthogonal to the metric}
    Let $\mathsf{T}$ be a triangulated category with a pre-generating sequence $\mathcal{G}$ (\Cref{Definition pre-generating sequence}), and an orthogonal metric $\mathcal{R}$ (\Cref{Good Metric}) such that for all $n\in \mathbb{Z}$, $\HomT{\mathcal{G}^n}{\mathcal{R}_i} = 0$ for all $i >> 0 $. Then, $\HomT{H}{\mathcal{R}_i} = 0$ for all $i >>0$, for any compact object $H$.
\end{lemma}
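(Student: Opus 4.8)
The plan is to reduce everything to the generators by using the defining property of a pre-generating sequence. By \Cref{Definition pre-generating sequence} we have $\mathsf{T}^c = \operatorname{smd}(\operatorname{coprod}(\bigcup_{n \in \mathbb{Z}}\mathcal{G}^n))$, so any compact object $H$ is a direct summand of some $X \in \operatorname{coprod}_m(\bigcup_{n}\mathcal{G}^n)$ for a suitable integer $m \geq 1$, see \Cref{Notation from Neeman's paper}. Since such an $X$ is assembled from objects of the $\mathcal{G}^n$ using only finitely many finite direct sums and extensions, it involves only finitely many of the indices $n$; say all of them lie in a finite interval $[a,b]$, so that $X \in \operatorname{smd}(\operatorname{coprod}_m(\mathcal{G}[a,b]))$ with $\mathcal{G}[a,b] = \bigcup_{a \leq n \leq b}\mathcal{G}^n$, see \Cref{Notation Subcategories for (pre)-generating sequences}.

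Next I would record that for each fixed integer $i$ the full subcategory $\mathcal{C}_i \colonequals {}^{\perp}\mathcal{R}_i = \{Y \in \mathsf{T} : \HomT{Y}{\mathcal{R}_i} = 0\}$ is closed under finite direct sums, under direct summands, and under extensions, see \Cref{Notation star operation}. The first two are clear from $\HomT{Y_1 \oplus Y_2}{M} \cong \HomT{Y_1}{M}\oplus \HomT{Y_2}{M}$, while closure under extensions follows by applying the cohomological functor $\HomT{-}{M}$ to a triangle $Y \to W \to Z \to \Sigma Y$ and using exactness of $\HomT{Z}{M} \to \HomT{W}{M} \to \HomT{Y}{M}$ for every $M \in \mathcal{R}_i$. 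Hence, as soon as $\mathcal{G}[a,b] \subseteq \mathcal{C}_i$ we automatically get $\operatorname{smd}(\operatorname{coprod}(\mathcal{G}[a,b])) \subseteq \mathcal{C}_i$.

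Combining the two steps finishes the argument: with $H$, $X$, and $[a,b]$ as above, the hypothesis provides for each $n$ with $a \leq n \leq b$ an integer $i_n$ such that $\HomT{\mathcal{G}^n}{\mathcal{R}_i} = 0$ for all $i \geq i_n$; set $i_0 \colonequals \max\{i_a,\dots,i_b\}$, a maximum over a finite set. Then $\mathcal{G}[a,b] \subseteq \mathcal{C}_i$ for all $i \geq i_0$, hence $X \in \mathcal{C}_i$, and therefore $H \in \mathcal{C}_i$ as a direct summand of $X$; that is, $\HomT{H}{\mathcal{R}_i} = 0$ for all $i \geq i_0$. I expect no serious obstacle: the only point that needs a little care is the finiteness claim in the first paragraph — that an object of $\operatorname{coprod}(\bigcup_n \mathcal{G}^n)$ uses only finitely many of the $\mathcal{G}^n$ — which is precisely what makes $i_0$ a genuine (finite) integer, and note that the metric and orthogonality axioms of $\mathcal{R}$ play no role.
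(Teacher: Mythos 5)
Your proposal is correct and follows essentially the same route as the paper's proof: the paper writes $\mathsf{T}^c = \bigcup_{n \geq 0}\mathcal{G}^{[-n,n]}$, places $H$ in some $\mathcal{G}^{[-n,n]}$, and then uses that $\HomT{\mathcal{G}^{[-n,n]}}{\mathcal{R}_i}=0$ for $i \gg 0$ because only finitely many $\mathcal{G}^j$ are involved — exactly your reduction to a finite interval $[a,b]$ followed by taking the maximum of the thresholds. The only difference is that you spell out explicitly the finiteness claim and the closure properties of ${}^{\perp}\mathcal{R}_i$, which the paper leaves implicit.
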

\begin{proof}
    Note that $\mathsf{T}^c = \bigcup_{n \geq 0}\mathcal{G}^{[-n,n]}$ (see \Cref{Notation Subcategories for (pre)-generating sequences}) as $\mathcal{G}$ is a pre-generating sequence. And so, there exists $n > 0$ such that $H \in \mathcal{G}^{[-n,n]}$. From the hypothesis $\HomT{\mathcal{G}[-n,n]}{\mathcal{R}_i} = 0$ for all $i >>0 $, and hence, $\HomT{\mathcal{G}^{[-n,n]}}{\mathcal{R}_i} = 0$ for all $i >>0 $. This implies that $\HomT{H}{\mathcal{R}_i} = 0$ for all $i >>0$, and we are done.
\end{proof}

\begin{proposition}\label{Proposition Closure of compacts is triangulated}
    Let $\mathsf{T}$ be a triangulated category with a pre-generating sequence $\mathcal{G}$, and an orthogonal metric $\mathcal{R}$ such that for all $n\in \mathbb{Z}$, $\HomT{\mathcal{G}^n}{\mathcal{R}_i} = 0$ for all $i >> 0 $. Then, $\overline{\mathsf{T}^c}$ (\Cref{Definition closure of compacts}) is a triangulated subcategory of $\mathsf{T}$.
\end{proposition}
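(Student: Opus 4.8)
The plan is to verify the three conditions making $\overline{\mathsf{T}^c}$ a triangulated subcategory. It is strictly full and contains $0$ directly from its definition, since $\mathsf{T}^c$ and every $\mathcal{R}_n$ are strictly full, contain $0$, and the $\star$-operation preserves these properties. For closure under $\Sigma^{\pm 1}$ I would use $\Sigma\mathsf{T}^c=\mathsf{T}^c$ together with the good-metric inclusions $\Sigma^{\pm1}\mathcal{R}_{n+1}\subseteq\mathcal{R}_n$ (\Cref{Good Metric}(2)): as $\Sigma$ is an autoequivalence it commutes with $\bigcap$, and $\Sigma^{\pm1}(\mathsf{T}^c\star\mathcal{R}_{n+1})=\mathsf{T}^c\star\Sigma^{\pm1}\mathcal{R}_{n+1}\subseteq\mathsf{T}^c\star\mathcal{R}_n$, so $\Sigma^{\pm1}\overline{\mathsf{T}^c}\subseteq\overline{\mathsf{T}^c}$, hence equality. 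Granting closure under shifts, rotating triangles reduces the two-out-of-three property to the single assertion: if $X\xrightarrow{f}Y\xrightarrow{g}Z\to\Sigma X$ is a triangle with $X,Y\in\overline{\mathsf{T}^c}$, then $Z\in\mathsf{T}^c\star\mathcal{R}_m$ for every $m\in\mathbb{Z}$.

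Fix $m$. First approximate $\Sigma X\in\overline{\mathsf{T}^c}$ at level $m$: a triangle $C'\to\Sigma X\to D'\to\Sigma C'$ with $C'\in\mathsf{T}^c$ and $D'\in\mathcal{R}_m$. By \Cref{Lemma Compacts orthogonal to the metric} applied to the compact object $C'$ there is an integer $N_0$ with $\HomT{C'}{\mathcal{R}_i}=0$ for all $i\geq N_0$. Set $n\colonequals\max(m,N_0+1)$ and approximate $Y\in\overline{\mathsf{T}^c}$ at level $n$: a triangle $C_Y\to Y\to D_Y\to\Sigma C_Y$ with $C_Y\in\mathsf{T}^c$, $D_Y\in\mathcal{R}_n$. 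Complete the composite $C_Y\to Y\xrightarrow{g}Z$ to a triangle $C_Y\to Z\to W\to\Sigma C_Y$; applying the octahedral axiom to $C_Y\to Y\to Z$, whose three cones are $D_Y$, $W$ and $\operatorname{Cone}(g)=\Sigma X$, produces a triangle $D_Y\to W\to\Sigma X\to\Sigma D_Y$. Using associativity of $\star$ (\Cref{Notation star operation}) and $\Sigma X\in\{C'\}\star\{D'\}$, this gives $W\in\mathcal{R}_n\star\{C'\}\star\mathcal{R}_m$.

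The crucial step is to commute the small term past the compact: $\mathcal{R}_n\star\{C'\}\subseteq\{C'\}\star\mathcal{R}_n$. Indeed, an object of $\mathcal{R}_n\star\{C'\}$ lies in a triangle $D\to A\to C'\xrightarrow{\partial}\Sigma D$ with $D\in\mathcal{R}_n$; here $\partial\in\HomT{C'}{\Sigma D}$ and $\Sigma D\in\Sigma\mathcal{R}_n\subseteq\mathcal{R}_{n-1}$ with $n-1\geq N_0$, so $\partial=0$ and $A\cong C'\oplus D\in\{C'\}\star\mathcal{R}_n$. Hence $W\in\{C'\}\star\mathcal{R}_n\star\mathcal{R}_m\subseteq\{C'\}\star\mathcal{R}_m\subseteq\mathsf{T}^c\star\mathcal{R}_m$, where $n\geq m$ gives $\mathcal{R}_n\subseteq\mathcal{R}_m$ and $\mathcal{R}_m\star\mathcal{R}_m\subseteq\mathcal{R}_m$ (\Cref{Good Metric}). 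Finally the triangle $C_Y\to Z\to W\to\Sigma C_Y$ gives $Z\in\{C_Y\}\star W\subseteq(\mathsf{T}^c\star\mathsf{T}^c)\star\mathcal{R}_m=\mathsf{T}^c\star\mathcal{R}_m$, since $\mathsf{T}^c$ is a triangulated subcategory. As $m$ was arbitrary, $Z\in\overline{\mathsf{T}^c}$.

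The delicate point — and the reason for fixing the levels in the order above — is the apparent circularity: the threshold $N_0$ depends on $C'$, which in turn depends on how finely $\Sigma X$ is approximated. This is harmless precisely because it suffices to treat a single $m$ at a time, so one may first fix $m$, extract $C'$ and $N_0$, and only afterwards choose the larger approximation level $n$ for $Y$. (One could also note that $\mathsf{T}^c\star\mathcal{R}_m$ is decreasing in $m$, so only large $m$ matter, but the argument above works verbatim for every $m$.)
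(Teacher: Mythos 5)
Your proof is correct and uses essentially the same mechanism as the paper's: approximate one piece first, extract the orthogonality threshold for the resulting compact via \Cref{Lemma Compacts orthogonal to the metric}, approximate the other piece more finely, and then commute the small term past the compact using the splitting of a triangle with vanishing connecting map. The only cosmetic difference is that you close up the cone rather than the middle term (hence the extra octahedron), whereas the paper's middle-term formulation reads off $F \in E_1 \star D_1 \star E_2 \star D_2$ directly from associativity of $\star$.
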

\begin{proof}
    It is clear from the definition (\Cref{Definition closure of compacts}) that $\overline{\mathsf{T}^c}$ is closed under suspensions and desuspensions. So, we just need to show that $\overline{\mathsf{T}^c}$ is closed under extensions. Let $F_1, F_2 \in \overline{\mathsf{T}^c}$ and $F \in \mathsf{T}$, such that there is a triangle $F_1 \to F \to F_2 \to \Sigma F_1$. Choose and fix any integer $n$. Then, by \Cref{Definition closure of compacts}, there exist $E_2 \in \mathsf{T}^c$, and $D_2 \in \mathcal{R}_{n}$ such that $F_2 \in E_2 \star D_2$. By \Cref{Lemma Compacts orthogonal to the metric}, there exists an integer $b$ such that, $\HomT{E_2}{\mathcal{R}_i \cap \overline{\mathsf{T}^c}} = 0$ for all $i \geq n + b$.  Then, again by \Cref{Definition closure of compacts}, there exist objects $E_1$ and $D_1$ such that $F_1 \in E_1 \star  D_1$ with $E_1 \in \mathsf{T}^c$ and $D_1 \in \mathcal{R}_{ n + b + 1 }$. And so, $F \in E_1 \star D_1 \star E_2 \star D_2 $. 
    
    Now, let $C \in D_1 \star E_2$, that is, there exists a triangle $\Sigma^{-1}E_2 \to D_1 \to C \to E_2$. Now, as $\HomT{\Sigma^{-1}E_2}{D_1} = 0$, we get that $C \cong D_1 \oplus E_2 \in E_2 \star D_1$. And so, $D_1 \star E_2 = E_2 \star D_1$. This gives us that,
    \[F \in E_1 \star E_2 \star D_1 \star D_2 \subseteq \mathsf{T}^c\star \mathcal{R}_{n}\]
    As this is true for any $n \in \mathbb{Z}$, $F \in \overline{\mathsf{T}^c}$, and we are done.
\end{proof}

We now show that $\overline{\mathsf{T}^c}$ is thick for a $\mathcal{G}$-preapproximable triangulated category.
\begin{proposition}\label{Proposition Closure of compacts is thick}
    Let $\mathsf{T}$ be a triangulated category with a generating sequence $\mathcal{G}$ (\Cref{Definition pre-generating sequence}), an orthogonal metric $\mathcal{R}$ (\Cref{Good Metric}), and an integer $A >0$ satisfying the definition of a $\mathcal{G}$-preapproximable triangulated category, see \Cref{Definition of G-preapproximability}. Then, $\overline{\mathsf{T}^c}$ is a thick subcategory of $\mathsf{T}$.
\end{proposition}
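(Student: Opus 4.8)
The strategy is: by \Cref{Proposition Closure of compacts is triangulated} the subcategory $\overline{\mathsf{T}^c}$ is already triangulated, so it remains only to check that it is closed under direct summands. Fix $F\in\overline{\mathsf{T}^c}$ with a decomposition $F\cong F_1\oplus F_2$, and write $\iota_k\colon F_k\to F$, $p_k\colon F\to F_k$ for the inclusions and projections and $e_1\colonequals\iota_1 p_1$. Unwinding \Cref{Definition closure of compacts}, to prove $F_1\in\overline{\mathsf{T}^c}$ it suffices, for each fixed $n\in\mathbb{Z}$, to construct a compact object $E'$ and a triangle $E'\to F_1\to D'\to\Sigma E'$ with $D'\in\mathcal{R}_n$.

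For this I would use two approximations of $F$, at two different scales, together with the octahedral axiom. Since $F\in\overline{\mathsf{T}^c}=\bigcap_{m\in\mathbb{Z}}\mathsf{T}^c\star\mathcal{R}_m$, choose a triangle $E\xrightarrow{\alpha}F\to D\to\Sigma E$ with $E\in\mathsf{T}^c$ and $D\in\mathcal{R}_n$. As $E$ is compact, \Cref{Lemma Compacts orthogonal to the metric} provides an integer $N\geq n$ with $\HomT{\Sigma^j E}{\mathcal{R}_i}=0$ for all $i\geq N$ and $j\in\{-1,0,1\}$; now choose a finer triangle $\tilde E\xrightarrow{\tilde\alpha}F\to\tilde D\to\Sigma\tilde E$ with $\tilde E\in\mathsf{T}^c$ and $\tilde D\in\mathcal{R}_{N+1}$. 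From $\HomT{E}{\tilde D}=0=\HomT{E}{\Sigma^{-1}\tilde D}$ one gets that composition with $\tilde\alpha$ is a bijection $\HomT{E}{\tilde E}\xrightarrow{\ \sim\ }\HomT{E}{F}$, so $\alpha$, $e_1\alpha$ and $(\operatorname{id}_F-e_1)\alpha$ lift uniquely along $\tilde\alpha$ to maps $\delta,\mu,\nu\colon E\to\tilde E$, and by uniqueness $\delta=\mu+\nu$; note that the cones of $\delta,\mu,\nu$ are cones of maps between compacts, hence again compact. Running the octahedral axiom on $E\xrightarrow{\delta}\tilde E\xrightarrow{\tilde\alpha}F$ (composite $\alpha$) gives $\operatorname{Cone}(\delta)\in\Sigma^{-1}\tilde D\star D\subseteq\mathcal{R}_N\star\mathcal{R}_n\subseteq\mathcal{R}_n$; running it on $E\xrightarrow{\nu}\tilde E\xrightarrow{\tilde\alpha}F$ — whose composite is $(\operatorname{id}_F-e_1)\alpha=\iota_2 p_2\alpha$, and so has cone isomorphic to $F_1\oplus\operatorname{Cone}(p_2\alpha)$ — gives $F_1\oplus\operatorname{Cone}(p_2\alpha)\in\operatorname{Cone}(\nu)\star\tilde D\subseteq\mathsf{T}^c\star\mathcal{R}_n$. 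Throughout I use the defining features of an (orthogonal) metric from \Cref{Good Metric}: $\mathcal{R}_m\star\mathcal{R}_m\subseteq\mathcal{R}_m$, $\Sigma^{\pm1}\mathcal{R}_{m+1}\subseteq\mathcal{R}_m$, and the closure of each $\mathcal{R}_m$ under direct summands.

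The remaining step — which I expect to be the main obstacle — is to pass from $F_1\oplus\operatorname{Cone}(p_2\alpha)\in\mathsf{T}^c\star\mathcal{R}_n$ to $F_1\in\mathsf{T}^c\star\mathcal{R}_n$, i.e.\ to strip off the parasitic summand $\operatorname{Cone}(p_2\alpha)$. The difficulty is structural: the map $\alpha$ genuinely mixes the two summands of $F$, so for instance the naive approximation $p_1\alpha\colon E\to F_1$ has cone lying in $D\star\Sigma F_2$, which is not small, while $\operatorname{Cone}(p_2\alpha)$ lies only in $D\star\Sigma F_1$. The point of having introduced the finer approximation $\tilde\alpha$ at a scale $N$ chosen after, and large relative to, $E$ is precisely that maps out of $E$ no longer see $\tilde D$, so the spurious $F_1$- and $F_2$-contributions from the coarse approximation can be made to cancel against terms controlled by $\tilde D\in\mathcal{R}_{N+1}\subseteq\mathcal{R}_n$. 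Carrying out this cancellation carefully — possibly with one or two further intermediate scales, or else by organising the approximating compacts into a Cauchy sequence and realising $F_1$ as a homotopy colimit — produces the desired triangle $E'\to F_1\to D'\to\Sigma E'$ with $D'\in\mathcal{R}_n$, and, combined with \Cref{Proposition Closure of compacts is triangulated}, shows that $\overline{\mathsf{T}^c}$ is thick.
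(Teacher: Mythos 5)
Your proof is incomplete, and the missing step is precisely the whole content of the proposition. Everything up to and including the containment $F_1\oplus\operatorname{Cone}(p_2\alpha)\in\mathsf{T}^c\star\mathcal{R}_n$ is correct but does not get you closer to the goal: you now need to know that a direct summand of an object of $\mathsf{T}^c\star\mathcal{R}_n$ again lies in $\mathsf{T}^c\star\mathcal{R}_n$, and the failure of $\star$-products of summand-closed subcategories to be summand-closed is exactly the phenomenon the proposition is about. Worse, the parasitic summand is not actually small: as you note, $\operatorname{Cone}(p_2\alpha)\in D\star\Sigma F_1$, so it contains an uncontrolled copy of $\Sigma F_1$ — the very object you are trying to approximate. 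No choice of further scales $N$ will make this term cancel, because the obstruction is not about the metric at all; it is the idempotent-completion problem, and refining $\tilde D$ only shrinks the $D$-part, never the $\Sigma F_1$-part. So the sentence ``carrying out this cancellation carefully \ldots produces the desired triangle'' is an unsubstantiated hope, not an argument, and the proof as written does not establish the claim.

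The way the paper closes this gap is the B\"okstedt--Neeman odd-shift swindle, which sidesteps the cancellation entirely. Writing $F\oplus F'\in\overline{\mathsf{T}^c}$ for the object to be split, one first shows $F\oplus\Sigma^{2i+1}F\in\overline{\mathsf{T}^c}$ for every $i$: the cone of $0\oplus 1\colon F\oplus F'\to F\oplus F'$ is $F\oplus\Sigma F$, and iterating cones of such $0\oplus 1$ maps inductively produces all the odd shifts, using only that $\overline{\mathsf{T}^c}$ is triangulated (\Cref{Proposition Closure of compacts is triangulated}). Then, for a fixed $n$, one uses that $\overline{\mathsf{T}^c}\subseteq\bigcup_m\mathcal{R}_m$ (\Cref{Lemma Definition of G-approximability (1)}, which needs the $\mathcal{G}$-preapproximability hypotheses (0) and (1) that your argument never invokes) together with summand-closure of each $\mathcal{R}_m$ to find an \emph{even} shift with $\Sigma^{2i+2}F\in\mathcal{R}_n$. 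Finally the triangle $F\oplus\Sigma^{2i+1}F\to F\xrightarrow{0}\Sigma^{2i+2}F\to\Sigma F\oplus\Sigma^{2i+2}F$ exhibits $F\in\bigl(F\oplus\Sigma^{2i+1}F\bigr)\star\Sigma^{2i+2}F\subseteq\mathsf{T}^c\star\mathcal{R}_n$. If you want to salvage your two-scale construction you would still have to run some version of this swindle on $F_1\oplus\operatorname{Cone}(p_2\alpha)$ at the end, at which point the two-scale setup is doing no work; I recommend replacing the argument by the swindle outright.
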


\begin{proof}
    We already know that $\overline{\mathsf{T}^c} $ is triangulated by \Cref{Proposition Closure of compacts is triangulated}. We just need to show it is closed under direct summands. Let $F \oplus F' \in \overline{\mathsf{T}^c} $. We want to show that $F \in \overline{\mathsf{T}^c}
    $. We first show that $F \oplus \Sigma^{2i + 1}F \in \overline{\mathsf{T}^c} $ for all $i \in \mathbb{Z}$. We give the proof for $i \geq 0$, the proof for $i < 0$ goes similarly. 
    
    As $\operatorname{Cone}{(F \oplus F' \xrightarrow{0 \oplus 1} F\oplus F')} \cong F \oplus \Sigma F$, we have that $F \oplus \Sigma F \in \overline{\mathsf{T}^c} $ as $\overline{\mathsf{T}^c}$ is triangulated by \Cref{Proposition Closure of compacts is triangulated}, which gives us the claim for $i = 0$. 
    
    Now suppose we know the claim up to some non-negative integer $i$, that is, $F \oplus \Sigma^{2i + 1} F$ lies in $\overline{\mathsf{T}^c}$. Further, $\Sigma^{2i + 1}(F \oplus \Sigma F) \cong \Sigma^{2i + 1}F \oplus \Sigma^{2i + 2}F \in \overline{\mathsf{T}^c}$. Then, by \Cref{Proposition Closure of compacts is triangulated}, we get that, $\operatorname{Cone}{(\Sigma^{2i + 2}F \oplus \Sigma^{2i + 1}F \xrightarrow{0 \oplus 1} F\oplus \Sigma^{2i + 1}F)} \cong F\oplus \Sigma^{2(i + 1) + 1}F \in \overline{\mathsf{T}^c}$, and hence we get the induction step. So, we have shown that $F \oplus \Sigma^{2i + 1}F \in \overline{\mathsf{T}^c} $ for all $n \in \mathbb{Z}$.
    
    Now, fix an integer $n$. As by assumption $\mathsf{T}^c \subseteq \bigcup_{i \in \mathbb{Z}}\Sigma^i \mathcal{R}_n $ for any $n \in \mathbb{Z}$, there exists an integer $i$ such that $\Sigma^{2i + 2}F \in \mathcal{R}_{n}$. We can show this as follows; there exists an integer $j$ such that $\Sigma^j F \in \mathcal{R}_{n + 1}$. But, as $\mathcal{R}$ is an orthogonal metric, it is in particular an extended good metric, and so by \Cref{Good Metric}(2), $\Sigma^{-1}\mathcal{R}_{n + 1} \cup \mathcal{R}_{n + 1} \cup \Sigma\mathcal{R}_{n + 1} \subseteq \mathcal{R}_n$. So, in particular, $\Sigma^j F, \Sigma^{j + 1}F \in \mathcal{R}_n$, and $j$ or $j+1$ is even, which gives us the required $i$.
    
    From the triangle, 
    \[ F \oplus \Sigma^{2i + 1}F \to F \xrightarrow{0} \Sigma^{2i + 2 }F \to \Sigma F \oplus \Sigma^{2i + 2}F\]
    we get that, $F \in (F \oplus \Sigma^{2i + 1}F )\star ( \Sigma^{2i + 2 }F ) \subseteq \overline{\mathsf{T}^c} \star \mathcal{R}_n \subseteq \mathsf{T}^c \star \mathcal{R}_n $ by \Cref{Good Metric}(1). As this is true for all $n \in \mathbb{Z}$, $F \in \overline{\mathsf{T}^c}$.
\end{proof}

\section{$\mathcal{G}$-Approximability}

We now give the main definitions of this work. We refer the reader to the introduction for some intuition for the following definition, see \Cref{Example to explain approximability} and \Cref{Intuition for approximability}. 

\begin{definition}\label{Definition of G-approximability}
    Let $\mathsf{T}$ be a compactly generated triangulated category with a generating sequence $\mathcal{G}$ (see \Cref{Definition of generating sequence}).
    Then, we say $\mathsf{T}$ is  \emph{weakly $\mathcal{G}$-approximable} if there exists an orthogonal metric $\mathcal{R}$ (see \Cref{Good Metric}), and an integer $A > 0$ such that,
    \begin{enumerate}
        \item The triangulated category $\mathsf{T}$, the orthogonal metric $\mathcal{R}$, and the integer $A > 0$ satisfy the definition of $\mathcal{G}$-preapproximability, see \Cref{Definition of G-preapproximability}
        \item $\HomT{\mathcal{G}^n}{\mathcal{R}_i} = 0$ for all $n,i \in \mathbb{Z}$ such that $n + i \geq A$.
        \item For all $i \in \mathbb{Z} $, and $F \in \mathcal{R}_i$, there exists a triangle $E \to F \to D \to \Sigma E$ with $D \in \mathcal{R}_{i + 1}$ and $E \in \overline{\mathcal{G}}^{[-A-i,A-i]}$ (see \Cref{Notation Subcategories for (pre)-generating sequences}).
    \end{enumerate}
    We say $\mathsf{T}$ is \emph{$\mathcal{G}$-approximable} if we further have,
    \begin{enumerate}[label={  (\arabic*)$'$}]
    \setcounter{enumi}{2}
        \item For all $i \in \mathbb{Z} $, and $F \in \mathcal{R}_i$, there exists a triangle $E \to F \to D \to \Sigma E$ with $D \in \mathcal{R}_{i + 1}$ and $E \in \overline{\mathcal{G}}^{[-A-i,A-i]}_{A}$ (see \Cref{Notation Subcategories for (pre)-generating sequences}).
    \end{enumerate}
    Note that condition (2) is similar too, but stronger than, \Cref{Definition of G-preapproximability}(2).
\end{definition}

We also define a weaker notion, which will be enough to show the representability theorems. The only difference in this definition is that we only ask for objects in the closure of the compacts to be approximable.

\begin{definition}\label{Definition of G-quasiapproximability}
    Let $\mathsf{T}$ be a compactly generated triangulated category with a generating sequence $\mathcal{G}$ (see \Cref{Definition of generating sequence})
    Then, we say $\mathsf{T}$ is  \emph{weakly $\mathcal{G}$-quasiapproximable} if there exists an orthogonal metric $\mathcal{R}$ (see \Cref{Good Metric}), and an integer $A > 0$ such that,
    \begin{enumerate}
        \item The triangulated category $\mathsf{T}$, the orthogonal metric $\mathcal{R}$, and the integer $A > 0$ satisfy the definition of $\mathcal{G}$-preapproximability, see \Cref{Definition of G-preapproximability}.
        \item $\HomT{\mathcal{G}^n}{\mathcal{R}_i} = 0$ for all $n,i \in \mathbb{Z}$ such that $n + i \geq A$.
        \item For all $i \in \mathbb{Z} $, and $F \in \mathcal{R}_i \cap \overline{\mathsf{T}^c}$, there exists a triangle $E \to F \to D \to \Sigma E$ with $D \in \mathcal{R}_{i + 1}\cap \overline{\mathsf{T}^c}$ and $E \in \overline{\mathcal{G}}^{[-A-i,A-i]}$.
    \end{enumerate}
    We say $\mathsf{T}$ is \emph{$\mathcal{G}$-quasiapproximable} if we further have,
    \begin{enumerate}[label={(\arabic*)$'$}]
    \setcounter{enumi}{2}
        \item For all $i \in \mathbb{Z} $, and $F \in \mathcal{R}_i\cap \overline{\mathsf{T}^c}$, there exists a triangle $E \to F \to D \to \Sigma E$ with $D \in \mathcal{R}_{i + 1}\cap \overline{\mathsf{T}^c}$ and $E \in \overline{\mathcal{G}}^{[-A-i,A-i]}_{A}$.
    \end{enumerate}
\end{definition}
It will follow from \Cref{Proposition 3 of section 5} that any (weakly) $\mathcal{G}$-approximable triangulated category is (weakly) $\mathcal{G}$-quasiapproximable.

\begin{remark}\label{Remark conditions of G-approximability}
    For the ease of the reader, we explicitly write down the conditions required in the definition of $\mathcal{G}$-approximability. Note that analogous statements hold for $\mathcal{G}$-quasiapproximability.
    
    Let $\mathsf{T}$ be a compactly generated triangulated category with a generating sequence $\mathcal{G}$ (see \Cref{Definition of generating sequence})
    Then, $\mathsf{T}$ is \emph{weakly $\mathcal{G}$-approximable} if there exists an orthogonal metric $\mathcal{R}$ (see \Cref{Good Metric}), and an integer $A > 0$ such that,
    \begin{enumerate}
    \setcounter{enumi}{-1}
        \item For each integer $i$, we have that, $\mathcal{M}^{\mathcal{G}}_{i} \subseteq \bigcup_{|n|\leq A}\Sigma^n\mathcal{M}^{\mathcal{G}}_{i+1}$ see \Cref{Definition of generating sequence}(2) for the definition of $\mathcal{M}^{\mathcal{G}}$.
        \item $\mathcal{M}^{\mathcal{G}}_{i + A} \subseteq \mathcal{R}_{i}\cap \mathsf{T}^c$ for all $i \in \mathbb{Z}$.
        \item $\HomT{\mathcal{G}^n}{\mathcal{R}_i} = 0$ for all $n,i \in \mathbb{Z}$ such that $n + i \geq A$. 
        \item For all $i \in \mathbb{Z} $, and $F \in \mathcal{R}_i$, there exists a triangle $E \to F \to D \to \Sigma E$ with $D \in \mathcal{R}_{i + 1}$ and $E \in \overline{\mathcal{G}}^{[-A-i,A-i]}$.
    \end{enumerate}
    We say $\mathsf{T}$ is \emph{$\mathcal{G}$-approximable} if we further have,
    \begin{enumerate}[label={(\arabic*)$'$}]
    \setcounter{enumi}{2}
        \item For all $i \in \mathbb{Z} $, and $F \in \mathcal{R}_i$, there exists a triangle $E \to F \to D \to \Sigma E$ with $D \in \mathcal{R}_{i + 1}$ and $E \in \overline{\mathcal{G}}^{[-A-i,A-i]}_{A}$.
    \end{enumerate}
\end{remark}

\begin{convention}\label{Convention on approximability}
    We will use the following conventions to make notation less cumbersome.

    \begin{enumerate}
        \item We write approx (resp. quasiapprox) instead of approximable/approximability (resp.\ quasiapproximable/quasiapproximability).
        
        \item Let $\mathsf{T}$ be a triangulated category. We say $\mathsf{T}$ is weakly $\mathcal{G}$-approx (resp. $\mathcal{G}$-approx) if there exists a generating sequence $\mathcal{G}$ (\Cref{Definition of generating sequence}) for $\mathsf{T}$, and $\mathsf{T}$ is weakly $\mathcal{G}$-approx (resp. $\mathcal{G}$-approx), see \Cref{Definition of G-approximability}. Similar convention holds for (weak) $\mathcal{G}$-quasiapprox, see \Cref{Definition of G-quasiapproximability}.
        \item Finally, we say $(\mathsf{T}, \mathcal{R}, A)$ is weakly $\mathcal{G}$-approx (resp.\ $\mathcal{G}$-approx) if $\mathsf{T}$ is a compactly generated triangulated category, $\mathcal{G}$ is a generating sequence for $\mathsf{T}$, $\mathcal{R}$ is an orthogonal metric on $\mathsf{T}$, and $A$ is a positive integer satisfying the conditions of $\mathcal{G}$-approx (resp. $\mathcal{G}$-approx). Similar convention holds for (weak) $\mathcal{G}$-quasiapprox.
    \end{enumerate}
    
\end{convention}

\begin{remark}\label{Remark on Definition of G-preapproximability(1)}
    From  \Cref{Definition of G-preapproximability}(1), we get that $\mathcal{M}^{\mathcal{G}} \preceq_{\mathbb{N}} \mathcal{R}\cap \mathsf{T}^c$ as extended good metrics on $\mathsf{T}^c$, see \Cref{Definition equivalence relation on extended good metrics}(2) for the definition of $\mathcal{M}^{\mathcal{G}}$. Note that this holds for any weakly $\mathcal{G}$-approx (or weakly $\mathcal{G}$-quasiapprox) triangulated category .
\end{remark}

We now prove some results related to the ideas introduced above. We begin by showing that we can get \say{arbitrarily good approximations} of objects in $\mathcal{R}_i$.

\begin{proposition}\label{Proposition Approximating Sequence}
    Let $\mathsf{T}$ be a compactly generated triangulated category with a generating sequence $\mathcal{G}$ (\Cref{Definition of generating sequence}), an orthogonal metric $\mathcal{R}$ (see \Cref{Good Metric}) and an integer $A > 0$ satisfying \Cref{Definition of G-approximability}(3). Then, with notation as in \Cref{Notation Subcategories for (pre)-generating sequences},
    \begin{enumerate}
        \item Let $i < j$ be any positive integers. For any $F \in \mathcal{R}_i$, there exists a triangle $E \to F \to D \to \Sigma E$ with $E \in \overline{\mathcal{G}}^{[-A-j+1,A-i]}$ and $D \in \mathcal{R}_j$. If we further assume that $\mathsf{T}$ satisfies \Cref{Definition of G-approximability}$(3')$, we can produce the triangle with $E \in \overline{\mathcal{G}}^{[-A-j+1,A-i]}_{(j-i)A}$.
        \item For any $F \in \mathcal{R}_i$, there exists a sequence $E_1 \to E_2 \to E_3 \to E_4 \to \cdots $ of objects in $\mathsf{T}$ with compatible maps to $F$, such that for all $n\geq 1$, in the triangle $E_n \to F \to D_n \to \Sigma E_n$, we have $E_n \in \overline{\mathcal{G}}^{[-A-n-i+1,A-i]}$ and $D_n \in \mathcal{R}_{i+n}$. If we further assume that $\mathsf{T}$ satisfies \Cref{Definition of G-approximability}$(3')$, we can get the sequence with $E_n \in \overline{\mathcal{G}}^{[-A-n-i+1,A-i]}_{nA}$ for all $n \geq 1$.
    \end{enumerate}
    Further, the analogous result for $F \in \mathcal{R}_i \cap \overline{\mathsf{T}^c}$ holds if we instead assume that $\mathsf{T}$ is a compactly generated triangulated category with a generating sequence $\mathcal{G}$, an orthogonal metric $\mathcal{R}$ and an integer $A > 0$ satisfying \Cref{Definition of G-quasiapproximability}(3) (resp.\ \Cref{Definition of G-quasiapproximability}(3$'$)) instead of \Cref{Definition of G-approximability}(3) (resp.\ \Cref{Definition of G-approximability}(3$'$)).
\end{proposition}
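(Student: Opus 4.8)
The plan is to iterate condition (3) of $\mathcal{G}$-approximability (\Cref{Definition of G-approximability}) and to assemble the resulting pieces with the octahedral axiom. Observe first that part (1) is nothing but the instance $n=j-i$ of part (2): the $(j-i)$-th triangle $E_{j-i}\to F\to D_{j-i}\to\Sigma E_{j-i}$ produced below has $D_{j-i}\in\mathcal{R}_{i+(j-i)}=\mathcal{R}_j$ and $E_{j-i}\in\overline{\mathcal{G}}^{[-A-(j-i)-i+1,\,A-i]}=\overline{\mathcal{G}}^{[-A-j+1,\,A-i]}$ (respectively $E_{j-i}\in\overline{\mathcal{G}}^{[-A-j+1,\,A-i]}_{(j-i)A}$ under the stronger hypothesis (3$'$)). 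So it suffices to prove (2).

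First I would build the sequence. Put $D_0\colonequals F\in\mathcal{R}_i$, and apply \Cref{Definition of G-approximability}(3) to $D_{m-1}\in\mathcal{R}_{i+m-1}$ for $m=1,2,\dots$ to obtain triangles $E^{(m)}\to D_{m-1}\xrightarrow{f_m}D_m\to\Sigma E^{(m)}$ with $D_m\in\mathcal{R}_{i+m}$ and $E^{(m)}\in\overline{\mathcal{G}}^{[-A-i-m+1,\,A-i-m+1]}$ (respectively $E^{(m)}\in\overline{\mathcal{G}}^{[-A-i-m+1,\,A-i-m+1]}_A$ if (3$'$) holds). Writing $g_n\colonequals f_n\circ\cdots\circ f_1\colon F\to D_n$, complete $g_n$ to a triangle $E_n\to F\xrightarrow{g_n}D_n\to\Sigma E_n$; by construction $D_n\in\mathcal{R}_{i+n}$.

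Next I would feed each factorization $F\xrightarrow{g_{n-1}}D_{n-1}\xrightarrow{f_n}D_n$ into the octahedral axiom. This produces two things at once: a triangle $E_{n-1}\to E_n\to E^{(n)}\to\Sigma E_{n-1}$, whence, by induction on $n$ and associativity of $\star$ (\Cref{Notation star operation}), $E_n\in E^{(1)}\star E^{(2)}\star\cdots\star E^{(n)}$; and a morphism of triangles from $(E_{n-1}\to F\to D_{n-1}\to\Sigma E_{n-1})$ to $(E_n\to F\to D_n\to\Sigma E_n)$ whose $F$-component is $\operatorname{id}_F$, the left-hand component $E_{n-1}\to E_n$ being the maps of the sequence, compatible with the maps to $F$ by commutativity of the squares. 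Since $[-A-i-m+1,\,A-i-m+1]\subseteq[-A-n-i+1,\,A-i]$ for $1\leq m\leq n$, each $E^{(m)}$ lies in $\overline{\mathcal{G}}^{[-A-n-i+1,\,A-i]}$; as this subcategory has the form $\operatorname{smd}(\operatorname{Coprod}(-))$ it is closed under extensions, so $E_n\in\overline{\mathcal{G}}^{[-A-n-i+1,\,A-i]}$, the required bound. Under (3$'$) one has instead $E^{(m)}\in\overline{\mathcal{G}}^{[-A-n-i+1,\,A-i]}_A$, and combining $\operatorname{Coprod}_a(-)\star\operatorname{Coprod}_b(-)=\operatorname{Coprod}_{a+b}(-)$ with $\operatorname{smd}(\mathsf{X})\star\operatorname{smd}(\mathsf{Y})\subseteq\operatorname{smd}(\mathsf{X}\star\mathsf{Y})$ (the latter obtained by padding an extension of summands with complementary summands on both ends) shows the $n$-fold extension lands in $\overline{\mathcal{G}}^{[-A-n-i+1,\,A-i]}_{nA}$, as claimed.

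Finally, the version for $F\in\mathcal{R}_i\cap\overline{\mathsf{T}^c}$ is obtained by running exactly the same argument with \Cref{Definition of G-quasiapproximability}(3) (resp. (3$'$)) in place of \Cref{Definition of G-approximability}(3): since that condition, applied to an object of $\mathcal{R}_{i+m-1}\cap\overline{\mathsf{T}^c}$, returns $D_m\in\mathcal{R}_{i+m}\cap\overline{\mathsf{T}^c}$, the sequence never leaves $\overline{\mathsf{T}^c}$ and all the conclusions persist. I expect the only genuinely technical points to be the index bookkeeping for the ranges $[-A-n-i+1,\,A-i]$ and the elementary extension-counting lemma $\operatorname{smd}(\operatorname{Coprod}_a)\star\operatorname{smd}(\operatorname{Coprod}_b)\subseteq\operatorname{smd}(\operatorname{Coprod}_{a+b})$; the rest is a routine unwinding of the octahedral axiom in its morphism-of-triangles form, so I do not anticipate a serious obstacle.
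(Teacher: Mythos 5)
Your proposal is correct and follows essentially the same route as the paper: reduce (1) to (2), iterate condition (3) (resp.\ (3$'$)) on the successive error terms $D_m$, and splice the resulting triangles together with the octahedral axiom applied to $F \to D_{n-1} \to D_n$, keeping track of the shift ranges and the extension count via $\overline{\mathcal{G}}^{[a,b]}_{p}\star\overline{\mathcal{G}}^{[a',b']}_{q}\subseteq\overline{\mathcal{G}}^{[\min(a,a'),\max(b,b')]}_{p+q}$. The only cosmetic difference is that you define $E_n$ as the cocone of the composite $F\to D_n$ and then extract the triangle $E_{n-1}\to E_n\to E^{(n)}$ from the octahedron, whereas the paper builds $E_{n+1}$ inductively inside the octahedron; these are the same argument.
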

\begin{proof}
    Clearly $(2) \implies (1)$. So, it is enough to show $(2)$.
    We proceed by induction on $n \geq 1$. Let $F \in \mathcal{R}_i$. Then, we get the case $n=1$ by \Cref{Definition of G-approximability}(3) and (3$'$). Now, suppose we have got a sequence $E_1 \to \cdots \to E_n$ with the required properties. So, we have the triangle $E_n \to F \to D_n \to \Sigma E_n$ with $E_n \in \overline{\mathcal{G}}^{[-A-n-i+1,A-i]}$ and $D_n \in \mathcal{R}_{i + n}$. If $\mathsf{T}$ further satisfies \Cref{Definition of G-approximability}$(2')$, we can assume that $E\in \overline{\mathcal{G}}^{[-A-n-i+1,A-i]}_{nA}$.
    
    By \Cref{Definition of G-approximability}(3), we know that there exists a triangle $E' \to D_n \to D_{n + 1} \to \Sigma E'$ with $E' \in \overline{\mathcal{G}}^{[-A-n-i,A-n-i]}$ and $D_{n + 1} \in \mathcal{R}_{i + n + 1}$. If $\mathsf{T}$ further satisfies \Cref{Definition of G-approximability}$(3')$, we can arrange so that $E' \in \overline{\mathcal{G}}^{[-A-n-i,A-n-i]}_{A}$.
    
    Using the octahedral axiom on $F \to D_n \to D_{n + 1}$, we get,
    \[\begin{tikzcd}
	E_n & E_{n + 1} & E' \\
	E_n & F & D_n \\
	& D_{n + 1} & D_{n + 1}
	\arrow[from=1-1, to=1-2]
	\arrow[from=1-2, to=1-3]
	\arrow[from=1-1, to=2-1, equals]
	\arrow[from=2-1, to=2-2]
	\arrow[from=1-2, to=2-2]
	\arrow[from=2-2, to=3-2]
	\arrow[from=3-2, to=3-3, equals]
	\arrow[from=2-2, to=2-3]
	\arrow[from=2-3, to=3-3,]
	\arrow[from=1-3, to=2-3]
\end{tikzcd}\]
    Note that here 
    \[E_{n + 1} \in \overline{\mathcal{G}}^{[-A-n-i+1,A-i]} \star \overline{\mathcal{G}}^{[-A-n-i,A-n-i]} \subseteq \overline{\mathcal{G}}^{[-A-n-i,A-i]}\] 
    If $\mathsf{T}$ further satisfies \Cref{Definition of G-approximability}$(2')$, we get that
    \[E_{n + 1} \in \overline{\mathcal{G}}^{[-A-n-i+1,A-i]}_{nA} \star \overline{\mathcal{G}}^{[-A-n-i,A-n-i]}_{A} \subseteq \overline{\mathcal{G}}^{[-A-n-i,A-i]}_{(n + 1)A}\] 
    and so, we are done via induction.
\end{proof}

\begin{lemma}\label{Lemma Intersection of the metric is zero}
    Let $\mathsf{T}$ be a compactly generated triangulated category with a pre-generating sequence $\mathcal{G}$ (\Cref{Definition pre-generating sequence}) and an extended good metric $\mathcal{R}$ (\Cref{Good Metric}) such that, for all $n \in \mathbb{Z}$, $\HomT{\mathcal{G}^n}{\mathcal{R}_i} = 0$ for $i >> 0$.
    Then, 
    \begin{enumerate}
        \item $\bigcap_{i \in \mathbb{Z}}\mathcal{R}_i = \{0\}$
        \item Let $E_1 \to E_2 \to E_3 \to E_4 \to \cdots $ be a sequence with compatible maps to an object $F$, such that for each $n \in \mathbb{Z}$, $\operatorname{Cone}(E_i \to F) \in \mathcal{R}_{n}$ for all $i >> 0$. Then the non-canonical map $\hocolim E_n \to F$ is an isomorphism.
    \end{enumerate} 
\end{lemma}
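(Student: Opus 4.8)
The plan is to reduce both parts to the compact generation of $\mathsf{T}$ together with the hypothesis $\HomT{\mathcal{G}^n}{\mathcal{R}_i}=0$ for $i\gg 0$; recall that by \Cref{Lemma Compacts orthogonal to the metric} this upgrades to $\HomT{H}{\mathcal{R}_i}=0$ for $i\gg 0$ for \emph{every} compact object $H$, since $\mathcal{G}$ is a pre-generating sequence.

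For (1), the inclusion $\{0\}\subseteq\bigcap_i\mathcal{R}_i$ is clear since each $\mathcal{R}_i$ contains $0$. Conversely, I would take $X\in\bigcap_{i\in\mathbb{Z}}\mathcal{R}_i$, fix $n$, and choose $i$ large enough that $\HomT{\mathcal{G}^n}{\mathcal{R}_i}=0$; since $X\in\mathcal{R}_i$ this forces $\HomT{\mathcal{G}^n}{X}=0$. As $n$ is arbitrary, $X\in\big(\bigcup_{n}\mathcal{G}^n\big)^{\perp}$, which is zero because $\mathcal{G}$ is a pre-generating sequence and $\mathsf{T}$ is compactly generated (see \Cref{Definition pre-generating sequence}); hence $X=0$.

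For (2), write $\phi\colon\hocolim E_n\to F$ for the comparison map, normalised so that $E_m\to\hocolim E_n\xrightarrow{\phi}F$ is the given structure map $e_m\colon E_m\to F$ for every $m$, and set $C=\operatorname{Cone}(\phi)$. Since $\mathsf{T}$ is compactly generated it suffices to show $\HomT{P}{C}=0$ for every compact $P$; applying $\HomT{P}{-}$ and $\HomT{\Sigma^{-1}P}{-}$ to the triangle $\hocolim E_n\xrightarrow{\phi}F\to C\to\Sigma\hocolim E_n$, this reduces to proving that $\phi_*\colon\HomT{Q}{\hocolim E_n}\to\HomT{Q}{F}$ is an isomorphism for every compact $Q$. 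Now $Q$ compact gives the standard identification $\HomT{Q}{\hocolim E_n}\cong\operatorname{colim}_n\HomT{Q}{E_n}$ under which $\phi_*$ is the colimit of the maps $(e_n)_*$. Fixing $Q$, I would use \Cref{Lemma Compacts orthogonal to the metric} to choose $j$ with $\HomT{Q}{\mathcal{R}_{j-1}}=0$ (hence also $\HomT{Q}{\mathcal{R}_{j}}=0$), and then the Cauchy-type hypothesis on the sequence to find $N$ with $\operatorname{Cone}(e_n)\in\mathcal{R}_j$ for all $n\geq N$. Applying $\HomT{Q}{-}$ to $E_n\xrightarrow{e_n}F\to\operatorname{Cone}(e_n)\to\Sigma E_n$ and using that both $\operatorname{Cone}(e_n)\in\mathcal{R}_j$ and $\Sigma^{-1}\operatorname{Cone}(e_n)\in\Sigma^{-1}\mathcal{R}_j\subseteq\mathcal{R}_{j-1}$ are killed by $\HomT{Q}{-}$ (here \Cref{Good Metric}(2)), one gets that $(e_n)_*\colon\HomT{Q}{E_n}\to\HomT{Q}{F}$ is an isomorphism for all $n\geq N$. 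These isomorphisms are compatible with the transition maps of the direct system, so the system stabilises and $\phi_*$ is an isomorphism.

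The main obstacle is purely bookkeeping inside part (2): one must invoke the metric hypothesis at a level high enough to annihilate $\HomT{Q}{-}$ on both $\operatorname{Cone}(e_n)$ \emph{and} its desuspension — this is the reason for passing from $j$ to $j-1$ — and one must check carefully that the eventual isomorphisms $(e_n)_*$ are compatible with the structure maps of the sequence, so that they assemble into an isomorphism of colimits. Everything else is routine manipulation with triangles and with the colimit description of a homotopy colimit against a compact object.
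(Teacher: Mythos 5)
Your proof is correct and follows essentially the same route as the paper's: part (1) is identical, and for part (2) both arguments show that $\HomT{Q}{E_n}\to\HomT{Q}{F}$ becomes an isomorphism for $n\gg 0$ via the long exact sequence and the eventual vanishing of maps into $\mathcal{R}_i$, then combine this with $\HomT{Q}{\hocolim E_n}\cong\colim\HomT{Q}{E_n}$ and compact generation to kill the cone --- the only difference being that the paper tests against $\Sigma^r G$ for $G\in\mathcal{G}^j$ and $-1\le r\le 1$ where you test against arbitrary compacts $Q$. One pedantic caveat: \Cref{Lemma Compacts orthogonal to the metric} is stated for an \emph{orthogonal} metric whereas here $\mathcal{R}$ is only an extended good metric, but since its proof never uses orthogonality your citation is harmless.
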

\begin{proof}
    \begin{enumerate}
        \item Let $F \in \bigcap_{i \in \mathbb{Z} }\mathcal{R}_i$. Then, $\HomT{\mathcal{G}^n}{F}=0$ for any $n \in \mathbb{Z}$ by hypothesis. As $(\bigcup_{j \in \mathbb{Z}}\mathcal{G}^j)^{\perp} = 0$, see \Cref{Definition pre-generating sequence}, we get that $F = 0$.
        \item Let $j \in \mathbb{Z}$ be any integer, and let $G \in \mathcal{G}^j$. Note that $\Sigma^i \mathcal{R}_{j+|i|} \subseteq \mathcal{R}_{j}$ for any integers $i$ and $j$, see \Cref{Good Metric}(2). So, by the hypothesis, 
        there exists an integer $a>0$ such that $\HomT{\Sigma^{r}G}{\mathcal{R}_p} = 0$  for all $p \geq a$ and $-1 \leq r \leq 2$. 
        
        As $\operatorname{Cone}(E_q \to F) \in \mathcal{R}_a$ for all $q >> 0$, by the long exact sequence we get from applying the functor $\HomT{G}{-}$ to the triangle $\Sigma^{-1} \operatorname{Cone}(E_q \to F) \to E_q \to F \to \operatorname{Cone}(E_q \to F)$, we get that the natural map $\HomT{\Sigma^r G}{E_{q}} \to \HomT{\Sigma^r G}{F}$ is an isomorphism for all $q >> 0$ and $-1 \leq r \leq 1$. Let $E \colonequals \hocolim E_n$. As $\HomT{\Sigma^r G}{E} = \colim\HomT{\Sigma^r G}{E_n}$ for any $r \in \mathbb{Z}$ by \cite[Lemma 2.8]{Neeman:1996}, we get that the natural map gives an isomorphism $\HomT{\Sigma^r G}{E} \cong \HomT{\Sigma^r G}{F}$ for $-1 \leq r \leq 1$.  

        So, by the long exact sequence we get from applying the functor $\HomT{G}{-}$ to the triangle $E \to F \to \operatorname{Cone}(E \to F) \to E[1]$, we get that $\HomT{G}{\operatorname{Cone}(E \to F)} = 0$. As $G \in \bigcup_{j \in \mathbb{Z}}\mathcal{G}^j$ was arbitrary and as $\bigcup_{j \in \mathbb{Z}}\mathcal{G}^j$ generates $\mathsf{T}$ (see \Cref{Definition pre-generating sequence}), we get that $\operatorname{Cone}(E \to F) \cong 0$. Therefore, $E \to F$ is an isomorphism, and we are done.
    \end{enumerate}
\end{proof}
\begin{corollary}\label{Corollary approximating by generating sequence}
   Let $(\mathsf{T}, \mathcal{R}, A)$ be weakly $\mathcal{G}$-approx, see \Cref{Convention on approximability}. For any object $F \in  \mathcal{R}_i$ consider the sequence $E_1 \to E_2 \to E_3 \to E_4 \to \cdots $ with compatible maps to $F$ we get from \Cref{Proposition Approximating Sequence}(2). Then, $\hocolim E_n \cong F $. In particular, $F \in \overline{\mathcal{G}}^{(-\infty,A-i+1]}$, see \Cref{Notation Subcategories for (pre)-generating sequences}.

   The analogous result holds for $F \in \mathcal{R}_i \cap \overline{\mathsf{T}^c}$ if instead $(\mathsf{T}, \mathcal{R}, A)$ is weakly $\mathcal{G}$-quasiapprox.
\end{corollary}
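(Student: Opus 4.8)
The plan is to assemble the corollary from \Cref{Proposition Approximating Sequence}(2) and \Cref{Lemma Intersection of the metric is zero}(2). First I would unwind what \Cref{Proposition Approximating Sequence}(2) supplies for $F \in \mathcal{R}_i$: a sequence $E_1 \to E_2 \to \cdots$ with compatible maps to $F$ whose associated triangles $E_n \to F \to D_n \to \Sigma E_n$ have $E_n \in \overline{\mathcal{G}}^{[-A-n-i+1,\,A-i]}$ and $\operatorname{Cone}(E_n \to F) = D_n \in \mathcal{R}_{i+n}$. Since $\mathcal{R}$ is an extended good metric, \Cref{Good Metric}(2) gives $\mathcal{R}_{k+1} \subseteq \mathcal{R}_k$ for all $k$, so for every fixed $m \in \mathbb{Z}$ we have $\operatorname{Cone}(E_n \to F) \in \mathcal{R}_{i+n} \subseteq \mathcal{R}_m$ as soon as $n \geq m-i$; that is, $\operatorname{Cone}(E_n \to F)$ eventually lands in any prescribed $\mathcal{R}_m$. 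This is precisely the hypothesis of \Cref{Lemma Intersection of the metric is zero}(2).

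Next I would check that \Cref{Lemma Intersection of the metric is zero} applies at all: its running assumption is that for every $n \in \mathbb{Z}$ one has $\HomT{\mathcal{G}^n}{\mathcal{R}_j} = 0$ for $j \gg 0$, and this is immediate from condition (2) of \Cref{Definition of G-approximability}, which asserts $\HomT{\mathcal{G}^n}{\mathcal{R}_j}=0$ whenever $n+j \geq A$, hence for fixed $n$ and all $j \gg 0$ (it follows equally from \Cref{Definition of G-preapproximability}(2), which is in force via \Cref{Definition of G-approximability}(1)). Applying \Cref{Lemma Intersection of the metric is zero}(2) to the sequence above, the non-canonical map $\hocolim E_n \to F$ is an isomorphism, which is the first assertion.

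For the ``in particular'' I would run the standard homotopy colimit triangle $\coprod_n E_n \to \coprod_n E_n \to \hocolim E_n \to \Sigma \coprod_n E_n$. Each $E_n$ lies in $\overline{\mathcal{G}}^{[-A-n-i+1,\,A-i]} \subseteq \overline{\mathcal{G}}^{(-\infty,\,A-i]}$, and $\overline{\mathcal{G}}^{(-\infty,\,A-i]}$ is closed under coproducts, so $\coprod_n E_n \in \overline{\mathcal{G}}^{(-\infty,\,A-i]}$; moreover, by \Cref{Definition of generating sequence}(2) one has $\Sigma\,\mathcal{G}(-\infty,A-i] \subseteq \mathcal{G}(-\infty,A-i+1]$, hence $\Sigma\coprod_n E_n \in \overline{\mathcal{G}}^{(-\infty,\,A-i+1]}$. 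Rotating the triangle exhibits $\hocolim E_n$ as an extension of an object of $\overline{\mathcal{G}}^{(-\infty,A-i+1]}$ by an object of $\overline{\mathcal{G}}^{(-\infty,A-i]} \subseteq \overline{\mathcal{G}}^{(-\infty,A-i+1]}$, and since $\overline{\mathcal{G}}^{(-\infty,A-i+1]}$ is closed under extensions we conclude $F \cong \hocolim E_n \in \overline{\mathcal{G}}^{(-\infty,A-i+1]}$. For the $\mathcal{G}$-quasiapproximable version I would argue identically, invoking the last sentence of \Cref{Proposition Approximating Sequence} (which provides the same sequence for $F \in \mathcal{R}_i \cap \overline{\mathsf{T}^c}$, now with each $D_n$ also in $\overline{\mathsf{T}^c}$) and noting that \Cref{Definition of G-quasiapproximability}(2) is the same vanishing as \Cref{Definition of G-approximability}(2): neither step uses anything beyond the cones descending into the metric and the homotopy colimit bookkeeping, so the argument goes through verbatim. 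I do not expect a genuine obstacle here; the only thing to watch is the off-by-one in the range $[-A-n-i+1,A-i]$ versus the $A-i+1$ in the conclusion, which is exactly the shift introduced by the $\Sigma$ in the homotopy colimit triangle.
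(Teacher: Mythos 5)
Your proof is correct and follows essentially the same route as the paper's: apply \Cref{Proposition Approximating Sequence}(2), invoke \Cref{Lemma Intersection of the metric is zero}(2) to identify $F$ with $\hocolim E_n$, and then use the homotopy colimit triangle to place $F$ in $\overline{\mathcal{G}}^{(-\infty,A-i+1]}$. Your verification of the hypotheses of the lemma and your tracking of the suspension shift are more explicit than the paper's one-line treatment, but the argument is the same.
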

\begin{proof}
    Let $F \in \mathcal{R}_i$ be any object. Consider the sequence $E_1 \to E_2 \to E_3 \to E_4 \to \cdots $  we get from \Cref{Proposition Approximating Sequence}(2). We have that $\operatorname{Cone}(E_n \to F) \in \mathcal{R}_{n + i}$ for all $n$ by \Cref{Proposition Approximating Sequence}. So, by \Cref{Lemma Intersection of the metric is zero}, $\hocolim E_i \cong F$. Note that $E_i \in \overline{\mathcal{G}}^{(-\infty,A-i]}$ for each $i \in \mathbb{Z}$ again by \Cref{Proposition Approximating Sequence}(2). From the triangle,
    \[\bigoplus_{i \in \mathbb{Z} }E_i \to \bigoplus_{i \in \mathbb{Z} }E_i \to \hocolim E_i \to \Sigma \bigoplus_{i \in \mathbb{Z} }E_i\]
    we get that $F \cong \hocolim E_i \in \overline{\mathcal{G}}^{(-\infty,A-i+1]}$, see \Cref{Notation Subcategories for (pre)-generating sequences}.
\end{proof}

We now show that the orthogonal metric in the definition of approximability lies in the preferred $\mathbb{N}$-equivalence class of orthogonal metrics (see \Cref{Definition preferred equivalence class of orthogonal metrics}). Further, we show that the extended good metric it induces on $\mathsf{T}^c$ is equivalent to the metric $\mathcal{M}^{\mathcal{G}}$. This is an important fact, as, for example, equivalent metrics give rise to equivalent categories via the completion machinery, as the Cauchy sequences remain the same, see \Cref{Definition Cauchy sequence} and \Cref{Definition completions}.

\begin{proposition}\label{Proposition approximable implies preferred equivalence class}
   Let $(\mathsf{T},\mathcal{R},A)$ be weakly $\mathcal{G}$-quasiapprox, see \Cref{Convention on approximability}. Then,
    \begin{enumerate}
        \item The metrics $\mathcal{M}^{\mathcal{G}}$ (\Cref{Definition of generating sequence}(2)) and $\mathcal{R} \cap \mathsf{T}^c$ are $\mathbb{N}$-equivalent, see \Cref{Definition equivalence relation on extended good metrics} for the definition of the equivalence relation.
        \item $\mathcal{R} \cap \overline{\mathsf{T}^c} $ is $\mathbb{N}$-equivalent to the metric $\mathcal{R}^{\mathcal{G}} \cap \overline{\mathsf{T}^c}$ (see \Cref{Definition of generating sequence})  as metrics on $\overline{\mathsf{T}^c}$. This implies that $\mathcal{M}^{\mathcal{G}}$ is $\mathbb{N}$-equivalent to the metric $\mathcal{R}^{\mathcal{G}} \cap \mathsf{T}^c$ on $\mathsf{T}^c$.
    \end{enumerate}
    Further, if $(\mathsf{T},\mathcal{R},A)$ is weakly $\mathcal{G}$-approx (see \Cref{Definition of G-quasiapproximability}), then,
    \begin{enumerate}
    \setcounter{enumi}{2}
        \item $\mathcal{R}$ lies in the preferred $\mathbb{N}$-equivalence class of orthogonal metrics on $\mathsf{T}$, that is, it is equivalent to the orthogonal metric $\mathcal{R}^{\mathcal{G}}$.
    \end{enumerate}
\end{proposition}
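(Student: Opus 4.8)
The plan is to prove each assertion by separately establishing the two $\mathbb{N}$-finer inequalities: one is formal from the axioms of $\mathcal{G}$-preapproximability, the other converts the approximation statement of \Cref{Corollary approximating by generating sequence} into membership in a ball of a metric.

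For (1), the inequality $\mathcal{M}^{\mathcal{G}}\preceq_{\mathbb{N}}\mathcal{R}\cap\mathsf{T}^c$ is exactly \Cref{Remark on Definition of G-preapproximability(1)}, immediate from \Cref{Definition of G-preapproximability}(1) with shift $A$. For the converse, given a compact $F\in\mathcal{R}_i$ I would note $F\in\mathcal{R}_i\cap\overline{\mathsf{T}^c}$ (using $\mathsf{T}^c\subseteq\overline{\mathsf{T}^c}$) and apply \Cref{Corollary approximating by generating sequence} to get $F\in\overline{\mathcal{G}}^{(-\infty,A-i+1]}$; intersecting with $\mathsf{T}^c$ and invoking \Cref{Remark big categories intersection compacts gives small categories} gives $F\in\mathcal{G}^{(-\infty,A-i+1]}=\mathcal{M}^{\mathcal{G}}_{i-A-1}$, i.e. $\mathcal{R}_{i+A+1}\cap\mathsf{T}^c\subseteq\mathcal{M}^{\mathcal{G}}_i$. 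Hence $\mathcal{R}\cap\mathsf{T}^c\preceq_{\mathbb{N}}\mathcal{M}^{\mathcal{G}}$ and the two metrics are $\mathbb{N}$-equivalent.

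For (2) and (3) I would first isolate the inclusion $\overline{\mathcal{G}}^{(-\infty,j]}\subseteq\mathcal{R}^{\mathcal{G}}_{-j}$, valid for every $j$: since $\mathcal{R}^{\mathcal{G}}_{-j}={}^{\perp}\big[(\mathcal{M}^{\mathcal{G}}_{-j})^{\perp}\big]$ is a left orthogonal it is closed under coproducts, extensions and direct summands, and it contains $\mathcal{M}^{\mathcal{G}}_{-j}\supseteq\mathcal{G}(-\infty,j]$, hence it contains $\operatorname{smd}\operatorname{Coprod}(\mathcal{G}(-\infty,j])=\overline{\mathcal{G}}^{(-\infty,j]}$. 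The easy direction is then: from $\mathcal{M}^{\mathcal{G}}_{i+A}\subseteq\mathcal{R}_i$ (this is \Cref{Definition of G-preapproximability}(1) read inside $\mathsf{T}$) I would take right orthogonals, then left orthogonals, using that $\mathcal{R}$ is orthogonal so ${}^{\perp}(\mathcal{R}_i^{\perp})=\mathcal{R}_i$, obtaining $\mathcal{R}^{\mathcal{G}}_{i+A}\subseteq\mathcal{R}_i$. For the hard direction, given $F\in\mathcal{R}_{i+A+1}$ (in case (3), where full weak $\mathcal{G}$-approximability lets \Cref{Corollary approximating by generating sequence} apply to all of $\mathcal{R}_{i+A+1}$) or $F\in\mathcal{R}_{i+A+1}\cap\overline{\mathsf{T}^c}$ (in case (2), using the quasiapprox form of that corollary), I would get $F\in\overline{\mathcal{G}}^{(-\infty,-i]}\subseteq\mathcal{R}^{\mathcal{G}}_i$ by the isolated inclusion. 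This yields the $\mathbb{N}$-equivalence of $\mathcal{R}$ and $\mathcal{R}^{\mathcal{G}}$ on $\mathsf{T}$ in (3) — and since $\mathbb{N}$-equivalence trivially implies equivalence, $\mathcal{R}$ lies in the preferred $\mathbb{N}$-equivalence class — and on $\overline{\mathsf{T}^c}$ in (2). The final assertion of (2), that $\mathcal{M}^{\mathcal{G}}$ is $\mathbb{N}$-equivalent to $\mathcal{R}^{\mathcal{G}}\cap\mathsf{T}^c$, then follows by restricting the equivalence just proved to $\mathsf{T}^c\subseteq\overline{\mathsf{T}^c}$ and combining with (1) by transitivity.

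I expect no serious obstacle: all the analytic content is already packaged in \Cref{Corollary approximating by generating sequence} (resting on \Cref{Proposition Approximating Sequence} and \Cref{Lemma Intersection of the metric is zero}). The one genuinely new point to verify carefully is that the double-orthogonal balls $\mathcal{R}^{\mathcal{G}}_n$ absorb the ``big'' subcategories $\overline{\mathcal{G}}^{(-\infty,j]}$; after that, the only care needed is bookkeeping the index shifts (shift $A$ in the formal inequality versus shift $A+1$ in the one coming from the corollary) so that one ends with uniform shifts on both sides, and keeping track of which form of \Cref{Corollary approximating by generating sequence} is invoked — the quasiapprox form for (1) and (2), the full form for (3).
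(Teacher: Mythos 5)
Your proposal is correct and follows essentially the same route as the paper: the easy inclusions come from \Cref{Definition of G-preapproximability}(1) (directly for (1), and after taking double orthogonals for (2) and (3)), while the converse inclusions come from \Cref{Corollary approximating by generating sequence} together with the observation that $\overline{\mathcal{G}}^{(-\infty,j]}$ is absorbed by $\mathcal{M}^{\mathcal{G}}$ (on intersecting with $\mathsf{T}^c$) respectively by $\mathcal{R}^{\mathcal{G}}$ (via the double orthogonal), with the same index shifts $A$ and $A+1$. Your justification of the absorption $\overline{\mathcal{G}}^{(-\infty,j]}\subseteq\mathcal{R}^{\mathcal{G}}_{-j}$ via closure properties of left orthogonals is just a rephrasing of the paper's identity $\big(\overline{\mathcal{G}}^{(-\infty,j]}\big)^{\perp}=\mathcal{G}(-\infty,j]^{\perp}$, so there is nothing substantively different.
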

\begin{proof}
    \begin{enumerate}
        \item We already know $\mathcal{M}^{\mathcal{G}} \preceq_{\mathbb{N}} \mathcal{R} \cap \mathsf{T}^c$ by \Cref{Remark on Definition of G-preapproximability(1)}. For the other direction, we need to show that there exists a integer $B>0$ such that $\mathcal{R}_{i + B} \cap \mathsf{T}^c \subseteq \mathcal{M}^{\mathcal{G}}_i$ for all $i\geq 0$. We claim that the integer $B$ can be chosen to be $A+1$. Let $F \in \mathcal{R}_{i + A + 1} \cap \mathsf{T}^c$ for any $i \geq 0$. Then, by \Cref{Corollary approximating by generating sequence}, $F \in \overline{\mathcal{G}}^{(-\infty, -i]}$ (see \Cref{Notation Subcategories for (pre)-generating sequences}). And so, $F \in \overline{\mathcal{G}}^{(-\infty, -i]} \cap \mathsf{T}^c = \mathcal{G}^{(-\infty, -i]}$, see \Cref{Remark big categories intersection compacts gives small categories}. But, by definition, $\mathcal{M}^{\mathcal{G}}_{i} = \mathcal{G}^{(-\infty, -i]} $. And so, $\mathcal{R}_{i + A + 1} \cap \mathsf{T}^c \subseteq \mathcal{M}^{\mathcal{G}}_i$, and we are done.
        \item From \Cref{Definition of G-quasiapproximability}(1), we have that $\mathcal{M}^{\mathcal{G}}_{i + A} \subseteq \mathcal{R}_{i} \cap \mathsf{T}^c \subseteq \mathcal{R}_{i}$ for all $i\geq 0$. And so, for all $i \geq 0 $,
        \[\mathcal{R}^{\mathcal{G}}_{i + A} = \ ^{\perp}\left((\mathcal{M}^{\mathcal{G}}_{i + A})^{\perp}\right) \subseteq \ ^{\perp}\left(\mathcal{R}_{i}^{\perp}\right) = \mathcal{R}_{i} \] 
        Conversely, we have that,
        \begin{displaymath}
            \mathcal{R}_{i+A+1}\cap \overline{\mathsf{T}^c} \subseteq \overline{\mathcal{G}}^{(-\infty,-i]} \subseteq \ ^{\perp}\left(\big(\overline{\mathcal{G}}^{(-\infty, -i]}\big)^{\perp}\right) = \ ^{\perp}\left(\mathcal{G}(-\infty, -i]^{\perp}\right) = \mathcal{R}^{\mathcal{G}}_i 
        \end{displaymath}
        for all $i$ where we get the first inclusion by \Cref{Corollary approximating by generating sequence},
        \item From \Cref{Definition of G-approximability}(1), we have that $\mathcal{M}^{\mathcal{G}}_{i + A} \subseteq \mathcal{R}_{i} \cap \mathsf{T}^c \subseteq \mathcal{R}_{i}$ for all $i$. And so, for all $i$,
        \[\mathcal{R}^{\mathcal{G}}_{i + A} = \ ^{\perp}\left((\mathcal{M}^{\mathcal{G}}_{i + A})^{\perp}\right) \subseteq \ ^{\perp}\left(\mathcal{R}_{i}^{\perp}\right) = \mathcal{R}_{i} \] 
        Conversely, by \Cref{Corollary approximating by generating sequence}, $\mathcal{R}_{i+A+1} \subseteq \overline{\mathcal{G}}^{(-\infty, -i]}$ for all $i \geq 0$. And so,
        \begin{displaymath}
            \mathcal{R}_{i + A +1 } = \ ^{\perp}\left((\mathcal{R}_{i + A + 1 })^{\perp}\right) \subseteq \ ^{\perp}\left(\big(\overline{\mathcal{G}}^{(-\infty, -i]}\big)^{\perp}\right) = \ ^{\perp}\left(\mathcal{G}(-\infty, -i]^{\perp}\right) = \mathcal{R}^{\mathcal{G}}_i
        \end{displaymath}  
        for all $i \geq 0$.
    \end{enumerate}
    
\end{proof}

We now give a converse to \Cref{Proposition approximable implies preferred equivalence class}(3). That is, we show that we can check approximability with any orthogonal metric in the preferred equivalence class of orthogonal metrics.

\begin{proposition}\label{Proposition G-approximability from orthogonal metric in preferred equivalence class}
    Let $\mathsf{T}$ be a weakly $\mathcal{G}$-approx triangulated category (see \Cref{Convention on approximability}), and $\mathcal{R}'$ be a metric in the preferred $\mathbb{N}$-equivalence class of orthogonal metrics on $\mathsf{T}$ (see \Cref{Definition preferred equivalence class of orthogonal metrics}). Then, there exists an integer $A' > 0$ such that $(\mathsf{T},\mathcal{R}',A')$ is weakly $\mathcal{G}$-approx. If $\mathsf{T}$ is further $\mathcal{G}$-approx, then we can choose the integer $A'$ such that $(\mathsf{T},\mathcal{R}',A')$ is $\mathcal{G}$-approx.

    The analogous result holds for (weak) $\mathcal{G}$-approx.
\end{proposition}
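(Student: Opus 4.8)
The plan is to route everything through the canonical orthogonal metric $\mathcal{R}^{\mathcal{G}}$ and then chase indices. Since $(\mathsf{T},\mathcal{R},A)$ is weakly $\mathcal{G}$-approx, \Cref{Proposition approximable implies preferred equivalence class}(3) says that $\mathcal{R}$ is $\mathbb{N}$-equivalent to $\mathcal{R}^{\mathcal{G}}$; by hypothesis $\mathcal{R}'$ is $\mathbb{N}$-equivalent to $\mathcal{R}^{\mathcal{G}}$ as well, and in particular $\mathcal{R}'$ is an orthogonal metric. Hence $\mathcal{R}$ and $\mathcal{R}'$ are $\mathbb{N}$-equivalent, so I may fix an integer $N\geq 0$ with $\mathcal{R}_{i+N}\subseteq\mathcal{R}'_i$ and $\mathcal{R}'_{i+N}\subseteq\mathcal{R}_i$ for all $i\in\mathbb{Z}$. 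I will verify the conditions of \Cref{Remark conditions of G-approximability} for $(\mathsf{T},\mathcal{R}',A')$, where $A'\colonequals(2N+1)A$; note $A'\geq A+N$ since $A\geq 1$. The key observation is that each of conditions (0)--(3) (and (3$'$)) is monotone in its integer parameter — enlarging it only enlarges the target subcategories and shrinks the hypothesis sets — so it suffices to see that this single value of $A'$ makes each condition hold.

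Conditions (0)--(2) are pure index bookkeeping. Condition (0) involves only $\mathcal{M}^{\mathcal{G}}$ and already holds for $A\leq A'$. Since $\mathcal{M}^{\mathcal{G}}$ is decreasing in its index (\Cref{Good Metric}(2)), for (1) we get $\mathcal{M}^{\mathcal{G}}_{i+A'}\subseteq\mathcal{M}^{\mathcal{G}}_{i+A+N}\subseteq\mathcal{R}_{i+N}\cap\mathsf{T}^c\subseteq\mathcal{R}'_i\cap\mathsf{T}^c$, using \Cref{Remark conditions of G-approximability}(1) for $\mathcal{R}$ and $\mathcal{R}_{i+N}\subseteq\mathcal{R}'_i$. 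For (2), if $n+i\geq A'$ then (using $\mathcal{R}'_i\subseteq\mathcal{R}_{i-N}$) we have $n+(i-N)\geq A'-N\geq A$, so $\HomT{\mathcal{G}^n}{\mathcal{R}'_i}\subseteq\HomT{\mathcal{G}^n}{\mathcal{R}_{i-N}}=0$ by \Cref{Remark conditions of G-approximability}(2) for $\mathcal{R}$.

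For condition (3), take $F\in\mathcal{R}'_i$, so $F\in\mathcal{R}_{i-N}$. Applying \Cref{Proposition Approximating Sequence} to $F$ with $2N+1$ approximation steps (the argument there is an iteration of \Cref{Remark conditions of G-approximability}(3) and is valid at every integer index) produces a triangle $E\to F\to D\to\Sigma E$ with $D\in\mathcal{R}_{(i-N)+(2N+1)}=\mathcal{R}_{(i+1)+N}\subseteq\mathcal{R}'_{i+1}$ and $E\in\overline{\mathcal{G}}^{[-A-i-N,\,A-i+N]}$; since $A'\geq A+N$ this lies in $\overline{\mathcal{G}}^{[-A'-i,A'-i]}$, which is (3) for $(\mathsf{T},\mathcal{R}',A')$. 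If moreover $\mathsf{T}$ is $\mathcal{G}$-approx, the stronger conclusion of \Cref{Proposition Approximating Sequence} (available since \Cref{Definition of G-quasiapproximability}'s hypotheses hold) caps the number of extensions at each of the $2N+1$ steps by $A$ and the octahedral splicing adds them, so $E\in\overline{\mathcal{G}}^{[-A-i-N,\,A-i+N]}_{(2N+1)A}\subseteq\overline{\mathcal{G}}^{[-A'-i,A'-i]}_{A'}$, giving (3$'$). Thus $(\mathsf{T},\mathcal{R}',A')$ is weakly $\mathcal{G}$-approx, resp.\ $\mathcal{G}$-approx.

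For the (weak) $\mathcal{G}$-quasiapprox statement one repeats the above verbatim, restricting $F$ to $\mathcal{R}'_i\cap\overline{\mathsf{T}^c}$ and using the $\overline{\mathsf{T}^c}$-version of \Cref{Proposition Approximating Sequence}; note $\overline{\mathsf{T}^c}$ is unchanged under passage between the equivalent metrics $\mathcal{R}$, $\mathcal{R}'$, $\mathcal{R}^{\mathcal{G}}$. The one point that needs extra care is condition (2), which is a statement on all of $\mathsf{T}$: in the quasi setting \Cref{Proposition approximable implies preferred equivalence class} only compares $\mathcal{R}$ with $\mathcal{R}^{\mathcal{G}}$ on $\overline{\mathsf{T}^c}$, but \Cref{Definition of G-quasiapproximability}(1) together with the orthogonality of $\mathcal{R}$ still gives $\mathcal{R}^{\mathcal{G}}_{i+A}\subseteq\mathcal{R}_i$ in $\mathsf{T}$, and combining this with $\mathcal{R}'_{i+M}\subseteq\mathcal{R}^{\mathcal{G}}_i$ (for suitable $M\geq 0$, from the hypothesis on $\mathcal{R}'$) yields $\mathcal{R}'_i\subseteq\mathcal{R}_{i-M-A}$, which runs the computation of (2) after enlarging $A'$ to $\max\{A',M+2A\}$. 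The main obstacle is not conceptual but organizational: keeping the shifts consistent, and in the quasiapproximable case recognizing that condition (2) lives on $\mathsf{T}$ rather than on $\overline{\mathsf{T}^c}$ so that it must be routed through $\mathcal{R}^{\mathcal{G}}$ rather than obtained by a direct appeal to \Cref{Proposition approximable implies preferred equivalence class}.
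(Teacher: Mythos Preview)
Your proof is correct and follows essentially the same route as the paper: establish $\mathbb{N}$-equivalence of $\mathcal{R}$ and $\mathcal{R}'$ via \Cref{Proposition approximable implies preferred equivalence class}, then verify conditions (0)--(3) by index-shifting, invoking \Cref{Proposition Approximating Sequence} to bridge the gap of $2N+1$ steps for condition (3)/(3$'$). Your treatment of the quasiapproximable case is in fact more careful than the paper's, which simply asserts the analogous result; your observation that condition (2) must be routed through $\mathcal{R}^{\mathcal{G}}$ rather than obtained from \Cref{Proposition approximable implies preferred equivalence class}(2) is a genuine subtlety the paper glosses over.
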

\begin{proof}
    As $\mathsf{T}$ is weakly $\mathcal{G}$-approx, there exists an orthogonal metric $\mathcal{R}$ and an integer $A > 0$ satisfying \Cref{Remark conditions of G-approximability}(0)-(3). By \Cref{Proposition approximable implies preferred equivalence class}(2), $\mathcal{R}$ lies in the preferred $\mathbb{N}$-equivalence class of orthogonal metrics. As $\mathcal{R}'$ also lies in the preferred $\mathbb{N}$-equivalence class, we get that $\mathcal{R}$ and $\mathcal{R}'$ are $\mathbb{N}$-equivalent. So, there exists an integer $B>0$ such that $\mathcal{R}'_{i + B} \subseteq \mathcal{R}_{i}$ and $\mathcal{R}_{i + B} \subseteq \mathcal{R}'_{i}$ for all $i \geq 0 $. We now show that the conditions (0)-(3) of \Cref{Remark conditions of G-approximability} hold with the orthogonal metric $\mathcal{R}'$ and the integer $A' = A + B  > 0$. Note that \Cref{Remark conditions of G-approximability}(0) holds trivially as it does not involve the orthogonal metric.
    \begin{enumerate}
        \item $\mathcal{M}^{\mathcal{G}}_{i + A + B} \subseteq \mathcal{R}_{i + B}\cap \mathsf{T}^c \subseteq \mathcal{R}'_{i}\cap \mathsf{T}^c$ for all $i \in \mathbb{Z}$.
        \item For all $n \in \mathbb{Z}$, $\HomT{\mathcal{G}^n}{\mathcal{R}'_i} = 0$ for $n + i \geq A+B $, as $\mathcal{R}'_i \subseteq \mathcal{R}_{i - B}$ for all $i \in \mathbb{Z}$.
        \item Let $i \geq 0$, and $F \in \mathcal{R}'_{i} \subset \mathcal{R}_{i - B}$. By \Cref{Proposition Approximating Sequence}(1), there exists a triangle $E \to F \to D \to \Sigma E$ with $E \in \overline{\mathcal{G}}^{[-A-(i+B),A-i+B]}$ and $D \in \mathcal{R}_{i+B+1} \subseteq \mathcal{R}'_{i + 1}$. 
    \end{enumerate}
    
    If $\mathsf{T}$ is further $\mathcal{G}$-approx with the orthogonal metric $\mathcal{R}$ and integer $A > 0$ satisfying the conditions of $\mathcal{G}$-approx, then, 
    \begin{enumerate}[label={(\arabic*$'$)}]
        \setcounter{enumi}{2}
        \item Let $i \geq 0$, and $F \in \mathcal{R}'_{i} \subset \mathcal{R}_{i - B}$. By \Cref{Proposition Approximating Sequence}(1), there exists a triangle $E \to F \to D \to \Sigma E$ with $E \in \overline{\mathcal{G}}^{[-A-(i+B),A-i+B]}_{(2B+1)A}$ and $D \in \mathcal{R}_{i+B+1} \subseteq \mathcal{R}'_{i + 1}$.
    \end{enumerate}
    And so  we have shown $\mathcal{G}$-approx using the orthogonal metric $\mathcal{R}'$ and the integer $A' = 2AB + A  > 0$.
\end{proof}
The following can be proved analogously to \Cref{Proposition G-approximability from orthogonal metric in preferred equivalence class}. 
\begin{remark}
    Let $\mathsf{T}$ be a weakly $\mathcal{G}$-approx triangulated category (see \Cref{Definition of G-approximability}), and let $\mathcal{G}'$ be a generating sequence $\mathbb{N}$-equivalent to $\mathcal{G}$ (\Cref{Definition equivalence class of generating sequences}) and $\mathcal{R}'$ be a metric in the preferred $\mathbb{N}$-equivalence class of orthogonal metrics on $\mathsf{T}$ (see \Cref{Definition preferred equivalence class of orthogonal metrics}). Then, there exists an integer $A' > 0$ such that $(\mathsf{T},\mathcal{R}',A')$ is weakly $\mathcal{G}'$-approx. If $\mathsf{T}$ was $\mathcal{G}$-approx, we can further choose $A'$ such that $(\mathsf{T},\mathcal{R}',A')$ is $\mathcal{G}'$-approx. The analogous result also holds for (weak) $\mathcal{G}$-quasiapprox.
\end{remark}

We now record a lemma to be used later.

\begin{lemma}
    \label{Lemma on implication of strong generating sequence}
    Let $(\mathsf{T},\mathcal{R},A)$ be  weakly $\mathcal{G}$-quasiapprox. Then, there exists $b\geq 0$ such that for any integer $i$ and positive integer $j$, $\mathcal{R}_{i} \cap \mathsf{T}^c  \subseteq \bigcup_{|n| \leq (j + 2b)A}\Sigma^n \mathcal{R}_{i + j} \cap \mathsf{T}^c$.
\end{lemma}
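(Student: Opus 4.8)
The plan is to reduce the statement to its analogue for the metric $\mathcal{M}^{\mathcal{G}}$ on $\mathsf{T}^{c}$, which is already available as \Cref{Lemma for lemma on implicatiopn of strong generating sequence}(1), and then transport back along the $\mathbb{N}$-equivalence of $\mathcal{M}^{\mathcal{G}}$ with $\mathcal{R}\cap\mathsf{T}^{c}$. So the first step is to invoke \Cref{Proposition approximable implies preferred equivalence class}(1): since $(\mathsf{T},\mathcal{R},A)$ is weakly $\mathcal{G}$-quasiapprox, the extended good metrics $\mathcal{M}^{\mathcal{G}}$ (see \Cref{Definition of generating sequence}(2)) and $\mathcal{R}\cap\mathsf{T}^{c}$ on $\mathsf{T}^{c}$ are $\mathbb{N}$-equivalent. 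Unwinding \Cref{Definition equivalence relation on extended good metrics}, and using that each of the families $\{\mathcal{M}^{\mathcal{G}}_{k}\}$ and $\{\mathcal{R}_{k}\cap\mathsf{T}^{c}\}$ is decreasing in $k$ by \Cref{Good Metric}(2), one can pick a single integer $b\geq 0$ with $\mathcal{R}_{k+b}\cap\mathsf{T}^{c}\subseteq\mathcal{M}^{\mathcal{G}}_{k}$ and $\mathcal{M}^{\mathcal{G}}_{k+b}\subseteq\mathcal{R}_{k}\cap\mathsf{T}^{c}$ for all $k\in\mathbb{Z}$. This $b$ will be the one claimed in the statement.

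Next I would fix an integer $i$ and a positive integer $j$ and take $F\in\mathcal{R}_{i}\cap\mathsf{T}^{c}$. Applying the first inclusion above with $k=i-b$ gives $F\in\mathcal{M}^{\mathcal{G}}_{i-b}$. Now, the hypothesis of \Cref{Lemma for lemma on implicatiopn of strong generating sequence} is precisely condition (0) in the definition of $\mathcal{G}$-preapproximability, and this holds for $(\mathsf{T},\mathcal{R},A)$ because weak $\mathcal{G}$-quasiapproximability includes $\mathcal{G}$-preapproximability (see \Cref{Definition of G-quasiapproximability}(1) and \Cref{Definition of G-preapproximability}). So \Cref{Lemma for lemma on implicatiopn of strong generating sequence}(1), applied with the integer $A$ and the positive integer $j+2b$, yields
\[
\mathcal{M}^{\mathcal{G}}_{i-b}\ \subseteq\ \bigcup_{|n|\leq (j+2b)A}\Sigma^{n}\mathcal{M}^{\mathcal{G}}_{(i-b)+(j+2b)}\ =\ \bigcup_{|n|\leq (j+2b)A}\Sigma^{n}\mathcal{M}^{\mathcal{G}}_{i+j+b}.
\]
Finally I would apply the second inclusion above with $k=i+j$ to get $\mathcal{M}^{\mathcal{G}}_{i+j+b}\subseteq\mathcal{R}_{i+j}\cap\mathsf{T}^{c}$; since $\mathsf{T}^{c}$ is closed under $\Sigma^{\pm 1}$, we have $\Sigma^{n}(\mathcal{R}_{i+j}\cap\mathsf{T}^{c})=\Sigma^{n}\mathcal{R}_{i+j}\cap\mathsf{T}^{c}$ for every $n$. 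Chaining the three inclusions gives $F\in\bigcup_{|n|\leq (j+2b)A}\Sigma^{n}\mathcal{R}_{i+j}\cap\mathsf{T}^{c}$, which is exactly the desired containment.

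There is no deep obstacle here; the work is entirely bookkeeping of indices and making sure the single constant $b$ serves both directions of the $\mathbb{N}$-equivalence (handled by monotonicity of the metric in its index) and that $j+2b$ is a legitimate positive integer input to \Cref{Lemma for lemma on implicatiopn of strong generating sequence}(1) (which it is, as $j\geq 1$ and $b\geq 0$). The only point that needs a little care is that \Cref{Proposition approximable implies preferred equivalence class}(1) must be used in the form of genuine $\mathbb{N}$-equivalence over all of $\mathbb{Z}$, not merely over nonnegative indices, so that the shifts $k=i-b$ and $k=i+j$ are both permitted for arbitrary $i\in\mathbb{Z}$.
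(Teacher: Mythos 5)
Your proposal is correct and is essentially identical to the paper's own proof: both extract a single constant $b\geq 0$ from the $\mathbb{N}$-equivalence of $\mathcal{M}^{\mathcal{G}}$ and $\mathcal{R}\cap\mathsf{T}^c$ (Proposition \ref{Proposition approximable implies preferred equivalence class}(1)), pass through $\mathcal{M}^{\mathcal{G}}_{i-b}$, apply Lemma \ref{Lemma for lemma on implicatiopn of strong generating sequence}(1) with step $j+2b$, and return via $\mathcal{M}^{\mathcal{G}}_{i+j+b}\subseteq\mathcal{R}_{i+j}\cap\mathsf{T}^c$. Your explicit check that condition (0) of $\mathcal{G}$-preapproximability supplies the hypothesis of that auxiliary lemma is a point the paper leaves implicit, but otherwise the two arguments coincide.
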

\begin{proof}
    By \Cref{Proposition approximable implies preferred equivalence class}(1), there exists an integer $b \geq 0$ so that for all $i$, 
    \[\mathcal{M}^{\mathcal{G}}_{i + b} \subseteq \mathcal{R}_{i} \cap \mathsf{T}^c \subseteq \mathcal{M}^{\mathcal{G}}_{i - b}\]
    see \Cref{Definition of generating sequence}(2) for the definition of $\mathcal{M}^{\mathcal{G}}$. By \Cref{Lemma for lemma on implicatiopn of strong generating sequence}(1), 
    \[\mathcal{R}_{i} \cap \mathsf{T}^c \subseteq \mathcal{M}^{\mathcal{G}}_{i - b} \subseteq \bigcup_{|n|\leq (j+2b)A}\Sigma^{n}\mathcal{M}^{\mathcal{G}}_{i + b +j} \subseteq \bigcup_{|n|\leq (j+2b)A}\Sigma^{n}\mathcal{R}_{i+j} \cap \mathsf{T}^c\]
    which is what we needed to show.
\end{proof}
As might be clear from the name, $\mathcal{G}$-approximability is closely related to the notion of approximability introduced by Neeman, see \cite[Definition 0.21]{Neeman:2021b}. In fact, approximability is a special case of $\mathcal{G}$-approximability. The connection is explored below.

\begin{remark}\label{Remark on equivalence relation in approximability and G-approximability}
    Let $\mathsf{T}$ be a triangulated category with a single compact generator $G$. We define the finite generating sequence $\mathcal{G}$ by $\mathcal{G}^n \colonequals \{\Sigma^{-n}G\} $ for all $n \in \mathbb{Z}$ (see \Cref{Definition of generating sequence}). Note that by \Cref{Examples of N-equivalent generating sequences}, if we choose a different compact generator we get a $\mathbb{N}$-equivalent generating sequence (\Cref{Definition equivalence class of generating sequences}). Then, it is immediate from the definitions that,
    \begin{enumerate}
        \item There is a bijection between the preferred equivalence class of t-structures on $\mathsf{T}$, see \Cref{Definition preferred equivalence class of t-structures}, and those orthogonal metrics $\mathcal{R}$ in the preferred $\mathbb{N}$-equivalence class of orthogonal metrics (\Cref{Definition preferred equivalence class of orthogonal metrics}), such that $\mathcal{R}_n = \Sigma^{n}\mathsf{U}$ for a t-structure $(\mathsf{U},\mathsf{V})$. In particular, the t-structure generated by $G$ (see \Cref{Theorem Compactly generated t-structure}) is exactly $\big(\mathcal{R}_0^{\mathcal{G}},(\Sigma \mathcal{R}_0^{G})^{\perp}\big)$ (see \Cref{Definition of generating sequence} for the definition of $\mathcal{R}^{\mathcal{G}}$).
        \item The closure of the compacts $\overline{\mathsf{T}^c} = \mathsf{T}^-_c$. See \Cref{Definition T^-c and T^b_c}(2) for the definition of $\mathsf{T}^-_c$, and \Cref{Definition closure of compacts} for the definition of $\overline{\mathsf{T}^c}$.
    \end{enumerate}
    
\end{remark}

\begin{theorem}\label{Theorem Approximability and G-approximability}
    Let $\mathsf{T}$ be a triangulated category with a single compact generator $G$. We define the finite generating sequence $\mathcal{G}$ by $\mathcal{G}^n \colonequals \{\Sigma^{-n}G\} $ for all $n \in \mathbb{Z}$ (\Cref{Definition of generating sequence}). Then,  
    \begin{enumerate}[label={(\roman*)}]
        \item $\mathsf{T}$ is a weakly approximable triangulated category (\Cref{Definition approximability}) if and only if $\mathsf{T}$ is weakly $\mathcal{G}$-approximable (\Cref{Definition of G-approximability}).
        \item $\mathsf{T}$ is an approximable triangulated category (\Cref{Definition approximability}) if and only if $\mathsf{T}$ is $\mathcal{G}$-approximable (\Cref{Definition of G-approximability}). 
    \end{enumerate}
\end{theorem}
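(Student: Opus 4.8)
The plan is to reduce both equivalences to a comparison between t-structures and orthogonal metrics of the special shape $\mathcal{R}_n=\Sigma^n\mathsf{U}$. Write $\mathsf{U}=\mathsf{T}^{\leq 0}_G$ for the aisle of the t-structure generated by $G$ (\Cref{Theorem Compactly generated t-structure}). The first step is to record, by unwinding \Cref{Notation Subcategories for (pre)-generating sequences} for the sequence $\mathcal{G}^n=\{\Sigma^{-n}G\}$ and using that $\Sigma$ is an autoequivalence, the identities $\mathcal{M}^{\mathcal{G}}_n=\operatorname{smd}(\operatorname{coprod}(\{\Sigma^{j}G:j\geq n\}))$ (so in particular $\Sigma^{-1}\mathcal{M}^{\mathcal{G}}_{n+1}=\mathcal{M}^{\mathcal{G}}_n$), $\overline{\mathcal{G}}^{[-A-i,\,A-i]}=\Sigma^{i}\overline{\langle G\rangle}^{[-A,A]}$, and $\overline{\mathcal{G}}^{[-A-i,\,A-i]}_{A}=\Sigma^{i}\overline{\langle G\rangle}^{[-A,A]}_{A}$. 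Taking right orthogonals (which depend only on the generating set) and using the standard identity ${}^{\perp}(\mathsf{U}^{\perp})=\mathsf{U}$ for a t-structure aisle, this also gives $\mathcal{R}^{\mathcal{G}}_n={}^{\perp}((\mathcal{M}^{\mathcal{G}}_n)^{\perp})=\Sigma^{n}\mathsf{U}$, which is the bijection of \Cref{Remark on equivalence relation in approximability and G-approximability}(1) made explicit.

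The heart of the proof is the following dictionary. For an arbitrary t-structure $(\mathsf{U},\mathsf{V})$ on $\mathsf{T}$, the sequence $\mathcal{R}=\{\Sigma^n\mathsf{U}\}_{n\in\mathbb{Z}}$ is an orthogonal metric (only the aisle properties $\Sigma\mathsf{U}\subseteq\mathsf{U}$, $\mathsf{U}\star\mathsf{U}\subseteq\mathsf{U}$, $0\in\mathsf{U}$, and ${}^{\perp}(\mathsf{U}^{\perp})=\mathsf{U}$ are used); and for a positive integer $A$, the triple $(\mathsf{T},\mathcal{R},A)$ satisfies the clauses of \Cref{Remark conditions of G-approximability} if and only if $(\mathsf{U},\mathsf{V})$ and $A$ satisfy the clauses of \Cref{Definition approximability}. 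Indeed: clause $(0)$ of $\mathcal{G}$-approximability is automatic for $A\geq1$, since $\mathcal{M}^{\mathcal{G}}_i=\Sigma^{-1}\mathcal{M}^{\mathcal{G}}_{i+1}$; the inclusion $\mathcal{M}^{\mathcal{G}}_{i+A}\subseteq\mathcal{R}_i\cap\mathsf{T}^c$ for all $i$ is equivalent to $\Sigma^A G\in\mathsf{U}$ (one direction by closure of $\Sigma^i\mathsf{U}$ under summands, finite coproducts and extensions, the other by taking $i=0$ and the object $\Sigma^A G$), i.e.\ clause $(1)$; applying $\Sigma^{-i}$ turns $\HomT{\mathcal{G}^n}{\mathcal{R}_i}=0$ for $n+i\geq A$ into $\HomT{\Sigma^{-(n+i)}G}{\mathsf{U}}=0$ for $n+i\geq A$, i.e.\ clause $(2)$; and applying $\Sigma^{\mp i}$ to the approximation triangles, together with $\overline{\mathcal{G}}^{[-A-i,A-i]}=\Sigma^i\overline{\langle G\rangle}^{[-A,A]}$ and $\mathcal{R}_{i+1}=\Sigma^{i+1}\mathsf{U}$, identifies clause $(3)$ of \Cref{Definition of G-approximability} with clause $(3)$ of \Cref{Definition approximability}, and likewise clause $(3)'$ with clause $(3)'$ using the $\operatorname{Coprod}_A$-version $\overline{\mathcal{G}}^{[-A-i,A-i]}_A=\Sigma^i\overline{\langle G\rangle}^{[-A,A]}_A$.

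Both implications then follow. For $(\Rightarrow)$, if $\mathsf{T}$ is weakly approximable then by \cite[Proposition 2.6]{Neeman:2021b} it is weakly approximable with respect to the fixed generator $G$, that is, there are a t-structure $(\mathsf{U},\mathsf{V})$ and an integer $A$ satisfying clauses $(1)$--$(3)$ of \Cref{Definition approximability}; putting $\mathcal{R}_n=\Sigma^n\mathsf{U}$, the dictionary shows $(\mathsf{T},\mathcal{R},A)$ is weakly $\mathcal{G}$-approximable, and the last point upgrades this to the $\mathcal{G}$-approximable case when $\mathsf{T}$ is approximable. For $(\Leftarrow)$, if $\mathsf{T}$ is weakly $\mathcal{G}$-approximable then, applying \Cref{Proposition G-approximability from orthogonal metric in preferred equivalence class} with $\mathcal{R}'=\mathcal{R}^{\mathcal{G}}$, there is an integer $A'$ with $(\mathsf{T},\mathcal{R}^{\mathcal{G}},A')$ weakly $\mathcal{G}$-approximable; since $\mathcal{R}^{\mathcal{G}}_n=\Sigma^n\mathsf{U}$ for the t-structure generated by $G$, the dictionary says precisely that this t-structure and $A'$ witness weak approximability of $\mathsf{T}$, and the $\mathcal{G}$-approximable case transfers via the second assertion of \Cref{Proposition G-approximability from orthogonal metric in preferred equivalence class}. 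This establishes (i) and (ii) at once, since the approximation clauses $(3)$ and $(3)'$ are handled uniformly.

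The step I expect to need the most care is the matching of the degree windows in clauses $(3)$ and $(3)'$: one must check that $\overline{\mathcal{G}}^{[-A-i,A-i]}$ really is the $\Sigma^i$-translate of $\overline{\langle G\rangle}^{[-A,A]}$, which means carefully reconciling the sign convention in \Cref{Definition of generating sequence}(2) (where $\mathcal{M}^{\mathcal{G}}_n=\mathcal{G}^{(-\infty,-n]}$) and in \Cref{Notation Subcategories for (pre)-generating sequences} with the conventions of \Cref{Notation from Neeman's paper}. Everything else is either purely formal (the orthogonal-metric axioms for $\Sigma^n\mathsf{U}$, clause $(0)$) or an immediate consequence of the structural results proved above.
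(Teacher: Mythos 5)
Your proposal is correct and follows essentially the same route as the paper: translate between the t-structure data and the orthogonal metric $\mathcal{R}_n=\Sigma^n\mathsf{U}$ clause by clause, use \Cref{Proposition G-approximability from orthogonal metric in preferred equivalence class} (via \Cref{Remark on equivalence relation in approximability and G-approximability}) to reduce the forward direction to the metric $\mathcal{R}^{\mathcal{G}}$ coming from the t-structure generated by $G$, and verify conditions (0)--(3) of \Cref{Remark conditions of G-approximability} directly for the converse. Your explicit checks that $\{\Sigma^n\mathsf{U}\}$ satisfies the orthogonal-metric axioms and that $\overline{\mathcal{G}}^{[-A-i,A-i]}=\Sigma^i\overline{\langle G\rangle}^{[-A,A]}$ are points the paper leaves implicit, and they are carried out correctly.
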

\begin{proof}
    Both (i) and (ii) follow the same way, so we only prove (i). Suppose $\mathsf{T}$ is weakly $\mathcal{G}$-approximable. Then, as mentioned in \Cref{Remark on equivalence relation in approximability and G-approximability}(1), the t-structure compactly generated by $G$ gives $\mathcal{R}^{\mathcal{G}}$, which is a metric in the preferred $\mathbb{N}$-equivalence class of orthogonal metrics, under the bijection given in \Cref{Remark on equivalence relation in approximability and G-approximability}(1). By \Cref{Proposition G-approximability from orthogonal metric in preferred equivalence class}, $\mathsf{T}$ is weakly $G$-approximable with respect to any orthogonal metric in the preferred equivalence class of orthogonal metrics, in particular with $\mathcal{R}^{\mathcal{G}}$. It is easy to see that this gives us that $\mathsf{T}$ is weakly approximable. \Cref{Remark conditions of G-approximability}(1), (2), and (3) imply \Cref{Definition approximability}(1), (2), and (3) respectively with the t-structure generated by $G$, see \Cref{Theorem Compactly generated t-structure}.

    Conversely, suppose $\mathsf{T}$ is weakly approximable. So, there exists a t-structure $(\mathsf{T}^{\leq 0},\mathsf{T}^{\geq 0})$, and an integer $A > 0$ such that the conditions (1)-(3) of \Cref{Definition approximability} are satisfied with the compact generator $G$. We will show that $\mathsf{T}$ is weakly $\mathcal{G}$-approximable by showing the conditions (0)-(3) of \Cref{Remark conditions of G-approximability} hold with the orthogonal good metric $\mathcal{R}_i = \Sigma^i \mathsf{T}^{\leq 0}$ for all $i \in \mathbb{Z}$ and the integer $A > 0$.
    \begin{enumerate}
    \setcounter{enumi}{-1}
        \item As $\mathcal{G}^n = \{\Sigma^{-n }G\}$, we get that $\mathcal{M}^{\mathcal{G}}_i = \Sigma^{-1} \mathcal{M}^{\mathcal{G}}_{i+1}$ for all $i \in \mathbb{Z}$, as $\mathcal{M}^{\mathcal{G}}_i = \mathcal{G}^{(-\infty,-i]} = \langle G \rangle^{(-\infty,-i]}$ by definition, see \Cref{Definition of generating sequence}(2) for the definition of $\mathcal{M}^{\mathcal{G}}$. Also see, \Cref{Notation Subcategories for (pre)-generating sequences} and \Cref{Notation from Neeman's paper}.
        \item As $\Sigma^A G \in \mathsf{T}^{\leq 0} = \mathcal{R}_0$, $\mathcal{M}^{\mathcal{G}}_{i+A} = \langle G \rangle^{(-\infty,-i-A]}\subseteq \mathcal{R}_i \cap \mathsf{T}^c$ for all $i \in \mathbb{Z}$, see \Cref{Notation from Neeman's paper}.
        \item As $\HomT{\Sigma^{-i} G}{\mathsf{T}^{\leq 0}} = 0$ for all $i \geq A$, we get that,
        \[\HomT{\mathcal{G}^n}{\mathcal{R}_{i}} = \HomT{\Sigma^{-n} G}{\mathsf{T}^{\leq -i}}=0\] for all $n + i \geq A$.
        \item For any $i \in \mathbb{Z}$ and $F \in \mathcal{R}_{i} = \mathsf{T}^{\leq -i}$, we get that $\Sigma^{-i}F \in \mathsf{T}^{\leq 0}$. So, by the \Cref{Definition approximability}(3), we get a triangle $E \to F \to D \to \Sigma E$ with $E \in \langle G \rangle^{[-A-i,A-i]}$ and $D \in \mathsf{T}^{\leq -i-1} = \mathcal{R}_{i+1}$, which is what we needed as $\langle G \rangle^{[-A-i,A-i]} = \mathcal{G}^{[-A-i,A-i]}$.
    \end{enumerate}
\end{proof}
\section{The closure of the compacts}

We now give some results related to the subcategory $\overline{\mathsf{T}^c}$, see \Cref{Definition closure of compacts}. Whenever there is an orthogonal metric $\mathcal{R}$ given on $\mathsf{T}$, $\overline{\mathsf{T}^c}$ will always mean the closure of the compacts with respect to the metric $\mathcal{R}$ unless explicitly stated otherwise. For a triangulated category with a generating sequence $\mathcal{G}$ (\Cref{Definition of generating sequence}), the closure of the compacts will be with respect to the metric $\mathcal{R}^{\mathcal{G}}$ unless explicitly stated otherwise

Now, we prove that under the assumption of (weak) $\mathcal{G}$-quasiapprox (\Cref{Definition of G-quasiapproximability}), we can approximate an object in the closure of compact by a \say{very nice} Cauchy sequence, see \Cref{Corollary 4 of section 5}. This will be used to prove the representability results in \Cref{Section Representability results}. We begin with a few technical lemmas.

\begin{lemma}\label{Lemma 1 of section 5}
    Let $(\mathsf{T},\mathcal{R},A)$ be weakly $\mathcal{G}$-quasiapprox or weakly $\mathcal{G}$-approx, see \Cref{Convention on approximability}. Then, 
    for any $i \in \mathbb{Z}$, 
    $m > 0$,
    and any $K \in \mathsf{T}^c \cap \mathcal{R}_i$, there exists an object $L$ and a triangle $E \to K\oplus L \to D \to \Sigma E$ with $E \in \mathcal{G}^{[-A-m+1-i,A-i]}$ and $D \in \mathcal{G}^{(-\infty, A-m-i]}$, see \Cref{Notation Subcategories for (pre)-generating sequences}. If $(\mathsf{T},\mathcal{R},A)$ is further $\mathcal{G}$-quasiapprox or $\mathcal{G}$-approx, then we can choose the triangle so that $ E \in \mathcal{G}^{[-A-m+1-i,A-i]}_{mA} $.
\end{lemma}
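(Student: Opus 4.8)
The plan is to reduce $K$, using \Cref{Proposition Approximating Sequence}, to a triangle whose outer terms sit in the expected \emph{big} categories $\overline{\mathcal G}^{[\cdot,\cdot]}$ and $\overline{\mathcal G}^{(-\infty,\cdot]}$, and then to use the compactness of $K$ together with the auxiliary object $L$ to replace these by genuinely compact (hence \emph{small}) objects, invoking $\overline{\mathcal G}^{?}_{!}\cap\mathsf T^c=\mathcal G^{?}_{!}$ from \Cref{Remark big categories intersection compacts gives small categories}.

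\emph{Step 1 (getting the shape).} Since $K\in\mathsf T^c\subseteq\overline{\mathsf T^c}$ and $K\in\mathcal R_i$, I apply the $\mathcal G$-quasiapprox form of \Cref{Proposition Approximating Sequence}(1) to $F=K$. Choosing the outer index just past $i+m$, and, if necessary, running one extra approximation step on the error term and invoking \Cref{Corollary approximating by generating sequence}, one obtains a triangle $E'\to K\to D'\to\Sigma E'$ with $E'\in\overline{\mathcal G}^{[-A-m+1-i,\,A-i]}$ and $D'\in\overline{\mathcal G}^{(-\infty,\,A-m-i]}$; under the (non-weak) $\mathcal G$-quasiapprox or $\mathcal G$-approx hypothesis the same proposition also gives $E'\in\overline{\mathcal G}^{[-A-m+1-i,\,A-i]}_{mA}$. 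By \Cref{Proposition Closure of compacts is triangulated}, both $E'$ and $D'$ lie in $\overline{\mathsf T^c}$. Some index bookkeeping is needed here, since the half-line furnished by \Cref{Proposition approximable implies preferred equivalence class}(1) is off by one unit from the target windows; this is absorbed using \Cref{Definition of generating sequence}(2) and the extra approximation step just mentioned.

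\emph{Step 2 (compactification via $L$).} Write $\overline{\mathcal G}^{[-A-m+1-i,A-i]}=\operatorname{smd}\bigl(\operatorname{Coprod}_{mA}(\mathcal G[-A-m+1-i,A-i])\bigr)$ in the $\mathcal G$-quasiapprox case (and $\operatorname{Coprod}_N$ for some $N$ otherwise) and pick $E'_{\mathrm c}$ with $\widetilde E:=E'\oplus E'_{\mathrm c}$ an honest build. Adding the split triangle on $E'_{\mathrm c}$ to the triangle of Step~1 gives $\widetilde E\to K\oplus E'_{\mathrm c}\to D'\to\Sigma\widetilde E$. The building filtration of $\widetilde E$ has (a priori infinite) coproducts of compacts from $\mathcal G^n$, $-A-m+1-i\le n\le A-i$, as its graded pieces; since $K$ is compact and $D'\in\overline{\mathsf T^c}$, only finitely many of these summands are seen by the maps of the octahedron relating that filtration to $K\oplus E'_{\mathrm c}$ and $D'$. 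Keeping those and absorbing the rest, together with $E'_{\mathrm c}$, into a single object $L$ (chosen so that $K\oplus L$ is again compact), one ends with a triangle $E\to K\oplus L\to D\to\Sigma E$ in which $E\in\operatorname{coprod}_{mA}(\mathcal G[-A-m+1-i,A-i])\subseteq\mathcal G^{[-A-m+1-i,A-i]}_{mA}$ is a finite build and $D:=\operatorname{Cone}(E\to K\oplus L)$ is compact and still lies in $\overline{\mathcal G}^{(-\infty,A-m-i]}$. Hence $D\in\overline{\mathcal G}^{(-\infty,A-m-i]}\cap\mathsf T^c=\mathcal G^{(-\infty,A-m-i]}$ and $E\in\mathcal G^{[-A-m+1-i,A-i]}_{mA}$ by \Cref{Remark big categories intersection compacts gives small categories}. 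Dropping all extension-count bookkeeping gives the first (weak) assertion, and the case $K\in\mathcal R_i\cap\overline{\mathsf T^c}$ runs verbatim using the $\overline{\mathsf T^c}$-statement of \Cref{Proposition Approximating Sequence}.

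\emph{Main obstacle.} The delicate point is the compactness reduction in Step~2: the filtration exhibiting $E'\in\overline{\mathcal G}^{[-A-m+1-i,A-i]}_{mA}$ uses genuinely infinite coproducts at each of its $mA$ stages, and these must all be truncated at once, compatibly with the connecting maps, so that the cone of the resulting finite build against $K\oplus L$ stays in $\overline{\mathcal G}^{(-\infty,A-m-i]}$ while $L$ (and hence $K\oplus L$ and $D$) remains compact. This is exactly where the compactness of $K$ and the $\operatorname{Hom}$-vanishing of \Cref{Definition of G-approximability}(2) — which drives the error terms into arbitrarily deep $\mathcal R_n$, and so off the relevant compacts — must be deployed with care; the remaining degree- and step-count bookkeeping is routine.
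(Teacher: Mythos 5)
Your overall architecture matches the paper's: first produce a triangle exhibiting $K$ in the \emph{big} star product $\overline{\mathcal G}^{[-A-m+1-i,A-i]}\star\overline{\mathcal G}^{(-\infty,A-m-i]}$, then use compactness of $K$ and an auxiliary summand $L$ to descend to the small categories via \Cref{Remark big categories intersection compacts gives small categories}. Your Step~1 is workable, though it takes a different route from the paper: the paper does not use \Cref{Corollary approximating by generating sequence} at all. Instead it observes that $\Hom{\mathsf{T}}{K}{\mathcal{R}_B}=0$ for $B\gg 0$ (by \Cref{Lemma Compacts orthogonal to the metric}) and that $E_m$ lies in a bounded window, so $\Hom{\mathsf{T}}{D_m}{\mathcal{R}_B}=0$; approximating $D_m$ to depth $B$ then forces the map $D_m\to Q$ to vanish, so $D_m$ is a \emph{direct summand} of a single approximant $E'\in\overline{\mathcal G}^{[-A-B+1,A-m-i]}$ with the upper index exactly right. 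This is cleaner than your "extra approximation step plus the corollary" fix for the off-by-one (which can be made to work, but needs the closure of $\overline{\mathcal G}^{(-\infty,j]}$ under extensions to be checked, and in the quasiapprox case needs the error terms to stay in $\overline{\mathsf{T}^c}$).

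The genuine gap is in Step~2, which is the heart of the lemma and which you only describe rather than prove. The assertion that "only finitely many of these summands are seen by the maps of the octahedron," and that the rest can be "absorbed into $L$" so that the new cone $D$ is compact \emph{and still lies in} $\overline{\mathcal G}^{(-\infty,A-m-i]}$, is precisely the nontrivial content here: truncating the filtration of $E'$ changes the cone against $K\oplus L$, and nothing in your sketch controls where that new cone lives or why it is compact. The paper handles this by quoting \cite[Lemmas 1.6 and 1.8, Remark 1.7]{Neeman:2021a}: any map from a compact object into $\operatorname{Coprod}_n(\mathcal S)$ factors through $\operatorname{coprod}_n(\mathcal S)$, and (verifying the hypothesis $\mathcal C\subseteq\mathsf{T}^c$ of Remark~1.7) any map from a compact object into $\mathcal X\star\mathcal Z$ factors through $K''\in\mathcal A\star\mathcal C$ with $\mathcal A,\mathcal C$ the corresponding small categories. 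Since $K\to K'\to K$ is the identity and factors through $K''$, one gets $K''\cong K\oplus L$, and the triangle $E\to K''\to D\to\Sigma E$ witnessing $K''\in\mathcal A\star\mathcal C$ has $E\in\mathcal G^{[-A-m+1-i,A-i]}_{mA}$ and $D\in\mathcal G^{(-\infty,A-m-i]}$ by construction — no separate argument about $D$ is needed. You should either invoke these factorization lemmas or supply a complete substitute; the inductive argument they encode (compactness applied at each of the $mA$ stages of the filtration, assembled by repeated octahedra) is exactly the part you have labelled an "obstacle" without resolving.
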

\begin{proof}
    Let $K \in \mathsf{T}^c \cap \mathcal{R}_i$ for some integer $i$. For every $m>0$, there exists a triangle 
    \begin{equation}\label{Diagram triangle in Lemma 1 of section 5}
        E_m \to K \to D_m \to \Sigma E_m \quad \text{with } E_m \in \overline{\mathcal{G}}^{[-A-m+1-i,A-i]} \text{ and } D_m \in \mathcal{R}_{i+m} \cap \overline{\mathsf{T}^c}
    \end{equation}
    by \Cref{Proposition Approximating Sequence}(1). If $\mathsf{T}$ is $\mathcal{G}-$quasiapprox, we can further get a triangle with $E_m \in \overline{\mathcal{G}}^{[-A-m+1-i,A-i]}_{mA}$.

    As $K\in\mathsf{T}^c$, there exists an integer $B > 0$, such that $\Hom{\mathsf{T}}{K}{\mathcal{R}_B} = 0$ by \Cref{Lemma Compacts orthogonal to the metric}. By increasing $B$ if necessary, we may assume that $B \geq m+2A+i$ so that we have $\Hom{\mathsf{T}}{\Sigma E_m}{\mathcal{R}_B} = 0$, as $E_m \in \overline{\mathcal{G}}^{[-A-m+1-i,A-i]}_{mA}$ and by \Cref{Definition of G-quasiapproximability}(2) we get that $\Hom{\mathsf{T}}{\overline{\mathcal{G}}^{[-A-m+1-i,A-i]}}{\mathcal{R}_{m+2A-1+i}} = 0$. Hence, we get that, $\Hom{\mathsf{T}}{D_m}{\mathcal{R}_B} = 0$ from the triangle \Cref{Diagram triangle in Lemma 1 of section 5}.

    By \Cref{Proposition Approximating Sequence}(1), we can construct a triangle for $D_m \in \mathcal{R}_{m + i} \cap \overline{\mathsf{T}^c}$ as follows,
    \begin{equation}\label{Diagram triangle 2 in Lemma 1 of section 5}
        E' \to D_m \to Q \to \Sigma E' \quad \text{with }E' \in \overline{\mathcal{G}}^{[-A-B+1,A-m-i]} \text{ and } Q \in \mathcal{R}_B \cap \overline{\mathsf{T}^c}\,.
    \end{equation}
    
    As $\Hom{\mathsf{T}}{D_m}{\mathcal{R}_B} = 0$, the map $D_m \to Q$ is zero, and so $D_m$ is a summand of $E'$ by \Cref{Diagram triangle 2 in Lemma 1 of section 5}. Hence $D_m \in \overline{\mathcal{G}}^{[-A-B+1,A-m-i]} \subseteq \overline{\mathcal{G}}^{(-\infty,A-m-i]}$. 

    So, $K \in \overline{\mathcal{G}}^{[-A-m+1-i,A-i]} \star \overline{\mathcal{G}}^{(-\infty,A-m-i]}$. If $\mathsf{T}$ is $\mathcal{G}$-quasiapprox, we further have that $K \in \overline{\mathcal{G}}^{[-A-m+1-i,A-i]}_{mA}\star \overline{\mathcal{G}}^{(-\infty,A-m-i]}$. 
    We define the following full subcategories,
    \begin{itemize}
        \item $\mathcal{X}_1 \colonequals \operatorname{Coprod}(\mathcal{G}[-A-m+1-i,A-i])$
        \item $\mathcal{X}_2 \colonequals \operatorname{Coprod}_{mA}(\mathcal{G}[-A-m+1-i,A-i])$
        \item $\mathcal{Z} \colonequals \operatorname{Coprod}( \mathcal{G}(-\infty,A-m-i])$
    \end{itemize}
    see \Cref{Notation from Neeman's paper}.
    We have that $K \in \operatorname{smd}(\mathcal{X}_i) \star \operatorname{smd}(\mathcal{Z}) \subseteq \operatorname{smd}(\mathcal{X}_i \star \mathcal{Z})$ with $i=1$ or $2$ depending on $\mathsf{T}$ being weakly $\mathcal{G}$-quasiapprox ($i=1$), or $\mathcal{G}$-quasiapprox ($i=2$). So, there exists an object $K' \in \mathcal{X}_i \star \mathcal{Z}$ with maps $K \xrightarrow{f} K' \to K$ composing to identity.
    
    We now define the corresponding small subcategories,
    \begin{itemize}
        \item $\mathcal{A}_1 \colonequals \operatorname{coprod}(\mathcal{G}[-A-m+1-i,A-i])$
        \item $\mathcal{A}_2 \colonequals \operatorname{coprod}_{mA}(\mathcal{G}[-A-m+1-i,A-i])$
        \item $\mathcal{C} \colonequals \operatorname{coprod}( \mathcal{G}(-\infty,A-m-i])$
    \end{itemize}
    By \cite[Lemma 1.8]{Neeman:2021a}, any map from compacts to the big categories $\mathcal{X}_1, \mathcal{X}_2$ and $\mathcal{Z}$ factors through the corresponding small category $\mathcal{A}_1, \mathcal{A}_2$ and $\mathcal{C}$. Let $\mathcal{S} = \mathsf{T}^c$. Observe that $\mathcal{C} \subseteq \mathcal{S} = \mathsf{T}^c$ and so the conditions of \cite[Remark 1.7]{Neeman:2021a} are satisfied. Hence, we can apply \cite[Lemma 1.6]{Neeman:2021a} to the map $K \xrightarrow{f}K'$, as it is a map from a compact object $K$ to $K' \in \mathcal{X}_i\star \mathcal{Z}$, to get that it must factor through $K'' \in \mathcal{A}_i \star \mathcal{C} $. The composite $K \to K'' \to K' \to K$ is thus the identity morphism, which gives us that $K$ is a summand of $K''$. And so, $K \in \operatorname{smd}(\mathcal{A}_i \star\mathcal{C} )$ which is what we needed to show.

\end{proof}

\begin{lemma}\label{Lemma 2 of section 5}
    Let $(\mathsf{T},\mathcal{R},A)$ be  weakly $\mathcal{G}$-quasiapprox or weakly $\mathcal{G}$-approx, see \Cref{Convention on approximability}. Then, there exists an integer $B > 0$, such that for any $i \in \mathbb{Z}$ and $K \in \mathsf{T}^c\cap \mathcal{R}_i$, there exists a triangle $E \to K \to D \to \Sigma E$ with $E \in \mathcal{G}^{[-B-i, B-i]}$ and $D \in \mathcal{R}_{i + 1}$.
    
    If $(\mathsf{T},\mathcal{R},A)$ is further $\mathcal{G}$-quasiapprox or $\mathcal{G}$-approx,  then we can further arrange $B$ so that we have $E \in \mathcal{G}^{[-B-i, B-i]}_{B} $.
\end{lemma}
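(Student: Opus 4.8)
The plan is to bootstrap from \Cref{Lemma 1 of section 5}, whose conclusion already gives the desired kind of triangle but only for $K\oplus L$, and then to eliminate the auxiliary summand $L$. Fix once and for all an integer $b\ge 0$ with $\mathcal{M}^{\mathcal{G}}_{j+b}\subseteq\mathcal{R}_j\cap\mathsf{T}^c\subseteq\mathcal{M}^{\mathcal{G}}_{j-b}$ for all $j$; this exists by \Cref{Proposition approximable implies preferred equivalence class}(1). Given $K\in\mathsf{T}^c\cap\mathcal{R}_i$, apply \Cref{Lemma 1 of section 5} with a parameter $m$ chosen large enough --- depending only on $A$; any $m\ge 2A+1$ works, since $\mathcal{G}^{(-\infty,A-m-i]}=\mathcal{M}^{\mathcal{G}}_{i+m-A}\subseteq\mathcal{R}_{i+m-2A}$ by \Cref{Definition of G-preapproximability}(1) --- so that its ``tail'' factor lands in $\mathcal{R}_{i+1}$. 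This produces an object $L$ and a triangle $E_1\to K\oplus L\to D_1\to\Sigma E_1$ with $E_1$ a compact object in the bounded window $\mathcal{G}^{[-A-m+1-i,\,A-i]}$ (and in $\mathcal{G}^{[-A-m+1-i,\,A-i]}_{mA}$ in the non-weak case) and $D_1\in\mathcal{G}^{(-\infty,A-m-i]}\subseteq\mathcal{R}_{i+1}\cap\mathsf{T}^c$.

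Next I record the properties of the spurious summand. Since $E_1$ and $D_1$ are compact, so is $K\oplus L$, hence so is $L$. Moreover every generator $\mathcal{G}^n$ occurring in $\mathcal{G}^{[-A-m+1-i,A-i]}$ has $n\le A-i$, so $\mathcal{G}^n\subseteq\mathcal{R}_{i-2A}$ by \Cref{Definition of G-preapproximability}(1); as $\mathcal{R}_{i-2A}$ is closed under coproducts, extensions and summands (it is part of an orthogonal metric and $\mathcal{R}_{i-2A}\star\mathcal{R}_{i-2A}\subseteq\mathcal{R}_{i-2A}$ by \Cref{Good Metric}), we get $E_1\in\mathcal{R}_{i-2A}$, $D_1\in\mathcal{R}_{i-2A}$, hence $K\oplus L\in\mathcal{R}_{i-2A}$ and therefore $L\in\mathsf{T}^c\cap\mathcal{R}_{i-2A}$. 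Applying the octahedral axiom to the composite $E_1\to K\oplus L\to K$, where the second map is the projection onto $K$ (with cone $\Sigma L$), produces a triangle $E_1\to K\to D'\to\Sigma E_1$ in which $E_1$ already lies in a window of width controlled by $A$ alone and $D'$ sits in $D_1\star\Sigma L$; thus $D'$ fails to lie in $\mathcal{R}_{i+1}$ only through the summand $\Sigma L$.

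The remaining and genuinely delicate step --- which I expect to be the main obstacle --- is to absorb this $\Sigma L$ into the two halves of the approximation while keeping the window bounded by a constant $B$ that does not depend on $i$ or $K$. The mechanism is to feed $L\in\mathsf{T}^c\cap\mathcal{R}_{i-2A}$ back into \Cref{Lemma 1 of section 5} (with its parameter tuned so that its tail is now in $\mathcal{R}_{i+2}$, so that the relevant suspension stays in $\mathcal{R}_{i+1}$) and then, exactly as in the proof of \Cref{Lemma 1 of section 5}, to pass from the resulting ``big'' $\star$-decomposition to a ``small'' one using the factorisation lemmas for maps out of compact objects, \cite[Lemma 1.6, Remark 1.7, Lemma 1.8]{Neeman:2021a}, together with the idempotent completeness of $\mathsf{T}$; one then recombines, via octahedra and the vanishing statements of \Cref{Definition of G-quasiapproximability}(2), to express $K$ itself as an extension of a compact object in a window $[-B-i,B-i]$ by an object of $\mathcal{R}_{i+1}$. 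The whole of this bookkeeping must be uniform in $i$ and $K$ --- the choices of $b$ and $m$ above are what make this possible --- and in the non-weak case one keeps track of the $\operatorname{coprod}_n$-indices throughout to obtain in addition that $E\in\mathcal{G}^{[-B-i,B-i]}_B$.
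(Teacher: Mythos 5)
Your setup is right and you have correctly located the crux: \Cref{Lemma 1 of section 5} only approximates $K\oplus L$ for an uncontrolled compact summand $L$, and the whole difficulty of the lemma is getting rid of $L$ while keeping the error term in $\mathcal{R}_{i+1}$ and the window uniform in $i$ and $K$. But the mechanism you propose for this step does not close. Feeding $L\in\mathsf{T}^c\cap\mathcal{R}_{i-2A}$ back into \Cref{Lemma 1 of section 5} does not give an approximation of $L$; it gives an approximation of $L\oplus L'$ for yet another unknown compact $L'$, so the procedure regresses rather than terminates. Moreover, even if you could approximate $L$ itself, your octahedron $E_1\to K\oplus L\to K$ leaves you with $D'\in D_1\star\Sigma L$, and $\Sigma L$ only lies in some fixed $\mathcal{R}_{i-c}$ with $c>0$; no amount of decomposing $L$ further will move that cone \emph{up} into $\mathcal{R}_{i+1}$, because $L$ is an arbitrary compact object and there is no reason for any piece of it to be that connective.

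The paper's proof resolves this with a different idea that is absent from your sketch. A second application of \Cref{Lemma 1 of section 5} (to the tail $D$, introducing a second spurious summand $M$) followed by a $3\times 3$ diagram cancels $L$ and $M$ against each other and produces a triangle for $K\oplus\Sigma K$ --- i.e.\ the spurious summand is traded for a \emph{shift of $K$ itself}, not eliminated. Iterating gives, for every $j$, uniform approximations of $K\oplus\Sigma^{2j-1}K$. The final step is then to invoke \Cref{Lemma on implication of strong generating sequence} to choose $r$ with $|r|$ bounded independently of $i$ and $K$ such that $\Sigma^{2r}K\in\mathcal{R}_{i+1}$; projecting $K\oplus\Sigma^{2r-1}K\to K$ has cone $\Sigma^{2r}K$, which now lies exactly where it needs to, and one last octahedron yields the triangle $E\to K\to D\to\Sigma E$ with $D\in\mathcal{R}_{i+1}$. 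Without this ``trade $L$ for $\Sigma^{2r-1}K$ and then kill $\Sigma^{2r}K$ using the uniform shift bound'' idea, your argument cannot be completed as written.
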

\begin{proof}
    
    Let $K \in \mathsf{T}^c\cap \mathcal{R}_i$ for some $i \in \mathbb{Z}$. We apply \Cref{Lemma 1 of section 5} to $K$ with $m = nA + 1$ for any integer $n > 1$, to get triangles $E \to K \oplus L \to D \to \Sigma E$ with $E \in \mathcal{G}^{[-(n+1)A-i,A-i]}$ and $D \in \mathcal{G}^{(-\infty, -(n-1)A-1-i]}$ (see \Cref{Notation Subcategories for (pre)-generating sequences}). If the category is $\mathcal{G}$-quasiapprox, we can further assume that $E \in \mathcal{G}^{[-(n+1)A-i,A-i]}_{(nA + 1)A}$. 

     Note that $D \in \mathsf{T}^c \cap\mathcal{R}_{(n-2)A+1+i}$ as $\mathcal{M}^\mathcal{G}_{i+A} \subseteq \mathcal{R}_i$ by \Cref{Remark conditions of G-approximability}(1), see \Cref{Definition of generating sequence}(2) for the definition of $\mathcal{M}^{\mathcal{G}}$. Applying \Cref{Lemma 1 of section 5} to $D$ with $m = 6A$, we can get a triangle, $E' \to D\oplus M \to D' \to \Sigma E'$ with $E' \in \mathcal{G}^{[-(n + 5)A-i, -(n - 3)A-1-i]}$ and $D' \in \mathcal{G}^{(-\infty, -(n + 3)A - 1 - i ]}$.  We can further assume that $E' \in \mathcal{G}^{[-(n + 5)A-i, -(n - 3)A-1-i]}_{6A^2}$ if $\mathsf{T}$ is $\mathcal{G}$-quasiapprox.

     Now, we apply the octahedral axiom to $K\oplus L \oplus M \to D\oplus M \to D'$, where the map $K\oplus L \oplus M \to D\oplus M$ is the sum of the map $K\oplus L \to D$ and the identity map on $M$, to get,
     \[\begin{tikzcd}
        E & {E''} & {E'} \\
        E & {K\oplus L\oplus M} & {D\oplus M} \\
        & {D'} & {D'}
	  \arrow[from=1-1, to=2-1, equals]
	  \arrow[from=1-1, to=1-2]
	  \arrow[from=1-2, to=1-3]
	  \arrow[from=1-2, to=2-2]
	  \arrow[from=1-3, to=2-3]
	  \arrow[from=2-2, to=3-2]
	  \arrow[from=2-3, to=3-3]
	  \arrow[from=3-2, to=3-3, equals]
	  \arrow[from=2-1, to=2-2]
	  \arrow[from=2-2, to=2-3]
    \end{tikzcd}\]
    This gives us that $E'' \in \mathcal{G}^{[-(n+1)A-i,A-i]} \star \mathcal{G}^{[-(n + 5)A-i, -(n - 3)A-1-i]} \subseteq \mathcal{G}^{[-(n + 5)A-i, A-i]}$. If $\mathsf{T}$ is $\mathcal{G}$-quasiapprox, then $E'' \in \mathcal{G}^{[-(n + 5)A-i, A-i]}_{(n + 6)A^2 + A}$. 
    
    $D'\in \mathcal{G}^{(-\infty, -(n + 3)A - 1 - i ]} \subseteq \mathcal{R}_{(n-2)A+1+i}$ and $E' \in \mathcal{G}^{[-(n + 5)A-i, -(n - 3)A-1-i]} \subseteq \mathcal{R}_{(n-4)A+1+i}$, again as $\mathcal{M}^\mathcal{G}_{i+A} \subseteq \mathcal{R}_i$ by \Cref{Definition of G-approximability}(1). And hence by the triangle $E' \to D \oplus M \to D' \to \Sigma E'$, we get that $M \in \mathcal{R}_{(n-4)A+1+i}$ using the fact that $\mathcal{R}_{(n-4)A+1+i}$ is closed under summands as $\mathcal{R}$ is an orthogonal metric.

    Consider the diagram,
    \[\begin{tikzcd}
	   E & {K\oplus L} & D \\
	   {E''} & {K\oplus L\oplus M} & {D'}
	   \arrow[from=1-1, to=1-2]
	   \arrow[from=1-2, to=1-3]
	   \arrow[from=1-2, to=2-2]
	   \arrow[from=2-1, to=2-2]
	   \arrow[from=2-2, to=2-3]
    \end{tikzcd}\]
    where the vertical map is the direct sum of the identity on $L$ and the zero map. The map from the top left to the bottom right in this diagram lies in \[\HomT{\mathcal{G}^{[-(n+1)A-i,A-i]}}{\mathcal{R}_{(n+2)A+1+i}} = 0 \]
    by \Cref{Definition of G-quasiapproximability}(2) as $(n+2)A+1+i -(n+1)A-i = A+1 $.  
    
    So, we can complete to the following $3 \times 3$ diagram, 
    
    \[\begin{tikzcd}
	   E & {K\oplus L} & D \\
	   {E''} & {K\oplus L\oplus M} & {D'} \\
	   {\widetilde{E}} & {K\oplus\Sigma K\oplus M} & {D''}
	   \arrow[from=1-1, to=1-2]
	   \arrow[from=1-2, to=1-3]
	   \arrow[from=1-2, to=2-2]
	   \arrow[from=2-1, to=2-2]
	   \arrow[from=2-2, to=2-3]
	   \arrow[from=1-1, to=2-1]
	   \arrow[from=1-3, to=2-3]
	   \arrow[from=2-1, to=3-1]
	   \arrow[from=3-1, to=3-2]
	   \arrow[from=3-2, to=3-3]
	   \arrow[from=2-3, to=3-3]
	   \arrow[from=2-2, to=3-2]
    \end{tikzcd}\]
    Note that $\widetilde{E} \in \mathcal{G}^{[-(n-5)A-i,A-i+1]}$ and $D'' \in \mathcal{R}_{(n-2)A+i}$. If $\mathsf{T}$ is $\mathcal{G}$-quasiapprox, we have that $\widetilde{E} \in \mathcal{G}^{[-(n-5)A-i,A-i+1]}_{(2n + 6)A^2 + 2A}$. Applying the octahedral axiom to the composable morphisms $\widetilde{E} \to K \oplus \Sigma K \oplus M \to K\oplus \Sigma K$, where the second map is just the projection, we get, 
    \[\begin{tikzcd}
	   {\widetilde{E}} & {K \oplus\Sigma K \oplus M} & {D''} \\
	   {\widetilde{E}} & {K \oplus \Sigma K} & {\widetilde{D}} \\
	   & {\Sigma M} & {\Sigma M}
	   \arrow[from=1-1, to=1-2]
	   \arrow[from=1-2, to=1-3]
	   \arrow[from=1-2, to=2-2]
	   \arrow[from=1-1, to=2-1, equals]
	   \arrow[from=2-1, to=2-2]
	   \arrow[from=2-2, to=2-3]
	   \arrow[from=2-3, to=3-3]
	   \arrow[from=2-2, to=3-2]
	   \arrow[from=1-3, to=2-3]
	   \arrow[from=3-2, to=3-3, equals]
    \end{tikzcd}\]
    which gives us the triangle $\widetilde{E} \to K \oplus \Sigma K \to \widetilde{D} \to \Sigma \widetilde{E}$ with $\widetilde{D} \in \mathcal{R}_{(n-4)A+i}$. 
    
    So the upshot of all of the above computation is that given any positive integer $n$, there exists an integer $B^1_n$ such that for any $i\in \mathbb{Z}$ and $K \in \mathsf{T}^c\cap \mathcal{R}_i$, there exists a triangle $E_n \to K\oplus \Sigma K \to D_n \to \Sigma E_n$ with $E_n \in \mathcal{G}^{[-B^1_n-i, B^1_n-i]}$ and $D_n \in \mathcal{R}_{i+n}$. If $\mathsf{T}$ is $\mathcal{G}$-quasiapprox, we can further arrange that $E_n \in \mathcal{G}^{[-B^1_n-i, B^1_n-i]}_{B^1_n}$. 

    Now, we will inductively show that given any positive integer $n$ and any $j \in \mathbb{Z}$, there exists an integer $B^j_n$ such that for any $i\in \mathbb{Z}$ and $K \in \mathsf{T}^c\cap \mathcal{R}_i$, there exists a triangle $E_n \to K\oplus \Sigma^{2j-1} K \to D_n \to \Sigma E_n$ with $E_n \in \mathcal{G}^{[-B^j_n-i, B^j_n-i]}$ and $D_n \in \mathcal{R}_{i+n}$. If $\mathsf{T}$ is $\mathcal{G}$-quasiapprox, we can arrange so that $E_n \in \mathcal{G}^{[-B^j_n-i, B^j_n-i]}_{B^j_n}$. 
    
    
    We prove the above claim for $j \geq 1$, and $\mathsf{T}$ weakly $\mathcal{G}$-quasiapprox. The other cases are analogous. We have already proven the result for $j=1$, so, we just need to prove the inductive step now. So, assume that we have the result up to some integer $j \geq 1$. Let $i$ be any integer, $K \in \mathsf{T}^c \cap \mathcal{R}_i$ any object, and $n$ any positive integer. By the induction hypothesis, we have a triangle $E \to K \oplus \Sigma^{2j-1} K \to D \to \Sigma E$ with $E \in \mathcal{G}^{[-B^j_{n+3}-i, B^j_{n+3}-i]}$ and $D \in \mathcal{R}_{i+n+3}$. Choose an $l \geq n + 3$ such that $\HomT{\mathcal{G}^{[-B^j_{n+3}-i, B^j_{n+3}-i]}}{\mathcal{R}_{p+i}} = 0$ for all $p \geq l-1$ which exists by \Cref{Definition of G-quasiapproximability}(2).
    
    By the $j=1$ case, we have a triangle $\widetilde{E} \to K \oplus \Sigma K \to \widetilde{D} \to \Sigma \widetilde{E}$ such that $\widetilde{E} \in \mathcal{G}^{[-B^1_l-i, B^1_l-i]}$ and $\widetilde{D} \in \mathcal{R}_{i+l}$.
    
     So, we get the following diagram with the vertical map given by the identity map on $K$,
    \[\begin{tikzcd}
          {E} & {K \oplus \Sigma^{2j-1} K} & {D} \\
	   \Sigma^{-1}\widetilde{E} & {K \oplus \Sigma^{-1} K} & \Sigma^{-1}\widetilde{D}
           \arrow[from=1-2, to=1-3]
	   \arrow[from=1-1, to=1-2]
	   \arrow[from=1-2, to=2-2]
	   \arrow[from=2-1, to=2-2]
	   \arrow[from=2-2, to=2-3]
    \end{tikzcd}\] 
    As the map from the top left to the bottom right vanishes, we get a $3 \times 3$ diagram as follows,
     \[\begin{tikzcd}
	   {E} & {K \oplus \Sigma^{2j-1} K} & {D} \\
	   \Sigma^{-1}\widetilde{E} & {K \oplus \Sigma^{-1} K} & \Sigma^{-1}\widetilde{D} \\
	   {\Sigma^{-1} E_n} & {\Sigma^{-1}K \oplus \Sigma^{2j}K} & {\Sigma^{-1} D_n}
	   \arrow[from=1-2, to=1-3]
	   \arrow[from=1-1, to=1-2]
	   \arrow[from=1-2, to=2-2]
	   \arrow[from=2-1, to=2-2]
	   \arrow[from=2-2, to=2-3]
	   \arrow[from=1-1, to=2-1]
	   \arrow[from=1-3, to=2-3]
	   \arrow[from=2-1, to=3-1]
	   \arrow[from=2-2, to=3-2]
	   \arrow[from=3-1, to=3-2]
	    \arrow[from=3-2, to=3-3]
	   \arrow[from=2-3, to=3-3]
    \end{tikzcd}\]
    The shift of the bottom triangle is what we were after, which proves the claim by induction.
    
     Now we prove the statement of the lemma. By \Cref{Lemma on implication of strong generating sequence}, there exists an integer $r$ such that $\Sigma^{2r}K \in \mathcal{R}_{i + 1}$. We can do this as follows. By \Cref{Lemma on implication of strong generating sequence}, there exists an integer $a$ with $|a| \leq (2 + 2b)A$ such that $\Sigma^a K \in \mathcal{R}_{i+2}$. Therefore $\Sigma^a K, \Sigma^{a+1}K \in \mathcal{R}_{i+1}$. So we can choose $2r$ to be the even integer in the set $\{a,a+1\}$.
     
     Note that $|r| \leq (1+b)A$ independent of $K$ and $i$, where $b$ is as in \Cref{Lemma on implication of strong generating sequence}. From the above claim, we get a triangle $E \to K \oplus \Sigma^{2r-1} K \to D' \to \Sigma E$ with $E \in \mathcal{G}^{[-B^r_1-i, B^r_1-i]}$ and $D' \in \mathcal{R}_{i+1}$. If $\mathsf{T}$ is $\mathcal{G}$-approximable, we can further arrange that $E \in \mathcal{G}^{[-B^r_1-i, B^r_1-i]}_{B^r_1}$. We apply the octahedral axiom to $E \to K \oplus \Sigma^{2r-1} K \to K$ with second map being the obvious projection map, to get,
     \[\begin{tikzcd}
	   E & {K \oplus \Sigma^{2r-1} K} & {D'} \\
	   E & K & D \\
	   & {\Sigma^{2r}K} & {\Sigma^{2r}K}
	   \arrow[from=1-1, to=1-2]
	   \arrow[from=1-1, to=2-1, equals]
	   \arrow[from=1-2, to=2-2]
	   \arrow[from=2-1, to=2-2]
	     \arrow[from=2-2, to=2-3]
	   \arrow[from=1-2, to=1-3]
	   \arrow[from=1-3, to=2-3]
	   \arrow[from=2-3, to=3-3]
	   \arrow[from=2-2, to=3-2]
	   \arrow[from=3-2, to=3-3, equals]
    \end{tikzcd}\]

    $E \to K \to D \to \Sigma E$ is the required triangle. So, we have proven the result with $B = \operatorname{max}\{B_1^r : |r| \leq (1+b)A\}$, with $b$ as in \Cref{Lemma on implication of strong generating sequence}. Note that this $B$ is independent of the integer $i$ and the object $K \in \mathcal{R}_i \cap \mathsf{T}^c$.
    
\end{proof}

\begin{proposition}\label{Proposition 3 of section 5}
    Let $(\mathsf{T},\mathcal{R},A)$ be  weakly $\mathcal{G}$-quasiapprox or weakly $\mathcal{G}$-approx, see \Cref{Convention on approximability}. Choose an integer $B$ as in \Cref{Lemma 2 of section 5}. Then, for any $i \in \mathbb{Z}$ and $F \in \overline{\mathsf{T}^c} \cap \mathcal{R}_i$ (\Cref{Definition closure of compacts}), there exists a triangle $E \to F \to D \to \Sigma E$ with $E \in \mathcal{G}^{[-B-i, B-i]}$ (see \Cref{Notation Subcategories for (pre)-generating sequences}) and $D \in \mathcal{R}_{i+1}$. If $(\mathsf{T},\mathcal{R},A)$ is further assumed to be $\mathcal{G}$-quasiapprox or $\mathcal{G}$-approx, then we can further get the triangle with $E \in \mathcal{G}^{[-B-i, B-i]}_{B}$.
\end{proposition}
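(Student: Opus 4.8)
The plan is to reduce to the compact case already established in \Cref{Lemma 2 of section 5} by first replacing $F$ by a compact approximation that still lies in $\mathcal{R}_i$, and then splicing the two triangles together with the octahedral axiom.

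First I would invoke the definition of the closure of the compacts. Since $F \in \overline{\mathsf{T}^c}$, applying \Cref{Definition closure of compacts} with $n = i+1$ produces a triangle $K \to F \to D' \to \Sigma K$ with $K \in \mathsf{T}^c$ and $D' \in \mathcal{R}_{i+1}$. Next I would check that $K$ itself lies in $\mathcal{R}_i$: rotating this triangle gives $\Sigma^{-1}D' \to K \to F \to D'$, so $K \in (\Sigma^{-1}D') \star F$; now $\Sigma^{-1}\mathcal{R}_{i+1} \subseteq \mathcal{R}_i$ and $\mathcal{R}_{i+1} \subseteq \mathcal{R}_i$ by \Cref{Good Metric}(2), while $F \in \mathcal{R}_i$ by hypothesis, whence $K \in \mathcal{R}_i \star \mathcal{R}_i \subseteq \mathcal{R}_i$ by \Cref{Good Metric}(1). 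Thus $K \in \mathsf{T}^c \cap \mathcal{R}_i$.

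Now I would apply \Cref{Lemma 2 of section 5} to $K$, obtaining a triangle $E \to K \to D'' \to \Sigma E$ with $E \in \mathcal{G}^{[-B-i, B-i]}$ and $D'' \in \mathcal{R}_{i+1}$ (and with $E \in \mathcal{G}^{[-B-i, B-i]}_{B}$ in the $\mathcal{G}$-quasiapprox or $\mathcal{G}$-approx case, for the same $B$). Applying the octahedral axiom to the composite $E \to K \to F$ yields the desired triangle $E \to F \to D \to \Sigma E$ together with a triangle $D'' \to D \to D' \to \Sigma D''$. Since $D''$ and $D'$ both lie in $\mathcal{R}_{i+1}$, we get $D \in \mathcal{R}_{i+1} \star \mathcal{R}_{i+1} \subseteq \mathcal{R}_{i+1}$ by \Cref{Good Metric}(1). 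The object $E$ is a vertex of both the input triangle from \Cref{Lemma 2 of section 5} and the output triangle of the octahedron, so the membership $E \in \mathcal{G}^{[-B-i, B-i]}$ (resp.\ $E \in \mathcal{G}^{[-B-i, B-i]}_{B}$) is untouched, which finishes the proof.

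The argument is essentially a splicing, so I do not expect a serious obstacle; the only point requiring attention is the metric bookkeeping, namely the nestings $\Sigma^{-1}\mathcal{R}_{i+1} \subseteq \mathcal{R}_i$ and $\mathcal{R}_{i+1} \subseteq \mathcal{R}_i$ together with the closure-under-extensions property $\mathcal{R}_n \star \mathcal{R}_n \subseteq \mathcal{R}_n$, all immediate from \Cref{Good Metric}, and confirming that the chosen $E$ really does remain a vertex of the new triangle produced by the octahedral axiom.
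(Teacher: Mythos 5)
Your argument is correct and coincides with the paper's own proof: both extract a compact approximation $K$ of $F$ from the definition of $\overline{\mathsf{T}^c}$ at level $i+1$, verify $K \in \mathsf{T}^c \cap \mathcal{R}_i$ via the same $\star$-manipulation, apply \Cref{Lemma 2 of section 5} to $K$, and splice with the octahedral axiom on $E \to K \to F$, concluding $D \in \mathcal{R}_{i+1} \star \mathcal{R}_{i+1} \subseteq \mathcal{R}_{i+1}$. No gaps.
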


\begin{proof}
    Let $i \in \mathbb{Z}$ and $F \in \overline{\mathsf{T}^c}$. By the definition of $\overline{\mathsf{T}^c}$, see \Cref{Definition closure of compacts}, we can get a triangle $K \to F \to D_1 \to \Sigma K$ such that $K \in \mathsf{T}^c$ and $D_1 \in \mathcal{R}_{i + 1}$. The triangle then gives us that $K \in \mathsf{T}^c\cap \mathcal{R}_i $, as $\Sigma^{-1} D_1 \in \mathcal{R}_i$ (\Cref{Good Metric}(2)), and $K \in \Sigma^{-1} D_1 \star F$ (\Cref{Good Metric}(1)). So, we can apply \Cref{Lemma 2 of section 5} to $K$, to get a triangle $E \to K \to D_2 \to \Sigma E$ with $E \in \mathcal{G}^{[-B-i, B-i]}$ and $D_2 \in \mathcal{R}_{i+1}$. If $\mathsf{T}$ is $\mathcal{G}$-quasiapprox, we can further assume that $E \in \mathcal{G}^{[-B-i, B-i]}_B$.

    We then complete the composable morphisms $E \to K \to F$ to the octahedron,
    \[\begin{tikzcd}
	   E & K & {D_2} \\
	   E & F & D \\
	   & {D_1} & {D_1}
	   \arrow[from=1-1, to=1-2]
	   \arrow[from=1-2, to=1-3]
	   \arrow[from=1-2, to=2-2]
	   \arrow[from=1-3, to=2-3]
	   \arrow[from=1-1, to=2-1, equals]
	   \arrow[from=2-1, to=2-2]
	   \arrow[from=2-2, to=2-3]
	   \arrow[from=2-3, to=3-3]
	   \arrow[from=2-2, to=3-2]
	   \arrow[from=3-2, to=3-3, equals]
    \end{tikzcd}\]

    As $D_1$ and $D_2$ both lie in $\mathcal{R}_{i+1}$, so does $D$, and hence $E \to F \to D \to \Sigma E$ is the required triangle and we are done.

\end{proof}

Note that \Cref{Proposition 3 of section 5} shows that $\mathcal{G}$-approx (resp.\ weak $\mathcal{G}$-approx) implies $\mathcal{G}$-quasiapprox (resp.\ weak $\mathcal{G}$-quasiapprox).

\begin{corollary}\label{Corollary 4 of section 5}
    Let $(\mathsf{T},\mathcal{R},A)$ be weakly $\mathcal{G}$-quasiapprox. Then, for any object $F \in \overline{\mathsf{T}^c} \cap \mathcal{R}_i$ (\Cref{Definition closure of compacts}), there exists a sequence of compact objects $E_1 \to E_2 \to E_3 \to E_4 \to \cdots $ mapping to $F$ such that $E_n \in \mathcal{G}^{[-B-n-i+1,B-i]}$ and $D_n \colonequals \operatorname{Cone}(E_n \to F) \in \mathcal{R}_{i+n}$ for all $n \geq 1$, see \Cref{Notation Subcategories for (pre)-generating sequences}. \\
    If $(\mathsf{T},\mathcal{R},A)$ is further assumed to be $\mathcal{G}$-quasiapprox, then we can further get a sequence such that $E_n \in \mathcal{G}^{[-B-n-i,B-i]}_{nB}$ for all $n \geq 1$.\\
    In either case, the non-canonical map $\hocolim{E}_n \to F$ is an isomorphism.
\end{corollary}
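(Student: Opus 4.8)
The plan is to bootstrap from \Cref{Proposition 3 of section 5} by iterating it and splicing the resulting triangles together with the octahedral axiom, in exactly the same manner as in the proof of \Cref{Proposition Approximating Sequence}; the only new content over that proposition is that the approximating objects now come out \emph{compact}, which is precisely what \Cref{Proposition 3 of section 5} supplies at each stage. So I would fix the integer $B$ as in \Cref{Lemma 2 of section 5}, take $F\in\overline{\mathsf{T}^c}\cap\mathcal{R}_i$, and build the sequence by induction.

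First, applying \Cref{Proposition 3 of section 5} to $F$ gives a triangle $E_1\to F\to D_1\to\Sigma E_1$ with $E_1\in\mathcal{G}^{[-B-i,B-i]}\subseteq\mathsf{T}^c$ and $D_1\in\mathcal{R}_{i+1}$ (and $E_1\in\mathcal{G}^{[-B-i,B-i]}_{B}$ in the $\mathcal{G}$-quasiapprox case). The point that makes the iteration run is that $\overline{\mathsf{T}^c}$ is a triangulated subcategory of $\mathsf{T}$ by \Cref{Proposition Closure of compacts is triangulated} (whose hypotheses are contained in weak $\mathcal{G}$-quasiapprox): since $F$ and the compact $E_1$ both lie in $\overline{\mathsf{T}^c}$, so does $D_1$, hence $D_1\in\overline{\mathsf{T}^c}\cap\mathcal{R}_{i+1}$ and \Cref{Proposition 3 of section 5} applies again. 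For the inductive step, given $E_n\to F\to D_n\to\Sigma E_n$ with $D_n\in\overline{\mathsf{T}^c}\cap\mathcal{R}_{i+n}$ and $E_n$ compact in the asserted window, I would apply \Cref{Proposition 3 of section 5} to $D_n$ to get $E'\to D_n\to D_{n+1}\to\Sigma E'$ with $E'\in\mathcal{G}^{[-B-(i+n),B-(i+n)]}$ (resp.\ $E'\in\mathcal{G}^{[-B-(i+n),B-(i+n)]}_{B}$) and $D_{n+1}\in\overline{\mathsf{T}^c}\cap\mathcal{R}_{i+n+1}$, and then run the octahedral axiom on $F\to D_n\to D_{n+1}$ to produce a compatible map $E_n\to E_{n+1}$, a triangle $E_n\to E_{n+1}\to E'\to\Sigma E_n$, and a triangle $E_{n+1}\to F\to D_{n+1}\to\Sigma E_{n+1}$. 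Then $E_{n+1}$ is again compact, lies in $\mathcal{G}^{[-B-n-i+1,B-i]}\star\mathcal{G}^{[-B-n-i,B-n-i]}\subseteq\mathcal{G}^{[-B-(n+1)-i+1,B-i]}$, and in the quasiapprox case the multiplicities add to place it in $\mathcal{G}^{[-B-(n+1)-i,B-i]}_{(n+1)B}$.

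For the last assertion, for each fixed $m\in\mathbb{Z}$ the cone $\operatorname{Cone}(E_n\to F)=D_n$ lies in $\mathcal{R}_{i+n}\subseteq\mathcal{R}_m$ as soon as $n\geq m-i$ (using $\mathcal{R}_{k+1}\subseteq\mathcal{R}_k$, which is part of \Cref{Good Metric}(2)), so the sequence $E_1\to E_2\to\cdots$ with its compatible maps to $F$ meets the hypothesis of \Cref{Lemma Intersection of the metric is zero}(2), giving that $\hocolim E_n\to F$ is an isomorphism. I expect the only genuine work to be the bookkeeping of the degree windows and multiplicities, together with the repeated check that each error term $D_n$ stays inside $\overline{\mathsf{T}^c}$ so that \Cref{Proposition 3 of section 5} keeps applying; the octahedral induction itself is essentially a transcription of the one already carried out in \Cref{Proposition Approximating Sequence}.
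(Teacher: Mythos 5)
Your proposal is correct and follows essentially the same route as the paper: induction starting from \Cref{Proposition 3 of section 5}, an octahedron on $F\to D_n\to D_{n+1}$ to splice the approximations, and \Cref{Lemma Intersection of the metric is zero}(2) for the homotopy colimit statement. Your explicit remark that each $D_n$ stays in $\overline{\mathsf{T}^c}$ (via \Cref{Proposition Closure of compacts is triangulated}) so that \Cref{Proposition 3 of section 5} can be reapplied is a point the paper's proof leaves implicit, and is a welcome clarification rather than a deviation.
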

\begin{proof}
    We prove this statement inductively on $n$. We get the case $n=1$ by \Cref{Proposition 3 of section 5}. Now, suppose we have the required sequence up to some integer $n\geq 1$. In particular, we have a triangle $E_n \to F \to D_n \to \Sigma E_n$ with $D_n \in \mathcal{R}_{i+n}$ and $E_n$ in $\mathcal{G}^{[-B-n-i+1,B-i]}$ or $\mathcal{G}^{[-B-n-i+1,B-i]}_{nB}$ depending on if $\mathsf{T}$ is weakly $\mathcal{G}$-quasiapprox or $\mathcal{G}$-quasiapprox respectively.
    
    We apply \Cref{Proposition 3 of section 5} to $D_n \in \mathcal{R}_{i+n}$ to get the triangle $\widetilde{E} \to D_n \to D_{n+1} \to \Sigma \widetilde{E}$ with $D_{n+1} \in \mathcal{R}_{i + n + 1}$ and $E_n$ in $\mathcal{G}^{[-B-n-i,B-i-n]}$ or $\mathcal{G}^{[-B-n-i,B-i-n]}_{B}$ depending on if $\mathsf{T}$ is weakly $\mathcal{G}$-quasiapprox or $\mathcal{G}$-quasiapprox respectively. Applying the octahedral axiom to $F \to D_n \to D_{n+1}$, we get,
    \[\begin{tikzcd}
	   E_n & E_{n+1} & \widetilde{E} \\
	   E_n & F & D_{n} \\
	   & D_{n+1} & D_{n+1}
	   \arrow[from=2-2, to=3-2]
	   \arrow[equals, from=3-2, to=3-3]
	   \arrow[from=2-2, to=2-3]
	   \arrow[from=2-3, to=3-3]
	   \arrow[from=1-1, to=2-1]
	   \arrow[from=1-1, to=1-2]
	   \arrow[from=1-2, to=2-2]
	   \arrow[from=2-1, to=2-2]
	   \arrow[from=1-2, to=1-3]
	   \arrow[from=1-3, to=2-3]
    \end{tikzcd}\]
    And so, we get the required extension of the sequence as,
    \[E_{n+1} \in \mathcal{G}^{[-B-n-i+1,B-i]} \star \mathcal{G}^{[-B-n-i,B-i-n]} \subseteq \mathcal{G}^{[-B-n-i, B-i]}\]
    if $\mathsf{T}$ is weakly $\mathcal{G}$-quasiapprox. Further, if $\mathsf{T}$ is $\mathcal{G}$-quasiapprox,
    \[E_{n+1} \in \mathcal{G}^{[-B-n-i+1,B-i]}_{nB} \star \mathcal{G}^{[-B-n-i,B-i-n]}_B \subseteq \mathcal{G}^{[-B-n-i, B-i]}_{(n+1)B}\]

    Finally, $\hocolim E_i \cong F$ by \Cref{Lemma Intersection of the metric is zero}(2).
\end{proof}
     
\begin{remark}
    Note that in \Cref{Proposition 3 of section 5} and \Cref{Corollary 4 of section 5}, we can choose $\mathcal{R}$ to be any orthogonal metric in the preferred equivalence class of orthogonal metrics (\Cref{Definition preferred equivalence class of orthogonal metrics}) by \Cref{Proposition G-approximability from orthogonal metric in preferred equivalence class}.
\end{remark}

\section{Representability results}\label{Section Representability results}
In this section, we will prove the main representability results. We start with the following definition inspired by \cite[Definition 7.3]{Neeman:2021b}.
\begin{definition}\label{Definition strong G-approximating system}
    Let $\mathsf{T}$ be a triangulated category with a generating sequence $\mathcal{G}$ (\Cref{Definition pre-generating sequence}), and an orthogonal metric $\mathcal{R}$ (\Cref{Good Metric}) in the preferred $\mathbb{N}$-equivalence of orthogonal metrics, see \Cref{Definition preferred equivalence class of orthogonal metrics}. Then, for $n$ a positive integer or $\infty$, we define a \emph{strong $\mathcal{G}_n$-approximating system} to be a sequence $E_0 \to E_1 \to E_2 \to E_3 \to \cdots $ such that each $E_i \in \mathcal{G}_n = \mathcal{G}^{(-\infty, \infty)}_n$ (\Cref{Notation Subcategories for (pre)-generating sequences}) and for all $i \geq 0$, $\operatorname{Cone}(E_i \to E_{i + 1} ) \in \mathcal{R}_{i + 1}$. Note that $\mathcal{G}_{\infty} =  \mathsf{T}^c$.
    
    Let $F \in \mathsf{T}$ and suppose we are given a map from a sequence $E_*=\{E_i \to E_{i+1}\}_{i \geq 0}$ in $\mathcal{G}_n$ to $F$. Then, the sequence is a \emph{strong $\mathcal{G}_n$-approximating system for $F$ } if, 
    \begin{enumerate}
        \item The natural map $\colim \Hom{\mathsf{T}}{-}{E_i} \to \Hom{\mathsf{T}}{-}{F}$ is an isomorphism on $\mathsf{T}^c$.  
        \item $\operatorname{Cone}(E_n \to F) \in \mathcal{R}_n$ for all $n\geq 0$.
    \end{enumerate}
    We also call a strong $\mathcal{G}_{\infty}$-approximating system a strong $\mathsf{T}^c$-approximating system as we have $\mathcal{G}_{\infty} = \mathsf{T}^c$.
\end{definition}

\begin{lemma}\label{Lemma on strong approximating sequences}
    Let $\mathsf{T}$ a triangulated category with a generating sequence $\mathcal{G}$ (\Cref{Definition of generating sequence}), and an orthogonal metric $\mathcal{R}$ (\Cref{Good Metric}) on $\mathsf{T}$ such that for all $n$, $\HomT{\mathcal{G}^n}{\mathcal{R}_i} = 0$ for all $i >> 0$. We consider the closure of the compacts $\overline{\mathsf{T}^c}$ (\Cref{Definition closure of compacts}) with respect to the metric $\mathcal{R}$. Then, for $n \in [1,\infty]$, that is, for $n$ either a positive integer or $\infty$,
    \begin{enumerate}
        \item Any strong $\mathcal{G}_n$-approximating system (\Cref{Definition strong G-approximating system}) is a strong $\mathcal{G}_n$-approximating system for $F = \hocolim{E_i}$, and further, $F \in \overline{\mathsf{T}^c}$.
        \item Given  $F \in$ $\mathsf{T}^c$ and a strong $\mathcal{G}_n$-approximating system $E_*$ for $F$ (\Cref{Definition strong G-approximating system}), the non-canonical map $\hocolim E_i \to F$ is an isomorphism.
        \item Any object $F \in \mathsf{T}^c$ has a strong $\mathsf{T}^c$-approximating system, see \Cref{Definition strong G-approximating system}.
       
    \end{enumerate}
\end{lemma}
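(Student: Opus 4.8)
The statement splits into three largely independent assertions, and the plan is to handle (2), then (1), then (3). Part (2) will be immediate from \Cref{Lemma Intersection of the metric is zero}(2): if $E_\ast$ is a strong $\mathcal{G}_n$-approximating system for $F$, then $\operatorname{Cone}(E_m \to F) \in \mathcal{R}_m$ for every $m \geq 0$ by \Cref{Definition strong G-approximating system}(2); since $\mathcal{R}_m \subseteq \mathcal{R}_j$ whenever $m \geq j$ (a consequence of \Cref{Good Metric}(2)), for each fixed $j$ one has $\operatorname{Cone}(E_m \to F) \in \mathcal{R}_j$ for all $m \geq j$, which is exactly the hypothesis of \Cref{Lemma Intersection of the metric is zero}(2); hence the canonical map $\hocolim E_i \to F$ is an isomorphism. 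This argument uses nothing about $F$ beyond its being the target of the system, so it applies for $F$ compact and, more generally, for $F \in \overline{\mathsf{T}^c}$.

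For part (1), set $F \colonequals \hocolim E_i$ with the tautological maps $E_i \to F$. Condition (1) of \Cref{Definition strong G-approximating system} is precisely \cite[Lemma 2.8]{Neeman:1996}, which gives $\colim \HomT{K}{E_i} \cong \HomT{K}{F}$ for compact $K$. For condition (2), fixing $m$, I would reindex $F \cong \hocolim E_{m+i}$ (a cofinal tail), use that cones commute with sequential homotopy colimits to identify $\operatorname{Cone}(E_m \to F)$ with $\hocolim C_i$ where $C_i \colonequals \operatorname{Cone}(E_m \to E_{m+i})$, and observe that telescoping the octahedra for $E_m \to E_{m+1} \to \cdots \to E_{m+i}$, together with $\operatorname{Cone}(E_{m+k}\to E_{m+k+1}) \in \mathcal{R}_{m+k+1} \subseteq \mathcal{R}_{m+1}$ and $\mathcal{R}_{m+1}\star\mathcal{R}_{m+1}\subseteq\mathcal{R}_{m+1}$ (\Cref{Good Metric}(1)), forces $C_i \in \mathcal{R}_{m+1}$ for all $i$; then, since $\mathcal{R}$ is an orthogonal metric each $\mathcal{R}_j$ is closed under coproducts, so the defining triangle $\bigoplus C_i \to \bigoplus C_i \to \hocolim C_i \to \Sigma\bigoplus C_i$ together with $\Sigma\mathcal{R}_{m+1}\subseteq\mathcal{R}_m$ and $\mathcal{R}_{m+1}\subseteq\mathcal{R}_m$ (\Cref{Good Metric}(2)) yields $\hocolim C_i \in \mathcal{R}_{m+1}\star\Sigma\mathcal{R}_{m+1} \subseteq \mathcal{R}_m \star \mathcal{R}_m = \mathcal{R}_m$, that is, $\operatorname{Cone}(E_m \to F) \in \mathcal{R}_m$. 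Finally $F \in \overline{\mathsf{T}^c}$: for $m \geq 0$ the triangle $E_m \to F \to \operatorname{Cone}(E_m \to F) \to \Sigma E_m$ puts $F$ in $\mathsf{T}^c \star \mathcal{R}_m$ (as $E_m \in \mathcal{G}_n \subseteq \mathsf{T}^c$), while for $m < 0$ this holds since $\mathsf{T}^c\star\mathcal{R}_0 \subseteq \mathsf{T}^c\star\mathcal{R}_m$; hence $F \in \bigcap_{m\in\mathbb{Z}}\mathsf{T}^c\star\mathcal{R}_m = \overline{\mathsf{T}^c}$, see \Cref{Definition closure of compacts}.

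For part (3), if $F$ is compact the constant sequence $E_i = F$ is a strong $\mathsf{T}^c$-approximating system for $F$; the content is the case $F \in \overline{\mathsf{T}^c}$, which I would treat by constructing compact objects $E_i$ with compatible maps $E_i \to F$ inductively, maintaining the invariant $\operatorname{Cone}(E_i \to F) \in \mathcal{R}_{i+1}$. The base case is a triangle $E_0 \to F \to D_0 \to \Sigma E_0$ with $E_0$ compact and $D_0 \in \mathcal{R}_1$, from $F \in \mathsf{T}^c\star\mathcal{R}_1$. For the step, put $D_i \colonequals \operatorname{Cone}(E_i \to F)$; since $\overline{\mathsf{T}^c}$ is triangulated (\Cref{Proposition Closure of compacts is triangulated}) and contains $E_i$ and $F$, it contains $D_i$, so $D_i \in \overline{\mathsf{T}^c}\cap\mathcal{R}_{i+1}$ admits a triangle $L_i \to D_i \to D_i' \to \Sigma L_i$ with $L_i$ compact and $D_i' \in \mathcal{R}_{i+2}$, and rotating plus \Cref{Good Metric} gives $L_i \in \Sigma^{-1}\mathcal{R}_{i+2}\star\mathcal{R}_{i+1} \subseteq \mathcal{R}_{i+1}$; applying the octahedral axiom to $F \to D_i \to D_i'$ then produces $E_{i+1} \in \mathsf{T}^c$ with a map to $F$ of cone $D_i' \in \mathcal{R}_{i+2}$ and a compatible map $E_i \to E_{i+1}$ of cone $L_i \in \mathcal{R}_{i+1}$, completing the induction and exhibiting the sequence as a strong $\mathsf{T}^c$-approximating system. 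It is one \emph{for} $F$: \Cref{Definition strong G-approximating system}(2) holds since $\operatorname{Cone}(E_i \to F) \in \mathcal{R}_{i+1} \subseteq \mathcal{R}_i$, and \Cref{Definition strong G-approximating system}(1) follows because \Cref{Lemma Compacts orthogonal to the metric} forces $\HomT{K}{D_i} = \HomT{K}{\Sigma^{-1}D_i} = 0$ for $i \gg 0$, so $\HomT{K}{E_i} \to \HomT{K}{F}$ is an isomorphism for $i \gg 0$, hence on the colimit.

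The step I expect to be the main obstacle is the cone computation in part (1): carefully justifying that cones commute with sequential homotopy colimits and that the orthogonal-metric pieces $\mathcal{R}_m$ are closed under exactly the homotopy colimits that appear, so that the shifts are accounted for correctly. The corresponding delicate point in part (3) is maintaining the invariant at level $\mathcal{R}_{i+1}$ rather than $\mathcal{R}_i$; this is what pins the compact correction $L_i$ into $\mathcal{R}_{i+1}$, and an index off by one would violate the definition of a strong approximating system.
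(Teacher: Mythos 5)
Your proposal is correct, and parts of it diverge from the paper's proof in ways worth noting. For (1) you follow essentially the paper's route: the paper proves $C_{i,j}=\operatorname{Cone}(E_j\to E_{j+i})\in\mathcal{R}_{j+1}$ by induction on $i$ via octahedra and then runs a $3\times 3$ diagram on the telescope $\bigoplus E_{j+i}\xrightarrow{1-\text{shift}}\bigoplus E_{j+i}$ to land $\operatorname{Cone}(E_j\to F)$ in $\mathcal{R}_{j+1}\star\Sigma\mathcal{R}_{j+1}\subseteq\mathcal{R}_j$; your phrase \say{cones commute with sequential homotopy colimits} is not literally a functorial statement, but the telescope triangle you write down, packaged as that $3\times 3$ diagram, is exactly the correct justification (and since each $\mathcal{R}_j$ is strictly full, producing one cone in $\mathcal{R}_j$ suffices). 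For (2) your argument is genuinely shorter than the paper's: you feed condition (2) of \Cref{Definition strong G-approximating system} directly into \Cref{Lemma Intersection of the metric is zero}(2), whereas the paper re-derives the vanishing of $D=\operatorname{Cone}(\hocolim E_i\to F)$ by octahedra using the cone estimates from part (1); both are valid, yours is what that lemma was built for. For (3) you take a different inductive construction: the paper fixes $E_i$, chooses $N$ with $\HomT{E_i}{\mathcal{R}_n}=0$ for $n\geq N$ (\Cref{Lemma Compacts orthogonal to the metric}), picks a \emph{fresh} approximation $E_{i+1}\to F$ with error in $\mathcal{R}_{N+i+1}$, and factors $E_i\to F$ through $E_{i+1}$ by the vanishing; you instead approximate the error term $D_i\in\overline{\mathsf{T}^c}\cap\mathcal{R}_{i+1}$ by a compact $L_i$ with error in $\mathcal{R}_{i+2}$ and glue via the octahedron on $F\to D_i\to D_i'$, which is the technique of \Cref{Proposition Approximating Sequence} and \Cref{Corollary 4 of section 5}. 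Your version buys explicit control of $\operatorname{Cone}(E_i\to E_{i+1})=L_i\in\mathcal{R}_{i+1}$ at the cost of needing $D_i\in\overline{\mathsf{T}^c}$, i.e.\ the input that $\overline{\mathsf{T}^c}$ is triangulated (\Cref{Proposition Closure of compacts is triangulated}, whose hypotheses are indeed satisfied here), an input the paper's construction avoids; you are also more explicit than the paper in verifying condition (1) of \Cref{Definition strong G-approximating system} for the constructed sequence. No gaps.
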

\begin{proof}
    First we prove (1). Let $E_*$ be a strong $\mathcal{G}_n$-approximating system, and let $F = \hocolim{E_i}$. For each $i,j \geq 1$, we have triangles $E_j \to E_{j + i } \to C_{i,j} \to \Sigma E_j$. We will prove that $C_{i,j} \in \mathcal{R}_{j + 1}$ for all $i,j \geq 1$ by induction on $i$. The base case $i=1$ follows from the definition, see \Cref{Definition strong G-approximating system}. Suppose we know the result up to some positive integer $i$. In particular, we have triangles $E_j \to E_{j + i} \to C_{i,j} \to \Sigma E_j$ with $C_{i,j} \in \mathcal{R}_{j + 1}$ for all $j \geq 1$. Again, as $E_*$ is a strong $\mathcal{G}_n$-approximating system, we get triangles $E_{j+i} \to E_{j + i +1} \to D_j \to \Sigma E_{j+i}$ with $D_j \in \mathcal{R}_{j + i + 1}$ for all $j \geq 1$. Applying the octahedral axiom to $E_j \to E_{j + i} \to E_{j + i +1}$, we get,
    \[\begin{tikzcd}
	   E_j & E_{j+i} & C_{i,j} \\
	   E_j & E_{j+i+1} & C_{(i+1,j)} \\
	   & D_j & D_j
	   \arrow[from=1-1, to=1-2]
	   \arrow[equals, from=1-1, to=2-1]
	   \arrow[from=1-2, to=1-3]
	   \arrow[from=1-2, to=2-2]
	   \arrow[from=1-3, to=2-3]
	   \arrow[from=2-1, to=2-2]
	   \arrow[from=2-2, to=2-3]
	   \arrow[from=2-2, to=3-2]
	   \arrow[from=2-3, to=3-3]
	   \arrow[equals, from=3-2, to=3-3]
    \end{tikzcd}\]
    The triangle forming the middle row gives us the induction step as, from the triangle forming the rightmost column we have that,
    \[C_{(i+1,j)} \in C_{i,j} \star D_j \subseteq \mathcal{R}_{j+1} \star \mathcal{R}_{j+i+1}\subseteq \mathcal{R}_{j+1} \star \mathcal{R}_{j+1} \subseteq \mathcal{R}_{j+1}\]
    for all $j \geq 1$, see \Cref{Good Metric}.
    
     Consider the following 3$\times$3 diagram which we construct from the top left commutative square in which the vertical maps are the direct sums of the obvious map $E_j \to E_{j+i}$ for all $i \geq 0$, 
    \[\begin{tikzcd}
	{\bigoplus_{i \geq 0}E_j} & {} & {\bigoplus_{i \geq 0}E_j} & {} & {E_j} \\
	{\bigoplus_{i \geq 0 }E_{j + i}} && {\bigoplus_{i \geq 0 }E_{j + i}} && F \\
	{\bigoplus_{i \geq 0 }C_{i,j}} && {\bigoplus_{i \geq 0 }C_{i,j}} && C_j
	\arrow["{1 - \text{shift}}", from=1-1, to=1-3]
	\arrow[from=1-3, to=1-5]
	\arrow[from=1-1, to=2-1]
	\arrow[from=1-3, to=2-3]
	\arrow[from=1-5, to=2-5]
	\arrow[from=2-1, to=3-1]
	\arrow["{1 - \text{shift}}", from=2-1, to=2-3]
	\arrow[from=2-3, to=3-3]
	\arrow[from=2-5, to=3-5]
	\arrow[from=2-3, to=2-5]
	\arrow[dashed, from=3-1, to=3-3]
	\arrow[dashed, from=3-3, to=3-5]
\end{tikzcd}\]

As $C_{i,j} \in \mathcal{R}_{j + 1}$ for all $i \geq 0$ and $\mathcal{R}_{j+1}$ is closed under coproducts as it is an orthogonal metric, we get that $C_j \in \mathcal{R}_j $ (see \Cref{Good Metric}). By the triangle forming the rightmost column of this diagram, $C_j = \operatorname{Cone}(E_j \to F) \in \mathcal{R}_j$ for all $j \geq 1$. Further, for any compact object $E$, $\colim\HomT{E}{E_i} \cong \HomT{E}{F}$ by \cite[Lemma 2.8]{Neeman:1996}. And so, $E_*$ is a strong $\mathcal{G}_n$-approximating system for $F$. The rightmost column also shows that $F \in \mathsf{T}^c$.

For the proof of (2), let $E = \hocolim{E_i}$. Complete the natural map $E \to F$ to a triangle $E \to F \to D \to \Sigma E$. We apply the octahedral axiom to $E_i \to E \to F $ for any $i \geq 0$, to get,

\[\begin{tikzcd}
	{E_i} & E & {C_i} \\
	{E_i} & F & {C'_i} \\
	& D & D
	\arrow[from=1-1, to=1-2]
	\arrow[from=1-2, to=1-3]
	\arrow[from=1-1, to=2-1, equals]
	\arrow[from=1-2, to=2-2]
	\arrow[from=2-1, to=2-2]
	\arrow[from=2-2, to=2-3]
	\arrow[from=1-3, to=2-3]
	\arrow[from=2-3, to=3-3]
	\arrow[from=2-2, to=3-2]
	\arrow[from=3-2, to=3-3, equals]
\end{tikzcd}\]
By the proof of part (1), we know that $C_i \in \mathcal{R}_i$, and hence $\Sigma C_i \in \mathcal{R}_{i-1}$, see \Cref{Good Metric}(2). As $E_*$ is a strong $\mathcal{G}_n$-approximating system for $F$, we get that $C'_i \in \mathcal{R}_i \subseteq \mathcal{R}_{i-1}$. Consider the triangle $C_i' \to D \to \Sigma C_i \to \Sigma C_i'$, we get that $D \in C_i' \star  \Sigma C_i \subseteq \mathcal{R}_{i - 1} \star \mathcal{R}_{i-1} \subseteq \mathcal{R}_{i-1}$ by \Cref{Good Metric}(1).

As this is true for any $i > 0$, $D \in \bigcap_{n \geq 0 }\mathcal{R}_n = 0 $ by \Cref{Lemma Intersection of the metric is zero}. So, $E = \hocolim{E_i} \to F $ is an isomorphism, which proves (1).

Finally, for (3), we proceed inductively. By the definition of $\overline{\mathsf{T}^c}$, see \Cref{Definition closure of compacts}, there exists a triangle $E_1 \to F \to D_1 \to \Sigma E_1$ such that $E_1 \in \mathsf{T}^c$ and $D_1 \in \mathcal{R}_1$. Now suppose we have constructed the sequence up to a positive integer $i$. We
choose an integer $N > 0$ such that $\Hom{\mathsf{T}}{E_i}{\mathcal{R}_n} = 0 $ for all $n \geq N$ which exists by \Cref{Lemma Compacts orthogonal to the metric}. Finally, again by the definition of the closure of the compacts (\Cref{Definition closure of compacts}), we choose a triangle $E_{i + 1} \to F \to D_{i + 1} \to \Sigma E_{i+1}$ such that $E_{i + 1} \in \mathsf{T}^c$ and $D_{i + 1} \in \mathcal{R}_{N + i + 1}\subseteq \mathcal{R}_{i + 1}$. By the choice of $N$, we get that the composite $(E_i \to F \to D_{i + 1} ) = 0 $, and hence $E_i \to F$ factors through $E_{i + 1}$, giving us the required sequence by induction, proving (3).

\end{proof}

Other than \Cref{Section technical tools}, throughout the rest of this section, we will be working in the following setup.
\begin{setup}\label{Setup}
    Let $\mathsf{T}$ be $R$-linear triangulated category, with $R$ a commutative noetherian ring. Further, assume $\mathsf{T}$ has a finite generating sequence $\mathcal{G}$ (\Cref{Definition of generating sequence}), an orthogonal metric $\mathcal{R}$ (\Cref{Good Metric}) in the preferred $\mathbb{N}$-equivalence class of orthogonal metrics on $\mathsf{T}$ (\Cref{Definition preferred equivalence class of orthogonal metrics}), and a positive integer $A>0$ such that,
    \begin{enumerate}
        \item $\HomT{G_1}{G_2}$ is a finitely generated $R$-module for all $G_1, G_2 \in \bigcup_{n \in \mathbb{Z}} \mathcal{G}^n$.
        \item $\HomT{\mathcal{G}^n}{\mathcal{R}_i} = 0$ for all $n,i \in \mathbb{Z}$ such that $n+i \geq A$. 
    \end{enumerate}
    As $\mathcal{G}$ is a finite generating sequence, $\mathcal{G}^i = \{G^i_j : j \in J_i\}$ for finite sets $J_i$ for all $i \in \mathbb{Z}$.
    In what follows, we will consider the closure of the compacts $\mathsf{T}^c$ (\Cref{Definition closure of compacts}) corresponding to the metric $\mathcal{R}$.
\end{setup}
\begin{remark}\label{Remark Setup condition 2 implication}
    Suppose we are in \Cref{Setup}.
    Recall that the orthogonal metric $\mathcal{R}^{\mathcal{G}}$ (see \Cref{Definition of generating sequence}) also lies in the preferred $\mathbb{N}$-equivalence class of orthogonal metrics. Hence, there exists an integer $l$ such that $\mathcal{R}^{\mathcal{G}}_{i+l} \subseteq \mathcal{R}_i$ for all $ i\in \mathbb{Z}$, see \Cref{Definition equivalence relation on extended good metrics}(2). But, by the definition of $\mathcal{R}^{\mathcal{G}}$, we have that $\mathcal{G}^{-i} \subseteq \mathcal{M}^{\mathcal{G}}_i \subseteq \mathcal{R}^{\mathcal{G}}_i \subseteq \mathcal{R}_{i-l}$, see \Cref{Definition of generating sequence}. And so, $\HomT{\mathcal{G}^n}{\mathcal{G}^{-i}}=0$ for all $n+i \geq A + l$.
\end{remark}
We remind the reader of the following standard definition.
\begin{definition}\label{Definition cohomological functors}
    Let $\mathsf{S}$ be a full and replete subcategory of a $R$-linear triangulated category $\mathsf{T}$ with $\Sigma \mathsf{S}=\mathsf{S}$. A functor $H : \mathsf{S}^{\text{op}} \to \operatorname{Mod}(R)$ is called a $\mathsf{S}$-cohomological functor if it takes triangles to long exact sequences of $R$-modules. Equivalently, we get an exact sequence $H(Z) \to H(Y) \to H(X)$ in $\operatorname{Mod}(R)$ for any triangle $X \to Y \to Z \to \Sigma X$ in $\mathsf{T}$ with $X,Y$ and $Z$ in $\mathsf{S}$.
\end{definition}

We will be mostly interested in $\mathcal{G}_n$-cohomological functors. So, we define a few special classes of $\mathcal{G}_n$-cohomological functors now.

\begin{definition}\label{Definition finite bounded cohomological functors}
    Suppose we are in \Cref{Setup}, and let $n \in [1,\infty]$. Then, we say a $\mathcal{G}_n$-cohomological functor $H$ is,
    \begin{enumerate}
        \item \emph{$\mathcal{G}$-semifinite} if $H(F)$ is a finitely generated $R$-module for all $F \in \mathcal{G}_n$, and $H(\mathcal{G}^i) = 0$ for all $i >>0 $.
        \item \emph{$\mathcal{G}$-finite} if $H(F)$ is a finitely generated $R$-module for all $F \in \mathcal{G}_n$, and $H(\mathcal{G}^i) = 0$ for all $|i| >>0 $.
    \end{enumerate}
    see \Cref{Notation Subcategories for (pre)-generating sequences} for the definition of $\mathcal{G}_n$.
\end{definition}
The following example shows what the $\mathcal{G}$-finite and $\mathcal{G}$-semifinite $\mathsf{T}^c$-cohomological functors look like for some specific cases if interest.
\begin{example}\label{Example finite bounded cohomological functors}
    Suppose we are in \Cref{Setup}. Further assume that $\mathsf{T}$ has a single compact generator $G$. Recall that we had a finite generating sequence (\Cref{Definition of generating sequence}) $\mathcal{G}$ on $\mathsf{T}$. Then,
    \begin{enumerate}
        \item If $\mathcal{G}^n = \{\Sigma^{-n}G\}$ for all $n \in \mathbb{Z}$, then a $\mathsf{T}^c$-cohomological functor $H$ is $\mathcal{G}$-semifinite (resp. $\mathcal{G}$-finite) if for all objects $F \in \mathsf{T}^c$, $H(F)$ is a finitely generated $R$-module and $H(\Sigma^{-i} F) = 0$ for all $i >> 0$ (resp. $|i| >> 0|$). Note that this is the generating sequence used for approximability, see \Cref{Theorem Approximability and G-approximability}.
        \item If $\mathcal{G}^n = \{\Sigma^{n}G\}$ for all $n \in \mathbb{Z}$, then a $\mathsf{T}^c$-cohomological functor $H$ is a $\mathcal{G}$-semifinite (resp. $\mathcal{G}$-finite) if for all objects $F \in \mathsf{T}^c$, $H(F)$ is a finitely generated $R$-module and $H(\Sigma^i F) = 0$ for all $i >> 0$ (resp. $|i| >> 0|$).
    \end{enumerate}
    These follow from the \Cref{Definition finite bounded cohomological functors} and the observation that for any $F \in \mathsf{T}^c$, there exists $n \geq 0$ such that $F \in \langle G \rangle^{[-n,n]}$ as $G$ is a compact generator of $\mathsf{T}$, see \Cref{Notation from Neeman's paper}.
\end{example}
\begin{notation}
    For any $F \in \mathsf{T}$, we have a $\mathsf{T}^c$-cohomological functor (\Cref{Definition cohomological functors})  $\mathcal{Y}(F) : [\mathsf{T}^c]^{\operatorname{op}} \to \operatorname{Mod}(R)$  which is the restriction of the Yoneda functor. That is, for any $K \in \mathsf{T}^c$, we have $\mathcal{Y}(F)(K) \colonequals \Hom{\mathsf{T}}{K}{F}$.
\end{notation}

\begin{lemma}\label{Lemma Closure of compacts are finite right-bounded}
   Suppose we are in \Cref{Setup}. Then, for any $F$ in the closure of the compacts $\mathsf{T}^c$ (\Cref{Definition closure of compacts}), we have that $\mathcal{Y}(F) : [\mathsf{T}^c]^{\operatorname{op}} \to \operatorname{Mod}(R)$ is a $\mathcal{G}$-semifinite $\mathsf{T}^c$-cohomological functor, see \Cref{Definition finite bounded cohomological functors}(2).
\end{lemma}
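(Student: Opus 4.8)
Since the restricted Yoneda functor $\mathcal{Y}(F) = \HomT{-}{F}\big|_{\mathsf{T}^c}$ is automatically a $\mathsf{T}^c$-cohomological functor, the plan is to verify the two extra requirements of $\mathcal{G}$-semifiniteness in \Cref{Definition finite bounded cohomological functors}: that $\mathcal{Y}(F)(K) = \HomT{K}{F}$ is a finitely generated $R$-module for every $K \in \mathsf{T}^c$, and that $\HomT{\mathcal{G}^i}{F} = 0$ for $i \gg 0$. The structural input behind both is that $F$ lies not merely in $\overline{\mathsf{T}^c}$ but inside the metric. First I would note that, by \Cref{Remark Setup condition 2 implication}, there is a fixed integer $l$ with $\mathcal{M}^{\mathcal{G}}_i \subseteq \mathcal{R}_{i-l}$ for all $i$, so $\mathsf{T}^c = \bigcup_i \mathcal{M}^{\mathcal{G}}_i \subseteq \bigcup_n \mathcal{R}_n$; and since $F \in \overline{\mathsf{T}^c} = \bigcap_n(\mathsf{T}^c \star \mathcal{R}_n)$, choosing a triangle $A \to F \to C \to \Sigma A$ with $A \in \mathsf{T}^c$ (hence $A \in \mathcal{R}_m$ for some $m$) and $C \in \mathcal{R}_0$ gives $F \in \mathcal{R}_q$ for $q = \min(m,0)$, using \Cref{Good Metric}(1) and the monotonicity $\mathcal{R}_{n+1} \subseteq \mathcal{R}_n$. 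Fix such a $p$ with $F \in \mathcal{R}_p$.

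The vanishing is then immediate: by \Cref{Setup}(2) we have $\HomT{\mathcal{G}^i}{\mathcal{R}_p} = 0$ whenever $i + p \geq A$, hence $\HomT{\mathcal{G}^i}{F} = 0$ for all $i \geq A - p$.

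For the finiteness statement the technical heart is the claim that $\HomT{K}{L}$ is a finitely generated $R$-module for all $K, L \in \mathsf{T}^c$, which I would prove by dévissage, using that $R$ is noetherian so that the finitely generated $R$-modules are closed in $\operatorname{Mod}(R)$ under submodules, quotients, and extensions. First, for $G_1, G_2 \in \bigcup_n \mathcal{G}^n$ (say $G_2 \in \mathcal{G}^{n_2}$) and $j \in \mathbb{Z}$, iterating \Cref{Definition of generating sequence}(2) gives $\Sigma^j G_2 \in \mathcal{G}^{[n_2-|j|,\, n_2+|j|]}$, so $\Sigma^j G_2$ is a direct summand of an iterated extension of finite coproducts of objects of $\bigcup_n \mathcal{G}^n$; applying $\HomT{G_1}{-}$ and invoking \Cref{Setup}(1) together with additivity and the long exact sequences of triangles shows $\HomT{G_1}{\Sigma^j G_2}$ is finitely generated for all $j$. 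Then for fixed $G_1$ the full subcategory $\{X \in \mathsf{T} : \HomT{G_1}{\Sigma^j X} \text{ is finitely generated for all } j\}$ is thick and contains every $\mathcal{G}^n$, hence contains $\mathsf{T}^c$, since $\mathsf{T}^c$ is the thick subcategory generated by $\bigcup_n\mathcal{G}^n$ by \Cref{Definition of generating sequence}; applying this, for fixed $L \in \mathsf{T}^c$ the full subcategory $\{X \in \mathsf{T} : \HomT{\Sigma^j X}{L} \text{ is finitely generated for all } j\}$ is again thick and contains every $\mathcal{G}^n$, hence contains $\mathsf{T}^c$. Taking $j = 0$ proves the claim.

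Finally, for a fixed $K \in \mathsf{T}^c$ I would argue as follows. By \Cref{Lemma Compacts orthogonal to the metric} there is an integer $N$ with $\HomT{K}{\mathcal{R}_i} = 0$ for all $i \geq N$. Since $F \in \overline{\mathsf{T}^c}$, choose a triangle $K' \to F \to D \to \Sigma K'$ with $K' \in \mathsf{T}^c$ and $D \in \mathcal{R}_{N+1}$. Then $D \in \mathcal{R}_{N+1} \subseteq \mathcal{R}_N$ and $\Sigma^{-1}D \in \mathcal{R}_N$ by \Cref{Good Metric}(2), so applying $\HomT{K}{-}$ to the triangle and using $\HomT{K}{D} = 0 = \HomT{K}{\Sigma^{-1}D}$ yields an isomorphism $\HomT{K}{K'} \xrightarrow{\sim} \HomT{K}{F}$. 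By the dévissage claim $\HomT{K}{K'}$ is finitely generated, hence so is $\mathcal{Y}(F)(K) = \HomT{K}{F}$, which completes the verification. I expect the dévissage claim to be the main obstacle: \Cref{Setup}(1) records finiteness of $\operatorname{Hom}$ only in degree zero and only between generators, so one must propagate it through all suspensions and the thick closure, and it is precisely the containment $\Sigma^{\pm 1}\mathcal{G}^n \subseteq \mathcal{G}^{[n-1,n+1]}$ that keeps suspensions of generators inside the coproduct–extension–summand closure of $\bigcup_n\mathcal{G}^n$, allowing the long exact sequences to do the rest.
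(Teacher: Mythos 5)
Your proposal is correct and follows essentially the same strategy as the paper: a dévissage from \Cref{Setup}(1) through $\operatorname{smd}(\operatorname{coprod}(\bigcup_n\mathcal{G}^n)) = \mathsf{T}^c$ to get Hom-finiteness between all compacts, then an approximation triangle $K' \to F \to D$ with $D$ deep in the metric to transfer finiteness to $F$, and the orthogonality condition \Cref{Setup}(2) for the eventual vanishing. The only (immaterial) divergence is in the vanishing step, where you first place $F$ itself in some $\mathcal{R}_p$ (via \Cref{Remark Setup condition 2 implication} and the argument of \Cref{Lemma Definition of G-approximability (1)}) and apply \Cref{Setup}(2) directly, whereas the paper splits $F$ into a compact piece and a piece in $\mathcal{R}_1$ and handles each separately; both are valid.
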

\begin{proof}
    Let, 
    \[\mathsf{L} \colonequals \left\{H \in \mathsf{T}^c : \Hom{\mathsf{T}}{G}{H} \text{ is a finitely generated $R$-module for all } G \in \bigcup_{n \in \mathbb{Z}}\mathcal{G}^n \right\} \]
    Then, $\bigcup_{n \in \mathbb{Z}}\mathcal{G}^n \subseteq \mathsf{L}$ by \Cref{Setup}(1). As $\mathsf{L}$ is clearly closed under extensions and direct summands, and contains $\bigcup_{n \in \mathbb{Z}}\mathcal{G}^n$, we get that $\operatorname{smd}(\operatorname{coprod}(\bigcup_{n \in \mathbb{Z}}\mathcal{G}^n)) = \mathsf{T}^c \subseteq \mathsf{L}$, see \Cref{Definition of generating sequence}(1). And so, $\mathsf{L} = \mathsf{T}^c$.
    Now, for any $L \in \mathsf{T}^c$, we define, 
    \[\mathsf{K}(L) \colonequals \{ K \in \mathsf{T}^c : \Hom{\mathsf{T}}{K}{L} \text{ is a finitely generated $R$-module} \} \]
    $\mathsf{K}(L)$ is clearly closed under extensions and direct summands, and contains $\bigcup_{n \in \mathbb{Z}}\mathcal{G}^n$ by the paragraph above, and hence, $\mathsf{K}(L) = \mathsf{T}^c$. And so, $\HomT{K}{L}$ is a finitely generated module for all $K, L \in \mathsf{T}^c$.

    Now, let $F \in \mathsf{T}^c $. Then, by the definition of $\mathsf{T}^c$ (\Cref{Definition closure of compacts}), there exists a triangle $E_1 \to F \to D_1 \to \Sigma E_1$ such that, $E_1 \in \mathsf{T}^c$ and $D_1 \in \mathcal{R}_1$. There exists $n>0$ such that $E_1 \in \mathcal{G}^{[-n,n]}$ (see \Cref{Notation Subcategories for (pre)-generating sequences}) as $\mathcal{G}$ is a generating sequence, see \Cref{Definition of generating sequence}(1). By \Cref{Remark Setup condition 2 implication}, $\HomT{\mathcal{G}^i}{ \mathcal{G}^{[-n,n]}} = 0$ for all $i >> 0$, which implies that $\HomT{\mathcal{G}^i}{ E_1 } = 0$ for all $i >> 0$. Also, by \Cref{Setup}(2), $\HomT{\mathcal{G}^i}{ \mathcal{R}_1 } = 0$ for all $i >> 0$. And so, $\HomT{\mathcal{G}^i}{ F } = 0$ for all $i >> 0$. So, all that remains to show is that $\HomT{K}{F}$ is a finitely generated $R$-module for all $K \in \mathsf{T}^c$.

    Let $K\in \mathsf{T}^c$. Then, by \Cref{Lemma Compacts orthogonal to the metric}, there exists an integer $j$ such that $\HomT{K}{\mathcal{R}_i} = 0$ for all $i \geq j$. As $F \in \mathsf{T}^c$, there exists a triangle $L \to F \to D \to \Sigma L$ with $L \in \mathsf{T}^c$ and $D \in \mathcal{R}_{j + 1}$. Consider the long exact sequence we get from applying the functor $\HomT{K}{-}$ to this triangle,
    \[0 = \HomT{K}{\Sigma^{-1}D} \to \HomT{K}{L} \to \HomT{K}{F} \to \HomT{K}{D} = 0\]
    
    This gives us that $\HomT{K}{F} \cong \HomT{K}{L}$, which is a finitely generated $R$-module, as we have shown above that $\HomT{K}{L}$ is a finitely generated $R$-module for any pair of compact objects $K$ and $L$. Combined with the previous paragraph, we get that $\mathcal{Y}(F)$ is a $\mathcal{G}$-semifinite $\mathsf{T}^c$-cohomological functor
\end{proof}

\subsection{Some important technical tools}\label{Section technical tools}

We state a few important definitions and results from \cite{Neeman:2021b}, some of them suitably modified to this context, which we will be using later in this section. Note that we will not be in \Cref{Setup} in general in this subsection. 

\begin{definition}\label{Definition of an Approximating Sequence}
    \cite[Definition 5.1]{Neeman:2021b} Let $\mathsf{T}$ be a $R$-linear triangulated category, where $R$ is a commutative ring. Let $\mathsf{A}$ and $\mathsf{B}$ be full subcategories of $\mathsf{T}$, and $H$ be a $\mathsf{B}$-cohomological functor (\Cref{Definition cohomological functors}). An \emph{$\mathsf{A}$-approximating system for $H$} is a sequence $E_1 \to E_2 \to E_3 \to E_4 \to \cdots $ in $\mathsf{T}$ with a cofinal subsequence in $\mathsf{A} \cap \mathsf{B}$ such that there exists an isomorphism of functors $\colim \HomT{-}{E_i}|_{\mathsf{B}} \cong H(-)$. 
\end{definition}
\begin{remark}
    Note that for any $n \in [1, \infty]$, any strong $\mathcal{G}_n$-approximating system for an object $F$ (\Cref{Definition strong G-approximating system}) gives a $\mathcal{G}_n$-approximating system of $\mathcal{Y}(F)$, as $F \cong \hocolim E_i$ by \Cref{Lemma on strong approximating sequences}(2), which implies that $\HomT{K}{F} = \colim \HomT{K}{E_i}$ for any $K \in \mathsf{T}^c$ by \cite[Lemma 2.8]{Neeman:1996}.
\end{remark}

\begin{definition}\label{Definition weak triangle}
    \cite[Definition 6.2]{Neeman:2021b} Let $\mathsf{T}$ be a triangulated category with a triangulated subcategory $\mathsf{S}$. For any $\mathsf{S}$-cohomological functor (\Cref{Definition cohomological functors}) $H$, we define another $\mathsf{S}$-cohomological functor $\Sigma H$ by $\Sigma H(s) \colonequals H(\Sigma^{-1}s)$ for all $s \in \mathsf{S}$.

    A \emph{weak triangle} in the category of $\mathsf{S}$-cohomological functors $\Hom{}{\mathsf{S}^{\operatorname{op}}}{\operatorname{Mod}(R)}$ is a sequence $A \xrightarrow{u} B \xrightarrow{v} C \xrightarrow{w} \Sigma A$ such that the following is true for any rotation of the sequence $A \xrightarrow{u} B \xrightarrow{v} C \xrightarrow{w} \Sigma A$: For any triangle $x \xrightarrow{u'} y \xrightarrow{v'} z \xrightarrow{w'} \Sigma x $ in $\mathsf{T}$, and a commutative diagram,
    \[\begin{tikzcd}
	   \mathcal{Y}(x) & \mathcal{Y}(y) \\
	   A & B
	   \arrow["\mathcal{Y}(u')",from=1-1, to=1-2]
	   \arrow[from=1-1, to=2-1]
	   \arrow[from=1-2, to=2-2]
	   \arrow["u",from=2-1, to=2-2]
    \end{tikzcd}\]
    we can extend to a commutative diagram as follows,
    \[\begin{tikzcd}
	   \mathcal{Y}(x) & \mathcal{Y}(y) & \mathcal{Y}(z) & \mathcal{Y}(\Sigma x) \\
	   A & B & C & \Sigma A
	   \arrow["\mathcal{Y}(u')",from=1-1, to=1-2]
	   \arrow[from=1-1, to=2-1]
	   \arrow["\mathcal{Y}(v')",from=1-2, to=1-3]
	   \arrow[from=1-2, to=2-2]
          \arrow["\mathcal{Y}(w')",from=1-3, to=1-4]
          \arrow[from=1-4, to=2-4]
	   \arrow[from=1-3, to=2-3]
	   \arrow["u",from=2-1, to=2-2]
	   \arrow["v",from=2-2, to=2-3]
          \arrow["w",from=2-3, to=2-4]
    \end{tikzcd}\]
    Note that a weak triangle is exact by \cite[Lemma 6.4]{Neeman:2021b}.
    A sequence $x \to y \to z \to \Sigma x$ in $\mathsf{T}$ is called a weak triangle if $\mathcal{Y}(-)$ takes it to a weak triangle in $\Hom{R}{\mathsf{S}^{\operatorname{op}}}{\operatorname{Mod}(R)}$.
    
\end{definition}

\begin{remark}\label{Remark regarding Neeman Lemma 6.5}
     We will be using \cite[Lemma 6.5]{Neeman:2021b} later in this section. In this remark, we adapt it to \Cref{Setup}. So, assume that we are in \Cref{Setup}. Further, suppose we are given
     \begin{enumerate}[label=(\roman*)]
         \item a morphism $\hat{\alpha} : \hat{A} \to \hat{B}$ in $\mathsf{T}^c$ (\Cref{Definition closure of compacts}).
         \item and $n, n' \in [0, \infty]$ , as well as a strong $\mathcal{G}_{n'}$-approximating system $\mathfrak{U}_*$ for $\hat{A}$ and a strong $\mathcal{G}_n$-approximating system $\mathfrak{B}_*$ for $\hat{B}$, see \Cref{Definition strong G-approximating system}. 
     \end{enumerate}
     
    Then up to passing to a subsequence of $\mathfrak{B}_*$, by \cite[Lemma 5.5]{Neeman:2021b}, we have a map of sequences $\alpha_* : \mathfrak{U}_* \to \mathfrak{B}_*$ compatible with $\hat{\alpha}$. 
     As in \cite[Lemma 6.5]{Neeman:2021b}, we extend the map of sequences to triangles, to get, for each $n > 0$,

     \[\begin{tikzcd}
	{\mathfrak{U}_m} &  {\mathfrak{B}_m} & {\mathfrak{C}_m} & {\Sigma\mathfrak{U}_m} \\
	{\mathfrak{U}_{m+1}} & {\mathfrak{B}_{m + 1}} & {\mathfrak{C}_{m + 1}} & {\Sigma\mathfrak{U}_{m + 1}} \\
	{\mathfrak{X}_m} & {\mathfrak{Y}_{m}} & {\mathfrak{Z}_{m}} & {\Sigma\mathfrak{X}_m}
	\arrow[from=1-1, to=1-2]
	\arrow[from=1-2, to=1-3]
    \arrow[from=1-3, to=1-4]
	\arrow[from=1-3, to=2-3]
    \arrow[from=1-4, to=2-4]
	\arrow[from=1-2, to=2-2]
	\arrow[from=1-1, to=2-1]
	\arrow[from=2-1, to=3-1]
	\arrow[from=2-2, to=3-2]
	\arrow[from=2-3, to=3-3]
    \arrow[from=2-4, to=3-4]
	\arrow[from=3-1, to=3-2]
	\arrow[from=3-2, to=3-3]
    \arrow[from=3-3, to=3-4]
	\arrow[from=2-1, to=2-2]
	\arrow[from=2-2, to=2-3]
    \arrow[from=2-3, to=2-4]
    \end{tikzcd}\]

    By the definition of a strong approximating system, see \Cref{Definition strong G-approximating system}, we have that $\mathfrak{Y}_{m},\mathfrak{X}_m \in \mathcal{R}_{m+1} \subseteq \mathcal{R}_m$. So, $\Sigma\mathfrak{X}_m \in \mathcal{R}_m$ by \Cref{Good Metric}(2). Hence, by \Cref{Good Metric}(1) we get that $\mathfrak{Z}_m \in \mathfrak{Y}_{m} \star \Sigma\mathfrak{X}_m \subseteq \mathcal{R}_m \star \mathcal{R}_m = \mathcal{R}_m$. And so, if we define $\mathfrak{C'}$ by $\mathfrak{C}'_m \colonequals \mathfrak{C}_{m + 1}$, then, it is a strong $\mathcal{G}_{n + n'}$-approximating system. By \cite[Remark 6.3]{Neeman:2021b} and \cite[Lemma 6.5]{Neeman:2021b}, the homotopy colimit of the sequence of triangles $\mathfrak{U}_* \to \mathfrak{B}_* \to \mathfrak{C}_* \to \Sigma \mathfrak{U}_*$ gives us a weak triangle (\Cref{Definition weak triangle}) $\hat{A} \to \hat{B} \to \hat{C} = \hocolim \mathfrak{C}_i \to \Sigma \hat{A}$ in $\mathsf{T}^c$. 
    
\end{remark}     

\subsection{The results}
We prove the main representability theorems in this subsection. The main results are \Cref{Main Theorem on Tc-} and \Cref{Main Theorem on Tbc}.
\begin{lemma}\label{Lemma 1 of Section 6.2}
    Suppose we are in \Cref{Setup}. Let $H$ be a $\mathcal{G}$-semifinite $\mathcal{G}_n$-cohomological functor (\Cref{Definition finite bounded cohomological functors}(2)).
    Then, we can produce a sequence $F_1 \to \cdots \to F_n$ in $\mathsf{T}^c$ with compatible maps, $\phi_i : \mathcal{Y}(F_i)|_{\mathcal{G}_n} \to H$ such that,
    \begin{enumerate}
        \item For all $1 \leq i \leq n $, $F_i$ has a strong $\mathcal{G}_i$-approximating system (\Cref{Definition strong G-approximating system}), and the restricted map $\phi_i|_{\mathcal{G}_i} : \mathcal{Y}(F_i)|_{\mathcal{G}_i} \to H|_{\mathcal{G}_i}$ is an epimorphism.
        \item ker($\phi_i|_{\mathcal{G}_1} ) \subseteq $ ker$\big(\mathcal{Y}(F_i)|_{\mathcal{G}_1} \to \mathcal{Y}(F_{i + 1})|_{\mathcal{G}_1}\big)$, that is, the map $ \mathcal{Y}(F_i)|_{\mathcal{G}_1} \to \mathcal{Y}(F_{i + 1})|_{\mathcal{G}_1} $ factors as $\Yon{F_i}{1} \xrightarrow{\phi_i|_{\mathcal{G}_1}} H|_{\mathcal{G}_1} \to \Yon{F_{i + 1}}{1} $ for all $1 \leq i < n$
    \end{enumerate}
    In particular, there exists an object $F \in \overline{\mathsf{T}^c}$ with a strong $\mathcal{G}_n$-approximating system, and an epimorphism $\phi : \mathcal{Y}(F)|_{\mathcal{G}_n} \to H$.
\end{lemma}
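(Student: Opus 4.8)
The ``in particular'' clause follows at once once the sequence $F_1\to\cdots\to F_n$ with properties (1) and (2) has been produced: for $n$ finite one takes $F=F_n$ and $\phi=\phi_n$, which is an epimorphism on $\mathcal{G}_n$ by (1) and which, again by (1) together with \Cref{Lemma on strong approximating sequences}(1), lies in $\overline{\mathsf{T}^c}$ and carries a strong $\mathcal{G}_n$-approximating system; for $n=\infty$ one instead diagonalises the strong $\mathcal{G}_i$-approximating systems of the $F_i$ (each term of the $i$-th system lies in $\mathcal{G}_i\subseteq\mathsf{T}^c$) into a single strong $\mathsf{T}^c$-approximating system, whose homotopy colimit is the desired $F$. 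So the plan is to construct $F_1\to\cdots\to F_n$ by induction on $i$.

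\emph{Base case $i=1$.} As $H$ is $\mathcal{G}$-semifinite, fix $N$ with $H(\mathcal{G}^m)=0$ for $m\ge N$ (enlarging $N$ so that $N>-C$, with $C$ as below); since $R$ is noetherian, each $R$-module $\bigoplus_{j\in J_m}H(G^m_j)$ with $m<N$ is finitely generated, so pick finitely many $R$-module generators. By the Yoneda lemma for the full subcategory $\mathcal{G}_n\subseteq\mathsf{T}$, each such generator is a natural transformation $\mathcal{Y}(G^m_j)|_{\mathcal{G}_n}\to H$; writing $P_m\in\mathcal{G}_1$ for the corresponding finite direct sum of copies of the $G^m_j$ and assembling over all of them, one gets $\psi\colon\mathcal{Y}\bigl(\bigoplus_{m<N}P_m\bigr)|_{\mathcal{G}_n}\to H$ which is surjective on every object of $\bigcup_m\mathcal{G}^m$, hence (by additivity of the cohomological functor $H$ and a direct-summand argument) on all of $\mathcal{G}_1$. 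It remains to realise $F_1:=\bigoplus_{m<N}P_m$ as the homotopy colimit of a strong $\mathcal{G}_1$-approximating system. Here one uses that $\mathcal{R}$ lies in the preferred $\mathbb{N}$-equivalence class of orthogonal metrics, so there is $C\ge0$ with $\mathcal{G}^m\subseteq\mathcal{M}^{\mathcal{G}}_{-m}\subseteq\mathcal{R}^{\mathcal{G}}_{-m}\subseteq\mathcal{R}_{-m-C}$ for all $m$, and $\mathcal{R}_{-m-C}$ is closed under coproducts; putting every $P_m$ with $-C\le m<N$ into $E_0$ (a finite direct sum, hence in $\mathcal{G}_1$) and setting $E_{k+1}:=E_k\oplus P_{-k-1-C}$ for $k\ge0$, the cone of $E_k\to E_{k+1}$ is $P_{-k-1-C}\in\mathcal{R}_{k+1}$ and $\hocolim E_k\cong\bigoplus_{m<N}P_m=F_1$. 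Thus $\phi_1:=\psi$ works.

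\emph{Inductive step.} Given $F_1\to\cdots\to F_i$ and $\phi_1,\dots,\phi_i$, \Cref{Lemma Closure of compacts are finite right-bounded} shows each $\HomT{G^m_j}{F_i}$ is a finitely generated $R$-module vanishing for $m\gg0$; hence so is the submodule $\bigl(\ker\phi_i|_{\mathcal{G}_1}\bigr)(G^m_j)$, $R$ being noetherian. Choosing finitely many generators of each of these and running the metric construction of the base case, one builds an object $K\in\overline{\mathsf{T}^c}$ with a strong $\mathcal{G}_1$-approximating system and a morphism $k\colon K\to F_i$; since $\ker\phi_i|_{\mathcal{G}_1}$ is a subfunctor of $\mathcal{Y}(F_i)|_{\mathcal{G}_1}$ containing the chosen generators, every composite $G^m_j\to K\xrightarrow{k}F_i$ lies in it, so $\phi_i\circ\mathcal{Y}(k)=0$ as a map $\mathcal{Y}(K)|_{\mathcal{G}_n}\to H$. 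Applying \Cref{Remark regarding Neeman Lemma 6.5} to $k$, with the $\mathcal{G}_1$-system on $K$ and the $\mathcal{G}_i$-system on $F_i$, produces a weak triangle $K\to F_i\to F_{i+1}\to\Sigma K$ in $\overline{\mathsf{T}^c}$ with $F_{i+1}$ carrying a strong $\mathcal{G}_{i+1}$-approximating system coming from the triangles $\mathfrak{U}_m\to\mathfrak{B}_m\to\mathfrak{C}_m$ of that remark; applying $H$ to these triangles and using that $\phi_i$ kills the images of the $\mathfrak{U}_m$, one lifts each $\phi_i(\mathfrak{B}_m)\in H(\mathfrak{B}_m)$ to $H(\mathfrak{C}_m)$ and, everything in sight being finitely generated over $R$, arranges the lifts compatibly (after passing to a subsequence) to obtain $\phi_{i+1}\colon\mathcal{Y}(F_{i+1})|_{\mathcal{G}_n}\to H$ with $\phi_{i+1}\circ\mathcal{Y}(F_i\to F_{i+1})=\phi_i$. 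Property (2) is then immediate: $\mathcal{Y}(F_i)|_{\mathcal{G}_1}\to\mathcal{Y}(F_{i+1})|_{\mathcal{G}_1}$ annihilates $\ker\phi_i|_{\mathcal{G}_1}$ by construction of $k$, hence factors through $\operatorname{im}(\phi_i|_{\mathcal{G}_1})=H|_{\mathcal{G}_1}$, using that $\phi_i|_{\mathcal{G}_i}$, and a fortiori $\phi_i|_{\mathcal{G}_1}$, is an epimorphism.

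\emph{The main obstacle} is the remaining part of (1): that $\phi_{i+1}|_{\mathcal{G}_{i+1}}$ is an epimorphism. One knows $\phi_{i+1}$ is epi on $\mathcal{G}_i$ (its image contains that of $\phi_i|_{\mathcal{G}_i}=H|_{\mathcal{G}_i}$), and one must propagate surjectivity one extension-step further, to $\mathcal{G}_{i+1}=\operatorname{smd}\bigl(\operatorname{coprod}_1(\mathcal{G}(-\infty,\infty))\star\operatorname{coprod}_i(\mathcal{G}(-\infty,\infty))\bigr)$, by a diagram chase along the triangle $F_i\to F_{i+1}\to\Sigma K$ that exploits precisely the fact that the kernel of $\phi_i$ over $\mathcal{G}_1$ has been annihilated in $\mathcal{Y}(F_{i+1})$. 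This is where the generating sequence, the metric $\mathcal{R}$, and the weak-triangle machinery all have to be coordinated, and it is a direct transcription of the corresponding argument for a single compact generator in \cite[Section 7]{Neeman:2021b}; the rest (the case $n=\infty$, compatibility of all the maps, and bookkeeping with the subcategories $\overline{\mathcal{G}}^{[\cdot,\cdot]}_{\cdot}$) is routine.
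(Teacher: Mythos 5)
Your proposal follows essentially the same route as the paper: the base case assembles a coproduct of objects of $\bigcup_m\mathcal{G}^m$ hitting generators of the finitely generated modules $H(G^m_j)$ and uses the $\mathbb{N}$-equivalence of $\mathcal{R}$ with $\mathcal{R}^{\mathcal{G}}$ to filter that coproduct into a strong $\mathcal{G}_1$-approximating system with split-mono connecting maps; the inductive step covers the kernel of $\phi_i$ over $\mathcal{G}_1$ by such an object, kills it via the weak-triangle construction of \Cref{Remark regarding Neeman Lemma 6.5}, and lifts $\phi_i$ to $\phi_{i+1}$. (The paper packages the kernel as a $\mathcal{G}$-semifinite functor $H'$ and covers it by another application of the base case, then realises the resulting natural transformation as an actual morphism via \cite[Lemma 5.8]{Neeman:2021b}; your direct choice of generating morphisms $G^m_j\to F_i$ in $\ker\phi_i|_{\mathcal{G}_1}$ is an equivalent and slightly more economical formulation.)

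The one step you leave open --- surjectivity of $\phi_{i+1}|_{\mathcal{G}_{i+1}}$, which you call the main obstacle and defer to a ``direct transcription'' of Neeman's argument --- is not something that needs to be re-derived: it is exactly the statement of \cite[Lemma 6.6]{Neeman:2021b}, applied with $\mathcal{A}=\mathcal{G}_1$, $\mathcal{B}=\mathcal{G}_n$ and the weak triangle $K\to F_i\to F_{i+1}\to\Sigma K$. That lemma is a formal statement about weak triangles and epimorphisms of cohomological functors and applies verbatim in the $\mathcal{G}$-graded setting, since $\mathcal{G}_{i+1}\subseteq\operatorname{smd}(\mathcal{G}_1\star\mathcal{G}_i)$; its hypotheses are precisely the two things you have arranged, namely that $\phi_i|_{\mathcal{G}_i}$ is epi and that $\mathcal{Y}(K)|_{\mathcal{G}_1}$ surjects onto $\ker(\phi_i|_{\mathcal{G}_1})$. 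Do note that the split-monomorphism property of the approximating system of $K$ is not incidental bookkeeping: it is hypothesis (vii) of \cite[Lemma 6.5]{Neeman:2021b} and is needed for the weak-triangle construction to go through, which is why the paper's base case records it as part of the strengthened statement being proved by induction.
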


\begin{proof}
    The proof is by induction on the integer $n \geq 1$. For $n = 1$, we prove a stronger statement, which helps in proving the induction step. 
     \begin{enumerate}
        \setcounter{enumi}{2}
         \item Let $H$ be a $\mathcal{G}$-semifinite $\mathcal{G}_1$-cohomological functor (\Cref{Definition finite bounded cohomological functors}). Then, there exists an object $F \in \mathsf{T}^c$ (\Cref{Definition closure of compacts}) and an epimorphism $\phi : \mathcal{Y}(F)|_{\mathcal{G}_1} \to H$, with $F$ having a strong $\mathcal{G}_1$-approximating system $E_*$ (\Cref{Definition strong G-approximating system}) such that each map $E_n \to E_{n + 1}$ is a split monomorphism.
     \end{enumerate}
     We now prove (3). Let $H$ be a $\mathcal{G}$-semifinite $\mathcal{G}_1$-cohomological functor. By the definition of a $\mathcal{G}$-semifinite functor, $H(G)$ is a finite $R$-module for all $G \in \bigcup_{n \in \mathbb{Z}}\mathcal{G}^n$, and is zero for $G \in \mathcal{G}^n$ for $n >> 0$. For each non-zero module $H(G^i_j)$, $i \in \mathbb{Z}$, $j \in J_i$, choose a finite set of generators $\{f_{ijk} : k \in K^i_j\}$, see \Cref{Setup} for the notation. By Yoneda lemma, each such element corresponds to a map, $\phi_{ijk} : \mathcal{Y}(G^i_j)|_{\mathcal{G}_1} \to H $. 

     Let $F = \bigoplus_{i \in \mathbb{Z}}\bigoplus_{j \in J_i} \bigoplus_{k \in K^i_j}G^i_j$. Then we have the obvious epimorphism $\phi : \mathcal{Y}(F)|_{\mathcal{G}_1} \to H$, given by,
     \[ \mathcal{Y}(F)|_{\mathcal{G}_1} \xrightarrow{\cong} \bigoplus_{i \in \mathbb{Z}}\bigoplus_{j \in J_i}\bigoplus_{k \in K^i_j}\mathcal{Y}(G^i_j)|_{\mathcal{G}_1} \xrightarrow{(\phi_{ijk})} H \]
     
     Now, we need to show the existence of a strong approximating system for $F$ with the required properties. As $\mathcal{R}$ is in the preferred $\mathbb{N}$-equivalence class of orthogonal metrics (\Cref{Definition preferred equivalence class of orthogonal metrics}), by \Cref{Remark on preferred equivalence class of orthogonal metrics}, there exists an integer $B>0$ such that $\mathcal{G}^{(-\infty,-i-B]} = \mathcal{M}^{\mathcal{G}}_{i+B} \subseteq \mathcal{R}_i$, see \Cref{Notation Subcategories for (pre)-generating sequences} and \Cref{Definition of generating sequence}(2). We define $E_i = \bigoplus_{n \geq -i - B + 1}\bigoplus_{j \in J_n}\bigoplus_{k \in K^n_j}G^n_j$. This is a finite direct sum by the construction of $F$, as $H$ is $\mathcal{G}$-semifinite, and so $E_i \in \mathcal{G}_1$ for all $i \in \mathbb{Z}$. The canonical map $E_i \to E_{i + 1}$ is clearly a split monomorphism. Further, the cone of the canonical map $E_i \to F$ lies in $\mathcal{G}^{(-\infty, -i - B]} \subseteq \mathcal{R}_{i}$ for all $i \in \mathbb{Z}$, and hence $E_*$ is a strong $\mathcal{G}_1$-approximating system for $F$. So, we have proved (3).

     Now, for the induction step. Suppose we know the result for all positive integers up to $n\geq 1$. Let $H$ be a $\mathcal{G}$-semifinite $\mathcal{G}_{n + 1}$-cohomological functor. Then $H|_{\mathcal{G}_n}$ is a $\mathcal{G}$-semifinite $\mathcal{G}_n$-cohomological functor, and so, by induction hypothesis, there exists a sequence $F_1 \to \cdots \to F_n$ in $\mathsf{T}^c$, with compatible epimorphisms $\phi_i : \mathcal{Y}(F_i)|_{\mathcal{G}_n} \to H|_{\mathcal{G}_n}$. In particular, we have the epimorphism $\phi_n : \Yon{F}{n} \to H|_{\mathcal{G}_n}$. Completing to a short exact sequence, we get, $0 \to H' \to \Yon{F}{n} \xrightarrow[]{\phi_n} H|_{\mathcal{G}_n} \to 0$. It is clear that $H'$ is also a $\mathcal{G}$-semifinite $\mathcal{G}_n$-cohomological functor. Now, we can use (3) to get an object $F' \in \mathsf{T}^c $ with a strong $\mathcal{G}_1$-approximating system $E'_*$ with the maps being split monomorphisms, and an epimorphism $\Yon{F'}{1} \to H'|_{\mathcal{G}_1}$.

     We have the maps $\Yon{F'}{1}\to H'|_{\mathcal{G}_1} \to \Yon{F_n}{1}$. As $F'$ admits a $\mathcal{G}_1$-approximating system (as any strong $\mathcal{G}_n$-approximating system gives a $\mathcal{G}_n$-approximating system as in \Cref{Definition of an Approximating Sequence}), by \cite[Lemma 5.8]{Neeman:2021b}, this composite is given by $\Yon{\alpha_n}{1}$ for a morphism $\alpha_n : F' \to F_n $.\\
    Now, we can apply \cite[Lemma 6.5]{Neeman:2021b} using \Cref{Remark regarding Neeman Lemma 6.5} to get that,
    \begin{enumerate}[label=(\roman*)]
        \item There exists a weak triangle (\Cref{Definition weak triangle}) $F' \xrightarrow{\alpha_n} F_n \xrightarrow{\beta_n} F_{n + 1} \to \Sigma F'$ in the category $\mathsf{T}^c$, and $F_{n + 1}$ admits a strong $\mathcal{G}_{n + 1}$-approximating system.
        \item There exists a map $\phi_{n + 1} : \mathcal{Y}(F_{n + 1}) \to H $ such that, $\phi_n$ is equal to the composite 
        \[ \Yon{F_{n}}{n} \xrightarrow{\Yon{\beta_n}{n}}\Yon{F_{n + 1}}{n} \xrightarrow{\phi_{n + 1}|_{\mathcal{G}_n}} H|_{\mathcal{G}_n}\]
    \end{enumerate}

    As $\Yon{F'}{1} \to H'|_{\mathcal{G}_{1}}$ is an epimorphism, we have that,
    \[ \Yon{F'}{1} \xrightarrow{\Yon{\alpha_n}{1}} \Yon{F_n}{1} \xrightarrow{\phi_n|_{\mathcal{G}_1}} H \]
    is exact from the short exact sequence $0 \to H' \to \mathcal{Y}(F_n)|_{\mathcal{G}_n} \xrightarrow{\phi_n} H \to 0$. By (ii), we further get that 
    \[ \Yon{F'}{1} \xrightarrow{\Yon{\alpha_n}{1}} \Yon{F_n}{1} \xrightarrow{\Yon{\beta_n}{1}} \Yon{F_{n + 1}}{1}\]
    is also exact by \cite[Lemma 6.4]{Neeman:2021b}, which gives us $(2)$.\\
    To prove (1), it just remains to show that $\phi_{n + 1}$ is an epimorphism. We note that we are in the setting of \cite[Lemma 6.6]{Neeman:2021b}, with $\mathcal{A} \colonequals \mathcal{G}_1$, $\mathcal{B} \colonequals \mathcal{G}_n$, and the weak triangle $F' \xrightarrow{\alpha_n} F_n \xrightarrow{\beta_n} F_{n + 1} \to \Sigma F'$. So, by \cite[Lemma 6.6]{Neeman:2021b}, we get that $\phi_{n + 1}$ is an epimorphism.

\end{proof}

\begin{remark}\label{Remark 2 of Section 6.2}
    Suppose we are in \Cref{Setup}. As the proof of the \Cref{Lemma 1 of Section 6.2} is using induction, if we are given $j < i$, and a $\mathcal{G}$-semifinite $\mathcal{G}_i$-cohomological functor (\Cref{Definition finite bounded cohomological functors}) $H$ with a sequence $F_1 \to \cdots \to F_j$ satisfying \Cref{Lemma 1 of Section 6.2} for $H|_{\mathcal{G}_j}$, we can extend the sequence to $F_1 \to \cdots \to F_i$ which satisfies \Cref{Lemma 1 of Section 6.2} for $H$. So, if we are given a $\mathcal{G}$-semifinite $\mathsf{T}^c$-cohomological functor $H$, we get an infinite sequence $F_1 \to F_2 \to F_3 \to \cdots $, such that for any $i>0$, $F_1 \to \cdots \to F_i$ satisfy \Cref{Lemma 1 of Section 6.2} for $H|_{\mathcal{G}_i}$. As $F_n$ has a strong $\mathcal{G}_n$-approximating system, by \cite[Corollary 5.4]{Neeman:2021b} we get that the morphisms $\phi_n : \mathcal{Y}(F_n)|_{\mathcal{G}_n} \to H|_{\mathcal{G}_n} $ lifts uniquely to $\phi_n : \mathcal{Y}(F_n) \to H$. Further, this lift is compatible with the maps $F_n \to F_{n + 1}$ because of the uniqueness.
\end{remark}

\begin{proposition}\label{Proposition 3 of Section 6.2}
    Suppose we are in \Cref{Setup} and let $\mathcal{K} = \bigoplus_{K\in \mathsf{T}^c} K$, where we take the direct sum over all the isomorphism classes of compacts. 
    Then, for any $\mathcal{G}$-semifinite $\mathsf{T}^c$-cohomological functor (\Cref{Definition finite bounded cohomological functors}) $H$ there exists an object $F \in \overline{\langle \mathcal{K} \rangle}_4 \subseteq \mathsf{T}$ and an isomorphism $\phi : \mathcal{Y}(F) \to H$, see \Cref{Notation from Neeman's paper}.
\end{proposition}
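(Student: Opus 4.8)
The plan is to follow the strategy of the proof of the analogous representability statement in \cite{Neeman:2021b}, using \Cref{Lemma 1 of Section 6.2} and the weak-triangle machinery of \Cref{Section technical tools} to upgrade an epimorphism onto $H$ to an isomorphism, at the cost of an $\omega$-indexed iteration. Throughout I will use that $\overline{\mathsf{T}^c}\subseteq\overline{\langle\mathcal{K}\rangle}_2$: by \Cref{Lemma on strong approximating sequences} any $F'\in\overline{\mathsf{T}^c}$ is the homotopy colimit of a strong $\mathsf{T}^c$-approximating system $E_1\to E_2\to\cdots$ of compact objects, and since $\bigoplus_i E_i$ is a direct summand of a coproduct of copies of $\mathcal{K}$ — hence lies in $\overline{\langle\mathcal{K}\rangle}_1$ — the triangle $\bigoplus_i E_i\to\bigoplus_i E_i\to\hocolim E_i\to\Sigma\bigoplus_i E_i$ places $F'$ in $\overline{\langle\mathcal{K}\rangle}_1\star\overline{\langle\mathcal{K}\rangle}_1\subseteq\overline{\langle\mathcal{K}\rangle}_2$. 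Since $\overline{\mathsf{T}^c}$ is triangulated (\Cref{Proposition Closure of compacts is triangulated}), the cone of any morphism between objects of $\overline{\mathsf{T}^c}$ is again in $\overline{\mathsf{T}^c}$, hence in $\overline{\langle\mathcal{K}\rangle}_2$.

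First I would build, for each $m\geq 1$, an object $F^{(m)}\in\overline{\mathsf{T}^c}$ and a map $\psi_m\colon\mathcal{Y}(F^{(m)})\to H$ whose restriction to $\mathcal{G}_m$ is an isomorphism, fitting into a sequence $F^{(1)}\to F^{(2)}\to\cdots$ over which the $\psi_m$ are compatible. For $F^{(1)}$: apply \Cref{Lemma 1 of Section 6.2} and \Cref{Remark 2 of Section 6.2} to $H$ to obtain compact objects $F_1\to F_2\to\cdots$, each with a strong $\mathcal{G}_n$-approximating system, with compatible $\phi_n\colon\mathcal{Y}(F_n)\to H$ such that $\phi_n|_{\mathcal{G}_n}$ is an epimorphism and $\colim_n\ker(\phi_n|_{\mathcal{G}_1})=0$ (the latter from \Cref{Lemma 1 of Section 6.2}(2), the transition maps on those kernels being zero). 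Interleaving the strong $\mathcal{G}_n$-approximating systems of the $F_n$ — using \cite[Lemma 5.5]{Neeman:2021b} to make the comparison maps of sequences compatible and passing to a diagonal — exhibits $F^{(1)}:=\hocolim F_n$ as the homotopy colimit of a strong $\mathsf{T}^c$-approximating system, so $F^{(1)}\in\overline{\mathsf{T}^c}$; and $\psi_1:=\colim_n\phi_n$, which makes sense on $\mathsf{T}^c$ since $\mathcal{Y}(F^{(1)})(K)=\colim_n\HomT{K}{F_n}$ by \cite[Lemma 2.8]{Neeman:1996}, is an epimorphism with $\psi_1|_{\mathcal{G}_1}$ an isomorphism. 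By \Cref{Lemma Closure of compacts are finite right-bounded}, $\mathcal{Y}(F^{(1)})$ is $\mathcal{G}$-semifinite; since $R$ is Noetherian and $\mathcal{Y}(F^{(1)})$ is cohomological on $\mathsf{T}^c$, a diagram chase shows that the kernel and cokernel of $\psi_1|_{\mathcal{G}_m}$ are $\mathcal{G}$-semifinite $\mathcal{G}_m$-cohomological functors for every $m$, so the same machinery can be applied again.

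For the inductive step, given $F^{(m)}$ and $\psi_m$, run the argument above with $\mathcal{G}_{m+1}$ in place of $\mathcal{G}_1$ — \Cref{Lemma 1 of Section 6.2} and its proof, through \cite[Lemmas 6.5 and 6.6]{Neeman:2021b} as adapted in \Cref{Remark regarding Neeman Lemma 6.5}, carry over with $\mathcal{G}_{m+1}$ replacing $\mathcal{G}_1$ — applied to the kernel and cokernel of $\psi_m|_{\mathcal{G}_{m+1}}$, producing an auxiliary object of $\overline{\mathsf{T}^c}$ together with a morphism into $F^{(m)}$ realizing the appropriate map of functors (via \cite[Lemma 5.8]{Neeman:2021b}, applicable since that object has an approximating system); let $F^{(m+1)}$ be its cone, which lies in $\overline{\mathsf{T}^c}\subseteq\overline{\langle\mathcal{K}\rangle}_2$ and comes with $\psi_{m+1}|_{\mathcal{G}_{m+1}}$ an isomorphism compatibly with $F^{(m)}\to F^{(m+1)}$. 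Finally set $F:=\hocolim F^{(m)}$. For a compact $K$, which lies in some $\mathcal{G}_m$, one has $\mathcal{Y}(F)(K)=\colim_m\HomT{K}{F^{(m)}}$ by \cite[Lemma 2.8]{Neeman:1996}, and for $m'\geq m$ the maps $\mathcal{Y}(F^{(m')})(K)\to H(K)$ are isomorphisms; hence $\mathcal{Y}(F)(K)\to H(K)$ is an isomorphism, naturally in $K$, so $\mathcal{Y}(F)\cong H$. Since each $F^{(m)}\in\overline{\langle\mathcal{K}\rangle}_2$ and $\overline{\langle\mathcal{K}\rangle}_2$ is closed under coproducts, the homotopy-colimit triangle places $F$ in $\overline{\langle\mathcal{K}\rangle}_2\star\overline{\langle\mathcal{K}\rangle}_2\subseteq\overline{\langle\mathcal{K}\rangle}_4$, as required.

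The main obstacle is the inductive step: upgrading ``$\psi_m$ an isomorphism on $\mathcal{G}_m$'' to ``$\psi_{m+1}$ an isomorphism on $\mathcal{G}_{m+1}$'' while keeping $F^{(m+1)}$ inside $\overline{\mathsf{T}^c}$ and compatible with the earlier terms. This forces one to lift purely functor-theoretic data (epimorphisms, kernels that vanish in a colimit) to honest morphisms and triangles in $\mathsf{T}$, which is exactly where approximating systems and the weak-triangle formalism are indispensable, and one must check that the correction term introduced at stage $m$ is supported deep enough in the metric that its cone still lies in $\overline{\mathsf{T}^c}$.
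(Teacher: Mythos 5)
Your first stage already \emph{is} the paper's entire proof, and you stop one observation short of finishing there. The paper takes $F=\hocolim F_n$ (your $F^{(1)}$), notes exactly as you do that by \Cref{Lemma 1 of Section 6.2}(2) each transition map $\mathcal{Y}(\beta_n)|_{\mathcal{G}_1}$ factors through $H|_{\mathcal{G}_1}$, so the ind-system is ind-isomorphic to the constant system and $\phi|_{\mathcal{G}_1}$ is an isomorphism --- and then simply observes that the full subcategory $\mathsf{S}=\{E\in\mathsf{T}^c:\phi(E)\text{ is an isomorphism}\}$ is closed under extensions and direct summands and contains $\bigcup_{i\in\mathbb{Z}}\mathcal{G}^i\subseteq\mathcal{G}_1$, hence equals $\operatorname{smd}(\operatorname{coprod}(\bigcup_{i}\mathcal{G}^i))=\mathsf{T}^c$ by \Cref{Definition of generating sequence}(1). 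The membership $F\in\overline{\langle\mathcal{K}\rangle}_4$ then follows because each $F_n$ carries a strong $\mathsf{T}^c$-approximating system, hence is a homotopy colimit of a single sequence of compacts and lies in $\overline{\langle\mathcal{K}\rangle}_2$, so $F\in\overline{\langle\mathcal{K}\rangle}_2\star\overline{\langle\mathcal{K}\rangle}_2\subseteq\overline{\langle\mathcal{K}\rangle}_4$. Your entire second induction over $m$ is unnecessary; the missing idea is the extension-and-summand closure of the isomorphism locus.

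That induction also has two genuine gaps. First, coning off a map $W\to F^{(m)}$ whose image generates $\ker(\psi_m|_{\mathcal{G}_{m+1}})$ does not make $\psi_{m+1}|_{\mathcal{G}_{m+1}}$ injective in one step: in the exact sequence $\mathcal{Y}(F^{(m)})\to\mathcal{Y}(F^{(m+1)})\to\mathcal{Y}(\Sigma W)$ you only control kernel elements coming from $\mathcal{Y}(F^{(m)})$, and new ones mapping nontrivially to $\mathcal{Y}(\Sigma W)$ can appear. This is the classical obstruction in Brown-representability arguments, resolved only in the colimit (indeed \cite[Lemma 6.6]{Neeman:2021b}, as used in \Cref{Lemma 1 of Section 6.2}, yields surjectivity of $\phi_{n+1}$ and the factorisation property, never injectivity at a finite stage). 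Second, the claim $F^{(1)}=\hocolim F_n\in\overline{\mathsf{T}^c}$ is unjustified: \Cref{Lemma 1 of Section 6.2} gives no control on $\operatorname{Cone}(F_n\to F_{n+1})$ in terms of the metric $\mathcal{R}$, so your interleaved diagonal need not be a Cauchy sequence; obtaining such control is exactly the content of \Cref{Lemma 3 of Section 6.2} through \Cref{Lemma 5 of Section 6.2}, which require $\mathcal{G}$-quasiapproximability --- an assumption the present proposition does not make. Without it each $F^{(m)}$ only lies in $\overline{\langle\mathcal{K}\rangle}_4$, and your final double homotopy colimit lands in $\overline{\langle\mathcal{K}\rangle}_8$, missing the stated bound.
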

\begin{proof}
    By \Cref{Remark 2 of Section 6.2}, there exists a sequence $F_1 \xrightarrow{\beta_1} F_2 \xrightarrow{\beta_2} F_3 \xrightarrow{\beta_3} F_4 \xrightarrow{\beta_4} \cdots $ with compatible maps $\phi_n : \mathcal{Y}(F_n) \to H$. Let $F \colonequals \hocolim F_n$. Then, the natural map $\colim \mathcal{Y}(F_n) \to \mathcal{Y}(F)$ is an isomorphism by \cite[Lemma 2.8]{Neeman:1996}. So, we get a map $\phi : \mathcal{Y}(F) \to H$. We will now show that this map is an isomorphism.

    Consider the full subcategory of $\mathsf{T}^c$ defined as,
    \begin{equation*}
        \mathsf{S} = \left\{ E \in \mathsf{T}^c \Big| \ \phi(E) :\Hom{\mathsf{T}}{E}{F} \to H(E) \text{ is an isomorphism}\right\}
    \end{equation*}
    It is clear that $\mathsf{S}$ is closed under extensions and summands in $\mathsf{T}^c$. We just need to show that $\bigcup_{i \in \mathbb{Z}}\mathcal{G}^i \subseteq \mathsf{S}$, as $\operatorname{smd}(\operatorname{coprod}(\bigcup_{i \in \mathbb{Z}}\mathcal{G}^i)) = \mathsf{T}^c$ by \Cref{Definition of generating sequence}(1). For that, consider $\phi|_{\mathcal{G}_1} : \mathcal{Y}(F)|_{\mathcal{G}_1} \to H|_{\mathcal{G}_1}$.
    As $\mathcal{Y}(F) = \colim \mathcal{Y}(F_i)$, $\phi|_{\mathcal{G}_1}$ is a map to $H|_{\mathcal{G}_1}$ from the sequence, 
    \[\Yon{F_1}{1} \xrightarrow{\Yon{\beta_1}{1}} \Yon{F_2}{1} \xrightarrow{\Yon{\beta_2}{1}} \Yon{F_3}{1} \xrightarrow{\Yon{\beta_3}{1}} \cdots \]
    By \Cref{Lemma 1 of Section 6.2}, the map $\Yon{\beta_n}{1}$ factors as $\Yon{F_n}{1} \to H|_{\mathcal{G}_1} \to \Yon{F_{n + 1}}{1}$. So, the sequence is ind-isomorphic to the constant sequence with each term being $H|_{\mathcal{G}_1}$. And so, $\phi|_{\mathcal{G}_1}$ is an isomorphism, and hence $\bigcup_{i \in \mathbb{Z}}\mathcal{G}^i \subseteq \mathcal{G}_1 \subseteq \mathsf{S}$. Hence, $\phi$ is an isomorphism. 
    
    It just remains to show that $F \in \overline{\langle \mathcal{K} \rangle}_4$. Note that $F = \hocolim F_n$, and each $F_n$ has a strong $\mathsf{T}^c$-approximating system (see \Cref{Remark 2 of Section 6.2}), so they in turn are homtopy colimits of compact objects. That is, $F_n \in \overline{\langle \mathcal{K} \rangle}_2$ for all $n \in \mathbb{N}$, and hence 
    \[ F = \hocolim F_n \in \overline{\langle \mathcal{K} \rangle}_2 \star \overline{\langle \mathcal{K} \rangle}_2 = \overline{\langle \mathcal{K} \rangle}_4 \]
    and we are done.

\end{proof}
\begin{remark}\label{Remark on choice of metric for computing Tc and Tbc}
    From now on, we will be adding the assumption that $\mathsf{T}$ is weakly $\mathcal{G}$-quasiapprox (see \Cref{Definition of G-quasiapproximability}) on top of the assumptions in \Cref{Setup}. We note here that with the assumption of weak $\mathcal{G}$-quasiapprox, the extended good metrics $\mathcal{M}^{\mathcal{G}}$ (\Cref{Definition of generating sequence}) and $\mathcal{R} \cap \mathsf{T}^c$ on $\mathsf{T}^c$ are $\mathbb{N}$-equivalent by \Cref{Proposition approximable implies preferred equivalence class} for any $\mathcal{R}$ in the preferred equivalence class of orthogonal metrics on $\mathsf{T}$ (see \Cref{Definition preferred equivalence class of orthogonal metrics}).
\end{remark}
\begin{lemma}\label{Lemma 3 of Section 6.2}
    In addition to the assumptions of \Cref{Setup}, we assume that $\mathsf{T}$ is weakly $\mathcal{G}$-quasiapprox, see \Cref{Definition of G-quasiapproximability}. Let $H$ be a $\mathcal{G}$-semifinite $\mathsf{T}^c$-cohomological functor (\Cref{Definition finite bounded cohomological functors}). Then, there exists a positive integer $\widetilde{A}$ such that for each $n\geq 1$ there exists
    \begin{enumerate}
        \item an object $F_n \in \mathsf{T}^c\cap \mathcal{R}_{-\widetilde{A}}$ (see \Cref{Definition closure of compacts}) and,
        \item a natural transformation $\phi_n : \mathcal{Y}(F_n) \to H$ for each $n \geq 1 $ such that $\phi_n|_{\mathcal{G}_n}$ is an epimorphism.
    \end{enumerate} 
\end{lemma}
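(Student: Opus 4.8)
The plan is to build the objects $F_n$ by induction on $n$, following the scheme of \Cref{Lemma 1 of Section 6.2}, but arranging—via the weakly $\mathcal{G}$-quasiapprox hypothesis—that every $F_n$ stays inside one fixed ball of the metric. Fix an integer $N_0$ with $H(\mathcal{G}^i)=0$ for all $i>N_0$ (possible since $H$ is $\mathcal{G}$-semifinite), and fix, as in \Cref{Remark Setup condition 2 implication}, an integer $l\ge 0$ with $\mathcal{G}^{m}\subseteq\mathcal{R}_{-m-l}$ for all $m$; in particular any object built from the generators in $\mathcal{G}^{(-\infty,N_0]}$ by finite coproducts, coproducts and extensions lies in $\mathcal{R}_{-N_0-l}$. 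Let $\widetilde A$ be an integer depending only on $N_0$, $A$, $B$, $l$—to be pinned down by the induction—and aim to produce every $F_n$ in $\overline{\mathsf{T}^c}\cap\mathcal{R}_{-\widetilde A}$.

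For $n=1$ I would take $F_1=\bigoplus G^i_j$ exactly as in part $(3)$ of the proof of \Cref{Lemma 1 of Section 6.2}; since $H$ vanishes above degree $N_0$ this coproduct only involves generators in $\mathcal{G}^{(-\infty,N_0]}$, so $F_1\in\mathcal{R}_{-N_0-l}\subseteq\mathcal{R}_{-\widetilde A}$, it carries a strong $\mathcal{G}_1$-approximating system with split-monic maps, and $\mathcal{Y}(F_1)|_{\mathcal{G}_1}\to H$ is onto. For the inductive step, given $F_n\in\overline{\mathsf{T}^c}\cap\mathcal{R}_{-\widetilde A}$ with $\mathcal{Y}(F_n)|_{\mathcal{G}_n}\to H$ onto, set $H'=\ker(\mathcal{Y}(F_n)|_{\mathcal{G}_n}\to H)$; on $\mathcal{G}^i$ with $i>N_0$ one has $H'(\mathcal{G}^i)=\HomT{\mathcal{G}^i}{F_n}$, which by \Cref{Setup}(2) vanishes once $i$ is large in terms of $\widetilde A$. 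Since $F_n\in\overline{\mathsf{T}^c}\cap\mathcal{R}_{-\widetilde A}$ and $\mathsf{T}$ is weakly $\mathcal{G}$-quasiapprox, \Cref{Proposition 3 of section 5} iterated (equivalently \Cref{Corollary 4 of section 5}) produces a compact object $E$, built from generators in a range depending only on $N_0,A,B,\widetilde A$, with a map $E\to F_n$ which is an isomorphism on $\HomT{\mathcal{G}^i}{-}$ for every $i>N_0$. One then takes the resolving object $F'$ to be the sum of $E$ with a coproduct of generators in $\mathcal{G}^{(-\infty,N_0]}$ covering $H'$ in degrees $\le N_0$; then $\mathcal{Y}(F')|_{\mathcal{G}_1}\to H'|_{\mathcal{G}_1}$ is onto and, if $\widetilde A$ was chosen large enough, $F'$ again lies in $\mathcal{R}_{-\widetilde A}$. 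Forming the weak triangle $F'\to F_n\to F_{n+1}\to\Sigma F'$ as in \Cref{Remark regarding Neeman Lemma 6.5} (via \cite[Lemma 6.5]{Neeman:2021b}) yields $F_{n+1}\in F_n\star\Sigma F'\subseteq\mathcal{R}_{-\widetilde A}$, with a strong approximating system, while \cite[Lemma 6.6]{Neeman:2021b}, applied with $\mathcal{A}=\mathcal{G}_1$ and $\mathcal{B}=\mathcal{G}_n$, shows $\mathcal{Y}(F_{n+1})|_{\mathcal{G}_{n+1}}\to H$ is onto. Finally the lift to $\phi_n:\mathcal{Y}(F_n)\to H$ comes from the strong approximating system for $F_n$ together with \cite[Corollary 5.4]{Neeman:2021b}, as in \Cref{Remark 2 of Section 6.2}.

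The delicate point—and essentially the only place real bookkeeping is required—is choosing the single constant $\widetilde A$ so that it survives the inductive step. One must verify that the generator-degrees occurring in the compact approximation $E$ of $F_n$ are bounded purely in terms of $N_0$, $A$, $B$, $l$: this is where the vanishing of $H$ (hence of the non-trivial part of each kernel $H'$) above degree $N_0$ is crucial, as it controls how far up $E$ must reach, while the low-degree part of each kernel is always resolved by generators in the fixed range $\mathcal{G}^{(-\infty,N_0]}$. One must also absorb the finitely many suspensions and extensions produced in passing from stage $n$ to stage $n+1$ into $\mathcal{R}_{-\widetilde A}$, which is possible because $\mathcal{R}$ is an orthogonal (extended good) metric, so $\mathcal{R}_{-\widetilde A}$ is closed under coproducts and extensions and $\Sigma\mathcal{R}_{-\widetilde A+1}\subseteq\mathcal{R}_{-\widetilde A}$. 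Once $\widetilde A$ is pinned down this way it works simultaneously for all $n$, which is precisely what the lemma asserts.
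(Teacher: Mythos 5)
Your inductive strategy—forcing every $F_n$ into the fixed ball $\mathcal{R}_{-\widetilde{A}}$ as you build it—is genuinely different from the paper's, and it has a structural gap rather than a bookkeeping one. The paper runs the induction of \Cref{Lemma 1 of Section 6.2} with \emph{no} metric control at all, so each $F_n$ sits in some $\mathcal{R}_{-l}$ with $l$ depending badly on $n$; it then post-processes each $F_n$ separately, choosing a \emph{coarse} compact approximation $E\to F_n$ built only from generators in degrees $>B'$ (so $H(E)=0$ and $\phi_n$ descends to the cone) whose cone nevertheless lies in $\mathcal{R}_{-(B+B')}$, and replaces $F_n$ by that cone, which lands uniformly in $\mathcal{R}_{-(B+B'+1)}$ regardless of $l$. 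No attempt is made to keep the bound through the inductive step, and for good reason.

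The obstruction to your version is the ``middle range'' of the kernel. If $F_n\in\mathcal{R}_{-\widetilde{A}}$, then \Cref{Setup}(2) only forces $H'(\mathcal{G}^i)=\HomT{\mathcal{G}^i}{F_n}$ to vanish for $i\ge A+\widetilde{A}$; for $N_0<i<A+\widetilde{A}$ it can be nonzero and must be covered by $\mathcal{Y}(F')$. Your proposed cover is a compact approximation $E$ of $F_n$, but the two properties you need of it are incompatible. Writing $E=E_{i_0}$ for the approximation of \Cref{Corollary 4 of section 5} at step $i_0$, one has $E_{i_0}\in\mathcal{G}^{[-B-i_0+\widetilde{A}+1,\,B+\widetilde{A}]}$ and $\operatorname{Cone}(E_{i_0}\to F_n)\in\mathcal{R}_{-\widetilde{A}+i_0}$. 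For $\mathcal{Y}(E)\to\mathcal{Y}(F_n)\to H$ to vanish (so that the map factors through $H'$ at all) you want $E$ built from generators in degrees $>N_0$, i.e.\ $i_0\le\widetilde{A}-B-N_0$; for $\HomT{\mathcal{G}^i}{E}\to\HomT{\mathcal{G}^i}{F_n}$ to be onto for every $i>N_0$ you need the cone killed by $\HomT{\mathcal{G}^{N_0+1}}{-}$, i.e.\ $i_0\ge A+\widetilde{A}-N_0-1$. Together these force $A+B\le 1$, which is impossible. If you instead cover the middle range by the generators $\mathcal{G}^i$ with $N_0<i<A+\widetilde{A}$ themselves, those only lie in $\mathcal{R}_{-i-A}\supsetneq\mathcal{R}_{-\widetilde{A}}$, so $F'$ and hence $F_{n+1}\in F_n\star\Sigma F'$ escape the target ball by a fixed positive amount at every step, and no single $\widetilde{A}$ survives all $n$. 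The repair is precisely to abandon metric control during the induction and truncate each $F_n$ afterwards, as the paper does.
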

\begin{proof}
    Firstly, as $\mathsf{T}$ is weakly $\mathcal{G}$-quasiapprox and $\mathcal{R}$ lies in the preferred equivalence class of orthogonal metrics (\Cref{Definition preferred equivalence class of orthogonal metrics}), by \Cref{Proposition G-approximability from orthogonal metric in preferred equivalence class}, there exists an integer $A > 0$ such that $\mathcal{G}$, $\mathcal{R}$, and $A$ satisfy \Cref{Definition of G-quasiapproximability}(1)-(3).
    
    Note that by \Cref{Lemma 1 of Section 6.2} and \Cref{Remark 2 of Section 6.2}, we can always find an object $F_n \in \mathsf{T}^c$, with a map $\phi_n : \mathcal{Y}(F_n) \to H$, which is an epimorphism when restricted to $\mathcal{G}_n$. Further, by \Cref{Lemma Definition of G-approximability (1)}(2), $\mathsf{T}^c \subseteq \bigcup_{i \in \mathbb{Z}} \mathcal{R}_i$. So, the additional assertion is on the uniform bound on which $\mathcal{R}_i$ each of the $F_n$ is contained in.

    As $H$ is $\mathcal{G}$-semifinite, there exists an integer $B' > 0$ such that $H(\mathcal{G}[B', \infty)) = 0$, see \Cref{Notation Subcategories for (pre)-generating sequences}. And so, $H$ is zero on $\mathcal{G}^{[B', \infty)}$. By \Cref{Corollary 4 of section 5}, there exists an integer $B > 0$ such that for all $F \in \mathsf{T}^c \cap \mathcal{R}_j$ and $ i > 0$, there exists a triangle, $E_i \to F \to D_i \to \Sigma E_i$ such that $E_i \in \mathcal{G}^{[-B-j-i+1,\infty)} $ and $D_i \in \mathcal{R}_{j+i}$. Let $\widetilde{A} = B+B'+1$.

    Consider the object $F_n$ we started with. As $F_n \in \mathsf{T}^c \subseteq \bigcup_{i \in \mathbb{Z}} \mathcal{R}_i$ by \Cref{Lemma Definition of G-approximability (1)}(2), there exists a positive integer $l$ such that $F_n \in \mathcal{R}_{-l}$. 
    By \Cref{Corollary 4 of section 5} on $F_n$, with $i = l - B - B'$, we get a triangle $E \to F_n \to D \to \Sigma E$ with $D \in \mathcal{R}_{-l+i} = \mathcal{R}_{-(B + B')}$ and $E \in \mathcal{G}^{[B'+1,\infty) } $. So, $H(E) = 0 $. And so, by the Yoneda lemma, the composite $\mathcal{Y}(E) \xrightarrow{\mathcal{Y}(\alpha)} \mathcal{Y}(F_n) \xrightarrow{\phi}H $ vanishes.

    We now apply \cite[Lemma 6.5]{Neeman:2021b} in full force, with $\mathcal{A} = \mathcal{B} = \mathsf{T}^c $, see also \Cref{Remark regarding Neeman Lemma 6.5}.  There exists the constant $\mathsf{T}^c$-approximating system for $E \in \mathsf{T}^c$. The connecting morphisms are all identity, hence in particular are split monomorphisms, and so condition (vii) of \cite[Lemma 6.5]{Neeman:2021b} is satisfied. There exists a strong $\mathcal{G}_n$-approximating system $C_*$ for $F_n \in \mathsf{T}^c$ by \Cref{Lemma on strong approximating sequences}. And finally, we have shown in the previous paragraph that the composite $\mathcal{Y}(E) \xrightarrow{\mathcal{Y}(\alpha)} \mathcal{Y}(F_n) \xrightarrow{\phi}H $ vanishes. Up to passing to a subsequence, the map $E \to F_n$ induces a map between the constant sequence given by $E$ and the sequence $C_*$. So, by \cite[Lemma 6.5]{Neeman:2021b} there exists a weak triangle $E \xrightarrow{\alpha} F_n \xrightarrow{\beta} \widetilde{D} \to \Sigma F_n$, where $\widetilde{D} = \hocolim D_i$, where $D_i = \operatorname{Cone}(E \to C_i)$ (see \Cref{Remark regarding Neeman Lemma 6.5}), such that $\phi : \mathcal{Y}(F_n) \to H $, factors as, 
    \[ \mathcal{Y}(F_n) \xrightarrow{\mathcal{Y}(\beta) }  \mathcal{Y}(\widetilde{D}) \xrightarrow{\psi} H \]
     The surjectivity of $\phi|_{\mathcal{G}_n}$ implies that of $\psi|_{\mathcal{G}_n}$.

     Finally, we show that $\widetilde{D}$ belongs to $\mathsf{T}^c \cap \mathcal{R}_{-\widetilde{A}}$. Note that $D = \operatorname{Cone}(E \to F_n ) \in \mathcal{R}_{-(B+B')}$, and as $C_\star$ is a strong $\mathcal{G}_n$-approximating system for $F$, we have that $\operatorname{Cone}(C_i \to F_n) \in \mathcal{R}_{i+1}$ for all $i$. So, by a simple application of the octahedral axiom to $E \to C_i \to F_n $  we get that $D_i = \operatorname{Cone}(E \to C_i) \in \mathcal{R}_{-(B+B')}$. By \Cref{Remark regarding Neeman Lemma 6.5}, $\widetilde{D} = \hocolim{D_i} \in \mathcal{R}_{-(B+B'+1)}$. So, the requited result holds with $\widetilde{A} = B + B' + 1 $.

\end{proof}

\begin{lemma}\label{Lemma 4 of Section 6.2}
     In additions to the assumptions of \Cref{Setup}, we further assume that $\mathsf{T}$ is $\mathcal{G}$-quasiapprox (\Cref{Definition of G-quasiapproximability}). Let $B>0$ be the integer we get from \Cref{Lemma 2 of section 5}. Let $H$ be a $\mathcal{G}$-semifinite $\mathsf{T}^c$-cohomological functor (\Cref{Definition finite bounded cohomological functors}). 
     Assume we are give objects $\widetilde{F}, F' \in \mathsf{T}^c \cap \mathcal{R}_i$ and $E \in \mathsf{T}^c\cap \mathcal{R}_i$, and a morphism $\alpha : E \to \widetilde{F}$ in $\mathsf{T}^c$, see \Cref{Definition closure of compacts}. Further, assume we are given a positive integer $m$, and a morphism $\widetilde{\phi} : \mathcal{Y}(\widetilde{F}) \to H $, which restricts to an epimorphism on $\mathcal{G}_{mB}$ and a morphism $\phi'  : \mathcal{Y}(F') \to H$ which restricts to an epimorphism on $\mathcal{G}_{(m+1)B}$. 

     Then, we get the following,
     \begin{enumerate}
         \item There exists a commutative diagram in $\mathsf{T}^c \cap \mathcal{R}_i$ as follows,
    \[\begin{tikzcd}
	E && {E'} && {F'} \\
	& {\widetilde{F}} && {\widetilde{F}'}
	\arrow["\alpha"', from=1-1, to=2-2]
	\arrow["\gamma", from=1-3, to=2-2]
	\arrow["\varepsilon", from=1-1, to=1-3]
	\arrow["{\alpha '}", from=1-3, to=2-4]
	\arrow["\beta"', from=1-5, to=2-4]
    \end{tikzcd}\]

    \item Furthermore, there exists a morphism $\widetilde{\phi}' : \mathcal{Y}(\widetilde{F}') \to H $ such that :
     \begin{enumerate}[label=(\roman*)]
         \item $E' \in \mathsf{T}^c$
         \item $\operatorname{Cone}(\alpha'), \operatorname{Cone}(\gamma) \in \mathcal{R}_{m-1+i}$
         \item The following diagram commutes,
         \[\begin{tikzcd}
	& {\mathcal{Y}(\widetilde{F}')} \\
	{\mathcal{Y}(F')} & H
	\arrow["\mathcal{Y}(\beta)", from=2-1, to=1-2]
    \arrow["\widetilde{\phi}'",from=1-2, to=2-2]
	\arrow["\phi'",from=2-1, to=2-2]
        \end{tikzcd}\]
         which implies that $\widetilde{\phi}'|_{\mathcal{G}_{(m+1)B}}$ is an epimorphism.
        \item The following square commutes,
        \[\begin{tikzcd}
	{\mathcal{Y}(E')} && {\mathcal{Y}(\widetilde{F})} \\
	{\mathcal{Y}(\widetilde{F}')} && H
	\arrow["{\widetilde{\phi}}", from=1-3, to=2-3]
	\arrow["{\mathcal{Y}(\alpha ')}"', from=1-1, to=2-1]
	\arrow["{\widetilde{\phi}'}"', from=2-1, to=2-3]
	\arrow["{\mathcal{Y}(\gamma)}", from=1-1, to=1-3]
        \end{tikzcd}\]
    \end{enumerate}
    \end{enumerate}

\end{lemma}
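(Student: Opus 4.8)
The plan is to produce $E'$ by compactly approximating $\widetilde F$ far enough to absorb $\alpha$, and then to assemble $\widetilde F'$ together with $\widetilde\phi'$ as a homotopy pushout (equivalently, a weak triangle) built from $\widetilde F$, $E'$ and $F'$, transporting the maps to $H$ across it by Neeman's weak-triangle machinery.

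First I would apply \Cref{Corollary 4 of section 5} to $\widetilde F\in\overline{\mathsf T^c}\cap\mathcal R_i$, obtaining a sequence of compacts $E_1\to E_2\to\cdots$ with compatible maps to $\widetilde F$, with $\operatorname{Cone}(E_n\to\widetilde F)\in\mathcal R_{i+n}$, $E_n\in\mathcal G^{[-B-n-i,B-i]}_{nB}$, and $\widetilde F\cong\hocolim E_n$. From the defining triangle of $E_n$, together with $\widetilde F\in\mathcal R_i$ and $\Sigma^{-1}\operatorname{Cone}(E_n\to\widetilde F)\in\mathcal R_{i+n-1}\subseteq\mathcal R_i$, the axiom $\mathcal R_i\star\mathcal R_i\subseteq\mathcal R_i$ of \Cref{Good Metric} forces $E_n\in\mathsf T^c\cap\mathcal R_i$. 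Since $E$ is compact and $\widetilde F\cong\hocolim E_n$, \cite[Lemma 2.8]{Neeman:1996} lets me factor $\alpha$ through some $E_{n_0}$; I would then fix $n\ge\max(n_0,m-1)$ and set $E':=E_n$, with $\gamma\colon E'\to\widetilde F$ the structure map and $\varepsilon\colon E\to E'$ the chosen lift. This yields the commutative diagram of (1), the assertion $E'\in\mathsf T^c$ of (2)(i), and $\operatorname{Cone}(\gamma)\in\mathcal R_{i+n}\subseteq\mathcal R_{m-1+i}$, i.e. the $\gamma$-part of (2)(ii).

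Next, using that $\phi'$ restricts to an epimorphism on $\mathcal G_{(m+1)B}$ and that $E'$ admits a strong approximating system, I would lift the natural transformation $\widetilde\phi\circ\mathcal Y(\gamma)\colon\mathcal Y(E')\to H$ (which factors through $\mathcal Y(\widetilde F)$) along $\phi'$, and realize the lift, via \cite[Lemma 5.8]{Neeman:2021b}, as a morphism $\delta\colon E'\to F'$ with $\phi'\circ\mathcal Y(\delta)=\widetilde\phi\circ\mathcal Y(\gamma)$; one then has to further arrange $\operatorname{Cone}(\delta)\in\mathcal R_{m-1+i}$ by passing to a better representative, absorbing the discrepancy through iterated approximations of $F'$ coming from \Cref{Lemma 2 of section 5} and tracking $\mathcal G_n$-complexity via the $\mathbb N$-equivalence of $\mathcal R\cap\mathsf T^c$ with $\mathcal M^{\mathcal G}$ (\Cref{Proposition approximable implies preferred equivalence class}). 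With $\delta$ in hand I would let $\widetilde F':=\widetilde F\amalg_{E'}F'$ be the homotopy pushout of $\widetilde F\xleftarrow{\gamma}E'\xrightarrow{\delta}F'$, with structure maps $\beta\colon F'\to\widetilde F'$ and $\alpha'\colon E'\to\widetilde F'$ (the common value of $E'\to\widetilde F\to\widetilde F'$ and $E'\to F'\to\widetilde F'$). The octahedral axiom gives $\operatorname{Cone}(\beta)\cong\operatorname{Cone}(\gamma)$ and $\operatorname{Cone}(\alpha')\in\operatorname{Cone}(\gamma)\star\operatorname{Cone}(\delta)\subseteq\mathcal R_{m-1+i}$, completing (2)(ii), and $\widetilde F'$ is again compact. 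Because $\phi'\circ\mathcal Y(\delta)=\widetilde\phi\circ\mathcal Y(\gamma)$, the transformations $\widetilde\phi$ and $\phi'$ agree after restriction along $\mathcal Y(\gamma)$ and $\mathcal Y(\delta)$, so applying $\mathcal Y$ to the Mayer--Vietoris triangle of the pushout and using that $H$ is cohomological, precisely in the form of \Cref{Remark regarding Neeman Lemma 6.5} and \cite[Lemmas 6.4 and 6.5]{Neeman:2021b}, they glue to $\widetilde\phi'\colon\mathcal Y(\widetilde F')\to H$ with $\widetilde\phi'\circ\mathcal Y(\beta)=\phi'$ and $\widetilde\phi'\circ\mathcal Y(\alpha')=\widetilde\phi\circ\mathcal Y(\gamma)$, which are (2)(iii) and (2)(iv); the epimorphism statement for $\widetilde\phi'|_{\mathcal G_{(m+1)B}}$ then follows from (iii) and surjectivity of $\phi'|_{\mathcal G_{(m+1)B}}$.

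The hard part will be the construction of $\delta$: one needs a single morphism $E'\to F'$ that simultaneously realizes the prescribed image in $H$ and has cone inside $\mathcal R_{m-1+i}$, whereas $E'$ — being an approximation of the arbitrary compact object $\widetilde F$ — carries no a priori bound on its $\mathcal G_n$-complexity, so the lifting through $\phi'$ and the cone control pull in opposite directions. Reconciling them is exactly where the detailed interplay of the metric $\mathcal R$, the generating sequence $\mathcal G$, and the weak-triangle construction of \cite[Lemma 6.5]{Neeman:2021b} has to be deployed, and I would expect the actual argument to interleave the choice of $\delta$ with the pushout/weak-triangle step rather than carrying them out separately.
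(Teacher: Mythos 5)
Your overall architecture (approximate, lift, then glue via a weak triangle) is in the right spirit, but the step you yourself flag as ``the hard part'' is a genuine gap, and the paper's proof avoids it by running the lifting in the opposite direction. You propose to construct $\delta\colon E'\to F'$ with $\phi'\circ\mathcal Y(\delta)=\widetilde\phi\circ\mathcal Y(\gamma)$ by lifting through $\phi'$. By Yoneda this requires the map $\phi'(E')\colon\Hom{\mathsf{T}}{E'}{F'}\to H(E')$ to hit a prescribed element, but the only surjectivity you have for $\phi'$ is on $\mathcal G_{(m+1)B}$, and your $E'$ is a term of a strong approximating system for the \emph{arbitrary} compact $\widetilde F$, so it carries no bound on its $\mathcal G_n$-complexity. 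The vague appeal to ``iterated approximations of $F'$'' and $\mathbb N$-equivalence of metrics does not produce such a bound; the obstruction is real, not bookkeeping. The paper's resolution is to reverse the roles: apply \Cref{Corollary 4 of section 5} to $F'$ (this is precisely where $\mathcal{G}$-quasiapproximability's condition (3)$'$ enters) to get $E_m'\xrightarrow{a}F'$ with $E_m'\in\mathcal G^{[-B-m+1-i,B-i]}_{mB}\subseteq\mathcal G_{mB}$ and cone in $\mathcal R_{i+m}$, and then lift the element of $H(E_m')$ corresponding to $\phi'\circ\mathcal Y(a)$ through $\widetilde\phi$, which \emph{is} surjective on $\mathcal G_{mB}$ by hypothesis. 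The object $E'$ is then chosen as a term of the approximating system of $\widetilde F$ through which the combined map $(\alpha,f)\colon E\oplus E_m'\to\widetilde F$ factors, and $\widetilde F'$ is the cone of $E_m'\to E'\oplus F'$ (a pushout over the \emph{controlled} object $E_m'$, not over $E'$).

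A second, smaller gap: even granting your $\delta$, you assert $\operatorname{Cone}(\alpha')\in\operatorname{Cone}(\gamma)\star\operatorname{Cone}(\delta)\subseteq\mathcal R_{m-1+i}$ from the octahedron, but in the paper the bound on $\operatorname{Cone}(\alpha')$ does not come for free from star-closure; it requires a deliberate choice of the strong $\mathsf{T}^c$-approximating system for $E'\oplus F'$ (taking its first $m+i$ terms to be constant at $E_m'$ with the map $a$ to $F'$), which identifies $\alpha'$ with a structure map of the induced approximating system for $\widetilde F'$ and hence controls its cone. Your sketch would need an analogous argument, and as written the claimed inclusion of $\operatorname{Cone}(\alpha')$ in $\mathcal R_{m-1+i}$ is not justified.
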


\begin{proof}
    Firstly, as $\mathsf{T}$ is $\mathcal{G}$-quasiapprox and $\mathcal{R}$ lies in the preferred equivalence class of orthogonal metrics (\Cref{Definition preferred equivalence class of orthogonal metrics}), by \Cref{Proposition G-approximability from orthogonal metric in preferred equivalence class}, there exists an integer $A > 0$ such that $\mathcal{G}$, $\mathcal{R}$, and $A$ satisfy the \Cref{Definition of G-quasiapproximability}(1)-(2) and (3$'$).
    
    We apply \Cref{Corollary 4 of section 5} to $F'$ and $m > 0 $ to get a triangle $E_m' \xrightarrow{a} F' \xrightarrow{b} D_m \to \Sigma E_m'$ with $D_m \in \mathcal{R}_{i+m}$ and $E_m' \in \mathcal{G}^{[-B-m+1-i,B-i]}_{mB} \subseteq \mathcal{G}_{mB}$, see \Cref{Notation Subcategories for (pre)-generating sequences}.

    As $E_m' \in \mathsf{T}^c$, by Yoneda lemma, the map $\mathcal{Y}(E_m') \xrightarrow{\mathcal{Y}(f)} \mathcal{Y}(F') \xrightarrow{\phi'} H$ corresponds to an element $x \in H(E_m')$. As $E_m' \in \mathcal{G}_{mB} $, the map 
    \[\widetilde{\phi}(E_m') : \Hom{\mathsf{T}}{E_m'}{\widetilde{F}} = \mathcal{Y}(\widetilde{F})(E_m') \to H(E_m')\]
    is surjective by hypothesis. So, there exists a map $f : E_m' \to \widetilde{F} $ such that $\widetilde{\phi}(f) = x$, which by Yoneda lemma give us the following commutative square, 
    \begin{equation}\label{commutative diagram 1}
        \begin{tikzcd}
	   \mathcal{Y}(E_m') && \mathcal{Y}(\widetilde{F}) \\
	   \mathcal{Y}(F') && H
	   \arrow["\mathcal{Y}(f)", from=1-1, to=1-3]
	   \arrow["\mathcal{Y}(a)"', from=1-1, to=2-1]
	   \arrow["{\phi'}"', from=2-1, to=2-3]
	   \arrow["{\widetilde{\phi}}", from=1-3, to=2-3]
        \end{tikzcd}
    \end{equation}

    As $\widetilde{F} \in \mathsf{T}^c$ it has a strong $\mathsf{T}^c$-approximating system $\widetilde{E}_*$ by \Cref{Lemma on strong approximating sequences}. Again, by \Cref{Lemma on strong approximating sequences}, $\hocolim \widetilde{E}_i \cong \widetilde{F}$. Consider the map $(\alpha, f) : E \oplus E_m' \to \widetilde{F}$. By \cite[Lemma 2.8]{Neeman:1996}, this map factors through $\widetilde{E}_j$ for some $j \geq 1$. We can choose $ \widetilde{E}_j $ with $j \geq m + i $, and define $E' \colonequals \widetilde{E}_j$. So, by above, we get that $(\alpha, f) : E \oplus E_m' \to \widetilde{F}$ is given by the composite,
    \[ E \oplus E_m' \xrightarrow{(\varepsilon, g) } E' \xrightarrow{\gamma} \widetilde{F} \] 
    By construction, $\operatorname{Cone}(\gamma) \in \mathcal{R}_{m+1+i}\subseteq \mathcal{R}_{i+1}$. Further, as $\widetilde{F} \in \mathcal{R}_i$, we get that $E' \in \mathcal{R}_i$, as $E' \in \Sigma^{-1}\operatorname{Cone}(\gamma) \star \widetilde{F}$. Similarly, from the triangle on the morphism $a$, along with the fact that $F', \Sigma^{-1}\operatorname{Cone}(a) = D_m \in \mathcal{R}_i$ give us that $E_m' \in \mathcal{R}_i$. 

    Now, we get the following commutative diagram from \Cref{commutative diagram 1}, using $f = \gamma \circ g$,
    \begin{equation}\label{commutative diagram 2}
        \begin{tikzcd}
	   \mathcal{Y}(E_m') && \mathcal{Y}(E') \\
	   \mathcal{Y}(F') && H
	   \arrow["\mathcal{Y}(g)", from=1-1, to=1-3]
	   \arrow["\mathcal{Y}(a)"', from=1-1, to=2-1]
	   \arrow["{\phi'}"', from=2-1, to=2-3]
	   \arrow["{\varrho = \widetilde{\phi}\circ \mathcal{Y}(\gamma)}", from=1-3, to=2-3]
        \end{tikzcd}
    \end{equation}
    From this diagram, we get a map $ \sigma =  \begin{pmatrix}
        -g \\
        \ \ a
    \end{pmatrix}  : E_m' \to E' \oplus F' $ in $ \mathsf{T}^c \cap \mathcal{R}_i$, and a morphism $( \varrho, \phi' ) : \mathcal{Y}( E' \oplus F' ) \to H $, such that the following composite vanishes,
    \[\begin{tikzcd}
	   \mathcal{Y}(E_m') && \mathcal{Y}(E' \oplus F') && H
	   \arrow["{\mathcal{Y}(\sigma)}", from=1-1, to=1-3]
	   \arrow["{( \varrho, \phi}' )", from=1-3, to=1-5]
    \end{tikzcd}\]

    $E_m'$ is a compact object so it has a trivial strong $\mathsf{T}^c$-approximating system, given by the constant sequence. $E' \oplus F' \in \mathsf{T}^c$, and hence by \Cref{Lemma on strong approximating sequences}(3) it also has a strong $\mathsf{T}^c$-approximating system. For the purposes of this proof, we make a choice of the first few terms of this sequence. We can do this as the proof of \Cref{Lemma on strong approximating sequences}(3) goes via induction. It is clear that $E' \oplus F'_*$ is a strong $\mathsf{T}^c$-approximating system for $E' \oplus F'$ for any strong $\mathsf{T}^c$-approximating system $F'_* $ for $F'$. We can choose the sequence $F'_*$ so that $F_n' = E_m'$ for all $n \leq m+i$ with identity maps, and the map to $F'$ being $a : E_m' \to F' $, as $\operatorname{Cone}(a) = D_m \in \mathcal{R}_{i + m}$. This is our choice of the approximating system for $E' \oplus F'$.

    By \cite[Lemma 6.5]{Neeman:2021b} and \Cref{Remark regarding Neeman Lemma 6.5}, we can complete the map $\sigma$ to a weak triangle in $\mathsf{T}^c$,
    \[\begin{tikzcd}
	E_m' && E' \oplus F'  && \widetilde{F}' && \Sigma E_m'
	\arrow["\sigma", from=1-1, to=1-3]
	\arrow["{(\alpha', \beta)}" , from=1-3, to=1-5]
	\arrow["{\tau}", from=1-5, to=1-7]
    \end{tikzcd}\]
    such that $(\varrho, \phi ') = \widetilde{\phi}' \circ \mathcal{Y}((\alpha', \beta)) $ for a morphism $   \widetilde{\phi}' : \mathcal{Y}(\widetilde{F}') \to H $.

    Recall that the strong $\mathsf{T}^c$-approximating system $\widetilde{F}_*'$ we get for $\widetilde{F}'$ by \Cref{Remark regarding Neeman Lemma 6.5} is given by $\widetilde{F}_n' \colonequals \operatorname{Cone}(E_m' \to E' \oplus F_{n + 1}')$. In particular, for $n = m + i - 1 $, as $F'_{m+i} = E_{m}'$ by the previous paragraph, we get that,
    \[\widetilde{F}_{m + i - 1}' \colonequals \operatorname{Cone}(E_m' \to E' \oplus F_{m +i}' = E' \oplus E_{m}') \cong E'\]
    and as can be easily checked, the map from $E' \oplus F_{m+i}'$ to $\widetilde{F}_{m + i - 1}'$ is exactly the projection map. This gives us that the obvious map $\widetilde{F}_{m +i - 1}' \to \widetilde{F}'$ is the same as $\alpha' : E' \to \widetilde{F}'$, which implies that $\operatorname{Cone}(\alpha') \in \mathcal{R}_{m+i-1} $. Further, as we already know that $E' \in \mathcal{R}_i $, this gives us that $\widetilde{F}'$ is in $\mathsf{T}^c \cap \mathcal{R}_i$.
    
    From \Cref{commutative diagram 2} and the fact that $(\varrho, \phi ')$ factors through $\mathcal{Y}((\alpha', \beta))$, Yoneda Lemma gives us the following commutative square in $ \mathsf{T}^c $,
    \[
        \begin{tikzcd}
	   E_m' && E' \\
	   F' && \widetilde{F}'
	   \arrow["g", from=1-1, to=1-3]
	   \arrow["a"', from=1-1, to=2-1]
	   \arrow["\beta"', from=2-1, to=2-3]
	   \arrow["{\alpha'}", from=1-3, to=2-3]
        \end{tikzcd}
    \]
    Further, we also get the following commutative diagram from above, 

    \[\begin{tikzcd}
	   \mathcal{Y}(E_m') && \mathcal{Y}(E') && \mathcal{Y}(\widetilde{F}) \\
	   \mathcal{Y}(F') && \mathcal{Y}(\widetilde{F}') \\
	   &&&& H
	   \arrow["{\mathcal{Y}(g)}", from=1-1, to=1-3]
	   \arrow["{\mathcal{Y}(a)}", from=1-1, to=2-1]
	   \arrow["{\mathcal{Y}(\beta)}", from=2-1, to=2-3]
	   \arrow["{\mathcal{Y}(\alpha')}", from=1-3, to=2-3]
	   \arrow["{\mathcal{Y}(\gamma)}", from=1-3, to=1-5]
	   \arrow["{\widetilde{\phi}}", from=1-5, to=3-5]
	   \arrow["{\widetilde{\phi}'}", from=2-3, to=3-5]
	   \arrow["{\phi'}", bend right=20, from=2-1, to=3-5]
    \end{tikzcd}\]
    
    which shows (iii) and (iv) completing the proof.

\end{proof}

\begin{lemma}\label{Lemma 5 of Section 6.2}
    Suppose we are in \Cref{Setup}. Further assume that $\mathsf{T}$ is $\mathcal{G}$-quasiapprox (\Cref{Definition of G-quasiapproximability}). Let $H$ be a $\mathcal{G}$-semifinite $\mathsf{T}^c$-cohomological functor (\Cref{Definition finite bounded cohomological functors}). Then, there exists an object $F \in \mathsf{T}^c$ (\Cref{Definition closure of compacts}) and an epimorphism $\mathcal{Y}(F) \to H$.
\end{lemma}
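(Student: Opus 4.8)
The plan is to run an inductive construction that repeatedly splices together the approximations supplied by \Cref{Lemma 3 of Section 6.2} using \Cref{Lemma 4 of Section 6.2}, producing a Cauchy tower inside $\overline{\mathsf{T}^c}$, and then take $F$ to be its homotopy colimit. First, $\mathcal{G}$-quasiapprox implies weak $\mathcal{G}$-quasiapprox by the remark following \Cref{Proposition 3 of section 5}, so \Cref{Lemma 3 of Section 6.2} applies: fix the integer $\widetilde{A}$ it provides, and for each $k\ge 1$ fix an object $F'_k\in\overline{\mathsf{T}^c}\cap\mathcal{R}_{-\widetilde{A}}$ together with a natural transformation $\phi'_k:\mathcal{Y}(F'_k)\to H$ whose restriction to $\mathcal{G}_k$ is an epimorphism. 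Fix also the integer $B$ of \Cref{Lemma 2 of section 5}, which is the constant occurring in \Cref{Lemma 4 of Section 6.2}.

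I will construct, by induction on $n\ge 1$: objects $E_n,\widetilde{F}_n\in\overline{\mathsf{T}^c}\cap\mathcal{R}_{-\widetilde{A}}$; morphisms $\alpha_n:E_n\to\widetilde{F}_n$; morphisms $\widetilde{\phi}_n:\mathcal{Y}(\widetilde{F}_n)\to H$ whose restriction to $\mathcal{G}_{nB}$ is an epimorphism; and, for $n\ge 2$, morphisms $\varepsilon_{n-1}:E_{n-1}\to E_n$ and $\gamma_n:E_n\to\widetilde{F}_{n-1}$ with $\gamma_n\circ\varepsilon_{n-1}=\alpha_{n-1}$, $\operatorname{Cone}(\gamma_n),\operatorname{Cone}(\alpha_n)\in\mathcal{R}_{n-2-\widetilde{A}}$, and $\widetilde{\phi}_n\circ\mathcal{Y}(\alpha_n)=\widetilde{\phi}_{n-1}\circ\mathcal{Y}(\gamma_n)$. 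For the base case take $E_1=0$, $\widetilde{F}_1=F'_B$, $\alpha_1=0$, $\widetilde{\phi}_1=\phi'_B$. For the inductive step one applies \Cref{Lemma 4 of Section 6.2} with $i=-\widetilde{A}$, $m=n$, and input $\widetilde{F}=\widetilde{F}_n$, $F'=F'_{(n+1)B}$, $E=E_n$, $\alpha=\alpha_n$, $\widetilde{\phi}=\widetilde{\phi}_n$, $\phi'=\phi'_{(n+1)B}$ — the hypotheses on the surjectivity ranges $\mathcal{G}_{nB}$ and $\mathcal{G}_{(n+1)B}$ holding by the induction hypothesis and the choice of $F'_{(n+1)B}$ — and reads off $E_{n+1}\colonequals E'$, $\widetilde{F}_{n+1}\colonequals\widetilde{F}'$, $\gamma_{n+1}\colonequals\gamma$, $\alpha_{n+1}\colonequals\alpha'$, $\varepsilon_n\colonequals\varepsilon$, $\widetilde{\phi}_{n+1}\colonequals\widetilde{\phi}'$. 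The conclusions of \Cref{Lemma 4 of Section 6.2}(1)--(2) then supply exactly the data for index $n+1$: the cone bounds land in $\mathcal{R}_{m-1+i}=\mathcal{R}_{(n+1)-2-\widetilde{A}}$, and $\widetilde{\phi}_{n+1}$ restricts to an epimorphism on $\mathcal{G}_{(n+1)B}$ because it factors the epimorphism $\phi'_{(n+1)B}$.

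Applying the octahedral axiom to $E_n\xrightarrow{\varepsilon_n}E_{n+1}\xrightarrow{\gamma_{n+1}}\widetilde{F}_n$, whose composite is $\alpha_n$, shows $\operatorname{Cone}(\varepsilon_n)\in\Sigma^{-1}\operatorname{Cone}(\gamma_{n+1})\star\operatorname{Cone}(\alpha_n)\subseteq\mathcal{R}_{n-2-\widetilde{A}}$ by \Cref{Good Metric}, so $E_1\to E_2\to\cdots$ is a Cauchy sequence. Put $F\colonequals\hocolim E_n$. A telescope estimate of the type used in the proof of \Cref{Lemma on strong approximating sequences}(1) gives $\operatorname{Cone}(E_n\to F)\in\mathcal{R}_{n-3-\widetilde{A}}$, and since $E_n\in\overline{\mathsf{T}^c}\subseteq\mathsf{T}^c\star\mathcal{R}_m$ for every $m$, the triangle $E_n\to F\to\operatorname{Cone}(E_n\to F)\to\Sigma E_n$ together with \Cref{Good Metric}(1) and associativity of $\star$ yields $F\in\mathsf{T}^c\star\mathcal{R}_m$ for every $m$; hence $F\in\overline{\mathsf{T}^c}$. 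Moreover $\mathcal{Y}(F)\cong\colim\mathcal{Y}(E_n)$ by \cite[Lemma 2.8]{Neeman:1996}. The morphisms $\eta_n\colonequals\widetilde{\phi}_n\circ\mathcal{Y}(\gamma_{n+1}):\mathcal{Y}(E_{n+1})\to H$ satisfy $\eta_n\circ\mathcal{Y}(\varepsilon_n)=\widetilde{\phi}_n\circ\mathcal{Y}(\alpha_n)=\widetilde{\phi}_{n-1}\circ\mathcal{Y}(\gamma_n)=\eta_{n-1}$, using the compatibility clause \Cref{Lemma 4 of Section 6.2}(2)(iv), so they glue to $\eta:\mathcal{Y}(F)\to H$. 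To see $\eta$ is an epimorphism it is enough to check pointwise surjectivity on compacts: given $K\in\mathsf{T}^c$, choose $n$ large enough that $K\in\mathcal{G}_{nB}$ (possible since $\bigcup_k\mathcal{G}_k=\mathsf{T}^c$) and $\HomT{K}{\operatorname{Cone}(\gamma_{n+1})}=0$ (possible since $\operatorname{Cone}(\gamma_{n+1})\in\mathcal{R}_{n-1-\widetilde{A}}$ and \Cref{Lemma Compacts orthogonal to the metric}); then $\widetilde{\phi}_n(K)$ is surjective and $\mathcal{Y}(\gamma_{n+1})(K)$ is surjective (from the triangle on $\gamma_{n+1}$), so $\eta_n(K)$ is surjective, and therefore so is $\eta(K)$.

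The main obstacle is keeping the inductive construction consistent: at every stage all of $E_n$, $\widetilde{F}_n$ and the relevant $F'_k$ must lie in the single ball $\mathcal{R}_{-\widetilde{A}}$, the cones of $\gamma_n$ and $\alpha_n$ (hence of the connecting maps $\varepsilon_n$) must shrink as $n\to\infty$, and the epimorphism ranges $\mathcal{G}_{nB}$ must exhaust $\mathsf{T}^c$, all simultaneously, so that \Cref{Lemma 4 of Section 6.2} can be re-applied at each step; the remaining points — the Cauchy/telescope estimate placing $F$ in $\overline{\mathsf{T}^c}$ and the identification $\mathcal{Y}(F)\cong\colim\mathcal{Y}(E_n)$ — are routine.
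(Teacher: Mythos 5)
Your proposal is correct and follows essentially the same route as the paper's proof: obtain the supply of approximants $F'_k$ from \Cref{Lemma 3 of Section 6.2}, splice them inductively via \Cref{Lemma 4 of Section 6.2} into a zig-zag whose connecting maps have shrinking cones, pass to the homotopy colimit, and verify surjectivity pointwise on compacts using \Cref{Lemma Compacts orthogonal to the metric}. The only differences are cosmetic index conventions (your $m=n$ versus the paper's $m=n+3$, the base case $E_1=0$, and defining the comparison maps via $\gamma_{n+1}$ rather than $\alpha_n$, which agree by the commuting square of \Cref{Lemma 4 of Section 6.2}(2)(iv)).
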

\begin{proof}
    As $\mathsf{T}$ is $\mathcal{G}$-quasiapproximable and $\mathcal{R}$ lies in the preferred equivalence class of orthogonal metrics (\Cref{Definition preferred equivalence class of orthogonal metrics}), by \Cref{Proposition G-approximability from orthogonal metric in preferred equivalence class}, there exists an integer $A > 0$ such that $\mathcal{G}$, $\mathcal{R}$, and $A$ satisfy the \Cref{Definition of G-quasiapproximability}(1)-(2) and (3$'$).
    
    By \Cref{Lemma 3 of Section 6.2}, there exists a positive integer $\widetilde{A}$, and for each positive integer $n$, an object $F_n \in \mathsf{T}^c \cap \mathcal{R}_{-\widetilde{A}}$ and a morphism $\phi_n : \mathcal{Y}(F_n) \to H $ which is an epimorphism on $\mathcal{G}_n$. Let $B > 0$ be the integer we get from \Cref{Lemma 2 of section 5}.

    We will use induction to produce in $ \mathsf{T}^c \cap \mathcal{R}_{-\widetilde{A}}$ a sequence,
    \[\begin{tikzcd}
	{E_1} && {E_2} && E_3 && \cdots \\
	& {\widetilde{F}_1} && {\widetilde{F}_2} && {\widetilde{F}_3}
	\arrow["{\varepsilon_2}", from=1-1, to=1-3]
	\arrow["{\alpha_1}"', from=1-1, to=2-2]
	\arrow["{\gamma_2}", from=1-3, to=2-2]
	\arrow["{\varepsilon_3}", from=1-3, to=1-5]
	\arrow["{\alpha_2}"', from=1-3, to=2-4]
	\arrow["{\gamma_3}", from=1-5, to=2-4]
	\arrow["{\alpha_3}"', from=1-5, to=2-6]
	\arrow["{\varepsilon_4}", from=1-5, to=1-7]
        \end{tikzcd}\]

    such that for all $i \geq 1$, $E_i \in \mathsf{T}^c$ and there exist maps $\widetilde{\phi}_i : \mathcal{Y}(\widetilde{F}_i) \to H$ satisfying,
    \begin{enumerate}
        \item $\widetilde{\phi}_i|_{\mathcal{G}_{(i + 3)B}}$ is an epimorphism.
        \item 
        \[\begin{tikzcd}
	       \mathcal{Y}(E_{i + 1}) && \mathcal{Y}(\widetilde{F}_{i + 1}) \\
	       \mathcal{Y}(\widetilde{F}_i) && H
	       \arrow["{\mathcal{Y}(\alpha_{i + 1})}", from=1-1, to=1-3]
	       \arrow["{\mathcal{Y}(\gamma_{i + 1})}", from=1-1, to=2-1]
	       \arrow["{\widetilde{\phi}}_i",from=2-1, to=2-3]
	       \arrow["\widetilde{\phi}_{i+1}",from=1-3, to=2-3]
        \end{tikzcd}\]
        commutes for each $i\geq 1$.
        \item $\operatorname{Cone}(\gamma_i), \operatorname{Cone}(\alpha_i) \in \mathcal{R}_{i + 1 - \widetilde{A}}$
    \end{enumerate}
    We proceed via induction on $n \geq 1$. For $n = 1$, we define $\widetilde{F}_1 = F_{4B}$, and $\widetilde{\phi}_1 = \phi_{4B}$. As $\widetilde{F}_1 \in \mathsf{T}^c$, there exists a triangle $E_1 \xrightarrow{\alpha_1} \widetilde{F}_1 \to D_1 \to \Sigma E_1$ with $E_1 \in \mathsf{T}^c$ and $D_1 \in \mathcal{R}_{1-\widetilde{A}}$. As $\widetilde{F}_1 \in \mathcal{R}_{-\widetilde{A}}$, we get that $E_1 \in \mathsf{T}^c \cap \mathcal{R}_{-\widetilde{A}} $. This proves the result for $n = 1$

    Now assume we know the result for up to a positive integer $n$. In particular, we have a map $\alpha_n : E_n \to \widetilde{F}_n$ in $\mathsf{T}^c \cap \mathcal{R}_{-\widetilde{A}}$, with $E_n \in \mathsf{T}^c$ and $\operatorname{Cone}(\alpha_n) \in \mathcal{R}_{n+1-\widetilde{A}}$, and a map $\widetilde{\phi}_n : \mathcal{Y}(\widetilde{F}_n) \to H$ which is an epimorphism on $\mathcal{G}_{(n + 3)B}$. 
    
    Again, by \Cref{Lemma 3 of Section 6.2}, there exists an object $F_{(n + 4)B}\in \mathsf{T}^c \cap \mathcal{R}_{-\widetilde{A}} $ with a morphism  $\phi_{(n + 4)B} : \mathcal{Y}(F_{(n + 4)B}) \to H $, such that the restriction to $\mathcal{G}_{(n + 4)B}$ is a surjection. By applying \Cref{Lemma 4 of Section 6.2} with $m = n + 3$ and $i = - \widetilde{A}$, we get in $\mathsf{T}^c \cap \mathcal{R}_{ - \widetilde{A}} $ a diagram,

    \[\begin{tikzcd}
	   {E_n} && {E_{n + 1}} && {F_{(n + 4)B}} \\
	   & {\widetilde{F}_n} && {\widetilde{F}_{n + 1}}
	   \arrow["{\alpha_n}"', from=1-1, to=2-2]
	   \arrow["{\gamma_{n + 1}}", from=1-3, to=2-2]
	   \arrow["{\varepsilon_{n + 1}}", from=1-1, to=1-3]
	   \arrow["{\alpha_{n + 1}}", from=1-3, to=2-4]
	   \arrow["\beta"', from=1-5, to=2-4]
    \end{tikzcd}\]
    as well as a map $\widetilde{\phi}_{n + 1} : \mathcal{Y}(\widetilde{F}_{n + 1}) \to H$ satisfying the properties listed in the statement of \Cref{Lemma 4 of Section 6.2}. \Cref{Lemma 4 of Section 6.2}(i) gives us that $E_{n + 1} \in \mathsf{T}^c$. \Cref{Lemma 4 of Section 6.2}(ii) tells us that $\operatorname{Cone}(\gamma_{n+1}), \operatorname{Cone}(\alpha_{n+1}) \in \mathcal{R}_{m-1+i} = \mathcal{R}_{n+2-\widetilde{A}} = \mathcal{R}_{(n+1) + 1 -\widetilde{A}}$. \Cref{Lemma 4 of Section 6.2}(iii) says that $\widetilde{\phi}_{n + 1}|_{\mathcal{G}_{(n + 4)B}}$ is an epimorphism. And finally, \Cref{Lemma 4 of Section 6.2}(iv) tells us that the square in (2) commutes. This completes the induction process.

    So, we have a sequence $E_1 \xrightarrow{\varepsilon_1} E_2 \xrightarrow{\varepsilon_2} E_3 \xrightarrow{\varepsilon_3} \cdots$. We define the map $\psi_i : \mathcal{Y}(E_i) \to H$ to be the composite $\mathcal{Y}(E_i) \xrightarrow{\mathcal{Y}(\alpha_i)} \mathcal{Y}(\widetilde{F}_i) \xrightarrow{\widetilde{\phi}_i} H$. Next, we prove that the following diagram commutes,
    \begin{equation}\label{Diagram 4}
        \begin{tikzcd}
	       &&& {\mathcal{Y}(E_{i + 1})} \\
	       {\mathcal{Y}(E_i)} \\
	       &&& H
	       \arrow["{\mathcal{Y}(\varepsilon_{i + 1})}",from=2-1, to=1-4]
	       \arrow["{\psi_{i + 1}}",from=1-4, to=3-4]
	       \arrow["{\psi_i}",from=2-1, to=3-4]
        \end{tikzcd}
    \end{equation}
    Notice that the commutativity of \Cref{Diagram 4} is the same as the commutativity of the boundary of the diagram \Cref{Diagram 5} below,

    \begin{equation}\label{Diagram 5}
        \begin{tikzcd}
	       && \mathcal{Y}(E_{i + 1}) && \mathcal{Y}(\widetilde{F}_{i + 1}) \\
	       \mathcal{Y}(E_i) && \mathcal{Y}(\widetilde{F}_i) && H
	       \arrow["{\mathcal{Y}(\gamma_{i + 1})}", from=1-3, to=2-3]
	       \arrow["{\mathcal{Y}(\alpha_{i + 1})}", from=1-3, to=1-5]
	       \arrow["{\widetilde{\phi}_i }", from=2-3, to=2-5]
	       \arrow["{\widetilde{\phi}_{i + 1}}",  from=1-5, to=2-5]
	       \arrow["{\mathcal{Y}(\varepsilon_{i + 1})}",from=2-1, to=1-3]
	       \arrow["{\mathcal{Y}(\alpha_i)}",from=2-1, to=2-3]
        \end{tikzcd}
    \end{equation}
    The square in the above diagram commutes by (2), and the triangle commutes as the diagram \Cref{Diagram 6} below commutes,
    
    \begin{equation}\label{Diagram 6}
        \begin{tikzcd}
	       && {E_{i + 1}} && {} \\
	       {E_i} && {\widetilde{F}_i}
	       \arrow["{\gamma_{i + 1}}", from=1-3, to=2-3]
	       \arrow["{\varepsilon_{i + 1}}", from=2-1, to=1-3]
	       \arrow["{\alpha_i}"', from=2-1, to=2-3]
        \end{tikzcd}
    \end{equation}
    
    This gives us the commutativity of the boundary of \Cref{Diagram 5}, and hence the commutativity of \Cref{Diagram 4}. This in turn helps us define a map to $H$ as follows. Let $F \colonequals \hocolim E_i$. We define $\psi : \mathcal{Y}(F) \to H$ as the colimit of $\psi_i$, noting that $\mathcal{Y}(F) = \colim \mathcal{Y}(E_i)$ by \cite[Lemma 2.8]{Neeman:1996}.

    Now applying the octahedral axiom to \Cref{Diagram 6}, we get that $\operatorname{Cone}(\varepsilon_{i + 1}) \in \mathcal{R}_{i+1-\widetilde{A}}$, as $\operatorname{Cone}(\alpha_i),\Sigma^{-1}\operatorname{Cone}(\gamma_{i+1}) \in \mathcal{R}_{i+1-\widetilde{A}}$, which gives us that up to passing to a subsequence, $E_*$ is a strong $\mathsf{T}^c$-approximating system (\Cref{Definition strong G-approximating system}), and so, by \Cref{Lemma on strong approximating sequences}, $F = \hocolim E_i \in \mathsf{T}^c$.

    Finally, it remains to show that $\psi$ is an epimorphism. Let $C \in \mathsf{T}^c$. As $\mathcal{G}$ is a generating sequence (see \Cref{Definition of generating sequence}(1)), we have that $\mathsf{T} = \bigcup_{i \geq 1} \mathcal{G}_i$, see \Cref{Notation Subcategories for (pre)-generating sequences}. So, $C \in \mathcal{G}_{(n + 3)B}$ for some $n > 0$. By \Cref{Lemma Compacts orthogonal to the metric}, we can further choose $n$ such that $\Hom{\mathsf{T}}{C}{\mathcal{R}_{n+1-\widetilde{A}}} = 0$. So, we get that the map $\Hom{\mathsf{T}}{C}{\widetilde{F}_n} \xrightarrow{\widetilde{\phi}_n(C)} H(C) $ is surjective as $\widetilde{\phi}_n|_{\mathcal{G}_{(n + 3)B}}$ is an epimorphism. We know that $\widetilde{D}_n \colonequals \operatorname{Cone}(\alpha_{n}) \in \mathcal{R}_{n+1-\widetilde{A}}$. So, we get the exact sequence,
    \[ \Hom{\mathsf{T}}{C}{E_n} \xrightarrow{\Hom{\mathsf{T}}{C}{\alpha_n}} \Hom{\mathsf{T}}{C}{\widetilde{F}_n} \xrightarrow{}   \Hom{\mathsf{T}}{C}{\widetilde{D}_n} = 0\]
    So, $\psi_n(C) : \Hom{\mathsf{T}}{C}{E_n} \to H(C)$ is surjective as it is a composite of two surjective morphism, namely $\Hom{}{C}{\alpha_n}$ and $\widetilde{\phi}_n(C)$. But, $\psi_n(C)$ factors via $\psi(C) : \Hom{\mathsf{T}}{C}{F} \to H(C)$, that is $\psi_n(C)= \Hom{}{C}{F_n} \to \Hom{}{C}{F} \xrightarrow{\psi(C)} H(C)$, so $\psi(C)$ also has to be surjective. As this is true for any $C \in \mathsf{T}^c$, we are done.

\end{proof}

\begin{corollary}\label{Corollary 7.17}
    In addition to the conditions of \Cref{Setup}, let $\mathsf{T}$ be $\mathcal{G}$-quasiapprox (\Cref{Definition of G-quasiapproximability}). Let $F' \in \mathsf{T}$ be an object such that $H = \mathcal{Y}(F')$ is a $\mathcal{G}$-semifinite $\mathsf{T}^c$-cohomological functor (\Cref{Definition finite bounded cohomological functors}). Then, there exists a triangle $F \xrightarrow{f} F' \xrightarrow{g} D \to \Sigma F$ with $F \in \mathsf{T}^c$ (\Cref{Definition closure of compacts}) and $g$ a phantom map, see \Cref{Definition of a Phantom map}.
\end{corollary}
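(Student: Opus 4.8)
The plan is to extract $F$ together with a surjection $\mathcal{Y}(F)\to\mathcal{Y}(F')$ from \Cref{Lemma 5 of Section 6.2}, realize that surjection by an honest morphism $f\colon F\to F'$, complete $f$ to a triangle, and then observe that the third leg is automatically phantom by a long exact sequence argument.

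First I would apply \Cref{Lemma 5 of Section 6.2} to the $\mathcal{G}$-semifinite $\mathsf{T}^c$-cohomological functor $H=\mathcal{Y}(F')$; this produces an object $F\in\overline{\mathsf{T}^c}$ together with an epimorphism $\psi\colon\mathcal{Y}(F)\to H=\mathcal{Y}(F')$ of $\mathsf{T}^c$-cohomological functors. The object $F$ furnished by \Cref{Lemma 5 of Section 6.2} is the homotopy colimit of a (subsequence of a) strong $\mathsf{T}^c$-approximating system, so by \Cref{Lemma on strong approximating sequences} it carries a strong $\mathsf{T}^c$-approximating system, and hence a $\mathsf{T}^c$-approximating system in the sense of \Cref{Definition of an Approximating Sequence}. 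Since $F$ admits a $\mathsf{T}^c$-approximating system, \cite[Lemma 5.8]{Neeman:2021b} applies to the natural transformation $\psi\colon\mathcal{Y}(F)\to\mathcal{Y}(F')$ and yields a morphism $f\colon F\to F'$ in $\mathsf{T}$ with $\mathcal{Y}(f)=\psi$. I then complete $f$ to a triangle $F\xrightarrow{f}F'\xrightarrow{g}D\to\Sigma F$.

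It remains to check that $g$ is a phantom map. Fix a compact object $K\in\mathsf{T}^c$ and a morphism $a\colon K\to F'$. Applying the homological functor $\HomT{K}{-}$ to the triangle gives an exact sequence
\[ \HomT{K}{F}\xrightarrow{\HomT{K}{f}}\HomT{K}{F'}\xrightarrow{\HomT{K}{g}}\HomT{K}{D}. \]
Because $\psi=\mathcal{Y}(f)$ is an epimorphism of $\mathsf{T}^c$-cohomological functors, its component $\HomT{K}{f}=\psi(K)$ is surjective, so its image is all of $\HomT{K}{F'}$; by exactness this image equals the kernel of $\HomT{K}{g}$, whence $\HomT{K}{g}=0$ and in particular $g\circ a=0$. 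As $K$ and $a$ were arbitrary, $g$ is phantom by \Cref{Definition of a Phantom map}, completing the proof. The only step requiring care is the passage from the natural transformation $\psi$ to the genuine morphism $f$ with $\mathcal{Y}(f)=\psi$: this needs $F$ to come equipped with a $\mathsf{T}^c$-approximating system, which is precisely what \Cref{Lemma 5 of Section 6.2} together with \Cref{Lemma on strong approximating sequences} supply; everything else is a formal manipulation of the exact sequence and the definition of a phantom map, so no serious obstacle arises.
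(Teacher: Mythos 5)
Your proposal is correct and follows essentially the same route as the paper: extract $F\in\overline{\mathsf{T}^c}$ and the epimorphism from \Cref{Lemma 5 of Section 6.2}, realize it as $\mathcal{Y}(f)$ via the strong $\mathsf{T}^c$-approximating system (\Cref{Lemma on strong approximating sequences}) and \cite[Lemma 5.8]{Neeman:2021b}, and read off phantomness from the long exact sequence. No gaps.
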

\begin{proof}
    
    By \Cref{Lemma 5 of Section 6.2}, we get $F \in \mathsf{T}^c$ and an epimorphism $\phi : \mathcal{Y}(F) \to H = \mathcal{Y}(F')$. By \Cref{Lemma on strong approximating sequences}(3), there exists a strong approximating system for $F \in \mathsf{T}^c$. So, by \cite[Lemma 5.8]{Neeman:2021b}, there exists a map $f : F \to F'$ so that $\phi = \mathcal{Y}(f)$. Consider the triangle $F \xrightarrow{f} F' \xrightarrow{g} D \to \Sigma F$ we get from completing $f$ to a triangle. For any object $C \in \mathsf{T}^c$, we get an exact sequence,
    \[\Hom{\mathsf{T}}{C}{F} \xrightarrow{\Hom{\mathsf{T}}{C}{f}} \Hom{\mathsf{T}}{C}{F'} \xrightarrow{\Hom{\mathsf{T}}{C}{g}} \Hom{\mathsf{T}}{C}{D} \]
    As the first map is surjective, we get that the second map is zero, and hence $g$ is a phantom.
    
\end{proof}
Now we state and prove the the main results of this chapter.
\begin{theorem}\label{Main Theorem on Tc-}
    Let $\mathsf{T}$ be a $\mathcal{G}$-quasiapproximable triangulated category (\Cref{Definition of G-quasiapproximability}) for a finite generating sequence $\mathcal{G}$ (\Cref{Definition of generating sequence}). Further, assume that $\mathsf{T}$ is $R$-linear for a commutative noetherian ring $R$, and that $\Hom{\mathsf{T}}{G}{G'}$ is a finitely generated $R$-module for all $G,G'$ in $\bigcup_{n \in \mathbb{Z}} \mathcal{G}^n$. Then, the functor $\mathcal{Y} : \mathsf{T}^c \to $ $\operatorname{Hom}\big([\mathsf{T}^c]^{\operatorname{op}}, \operatorname{Mod}(R) \big)$ taking $F \in \mathsf{T}^c $ (\Cref{Definition closure of compacts}) to $\Hom{\mathsf{T}}{-}{F}|_{\mathsf{T}^c}$, satisfies the following : 
    \begin{enumerate}
        \item The essential image of $\mathcal{Y}$ is exactly the $\mathcal{G}$-semifinite $\mathsf{T}^c$-cohomological functors, see \Cref{Definition finite bounded cohomological functors}.
        \item $\mathcal{Y}$ is full.
    \end{enumerate}
\end{theorem}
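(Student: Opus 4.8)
The plan is to reduce to the situation of \Cref{Setup} and then assemble the lemmas of Sections~5 and~6. Since $\mathsf{T}$ is $\mathcal{G}$-quasiapproximable and the orthogonal metric $\mathcal{R}^{\mathcal{G}}$ lies in the preferred $\mathbb{N}$-equivalence class, \Cref{Proposition G-approximability from orthogonal metric in preferred equivalence class} gives an integer $A>0$ for which $(\mathsf{T},\mathcal{R}^{\mathcal{G}},A)$ is again $\mathcal{G}$-quasiapprox; together with the hypotheses that $R$ is noetherian and $\Hom{\mathsf{T}}{G}{G'}$ is finitely generated for $G,G'\in\bigcup_n\mathcal{G}^n$, this is precisely \Cref{Setup} with $\mathcal{R}=\mathcal{R}^{\mathcal{G}}$. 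From now on $\overline{\mathsf{T}^c}$ (what the statement denotes $\mathsf{T}^c$), strong approximating systems, and so on are all taken relative to this metric. We record one elementary fact used throughout: because $R$ is noetherian and the vanishing condition $H(\mathcal{G}^i)=0$ for $i\gg 0$ passes to subobjects and quotients, $\mathcal{G}$-semifiniteness of a $\mathsf{T}^c$-cohomological functor (\Cref{Definition finite bounded cohomological functors}) is inherited by subfunctors, quotients, and extensions.

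\textbf{The two easy assertions.} The inclusion of the essential image of $\mathcal{Y}$ into the $\mathcal{G}$-semifinite functors is exactly \Cref{Lemma Closure of compacts are finite right-bounded}. Fullness is also quick: given $F,F'\in\overline{\mathsf{T}^c}$ and a natural transformation $\eta\colon\mathcal{Y}(F)\to\mathcal{Y}(F')$ of functors on $\mathsf{T}^c$, the object $F$ carries a strong $\mathsf{T}^c$-approximating system by \Cref{Lemma on strong approximating sequences}(3), hence a $\mathsf{T}^c$-approximating system in the sense of \Cref{Definition of an Approximating Sequence}, and \cite[Lemma~5.8]{Neeman:2021b} then produces $\theta\colon F\to F'$ with $\mathcal{Y}(\theta)=\eta$.

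\textbf{The main point: surjectivity onto the $\mathcal{G}$-semifinite functors.} Let $H$ be a $\mathcal{G}$-semifinite $\mathsf{T}^c$-cohomological functor. By \Cref{Proposition 3 of Section 6.2} we may fix $F_0\in\mathsf{T}$ with $\mathcal{Y}(F_0)\cong H$, so it is enough to find $F\in\overline{\mathsf{T}^c}$ with $\mathcal{Y}(F)\cong\mathcal{Y}(F_0)$. Applying \Cref{Corollary 7.17} to $F_0$ gives a triangle $F^{(1)}\to F_0\xrightarrow{\,g_1\,}D^{(1)}\to\Sigma F^{(1)}$ with $F^{(1)}\in\overline{\mathsf{T}^c}$ and $g_1$ phantom; since $\mathcal{Y}(g_1)=0$, the long exact sequence of restricted Yoneda functors shows that $\mathcal{Y}(F^{(1)})\to\mathcal{Y}(F_0)$ is an epimorphism whose kernel is $\mathcal{Y}(\Sigma^{-1}D^{(1)})$, and this kernel is again $\mathcal{G}$-semifinite (a subfunctor of the $\mathcal{G}$-semifinite functor $\mathcal{Y}(F^{(1)})$, by \Cref{Lemma Closure of compacts are finite right-bounded} and the remark above) and still representable by an honest object of $\mathsf{T}$. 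Iterating \Cref{Corollary 7.17} on the successive objects $\Sigma^{-1}D^{(n)}$, and interweaving this with the \say{enlarge, then correct} construction underlying \Cref{Lemma 3 of Section 6.2}, \Cref{Lemma 4 of Section 6.2}, and \Cref{Lemma 5 of Section 6.2} --- at stage $n$ one uses \Cref{Lemma 5 of Section 6.2} to make the map onto $H$ surjective on $\mathcal{G}_n$ and then annihilates the overshoot produced at the previous stage, the accumulated error landing in $\mathcal{R}_n$ --- one builds a Cauchy sequence $E_1\to E_2\to\cdots$ of compact objects with compatible maps $\mathcal{Y}(E_i)\to H$ whose colimit is an isomorphism: nothing spurious survives because $\bigcap_n\mathcal{R}_n=0$ by \Cref{Lemma Intersection of the metric is zero}(1), and $H$ is exhausted because $\bigcup_n\mathcal{G}_n=\mathsf{T}^c$. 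Setting $F:=\hocolim E_i$, \Cref{Lemma on strong approximating sequences}(1) gives $F\in\overline{\mathsf{T}^c}$, while $\mathcal{Y}(F)=\colim\mathcal{Y}(E_i)\cong H$ by \cite[Lemma~2.8]{Neeman:1996}.

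\textbf{The main obstacle.} The genuinely hard step is this last convergence argument: organising the interleaved partial resolutions of $H$ so that in the colimit one recovers $H$ on the nose, with neither missing nor spurious homology. This is exactly where $\mathcal{G}$-quasiapproximability is indispensable --- it is what forces the stage-$n$ error terms into $\mathcal{R}_n$ --- and where the noetherian hypothesis on $R$ is used repeatedly, keeping each successive kernel $\mathcal{G}$-semifinite and hence amenable to the same lemmas. Everything else in the proof is a bookkeeping assembly of results already established in Sections~5 and~6.
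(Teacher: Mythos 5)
Your setup, the easy inclusion of the essential image into the $\mathcal{G}$-semifinite functors, and the fullness argument via strong approximating systems and \cite[Lemma 5.8]{Neeman:2021b} all match the paper. The gap is in the surjectivity step. You correctly invoke \Cref{Proposition 3 of Section 6.2} to get $F_0$ with $\mathcal{Y}(F_0)\cong H$ and \Cref{Corollary 7.17} to get a phantom triangle, but your proposed mechanism for finishing --- ``build a Cauchy sequence $E_1\to E_2\to\cdots$ of compacts with compatible maps to $H$ whose colimit is an isomorphism, because $\bigcap_n\mathcal{R}_n=0$ and $\bigcup_n\mathcal{G}_n=\mathsf{T}^c$'' --- does not work and is not what the paper does. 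The condition $\bigcap_n\mathcal{R}_n=0$ (\Cref{Lemma Intersection of the metric is zero}) controls objects, not kernels of natural transformations: the construction of \Cref{Lemma 5 of Section 6.2} only ever delivers an \emph{epimorphism} $\mathcal{Y}(F)\to H$ with $F\in\overline{\mathsf{T}^c}$, and there is no metric control on the kernel of that epimorphism that would let you shrink it away by refining the sequence. This is precisely the convergence problem you flag as ``the main obstacle,'' but naming it is not the same as resolving it.

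The paper's actual resolution is a finite, non-limiting argument that you are missing entirely. \Cref{Proposition 3 of Section 6.2} gives not just some $F$ representing $H$ but $F\in\overline{\langle\mathcal{K}\rangle}_4$, where $\mathcal{K}$ is the coproduct of all compacts --- this quantitative bound is the whole point of that proposition. One then shows by induction, iterating \Cref{Corollary 7.17} on the successive cones $D_n$ and \emph{composing} the phantom maps, that for every $n$ there is a triangle $F_n\to F\xrightarrow{\beta_n}D_n\to\Sigma F_n$ with $F_n\in\overline{\mathsf{T}^c}$ and $\beta_n\in\mathcal{I}^n$, the $n$-th power of the phantom ideal (\Cref{Definition of a Phantom map}). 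Taking $n=4$, Christensen's theorem (\Cref{Christensen 1.1}) says $\bigl(\overline{\langle\mathcal{K}\rangle}_4,\mathcal{I}^4\bigr)$ is a projective class, so the map $\beta_4\colon F\to D_4$, being in $\mathcal{I}^4$ with source in $\overline{\langle\mathcal{K}\rangle}_4$, must vanish; hence $F$ is a direct summand of $F_4\in\overline{\mathsf{T}^c}$, and thickness of $\overline{\mathsf{T}^c}$ (\Cref{Proposition Closure of compacts is thick}) gives $F\in\overline{\mathsf{T}^c}$. None of these ingredients --- the membership $F\in\overline{\langle\mathcal{K}\rangle}_4$, the powers $\mathcal{I}^n$ of the phantom ideal, the projective class theorem, or the thickness of $\overline{\mathsf{T}^c}$ --- appears in your proposal, and without them the argument does not close.
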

\begin{proof}
    
    We already know that $\mathcal{Y}(F)$ is a $\mathcal{G}$-semifinite $\mathsf{T}^c$-cohomological functor for any $F \in \mathsf{T}^c$ by \Cref{Lemma 1 of Section 6.2}. By \Cref{Lemma on strong approximating sequences}(3), any $F \in \mathsf{T}^c$ admits a strong $\mathsf{T}^c$-approximating system (\Cref{Definition strong G-approximating system}), and hence by \cite[Lemma 5.8]{Neeman:2021b}, the functor $\mathcal{Y}$ is full on $\mathsf{T}^c$.

    Now, let $H$ be any $\mathcal{G}$-semifinite $\mathsf{T}^c$-cohomological functor. By \Cref{Proposition 3 of Section 6.2}, there exists $F \in \overline{\langle \mathcal{K} \rangle}_4$ such that $H \cong \mathcal{Y}(F)$. Recall that $\mathcal{K} = \bigoplus_{K \in \mathsf{T}^c} K$, where the coproduct is over the equivalence class of compact objects, and hence a set-indexed coproduct. It remains to show that $F \in \mathsf{T}^c$. 
    
    Denote the ideal of phantom maps by $\mathcal{I}$ (\Cref{Definition of a Phantom map}). We use induction to prove that for any $n \geq 1$, there exists a triangle $F_n \to F \xrightarrow{\beta_n} D_n \to \Sigma F_n$ with $F_n \in \mathsf{T}^c $ and $\beta_n \in \mathcal{I}^n$. $n = 1 $ is given by \Cref{Corollary 7.17}. So, assume now that we know the result up to a positive integer $n$, i.e. a triangle $F_n \to F \xrightarrow{\beta_n} D_n \to \Sigma F_n$ with $F_n \in \mathsf{T}^c$ and $\beta_n \in \mathcal{I}^n$. Consider the exact sequence,
    \[ \mathcal{Y}(\Sigma^{-1}F)\xrightarrow{\mathcal{Y}(\Sigma^{-1}\beta_n) \ = \ 0 \ } \mathcal{Y}(\Sigma^{-1}D_n)\xrightarrow{}\mathcal{Y}(F_n)\xrightarrow{}\mathcal{Y}(F)\xrightarrow{\mathcal{Y}(\beta_n) \ = \ 0 \  }\mathcal{Y}(D_n)\]
    As $\mathcal{Y}(F_n)$ and $\mathcal{Y}(F)$ are both $\mathcal{G}$-semifinite $\mathsf{T}^c$-cohomological functors, the above exact sequence tells us that so is $\mathcal{Y}(D_n)$. So, by \Cref{Corollary 7.17}, we have a triangle $F' \to D_n \xrightarrow{\gamma} D_{n + 1} \to \Sigma F'$ with $F' \in \mathsf{T}^c$ and $\gamma \in \mathcal{I}$. Let $\beta_{n + 1 } \colonequals \gamma \circ \beta_{n} $. Note that $\beta_{n + 1 } \in \mathcal{I}^{n + 1 } $. Consider the triangle $ F_{n + 1} \to F \xrightarrow{\beta_{n + 1}} D_{n + 1} \to \Sigma F_{n + 1}$ we get from $\beta_{n + 1}$. Applying the octahedral axiom to $ \gamma \circ \beta_{n } $, we get a triangle $F_n \to F_{n + 1 } \to F' \to \Sigma F_n $. By \Cref{Proposition Closure of compacts is triangulated} we get that $F_{n + 1} \in \mathsf{T}^c$ as $F_n, F' \in \mathsf{T}^c$, which completes the induction argument.

    Now, consider the triangle $F_4 \to F \to D_4 \to \Sigma F_4$. Note that $F \to D_4 \in \mathcal{I}^4 $ and $F \in \overline{\langle \mathcal{K} \rangle}_4 $. $(\overline{\langle \mathcal{K} \rangle}_1, \mathcal{I})$ forms a projective class, see \Cref{Definition of a projective class}. And so, by \cite[Theorem 1.1]{Christensen:1998}, so does $(\overline{\langle \mathcal{K} \rangle}_4, \mathcal{I}^4 ) $. This gives us that $F \to D_4 $ vanishes, and hence $F$ is a direct summand of $F_4 \in \mathsf{T}^c$. As $ \mathsf{T}^c $ is thick by \Cref{Proposition Closure of compacts is thick}, we get $F \in \mathsf{T}^c$, and we are done.

\end{proof}

\begin{lemma}\label{Lemma 6 of Section 6.2}
    With the same assumptions as \Cref{Main Theorem on Tc-}, let $ f : F \to F' $ be a morphism in $\mathsf{T}^c$, with $F' \in T^b_c$, see \Cref{Definition closure of compacts}. Then $\mathcal{Y}(f) = 0 $ implies $f = 0$.
\end{lemma}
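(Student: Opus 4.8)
The plan is to exploit the two defining properties in play: $F$ lies in the closure of the compacts, so it is an extension of a compact object by an object lying in an arbitrarily small ball of the metric; while $F'$ lies in $\mathsf{T}^b_c$, so it is right-orthogonal to all sufficiently small balls. Matching these up detects $\HomT{F}{F'}$ on a single compact object, where a phantom map must vanish.

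Concretely, I would first unwind what $F'\in\mathsf{T}^b_c$ gives. By \Cref{Definition closure of compacts} we have $\mathsf{T}^b_c=\overline{\mathsf{T}^c}\cap(\mathcal{M}^{\mathcal{G}})^{\perp}$, and since $\mathcal{R}$ lies in the preferred $\mathbb{N}$-equivalence class of orthogonal metrics (\Cref{Definition preferred equivalence class of orthogonal metrics}) it is equivalent to $\mathcal{R}^{\mathcal{G}}$, so $(\mathcal{M}^{\mathcal{G}})^{\perp}=(\mathcal{R}^{\mathcal{G}})^{\perp}=\bigcup_{n\in\mathbb{Z}}\mathcal{R}_n^{\perp}$, all orthogonals taken in $\mathsf{T}$. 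Hence there is an integer $m$ with $\HomT{\mathcal{R}_m}{F'}=0$. Next, since $F\in\overline{\mathsf{T}^c}=\bigcap_{n\in\mathbb{Z}}\mathsf{T}^c\star\mathcal{R}_n$, applying this with $n=m$ produces a triangle $E\xrightarrow{a}F\xrightarrow{p}D\to\Sigma E$ with $E\in\mathsf{T}^c$ and $D\in\mathcal{R}_m$.

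Then I would apply $\HomT{-}{F'}$ to this triangle. Exactness gives
\[
\HomT{D}{F'}\xrightarrow{\ p^{*}\ }\HomT{F}{F'}\xrightarrow{\ a^{*}\ }\HomT{E}{F'},
\]
and the leftmost group vanishes because $D\in\mathcal{R}_m$; therefore $a^{*}$, which is precomposition with $a\colon E\to F$, is injective. Finally, $E$ is compact, so the hypothesis $\mathcal{Y}(f)=0$ says precisely that $a^{*}(f)=f\circ a=0$ in $\HomT{E}{F'}$, and injectivity of $a^{*}$ forces $f=0$.

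The argument has no genuine obstacle; the only point that needs a moment's care is the identification of the orthogonality satisfied by objects of $\mathsf{T}^b_c$ — that being in $(\mathcal{M}^{\mathcal{G}})^{\perp}$ is the same as being annihilated by $\HomT{\mathcal{R}_m}{-}$ for some $m$, which is where membership of $\mathcal{R}$ in the preferred $\mathbb{N}$-equivalence class is used — together with the observation that one only needs injectivity of $a^{*}$, so a single ball $\mathcal{R}_m$ suffices and no bookkeeping with desuspensions is required.
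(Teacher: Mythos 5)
Your proof is correct and follows essentially the same route as the paper's: both use that $F'\in\mathsf{T}^b_c$ is right-orthogonal to some ball of the metric, decompose $F\in\overline{\mathsf{T}^c}$ as an extension of a compact $E$ by an object $D$ in that ball, and conclude from $fa=0$ that $f$ factors through $D$ and hence vanishes. The only cosmetic difference is that the paper works directly with $\mathcal{R}^{\mathcal{G}}$ while you transfer the orthogonality to $\mathcal{R}$ via the preferred $\mathbb{N}$-equivalence class, which is a valid (and explicitly justified) substitution.
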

\begin{proof}
    As $F' \in \mathsf{T}^b_c$, there exists an integer $n$ such that $F' \in (\mathcal{R}^{\mathcal{G}}_n)^{\perp} = (\mathcal{M}^{\mathcal{G}}_n)^{\perp}$, see \Cref{Definition of generating sequence}. As $F \in \mathsf{T}^c $, we have a triangle $E \xrightarrow{g} F \xrightarrow{h} D \to  \Sigma E$ with $E \in \mathsf{T}^c $ and $D \in \mathcal{R}^{\mathcal{G}}_{n}$. As $\mathcal{Y}(f) = 0 $, we get that $fg = 0 $, which tells us that $f : F \to F' $ factors as $F \xrightarrow{h} D \to F' $. As $D \in \mathcal{R}^{\mathcal{G}}_n $ and $F' \in (\mathcal{R}^{\mathcal{G}}_n)^{\perp} $, $(D \to F' ) = 0 $, and hence so is $f$, and we are done.
    
\end{proof}

\begin{theorem}\label{Main Theorem on Tbc}
    Let $\mathsf{T}$ be a $\mathcal{G}$-quasiapproximable triangulated category (\Cref{Definition of G-quasiapproximability}) for a finite generating sequence $\mathcal{G}$ (\Cref{Definition of generating sequence}). Further, assume that $\mathsf{T}$ is $R$-linear for a commutative noetherian ring $R$, and that $\Hom{\mathsf{T}}{G}{G'}$ is a finitely generated $R$-module for all $G,G'$ in $\bigcup_{n \in \mathbb{Z}} \mathcal{G}^n$. Then, the functor $\mathcal{Y}|_{\mathsf{T}^b_c} : \mathsf{T}^b_c \to $ $\operatorname{Hom}\big([\mathsf{T}^c]^{\operatorname{op}}, \operatorname{Mod}(R) \big)$ taking $F \in \mathsf{T}^b_c $ (\Cref{Definition closure of compacts}) to $\Hom{\mathsf{T}}{-}{F}|_{\mathsf{T}^c}$, satisfies the following : 
    \begin{enumerate}
        \item The essential image of $\mathcal{Y}|_{\mathsf{T}^b_c}$ is exactly the $\mathcal{G}$-finite $\mathsf{T}^c$-cohomological functors, see \Cref{Definition finite bounded cohomological functors}.
        \item $\mathcal{Y}|_{\mathsf{T}^b_c}$ is fully faithful.
    \end{enumerate}
\end{theorem}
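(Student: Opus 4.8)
The plan is to deduce the theorem from \Cref{Main Theorem on Tc-}, \Cref{Lemma Closure of compacts are finite right-bounded} and \Cref{Lemma 6 of Section 6.2}, the point being that, on the level of representing functors $H=\mathcal{Y}(F)$, the condition $F\in\mathsf{T}^b_c=\overline{\mathsf{T}^c}\cap(\mathcal{M}^{\mathcal{G}})^{\perp}$ corresponds exactly to the extra vanishing $H(\mathcal{G}^i)=0$ for $i\ll 0$. Throughout one may pass freely between $\mathcal{R}$ and the equivalent metric $\mathcal{R}^{\mathcal{G}}$, since $\overline{\mathsf{T}^c}$, $(\mathcal{M}^{\mathcal{G}})^{\perp}$ and $\mathsf{T}^b_c$ are unchanged under equivalence of metrics (\Cref{Definition closure of compacts}), and use the identity $(\mathcal{M}^{\mathcal{G}})^{\perp}=(\mathcal{R}^{\mathcal{G}})^{\perp}$.

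First I would prove part (1). For the forward inclusion, take $F\in\mathsf{T}^b_c$. By \Cref{Lemma Closure of compacts are finite right-bounded}, $\mathcal{Y}(F)$ is $\mathcal{G}$-semifinite, so it takes finitely generated values on all of $\mathsf{T}^c$ and $\mathcal{Y}(F)(\mathcal{G}^i)=0$ for $i\gg 0$. Since $F\in(\mathcal{M}^{\mathcal{G}})^{\perp}$, there is an integer $n$ with $F\in(\mathcal{M}^{\mathcal{G}}_n)^{\perp}$, and as $\mathcal{M}^{\mathcal{G}}_n=\mathcal{G}^{(-\infty,-n]}\supseteq\mathcal{G}^i$ for all $i\le -n$ (\Cref{Definition of generating sequence}(2)), we get $\HomT{\mathcal{G}^i}{F}=0$ for $i\le -n$. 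Combining the two vanishing ranges gives $\mathcal{Y}(F)(\mathcal{G}^i)=0$ for $|i|\gg 0$, so $\mathcal{Y}(F)$ is $\mathcal{G}$-finite. For the reverse inclusion, let $H$ be a $\mathcal{G}$-finite $\mathsf{T}^c$-cohomological functor; in particular $H$ is $\mathcal{G}$-semifinite, so \Cref{Main Theorem on Tc-}(1) produces $F\in\overline{\mathsf{T}^c}$ with $\mathcal{Y}(F)\cong H$. Since $H(\mathcal{G}^i)=0$ for $|i|\gg 0$, there is $n$ with $\HomT{\mathcal{G}^i}{F}=0$ for all $i\le -n$. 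The full subcategory of objects right-orthogonal to $F$ is closed under coproducts, extensions and direct summands, hence contains the subcategory generated by $\bigcup_{i\le -n}\mathcal{G}^i$ under these operations, namely $\mathcal{M}^{\mathcal{G}}_n$. Therefore $F\in(\mathcal{M}^{\mathcal{G}}_n)^{\perp}\subseteq(\mathcal{M}^{\mathcal{G}})^{\perp}$, so $F\in\overline{\mathsf{T}^c}\cap(\mathcal{M}^{\mathcal{G}})^{\perp}=\mathsf{T}^b_c$. This identifies the essential image with the $\mathcal{G}$-finite $\mathsf{T}^c$-cohomological functors.

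For part (2), fullness is inherited: $\mathsf{T}^b_c$ is a full subcategory of $\overline{\mathsf{T}^c}$ and $\mathcal{Y}$ is full on $\overline{\mathsf{T}^c}$ by \Cref{Main Theorem on Tc-}(2), so its restriction $\mathcal{Y}|_{\mathsf{T}^b_c}$ is full. Faithfulness is precisely \Cref{Lemma 6 of Section 6.2}: for $f\colon F\to F'$ with $F,F'\in\mathsf{T}^b_c\subseteq\overline{\mathsf{T}^c}$ and $\mathcal{Y}(f)=0$ one concludes $f=0$. Hence $\mathcal{Y}|_{\mathsf{T}^b_c}$ is fully faithful.

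There is no genuine obstacle here; the statement is essentially a corollary of \Cref{Main Theorem on Tc-} and \Cref{Lemma 6 of Section 6.2}, where the substantive work (producing a representing object in $\overline{\mathsf{T}^c}$, and the phantom-map/projective-class argument placing it inside $\overline{\mathsf{T}^c}$) has already been done. The only step needing a short argument is the translation between the two-sided vanishing of a $\mathcal{G}$-finite functor on the strata $\mathcal{G}^i$ and membership in $(\mathcal{M}^{\mathcal{G}})^{\perp}$; this uses nothing beyond the closure of right-orthogonals under coproducts, extensions and summands, together with the defining relation $\mathcal{M}^{\mathcal{G}}_n=\mathcal{G}^{(-\infty,-n]}$ and \Cref{Lemma Closure of compacts are finite right-bounded} for the finiteness of values.
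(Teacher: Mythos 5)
Your proof is correct and follows essentially the same route as the paper's: fullness and faithfulness are obtained from \Cref{Main Theorem on Tc-}(2) and \Cref{Lemma 6 of Section 6.2}, and the essential image is identified by translating membership in $(\mathcal{M}^{\mathcal{G}})^{\perp}$ into the two-sided vanishing $H(\mathcal{G}^i)=0$ for $|i|\gg 0$ exactly as in the paper. The only blemish is terminological: the subcategory $\{X : \HomT{X}{F}=0\}$ you invoke is the \emph{left} orthogonal ${}^{\perp}\{F\}$, not the right orthogonal, though the closure properties you use for it are the correct ones.
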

\begin{proof}

    It just remains to identify the essential image of $\mathsf{T}^b_c $ under $\mathcal{Y}$, as \Cref{Main Theorem on Tc-} and \Cref{Lemma 6 of Section 6.2} show that $\mathcal{Y}|_{\mathsf{T}^b_c}$ is fully faithful.

    Let $F \in \mathsf{T}^b_c \subseteq\mathsf{T}^c$, see \Cref{Definition closure of compacts}. By \Cref{Lemma Closure of compacts are finite right-bounded}, we already know that $\mathcal{Y}(F)$ is a $\mathcal{G}$-semifinite $\mathsf{T}^c$-cohomological functor (\Cref{Definition finite bounded cohomological functors}). By \Cref{Definition closure of compacts}, $\mathsf{T}^b_c = \mathsf{T}^c \cap \big(\mathcal{M}^{\mathcal{G}}\big)^{\perp}$ see \Cref{Remark on choice of metric for computing Tc and Tbc}. And so, $\HomT{\mathcal{M}^{\mathcal{G}}_i}{F}= 0$ for $i > > 0$. Therefore, $\HomT{\mathcal{G}^{-i}}{F} = 0$ for $i >> 0$ as $\mathcal{G}^i \subseteq \mathcal{M}^{\mathcal{G}}_i$ for all $i \in \mathbb{Z}$, see \Cref{Definition closure of compacts}. This gives us that $\mathcal{Y}(F)$ is a bounded $\mathsf{T}^c$-cohomological functor for any $F \in \mathsf{T}^b_c $. 
    
    Conversely, let $H$ be a $\mathcal{G}$-finite $\mathsf{T}^c$-cohomological functor. As this implies that $H$ is a $\mathcal{G}$-semifinite $\mathsf{T}^c$-cohomological functor, by \Cref{Main Theorem on Tc-}(1) there exists $F \in \mathsf{T}^c$ such that $H \cong \mathcal{Y}(F)$. As $H$ is a $\mathcal{G}$-finite $\mathsf{T}^c$-cohomological functor, there exists a positive integer $n$ such that $H(\mathcal{G}^{-i}) = 0$  for all $i \geq n $. And so, as $H \cong \mathcal{Y}(F) $, we get that $\Hom{\mathsf{T}}{\mathcal{G}^{-i}}{F} = 0 $ for all $i \geq n $. So, $F \in (\mathcal{M}^{\mathcal{G}}_n)^{\perp} $, see \Cref{Definition closure of compacts}. As $F \in \mathsf{T}^c$ already, we have shown that $F \in \mathsf{T}^b_c = \mathsf{T}^c \cap \big(\mathcal{M}^{\mathcal{G}}\big)^{\perp} $, and we are done.
\end{proof}

\section{Examples of approximability}
In this section we discuss some examples of (weakly) $\mathcal{G}$-approximable and (weakly) $\mathcal{G}$-quasiapproximable triangulated categories, see \Cref{Definition of G-approximability} and \Cref{Definition of G-quasiapproximability}. We first focus on an abstract class of examples, and later, we give more concrete examples.

We have already seen in \Cref{Theorem Approximability and G-approximability} that approximable triangulated categories are examples of $\mathcal{G}$-approximable triangulated categories. We begin by introducing another class of examples of $\mathcal{G}$-approximable triangulated categories, namely co-approximable triangulated categories. In this setting, co-t-structures will play the role t-structures do for approximability.

We begin with some definitions.
\begin{definition}
    Two co-t-structures $(\mathsf{U}_1,\mathsf{V}_1)$ and $(\mathsf{U}_2,\mathsf{V}_2)$ on a triangulated category $\mathsf{T}$ are said to be equivalent if there exists a positive integer $A$ such that,
    \[ \Sigma^{- A}\mathsf{U}_1 \subseteq \mathsf{U}_2 \subseteq \Sigma^{A}\mathsf{U}_1 \]
    Equivalently, by taking orthogonals and shifting, 
    \[ \Sigma^{ A}\mathsf{V}_1 \subseteq \mathsf{V}_2 \subseteq \Sigma^{- A}\mathsf{V}_1 \]
    It is easy to verify that this indeed gives an equivalence relation on the collection of co-t-structures on $\mathsf{T}$.
\end{definition}
Recall that we can construct a co-t-structure on a triangulated category with coproducts from a given set of compact objects, see \Cref{Theorem Compactly generated co-t-structures}. We will in particular be interested in co-t-structures generated by a single compact object. 
\begin{definition}\label{Definition of compactly generated co-t-structure}
    Let $\mathsf{T}$ be a triangulated category with all small coproducts. Then for any $H \in \mathsf{T}^c$, we define $\mathsf{V}_H \colonequals \{\Sigma^{-n}H : n \geq 1\}^{\perp}$. Then, the \emph{co-t-structure generated by $H$} is defined to be $(\mathsf{U}_H, \mathsf{V}_H)$, where $\mathsf{U}_H \colonequals \ ^{\perp}(\Sigma\mathsf{V}_H)$. It is a co-t-structure by \Cref{Theorem Compactly generated co-t-structures}.
\end{definition}

\begin{remark}\label{Remark 1 on co-approx and G-approx}
    Let $\mathsf{T}$ be a triangulated category with a single compact generator $G$. We observe that we are in the setting of \Cref{Section Generating sequence and orthogonal metrics} by considering the finite generating sequence (\Cref{Definition of generating sequence}) $\mathcal{G}$ given by $\mathcal{G}^n \colonequals \{\Sigma^n G\}$. The metric $\mathcal{R}^{\mathcal{G}}$, see \Cref{Definition of generating sequence}, is then given by $\mathcal{R}^{\mathcal{G}}_n = \Sigma^{-n}\mathsf{U}_G$ where $\mathsf{U}_G$ is as defined in \Cref{Definition of compactly generated co-t-structure}.
\end{remark}
\begin{lemma}\label{Lemma eq. class co-t-structures}
    Let $G_1, G_2 \in \mathsf{T}$ be two compact generators of a triangulated category $\mathsf{T}$. Then, the co-t-structures $(\mathsf{U}_{G_1}, \mathsf{V}_{G_1})$ and $(\mathsf{U}_{G_2}, \mathsf{V}_{G_2})$ these generate are equivalent.
\end{lemma}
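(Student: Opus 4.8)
The plan is to show that each of the two co-t-structures is squeezed between suitable shifts of the other, which is exactly what the definition of equivalence demands. First I would recall that since $G_1$ and $G_2$ are both compact generators of $\mathsf{T}$, we have $\langle G_1 \rangle = \mathsf{T}^c = \langle G_2 \rangle$, hence there is an integer $A \geq 0$ with $G_1 \in \langle G_2 \rangle^{[-A,A]}$ and $G_2 \in \langle G_1 \rangle^{[-A,A]}$. This is the only "input" from the hypothesis, and it is the same observation used in \Cref{Examples of N-equivalent generating sequences}.

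Next I would translate this into a containment of the aisles $\mathsf{U}_{G_i} = {}^{\perp}(\Sigma \mathsf{V}_{G_i})$, where $\mathsf{V}_{G_i} = \{\Sigma^{-n}G_i : n \geq 1\}^{\perp}$. The cleanest route is to work with the coaisles $\mathsf{V}_{G_i}$ directly: if $G_1 \in \langle G_2 \rangle^{[-A,A]}$, then $\Sigma^{-n}G_1 \in \langle \Sigma^{-n}G_2 \rangle^{[-A,A]} \subseteq \langle \{\Sigma^{-m}G_2 : m \geq n-A\}\rangle$ for $n \geq 1$, and since $\mathsf{V}_{G_2}$ is closed under the operations generating the right-hand side (it is closed under extensions, summands, suspensions in the appropriate range by the co-t-structure axioms — specifically $\Sigma \mathsf{T}^{\leq 0} \subseteq \mathsf{T}^{\leq 0}$ translates to a one-sided stability of $\mathsf{V}$), one deduces $\Sigma^{A}\mathsf{V}_{G_2} \subseteq \mathsf{V}_{G_1}$ after bookkeeping the shifts, hence by symmetry $\Sigma^{-A}\mathsf{V}_{G_1} \subseteq \mathsf{V}_{G_2} \subseteq \Sigma^{A}\mathsf{V}_{G_1}$. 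Passing to left orthogonals of $\Sigma$-shifts then yields $\Sigma^{-A}\mathsf{U}_{G_1} \subseteq \mathsf{U}_{G_2} \subseteq \Sigma^{A}\mathsf{U}_{G_1}$, which is exactly the asserted equivalence. Alternatively, one can phrase everything in the language of \Cref{Remark 1 on co-approx and G-approx}: the sequences $\mathcal{G}_{G_i}^n = \{\Sigma^n G_i\}$ are $\mathbb{N}$-equivalent generating sequences by \Cref{Examples of N-equivalent generating sequences}(2), hence by \Cref{Lemma eq. class} the metrics $\mathcal{R}^{\mathcal{G}_{G_1}}$ and $\mathcal{R}^{\mathcal{G}_{G_2}}$ are $\mathbb{N}$-equivalent, and since $\mathcal{R}^{\mathcal{G}_{G_i}}_n = \Sigma^{-n}\mathsf{U}_{G_i}$ this $\mathbb{N}$-equivalence of metrics unwinds precisely to the definition of equivalence of the co-t-structures.

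I would most likely present the second route, since it reuses machinery already set up in the paper and keeps the argument short: invoke \Cref{Examples of N-equivalent generating sequences}(2) to get $\mathbb{N}$-equivalence of the generating sequences $\mathcal{G}_{G_1}, \mathcal{G}_{G_2}$, then \Cref{Lemma eq. class}(2) to get $\mathbb{N}$-equivalence of $\mathcal{R}^{\mathcal{G}_{G_1}}$ and $\mathcal{R}^{\mathcal{G}_{G_2}}$, then \Cref{Remark 1 on co-approx and G-approx} to identify $\mathcal{R}^{\mathcal{G}_{G_i}}_n = \Sigma^{-n}\mathsf{U}_{G_i}$, and finally observe that $\mathbb{N}$-equivalence of metrics of the form $\{\Sigma^{-n}\mathsf{U}_i\}$ is literally the statement $\Sigma^{-A}\mathsf{U}_1 \subseteq \mathsf{U}_2 \subseteq \Sigma^{A}\mathsf{U}_1$ for some $A$. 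The main obstacle, such as it is, is purely bookkeeping: making sure the index shifts in passing between "$G_1 \in \langle G_2\rangle^{[-A,A]}$" and the containment $\mathcal{G}_{G_1}^n \subseteq \mathcal{G}_{G_2}^{[n-A,n+A]}$ are recorded correctly, and checking that the $\mathbb{N}$-equivalence of $\mathcal{R}$'s really does collapse to the one-integer bound in the definition of equivalence of co-t-structures (it does, because these metrics are "linear" in $n$, exactly as noted in the remark preceding \Cref{Definition pre-generating sequence} about metrics arising from t-structures or co-t-structures). There is no genuine difficulty here; the lemma is a formal consequence of the generating-sequence formalism already in place.
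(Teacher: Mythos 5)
Your proposal is correct, and the route you say you would actually present — combining \Cref{Examples of N-equivalent generating sequences}(2), \Cref{Lemma eq. class}(2), and the identification $\mathcal{R}^{\mathcal{G}_{G_i}}_n = \Sigma^{-n}\mathsf{U}_{G_i}$ from \Cref{Remark 1 on co-approx and G-approx} — is exactly the paper's proof. No further comment is needed.
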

\begin{proof}
    Follows from \Cref{Remark 1 on co-approx and G-approx}, \Cref{Lemma eq. class}(2), and \Cref{Examples of N-equivalent generating sequences}.
\end{proof}

This motivates the following definition.

\begin{definition}\label{Definition preferred equivalence of co-t-structures}
    Let $\mathsf{T}$ be a compactly generated triangulated category with a single compact generator $G$. Then, we define the \emph{preferred equivalence class of co-t-structures on $\mathsf{T}$} to be all co-t-structures which are equivalent to $(\mathsf{U}_G, \mathsf{V}_G)$, see \Cref{Definition of compactly generated co-t-structure}. Note that by \Cref{Lemma eq. class co-t-structures}, this definition is independent of the choice of the compact generator $G \in \mathsf{T}$.
\end{definition}

\begin{definition}\label{Definition T^+_c and T^-}
Let $(\mathsf{U},\mathsf{V})$ be a co-t-structure on $\mathsf{T}$. We define the following full subcategories,
\begin{equation*}
\mathsf{T}^- = \bigcup_{n \in \mathbb{Z}}\Sigma^n\mathsf{V} \ , \  \mathsf{T}_c^+ \colonequals  \bigcap_{n \in \mathbb{Z}} (\mathsf{T}^c \star \Sigma^{-n} \mathsf{U})
\end{equation*}
It is clear that these categories remain the same if replace the co-t-structure $(\mathsf{U},\mathsf{V})$ by an equivalent one. Hence, if $\mathsf{T}$ has a single compact generator, $\mathsf{T}^-$ and $\mathsf{T}_c^+$ defined by a co-t-structure in the preferred equivalence class (\Cref{Definition preferred equivalence of co-t-structures}) are intrinsic subcategories of $\mathsf{T}$.
\end{definition}

The following remark is just an exercise in unwinding definitions.

\begin{remark}\label{Remark preferred equivalence class co-t-structure and metric}
    Let $\mathsf{T}$ be a triangulated category with a single compact generator $G$. We define the finite generating sequence (\Cref{Definition of generating sequence}) $\mathcal{G}$ by $\mathcal{G}^n \colonequals \{\Sigma^{n}G\}$ for all $n \in \mathbb{Z}$. Then, there is a bijection between the preferred equivalence class of co-t-structures on $\mathsf{T}$ and those metrics $\mathcal{M}$ in the preferred $\mathbb{N}$-equivalence class of orthogonal metrics, such that $\mathcal{M}_n = \Sigma^{n}\mathsf{U}$ for all $n \in \mathbb{Z}$ for a co-t-structure $(\mathsf{U},\mathsf{V})$ on $\mathsf{T}$. 
\end{remark}

Now, we come to the definition of co-approximability.

\begin{definition} \label{Definition of co-approx}
We say a triangulated category $\mathsf{T}$ is \emph{weakly co-approximable} if there exists a co-t-structure $(\mathsf{U},\mathsf{V})$, a compact generator $G$, and an integer $A > 0$ such that,
    \begin{enumerate}
        \item $\Sigma^{-A} G \in \mathsf{U}$, 
        \item \label{co-approx:Hom} $\Hom{\mathsf{T}}{\Sigma^A G}{\mathsf{U}} = 0$, and
        \item \label{co-approx:tri} Every object $F \in \mathsf{U}$ admits a triangle $E \to F \to D \to \Sigma E$ such that $ D \in \Sigma^{-1} \mathsf{U}$  $  \text{ and }E \in \overline{\langle G \rangle}^{[-A,A]}$, see \Cref{Notation from Neeman's paper}.
    \end{enumerate}
We say $\mathsf{T}$ is \emph{co-approximable} if further,
\begin{enumerate}[label= (\arabic*)$'$]
    \setcounter{enumi}{2}
    \item Every object $E \in \mathsf{U}$ admits a triangle
    $E \to F \to D \to \Sigma E$ such that $D \in \Sigma^{-1} \mathsf{U}$ and $E \in \overline{\langle G \rangle}^{[-A,A]}_A$, see \Cref{Notation from Neeman's paper}.
\end{enumerate}

\end{definition}

\begin{definition}\label{Definition of co-quasiapprox}
We say a triangulated category $\mathsf{T}$ is \emph{weakly co-quasiapproximable} if there exists a co-t-structure $(\mathsf{U},\mathsf{V})$, a compact generator $G$, and an integer $A > 0$ such that,
    \begin{enumerate}
        \item $\Sigma^{-A} G \in \mathsf{U}$, 
        \item $\Hom{\mathsf{T}}{\Sigma^A G}{\mathsf{U}} = 0$, and
        \item Every object $F \in \mathsf{U} \cap \mathsf{T}^+_c$ admits a triangle $E \to F \to D \to \Sigma E$ such that $ D \in \Sigma^{-1} \mathsf{U} \cap \mathsf{T}^+_c$  $  \text{ and }E \in \overline{\langle G \rangle}^{[-A,A]}$, see \Cref{Notation from Neeman's paper}.
    \end{enumerate}
We say $\mathsf{T}$ is \emph{co-quasiapproximable} if further,
\begin{enumerate}[label= (\arabic*)$'$]
    \setcounter{enumi}{2}
    \item Every object $E \in \mathsf{U} \cap \mathsf{T}^+_c $ admits a triangle
    $E \to F \to D \to \Sigma E$ such that $D \in \Sigma^{-1} \mathsf{U} \cap \mathsf{T}^+_c$ and $E \in \overline{\langle G \rangle}^{[-A,A]}_A$, see \Cref{Notation from Neeman's paper}.
\end{enumerate}    
    
\end{definition}

It is easy to verify that if $\mathsf{T}$ is (weakly) co-approx (resp.\ (weakly) co-quasiapprox) with respect to one compact generator, then it is (weakly) co-approx (resp.\ (weakly) co-quasiapprox) with respect to any other compact generator up to increasing the integer $A$ if necessary.

The following result shows that co-approximability is a special case of $\mathcal{G}$-approximability.

\begin{theorem}\label{Theorem Co-Approximability and G-Approximability}
    Let $\mathsf{T}$ be a triangulated category with a single compact generator $G$. We define the finite generating sequence (\Cref{Definition of generating sequence}) $\mathcal{G}$ by $\mathcal{G}^n \colonequals \{\Sigma^{n}G\} $. Then,  \begin{enumerate}[label=(\roman*)]
        \item $\mathsf{T}$ is a weakly co-approximable triangulated category (\Cref{Definition of co-approx}) if and only if $\mathsf{T}$ is weakly $\mathcal{G}$-approximable (\Cref{Definition of G-approximability}).
        \item $\mathsf{T}$ is a co-approximable triangulated category (\Cref{Definition of co-approx}) if and only if $\mathsf{T}$ is $\mathcal{G}$-approximable (\Cref{Definition of G-approximability}).
    \end{enumerate}
    The analogous result also holds for (weak) co-quasiapprox and (weak) $\mathcal{G}$-quasiapprox.
\end{theorem}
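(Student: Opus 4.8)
The plan is to run the argument exactly parallel to the proof of Theorem~\ref{Theorem Approximability and G-approximability}, with co-t-structures playing the role that t-structures played there, and with the generating sequence now chosen to be $\mathcal{G}^n = \{\Sigma^n G\}$ rather than $\{\Sigma^{-n}G\}$. As in that earlier proof, since (i), (ii), and the co-quasiapprox variants all follow by the same bookkeeping, I would spell out only the equivalence in (i) and indicate that the others are verbatim modifications. The two directions of (i) are the two halves of the argument.

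\emph{From $\mathcal{G}$-approximable to co-approximable.} Suppose $\mathsf{T}$ is weakly $\mathcal{G}$-approximable. By Remark~\ref{Remark 1 on co-approx and G-approx} the co-t-structure $(\mathsf{U}_G,\mathsf{V}_G)$ generated by $G$ induces the orthogonal metric $\mathcal{R}^{\mathcal{G}}$ via $\mathcal{R}^{\mathcal{G}}_n = \Sigma^{-n}\mathsf{U}_G$, and by Remark~\ref{Remark preferred equivalence class co-t-structure and metric} this sits in the preferred $\mathbb{N}$-equivalence class of orthogonal metrics. By Proposition~\ref{Proposition G-approximability from orthogonal metric in preferred equivalence class}, $\mathsf{T}$ is then weakly $\mathcal{G}$-approximable with respect to $\mathcal{R}^{\mathcal{G}}$ itself (for a suitably enlarged integer $A$). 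Now I would translate the conditions of Remark~\ref{Remark conditions of G-approximability}(0)--(3), for the metric $\mathcal{R}_n = \Sigma^{-n}\mathsf{U}_G$, back into the language of Definition~\ref{Definition of co-approx}: condition (1) says $\mathcal{M}^{\mathcal{G}}_{i+A}\subseteq \mathcal{R}_i\cap\mathsf{T}^c$, which unwinds to $\Sigma^{-A}G\in\mathsf{U}_G$; condition (2), $\HomT{\mathcal{G}^n}{\mathcal{R}_i}=0$ for $n+i\geq A$, gives $\HomT{\Sigma^A G}{\mathsf{U}_G}=0$; and condition (3), the approximation triangle for $F\in\mathcal{R}_i=\Sigma^{-i}\mathsf{U}_G$, rescaled by $\Sigma^i$, gives for $F\in\mathsf{U}_G$ a triangle $E\to F\to D\to\Sigma E$ with $D\in\Sigma^{-1}\mathsf{U}_G$ and $E\in\overline{\langle G\rangle}^{[-A,A]}$ (using $\mathcal{G}^{[-A-i,A-i]}$ shifted by $\Sigma^i$ equals $\langle G\rangle^{[-A,A]}$ up to the generating-sequence bookkeeping). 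Condition (0) is automatic since $\mathcal{M}^{\mathcal{G}}_i = \langle G\rangle^{[i,\infty)}$ gives $\mathcal{M}^{\mathcal{G}}_i = \Sigma\mathcal{M}^{\mathcal{G}}_{i+1}$.

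\emph{From co-approximable to $\mathcal{G}$-approximable.} Conversely, given a co-t-structure $(\mathsf{U},\mathsf{V})$, a compact generator $G$, and an integer $A>0$ witnessing weak co-approximability, I would check conditions (0)--(3) of Remark~\ref{Remark conditions of G-approximability} for the orthogonal metric $\mathcal{R}_i \colonequals \Sigma^{-i}\mathsf{U}$ (which is an orthogonal metric because $\mathsf{U} = {}^{\perp}(\Sigma\mathsf{V})$ is already a left orthogonal, so $^{\perp}(\mathcal{R}_i^{\perp})=\mathcal{R}_i$) and the same integer $A$: (0) as above from $\mathcal{M}^{\mathcal{G}}_i = \langle G\rangle^{[i,\infty)}$; (1) from $\Sigma^{-A}G\in\mathsf{U}$; (2) from $\HomT{\Sigma^A G}{\mathsf{U}}=0$ together with $\Sigma\mathsf{U}\subseteq\mathsf{U}$; (3) by taking $F\in\mathcal{R}_i$, applying $\Sigma^i$ to land in $\mathsf{U}$, invoking Definition~\ref{Definition of co-approx}(3), and shifting back. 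The $(3)'$ version uses the "$_A$" refinement identically. For co-quasiapprox one restricts all the approximation statements to $\mathsf{T}^+_c = \bigcap_n(\mathsf{T}^c\star\Sigma^{-n}\mathsf{U})$, which is exactly $\overline{\mathsf{T}^c}$ computed with respect to $\mathcal{R}$, so Definition~\ref{Definition of co-quasiapprox} matches Definition~\ref{Definition of G-quasiapproximability} on the nose.

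\emph{Main obstacle.} The only genuinely delicate point is keeping the index conventions straight: here $\mathcal{G}^n = \{\Sigma^n G\}$ (not $\{\Sigma^{-n}G\}$), so $\mathcal{M}^{\mathcal{G}}_n = \mathcal{G}^{(-\infty,-n]} = \langle G\rangle^{(-\infty,-n]}$ corresponds to $\Sigma^{-n}\mathsf{U}$ rather than $\Sigma^n\mathsf{U}$, and the directions of the co-t-structure inequalities ($\Sigma\mathsf{U}\subseteq\mathsf{U}$, $\mathsf{V}\subseteq\Sigma\mathsf{V}$) run opposite to the t-structure case. One must verify that the resulting $\mathcal{R}$ genuinely satisfies Definition~\ref{Good Metric}(1)--(2) and the orthogonality condition, which is where the hypothesis $\Sigma^{-1}\mathsf{C}\subseteq\mathsf{C}$ underlying Theorem~\ref{Theorem Compactly generated co-t-structures} quietly does its work. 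Everything else is a mechanical transcription of the proof of Theorem~\ref{Theorem Approximability and G-approximability}, with Remark~\ref{Remark preferred equivalence class co-t-structure and metric} supplying the bijection between the preferred equivalence class of co-t-structures and the relevant metrics.
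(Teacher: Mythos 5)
Your proposal is correct and follows essentially the same route as the paper: one direction via Remark~\ref{Remark 1 on co-approx and G-approx}, Remark~\ref{Remark preferred equivalence class co-t-structure and metric}, and Proposition~\ref{Proposition G-approximability from orthogonal metric in preferred equivalence class}, and the converse by verifying conditions (0)--(3) of Remark~\ref{Remark conditions of G-approximability} for the metric $\mathcal{R}_i=\Sigma^{-i}\mathsf{U}$ with the same integer $A$. Your explicit justification that $\Sigma^{-i}\mathsf{U}$ is genuinely an orthogonal metric (since $\mathsf{U}={}^{\perp}(\Sigma\mathsf{V})$, using that $\mathsf{U}$ is closed under summands) is a detail the paper leaves implicit, and is a welcome addition.
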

\begin{proof}
    Both (i) and (ii) follow the same way, so we only prove (i). Suppose $\mathsf{T}$ is weakly $\mathcal{G}$-approximable. Then, by \Cref{Remark 1 on co-approx and G-approx} we get that that the co-t-structure compactly generated by $G$ gives $\mathcal{R}^{\mathcal{G}}$ under the bijection given in \Cref{Remark preferred equivalence class co-t-structure and metric}, which is a metric in the preferred $\mathbb{N}$-equivalence class of orthogonal metric. By \Cref{Proposition G-approximability from orthogonal metric in preferred equivalence class}, $\mathsf{T}$ is weakly $\mathcal{G}$-approximable with respect to this orthogonal metric. It is easy to see that this gives us that $\mathsf{T}$ is weakly co-approximable using the co-t-structure $(\mathsf{U}_G,\mathsf{V}_G)$, see \Cref{Definition of compactly generated co-t-structure}. \Cref{Definition of G-approximability}(1), (2), and (3) imply \Cref{Definition of co-approx}(1), (2), and (3) respectively.

    Conversely, suppose $\mathsf{T}$ is weakly co-approximable. So, there exists a co-t-structure $(\mathsf{U},\mathsf{V})$, and an integer $A > 0$ such that the conditions (1)-(3) of \Cref{Definition of co-approx} are satisfied with the compact generator $G$. We will show that $\mathsf{T}$ is weakly $\mathcal{G}$-approximable by showing the conditions (0)-(3) of \Cref{Remark conditions of G-approximability} hold with the orthogonal good metric $\mathcal{R}_i = \Sigma^{-i} \mathsf{U}$ for all $i \in \mathbb{Z}$ and the integer $A > 0$.
    \begin{enumerate}
    \setcounter{enumi}{-1}
        \item As $\mathcal{G}^n = \{\Sigma^n G\}$, we get that $\mathcal{M}^{\mathcal{G}}_i = \Sigma \mathcal{M}^{\mathcal{G}}_{i+1}$ for all $i \in \mathbb{Z}$, as $\mathcal{M}^{\mathcal{G}}_i = \mathcal{G}^{(-\infty,-i]} = \langle G \rangle^{[i,\infty)}$ by definition. See \Cref{Definition of generating sequence}(2) for the definition of $\mathcal{M}^{\mathcal{G}}$. Also see, \Cref{Notation Subcategories for (pre)-generating sequences} and \Cref{Notation from Neeman's paper}.
        \item As $\Sigma^{-A} G \in \mathsf{U} = \mathcal{R}_0$, $\mathcal{M}^G_{i+A} = \langle G \rangle^{[i+A,\infty)}\subseteq \mathcal{R}_i \cap \mathsf{T}^c$ for all $i \in \mathbb{Z}$, see \Cref{Notation from Neeman's paper}.
        \item As $\HomT{\Sigma^n G}{\mathsf{U}} = 0$ for all $n \geq A$, we get that,
        \[\HomT{\mathcal{G}_n}{\mathcal{R}_{i}} = \HomT{\Sigma^{n} G}{\Sigma^{-i} \mathsf{U}}=0\] for all $n + i \geq A$.
        \item For any $i \in \mathbb{Z}$ and $F \in \mathcal{R}_{i} = \Sigma^{-i}\mathsf{U}$, we get that $\Sigma^{i}F \in \mathsf{U}$. So, by the \Cref{Definition of co-approx}(3), we get a triangle $E \to F \to D \to \Sigma E$ with $E \in \langle G \rangle^{[-A+i,A+i]}$ and $D \in \Sigma^{-i-1} \mathsf{U} = \mathcal{R}_{i+1}$, which is what we needed.
    \end{enumerate}
\end{proof}
\begin{proposition}\label{Proposition co-approximability implies preferred equivalence class}
    Let $\mathsf{T}$ be a weakly co-approximable triangulated category, with a compact generator $G$, co-t-structure $(\mathsf{U}, \mathsf{V} )$ and positive integer $A$ satisfying the conditions (1)-(3) of \Cref{Definition of co-approx}. Then, $(\mathsf{U}, \mathsf{V})$ is in the preferred equivalence class of co-t-structures on $\mathsf{T}$. Conversely, for any co-t-structure $(\mathsf{U}',\mathsf{V}')$ in the preferred equivalence class of co-t-structures (\Cref{Definition preferred equivalence of co-t-structures}), there exists an integer $A'$ such that the conditions of weak co-approximability are satisfied.
\end{proposition}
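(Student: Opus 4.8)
The idea is to transport everything to the already-developed language of orthogonal metrics and $\mathcal{G}$-approximability, via the finite generating sequence $\mathcal{G}$ with $\mathcal{G}^n\colonequals\{\Sigma^n G\}$. First I would recall the dictionary established in the proof of \Cref{Theorem Co-Approximability and G-Approximability}: for a co-t-structure $(\mathsf{U},\mathsf{V})$ and integer $A$, the conditions (1)--(3) of weak co-approximability (\Cref{Definition of co-approx}) are shown there to be \emph{equivalent} — condition by condition — to the conditions (0)--(3) of \Cref{Remark conditions of G-approximability} for the orthogonal metric $\mathcal{R}$ defined by $\mathcal{R}_i\colonequals\Sigma^{-i}\mathsf{U}$ and the same integer $A$. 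Thus the hypothesis that $((\mathsf{U},\mathsf{V}),G,A)$ witnesses weak co-approximability is exactly the statement that $(\mathsf{T},\mathcal{R},A)$ is weakly $\mathcal{G}$-approximable.

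For the first assertion, I would then invoke \Cref{Proposition approximable implies preferred equivalence class}(3), which tells us that this $\mathcal{R}$ is equivalent to the canonical orthogonal metric $\mathcal{R}^{\mathcal{G}}$. By \Cref{Remark 1 on co-approx and G-approx} we have $\mathcal{R}^{\mathcal{G}}_i=\Sigma^{-i}\mathsf{U}_G$, where $(\mathsf{U}_G,\mathsf{V}_G)$ is the co-t-structure generated by $G$ (\Cref{Definition of compactly generated co-t-structure}). Since both $\mathcal{R}$ and $\mathcal{R}^{\mathcal{G}}$ are obtained by shifting a single subcategory, equivalence and $\mathbb{N}$-equivalence of metrics coincide here (the remark following \Cref{Definition equivalence relation on extended good metrics}), so there is $n\geq 0$ with $\Sigma^{-i-n}\mathsf{U}\subseteq\Sigma^{-i}\mathsf{U}_G$ and $\Sigma^{-i-n}\mathsf{U}_G\subseteq\Sigma^{-i}\mathsf{U}$ for all $i\in\mathbb{Z}$; unwinding the shifts gives $\Sigma^{-n}\mathsf{U}_G\subseteq\mathsf{U}\subseteq\Sigma^{n}\mathsf{U}_G$, which is precisely the definition of $(\mathsf{U},\mathsf{V})$ being equivalent to $(\mathsf{U}_G,\mathsf{V}_G)$, i.e.\ lying in the preferred equivalence class (\Cref{Definition preferred equivalence of co-t-structures}).

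For the converse, I would start from a co-t-structure $(\mathsf{U}',\mathsf{V}')$ in the preferred equivalence class and set $\mathcal{R}'_i\colonequals\Sigma^{-i}\mathsf{U}'$; by \Cref{Remark preferred equivalence class co-t-structure and metric} this is an orthogonal metric lying in the preferred $\mathbb{N}$-equivalence class of orthogonal metrics on $\mathsf{T}$. Since the first part shows $\mathsf{T}$ is weakly $\mathcal{G}$-approximable, \Cref{Proposition G-approximability from orthogonal metric in preferred equivalence class} produces an integer $A'>0$ such that $(\mathsf{T},\mathcal{R}',A')$ is weakly $\mathcal{G}$-approximable. Reading the dictionary of \Cref{Theorem Co-Approximability and G-Approximability} backwards (the conditions (0)--(3) of \Cref{Remark conditions of G-approximability} for $\mathcal{R}'_i=\Sigma^{-i}\mathsf{U}'$ translate back to (1)--(3) of \Cref{Definition of co-approx} for $(\mathsf{U}',\mathsf{V}')$), this says exactly that $\mathsf{T}$ is weakly co-approximable with respect to $(\mathsf{U}',\mathsf{V}')$, the generator $G$, and the integer $A'$.

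The only real point requiring care — and the step I expect to be the main obstacle in writing this cleanly — is bookkeeping the sign conventions in the bijection of \Cref{Remark preferred equivalence class co-t-structure and metric} and checking that the translation in the proof of \Cref{Theorem Co-Approximability and G-Approximability} is genuinely an \emph{if and only if} at the level of each individual condition, so that it can be run in reverse; everything else is a formal consequence of the equivalence $\mathcal{R}_i=\Sigma^{-i}\mathsf{U}$ between co-t-structures and orthogonal metrics of this special shape.
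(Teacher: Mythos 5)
Your proposal is correct and follows exactly the route the paper takes: the paper's proof is a one-line citation of \Cref{Theorem Co-Approximability and G-Approximability}, \Cref{Proposition approximable implies preferred equivalence class}, and \Cref{Remark preferred equivalence class co-t-structure and metric} for the first assertion, and \Cref{Proposition G-approximability from orthogonal metric in preferred equivalence class} for the converse, which is precisely the dictionary-plus-transport argument you spell out. Your expanded version, including the observation that equivalence and $\mathbb{N}$-equivalence coincide for metrics of the form $\Sigma^{-i}\mathsf{U}$, is a faithful (and more explicit) rendering of the intended proof.
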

\begin{proof}
    Follows from \Cref{Theorem Co-Approximability and G-Approximability}, \Cref{Proposition approximable implies preferred equivalence class} and \Cref{Remark preferred equivalence class co-t-structure and metric}. The converse follows from \Cref{Proposition G-approximability from orthogonal metric in preferred equivalence class}.
\end{proof}

\begin{proposition}\label{Proposition Weak co-approx implies aisle in G^{-A-a, infty}}
    Let $\mathsf{T}$ be a weakly co-approximable triangulated category, with compact generator $G$, co-t-structure $(\mathsf{U}, \mathsf{V} )$ and positive integer $A$ satisfying the conditions (1)-(3) of \Cref{Definition of co-approx}. Then, $\mathsf{U} \subseteq \overline{\langle G \rangle}^{[-A-1, \infty)}$, see \Cref{Notation from Neeman's paper}. In particular, $\mathsf{U}_{G} \colonequals {}^{\perp}(G[0,\infty)^{\perp}) \subseteq \overline{\langle G \rangle}^{[-A-1, \infty)}$, see \Cref{Definition of compactly generated co-t-structure} and \Cref{Proposition co-approximability implies preferred equivalence class}.
\end{proposition}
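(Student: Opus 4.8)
The plan is to transport the statement into the $\mathcal{G}$-approximability framework of \Cref{Section Generating sequence and orthogonal metrics} and then quote the ``approximation by a homotopy colimit'' result. Concretely, take the finite generating sequence $\mathcal{G}$ with $\mathcal{G}^n \colonequals \{\Sigma^n G\}$. As recorded in the proof of \Cref{Theorem Co-Approximability and G-Approximability}, if the co-t-structure $(\mathsf{U},\mathsf{V})$ and the integer $A$ witness weak co-approximability of $\mathsf{T}$, then the triple $(\mathsf{T},\mathcal{R},A)$ with $\mathcal{R}_i \colonequals \Sigma^{-i}\mathsf{U}$ is weakly $\mathcal{G}$-approximable; in particular $\mathsf{U} = \mathcal{R}_0$.

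With this in hand the first inclusion is essentially immediate: applying \Cref{Corollary approximating by generating sequence} to $(\mathsf{T},\mathcal{R},A)$ with $i=0$ shows that every $F\in\mathsf{U}=\mathcal{R}_0$ lies in $\overline{\mathcal{G}}^{(-\infty,A+1]}$. It then only remains to reconcile notation. By \Cref{Notation Subcategories for (pre)-generating sequences} and \Cref{Notation from Neeman's paper}, $\mathcal{G}(-\infty,A+1] = \{\Sigma^n G : n\le A+1\}$, which under the substitution $n\mapsto -i$ is exactly $G[-A-1,\infty) = \{\Sigma^{-i}G : i\ge -A-1\}$; hence $\overline{\mathcal{G}}^{(-\infty,A+1]} = \overline{\langle G\rangle}^{[-A-1,\infty)}$, and we conclude $\mathsf{U}\subseteq\overline{\langle G\rangle}^{[-A-1,\infty)}$.

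For the ``in particular'' clause I would show $\mathsf{U}_G\subseteq\mathsf{U}$ and then invoke the inclusion just proved. From the co-t-structure axiom $\Sigma^{-1}\mathsf{U}\subseteq\mathsf{U}$ together with condition (1) of \Cref{Definition of co-approx} (namely $\Sigma^{-A}G\in\mathsf{U}$) one gets $\Sigma^{-i}G\in\mathsf{U}$ for all $i\ge A$, i.e.\ $G[A,\infty)\subseteq\mathsf{U}$. A co-t-structure aisle satisfies $\mathsf{U} = {}^{\perp}(\mathsf{U}^{\perp})$: indeed the defining triangle from axiom (3) of a co-t-structure exhibits any $X\in{}^{\perp}(\Sigma\mathsf{V})$ as a summand of an object of $\mathsf{U}$, so ${}^{\perp}(\mathsf{U}^{\perp})\subseteq{}^{\perp}(\Sigma\mathsf{V})=\mathsf{U}$, the reverse inclusion being automatic. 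Taking orthogonals twice therefore upgrades $G[A,\infty)\subseteq\mathsf{U}$ to ${}^{\perp}(G[A,\infty)^{\perp})\subseteq{}^{\perp}(\mathsf{U}^{\perp})=\mathsf{U}$; and since $A>0$ we have $G[0,\infty)\supseteq G[A,\infty)$, whence $\mathsf{U}_G = {}^{\perp}(G[0,\infty)^{\perp})\subseteq{}^{\perp}(G[A,\infty)^{\perp})\subseteq\mathsf{U}\subseteq\overline{\langle G\rangle}^{[-A-1,\infty)}$.

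The only delicate point is the bookkeeping across the two indexing conventions — the $\mathcal{G}^{(-\infty,j]}$ convention, which shifts by $\Sigma^n$ with $n$ increasing, versus the $\langle G\rangle^{[a,b]}$ convention, which uses desuspensions $\Sigma^{-i}$ — so that the interval produced by \Cref{Corollary approximating by generating sequence} comes out as precisely $[-A-1,\infty)$ and not off by a shift. Everything else is a short orthogonality argument, so I do not anticipate a substantive obstacle.
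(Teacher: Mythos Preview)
Your argument for the main inclusion $\mathsf{U}\subseteq\overline{\langle G\rangle}^{[-A-1,\infty)}$ is correct and is exactly the paper's approach: translate to the $\mathcal{G}$-approximable setting via \Cref{Theorem Co-Approximability and G-Approximability} and apply \Cref{Corollary approximating by generating sequence} at $i=0$, then unwind the indexing conventions.

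The ``in particular'' clause, however, contains a genuine error. The operation $S\mapsto{}^{\perp}(S^{\perp})$ is \emph{covariant}, not contravariant: each of $(-)^{\perp}$ and ${}^{\perp}(-)$ reverses inclusions, so their composite preserves them. Thus from $G[A,\infty)\subseteq G[0,\infty)$ you obtain
\[
{}^{\perp}\big(G[A,\infty)^{\perp}\big)\ \subseteq\ {}^{\perp}\big(G[0,\infty)^{\perp}\big)=\mathsf{U}_G,
\]
the opposite of what you claim. Your chain therefore shows only that ${}^{\perp}(G[A,\infty)^{\perp})\subseteq\mathsf{U}$, not that $\mathsf{U}_G\subseteq\mathsf{U}$. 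And indeed $\mathsf{U}_G\subseteq\mathsf{U}$ can fail: replace $(\mathsf{U},\mathsf{V})$ by $(\Sigma^{-1}\mathsf{U},\Sigma^{-1}\mathsf{V})$, which still satisfies (1)--(3) of \Cref{Definition of co-approx} with $A$ increased by $1$, while $G\in\mathsf{U}_G$ need not lie in $\Sigma^{-1}\mathsf{U}$.

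The paper's route for the ``in particular'' is signalled by the reference to \Cref{Proposition co-approximability implies preferred equivalence class}: that proposition says the compactly generated co-t-structure $(\mathsf{U}_G,\mathsf{V}_G)$ is itself in the preferred equivalence class and hence satisfies the conditions of \Cref{Definition of co-approx} for some integer. One then applies the first part of the proposition directly to $(\mathsf{U}_G,\mathsf{V}_G)$ rather than trying to compare $\mathsf{U}_G$ with the given $\mathsf{U}$.
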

\begin{proof}
    Follows from \Cref{Theorem Co-Approximability and G-Approximability} and \Cref{Corollary approximating by generating sequence}.
\end{proof}

We now see what the closure of the compacts and the orthogonal to $\mathcal{G}$ are in this setting, see \Cref{Definition closure of compacts}.

The following is again just an exercise in unwinding definitions.
\begin{remark}\label{Remark 2 on co-approx and G-approx }
    Let $\mathsf{T}$ be a triangulated category with a single compact generator $G$. We define the finite generating sequence $\mathcal{G}$ by $\mathcal{G}^n \colonequals\{\Sigma^{n}G\}$. Then, $\overline{\mathsf{T}^c} = \mathsf{T}^+_c$ and $\mathcal{G}^{\perp} = \mathsf{T}^-$, see \Cref{Definition T^+_c and T^-}, \Cref{Remark 1 on co-approx and G-approx}, and \Cref{Definition closure of compacts}.
\end{remark}

We note the following results about the closure of the compacts in this setting.

\begin{proposition}
    Let $\mathsf{T}$ a compactly generated triangulated category with a co-t-structure $(\mathsf{U},\mathsf{V})$. Assume there exists a compact generator $G$ and a positive integer $A$ such that $\Hom{\mathsf{T}}{\Sigma^A G}{\mathsf{U}} = 0$. Then $\mathsf{T}_c^+$ (\Cref{Definition T^+_c and T^-}) is triangulated.
\end{proposition}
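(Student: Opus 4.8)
The plan is to identify $\mathsf{T}^+_c$ with the closure of the compacts $\overline{\mathsf{T}^c}$ for a suitable generating sequence and orthogonal metric, and then invoke \Cref{Proposition Closure of compacts is triangulated}. Concretely, let $\mathcal{G}$ be the finite generating sequence with $\mathcal{G}^n \colonequals \{\Sigma^n G\}$ for all $n \in \mathbb{Z}$. As noted in \Cref{Remark 1 on co-approx and G-approx}, we are then in the setting of \Cref{Section Generating sequence and orthogonal metrics}, with $\mathcal{R}^{\mathcal{G}}_n = \Sigma^{-n}\mathsf{U}_G$ where $(\mathsf{U}_G,\mathsf{V}_G)$ is the co-t-structure generated by $G$ (\Cref{Definition of compactly generated co-t-structure}). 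However, the co-t-structure $(\mathsf{U},\mathsf{V})$ in the statement is \emph{not} assumed to be the one generated by $G$; so the first thing to check is that the metric $\mathcal{R}_n \colonequals \Sigma^{-n}\mathsf{U}$ is an orthogonal metric in the sense of \Cref{Good Metric} and that it is $\mathbb{N}$-equivalent to $\mathcal{R}^{\mathcal{G}}$. The extended-good-metric axioms for $\{\Sigma^{-n}\mathsf{U}\}$ are immediate from the co-t-structure axioms (the $\star$-closure of $\mathsf{U}$ and the nesting $\Sigma\mathsf{U} \subseteq \mathsf{U}$), and orthogonality $^{\perp}((\Sigma^{-n}\mathsf{U})^{\perp}) = \Sigma^{-n}\mathsf{U}$ holds because $\mathsf{U} = {}^{\perp}(\Sigma\mathsf{V})$ is a left orthogonal hence reflexive in this sense.

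Next I would establish the hypothesis needed by \Cref{Proposition Closure of compacts is triangulated}, namely that for all $n \in \mathbb{Z}$, $\HomT{\mathcal{G}^n}{\mathcal{R}_i} = 0$ for $i \gg 0$. For our generating sequence this reads $\HomT{\Sigma^n G}{\Sigma^{-i}\mathsf{U}} = 0$ for $i \gg 0$, i.e.\ $\HomT{\Sigma^{n+i} G}{\mathsf{U}} = 0$ for $i \gg 0$. Since $\mathsf{U} \subseteq \Sigma\mathsf{U} \subseteq \Sigma^2\mathsf{U} \subseteq \cdots$ is a nested union, the hypothesis $\HomT{\Sigma^A G}{\mathsf{U}} = 0$ gives $\HomT{\Sigma^m G}{\mathsf{U}} = 0$ for all $m \geq A$ (any map $\Sigma^m G \to u$ with $u \in \mathsf{U}$ is, after shifting, a map $\Sigma^A G \to \Sigma^{A-m}u \in \Sigma^{A-m}\mathsf{U} \subseteq \mathsf{U}$, hence zero), so the required vanishing holds for $i \geq A - n$. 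Then \Cref{Proposition Closure of compacts is triangulated} applies (a single object $G$ is trivially a pre-generating sequence: $\operatorname{smd}(\operatorname{coprod}(\cup_i \mathcal{G}^i)) = \operatorname{smd}(\operatorname{coprod}(\{\Sigma^i G\}_{i\in\mathbb{Z}})) = \langle G\rangle = \mathsf{T}^c$ since $G$ is a compact generator) and shows $\overline{\mathsf{T}^c}$, computed with respect to $\mathcal{R}$, is a triangulated subcategory of $\mathsf{T}$.

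Finally I would identify $\overline{\mathsf{T}^c}$ with $\mathsf{T}^+_c$. By \Cref{Definition closure of compacts}, $\overline{\mathsf{T}^c} = \bigcap_{n\in\mathbb{Z}} \mathsf{T}^c \star \mathcal{R}_n = \bigcap_{n\in\mathbb{Z}} \mathsf{T}^c \star \Sigma^{-n}\mathsf{U}$, which is exactly the definition of $\mathsf{T}^+_c$ in \Cref{Definition T^+_c and T^-}. (This matches \Cref{Remark 2 on co-approx and G-approx } for the case $(\mathsf{U},\mathsf{V})=(\mathsf{U}_G,\mathsf{V}_G)$, but here the argument works for the given $(\mathsf{U},\mathsf{V})$ directly, without needing to pass through the preferred equivalence class — though one could alternatively note that the condition $\HomT{\Sigma^A G}{\mathsf{U}}=0$ together with $\mathsf{U}$ being a co-aisle already forces $(\mathsf{U},\mathsf{V})$ to be $\mathbb{N}$-equivalent, hence $\mathsf{T}^+_c$ is unchanged.) The only mild subtlety — and the step I'd expect to need the most care — is verifying the orthogonality axiom $^{\perp}((\Sigma^{-n}\mathsf{U})^{\perp}) = \Sigma^{-n}\mathsf{U}$ cleanly; but this follows formally from \Cref{Lemma R^G is an orthgonal good metric}(1) applied to $\mathcal{M} = \Sigma\mathsf{V}$, since $\mathsf{U} = {}^{\perp}(\Sigma\mathsf{V})$ and $(\Sigma\mathsf{V})^{\perp} \supseteq$ the relevant objects, so that $^{\perp}(\mathsf{U}^{\perp}) = \mathsf{U}$. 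With that in hand the proof is complete.
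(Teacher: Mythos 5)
Your proof is correct and is essentially the paper's: the paper's own proof is a one-line citation of \Cref{Remark 1 on co-approx and G-approx} together with \Cref{Proposition Closure of compacts is triangulated}, and you have filled in exactly the verifications that citation leaves implicit (that $\{\Sigma^{-n}\mathsf{U}\}_{n\in\mathbb{Z}}$ is an orthogonal metric because $\mathsf{U}={}^{\perp}(\Sigma\mathsf{V})$ is a left orthogonal, that the Hom-vanishing hypothesis of \Cref{Proposition Closure of compacts is triangulated} follows from $\HomT{\Sigma^A G}{\mathsf{U}}=0$ and the nesting of the co-aisle, and that $\overline{\mathsf{T}^c}=\bigcap_n \mathsf{T}^c\star\Sigma^{-n}\mathsf{U}=\mathsf{T}^+_c$ by definition). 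Two cosmetic points: the nesting you need, and in fact use later, is $\Sigma^{-1}\mathsf{U}\subseteq\mathsf{U}$ rather than $\Sigma\mathsf{U}\subseteq\mathsf{U}$; and your parenthetical suggestion that $\HomT{\Sigma^A G}{\mathsf{U}}=0$ alone forces $(\mathsf{U},\mathsf{V})$ into the preferred equivalence class is not correct (one also needs $\Sigma^{-A}G\in\mathsf{U}$, which is only assumed in the subsequent thickness statement), but your main argument rightly does not rely on it.
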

\begin{proof}
    Follows from \Cref{Remark 1 on co-approx and G-approx} and \Cref{Proposition Closure of compacts is triangulated}
\end{proof}

\begin{proposition}
    Let $\mathsf{T}$ a compactly generated triangulated category with a co-t-structure $(\mathsf{U},\mathsf{V})$. Assume there exists a compact generator $G$ and a positive integer $A$ such that $\Hom{\mathsf{T}}{\Sigma^A G}{\mathsf{U}} = 0$ and $\Sigma^{-A} G \subseteq \mathsf{U}$. Then $\mathsf{T}_c^+$ (\Cref{Definition T^+_c and T^-}) is thick.
\end{proposition}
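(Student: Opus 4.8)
The plan is to deduce the statement from \Cref{Proposition Closure of compacts is thick} by exhibiting an appropriate generating sequence, orthogonal metric, and integer, in the spirit of \Cref{Remark 1 on co-approx and G-approx} and of the proof of the preceding proposition. Since $\mathsf{T}$ has a single compact generator $G$, I would take the finite generating sequence $\mathcal{G}$ with $\mathcal{G}^n \colonequals \{\Sigma^n G\}$ (a generating sequence because $G$ is a compact generator), and set $\mathcal{R}_n \colonequals \Sigma^{-n}\mathsf{U}$ for all $n \in \mathbb{Z}$. The first step is to check that $\mathcal{R}$ is an orthogonal metric in the sense of \Cref{Good Metric}: conditions (1) and (2) of an extended good metric are immediate from the co-t-structure axioms $\Sigma^{-1}\mathsf{U}\subseteq \mathsf{U}$ and closure of the aisle $\mathsf{U}$ under extensions and summands, while the orthogonality ${}^{\perp}(\mathcal{R}_n^{\perp}) = \mathcal{R}_n$ reduces to ${}^{\perp}(\mathsf{U}^{\perp}) = \mathsf{U}$; this in turn follows from the co-t-structure truncation triangles, since a map out of (resp.\ into) the truncation factor is forced to vanish, so $\mathsf{U}^{\perp}=\Sigma\mathsf{V}$ and ${}^{\perp}(\Sigma\mathsf{V})=\mathsf{U}$ by closure under summands.

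Next I would verify that $(\mathsf{T},\mathcal{R},A)$ satisfies the three conditions of $\mathcal{G}$-preapproximability, \Cref{Definition of G-preapproximability}. Writing $\mathcal{M}^{\mathcal{G}}_i = \mathcal{G}^{(-\infty,-i]} = \langle G\rangle^{[i,\infty)}$ one has $\mathcal{M}^{\mathcal{G}}_i = \Sigma\mathcal{M}^{\mathcal{G}}_{i+1}$, so condition (0) holds trivially (using $A\ge 1$). For condition (1), the hypothesis $\Sigma^{-A}G\in\mathsf{U}$ together with $\Sigma^{-1}\mathsf{U}\subseteq\mathsf{U}$ gives $\Sigma^{-k}G\in\mathsf{U}$ for all $k\ge A$, and then closure of $\mathsf{U}$ under finite coproducts, extensions, and summands yields $\mathcal{M}^{\mathcal{G}}_{i+A}=\langle G\rangle^{[i+A,\infty)}\subseteq \Sigma^{-i}\mathsf{U}\cap\mathsf{T}^c=\mathcal{R}_i\cap\mathsf{T}^c$. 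For condition (2), the hypothesis $\HomT{\Sigma^A G}{\mathsf{U}}=0$ with $\Sigma^{-1}\mathsf{U}\subseteq\mathsf{U}$ gives $\HomT{\Sigma^m G}{\mathsf{U}}=0$ for all $m\ge A$, hence $\HomT{\mathcal{G}^n}{\mathcal{R}_i}=\HomT{\Sigma^{n+i}G}{\mathsf{U}}=0$ whenever $n+i\ge A$, which in particular holds for all $i\gg 0$ with $n$ fixed.

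With these verified, \Cref{Proposition Closure of compacts is thick} applies and shows that $\overline{\mathsf{T}^c}$, the closure of the compacts computed with respect to $\mathcal{R}$, is a thick subcategory of $\mathsf{T}$. Finally I would identify this category: by \Cref{Definition closure of compacts}, $\overline{\mathsf{T}^c} = \bigcap_{n\in\mathbb{Z}}(\mathsf{T}^c\star\mathcal{R}_n) = \bigcap_{n\in\mathbb{Z}}(\mathsf{T}^c\star\Sigma^{-n}\mathsf{U})$, which is exactly $\mathsf{T}^+_c$ as defined in \Cref{Definition T^+_c and T^-} for the co-t-structure $(\mathsf{U},\mathsf{V})$; therefore $\mathsf{T}^+_c$ is thick.

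I do not expect a serious obstacle here. The only points demanding a little care are confirming that $\mathcal{R}_n=\Sigma^{-n}\mathsf{U}$ is genuinely an \emph{orthogonal} metric — the co-t-structure axioms are precisely tailored to this — and observing that the given co-t-structure $(\mathsf{U},\mathsf{V})$ need not be the one generated by $G$; since, however, $\mathsf{T}^+_c$ in \Cref{Definition T^+_c and T^-} is computed from the same $(\mathsf{U},\mathsf{V})$, the identification in the last step is direct and no equivalence-of-co-t-structures argument is needed.
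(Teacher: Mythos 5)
Your proposal is correct and follows the same route as the paper, which deduces the statement from \Cref{Proposition Closure of compacts is thick} via the generating sequence $\mathcal{G}^n=\{\Sigma^n G\}$ and the metric $\mathcal{R}_n=\Sigma^{-n}\mathsf{U}$ (cf.\ \Cref{Remark 1 on co-approx and G-approx}); you have merely written out the verifications of orthogonality and of \Cref{Definition of G-preapproximability}(0)--(2) that the paper leaves implicit. Your closing observation that the argument works for the given $(\mathsf{U},\mathsf{V})$ directly, without passing to the co-t-structure generated by $G$, is accurate and is in fact a point the paper's one-line citation glosses over.
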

\begin{proof}
    Follows from \Cref{Remark 1 on co-approx and G-approx} and \Cref{Proposition Closure of compacts is thick}.
\end{proof}

\begin{corollary}
    Let $\mathsf{T}$ be a weakly co-approximable triangulated category, with compact generator $G$, co-t-structure $(\mathsf{U}, \mathsf{V} )$ and positive integer $A$ satisfying \Cref{Definition of co-approx}(1)-(3). Then, for any object $F \in \mathsf{T}^+_c \cap \mathsf{U}$ (\Cref{Definition T^+_c and T^-}), there exists a sequence $\{E_i \to E_{i+1}\}_{i \geq 1}$ mapping to $F$ such that $E_i \in \langle G \rangle^{[-B, B + i - 1]}$ and $\operatorname{Cone}(E_i \to F) \in \Sigma^{- i}\mathsf{U}$ for all $i \geq 1$. If $\mathsf{T}$ is co-approximable, we can further arrange the sequence $E_*$ so that $E_i \in \langle G \rangle^{[-B, B + i - 1]}_{iB}$, see \Cref{Notation from Neeman's paper}.
    
    The analogous result also holds for (weak) quasiapprox. In all of these cases, the non-canonical map $\hocolim{E}_i \to F$ is an isomorphism.
\end{corollary}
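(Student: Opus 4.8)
The plan is to reduce the statement to \Cref{Corollary 4 of section 5} via the dictionary between co-t-structures and generating sequences. First I would set $\mathcal{G}^n \colonequals \{\Sigma^n G\}$ and recall that, by \Cref{Theorem Co-Approximability and G-Approximability}, weak co-approximability of $\mathsf{T}$ is the same as weak $\mathcal{G}$-approximability, which by \Cref{Proposition 3 of section 5} implies weak $\mathcal{G}$-quasiapproximability; likewise co-approximability gives $\mathcal{G}$-quasiapproximability. Under the dictionary of \Cref{Remark 1 on co-approx and G-approx} and \Cref{Remark 2 on co-approx and G-approx }, one has $\overline{\mathsf{T}^c} = \mathsf{T}^+_c$ and the (orthogonal) metric $\mathcal{R}^{\mathcal{G}}$ is given by $\mathcal{R}^{\mathcal{G}}_n = \Sigma^{-n}\mathsf{U}_G$, where $\mathsf{U}_G$ is the aisle of the co-t-structure generated by $G$; by \Cref{Proposition G-approximability from orthogonal metric in preferred equivalence class} we may apply \Cref{Corollary 4 of section 5} with this metric.

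Next I would use \Cref{Proposition co-approximability implies preferred equivalence class}: the given co-t-structure $(\mathsf{U},\mathsf{V})$ lies in the preferred equivalence class, so there is an integer $a > 0$ with $\Sigma^{-a}\mathsf{U}_G \subseteq \mathsf{U} \subseteq \Sigma^{a}\mathsf{U}_G$. In particular $F \in \mathsf{U} \subseteq \Sigma^a\mathsf{U}_G = \mathcal{R}^{\mathcal{G}}_{-a}$, and $F \in \mathsf{T}^+_c = \overline{\mathsf{T}^c}$, so \Cref{Corollary 4 of section 5} (applied with $i = -a$) produces a sequence of compact objects $E_1 \to E_2 \to \cdots$ mapping to $F$, with $E_n \in \mathcal{G}^{[-B'-n+a+1,\,B'+a]}$, with $\operatorname{Cone}(E_n \to F) \in \mathcal{R}^{\mathcal{G}}_{n-a} = \Sigma^{a-n}\mathsf{U}_G$ for all $n$, and with $\hocolim E_n \cong F$; if $\mathsf{T}$ is moreover co-quasiapproximable one may take $E_n \in \mathcal{G}^{[-B'-n+a,\,B'+a]}_{nB'}$.

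Finally I would translate back and re-index. From $\Sigma^{-a}\mathsf{U}_G \subseteq \mathsf{U}$ one gets $\Sigma^{a-n}\mathsf{U}_G \subseteq \Sigma^{2a-n}\mathsf{U}$, so putting $E_i' \colonequals E_{i+2a}$ gives $\operatorname{Cone}(E_i' \to F) \in \Sigma^{-i}\mathsf{U}$; and since $\mathcal{G}^{[c,d]} = \langle G \rangle^{[-d,-c]}$ (because $\mathcal{G}^n = \{\Sigma^n G\}$), the containment of $E_{i+2a}$ becomes $E_i' \in \langle G \rangle^{[-(B'+a),\,B'+a+i-1]}$, so the first assertion holds with $B \colonequals B'+a$. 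In the co-quasiapproximable case one additionally checks $(i+2a)B' \le (2a+1)iB'$ for $i \ge 1$, in order to absorb the index shift into the bound on the number of extensions; this gives $E_i' \in \langle G \rangle^{[-B,\,B+i-1]}_{iB}$ after suitably enlarging $B$. Passing from $\{E_n\}$ to the cofinal subsequence $\{E_{i+2a}\}$ does not change the homotopy colimit, so $\hocolim E_i' \to F$ is still an isomorphism, and the (weak) quasiapprox variant is verbatim the same. There is no genuinely hard step here: the only point needing care --- the ``main obstacle'', such as it is --- is this bookkeeping, namely converting the canonical metric $\mathcal{R}^{\mathcal{G}}$ attached to $\mathsf{U}_G$ to the possibly different aisle $\mathsf{U}$ of the hypothesised co-t-structure, and confirming that the resulting index shift is absorbed by enlarging the constant $B$.
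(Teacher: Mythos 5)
Your proposal is correct and is essentially the paper's proof, which simply cites \Cref{Remark 1 on co-approx and G-approx}, \Cref{Theorem Co-Approximability and G-Approximability}, and \Cref{Corollary 4 of section 5}. The only difference is that you route through the compactly generated co-t-structure $\mathsf{U}_G$ and then shift indices by the equivalence constant $a$, whereas the proof of \Cref{Theorem Co-Approximability and G-Approximability} already shows one may take the orthogonal metric to be $\mathcal{R}_i = \Sigma^{-i}\mathsf{U}$ for the given $(\mathsf{U},\mathsf{V})$ itself, so \Cref{Corollary 4 of section 5} applies directly with $i=0$ and no re-indexing is needed; your extra bookkeeping is harmless but avoidable.
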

\begin{proof}
    Follows from \Cref{Remark 1 on co-approx and G-approx}, \Cref{Theorem Co-Approximability and G-Approximability}, and \Cref{Corollary 4 of section 5}.
\end{proof}

\subsection{Examples coming from algebraic geometry}

Now, we give a class of examples of co-approximable triangulated categories, see \Cref{Definition of co-approx}. We begin with some conventions.
\begin{notation}
    Let $X$ be a noetherian scheme. Then, 
    \begin{enumerate}
        \item $(\mathbf{D}({\operatorname{Qcoh}}(X))^{\leq 0},\mathbf{D}({\operatorname{Qcoh}}(X))^{\geq 0})$ denotes the standard t-structure on $\mathbf{D}({\operatorname{Qcoh}}(X))$, where the truncation triangles are given by the soft or the canonical truncation of complexes.
        \item $(\mathbf{K}(\operatorname{Inj}\text{-}X)^{\geq 0},\mathbf{K}(\operatorname{Inj}\text{-}X)^{\leq 0})$ denotes the standard co-t-structure on $\mathbf{K}(\operatorname{Inj}\text{-}X)$, where the truncation triangles are given by the hard or the brutal truncation of complexes.
        
    \end{enumerate}
\end{notation}

\begin{lemma}\label{Lemma coaisle of derived category and "aisle" of homotopy category}
    Let $X$ be a noetherian scheme and $p : \mathbf{K}(\operatorname{Inj}\text{-}X) \to \mathbf{D}({\operatorname{Qcoh}}(X))$ be the Verdier quotient map, see \cite[Proposition 3.6]{Krause:2005}. Then, this restricts to an equivalence $ p : \mathbf{K}(\operatorname{Inj}\text{-}X)^{\geq 0} \to \mathbf{D}({\operatorname{Qcoh}}(X))^{\geq 0} $.
\end{lemma}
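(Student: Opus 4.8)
The plan is to exhibit the functor $p$ restricted to the standard "aisle" $\mathbf{K}(\operatorname{Inj}\text{-}X)^{\geq 0}$ as an equivalence onto $\mathbf{D}(\operatorname{Qcoh}(X))^{\geq 0}$ by constructing an explicit quasi-inverse via injective resolutions, and checking the two are mutually inverse up to natural isomorphism. First I would recall that the Verdier quotient $p : \mathbf{K}(\operatorname{Inj}\text{-}X) \to \mathbf{D}(\operatorname{Qcoh}(X))$ has as kernel the acyclic complexes of injectives, and that by Krause's theorem (\cite[Proposition 3.6]{Krause:2005} and the surrounding material) $p$ admits a right adjoint $i$ which sends a complex to a "K-injective-type" resolution in $\mathbf{K}(\operatorname{Inj}\text{-}X)$; since $X$ is noetherian, bounded-below complexes of quasicoherent sheaves have honest injective resolutions lying in $\mathbf{K}(\operatorname{Inj}\text{-}X)$, and these resolutions are termwise injective and can be taken bounded below. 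So the candidate quasi-inverse is: send $F \in \mathbf{D}(\operatorname{Qcoh}(X))^{\geq 0}$ to a bounded-below injective resolution $I^\bullet$ of $F$, which manifestly lies in $\mathbf{K}(\operatorname{Inj}\text{-}X)^{\geq 0}$ (it is concentrated in non-negative degrees after suitable truncation, or more precisely the brutal truncation below $0$ of a resolution of a complex with cohomology in degrees $\geq 0$ can be chosen in degrees $\geq 0$).

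The key steps, in order, are: (1) Observe that $p$ is well-defined on the aisle: if $E^\bullet$ is a complex of injectives concentrated in degrees $\geq 0$, then its image in $\mathbf{D}(\operatorname{Qcoh}(X))$ has cohomology concentrated in degrees $\geq 0$ (the brutal truncation only kills negative-degree terms, and cohomology of a complex in non-negative degrees sits in non-negative degrees), so $p(\mathbf{K}(\operatorname{Inj}\text{-}X)^{\geq 0}) \subseteq \mathbf{D}(\operatorname{Qcoh}(X))^{\geq 0}$. (2) Full faithfulness on the aisle: for $E^\bullet, E'^\bullet$ complexes of injectives in degrees $\geq 0$, one must show $\operatorname{Hom}_{\mathbf{K}(\operatorname{Inj}\text{-}X)}(E^\bullet, E'^\bullet) \xrightarrow{\sim} \operatorname{Hom}_{\mathbf{D}}(pE^\bullet, pE'^\bullet)$. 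Since $E'^\bullet$ is a bounded-below complex of injectives, it is K-injective, so $\operatorname{Hom}_{\mathbf{K}}(E^\bullet, E'^\bullet) = \operatorname{Hom}_{\mathbf{D}(\operatorname{Qcoh}(X))}(E^\bullet, E'^\bullet)$ already (maps into a K-injective complex are computed in the homotopy category), and this is exactly the target Hom since $p$ is the identity on objects up to the quotient and $pE'^\bullet \cong E'^\bullet$ in $\mathbf{D}$. (3) Essential surjectivity on the aisle: given $F \in \mathbf{D}(\operatorname{Qcoh}(X))^{\geq 0}$, choose a quasi-isomorphism $F \to I^\bullet$ with $I^\bullet$ a complex of injectives in degrees $\geq 0$ (possible since $X$ noetherian and $F$ cohomologically bounded below — take a Cartan–Eilenberg or a direct injective resolution and brutally truncate, noting the truncation stays quasi-isomorphic to $F$ because $F$ has no cohomology below $0$); then $I^\bullet \in \mathbf{K}(\operatorname{Inj}\text{-}X)^{\geq 0}$ and $p(I^\bullet) \cong F$.

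I expect the main obstacle to be step (3), specifically the care needed in arranging the injective resolution of a bounded-below \emph{complex} (not just a module) to land in non-negative degrees while remaining quasi-isomorphic to $F$: one needs that for $F$ with $H^i(F) = 0$ for $i < 0$, a functorial injective resolution $F \to I^\bullet$ can be replaced by its brutal truncation $\sigma^{\geq 0} I^\bullet$ without changing the quasi-isomorphism class, which uses that the "bad" part $\sigma^{<0}I^\bullet$ is itself acyclic (this is where $H^i(F)=0$ for $i<0$ enters), hence a complex of injectives that is acyclic and bounded above by $-1$, so its inclusion into $I^\bullet$ is split off in $\mathbf{K}(\operatorname{Inj}\text{-}X)$ up to the relevant quasi-isomorphism. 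The remaining routine check is that the quasi-inverse constructed this way is natural and that the two composites are isomorphic to the respective identities, which follows formally once (1)–(3) are in place, again invoking the K-injectivity of bounded-below complexes of injectives on a noetherian scheme. Throughout, the only external input is Krause's description of $\mathbf{K}(\operatorname{Inj}\text{-}X)$ and its relation to $\mathbf{D}(\operatorname{Qcoh}(X))$ via $p$, already cited in the statement.
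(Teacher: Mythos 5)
Your proposal is correct and follows essentially the same route as the paper: the paper's proof is a one-line appeal to the fact that for a bounded-below complex $F$ of injective quasicoherent sheaves and an arbitrary complex $E$ of injectives, $\operatorname{Hom}_{\mathbf{K}(\operatorname{Inj}\text{-}X)}(E,F)\cong\operatorname{Hom}_{\mathbf{D}(\operatorname{Qcoh}(X))}(E,F)$ (K-injectivity of bounded-below complexes of injectives), which is exactly your step (2), with essential surjectivity via bounded-below injective resolutions as in your step (3). Your write-up just makes explicit the routine details the paper leaves to the reader.
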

\begin{proof}
    This follows trivially as $\Hom{\mathbf{K}(\operatorname{Inj}\text{-}X)}{E}{F} \cong \Hom{\mathbf{D}({\operatorname{Qcoh}}(X))}{E}{F}$ if $F$ is a bounded below complex of injective quasicoherent sheaves, and $E$ is an arbitrary complex of injective quasicoherent sheaves, see \cite[Tag 05TG]{StacksProject}.
\end{proof}

\begin{theorem}\label{Theorem Kinj co-approx}
    Let $X$ be a noetherian scheme. Assume there exists $G \in \mathbf{D^b_{\operatorname{coh}}}(X)$ such that $\mathbf{D}(\operatorname{Qcoh}(X)) = \overline{\langle G \rangle}_N$ for some positive integer $N$, see \Cref{Notation from Neeman's paper}. Then $\mathbf{K}(\operatorname{Inj}\text{-}X)$ is co-approximable, see \Cref{Definition of co-approx}.    
\end{theorem}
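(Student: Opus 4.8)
The plan is to verify the three defining conditions of \Cref{Definition of co-approx} directly (equivalently, one could check $\mathcal{G}$-approximability in the sense of \Cref{Definition of G-approximability} via \Cref{Theorem Co-Approximability and G-Approximability}), using the standard co-t-structure $(\mathsf{U},\mathsf{V})=(\mathbf{K}(\operatorname{Inj}\text{-}X)^{\geq 0},\mathbf{K}(\operatorname{Inj}\text{-}X)^{\leq 0})$ coming from brutal truncation. First I would reduce to a single compact generator: recall $\mathbf{K}(\operatorname{Inj}\text{-}X)^c\simeq\mathbf{D}^{b}_{\operatorname{coh}}(X)$ via injective resolution, and from the hypothesis $\mathbf{D}(\operatorname{Qcoh}(X))=\overline{\langle G\rangle}_N$ extract that in fact $\mathbf{D}^{b}_{\operatorname{coh}}(X)=\langle G\rangle_{N'}$ for some $N'$ — strong generation of the big derived category by a bounded coherent object descends to strong generation of the bounded derived category, by approximating a bounded coherent complex with coherent subobjects inside the ``big'' building process (a standard fact I would cite). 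Fixing a bounded-below complex of injectives $E$ quasi-isomorphic to $G$, this makes $E$ a strong generator of $\mathbf{K}(\operatorname{Inj}\text{-}X)^c$, hence a compact generator of $\mathbf{K}(\operatorname{Inj}\text{-}X)$.

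Next I would verify conditions (1) and (2) of \Cref{Definition of co-approx}. Since $E$ is bounded below, $\Sigma^{-A}E\in\mathsf{U}$ for $A$ large, giving (1). For (2), any $F\in\mathsf{U}$ is, up to homotopy, a complex of injective quasi-coherent sheaves in non-negative degrees, hence K-injective (a summand of a bounded-below complex of injectives); the preceding lemma identifies $p$ on $\mathsf{U}$ with an equivalence onto $\mathbf{D}(\operatorname{Qcoh}(X))^{\geq 0}$, so $\Hom{\mathbf{K}(\operatorname{Inj}\text{-}X)}{\Sigma^A E}{F}\cong\Hom{\mathbf{D}(\operatorname{Qcoh}(X))}{\Sigma^A G}{p(F)}$, and the latter vanishes once $A$ exceeds the top cohomological degree of $G$, since then $\Sigma^A G\in\mathbf{D}(\operatorname{Qcoh}(X))^{\leq -1}$ while $p(F)\in\mathbf{D}(\operatorname{Qcoh}(X))^{\geq 0}$.

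The heart of the argument is condition (3)$'$ (which also gives (3)). Given $F\in\mathsf{U}$, write $F\cong q(pF)$ with $pF\in\mathbf{D}(\operatorname{Qcoh}(X))^{\geq 0}$, where $q$ is the triangulated, fully faithful K-injective-resolution functor right adjoint to $p$. I would push the canonical truncation triangle $H^0(pF)[0]\to pF\to\tau^{\geq 1}pF\to$ through $q$: the error term $q(\tau^{\geq 1}pF)$ lands in $\mathbf{K}(\operatorname{Inj}\text{-}X)^{\geq 1}=\Sigma^{-1}\mathsf{U}$, as required, so it remains to show the approximating term $q(H^0(pF)[0])$ — the injective resolution of the quasi-coherent sheaf $\mathcal{F}:=H^0(pF)$ — lies in $\overline{\langle E\rangle}^{[-A,A]}_{A}$ for a uniform $A$. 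Writing $\mathcal{F}=\varinjlim_j\mathcal{F}_j$ as a filtered union of coherent subsheaves ($X$ noetherian), one has $\mathcal{F}[0]\cong\hocolim_j\mathcal{F}_j[0]$ in $\mathbf{D}(\operatorname{Qcoh}(X))$; since over a noetherian scheme a coproduct of bounded-below complexes of injectives is again one, $q$ commutes with this coproduct, whence $q(\mathcal{F}[0])\in\bigl(\bigoplus_j q(\mathcal{F}_j[0])\bigr)\star\Sigma\bigl(\bigoplus_j q(\mathcal{F}_j[0])\bigr)$. Each coherent $\mathcal{F}_j[0]$ lies in $\langle G\rangle^{[-M,M]}_{N'}$ with $M$ uniform in $j$ (the standard estimate that building an object of bounded amplitude from $G$ in $N'$ steps needs only boundedly many shifts, see \Cref{Notation from Neeman's paper}), so $q(\mathcal{F}_j[0])\in\langle E\rangle^{[-M,M]}_{N'}$ and $q(\mathcal{F}[0])\in\overline{\langle E\rangle}^{[-M-1,M+1]}_{2N'}$. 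Choosing $A$ to dominate $M+1$, $2N'$ and the bounds from the previous step completes the verification, so $\mathbf{K}(\operatorname{Inj}\text{-}X)$ is co-approximable.

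The step I expect to be the main obstacle is the uniformity in (3)$'$: the approximating object must be built from $E$ with both the number of shifts and the number of cones bounded uniformly over all $F\in\mathsf{U}$. This is precisely where noetherianity of $X$ is needed (so that $q$ interacts correctly with the coproducts bridging coherent and quasi-coherent sheaves, and so that coproducts of injectives remain injective) and where strong — not merely classical — generation of $\mathbf{D}^{b}_{\operatorname{coh}}(X)$ by $G$ enters, through the uniform amplitude estimate. A secondary point requiring care, rather than being immediate, is the Step-1 reduction that big strong generation descends to strong generation of the bounded derived category.
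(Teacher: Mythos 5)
Your overall strategy coincides with the paper's: both proofs take the standard (brutal-truncation) co-t-structure, note that conditions (1) and (2) of \Cref{Definition of co-approx} are routine, identify the coaisle with $\mathbf{D}(\operatorname{Qcoh}(X))^{\geq 0}$, and produce the approximating triangle from the canonical truncation $H^0(F)[0]\to F\to\tau^{\geq 1}F$, so that everything reduces to the uniform containment of the heart of the standard t-structure in $\overline{\langle G\rangle}^{[-A,A]}_{A}$. The paper obtains this uniformity by quoting \cite[Lemma 2.4]{Neeman:2018b}, and it gets compact generation of $\mathbf{K}(\operatorname{Inj}\text{-}X)$ from the fact that $G$ is merely a \emph{classical} generator of $\mathbf{D}^b_{\operatorname{coh}}(X)$ (\cite[Lemma 2.7]{Neeman:2021a}); your Step-1 descent of \emph{strong} generation to the bounded derived category is therefore an unnecessary (and itself nontrivial) detour.

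The genuine gap is in your self-contained derivation of the uniformity statement, which you correctly identify as the heart of the matter. First, the ``standard estimate'' that every coherent sheaf lies in $\langle G\rangle^{[-M,M]}_{N'}$ with $M$ uniform over all coherent sheaves is not an off-the-shelf consequence of $\mathbf{D}^b_{\operatorname{coh}}(X)=\langle G\rangle_{N'}$: strong generation bounds the number of cones but says nothing a priori about the range of shifts used, and this uniform shift bound is essentially equivalent to the lemma you are trying to prove (it is the content of \cite[Lemmas 2.2--2.4]{Neeman:2018b}). Second, the assembly step fails as stated: a quasicoherent sheaf $\mathcal{F}$ on a noetherian scheme is a \emph{filtered} union of its coherent subsheaves, but this system need not have a countable cofinal chain, so $\mathcal{F}[0]$ is not a B\"okstedt--Neeman homotopy colimit of the $\mathcal{F}_j[0]$ and the two-term decomposition $q(\mathcal{F}[0])\in\bigl(\bigoplus_j q(\mathcal{F}_j[0])\bigr)\star\Sigma\bigl(\bigoplus_j q(\mathcal{F}_j[0])\bigr)$, which relies on the ``$1-\text{shift}$'' short exact sequence for sequences indexed by $\mathbb{N}$, is not available. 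Replacing both sketches by the citation of \cite[Lemma 2.4]{Neeman:2018b}, as the paper does, closes the argument.
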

\begin{proof}
    We work with the standard co-t-structure on $\mathbf{K}(\operatorname{Inj}\text{-}X)$ again. The only non-trivial axiom to verify is the existence of approximating triangles (\Cref{Definition of co-approx}(3$'$)). So, let $F \in \mathbf{K}(\operatorname{Inj}\text{-}X)^{\geq 0} \cong \mathbf{D}(\operatorname{Qcoh}(X))^{\geq 0}$, see \Cref{Lemma coaisle of derived category and "aisle" of homotopy category}. We now work in the category $\mathbf{D}(\operatorname{Qcoh}(X))$. Consider the truncation triangle $F^{\leq 0} \to F \to F^{\geq 1} \to \Sigma F^{\leq 0}$ with respect to the standard t-structure on $\mathbf{D}(\operatorname{Qcoh}(X))$. We have that $F^{\leq 0} \in \mathbf{D}(\operatorname{Qcoh}(X))^{\leq 0}\cap \mathbf{D}(\operatorname{Qcoh}(X))^{\geq 0}$. So, by \cite[Lemma 2.4]{Neeman:2018b}, there exists a positive integer $A$, independent of the complex $F \in \mathbf{D}(\operatorname{Qcoh}(X))^{\geq 0}$, such that $F^{\leq 0} \in \overline{\langle G \rangle}^{[-A, A]}_{A}$. As $F^{\geq 1} \in \mathbf{D}(\operatorname{Qcoh}(X))^{\geq 1} \cong \mathbf{K}(\operatorname{Inj}\text{-}X)^{\geq 1}$, the truncation triangle is the one we were after to prove co-approximability. Note that $G$ is a classical generator for $\mathbf{D^b_{\operatorname{coh}}}(X)$ by \cite[Lemma 2.7]{Neeman:2021a}, and hence is a compact generator for $\mathbf{K}(\operatorname{Inj}\text{-}X)$.
\end{proof}

\begin{remark}\label{Remark Kinj standard co-t-structure in preferred equiv class}
    Let $X$ be a noetherian scheme. Assume there exists $G \in \mathbf{D^b_{\operatorname{coh}}}(X)$ such that $\mathbf{D}(\operatorname{Qcoh}(X)) = \overline{\langle G \rangle}_N$ for some positive integer $N$. By the proof of \Cref{Theorem Kinj co-approx}, \Cref{Proposition approximable implies preferred equivalence class}, and \Cref{Theorem Co-Approximability and G-Approximability}, we get that the standard co-t-structures on $\mathbf{K}(\operatorname{Inj}\text{-}X)$ lies in the preferred equivalence class of co-t-structures, see \Cref{Definition preferred equivalence of co-t-structures}. This in turn implies that the metric given by $\{\mathbf{D}^{b}(\operatorname{coh}(X))^{\geq n}\}_{n \in \mathbb{Z}}$ is equivalent to the metric given by $\{\langle G \rangle ^{[n,\infty)}\}_{n \in \mathbb{Z}}$. This also follows immediately from \cite[Lemma 2.3]{Neeman:2018b}
\end{remark}

\begin{remark}\label{Remark Kinj co-approx for stacks and algebras}
    We note here that the arguments in \Cref{Theorem Kinj co-approx} and \Cref{Remark Kinj standard co-t-structure in preferred equiv class} hold in the more general setting of the homotopy category of injective objects of a locally noetherian Grothendieck abelian category using \cite{Krause:2005}. We give two important cases where this is applicable.

    \begin{enumerate}
        \item Let $X$ be a noetherian scheme, and let $\mathcal{A}$ be a coherent algebra over $X$. Assume there exists $G \in \mathbf{D^b_{\operatorname{coh}}}(\mathcal{A})$ such that $\mathbf{D}(\operatorname{Qcoh}(\mathcal{A})) = \overline{\langle G \rangle}_N$ for some positive integer $N$, see \Cref{Notation from Neeman's paper}. Then $\mathbf{K}(\operatorname{Inj}\text{-}\mathcal{A})$ is co-approximable and the standard co-t-structure lies in the preferred equivalence class.
        \item Let $\mathcal{X}$ be a noetherian stack. Again, if there exists an object $G \in \mathbf{D^b_{\operatorname{coh}}}(\mathcal{X})$ such that $\mathbf{D}(\operatorname{Qcoh}(\mathcal{X})) = \overline{\langle G \rangle}_N$ for some positive integer $N$, then $\mathbf{K}(\operatorname{Inj}\text{-}\mathcal{X})$ is co-approximable and the standard co-t-structure lies in the preferred equivalence class. 
    \end{enumerate}

\end{remark}

We now give a class of example of weak co-quasiapproximable triangulated categories. The example in question will be of the mock homotopy category of projectives. Throughout our discussion, we will try to stick to the notation in \cite{Murfet:2008}.

We begin by setting up some conventions. 
\begin{convention}\label{Convention Mock homotopy category of projectives}
    Let $X$ be a noetherian separated scheme. We define a compactly generated co-t-structure $(\mathsf{U},\mathsf{V})$ on the mock homotopy category of projectives $\mathbf{K}_{m}(\operatorname{Proj}\text{-}X)$ defined by the set of compact objects $U_{\lambda}(\operatorname{coh}(X))^{\circ}$, where $U_{\lambda}(-)^{\circ}: \mathbf{D}^b(\operatorname{coh}(X))^{\operatorname{op}} \xrightarrow{\sim} \mathbf{K}_m(\operatorname{Proj}(X))^c $, see \cite[Theorem 7.4]{Murfet:2008}. This immediately gives us an orthogonal metric $\mathcal{R}$ given by $\mathcal{R}_n = \Sigma^{-n} \mathsf{U}$. By \cite[Theorem 2.3.4]{Bondarko:2022}, the metric $\mathcal{R} \cap \mathsf{T}^c$ is $\mathbb{N}$-equivalent to the metric $\mathcal{M}$ on $\mathsf{T}^c$ given by $\mathcal{M}_n = U_\lambda(D^{b}(\operatorname{coh}(X)^{\leq -n})$.
\end{convention}

\begin{lemma}\label{Closure of compacts for the mock homotopy category of projectives}
    Let $X$ be a noetherian, separated scheme. Then, the closure of the compacts $\mathbf{K}_m(\operatorname{Proj}\text{-}X)^+_c = U_\lambda(\mathbf{D}^{-}(\operatorname{coh}(X))^{\circ}$, where we take the closure of the compacts with respect to the metric $\mathcal{R}$ of \Cref{Convention Mock homotopy category of projectives}.
\end{lemma}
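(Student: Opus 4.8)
The plan is to transport the standard truncation description of $\mathbf{D}^-(\operatorname{coh}(X))$ through the functor $U_\lambda(-)^\circ$, proving the two inclusions separately. Throughout write $\mathsf{T}=\mathbf{K}_m(\operatorname{Proj}\text{-}X)$, with the orthogonal metric $\mathcal{R}_n=\Sigma^{-n}\mathsf{U}$ and the metric $\mathcal{M}_n=U_\lambda\big(\mathbf{D}^b(\operatorname{coh}(X))^{\leq -n}\big)^\circ$ as in \Cref{Convention Mock homotopy category of projectives}, and recall from \cite{Murfet:2008} that $U_\lambda(-)^\circ$ refines to an exact functor $\mathbf{D}^-(\operatorname{coh}(X))^{\operatorname{op}}\to\mathsf{T}$ which sends products to coproducts, restricts to the equivalence $\mathbf{D}^b(\operatorname{coh}(X))^{\operatorname{op}}\xrightarrow{\sim}\mathsf{T}^c$, and is conservative. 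The single technical ingredient is a metric comparison away from the compacts: there is a constant $N$ with $U_\lambda\big(\mathbf{D}^-(\operatorname{coh}(X))^{\leq -n}\big)^\circ\subseteq\mathcal{R}_{n-N-1}$ and $\mathcal{R}_n\cap\mathsf{T}^c\subseteq\mathcal{M}_{n-N}$ for all $n$. Indeed, for bounded complexes the inclusion $\mathcal{M}_n\subseteq\mathcal{R}_{n-N}$ (and the reverse one on $\mathsf{T}^c$) is exactly the $\mathbb{N}$-equivalence $\mathcal{M}\sim_{\mathbb{N}}\mathcal{R}\cap\mathsf{T}^c$ supplied by \cite[Theorem 2.3.4]{Bondarko:2022}; a general $C\in\mathbf{D}^-(\operatorname{coh}(X))^{\leq -n}$ is the homotopy limit of its truncations $\tau^{\geq -m}C\in\mathbf{D}^b(\operatorname{coh}(X))^{\leq -n}$, so $U_\lambda(C)^\circ$ is the homotopy colimit of the objects $U_\lambda(\tau^{\geq -m}C)^\circ\in\mathcal{R}_{n-N}$, and $\mathcal{R}_{n-N}$, being closed under coproducts and extensions since $\mathcal{R}$ is an orthogonal metric, absorbs homotopy colimits up to a single shift, landing us in $\mathcal{R}_{n-N-1}$.

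For the inclusion $U_\lambda\big(\mathbf{D}^-(\operatorname{coh}(X))\big)^\circ\subseteq\overline{\mathsf{T}^c}$, let $C\in\mathbf{D}^-(\operatorname{coh}(X))$, say with cohomology in degrees $\leq m$. For each $n$ the truncation triangle $\tau^{\leq -n}C\to C\to\tau^{\geq -n+1}C\to\Sigma\tau^{\leq -n}C$ has $\tau^{\geq -n+1}C\in\mathbf{D}^b(\operatorname{coh}(X))$ and $\tau^{\leq -n}C\in\mathbf{D}^-(\operatorname{coh}(X))^{\leq -n}$; applying $U_\lambda(-)^\circ$ and using the metric comparison yields $U_\lambda(C)^\circ\in\mathsf{T}^c\star\mathcal{R}_{n-N-1}$. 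Since $n$ was arbitrary, $U_\lambda(C)^\circ\in\bigcap_{k\in\mathbb{Z}}\mathsf{T}^c\star\mathcal{R}_k=\overline{\mathsf{T}^c}$, the closure being taken with respect to $\mathcal{R}$.

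For the reverse inclusion, fix $F\in\overline{\mathsf{T}^c}$. First produce a Cauchy sequence of compacts converging to $F$: from $F\in\mathsf{T}^c\star\mathcal{R}_1$ choose a triangle $E_1\to F\to D_1\to\Sigma E_1$ with $E_1\in\mathsf{T}^c$ and $D_1\in\mathcal{R}_1$; inductively, given $E_i\in\mathsf{T}^c$ with $\operatorname{Cone}(E_i\to F)\in\mathcal{R}_{n_i}$, use \Cref{Lemma Compacts orthogonal to the metric} (its hypotheses hold here, as this is a compactly generated co-t-structure) to pick $N_i$ with $\Hom{\mathsf{T}}{E_i}{\mathcal{R}_j}=0$ for $j\geq N_i$, write $F\in\mathsf{T}^c\star\mathcal{R}_{N_i}$ via a triangle $E_{i+1}\to F\to D_{i+1}\to\Sigma E_{i+1}$ so that $E_i\to F$ factors through $E_{i+1}$, and observe by the octahedral axiom that $\operatorname{Cone}(E_i\to E_{i+1})\in\mathcal{R}_{n_i}$ and is compact; by \Cref{Lemma Intersection of the metric is zero}(2) the natural map $\hocolim E_i\to F$ is an isomorphism. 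Now transport this through $U_\lambda(-)^\circ$: write $E_i=U_\lambda(C_i)^\circ$ with $C_i\in\mathbf{D}^b(\operatorname{coh}(X))$ and connecting maps $C_{i+1}\to C_i$. The compact object $\operatorname{Cone}(E_i\to E_{i+1})\in\mathcal{R}_{n_i}\cap\mathsf{T}^c\subseteq\mathcal{M}_{n_i-N}$ translates into the statement that the cone of $C_{i+1}\to C_i$ has cohomology concentrated in degrees $\leq N+1-n_i$, so the tower $\{C_i\}$ is uniformly bounded above and stabilises in every fixed degree as $n_i\to\infty$; hence $C\colonequals\operatorname{holim}C_i\in\mathbf{D}^-(\operatorname{coh}(X))$, and $U_\lambda(C)^\circ$ is the homotopy colimit of the $U_\lambda(C_i)^\circ=E_i$, because $U_\lambda(-)^\circ$ carries the defining product triangle of $\operatorname{holim}C_i$ to the defining coproduct triangle of $\hocolim E_i$. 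Therefore $F\cong\hocolim E_i\cong U_\lambda(C)^\circ$ lies in $U_\lambda\big(\mathbf{D}^-(\operatorname{coh}(X))\big)^\circ$, completing the proof.

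The main obstacle is this last transport step. One needs to know that $U_\lambda(-)^\circ$ is defined, exact, conservative, and product-to-coproduct preserving on $\mathbf{D}^-(\operatorname{coh}(X))^{\operatorname{op}}$ in order to convert the abstract homotopy colimit $\hocolim E_i$ in $\mathsf{T}$ into $U_\lambda$ of an honest complex $\operatorname{holim}C_i$, and one must control the cohomological amplitude of that complex using Bondarko's weight structure — it is precisely the uniform upper bound on the $C_i$ that keeps $\operatorname{holim}C_i$ inside $\mathbf{D}^-(\operatorname{coh}(X))$ rather than merely in the cohomologically left-bounded completion. The remaining steps are routine octahedral and truncation arguments together with the closure properties of orthogonal metrics already established in \Cref{Section Metrics and Completions} and \Cref{Section Generating sequence and orthogonal metrics}.
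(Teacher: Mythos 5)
Your first inclusion is essentially the paper's argument: canonical truncation triangles pushed through $U_\lambda(-)^{\circ}$, and you are in fact more explicit than the paper about why the unbounded tails $U_\lambda\big(\tau^{\leq -n}C\big)^{\circ}$ land in the metric balls $\mathcal{R}_k$. Your second inclusion also begins exactly as the paper's does, realising $F \in \mathbf{K}_m(\operatorname{Proj}\text{-}X)^+_c$ as $\hocolim E_i$ for a Cauchy sequence of compacts $E_i = U_\lambda(C_i)^{\circ}$, and your use of the $\mathbb{N}$-equivalence of $\mathcal{R}\cap\mathsf{T}^c$ with Bondarko's metric to get degreewise stabilisation of the tower $\{C_i\}$ is a sound way to see that the candidate object has bounded-above coherent cohomology.

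The gap is in the final identification $F \cong U_\lambda(C)^{\circ}$. You invoke that $U_\lambda(-)^{\circ}$ is ``product-to-coproduct preserving on $\mathbf{D}^-(\operatorname{coh}(X))^{\operatorname{op}}$'', but $\mathbf{D}^-(\operatorname{coh}(X))$ has no countable products (a countable product of coherent sheaves is not coherent), so the homotopy limit triangle you want to push through the functor does not exist in the category on which $U_\lambda(-)^{\circ}$ is given; $C=\operatorname{holim}C_i$ must be formed in a larger ambient category such as $\mathbf{D}(\operatorname{Qcoh}(X))$. Consequently the isomorphism $\hocolim U_\lambda(C_i)^{\circ} \cong U_\lambda(\operatorname{holim}C_i)^{\circ}$ is precisely the nontrivial content of the lemma and cannot be obtained by ``applying $U_\lambda(-)^{\circ}$ to the defining triangle''; as written it is asserted, not proved. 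This is the step the paper handles by a different device: it works affine-locally, comparing the triangles obtained by applying $\operatorname{\mathcal{H}\textit{om}}(-,\mathcal{O}_X)$ and $U(-^{\circ})$ to the hocolim triangle to see that $F$ is locally a bounded-above complex of vector bundles, and only then produces $E = U(F^{\circ}) \in \mathbf{D}^-(\operatorname{coh}(X))$. To repair your route you would either need to establish the compatibility of $U_\lambda(-)^{\circ}$ with these (co)limits on a suitable enlargement of its source, or construct the comparison map $\hocolim E_i \to U_\lambda(C)^{\circ}$ directly from the maps $C \to C_i$ and verify it is an isomorphism by testing against the compact generators $U_\lambda(\operatorname{coh}(X))^{\circ}$, using that $\Hom{}{C}{K} \cong \colim_i \Hom{}{C_i}{K}$ for $K$ bounded thanks to the degreewise stabilisation.
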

\begin{proof}
    One inclusion is easier to show. For any $F \in D^{-}(\operatorname{coh}(X))$ and any positive integer $n$, we have a triangle $F^{\leq -{n+1}} \to F \to F^{\geq n} \to \Sigma F^{\leq -{n+1}}$ given by the canonical truncation. Applying $U_\lambda(-)^{\circ}$ to these triangles gives us that $U_\lambda(F)^{\circ} \in \mathbf{K}_m(\operatorname{Proj}\text{-}X)^+_c$.
    
    For the other inclusion, first note that if we define the generating sequence $\mathcal{G}$ by $\mathcal{G}_n = U_\lambda(\Sigma^{n}\operatorname{coh}(X))$, then we are in the setting of \Cref{Lemma on strong approximating sequences}. Let $F \in \mathbf{K}_m(\operatorname{Proj}\text{-}X)^+_c$. Then, there exists a strong $\mathbf{K}_m(\operatorname{Proj}\text{-}X)^c$-approximating sequence $\{F_n \to F_{n+1}\}$ such that $\hocolim F_n \cong F$. By the description of compact objects, we have that for all $n$, there exists $E_n \in  \mathbf{D}^b(\operatorname{coh}(X))$ with $F_n = U_\lambda(E_n)^{\circ}$. This gives us a triangle,
    \begin{displaymath}
        \bigoplus U_\lambda(E_n)^{\circ} \to \bigoplus U_\lambda(E_n)^{\circ} \to F \to \bigoplus \Sigma U_\lambda(E_n)^{\circ}
    \end{displaymath}
    By working locally, and comparing the triangles we get by applying $\operatorname{\mathcal{H}\textit{om}}(-,\mathcal{O}_X)$ and $U(-^{\circ})$ to the above triangle gives us that $F$ is locally a complex of vector bundles (that is, affine locally it is isomorphic to a complex of finitely generated projective modules in $\mathbf{K}(\operatorname{Proj}\text{-}R)$). From this, it is easy to show that $U(F^{\circ}) = E$ for some $E \in \mathbf{D}^{-}(\operatorname{coh}(X))$, which is what we needed to show.
\end{proof}

\begin{theorem}\label{Theorem Kmproj co-approx}
    Let $X$ be a noetherian scheme. Assume there exists $G \in \mathbf{D^b_{\operatorname{coh}}}(X)$ such that $\mathbf{D}(\operatorname{Qcoh}(X)) = \overline{\langle G \rangle}_N$ for some positive integer $N$, see \Cref{Notation from Neeman's paper}. Then $\mathbf{K}_{m}(\operatorname{Proj}\text{-}X)$ is co-quasiapprox, see \Cref{Definition of co-quasiapprox}.    
\end{theorem}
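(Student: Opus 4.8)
The plan is to imitate the proof of \Cref{Theorem Kinj co-approx}, transporting everything along the contravariant triangulated equivalence $U_{\lambda}(-)^{\circ}\colon \mathbf{D}^b(\operatorname{coh}(X))^{\operatorname{op}}\xrightarrow{\ \sim\ }\mathbf{K}_m(\operatorname{Proj}\text{-}X)^c$ of \cite[Theorem 7.4]{Murfet:2008}, so that the approximating triangles reduce to canonical truncations in $\mathbf{D}^-(\operatorname{coh}(X))$. Throughout I work with the co-t-structure $(\mathsf{U},\mathsf{V})$ of \Cref{Convention Mock homotopy category of projectives} (so $\mathcal{R}_n=\Sigma^{-n}\mathsf{U}$) and take $H\colonequals U_{\lambda}(G)^{\circ}$ as the compact generator. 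First I would record that $H$ really is a compact generator: by \cite[Lemma 2.7]{Neeman:2021a} the hypothesis $\mathbf{D}(\operatorname{Qcoh}(X))=\overline{\langle G\rangle}_N$ forces $G$ to be a classical generator of $\mathbf{D}^b(\operatorname{coh}(X))$, a triangulated equivalence carries classical generators to classical generators, so $H$ classically generates $\mathbf{K}_m(\operatorname{Proj}\text{-}X)^c$ and hence generates $\mathbf{K}_m(\operatorname{Proj}\text{-}X)$.

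Conditions (1)--(2) of \Cref{Definition of co-quasiapprox} (with $H$ in the role of the generator $G$ there) are then routine. Since $G$ classically generates $\mathbf{D}^b(\operatorname{coh}(X))$ it is comparable, up to a finite shift, to the generating set $\operatorname{coh}(X)$; combining this with the duality and with the $\mathbb{N}$-equivalence $\mathcal{R}\cap\mathsf{T}^c\sim\{U_{\lambda}(\mathbf{D}^b(\operatorname{coh}(X))^{\leq -n})^{\circ}\}_n$ of \Cref{Convention Mock homotopy category of projectives} (which rests on \cite[Theorem 2.3.4]{Bondarko:2022}) produces a single integer $A$ with $\Sigma^{-A}H\in\mathsf{U}$ and $\Hom{\mathbf{K}_m(\operatorname{Proj}\text{-}X)}{\Sigma^A H}{\mathsf{U}}=0$. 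Moreover, by \Cref{Proposition co-approximability implies preferred equivalence class} together with \Cref{Proposition G-approximability from orthogonal metric in preferred equivalence class}, it suffices to verify the approximating-triangle axioms for any single co-t-structure in the preferred equivalence class; I would therefore, at the cost of enlarging $A$, arrange that for compact (indeed for $\mathsf{T}^+_c$-) objects the aisle $\mathsf{U}$ corresponds under $U_{\lambda}(-)^{\circ}$ to the canonical $\mathbf{D}^{\leq 0}$-condition, i.e. use the co-t-structure induced by the canonical t-structure on $\mathbf{D}^-(\operatorname{coh}(X))$.

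The content is condition $(3')$ (which implies $(3)$). Let $F\in\mathsf{U}\cap\mathsf{T}^+_c$. By \Cref{Closure of compacts for the mock homotopy category of projectives}, $\mathsf{T}^+_c=\mathbf{K}_m(\operatorname{Proj}\text{-}X)^+_c=U_{\lambda}(\mathbf{D}^-(\operatorname{coh}(X)))^{\circ}$, so $F\cong U_{\lambda}(E)^{\circ}$ for some $E\in\mathbf{D}^-(\operatorname{coh}(X))$, and after the normalisation above the hypothesis $F\in\mathsf{U}$ says precisely that $\mathcal{H}^i(E)=0$ for $i>0$. I would then take the canonical truncation triangle
\[
\tau^{\leq -1}E\longrightarrow E\longrightarrow \mathcal{H}^0(E)[0]\longrightarrow \Sigma\tau^{\leq -1}E
\]
in $\mathbf{D}^-(\operatorname{coh}(X))$, noting $\tau^{\geq 0}E=\mathcal{H}^0(E)[0]$. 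Since $\mathbf{D}(\operatorname{Qcoh}(X))=\overline{\langle G\rangle}_N$, \cite[Lemma 2.4]{Neeman:2018b} supplies a positive integer $A_0$, independent of $E$ (hence of $F$), with $\mathcal{H}^0(E)\in\langle G\rangle^{[-A_0,A_0]}_{A_0}$ inside $\mathbf{D}^b(\operatorname{coh}(X))$. Applying $U_{\lambda}(-)^{\circ}$ to the truncation triangle (which reverses and rotates it; on $\mathbf{D}^-(\operatorname{coh}(X))$ this is the extension of the duality used in the proof of \Cref{Closure of compacts for the mock homotopy category of projectives}) gives a triangle
\[
U_{\lambda}(\mathcal{H}^0(E))^{\circ}\longrightarrow F\longrightarrow U_{\lambda}(\tau^{\leq -1}E)^{\circ}\longrightarrow \Sigma\,U_{\lambda}(\mathcal{H}^0(E))^{\circ}
\]
in $\mathbf{K}_m(\operatorname{Proj}\text{-}X)$ in which $U_{\lambda}(\mathcal{H}^0(E))^{\circ}\in\langle H\rangle^{[-A_0,A_0]}_{A_0}\subseteq\overline{\langle H\rangle}^{[-A_0,A_0]}_{A_0}$ (as $U_{\lambda}(-)^{\circ}$ is an equivalence onto the compacts and the range $[-A_0,A_0]$ is symmetric under the sign flip of shifts), while $U_{\lambda}(\tau^{\leq -1}E)^{\circ}\in\Sigma^{-1}\mathsf{U}\cap\mathsf{T}^+_c$ (because $\tau^{\leq -1}E$ lies in $\mathbf{D}^{\leq -1}(\operatorname{coh}(X))\subseteq\mathbf{D}^-(\operatorname{coh}(X))$). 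Absorbing the finite shifts from the normalisations into a final $A\geq A_0$ gives the triangle required by \Cref{Definition of co-quasiapprox}$(3')$, so $\mathbf{K}_m(\operatorname{Proj}\text{-}X)$ is co-quasiapproximable.

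The main obstacle I foresee is bookkeeping rather than conceptual: making the $\mathbb{N}$-equivalence between the abstractly-defined co-t-structure of \Cref{Convention Mock homotopy category of projectives} and the hands-on one coming from the canonical t-structure on $\mathbf{D}^-(\operatorname{coh}(X))$ explicit enough (including on $\mathsf{T}^+_c$, not just on $\mathsf{T}^c$) that the error term lands in $\Sigma^{-1}\mathsf{U}\cap\mathsf{T}^+_c$ on the nose, and checking that $U_{\lambda}(-)^{\circ}$ genuinely takes the truncation triangle to a triangle — for which one may, if needed, first replace $E$ by $\tau^{\geq -n}E\in\mathbf{D}^b(\operatorname{coh}(X))$, apply the honest equivalence there, and pass to the homotopy colimit. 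It is also worth noting why the argument yields only co-quasiapproximability: the truncation step uses that $F=U_{\lambda}(E)^{\circ}$ with $E$ of coherent cohomology, which is exactly the condition $F\in\mathsf{T}^+_c$; for a general $F\in\mathsf{U}$ no such description is available, which is the gap alluded to in \Cref{Remark introduction on G-quasiapproximability}.
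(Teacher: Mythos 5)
Your proposal is correct and follows essentially the same route as the paper's proof: identify $U_{\lambda}(G)^{\circ}$ as the compact generator via \cite[Lemma 2.7]{Neeman:2021a}, use the description of $\mathbf{K}_m(\operatorname{Proj}\text{-}X)^+_c$ as $U_{\lambda}(\mathbf{D}^-(\operatorname{coh}(X)))^{\circ}$ to reduce condition $(3')$ to a canonical truncation in $\mathbf{D}^-(\operatorname{coh}(X))$, and put the bounded piece into $\langle G\rangle^{[-A,A]}_A$ by \cite[Lemma 2.4]{Neeman:2018b} before transporting back along the duality. Your explicit remarks on why the argument only yields quasi-approximability and on extending $U_{\lambda}(-)^{\circ}$ beyond bounded complexes address points the paper leaves implicit, but they do not change the argument.
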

\begin{proof}
    We first note that $G$ is a classical generator for $\mathbf{D^b_{\operatorname{coh}}}(X)$ by \cite[Lemma 2.7]{Neeman:2021a}, and hence, $U_\lambda(G)^{\circ}$ is a compact generator for $\mathbf{K}_{m}(\operatorname{Proj}\text{-}X)$. The first two conditions of \Cref{Definition of co-quasiapprox} are easy enough to verify with the co-t-structure $(\mathsf{U},\mathsf{V})$ as in \Cref{Convention Mock homotopy category of projectives}. So, it remains to verify the final condition.

    Let $F \in \mathsf{U}$. Then, by \Cref{Closure of compacts for the mock homotopy category of projectives}, $F = U_\lambda(\widetilde{F})^{\circ}$ for some $\widetilde{F} \in \mathbf{D}^{-}(\operatorname{coh}(X))^{\leq 1}$. We have a triangle $\widetilde{F}^{\leq 2} \to \widetilde{F} \to \widetilde{F}^{\geq 1} \to \Sigma \widetilde{F}^{\leq 2}$ via the canonical truncation. Note that,
    \begin{displaymath}
        \widetilde{F}^{\geq -1} \in \mathbf{D}^b(\operatorname{coh}(X))^{\geq -1} \cap \mathbf{D}^b(\operatorname{coh}(X))^{\leq 1} \subseteq \langle G \rangle_A^{[-A,A]}
    \end{displaymath}
    where the inclusion holds by \cite[Lemma 2.4]{Neeman:2018b}. Applying $U(-)^{\circ}$ we get the required triangle.
    
\end{proof}

\begin{remark}\label{Remark Kmproj standard co-t-structure in preferred equiv class}
    Let $X$ be a noetherian scheme. Assume there exists $G \in \mathbf{D^b_{\operatorname{coh}}}(X)$ such that $\mathbf{D}(\operatorname{Qcoh}(X)) = \overline{\langle G \rangle}_N$ for some positive integer $N$. By the proof of \Cref{Theorem Kinj co-approx}, \Cref{Proposition approximable implies preferred equivalence class}, and \Cref{Theorem Co-Approximability and G-Approximability}, we get that the metric given by $\{\mathbf{D}^{b}(\operatorname{coh}(X))^{\leq -n}\}_{n \in \mathbb{Z}}$ is equivalent to the metric given by $\{\langle G \rangle ^{(-\infty,-n]}\}_{n \in \mathbb{Z}}$. This also follows immediately from \cite[Lemma 2.2]{Neeman:2018b}.
\end{remark}

\begin{theorem}\label{Theorem Examples of co-approximability}
    Let $X$ be a separated noetherian scheme. Then there exists a complex $G$ in $\mathbf{D^b_{\operatorname{coh}}}(X)$ and a positive integer $N$ such that $\mathbf{D}(\operatorname{Qcoh}(X)) = \overline{\langle G \rangle}_N$ in the following cases, 
    \begin{enumerate}
        \item\label{reg} $X$ is regular and finite dimensional.
        \item\label{qexc} $X$ is a quasiexcellent and finite dimensional scheme.
    \end{enumerate} 
    Further, in either of these case, for any Azumaya algebra $\mathcal{A}$, there exists a complex $G$ in $\mathbf{D^b_{\operatorname{coh}}}(\mathcal{A})$ and a positive integer $N$ such that $\mathbf{D}(\operatorname{Qcoh}(\mathcal{A})) = \overline{\langle G \rangle}_N$ by \cite[Corollary 6.1 \& Corollary 6.2]{DeDeyn/Lank/ManaliRahul:2024b}.

    Note that the homotopy category of injectives is co-approximable in these cases by \Cref{Theorem Kinj co-approx} and \Cref{Remark Kinj co-approx for stacks and algebras}.
\end{theorem}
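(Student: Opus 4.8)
The plan is to treat this as a collation result: the substantive input is the \emph{strong generation} of $\mathbf{D}^b_{\operatorname{coh}}(X)$, and the rest is a bootstrap from the bounded derived category of coherent sheaves to $\mathbf{D}(\operatorname{Qcoh}(X))$ with a uniform bound on the number of cone steps. First I would set the stage: for $X$ separated and noetherian one has $\mathbf{D}(\operatorname{Qcoh}(X))\cong\mathbf{D}_{\operatorname{Qcoh}}(X)$, inside which $\mathbf{D}^b_{\operatorname{coh}}(X)$ is a thick subcategory, so a $G\in\mathbf{D}^b_{\operatorname{coh}}(X)$ may be regarded as an object of $\mathbf{D}(\operatorname{Qcoh}(X))$, and such a $G$ is automatically a classical generator of $\mathbf{D}^b_{\operatorname{coh}}(X)$ by \cite[Lemma 2.7]{Neeman:2021a}. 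Then I would record that $\mathbf{D}^b_{\operatorname{coh}}(X)=\langle G\rangle_M$ for suitable $G$ and $M\ge 1$ in each case. In case (1), since $X$ is regular of finite Krull dimension every coherent sheaf has locally bounded projective dimension, so $\mathbf{D}^b_{\operatorname{coh}}(X)=\mathbf{D}^{\operatorname{perf}}(X)$, and the latter is strongly generated by the classical covering-and-gluing argument: $X$ is quasicompact, hence covered by finitely many affine opens $\operatorname{Spec}R_i$ with $R_i$ regular of global dimension $\le\dim X$, where perfect complexes are built from $R_i$ in $\le\dim X+1$ steps, and one glues along a Mayer--Vietoris square; see \cite{Rouquier:2008,Neeman:2021a,DeDeyn/Lank/ManaliRahul:2024a}. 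In case (2) this is Neeman's theorem that $\mathbf{D}^b_{\operatorname{coh}}(X)$ admits a strong generator for every separated noetherian quasiexcellent scheme of finite Krull dimension, \cite{Neeman:2021a}; see also \cite{DeDeyn/Lank/ManaliRahul:2024a}.

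Next I would carry out the bootstrap: from $\mathbf{D}^b_{\operatorname{coh}}(X)=\langle G\rangle_M$ I want to deduce $\mathbf{D}(\operatorname{Qcoh}(X))=\overline{\langle G\rangle}_N$ for a suitable $N$. Since $X$ is noetherian, every complex of quasicoherent sheaves is the filtered colimit of its bounded subcomplexes with coherent terms; via the telescope presentation of a filtered colimit (legitimate because $\operatorname{Qcoh}(X)$ is Grothendieck) such a colimit sits in a triangle whose two outer terms are coproducts of objects of $\mathbf{D}^b_{\operatorname{coh}}(X)=\langle G\rangle_M\subseteq\overline{\langle G\rangle}_M$, and the latter is closed under coproducts, so $\mathbf{D}(\operatorname{Qcoh}(X))\subseteq\overline{\langle G\rangle}_M\star\overline{\langle G\rangle}_M=\overline{\langle G\rangle}_{2M}$ and $N=2M$ works. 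If one prefers to avoid non-sequential colimits, one first writes $F\cong\hocolim_k\tau^{\le k}F$ for the standard t-structure to reduce to bounded-above complexes; the crux — approximating quasicoherent complexes by coherent ones in boundedly many steps — is unchanged. This bootstrap, together with its algebra version, is also carried out in \cite{DeDeyn/Lank/ManaliRahul:2024a}, and combined with \cite[Corollary 6.1 \& Corollary 6.2]{DeDeyn/Lank/ManaliRahul:2024b} it yields the displayed statement for an Azumaya algebra $\mathcal{A}$.

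Finally, the assertion that $\mathbf{K}(\operatorname{Inj}\text{-}X)$ is co-approximable in cases (1) and (2) is immediate: the property just established — $\mathbf{D}(\operatorname{Qcoh}(X))=\overline{\langle G\rangle}_N$ with $G\in\mathbf{D}^b_{\operatorname{coh}}(X)$ — is exactly the hypothesis of \Cref{Theorem Kinj co-approx}, and \Cref{Remark Kinj co-approx for stacks and algebras} supplies the algebra and stacky variants. The genuine difficulty of the theorem sits entirely in the first step, and above all in Neeman's quasiexcellent strong-generation theorem, whose proof is substantial; the only delicate point in the bootstrap is to keep the number of cone steps independent of the (possibly unbounded, possibly of large cardinality) complex, which is precisely what the filtered-colimit-of-coherent-subcomplexes presentation achieves.
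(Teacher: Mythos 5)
There is a genuine gap in your ``bootstrap'' from $\mathbf{D}^b_{\operatorname{coh}}(X)=\langle G\rangle_M$ to $\mathbf{D}(\operatorname{Qcoh}(X))=\overline{\langle G\rangle}_{2M}$, and it sits exactly at the point you flag as unproblematic. The telescope presentation $\bigoplus E_i\to\bigoplus E_i\to\hocolim E_i\to\Sigma\bigoplus E_i$ is available only for \emph{sequential} ($\mathbb{N}$-indexed) colimits; a general filtered colimit admits no such two-term presentation. A general object of $\mathbf{D}(\operatorname{Qcoh}(X))$ is only an \emph{uncountable filtered} colimit of its bounded subcomplexes with coherent terms --- already a single non-coherent quasicoherent sheaf is merely the filtered union of its coherent subsheaves, with no countable cofinal chain in general --- so the inclusion $\mathbf{D}(\operatorname{Qcoh}(X))\subseteq\overline{\langle G\rangle}_M\star\overline{\langle G\rangle}_M$ does not follow. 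Your fallback, writing $F\cong\hocolim_k\tau^{\leq k}F$, does not repair this: each $\tau^{\leq k}F$ is still only a filtered colimit of coherent objects, and trying instead to resolve it by surjections from coproducts of coherent sheaves produces kernels that are again arbitrary quasicoherent sheaves, so the process does not terminate in boundedly many cones. In fact no formal passage from strong generation of $\mathbf{D}^b_{\operatorname{coh}}(X)$ to bounded generation of the big category is known; the substance of the cited theorems lies precisely in establishing the statement for $\mathbf{D}(\operatorname{Qcoh}(X))$.

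The paper's proof is a direct citation of results that prove the \emph{big} statement: \cite[Theorem 7.5]{Rouquier:2008} or \cite[Theorem 7.5]{Neeman:2021c} in the regular case, and \cite[Proof of Main Theorem, page 137]{Aoki:2021} in the quasiexcellent case. Those arguments work by descent to an affine cover together with Koszul-type (respectively, Aoki's compactness) arguments in the big category, and the strong generation of $\mathbf{D}^b_{\operatorname{coh}}(X)$ that you take as your starting point is then \emph{deduced} from the big statement by intersecting with compacts, in the style of \cite[Proposition 1.9 and Lemma 2.7]{Neeman:2021a}. So your logical direction is the reverse of the one that actually works. The correct fix is simply to quote the results giving $\mathbf{D}(\operatorname{Qcoh}(X))=\overline{\langle G\rangle}_N$ directly; the final paragraph of your proposal (feeding this into \Cref{Theorem Kinj co-approx} and \Cref{Remark Kinj co-approx for stacks and algebras}, and quoting \cite{DeDeyn/Lank/ManaliRahul:2024b} for the Azumaya case) is then fine.
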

\begin{proof}
    \begin{enumerate}
        \item  If $X$ is regular and finite dimensional, this is well known, see for example \cite[Theorem 7.5]{Rouquier:2008} or \cite[Theorem 7.5]{Neeman:2021c}.
        \item This is shown in \cite[Proof of Main Theorem, page 137]{Aoki:2021}.
    \end{enumerate}
\end{proof}

The following is the analogous result for stacks.
\begin{theorem}\label{Theorem Examples of co-approximability for stacks}
    Let $\mathcal{X}$ be a noetherian, separated, finite-dimensional, quasiexcellent tame algebraic stack. Then, there exists a complex $G$ in $\mathbf{D^b_{\operatorname{coh}}}(\mathcal{X})$ and a positive integer $N$ such that $\mathbf{D}(\operatorname{Qcoh}(\mathcal{X})) = \overline{\langle G \rangle}_N$.

    Note that the homotopy category of injectives is co-approximable by \Cref{Theorem Kinj co-approx} and \Cref{Remark Kinj co-approx for stacks and algebras}.
\end{theorem}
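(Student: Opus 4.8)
The plan is to reduce the statement to the case of a (noncommutative) coherent algebra over the coarse moduli space of $\mathcal{X}$, and then to invoke the strong generation result for such algebras over quasiexcellent finite-dimensional noetherian schemes, which is the noncommutative counterpart of \Cref{Theorem Examples of co-approximability} (proved by adapting Aoki's argument; cf.\ \cite{DeDeyn/Lank/ManaliRahul:2024b}, which already covers the Azumaya and regular cases). Concretely, I would produce a finite $\mathcal{O}_X$-algebra $\mathcal{A}$ together with an equivalence $\mathbf{D}(\operatorname{Qcoh}(\mathcal{X})) \simeq \mathbf{D}(\operatorname{Qcoh}(\mathcal{A}))$, apply the coherent-algebra result to get $G_0 \in \mathbf{D}^b_{\operatorname{coh}}(\mathcal{A})$ and an integer $N$ with $\mathbf{D}(\operatorname{Qcoh}(\mathcal{A})) = \overline{\langle G_0 \rangle}_N$, and transport $G_0$ across the equivalence to obtain the desired $G \in \mathbf{D}^b_{\operatorname{coh}}(\mathcal{X})$.

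First I would set up the coarse moduli space. Since $\mathcal{X}$ is tame it has finite inertia, so by the Keel--Mori theorem it admits a coarse moduli space $\pi \colon \mathcal{X} \to X$ with $\pi$ proper and quasi-finite; here $X$ is again noetherian, separated, and finite-dimensional, and crucially $X$ is quasiexcellent. The last point follows from the Abramovich--Olsson--Vistoli local structure theorem: \'etale-locally on $X$ one has $\mathcal{X} = [\operatorname{Spec} B / G]$ with $G$ finite linearly reductive, $X = \operatorname{Spec} B^G$, and $B$ finite over $B^G$, so quasiexcellence passes to the ring of invariants (alternatively this is recorded in work of Hall--Rydh). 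Tameness also gives that $\pi_*$ is exact on quasicoherent sheaves, hence $R\pi_* = \pi_*$ has cohomological amplitude $0$, and the Reynolds operator provides a splitting $\pi_*\pi^* \cong \operatorname{id}$.

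Next, using that $\mathcal{X}$ is a tame separated noetherian stack with finite inertia, I would fix a locally free sheaf $\mathcal{E}$ on $\mathcal{X}$ which is a compact generator of $\mathbf{D}_{\operatorname{Qcoh}}(\mathcal{X})$ (via the resolution property, choosing $\mathcal{E}$ large enough to contain all the irreducible characters of the local stabilizer groups; recall $\mathbf{D}_{\operatorname{Qcoh}}(\mathcal{X})$ is compactly generated by Hall--Neeman--Rydh). Set $\mathcal{A} \colonequals \pi_* \mathcal{H}om_{\mathcal{X}}(\mathcal{E},\mathcal{E})$, a coherent $\mathcal{O}_X$-algebra, finite as an $\mathcal{O}_X$-module since $\pi$ is proper and quasi-finite. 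Because $\mathcal{E}$ is locally free and $\pi_*$ is exact, $R\pi_* R\mathcal{H}om_{\mathcal{X}}(\mathcal{E},\mathcal{E})$ is concentrated in degree $0$, so $\mathcal{E}$ is a relative tilting object over $X$ and the functor $R\pi_* R\mathcal{H}om_{\mathcal{X}}(\mathcal{E},-)$ is an equivalence $\mathbf{D}(\operatorname{Qcoh}(\mathcal{X})) \simeq \mathbf{D}(\operatorname{Qcoh}(\mathcal{A}))$. (When $\mathcal{X} = [Y/G]$ is globally a quotient by a finite group one may instead take $\mathcal{A}$ to be the crossed-product algebra, and for a gerbe the relative dual group algebra.) Since $X$ is quasiexcellent and finite-dimensional, the coherent-algebra analogue of \Cref{Theorem Examples of co-approximability} applies to $\mathcal{A}$ (e.g.\ after reducing to the center, which is a finite commutative $\mathcal{O}_X$-algebra and hence the structure sheaf of a quasiexcellent finite-dimensional scheme), giving the required $G_0$ and $N$; transporting through the tilting equivalence finishes the argument.

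The main obstacle is the construction of the global algebra $\mathcal{A}$: it relies on $\mathcal{X}$ having the resolution property (or at least a global presentation as a quotient by a finite group), which is a genuine hypothesis for separated tame stacks rather than an automatic consequence. If that is unavailable, one must instead descend strong generation along an \'etale cover of the coarse space $X$; because strong generation is quantitative, this forces a Mayer--Vietoris/\v{C}ech bookkeeping of the number of cone steps in the spirit of Aoki's reduction arguments, and that descent is the technical heart of the proof. A secondary point to verify is that the coherent-algebra strong generation input is available in exactly the needed generality — finite, not necessarily Azumaya, $\mathcal{O}_X$-algebras over quasiexcellent finite-dimensional $X$ — rather than only in the Azumaya case cited for \Cref{Theorem Examples of co-approximability}.
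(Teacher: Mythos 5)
Your primary route --- building a global locally free generator $\mathcal{E}$ on $\mathcal{X}$, forming the coherent algebra $\mathcal{A} = \pi_*\mathcal{H}om(\mathcal{E},\mathcal{E})$ on the coarse space, and transporting strong generation across the tilting equivalence --- has a genuine gap that you yourself flag: it requires the resolution property (or a global finite quotient presentation) for $\mathcal{X}$, and this is \emph{not} available for a general noetherian, separated, finite-dimensional, quasiexcellent tame stack. The Keel--Mori and Abramovich--Olsson--Vistoli results only give the quotient presentation $[\operatorname{Spec} B/G]$ \'etale-locally on the coarse space, and even for schemes the resolution property is a nontrivial hypothesis; so the tilting-bundle argument cannot be run globally in the stated generality. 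The coherent-algebra strong generation input you need is also not something the paper establishes beyond the Azumaya case, so even granting $\mathcal{E}$ you would owe a further argument there.

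The route that actually works is exactly the one you relegate to a fallback: descend strong generation along the coarse moduli map using the \'etale-local quotient structure, with the quantitative bookkeeping of cone lengths handled by Aoki's descent machinery. This is what the paper does --- it cites Aoki's Corollary 4.5 and Theorem 5.1 together with the local structure result for tame stacks (Hall--Priver, Lemma B.2), following the proof of their Theorem B.1. So your instincts about where the technical heart lies are correct, but as written the proposal proves the theorem only under an extra global hypothesis and leaves the general case as an unexecuted sketch. To repair it, drop the tilting construction entirely and carry out (or precisely cite) the \'etale descent: quasiexcellence of the coarse space follows from the local invariant-ring description as you say, Aoki's theorem gives uniform strong generation of $\mathbf{D}^b_{\operatorname{coh}}$ \'etale-locally, and his descent formalism glues these bounds to a single classical generator $G$ and integer $N$ for $\mathbf{D}(\operatorname{Qcoh}(\mathcal{X}))$.
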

\begin{proof}
    This follows from \cite[Corollary 4.5 \& Theorem 5.1]{Aoki:2021} and \cite[Lemma B.2]{Hall/Priver:2024} by the same proof idea as \cite[Theorem B.1]{Hall/Priver:2024}
\end{proof}

\subsection{Miscellaneous examples}

We now give two examples where the generating sequence does not arise from a single compact generator. Hence, in particular these are not examples of approximability or co-approximability.

The first example is of the derived category of the classifying stack of $\mathbb{G}_m$ over the field $\mathbb{C}$. The derived category $\mathbf{D}_{\operatorname{Qcoh}}(B\mathbb{G}_m)$ is exceptionally simple here. We start with the structure of quasicoherent sheaves on $B\mathbb{G}_m$, which is well known in the literature, see for example \cite[Theorem 2.1 and Theorem 2.15]{Schwarz:2018}.
\begin{lemma}\label{Lemma on quasicoherent sheaves on BG_m}
    For all $i \in \mathbb{Z}$, let $G_i$ be the quasicoherent sheaf corresponding to the 1-dimensional weight $i$ representation. Then,
    \begin{enumerate}
        \item $\Hom{}{G_i}{G_j} = 0$ for all $i \neq j$.
        \item $\Hom{}{G_i}{G_i} \cong \mathbb{C}$ for all $i \in \mathbb{Z}$.
        \item Any quasicoherent sheaf $F$ is isomorphic to a direct sum of copies of $G_i$, that is, $F \cong \bigoplus_{i \in \mathbb{Z}} G_i^{\oplus I_i}$ for sets $I_i$ for all $i \in \mathbb{Z}$.
    \end{enumerate}
    This gives us that in the derived category, $\mathbf{D}_{\operatorname{Qcoh}}(B\mathbb{G}_m)$, we have that, 
    \begin{enumerate}
        \item $\Hom{\mathbf{D}_{\operatorname{Qcoh}}(B\mathbb{G}_m)}{\Sigma^n G_i}{G_j} = 0$ for all $i \neq j$ and for all $n \in \mathbb{Z}$.
        \item $\Hom{\mathbf{D}_{\operatorname{Qcoh}}(B\mathbb{G}_m)}{\Sigma^n G_i}{G_i} = 0 $ for all $n \neq 0$ and for all $i \in \mathbb{Z}$.
        \item $\Hom{\mathbf{D}_{\operatorname{Qcoh}}(B\mathbb{G}_m)}{G_i}{G_i} \cong \mathbb{C}$ for all $i \in \mathbb{Z}$.
    \end{enumerate}
\end{lemma}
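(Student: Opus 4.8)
The three statements about the abelian category $\operatorname{Qcoh}(B\mathbb{G}_m)$ are exactly \cite[Theorem 2.1 and Theorem 2.15]{Schwarz:2018}, so the plan is only to deduce the three derived-category statements from them. The crucial observation is that items (1)--(3) of the abelian part say precisely that $\operatorname{Qcoh}(B\mathbb{G}_m)$ is a \emph{semisimple} abelian category: up to equivalence it is the category of $\mathbb{Z}$-graded $\mathbb{C}$-vector spaces, the $G_i$ being pairwise non-isomorphic simple objects each with endomorphism ring $\mathbb{C}$, and every object a (possibly infinite) coproduct of them. Equivalently, $\operatorname{Qcoh}(B\mathbb{G}_m)$ is the category $\operatorname{Rep}_{\mathbb{C}}(\mathbb{G}_m)$ of representations of the linearly reductive group $\mathbb{G}_m$ over $\mathbb{C}$.

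First I would recall that for this noetherian stack the natural functor gives an equivalence $\mathbf{D}(\operatorname{Qcoh}(B\mathbb{G}_m)) \xrightarrow{\ \sim\ } \mathbf{D}_{\operatorname{Qcoh}}(B\mathbb{G}_m)$, so that (with cohomological grading, $\Sigma = [1]$) the group $\Hom{\mathbf{D}_{\operatorname{Qcoh}}(B\mathbb{G}_m)}{\Sigma^n G_i}{G_j}$ is identified with $\operatorname{Ext}^{-n}_{\operatorname{Qcoh}(B\mathbb{G}_m)}(G_i,G_j)$. Since $\operatorname{Qcoh}(B\mathbb{G}_m)$ is semisimple, every object is both projective and injective, hence $\operatorname{Ext}^{k}(-,-) = 0$ for every $k \neq 0$, while $\operatorname{Ext}^{0}(G_i,G_j) = \Hom{\operatorname{Qcoh}(B\mathbb{G}_m)}{G_i}{G_j}$. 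Feeding in $G_i$ and $G_j$ and invoking items (1)--(2) of the abelian part gives at once $\Hom{\mathbf{D}_{\operatorname{Qcoh}}(B\mathbb{G}_m)}{\Sigma^n G_i}{G_j} = 0$ for $i \neq j$ and all $n \in \mathbb{Z}$, and $\Hom{\mathbf{D}_{\operatorname{Qcoh}}(B\mathbb{G}_m)}{\Sigma^n G_i}{G_i} = 0$ for $n \neq 0$; the remaining case $n = 0$, $i = j$ is item (2), namely $\cong \mathbb{C}$.

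There is no serious obstacle here. The only point that needs a word of justification is the identification of the derived Hom groups in $\mathbf{D}_{\operatorname{Qcoh}}(B\mathbb{G}_m)$ with Ext groups computed inside the abelian category $\operatorname{Qcoh}(B\mathbb{G}_m)$, i.e. the vanishing of the higher cohomology of $\mathbb{G}_m$-representations; this is immediate from linear reductivity of $\mathbb{G}_m$ in characteristic zero. Everything else is the semisimplicity statement of \cite{Schwarz:2018} together with the bookkeeping of the grading shift.
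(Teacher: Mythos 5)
Your proposal is correct. The paper in fact offers no proof of this lemma at all---it is stated as well known, with the citation to Schwarz for the abelian-category part and the derived statements left implicit---and your argument (semisimplicity of $\operatorname{Qcoh}(B\mathbb{G}_m)\simeq$ graded $\mathbb{C}$-vector spaces, the equivalence $\mathbf{D}(\operatorname{Qcoh}(B\mathbb{G}_m))\simeq \mathbf{D}_{\operatorname{Qcoh}}(B\mathbb{G}_m)$, and the identification $\Hom{\mathbf{D}_{\operatorname{Qcoh}}(B\mathbb{G}_m)}{\Sigma^n G_i}{G_j}\cong \operatorname{Ext}^{-n}(G_i,G_j)$) is exactly the standard reasoning the author is tacitly invoking.
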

\begin{remark}\label{Remark on Derived category of BG_m}
    By \Cref{Lemma on quasicoherent sheaves on BG_m}, $\mathbf{D}_{\operatorname{Qcoh}}(B\mathbb{G}_m) = \operatorname{Add}(\{\Sigma^{j}G_i\}_{i,j \in \mathbb{Z}})$. We have a finite generating sequence (\Cref{Definition of generating sequence}) $\mathcal{G}$ on $\mathbf{D}_{\operatorname{Qcoh}}(B\mathbb{G}_m)$, given by 
       \[\mathcal{G}^n \colonequals \{ \Sigma^{n+|i|} G_i : |i| \leq -n\} \cup \{\Sigma^{-n-|i|} G_i : |i| \leq -n\}  \] 
    Note that $\mathcal{G}_n = \varnothing$ for all positive integers $n$.
  
    We can explicitly compute the extended good metric $\mathcal{M}^{\mathcal{G}}$ (see \Cref{Definition of generating sequence}(2)) and $\overline{\langle G \rangle}{[-n,n]}$ (see \Cref{Notation Subcategories for (pre)-generating sequences}) for this generating sequence,
    \begin{enumerate}
        \item $\mathcal{G}(-\infty, -n] = \{\Sigma^j G_i : |j|+|i| \geq |n|\}$ for all $n \leq 0$, and $\mathcal{G}(-\infty, n] = \mathcal{G}(-\infty, 0]$ for all $n \geq 0$.
        \item $\mathcal{M}^{\mathcal{G}}_n = \operatorname{add}(\mathcal{G}(-\infty, -n]), \mathcal{R}^{\mathcal{G}}_{n} = \operatorname{Add}(\mathcal{G}(-\infty, -n])$ for all $n \in \mathbb{Z}$.
        \item For any $n \geq 0$, $\mathcal{G}[-n,n] = \{\Sigma^j G_i : |j|+|i| \leq n\}$.
        \item For all $n \geq 0$, $\overline{\langle G \rangle}{[-n,n]} = \operatorname{Add}(\mathcal{G}[-n,n])$.
    \end{enumerate}
\end{remark}

\begin{theorem}\label{Theorem G-approximability for BG_m}
    The derived category of $B\mathbb{G}_m$ is $\mathcal{G}$-approximable (\Cref{Definition of G-approximability}), with $\mathcal{G}$ being defined as in \Cref{Remark on Derived category of BG_m}.
\end{theorem}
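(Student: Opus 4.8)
The plan is to verify the conditions of $\mathcal{G}$-approximability from \Cref{Definition of G-approximability} (equivalently, conditions (0)--(3) and (3)$'$ of \Cref{Remark conditions of G-approximability}) directly using the explicit description of $\mathbf{D}_{\operatorname{Qcoh}}(B\mathbb{G}_m)$ obtained in \Cref{Lemma on quasicoherent sheaves on BG_m} and \Cref{Remark on Derived category of BG_m}. The key simplifying point is that $\mathbf{D}_{\operatorname{Qcoh}}(B\mathbb{G}_m) = \operatorname{Add}(\{\Sigma^j G_i\}_{i,j \in \mathbb{Z}})$ with all Hom-groups between the $\Sigma^j G_i$ being either $0$ or $\mathbb{C}$, concentrated in a single degree; so every object is (non-canonically) a coproduct of shifts of the $G_i$, and there are no nontrivial extensions to worry about. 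I would take the orthogonal metric to be $\mathcal{R} = \mathcal{R}^{\mathcal{G}}$, so that $\mathcal{R}_n = \operatorname{Add}(\mathcal{G}(-\infty,-n])= \operatorname{Add}(\{\Sigma^j G_i : |i|+|j| \geq n\})$ for $n \geq 0$ (and $\mathcal{R}_n = \mathsf{T}$ for $n \leq 0$), and I would try to show that $A = 1$ works, or if bookkeeping forces it, some small explicit constant.

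First I would dispatch condition (0): from the formula $\mathcal{M}^{\mathcal{G}}_n = \operatorname{add}(\{\Sigma^j G_i : |i|+|j|\geq n\})$ one sees $\mathcal{M}^{\mathcal{G}}_i \subseteq \Sigma \mathcal{M}^{\mathcal{G}}_{i+1} \cup \mathcal{M}^{\mathcal{G}}_{i+1} \cup \Sigma^{-1}\mathcal{M}^{\mathcal{G}}_{i+1}$ directly, since an object $\Sigma^j G_i$ with $|i|+|j| \geq n$ satisfies $|i|+|j\pm 1| \geq n+1$ for the appropriate sign of the shift (when $j \neq 0$), and for $j = 0$ one has $|i| \geq n$ so $|i| \geq n+1$ unless $|i| = n$, in which case $\Sigma^{\pm 1}(\Sigma^0 G_i) = \Sigma^{\pm 1}G_i$ lies in $\mathcal{M}^{\mathcal{G}}_{n+1}$. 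So $A = 1$ suffices here. Condition (1), $\mathcal{M}^{\mathcal{G}}_{i+A} \subseteq \mathcal{R}_i \cap \mathsf{T}^c$, is immediate once $\mathcal{R} = \mathcal{R}^{\mathcal{G}}$ since then $\mathcal{R}_i \cap \mathsf{T}^c = \mathcal{M}^{\mathcal{G}}_i \supseteq \mathcal{M}^{\mathcal{G}}_{i+1}$ by \Cref{Proposition approximable implies preferred equivalence class} or just by the explicit formulas. Condition (2), $\HomT{\mathcal{G}^n}{\mathcal{R}_i} = 0$ for $n + i \geq A$: here I use that $\mathcal{G}^n$ consists of objects $\Sigma^{n+|k|}G_k$ and $\Sigma^{-n-|k|}G_k$ with $|k| \leq -n$ (so $n \leq 0$), while $\mathcal{R}_i$ is an $\operatorname{Add}$ of objects $\Sigma^j G_\ell$ with $|\ell|+|j| \geq i$; by \Cref{Lemma on quasicoherent sheaves on BG_m} a Hom is nonzero only if $\ell = k$ and the total shift matches, i.e. $j = \pm(n+|k|)$, forcing $|k| + |j| = |k| + n + |k|$ or similar, and one checks this is incompatible with $|\ell| + |j| \geq i$ when $n + i \geq 1$. (Coproducts in the target do not cause trouble since the source is compact, so Hom commutes with the coproduct.)

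The substance is conditions (3) and (3)$'$: given $F \in \mathcal{R}_i$, produce a triangle $E \to F \to D \to \Sigma E$ with $D \in \mathcal{R}_{i+1}$ and $E \in \overline{\mathcal{G}}^{[-A-i,A-i]}$ (resp.\ $\overline{\mathcal{G}}^{[-A-i,A-i]}_A$). Write $F \cong \bigoplus_{k,j} (\Sigma^j G_k)^{\oplus I_{k,j}}$ where the sum runs over pairs with $|k|+|j| \geq i$ (this uses that $\mathcal{R}_i = \operatorname{Add}(\mathcal{G}(-\infty,-i])$, and that every object of this $\operatorname{Add}$ really is such a coproduct — which follows from \Cref{Lemma on quasicoherent sheaves on BG_m}(3) applied degreewise, there being no extension problem). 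I would set $E$ to be the subcoproduct over those pairs with $|k|+|j| = i$ exactly (the ``sphere'' of radius $i$), which is a split summand of $F$, so that $D$ is the complementary summand, a coproduct over pairs with $|k|+|j| \geq i+1$, i.e. $D \in \mathcal{R}_{i+1}$, and the triangle is split. It remains to check $E \in \overline{\mathcal{G}}^{[-A-i,A-i]}$: since $E$ is built only from $\Sigma^j G_k$ with $|k|+|j| = i$, and $\mathcal{G}^{-i} \ni \Sigma^{-i+|k|}G_k, \Sigma^{i-|k|}G_k$ for $|k| \leq i$, one sees $\Sigma^j G_k$ with $|k|+|j|=i$ is $\Sigma^{j'}(\text{something in }\mathcal{G}^{-i})$ for $|j'|$ bounded by a small constant depending only on $A$; a short computation of the ranges shows $E \in \overline{\mathcal{G}}^{[-A-i,A-i]}$ for $A$ large enough (I expect $A = 1$ or $A = 2$). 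The split form of the triangle also immediately gives the $\mathcal{G}_A$-version (3)$'$, since $E$, being a coproduct of objects each lying in $\overline{\mathcal{G}}^{[-A-i,A-i]}_1$, lies in $\overline{\mathcal{G}}^{[-A-i,A-i]}_1 \subseteq \overline{\mathcal{G}}^{[-A-i,A-i]}_A$ — there is genuinely no extension-length cost here. The main obstacle is purely bookkeeping: pinning down the precise window $[-A-i, A-i]$ and the smallest $A$ that makes the indexing in \Cref{Remark on Derived category of BG_m}(3)--(4) line up with the definition of $\mathcal{G}^n$; conceptually the result is forced because the category is semisimple-like and the metric is literally the ``radius'' filtration on a coproduct of one-dimensional pieces.
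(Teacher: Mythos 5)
Your proposal is correct and follows essentially the same route as the paper: both verify conditions (0)--(3)$'$ of \Cref{Remark conditions of G-approximability} with the metric $\mathcal{R}^{\mathcal{G}}$ and $A=1$, and both obtain (3)$'$ from the split decomposition $F = E \oplus D$ with $E$ the ``sphere of radius $i$'' summand (which is exactly $\operatorname{Add}(\mathcal{G}^{-i}) \subseteq \overline{\mathcal{G}}^{[-i,-i]}_1 \subseteq \overline{\mathcal{G}}^{[-1-i,1-i]}_1$, settling your remaining bookkeeping with $A=1$).
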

\begin{proof}
    We will show $\mathcal{G}$-approximability using the orthogonal metric $\mathcal{R}^{\mathcal{G}}$ (see \Cref{Definition of generating sequence} for the definition of $\mathcal{R}^{\mathcal{G}}$) and the integer $A=1$. We verify below the conditions of \Cref{Remark conditions of G-approximability},
    \begin{enumerate}
     \setcounter{enumi}{-1}
        \item $\mathcal{M}^{\mathcal{G}}_i = \bigcup_{|n| \leq 1} \Sigma^n \mathcal{M}^{\mathcal{G}}_{i+1}$ for all $i \in \mathbb{Z}$ by \Cref{Remark on Derived category of BG_m}(1)-(2).
        \item $\mathcal{M}^{\mathcal{G}}_i = \mathcal{R}^{\mathcal{G}}_i \cap \mathsf{T}^c$ for all $i \in \mathbb{Z}$, by \Cref{Remark on Derived category of BG_m}(2) and \cite[Proposition 1.9]{Neeman:2021a}.
        \item $\Hom{\mathbf{D}_{\operatorname{Qcoh}}(B\mathbb{G}_m)}{\mathcal{G}^{n}}{\mathcal{R}^{\mathcal{G}}_{i}} = 0$ for all $n + i \geq 1$ by \Cref{Lemma on quasicoherent sheaves on BG_m}.
        \item[(3)$'$] By \Cref{Remark on Derived category of BG_m}, any $F \in \mathcal{R}^{\mathcal{G}}_i$ for $i \geq 0$ can be written as a direct sum $F = E \oplus D$ with $E \in \overline{\mathcal{G}}^{[-i,0]}_1 = \overline{\mathcal{G}}^{[-i,i]}$ and $D \in \mathcal{R}^{\mathcal{G}}_{i+1}$. This gives us the triangle $E \to F \to D \to \Sigma E$ we needed. For $i < 0 $, it is trivial, as $\mathcal{R}^{\mathcal{G}}_{i} = \mathcal{R}^{\mathcal{G}}_{i+1}$.
    \end{enumerate}
\end{proof}

\begin{remark}
    Let $\operatorname{Spec}(R)$ be an affine scheme with a $\mathbb{G}_m$-action such that the corresponding grading on $R$ is bounded, that is, only finitely many of the graded pieces are non-zero. Then, the the derived category of the quotient stack $[\operatorname{Spec}(R)/\mathbb{G}_m]$ can be shown to be $\mathcal{G}$-approximable analogous to the proof of \Cref{Theorem G-approximability for BG_m}. 
\end{remark}

We now give an example of weak $\mathcal{G}$-approximability for a special class of rings. We begin by recalling the definition of a pure-projective module. Pure-projectivity can be defined in a more general setting, but for modules, the following is an equivalent definition.

\begin{definition}
     Let $R$ be a ring. An $R$-module is \emph{pure-projective} if it is the direct summand of a direct sum of finitely presented modules. Given an $R$-module $M$, we define a pure-projective resolution to be a complex $P$ of pure-projective modules such that $P^i = 0$ for all $i > 0$, which is a \emph{pure exact} resolution of $M$, that is, the complex $\Hom{}{N}{P}$ is a resolution of the module $\Hom{}{N}{M}$ for all pure projective modules $N$.
     
     The \emph{pure-projective dimension} of a module is the minimal length of a pure-projective resolution.
     The \emph{pure global dimension} of a ring $R$ is the supremum of the pure-projective dimension of all $R$-modules.
\end{definition}
The following result by Kie{\l}pi\'{n}ski and Simson gives a large class of rings with finite pure global dimension.

\begin{theorem}
    \cite[Theorem 2.2]{Kielpinski/Simson:1975} For a ring of cardinality less than equal to $\aleph_n$, the pure global dimension is less than or equal to $n+1$.
\end{theorem}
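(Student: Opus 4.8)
The plan is to regard this as the \emph{pure} analogue of Auslander's theorem that $\operatorname{gl.dim}(R)\le\operatorname{w.dim}(R)+n+1$ whenever $|R|\le\aleph_n$: in the pure-exact structure every module behaves like a flat one, since a short exact sequence is pure-exact exactly when it stays exact after applying $\operatorname{Hom}_R(N,-)$ for all finitely presented $N$, so the relevant ``weak dimension'' is $0$ and the target bound is $n+1$.

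I would first move the problem into a functor category. The assignment $M\mapsto\widehat M:=\operatorname{Hom}_R(-,M)|_{\operatorname{mod}(R)}$ identifies $\operatorname{Mod}(R)$ with the full subcategory of flat functors in $\mathcal F:=\operatorname{Fun}((\operatorname{mod}R)^{\operatorname{op}},\operatorname{Ab})$; under this identification pure-exact sequences become short exact sequences of flat functors, and the pure-projective modules become precisely the coproducts of representable functors, which are the projectives of $\mathcal F$ lying in this subcategory. Hence $\operatorname{pp.dim}_R(M)$ is the length of a shortest resolution of $\widehat M$ by coproducts of representables, and it suffices to bound this uniformly in terms of the size of $\operatorname{mod}(R)$. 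A finitely presented module is specified by a finite presentation matrix over $R$, and $\operatorname{Hom}$ between two such modules is a subquotient of a finite power of $R$; since $\aleph_n^{k}=\aleph_n$ for finite $k$, the category $\operatorname{mod}(R)$ has at most $\aleph_n$ morphisms when $|R|\le\aleph_n$. So the statement reduces to: a locally finitely presented category with at most $\aleph_n$ finitely presented objects has pure global dimension $\le n+1$.

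I would prove this by transfinite induction on $n$. The base case $n=0$ asks that every module over a countable ring have pure-projective dimension $\le1$. A countable module (equivalently, over a countable ring, a countably generated one) is a filtered colimit of finitely presented modules over a countable directed poset, hence over a cofinal chain $M_0\to M_1\to\cdots$, and the telescope sequence $0\to\bigoplus_i M_i\xrightarrow{1-\mathrm{shift}}\bigoplus_i M_i\to M\to0$ is pure-exact with pure-projective outer terms, because applying $\operatorname{Hom}_R(N,-)$ for finitely presented $N$ turns it into the standard resolution of a colimit of abelian groups. For an arbitrary module over a countable ring one passes to a continuous filtration by countably generated pure submodules (available since purity is a finitary condition, so a closing-off/Löwenheim--Skolem argument produces pure submodules of any prescribed infinite cardinality) and assembles a length-$1$ pure-projective resolution out of those of the factors, the key input being a Hill-type decomposition lemma: a pure submodule of a coproduct of countably generated modules over a countable ring is again a coproduct of countably generated pure submodules. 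The inductive step is modelled on the same two moves: represent $M$ by a continuous pure filtration whose factors are resolved by the inductive hypothesis, then splice the resolutions, the length $\omega_n$ of the filtration being what turns a bound of $n$ on the factors into a bound of $n+1$ on $M$.

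The step I expect to be the genuine obstacle is this last splicing, i.e. proving the ``pure Eklof/Auslander lemma'' together with the cardinal bookkeeping that keeps the dimension from drifting above $n+1$: this is precisely where the hypothesis $|R|\le\aleph_n$ is spent, since it simultaneously bounds the length of the filtration and the supply of finitely presented building blocks, and it is the pure-exact counterpart of the combinatorial core of Auslander's theorem. The Hill-type decomposition used in the base case is the technical lemma that makes the induction start, and getting its exact form right (and checking it survives the inductive refinement of filtrations) is where most of the care will go.
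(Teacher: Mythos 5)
There is nothing in the paper to compare against here: the statement is quoted verbatim from Kie{\l}pi\'{n}ski--Simson and is used as a black box (the paper cites \cite[Theorem 2.2]{Kielpinski/Simson:1975} and gives no argument), so the only meaningful question is whether your sketch reconstructs the external proof. It essentially does. The passage to the functor category $\operatorname{Fun}((\operatorname{mod}R)^{\operatorname{op}},\operatorname{Ab})$, the identification of pure-projective dimension of $M$ with the projective dimension of the flat functor $\operatorname{Hom}_R(-,M)|_{\operatorname{mod}R}$, the bound $|\operatorname{mod}R|\le\aleph_n$, and the reduction to an Osofsky/Auslander-type cardinality induction (telescope at the countable base, continuous pure filtrations plus the Auslander splicing lemma at higher cardinals) is exactly the route of the original paper and of the later treatments by Gruson--Jensen and Simson. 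One point in your last paragraph is imprecise and worth fixing if you were to write this out: the pure analogue of Auslander's lemma \emph{preserves} the bound on the factors of a continuous pure filtration regardless of its length, so ``the length $\omega_n$ of the filtration'' is not what produces the $+1$. The increment enters at the successor steps of the induction on directed posets: the factor $\varinjlim_{I_{\alpha+1}}/\varinjlim_{I_\alpha}$ is a cokernel of a map between objects of pure-projective dimension $\le n$, which a priori only has dimension $\le n+1$, and it is this cokernel (together with the telescope giving $1$ rather than $0$ at the base) that accounts for the final bound $n+1$.
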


\begin{hypothesis}\label{Condition ring finite pure global dimension}
    A noetherian ring $R$ satisfies \Cref{Condition ring finite pure global dimension} if there exists a positive integer $A$ such that every $R$-module $M$ has a resolution by pure-projective modules of length less than or equal to $A$.
\end{hypothesis}

\begin{remark}
    Note that in \Cref{Condition ring finite pure global dimension}, we do not assume that the resolution by pure-projective modules is a pure-exact resolution. Further, note that the \Cref{Condition ring finite pure global dimension} is satisfied by a ring if it has finite pure global dimension.
\end{remark}

\begin{theorem}
    Let $R$ be commutative noetherian ring satisfying \Cref{Condition ring finite pure global dimension}. Then, $\mathbf{K}(\operatorname{Inj}\text{-}R)$ is weakly $\mathcal{G}$-approximable (\Cref{Definition of G-approximability}) for the generating sequence (\Cref{Definition of generating sequence}) given by $\mathcal{G}^i \colonequals \{\Sigma^{i}M : M \in \operatorname{mod}(R)\}$.
\end{theorem}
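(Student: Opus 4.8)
The plan is to verify the four conditions $(0)$--$(3)$ of weak $\mathcal{G}$-approximability listed in \Cref{Remark conditions of G-approximability} for a concrete orthogonal metric coming from brutal truncation; \Cref{Condition ring finite pure global dimension} enters only in condition $(3)$, and the integer $A$ appearing there will be the one supplied by that hypothesis.

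First I would dispatch the structural preliminaries. Since $R$ is noetherian, $\mathbf{K}(\operatorname{Inj}\text{-}R)$ is compactly generated with $\mathsf{T}^c\simeq\mathbf{D}^b(\operatorname{mod}(R))$, a finitely generated module being identified with its injective resolution; every object of $\mathbf{D}^b(\operatorname{mod}(R))$ is a finite iterated extension of stalk complexes $\Sigma^j M$ with $M$ finitely generated, so $\operatorname{smd}(\operatorname{coprod}(\bigcup_i\mathcal{G}^i))=\mathsf{T}^c$, and since $\Sigma^{\pm 1}\mathcal{G}^n=\mathcal{G}^{n\pm 1}$ the second condition of \Cref{Definition of generating sequence} holds; thus $\mathcal{G}$ is a generating sequence. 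For the metric, let $\mathsf{U}$ be the aisle of the compactly generated co-t-structure on $\mathbf{K}(\operatorname{Inj}\text{-}R)$ generated, via \Cref{Theorem Compactly generated co-t-structures}, by the set $\{\Sigma^{-j}(\text{injective resolution of }M):j\ge 0,\ M\in\operatorname{mod}(R)\}$, and set $\mathcal{R}_n\colonequals\Sigma^{-n}\mathsf{U}$. A short argument identifies $\mathsf{U}$ with the full subcategory of objects isomorphic to a complex of injectives concentrated in cohomological degrees $\ge 0$: any $X\in\mathsf{U}$ is orthogonal to its brutal truncation $\sigma^{\le -1}X$, hence (completing the triangle $\sigma^{\ge 0}X\to X\to\sigma^{\le -1}X$) is a retract of $\sigma^{\ge 0}X$, and conversely for $X$ a complex of injectives in degrees $\ge 0$ the connecting object $v$ in its co-t-structure triangle is forced by the orthogonality defining the co-aisle, together with Baer's criterion, to be $\Sigma$ of an injective, whence the map $X\to v$ vanishes and $X$ is again a retract of an object of $\mathsf{U}$; bounded-below complexes of injectives being closed under retracts in $\mathbf{K}(\operatorname{Inj}\text{-}R)$, this gives $\mathsf{U}=\{\text{degrees}\ge 0\}$, so $\mathcal{R}$ is an orthogonal good metric (\Cref{Good Metric}). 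Conditions $(0)$ and $(1)$ of \Cref{Remark conditions of G-approximability} are now immediate, both reducing to $\mathcal{M}^{\mathcal{G}}_i=\Sigma\mathcal{M}^{\mathcal{G}}_{i+1}$ together with the observation that $\Sigma^j(\text{inj.\ res.\ of }M)$ lies in $\mathcal{R}_{-j}\cap\mathsf{T}^c$; and condition $(2)$ holds with $A=1$: for $n+i\ge 1$, $W\in\mathcal{R}_i$ and $M$ finitely generated, $\operatorname{Hom}_{\mathsf{T}}(\Sigma^n(\text{inj.\ res.\ of }M),W)\cong\operatorname{Hom}_{\mathsf{T}}(\text{inj.\ res.\ of }M,\Sigma^{-n}W)$, and $\Sigma^{-n}W$ is a bounded-below complex of injectives in degrees $\ge i+n\ge 1$, hence $K$-injective, so this group is $\operatorname{Hom}_{\mathbf{D}(R)}(M,\Sigma^{-n}W)=0$.

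The content is condition $(3)$, with $A$ now fixed as in \Cref{Condition ring finite pure global dimension}. Given $i$ and $F\in\mathcal{R}_i$, choose a representative of $F$ that is a complex of injectives in degrees $\ge i$; it is $K$-injective and lies in $\mathbf{D}^{\ge i}(R)$, with bottom cohomology $N\colonequals H^i(F)$, an arbitrary $R$-module. By \Cref{Condition ring finite pure global dimension} there is an exact resolution $0\to P_A\to\cdots\to P_0\to N\to 0$ with each $P_k$ pure-projective, i.e.\ a direct summand of a coproduct of finitely generated modules. The complex $P_\bullet=[P_A\to\cdots\to P_0]$ in cohomological degrees $-A,\dots,0$ is quasi-isomorphic to $N$, so an injective resolution $E$ of $\Sigma^{-i}P_\bullet$ is, in $\mathbf{D}(R)$, isomorphic to $\Sigma^{-i}N\cong\tau^{\le i}F$. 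Because coproducts of injectives are injective over the noetherian ring $R$, the injective resolution of a pure-projective module is a summand of a coproduct of injective resolutions of finitely generated modules; combining this with exactness of the injective-resolution functor on $\mathbf{D}^+(R)$, the object $E$ is an iterated extension of objects of the form $\Sigma^{-q}(\text{pure-projective})$ with $i-A\le q\le i$, whence $E\in\overline{\mathcal{G}}^{[-A-i,A-i]}$ (see \Cref{Notation Subcategories for (pre)-generating sequences}). As $F$ is $K$-injective, $\operatorname{Hom}_{\mathsf{T}}(E,F)\cong\operatorname{Hom}_{\mathbf{D}(R)}(\tau^{\le i}F,F)$; taking $\alpha\colon E\to F$ corresponding to the canonical morphism and completing to a triangle $E\xrightarrow{\alpha}F\to D\to\Sigma E$, the localisation $\mathbf{K}(\operatorname{Inj}\text{-}R)\to\mathbf{D}(R)$ identifies $D$ with $\tau^{\ge i+1}F\in\mathbf{D}^{\ge i+1}(R)$. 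It remains to see $D\in\mathcal{R}_{i+1}$: taking $E$ to be a minimal injective resolution (so $E$ lies in degrees $\ge i$), the mapping cone $D$ is a complex of injectives in degrees $\ge i-1$ whose cohomology vanishes in degrees $\le i$; since $D^{i-2}=0$ the differential $D^{i-1}\to D^i$ is injective, its injective source splits off as a direct summand of $D^i$, the resulting contractible length-two subcomplex cancels, and $H^i(D)=0$ permits one more such cancellation, exhibiting $D$ as isomorphic in $\mathbf{K}(\operatorname{Inj}\text{-}R)$ to a complex of injectives in degrees $\ge i+1$. This establishes condition $(3)$ and completes the verification.

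I expect the main difficulty to lie in condition $(3)$, in the passage between $\mathbf{K}(\operatorname{Inj}\text{-}R)$ and $\mathbf{D}(R)$: one must fit the approximating object $E$ into the narrow band $\overline{\mathcal{G}}^{[-A-i,A-i]}$ — which is precisely where the uniform finite pure-projective dimension is indispensable, pure-projective modules being exactly what the finitely generated modules generate under coproducts and summands — while simultaneously arguing that the error term is as small as $\mathcal{R}_{i+1}$ rather than merely $\mathcal{R}_{i-1}$, which rests on the elementary but essential fact that injective submodules split off. A secondary technical point, used already in setting up the metric, is the identification of the aisle of the brutal-truncation co-t-structure with the complexes of injectives in non-negative degrees, which relies on $K$-injectivity of bounded-below complexes of injectives together with Baer's criterion.
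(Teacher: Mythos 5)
Your proposal is correct and follows essentially the same route as the paper: the brutal-truncation co-t-structure supplies the orthogonal metric, conditions (0)--(2) are routine, and condition (3) comes from splitting off $H^i(F)$ and resolving it by pure-projectives of length at most $A$. The only (cosmetic) difference is that the paper transports $F$ through the equivalence $\mathbf{K}(\operatorname{Inj}\text{-}R)^{\geq i}\simeq\mathbf{D}(R)^{\geq i}$ and takes the canonical truncation there, so the error term lands in $\mathcal{R}_{i+1}$ automatically, whereas you build the triangle directly in $\mathbf{K}(\operatorname{Inj}\text{-}R)$ and need the explicit splitting-off-injectives cancellation at the end.
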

\begin{proof}
    We again use the standard co-t-structure $(\mathbf{K}(\operatorname{Inj}\text{-}R)^{\geq 0},\mathbf{K}(\operatorname{Inj}\text{-}R)^{\leq 0})$ on $\mathbf{K}(\operatorname{Inj}\text{-}R)$ to define an orthogonal metric (\Cref{Good Metric}). We define the orthogonal metric $\mathcal{R}$ by setting $\mathcal{R}_i = \mathbf{K}(\operatorname{Inj}\text{-}R)^{\geq n} \colonequals \Sigma^{-n} \mathbf{K}(\operatorname{Inj}\text{-}R)^{\geq 0}$ for all $n \in \mathbb{Z}$. We prove weak $\mathcal{G}$-approximability, see \Cref{Remark conditions of G-approximability}, with this orthogonal metric $\mathcal{R}$, and the integer $A$ given by \Cref{Condition ring finite pure global dimension}.
    \begin{enumerate}
    \setcounter{enumi}{-1}
        \item Note that by definition of $\mathcal{M}^{\mathcal{G}}$ (see \Cref{Definition of generating sequence}(2)), $\mathcal{M}^{\mathcal{G}}_i = \operatorname{smd}(\operatorname{coprod}(\mathcal{G}(-\infty,i]))$, see \Cref{Notation from Neeman's paper} and \Cref{Notation Subcategories for (pre)-generating sequences}. Further, from the definition of $\mathcal{G}$, it is clear that that $\mathcal{G}(-\infty,-i] = \{\Sigma^{-n}M : n \geq i \text{ and } M \in \operatorname{mod}(R)\}$. And so, we get that $\mathcal{M}^{\mathcal{G}}_{i} = \Sigma \mathcal{M}^{\mathcal{G}}_{i+1}$.
        \item Note that $\mathcal{G}(-\infty,-i] = \{\Sigma^{-n}M : n \geq i \text{ and } M \in \operatorname{mod}(R)\} \subseteq \Sigma^{-i} \mathbf{K}(\operatorname{Inj}\text{-}X)^{\geq 0} = \mathcal{R}_i$ for all $i \in \mathbb{Z}$ by definition. And so $\mathcal{M}^{\mathcal{G}}_i = \operatorname{smd}(\operatorname{coprod}(\mathcal{G}(-\infty,i])) \subseteq \mathcal{R}_i$ for all $i \in \mathbb{Z}$. 
        \item $\Hom{}{\mathcal{G}^i}{\mathcal{R}_{n}} = 0$ for all $i+n \geq 1$ using \cite[Lemma 2.1]{Krause:2005}.
        \item Let $F \in \mathcal{R}_i = \mathbf{K}(\operatorname{Inj}\text{-}R)^{\geq i} \cong \mathbf{D}(\operatorname{Qcoh}(X))^{\geq i}$. We take its canonical truncation to get the triangle $\tau^{\leq i}F \to F \to \tau^{\geq i+1}F \to \Sigma \tau^{\leq i}F$. Note that this is the required triangle, as $\tau^{\geq 1}F \in \mathbf{D}(\operatorname{Qcoh}(X))^{\geq i+1} \cong \mathbf{K}(\operatorname{Inj}\text{-}R)^{\geq i+1}$ (\Cref{Lemma coaisle of derived category and "aisle" of homotopy category}) and $\tau^{\leq i}F \cong \Sigma^{-i}M$ for a $R$-module $M$. As $R$ satisfies \Cref{Condition ring finite pure global dimension}, $M \in \langle G \rangle{[0,A]} \subseteq \langle G \rangle{[-A,A]}$, which gives us that $\tau^{\leq i}F \in \langle G \rangle{[-A-i,A-i]}$, which is what we needed.
    \end{enumerate}
\end{proof}

\section{Completion and closure of compacts}
In this section, we will see what completion \`{a} la Neeman gives us for the setting we are interested in, see \Cref{Section Metrics and Completions}, and in particular \Cref{Definition completions}. Further, using these computations, we give some applications of the representability theorems proved in \Cref{Section Representability results}.

We start with a triangulated category $\mathsf{T}$ with a generating sequence $\mathcal{G}$, and compute what the completion machinery gives us when applied to the category of compacts $\mathsf{T}^c$. We recall the relevant definitions again for the reader's convenience in this setup.

\begin{definition}\label{Definition completions for Chapter on completions}\cite[Definition 1.10]{Neeman:2018}
    Let $\mathsf{T}$ be a compactly generated triangulated category with a generating sequence $\mathcal{G}$ (\Cref{Definition of generating sequence}). Further, let $\{\mathcal{L}_i\}_{i \geq 0}$ be a good metric (or equivalently an extended good metric, see \Cref{Good Metric}) on $\mathsf{T}^c$.
    We denote the restricted Yoneda functor by $\mathcal{Y}:\mathsf{T} \to \operatorname{Mod}(\mathsf{T}^c)$, where $\operatorname{Mod}(\mathsf{T}^c)$ denotes $ \Hom{}{[\mathsf{T}^c]^{\operatorname{op}}}{\operatorname{Mod}(\mathbb{Z})}$. Then, we can define the following full subcategories of $\operatorname{Mod}(\mathsf{T}^c)$,
    \begin{enumerate}
        \item $\mathfrak{L}(\mathsf{T}^c) \colonequals \{\colim \mathcal{Y}(E_i) : E_1 \to E_2 \to \cdots \text{ is a Cauchy sequence in } \mathsf{T}^c\}$, see \Cref{Definition Cauchy sequence} for the definition of a Cauchy sequence for a triangulated category with a metric.
        \item $\mathfrak{C}(\mathsf{T}^c) \colonequals \bigcup_{n \geq 1}\mathcal{Y}(\mathcal{M}_n)^{\perp}$.
        \item $\mathfrak{S}(\mathsf{T}^c) \colonequals \mathfrak{L}(\mathsf{T}^c) \cap \mathfrak{C}(\mathsf{T}^c)$. This is a triangulated category by \cite[Theorem 2.11]{Neeman:2018}
    \end{enumerate}
    Note that these three categories remain the same if we replace the metric $\mathcal{L}$ with an equivalent metric, see \Cref{Definition equivalence relation on extended good metrics}.
\end{definition}
Recall that \Cref{Theorem Tc to T is a good extension} gives us a way of computing the completion of $\mathsf{T}^c$. We now compute the categories $\mathfrak{L}'(\mathsf{T}^c)$, $\mathcal{Y}^{-1}(\mathfrak{C}(\mathsf{T}^c))$, and $\mathfrak{S}'(\mathsf{T}^c)$ of \Cref{Theorem Tc to T is a good extension} in the setup we are interested in. Recall from \Cref{Theorem Tc to T is a good extension} that, 
    \[\mathfrak{L}'(\mathsf{T}^c) \colonequals \{F \in \mathsf{T} : F \cong \hocolim E_n \text{ for a Cauchy sequence } E_1 \to E_2 \to \cdots \text{ in } \mathsf{T}^c \}\] 
\begin{proposition}\label{Proposition 1 on L'(Tc)}
    Let $\mathsf{T}$ a triangulated category with a generating sequence $\mathcal{G}$ (\Cref{Definition of generating sequence}), and an orthogonal metric $\mathcal{R}$ (\Cref{Good Metric}) such that for all $n\in \mathbb{Z} $, $\HomT{\mathcal{G}^n}{\mathcal{R}_i} = 0$ for all $i >> 0$. If we consider the compacts with respect to the extended good metric $\mathcal{R} \cap \mathsf{T}^c$, and the closure of the compacts $\overline{\mathsf{T}^c}$ (\Cref{Definition closure of compacts}) with the metric $\mathcal{R}$, then $\mathfrak{L}'(\mathsf{T}^c) = \overline{\mathsf{T}^c}$.
\end{proposition}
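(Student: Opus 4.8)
The plan is to prove the two inclusions $\mathfrak{L}'(\mathsf{T}^c) \subseteq \overline{\mathsf{T}^c}$ and $\overline{\mathsf{T}^c} \subseteq \mathfrak{L}'(\mathsf{T}^c)$ separately, using the results already assembled in the excerpt.

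For the inclusion $\mathfrak{L}'(\mathsf{T}^c) \subseteq \overline{\mathsf{T}^c}$, suppose $F \cong \hocolim E_n$ for a Cauchy sequence $E_1 \to E_2 \to \cdots$ in $\mathsf{T}^c$ with respect to the metric $\mathcal{R} \cap \mathsf{T}^c$. By passing to a subsequence (using \Cref{Definition Cauchy sequence}), I may assume $\operatorname{Cone}(E_n \to E_{n+1}) \in \mathcal{R}_{n+1} \cap \mathsf{T}^c \subseteq \mathcal{R}_{n+1}$ for all $n$. A standard computation with the octahedral axiom, exactly as in the proof of \Cref{Lemma on strong approximating sequences}(1), shows that $\operatorname{Cone}(E_n \to E_m) \in \mathcal{R}_{n+1}$ for all $m > n$, and then that $\operatorname{Cone}(E_n \to F) \in \mathcal{R}_n$ for all $n$ — the key point being that $\mathcal{R}_{n}$ is closed under coproducts since $\mathcal{R}$ is an orthogonal metric, so the $1-\text{shift}$ triangle defining the homotopy colimit stays inside $\mathcal{R}_n$. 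Given this, for each $n$ the triangle $E_n \to F \to \operatorname{Cone}(E_n \to F) \to \Sigma E_n$ exhibits $F \in \mathsf{T}^c \star \mathcal{R}_n$; since this holds for all $n \in \mathbb{Z}$ (for negative $n$ it is automatic as $\mathcal{R}_n \supseteq \mathcal{R}_0$), we get $F \in \bigcap_{n \in \mathbb{Z}} \mathsf{T}^c \star \mathcal{R}_n = \overline{\mathsf{T}^c}$ by \Cref{Definition closure of compacts}.

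For the reverse inclusion $\overline{\mathsf{T}^c} \subseteq \mathfrak{L}'(\mathsf{T}^c)$, let $F \in \overline{\mathsf{T}^c}$. I would reproduce the inductive construction in the proof of \Cref{Lemma on strong approximating sequences}(3): using the definition of $\overline{\mathsf{T}^c}$ repeatedly, together with \Cref{Lemma Compacts orthogonal to the metric} to guarantee that the appropriate composites vanish, one builds a sequence $E_1 \to E_2 \to \cdots$ of compact objects with compatible maps to $F$ such that $\operatorname{Cone}(E_n \to F) \in \mathcal{R}_n$ for all $n$. This sequence is Cauchy with respect to $\mathcal{R} \cap \mathsf{T}^c$: from the triangles $E_n \to F \to \operatorname{Cone}(E_n \to F)$ and $E_{n+1} \to F \to \operatorname{Cone}(E_{n+1} \to F)$ one sees $\operatorname{Cone}(E_n \to E_{n+1}) \in \mathcal{R}_n \star \Sigma^{-1}\mathcal{R}_{n+1} \subseteq \mathcal{R}_{n-1}$, and since each $E_n$ is compact and each cone $\operatorname{Cone}(E_n\to E_{n+1})$ is compact, it lies in $\mathcal{R}_{n-1} \cap \mathsf{T}^c$. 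Finally, \Cref{Lemma Intersection of the metric is zero}(2) applies (its hypotheses are exactly those assumed here, namely $\HomT{\mathcal{G}^n}{\mathcal{R}_i} = 0$ for $i \gg 0$), giving that the non-canonical map $\hocolim E_n \to F$ is an isomorphism, so $F \in \mathfrak{L}'(\mathsf{T}^c)$.

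I expect the main obstacle to be purely bookkeeping: making sure that at every stage the cones and the intermediate objects are genuinely compact (so that the Cauchy condition for the metric \emph{on $\mathsf{T}^c$} is satisfied, not just the weaker condition in $\mathsf{T}$), and keeping the index shifts consistent when invoking \Cref{Good Metric}(2). There is no deep new idea; essentially all the content has already been isolated in \Cref{Lemma on strong approximating sequences}, \Cref{Lemma Compacts orthogonal to the metric}, and \Cref{Lemma Intersection of the metric is zero}, and the proof amounts to observing that a strong $\mathsf{T}^c$-approximating sequence (in the sense of \Cref{Definition strong G-approximating system}) is the same thing as a Cauchy sequence in $\mathsf{T}^c$ whose homotopy colimit is $F$.
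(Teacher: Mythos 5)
Your proposal is correct and follows essentially the same route as the paper: the paper simply observes that Cauchy sequences in $(\mathsf{T}^c,\mathcal{R}\cap\mathsf{T}^c)$ coincide (up to subsequence) with strong $\mathsf{T}^c$-approximating systems, and then quotes \Cref{Lemma on strong approximating sequences}(1) for the inclusion $\mathfrak{L}'(\mathsf{T}^c)\subseteq\overline{\mathsf{T}^c}$ and \Cref{Lemma on strong approximating sequences}(3) together with \Cref{Lemma Intersection of the metric is zero} for the reverse inclusion — exactly the two arguments you unpack. The only difference is presentational: you re-derive the content of those lemmas inline rather than citing them, and your index bookkeeping ($\operatorname{Cone}(E_n\to E_{n+1})\in\mathcal{R}_{n-1}\cap\mathsf{T}^c$) is slightly more generous than necessary but harmless.
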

\begin{proof}
    From \Cref{Lemma on strong approximating sequences}(3), we have that,
    \[\mathfrak{L}'(\mathsf{T}^c) = \{F \in \mathsf{T} : F\text{ has a strong }\mathsf{T}^c \text{-approximating system}\}\]
    noting that any $\mathsf{T}^c$-approximating system (\Cref{Definition of an Approximating Sequence}) is a Cauchy sequence (\Cref{Definition Cauchy sequence}) with respect to the metric $\mathcal{R} \cap \mathsf{T}^c$ and conversely, any Cauchy sequence with respect to $\mathcal{R} \cap \mathsf{T}^c$ has a subsequence which is a strong $\mathsf{T}^c$-approximating system.
    Then, the result immediately follows from \Cref{Lemma on strong approximating sequences}(1) and \Cref{Lemma on strong approximating sequences}(3).
\end{proof}

\begin{proposition}\label{Proposition computation of orthogonal to metric}
    Let $\mathsf{T}$ a triangulated category with a generating sequence $\mathcal{G}$, see \Cref{Definition of generating sequence}. Let $\mathcal{Y} : \mathsf{T} \to \Hom{}{[\mathsf{T}^c]^{\operatorname{op}}}{\operatorname{Mod}(\mathbb{Z})}$ be the restricted Yoneda functor. Then,
    \begin{enumerate}
        \item If we equip $\mathsf{T}^c$ with the metric $\mathcal{M}^{\mathcal{G}}$ (see \Cref{Definition of generating sequence}), then, $\mathcal{Y}^{-1}(\mathfrak{C}(\mathsf{T}^c)) = \big(\mathcal{M}^{\mathcal{G}}\big)^{\perp}$, see \Cref{Definition closure of compacts} for the definition of $\big(\mathcal{M}^{\mathcal{G}}\big)^{\perp}$ and see \Cref{Definition completions for Chapter on completions}(2) for the definition of $\mathfrak{C}(\mathsf{T}^c)$.
        \item If $\mathsf{T}$ is weakly $\mathcal{G}$-quasiapproximable (\Cref{Definition of G-quasiapproximability}), and we equip $\mathsf{T}^c$ with the metric $\mathcal{R}^{\mathcal{G}} \cap \mathsf{T}^c$ (see \Cref{Definition of generating sequence}), then also $\mathcal{Y}^{-1}(\mathfrak{C}(\mathsf{T}^c)) = \big(\mathcal{M}^{\mathcal{G}}\big)^{\perp}$.
    \end{enumerate}
\end{proposition}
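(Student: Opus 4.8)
The plan is to notice that, once $\mathfrak{C}(\mathsf{T}^c)$ is unwound through the restricted Yoneda functor, neither statement really involves the functor category $\operatorname{Mod}(\mathsf{T}^c)$, so everything reduces to comparing two unions of right orthogonals inside $\mathsf{T}$. Since $\mathsf{T}$ carries a generating sequence it is compactly generated, hence has coproducts and $\mathsf{T}^c$ is essentially small, so \Cref{Theorem Tc to T is a good extension} is available. The first step is to record that, for \emph{any} good metric $\{\mathcal{M}_n\}_{n\in\mathbb{Z}}$ on $\mathsf{T}^c$ (restricted to $n\geq 1$ where necessary), one has $\mathcal{Y}^{-1}(\mathfrak{C}(\mathsf{T}^c)) = \bigcup_{n\geq 1}\mathcal{M}_n^{\perp} = \bigcup_{n\in\mathbb{Z}}\mathcal{M}_n^{\perp}$, the orthogonals taken in $\mathsf{T}$: the first equality is \Cref{Theorem Tc to T is a good extension}(2) (alternatively, directly, for compact $M$ the Yoneda lemma gives $\operatorname{Hom}_{\operatorname{Mod}(\mathsf{T}^c)}(\mathcal{Y}(M),\mathcal{Y}(F)) \cong \mathcal{Y}(F)(M) = \HomT{M}{F}$, so $\mathcal{Y}(F)\in\mathcal{Y}(\mathcal{M}_n)^{\perp}$ iff $F\in\mathcal{M}_n^{\perp}$), and the second holds because the good-metric axiom $\mathcal{M}_{n+1}\subseteq\mathcal{M}_n$ makes the subcategories $\mathcal{M}_n^{\perp}$ increase with $n$. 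Before invoking this I will check that the two relevant metrics are good metrics on $\mathsf{T}^c$: $\mathcal{M}^{\mathcal{G}}$ is one by \Cref{Definition of generating sequence}(2), and $\{\mathcal{R}^{\mathcal{G}}_n\cap\mathsf{T}^c\}_n$ is one because $\mathcal{R}^{\mathcal{G}}$ is an orthogonal (hence good) metric on $\mathsf{T}$ by \Cref{Lemma R^G is an orthgonal good metric}(2) and the two axioms of \Cref{Good Metric} survive intersecting with $\mathsf{T}^c$, as $\mathsf{T}^c$ is closed under $\Sigma^{\pm 1}$ and under extensions.

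For part (1) the metric on $\mathsf{T}^c$ is $\mathcal{M}^{\mathcal{G}}$ itself, so the reduction above immediately yields $\mathcal{Y}^{-1}(\mathfrak{C}(\mathsf{T}^c)) = \bigcup_{n\in\mathbb{Z}}(\mathcal{M}^{\mathcal{G}}_n)^{\perp} = (\mathcal{M}^{\mathcal{G}})^{\perp}$, the last equality being the definition of the orthogonal of a metric, see \Cref{Definition closure of compacts} and \Cref{Definition of generating sequence}. So (1) is essentially free.

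For part (2) the metric on $\mathsf{T}^c$ is $\mathcal{R}^{\mathcal{G}}\cap\mathsf{T}^c$, so $\mathcal{Y}^{-1}(\mathfrak{C}(\mathsf{T}^c)) = \bigcup_{n\in\mathbb{Z}}(\mathcal{R}^{\mathcal{G}}_n\cap\mathsf{T}^c)^{\perp}$, and I must match this with $(\mathcal{M}^{\mathcal{G}})^{\perp} = \bigcup_{n\in\mathbb{Z}}(\mathcal{M}^{\mathcal{G}}_n)^{\perp}$. One inclusion is formal and uses no hypothesis: $\mathcal{M}^{\mathcal{G}}_n\subseteq\mathsf{T}^c$ and $\mathcal{M}^{\mathcal{G}}_n\subseteq{}^{\perp}((\mathcal{M}^{\mathcal{G}}_n)^{\perp}) = \mathcal{R}^{\mathcal{G}}_n$, hence $\mathcal{M}^{\mathcal{G}}_n\subseteq\mathcal{R}^{\mathcal{G}}_n\cap\mathsf{T}^c$, so $(\mathcal{R}^{\mathcal{G}}_n\cap\mathsf{T}^c)^{\perp}\subseteq(\mathcal{M}^{\mathcal{G}}_n)^{\perp}$ and therefore $\mathcal{Y}^{-1}(\mathfrak{C}(\mathsf{T}^c))\subseteq(\mathcal{M}^{\mathcal{G}})^{\perp}$. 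For the reverse inclusion I will bring in the weak $\mathcal{G}$-quasiapproximability hypothesis: by \Cref{Proposition approximable implies preferred equivalence class}(2) the metrics $\mathcal{M}^{\mathcal{G}}$ and $\mathcal{R}^{\mathcal{G}}\cap\mathsf{T}^c$ on $\mathsf{T}^c$ are $\mathbb{N}$-equivalent, so there is an integer $N\geq 0$ with $\mathcal{R}^{\mathcal{G}}_{m+N}\cap\mathsf{T}^c\subseteq\mathcal{M}^{\mathcal{G}}_m$ for every $m\in\mathbb{Z}$, whence $(\mathcal{M}^{\mathcal{G}}_m)^{\perp}\subseteq(\mathcal{R}^{\mathcal{G}}_{m+N}\cap\mathsf{T}^c)^{\perp}\subseteq\mathcal{Y}^{-1}(\mathfrak{C}(\mathsf{T}^c))$ for all $m$; taking the union over $m$ gives the reverse inclusion, and (2) follows.

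I do not expect a genuine obstacle: the argument is short bookkeeping resting on \Cref{Theorem Tc to T is a good extension} and on the $\mathbb{N}$-equivalence of \Cref{Proposition approximable implies preferred equivalence class}. The only points that call for care are verifying that $\{\mathcal{R}^{\mathcal{G}}_n\cap\mathsf{T}^c\}_n$ really is a good metric on $\mathsf{T}^c$ (needed to apply \Cref{Theorem Tc to T is a good extension} in part (2)), and pinning down precisely where quasiapproximability enters — only in the inclusion $(\mathcal{M}^{\mathcal{G}})^{\perp}\subseteq\mathcal{Y}^{-1}(\mathfrak{C}(\mathsf{T}^c))$ of part (2), the opposite inclusion holding unconditionally.
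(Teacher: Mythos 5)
Your proof is correct and follows essentially the same route as the paper: part (1) is read off from \Cref{Theorem Tc to T is a good extension}(2), and part (2) reduces to the $\mathbb{N}$-equivalence of $\mathcal{M}^{\mathcal{G}}$ and $\mathcal{R}^{\mathcal{G}}\cap\mathsf{T}^c$ supplied by \Cref{Proposition approximable implies preferred equivalence class}(2). The paper simply invokes the fact that equivalent metrics give the same $\mathfrak{C}$, whereas you spell this out as two inclusions of unions of orthogonals and note that one inclusion is unconditional; the content is the same.
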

\begin{proof}
    \begin{enumerate}
        \item Follows directly from \Cref{Theorem Tc to T is a good extension}(2).
        \item As $\mathsf{T}$ is weakly $\mathcal{G}$-quasiapproximable, the metrics $\mathcal{M}^{\mathcal{G}}$ and $\mathcal{R}^{\mathcal{G}} \cap \mathsf{T}^c$ are equivalent by \Cref{Proposition approximable implies preferred equivalence class}, and hence we are done by (1).
    \end{enumerate}
\end{proof}

\begin{corollary}\label{Corollary Closure of compact for weak G-approximability}
    Let $\mathsf{T}$ be a weakly $\mathcal{G}$-quasiapproximable triangulated category (\Cref{Definition of G-quasiapproximability}). Consider $\mathsf{T}^c$ with the extended good metric $\mathcal{M}^{\mathcal{G}}$, with $\mathcal{M}^{\mathcal{G}}_n = \mathcal{G}^{(-\infty,-n]}$, see \Cref{Definition of generating sequence} and \Cref{Notation Subcategories for (pre)-generating sequences}. Then, $\mathfrak{L}'(\mathsf{T}^c) = \overline{\mathsf{T}^c}$ (\Cref{Definition completions for Chapter on completions}(1)), where the closure of the compacts $\overline{\mathsf{T}^c}$ (see \Cref{Definition closure of compacts}) is with respect to the metric $\mathcal{R}^{\mathcal{G}}$, see \Cref{Definition closure of compacts}. Further, the category $\mathfrak{S}'(\mathsf{T}^c) = \mathsf{T}^b_c$, see \Cref{Definition completions for Chapter on completions}(3) and \Cref{Definition closure of compacts}.
\end{corollary}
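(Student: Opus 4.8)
\emph{Proof idea.} The plan is to read this off \Cref{Proposition 1 on L'(Tc)} together with \Cref{Proposition computation of orthogonal to metric}, which carry the real content; the only things to arrange are (a) that the orthogonal metric $\mathcal{R}^{\mathcal{G}}$ satisfies the standing hypothesis of \Cref{Proposition 1 on L'(Tc)} (namely $\HomT{\mathcal{G}^n}{\mathcal{R}^{\mathcal{G}}_i}=0$ for $i\gg 0$, for each fixed $n$), and (b) the reconciliation of the two extended good metrics on $\mathsf{T}^c$ that occur, the metric $\mathcal{M}^{\mathcal{G}}$ appearing in the statement versus the metric $\mathcal{R}^{\mathcal{G}}\cap\mathsf{T}^c$ used in \Cref{Proposition 1 on L'(Tc)}.

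For (b) I would first invoke \Cref{Proposition approximable implies preferred equivalence class}: since $\mathsf{T}$ is weakly $\mathcal{G}$-quasiapproximable, the metrics $\mathcal{M}^{\mathcal{G}}$ and $\mathcal{R}^{\mathcal{G}}\cap\mathsf{T}^c$ on $\mathsf{T}^c$ are $\mathbb{N}$-equivalent, hence equivalent. For (a) I would fix the orthogonal metric $\mathcal{R}$ and integer $A>0$ from \Cref{Definition of G-quasiapproximability} witnessing weak $\mathcal{G}$-quasiapproximability, and extract from the proof of \Cref{Proposition approximable implies preferred equivalence class}(2) the inclusion $\mathcal{R}^{\mathcal{G}}_{i+A}\subseteq\mathcal{R}_i$ for all $i$ (this is just $\mathcal{R}^{\mathcal{G}}_{i+A}={}^{\perp}((\mathcal{M}^{\mathcal{G}}_{i+A})^{\perp})\subseteq{}^{\perp}(\mathcal{R}_i^{\perp})=\mathcal{R}_i$, using $\mathcal{M}^{\mathcal{G}}_{i+A}\subseteq\mathcal{R}_i$ and orthogonality of $\mathcal{R}$). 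Combined with \Cref{Definition of G-quasiapproximability}(2) this gives, for fixed $n$, that $\HomT{\mathcal{G}^n}{\mathcal{R}^{\mathcal{G}}_i}\subseteq\HomT{\mathcal{G}^n}{\mathcal{R}_{i-A}}=0$ once $i\geq 2A-n$, which is exactly the hypothesis \Cref{Proposition 1 on L'(Tc)} and \Cref{Proposition computation of orthogonal to metric} require of $\mathcal{R}^{\mathcal{G}}$.

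With these in hand I would apply \Cref{Proposition 1 on L'(Tc)} to the orthogonal metric $\mathcal{R}^{\mathcal{G}}$: viewing $\mathsf{T}^c$ with $\mathcal{R}^{\mathcal{G}}\cap\mathsf{T}^c$ and $\overline{\mathsf{T}^c}$ with $\mathcal{R}^{\mathcal{G}}$ gives $\mathfrak{L}'(\mathsf{T}^c)=\overline{\mathsf{T}^c}$. Since Cauchy sequences, and hence $\mathfrak{L}'$, $\mathcal{Y}^{-1}(\mathfrak{C})$ and $\mathfrak{S}'$, are unchanged on replacing the metric on $\mathsf{T}^c$ by an equivalent one (see \Cref{Definition completions}, \Cref{Theorem Tc to T is a good extension}), the equivalence from step (b) lets me recompute $\mathfrak{L}'(\mathsf{T}^c)$ with $\mathcal{M}^{\mathcal{G}}$ and still get $\overline{\mathsf{T}^c}$, proving the first assertion. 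For the second, I use $\mathfrak{S}'(\mathsf{T}^c)=\mathfrak{L}'(\mathsf{T}^c)\cap\mathcal{Y}^{-1}(\mathfrak{C}(\mathsf{T}^c))$ with everything computed for $\mathcal{M}^{\mathcal{G}}$: the first factor is $\overline{\mathsf{T}^c}$ by the above, and $\mathcal{Y}^{-1}(\mathfrak{C}(\mathsf{T}^c))=(\mathcal{M}^{\mathcal{G}})^{\perp}$ by \Cref{Proposition computation of orthogonal to metric}(1), so $\mathfrak{S}'(\mathsf{T}^c)=\overline{\mathsf{T}^c}\cap(\mathcal{M}^{\mathcal{G}})^{\perp}$, which is precisely $\mathsf{T}^b_c$ by \Cref{Definition closure of compacts}.

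The only genuinely non-formal point — the main obstacle — is the Hom-vanishing for $\mathcal{R}^{\mathcal{G}}$ in step (a): it fails for a general generating sequence (e.g. if $\mathcal{G}^n=\mathsf{T}^c$ for all $n$, then $\mathcal{M}^{\mathcal{G}}_i=\mathsf{T}^c$, $(\mathcal{M}^{\mathcal{G}}_i)^{\perp}=0$ and $\mathcal{R}^{\mathcal{G}}_i=\mathsf{T}$), so this is exactly where the weak $\mathcal{G}$-quasiapproximability hypothesis is consumed, through the inclusion $\mathcal{R}^{\mathcal{G}}_{i+A}\subseteq\mathcal{R}_i$ borrowed from \Cref{Proposition approximable implies preferred equivalence class}. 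The rest is bookkeeping with the definitions of $\mathfrak{L}'$, $\mathfrak{C}$ and $\mathfrak{S}'$.
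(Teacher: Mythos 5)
Your proof is correct and follows essentially the same route as the paper: combine \Cref{Proposition approximable implies preferred equivalence class} (the $\mathbb{N}$-equivalence of $\mathcal{M}^{\mathcal{G}}$ and $\mathcal{R}^{\mathcal{G}}\cap\mathsf{T}^c$) with \Cref{Proposition 1 on L'(Tc)} and \Cref{Proposition computation of orthogonal to metric}, then intersect to identify $\mathfrak{S}'(\mathsf{T}^c)$ with $\mathsf{T}^b_c$. Your explicit verification that $\mathcal{R}^{\mathcal{G}}$ satisfies the Hom-vanishing hypothesis of \Cref{Proposition 1 on L'(Tc)}, via the inclusion $\mathcal{R}^{\mathcal{G}}_{i+A}\subseteq\mathcal{R}_i$ and \Cref{Definition of G-quasiapproximability}(2), is a point the paper's proof leaves implicit, and your argument for it is correct.
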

\begin{proof}
    For a weakly $\mathcal{G}$-quasiapproximable triangulated category, the metrics $\mathcal{R}^{\mathcal{G}} \cap \mathsf{T}^c$ and $\mathcal{M}^{\mathcal{G}}$ are equivalent by \Cref{Proposition approximable implies preferred equivalence class}, see \Cref{Definition of generating sequence} for the definition of $\mathcal{R}^{\mathcal{G}}$. So, the first part follows from \Cref{Proposition 1 on L'(Tc)}. Further, by \Cref{Proposition computation of orthogonal to metric}, we get that $\mathcal{Y}^{-1}(\mathfrak{C}(\mathsf{T}^c)) = \left(\mathcal{M}^{\mathcal{G}}\right)^{\perp}$, see \Cref{Definition closure of compacts}. So, 
    \[\mathsf{T}^b_c \colonequals \overline{\mathsf{T}^c} \cap \left(\mathcal{M}^{\mathcal{G}}\right)^{\perp} = \mathfrak{L}'(\mathsf{T}^c) \cap \mathcal{Y}^{-1}(\mathfrak{C}(\mathsf{T}^c)) = \mathfrak{S}'(\mathsf{T}^c)\]
    which is what we needed to show.
\end{proof}

Now, we do a few concrete computations. We start with the following lemma.

\begin{lemma}\label{Lemma Standard co-t-structure is compactly generated}
    Let $X$ be a noetherian scheme. Then the standard co-t-structure on $\mathbf{K}(\operatorname{Inj}\text{-}X)$, which we denote by $(\mathbf{K}(\operatorname{Inj}\text{-}X)^{\geq 0},\mathbf{K}(\operatorname{Inj}\text{-}X)^{\leq 0})$, is compactly generated (see \Cref{Definition of compactly generated co-t-structure}). In fact, $\mathbf{K}(\operatorname{Inj}\text{-}X)^{\leq 0} = \left(\bigcup_{n \geq 1}\Sigma^{-n} \operatorname{Coh}(X)\right)^{\perp}$. 
\end{lemma}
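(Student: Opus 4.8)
The claim is that the standard co-t-structure on $\mathbf{K}(\operatorname{Inj}\text{-}X)$ is the one compactly generated by the injective resolutions of $\operatorname{Coh}(X)$, and more precisely that $\mathbf{K}(\operatorname{Inj}\text{-}X)^{\leq 0} = \big(\bigcup_{n\geq 1}\Sigma^{-n}\operatorname{Coh}(X)\big)^{\perp}$, where by $\Sigma^{-n}\operatorname{Coh}(X)$ we mean the injective resolutions of coherent sheaves, suitably shifted, living inside $\mathbf{K}(\operatorname{Inj}\text{-}X)$. Recall from \Cref{Notation triangulated categories for schemes}(11) that $\mathbf{K}(\operatorname{Inj}\text{-}X)^c$ is equivalent to $\mathbf{D}^b(\operatorname{coh}(X))$ via injective resolution, so $\mathsf{C} \colonequals \bigcup_{n\geq 0}\Sigma^{-n}\operatorname{Coh}(X)$ is a set of compact objects closed under $\Sigma^{-1}$, and \Cref{Theorem Compactly generated co-t-structures} applies: $\big({}^{\perp}(\Sigma\mathsf{V}),\mathsf{V}\big)$ is a co-t-structure, where $\mathsf{V} = \Sigma^{-1}\mathsf{C}^{\perp} = \big(\bigcup_{n\geq 1}\Sigma^{-n}\operatorname{Coh}(X)\big)^{\perp}$. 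So the whole statement reduces to identifying this $\mathsf{V}$ with $\mathbf{K}(\operatorname{Inj}\text{-}X)^{\leq 0}$, the coaisle of the brutal-truncation co-t-structure.

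First I would recall the explicit description of the standard co-t-structure: $\mathbf{K}(\operatorname{Inj}\text{-}X)^{\geq 0}$ consists of complexes of injectives supported in degrees $\geq 0$ (up to homotopy equivalence), and $\mathbf{K}(\operatorname{Inj}\text{-}X)^{\leq 0}$ consists of complexes of injectives supported in degrees $\leq 0$; the truncation triangles come from brutal truncation $\sigma^{\geq 1}F \to F \to \sigma^{\leq 0}F$. The key computational input is the behaviour of $\operatorname{Hom}$ in $\mathbf{K}(\operatorname{Inj}\text{-}X)$ out of bounded-below complexes, exactly as used in \Cref{Lemma coaisle of derived category and "aisle" of homotopy category}: for $E$ an arbitrary complex of injectives and $I$ a bounded-below complex of injectives, $\operatorname{Hom}_{\mathbf{K}(\operatorname{Inj}\text{-}X)}(I,E) \cong \operatorname{Hom}_{\mathbf{D}(\operatorname{Qcoh}(X))}(I,E)$ (cf.\ \cite[Tag 05TG]{StacksProject}). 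Applying this with $I$ the injective resolution of $\Sigma^{-n}\mathcal{F}$ for $\mathcal{F}\in\operatorname{coh}(X)$ and $n\geq 1$, the vanishing $\operatorname{Hom}_{\mathbf{K}(\operatorname{Inj}\text{-}X)}(\Sigma^{-n}\mathcal{F},E)=0$ for all such $n,\mathcal{F}$ is equivalent to $\operatorname{Hom}_{\mathbf{D}(\operatorname{Qcoh}(X))}(\Sigma^{-n}\mathcal{F},E)=0$ for all $n\geq 1$, $\mathcal{F}\in\operatorname{coh}(X)$, i.e.\ to $\operatorname{Hom}_{\mathbf{D}(\operatorname{Qcoh}(X))}(\mathcal{F},\Sigma^{-n}E)=0$ for all $n\geq 1$. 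Since every quasicoherent sheaf is a filtered colimit of coherent ones on a noetherian scheme (and $\mathcal{F}$ compact allows testing via $\operatorname{Hom}$ into $\Sigma^{-n}E$), this says precisely that $H^{-n}(E)=0$ for all $n\geq 1$, i.e.\ $E\in\mathbf{D}(\operatorname{Qcoh}(X))^{\geq 0}$. Via the equivalence $\mathbf{K}(\operatorname{Inj}\text{-}X)^{\geq 0}\cong\mathbf{D}(\operatorname{Qcoh}(X))^{\geq 0}$ of \Cref{Lemma coaisle of derived category and "aisle" of homotopy category} — but being careful that here we want the coaisle $\mathsf{V}$, not the aisle — one gets $\mathsf{V} = \big(\bigcup_{n\geq 1}\Sigma^{-n}\operatorname{Coh}(X)\big)^{\perp} = \mathbf{K}(\operatorname{Inj}\text{-}X)^{\leq 0}$ by matching the cohomological-support description on both sides.

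The one subtlety — and the main thing to get right — is the bookkeeping of which piece of the co-t-structure (aisle $\mathsf{U}_H$ versus coaisle $\mathsf{V}_H$) corresponds to which piece of the standard one, and the fact that \Cref{Lemma coaisle of derived category and "aisle" of homotopy category} identifies $\mathbf{K}(\operatorname{Inj}\text{-}X)^{\geq 0}$ with $\mathbf{D}(\operatorname{Qcoh}(X))^{\geq 0}$, whereas the orthogonality computation most naturally lands us inside $\mathbf{D}(\operatorname{Qcoh}(X))$. I would organize it as: (i) note $\mathsf{C}=\bigcup_{n\geq 0}\Sigma^{-n}\operatorname{Coh}(X)$ is a set of compacts closed under desuspension, so \Cref{Theorem Compactly generated co-t-structures} gives a compactly generated co-t-structure with coaisle $\mathsf{V}=\Sigma^{-1}\mathsf{C}^{\perp}$; (ii) compute $\mathsf{V}$ explicitly using the $\operatorname{Hom}$-comparison for bounded-below complexes of injectives, obtaining $\mathsf{V}=\{E : H^{i}(E)=0 \text{ for } i<0\}$; (iii) observe this is exactly $\mathbf{K}(\operatorname{Inj}\text{-}X)^{\leq 0}$ for the brutal-truncation co-t-structure — here I would invoke that a complex of injectives with cohomology concentrated in non-positive degrees is homotopy equivalent to one supported in non-positive degrees (a K-injective bounded-above complex of injectives argument), or alternatively characterize $\mathbf{K}(\operatorname{Inj}\text{-}X)^{\leq 0}$ directly by the same orthogonality. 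Step (iii) is the expected obstacle, since it requires pinning down the standard co-t-structure intrinsically rather than via chosen representatives; the cleanest route is to show both $\mathsf{V}$ and $\mathbf{K}(\operatorname{Inj}\text{-}X)^{\leq 0}$ equal $\{E \in \mathbf{K}(\operatorname{Inj}\text{-}X) : \operatorname{Hom}_{\mathbf{K}(\operatorname{Inj}\text{-}X)}(\Sigma^{-n}\mathcal{F},E)=0 \text{ for all } n\geq 1,\ \mathcal{F}\in\operatorname{coh}(X)\}$, the first by definition and the second by a direct check using brutal truncations together with the $\operatorname{Hom}$-comparison, and then conclude they coincide and that $\mathsf{U}_H = {}^{\perp}(\Sigma\mathsf{V}) = \mathbf{K}(\operatorname{Inj}\text{-}X)^{\geq 0}$.
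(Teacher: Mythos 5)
Your step (i) and the reduction to identifying $\mathsf{V}=\big(\bigcup_{n\geq1}\Sigma^{-n}\operatorname{Coh}(X)\big)^{\perp}$ with $\mathbf{K}(\operatorname{Inj}\text{-}X)^{\leq 0}$ agree with the paper. The gap is in steps (ii)--(iii). First, a sign slip: $\operatorname{Hom}(\Sigma^{-n}\mathcal{F},E)=\operatorname{Hom}(\mathcal{F},\Sigma^{n}E)$, so the orthogonality constrains \emph{positive} degrees, not negative ones. More seriously, even after fixing the sign, $\mathsf{V}$ is \emph{not} cut out by cohomology vanishing: $\operatorname{Hom}_{\mathbf{K}(\operatorname{Inj}\text{-}X)}(\Sigma^{-n}i\mathcal{F},E)\cong\operatorname{Hom}_{\mathbf{D}(\operatorname{Qcoh}(X))}(\mathcal{F},\Sigma^{n}E)$ is an Ext/hypercohomology group, not a cohomology sheaf of $E$, and its vanishing for all $n\geq1$ and all coherent $\mathcal{F}$ forces, beyond $\mathcal{H}^{n}(E)=0$ for $n\geq1$, that $\mathcal{Z}^{0}(E)$ be an \emph{injective} quasicoherent sheaf (Baer's criterion applied to $\operatorname{Ext}^{1}(\mathcal{F},\mathcal{Z}^{0}(E))$). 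The injective resolution $iM$ of a non-injective module $M$ is a concrete counterexample to the lemma you invoke in step (iii): its cohomology is concentrated in degree $0$, yet it is not homotopy equivalent to a complex of injectives in degrees $\leq0$, because $\operatorname{Hom}_{\mathbf{K}(\operatorname{Inj})}(\Sigma^{-1}i\mathcal{G},iM)=\operatorname{Ext}^{1}(\mathcal{G},M)\neq0$ for some $\mathcal{G}$, whereas any complex supported in degrees $\leq0$ kills all such Homs for degree reasons. So ``cohomology in non-positive degrees implies homotopy equivalent to a complex in non-positive degrees'' is false, and the route through $\mathbf{D}(\operatorname{Qcoh}(X))^{\geq0}$ cannot close the argument.

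What the paper supplies at exactly this point, and what your proposal is missing: given $F\in\mathsf{V}$, one shows (a) $\mathcal{H}^{n}(F)=0$ for $n\geq1$, by producing from a nonzero cohomology class a coherent subsheaf of $\mathcal{Z}^{n}(F)$ and hence a nonzero map $\Sigma^{-n}\mathcal{F}\to F$; (b) $\mathcal{Z}^{0}(F)$ is injective, since $\sigma^{\geq0}F$ is then an injective resolution of $\mathcal{Z}^{0}(F)$ and a nonzero class in $\operatorname{Ext}^{1}(\mathcal{F},\mathcal{Z}^{0}(F))$ would give a nonzero map $\Sigma^{-1}\mathcal{F}\to F$; and (c) the degreewise-split short exact sequence of complexes splitting $F$ at $\mathcal{Z}^{0}(F)$ exhibits $F$ as an extension of the bounded-below acyclic complex $F^{0}/\mathcal{Z}^{0}(F)\to F^{1}\to\cdots$ (null-homotopic) by $\cdots\to F^{-1}\to\mathcal{Z}^{0}(F)\to0$, which lies in $\mathbf{K}(\operatorname{Inj}\text{-}X)^{\leq0}$. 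A smaller point: the Hom-comparison you quote (\cite[Tag 05TG]{StacksProject}) goes the wrong way --- it concerns Hom \emph{into} a bounded-below complex of injectives; for Hom \emph{out of} $i\mathcal{F}$ into an arbitrary complex of injectives you need \cite[Lemma 2.1]{Krause:2005}, which is what the paper uses for the easy inclusion.
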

\begin{proof}
    Consider the full subcategory $\operatorname{Coh}(X) \subseteq \mathbf{D}^b_{\operatorname{coh}}(X) \subseteq \mathbf{K}(\operatorname{Inj}\text{-}X)$. We claim that the co-t-structure generated by $\operatorname{Coh}(X)$ is exactly the standard co-t-structure. Let the co-t-structure generated by $\operatorname{Coh}(X)$ be $(\mathsf{U},\mathsf{V})$. That is, $\mathsf{V} = \left(\bigcup_{n \geq 1}\Sigma^{-n} \operatorname{Coh}(X)\right)^{\perp}$ and $\mathsf{U} = {}^{\perp}\left(\Sigma \mathsf{V} \right)$ It is enough to show that $\mathsf{V} = \mathbf{K}(\operatorname{Inj}\text{-}X)^{\leq 0}$.

    Note that $\mathbf{K}(\operatorname{Inj}\text{-}X)^{\leq 0} \subseteq \left(\bigcup_{n \geq 1}\Sigma^{-n} \operatorname{Coh}(X)\right)^{\perp} = \mathsf{V}$, by \cite[Lemma 2.1]{Krause:2005}. So, we just need to show the other inclusion. Let $F \in \mathsf{V}$. Then, first of all note that if any of the cohomology sheaves $\mathcal{H}^n(X) \neq 0$ for some $n \geq 1$, then there exists a map $\mathcal{F} \to \mathcal{Z}^n(F)$ for a coherent sheaf $\mathcal{F}$,  where $ \mathcal{Z}^n(F)$ is the kernel of the n$^{\operatorname{th}}$ differential, such that the induced map $\mathcal{F} \to \mathcal{H}^n(\mathcal{F})$ is not zero. This gives us a non-zero map from $\Sigma^{-n}\mathcal{F} \to F$, which is a contradiction. So, $\mathcal{H}^n(X) = 0$ for all $n \geq 1$. So, the brutal truncation $\sigma^{\geq 0}F$ gives a injective resolution of $\mathcal{Z}^0(F)$, the kernel of the 0$^{\operatorname{th}}$ differential. If $\mathcal{Z}^0(F)$ is not injective, by Baer's criterion (see for example \cite[Lemma 11.2.10]{Krause:2022}), $\operatorname{Ext}^1(\mathcal{F},\mathcal{Z}^0(F)) \neq 0$ for some coherent sheaf $\mathcal{F}$. 
    Note that a non-zero element of $\operatorname{Ext}^1(\mathcal{F},\mathcal{Z}^0(F))$ corresponds to a non-zero map $\Sigma^{-1} \mathcal{F} \to F$. And so, there exists a non-zero map $\Sigma^{-1} \mathcal{F} \to F$ for a coherent sheaf $\mathcal{F}$, which is a contradiction. Therefore, $\mathcal{Z}^0(F)$ is injective. Now, we have a short exact sequence of complexes as follows,
    \[\begin{tikzcd}
	   \cdots & {F^{-2}} & {F^{-1}} & {\mathcal{Z}^0(F)} & 0 & 0 & \cdots \\
	   \cdots & {F^{-2}} & {F^{-1}} & {F^0} & {F^1} & {F^2} & \cdots \\
	   \cdots & 0 & 0 & {F^0/\mathcal{Z}^0(F)} & {F^1} & {F^2} & \cdots
	   \arrow[from=1-1, to=1-2]
	   \arrow[from=1-2, to=1-3]
	   \arrow[from=1-2, to=2-2]
	   \arrow[from=1-3, to=1-4]
	   \arrow[from=1-3, to=2-3]
	   \arrow[from=1-4, to=1-5]
	   \arrow[from=1-4, to=2-4]
	   \arrow[from=1-5, to=1-6]
	   \arrow[from=1-5, to=2-5]
	   \arrow[from=1-6, to=1-7]
	   \arrow[from=1-6, to=2-6]
	   \arrow[from=2-1, to=2-2]
	   \arrow[from=2-2, to=2-3]
	   \arrow[from=2-2, to=3-2]
	   \arrow[from=2-3, to=2-4]
	   \arrow[from=2-3, to=3-3]
	   \arrow[from=2-4, to=2-5]
	   \arrow[from=2-4, to=3-4]
	   \arrow[from=2-5, to=2-6]
	   \arrow[from=2-5, to=3-5]
	   \arrow[from=2-6, to=2-7]
	   \arrow[from=2-6, to=3-6]
	   \arrow[from=3-1, to=3-2]
	   \arrow[from=3-2, to=3-3]
	   \arrow[from=3-3, to=3-4]
	   \arrow[from=3-4, to=3-5]
	   \arrow[from=3-5, to=3-6]
	   \arrow[from=3-6, to=3-7]
    \end{tikzcd}\]
    Note that this gives us a triangle in $\mathbf{K}(\operatorname{Inj}\text{-}X)$ as all of the columns are split short exact sequences. The columns in degree not equal to 0 split trivially, and the column in degree 0 splits as $\mathcal{Z}^0(F)$ is injective. Further, this implies that $F^0/\mathcal{Z}^0(F)$ is also injective, as it is a summand of the injective object $F^0$. As $\mathcal{H}^n(F) = 0$ for all $n \geq 1$, the complex given by the bottom row is in fact acyclic. As it is a bounded below acyclic complex of injectives, it is in fact further null-homotopic. So, we get that $F$ is isomorphic to the complex given by the top row, and hence $F \in \mathbf{K}(\operatorname{Inj}\text{-}X)^{\leq 0}$, which is what we needed to show.
\end{proof}

\begin{theorem}\label{Theorem completion of compacts for for Kinj}
    Let $X$ be a noetherian scheme. Consider the homotopy category of injectives $\mathbf{K}(\operatorname{Inj}\text{-}X)$.
    We endow the category of compacts $\mathbf{D}^b_{\operatorname{coh}}(X)$ with the metric which is given by $\{\mathbf{D}^b_{\operatorname{coh}}(X)\cap \mathbf{K}(\operatorname{Inj}\text{-}X)^{\geq n}\}_{n \in \mathbb{Z}}$, where $(\mathbf{K}(\operatorname{Inj}\text{-}X)^{\geq 0}, \mathbf{K}(\operatorname{Inj}\text{-}X)^{\leq 0})$ is the standard co-t-structure on $\mathbf{K}(\operatorname{Inj}\text{-}X)$ defined via brutal truncations. Then, 
    \begin{enumerate}
        \item $\mathfrak{L}'(\mathbf{D}^b_{\operatorname{coh}}(X)) \cong \mathbf{D}^{+}_{\operatorname{coh}}(X)$, see \Cref{Theorem Tc to T is a good extension} for the definition of $\mathfrak{L}'(\mathbf{D}^b_{\operatorname{coh}}(X))$. Note that this agrees with $\mathbf{K}(\operatorname{Inj}\text{-}X)^+_c$ if $\mathbf{D}(\operatorname{Qcoh}(X)) = \overline{\langle G \rangle}_N$ for an object $G \in \mathbf{D}^b_{\operatorname{coh}}(X)$ and $N \geq 1$ by \Cref{Corollary Closure of compact for weak G-approximability}, see \Cref{Definition T^+_c and T^-}, and \Cref{Remark 2 on co-approx and G-approx }, \Cref{Remark Kinj standard co-t-structure in preferred equiv class}.
        \item $\mathcal{Y}^{-1}(\mathfrak{C}(\mathbf{D}^b_{\operatorname{coh}}(X))) = \bigcup_{n \in \mathbb{Z}} \mathbf{K}(\operatorname{Inj}\text{-}X)^{\leq n}$, see \Cref{Definition completions for Chapter on completions}(2).
        \item  $\mathbf{D}^{b}(\operatorname{Inj}\text{-}X) \colonequals \mathbf{K}^{b}(\operatorname{Inj}\text{-}X) \cap \mathbf{D}^b_{\operatorname{coh}}(X) = \mathfrak{S}'(\mathbf{D}^b_{\operatorname{coh}}(X)) \cong \mathfrak{S}({\mathbf{D}^b_{\operatorname{coh}}(X)})$, see \Cref{Definition completions for Chapter on completions}(3) and \Cref{Theorem Tc to T is a good extension}(3).  Further, this agrees with $\mathbf{K}(\operatorname{Inj}\text{-}X)^b_c$ if $\mathbf{D}(\operatorname{Qcoh}(X)) = \overline{\langle G \rangle}_N$ for an object $G \in \mathbf{D}^b_{\operatorname{coh}}(X)$ and $N \geq 1$ by \Cref{Corollary Closure of compact for weak G-approximability}, see \Cref{Definition T^+_c and T^-}, and \Cref{Remark 2 on co-approx and G-approx }, \Cref{Remark Kinj standard co-t-structure in preferred equiv class}
    \end{enumerate}
\end{theorem}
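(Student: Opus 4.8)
The plan is to run everything through \Cref{Theorem Tc to T is a good extension}, applied with $\mathsf{T}=\mathbf{K}(\operatorname{Inj}\text{-}X)$ (whose subcategory of compacts is $\mathbf{D}^{b}_{\operatorname{coh}}(X)$) and the stated metric $\mathcal{M}_n=\mathbf{D}^{b}_{\operatorname{coh}}(X)\cap\mathbf{K}(\operatorname{Inj}\text{-}X)^{\geq n}$, which is a good metric by the co-t-structure axioms. Then it suffices to identify $\mathfrak{L}'(\mathbf{D}^{b}_{\operatorname{coh}}(X))$ and $\mathcal{Y}^{-1}(\mathfrak{C}(\mathbf{D}^{b}_{\operatorname{coh}}(X)))$ inside $\mathbf{K}(\operatorname{Inj}\text{-}X)$, after which $\mathfrak{S}'$ is their intersection and $\mathfrak{S}'\cong\mathfrak{S}$ is \Cref{Theorem Tc to T is a good extension} again. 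Throughout I would use two standard descriptions: an object of $\mathbf{K}(\operatorname{Inj}\text{-}X)^{\leq m}$ is one homotopy equivalent to a complex of injectives concentrated in degrees $\leq m$ (this is \Cref{Lemma Standard co-t-structure is compactly generated}), and dually, via \Cref{Lemma coaisle of derived category and "aisle" of homotopy category}, an object of $\mathbf{K}(\operatorname{Inj}\text{-}X)^{\geq m}$ is one homotopy equivalent to a complex of injectives concentrated in degrees $\geq m$; in particular $\mathbf{K}(\operatorname{Inj}\text{-}X)^{\leq m}\subseteq\mathbf{K}(\operatorname{Inj}\text{-}X)^{\leq m+1}$, and $\mathbf{D}^{+}_{\operatorname{coh}}(X)$, viewed in $\mathbf{K}(\operatorname{Inj}\text{-}X)$ via injective resolutions, is exactly the bounded-below complexes of injectives with coherent cohomology.

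For (1) I would prove $\mathfrak{L}'(\mathbf{D}^{b}_{\operatorname{coh}}(X))=\mathbf{D}^{+}_{\operatorname{coh}}(X)$ by two inclusions. Given $F\in\mathbf{D}^{+}_{\operatorname{coh}}(X)$, its canonical truncations $\tau^{\leq n}F$ lie in $\mathbf{D}^{b}_{\operatorname{coh}}(X)$, the cone of $\tau^{\leq n}F\to\tau^{\leq n+1}F$ is $\mathcal{H}^{n+1}(F)[-n-1]\in\mathcal{M}_{n+1}$, so $\{\tau^{\leq n}F\}$ is Cauchy with $\hocolim\tau^{\leq n}F\cong F$; hence $F\in\mathfrak{L}'$. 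Conversely, after thinning a Cauchy sequence so that $\operatorname{Cone}(E_i\to E_{i+1})\in\mathcal{M}_{i+1}$, the fact that $\mathcal{H}^m(-)$ commutes with homotopy colimits shows $\mathcal{H}^m(\hocolim E_i)$ is coherent and vanishes for $m\ll0$; and since each $E_i$ is an injective resolution of a bounded coherent complex (hence bounded below) and a short induction on the cones gives a uniform lower degree bound, $\bigoplus E_i$, and therefore $\hocolim E_i$, is homotopy equivalent to a bounded-below complex of injectives, so $\hocolim E_i\in\mathbf{D}^{+}_{\operatorname{coh}}(X)$.

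For (2) I would combine \Cref{Theorem Tc to T is a good extension} with \Cref{Lemma Standard co-t-structure is compactly generated}: $\mathcal{Y}^{-1}(\mathfrak{C}(\mathbf{D}^{b}_{\operatorname{coh}}(X)))=\bigcup_{n\geq1}\mathcal{M}_n^{\perp}$ with the orthogonal taken in $\mathbf{K}(\operatorname{Inj}\text{-}X)$, and since every object of $\mathcal{M}_n=\mathbf{D}^{b}_{\operatorname{coh}}(X)\cap\mathbf{K}(\operatorname{Inj}\text{-}X)^{\geq n}$ is an iterated extension of shifts $\Sigma^{-m}\mathcal{F}$ with $\mathcal{F}\in\operatorname{Coh}(X)$ and $m\geq n$ (filter by cohomology), we get $\mathcal{M}_n^{\perp}=\big(\bigcup_{m\geq n}\Sigma^{-m}\operatorname{Coh}(X)\big)^{\perp}=\mathbf{K}(\operatorname{Inj}\text{-}X)^{\leq n-1}$ by \Cref{Lemma Standard co-t-structure is compactly generated} (shifted). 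Taking the union over $n$ and using $\mathbf{K}(\operatorname{Inj}\text{-}X)^{\leq n}\subseteq\mathbf{K}(\operatorname{Inj}\text{-}X)^{\leq n+1}$ yields $\bigcup_{n\in\mathbb{Z}}\mathbf{K}(\operatorname{Inj}\text{-}X)^{\leq n}$.

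For (3), intersecting (1) and (2) gives $\mathfrak{S}'(\mathbf{D}^{b}_{\operatorname{coh}}(X))=\mathbf{D}^{+}_{\operatorname{coh}}(X)\cap\bigcup_{n}\mathbf{K}(\operatorname{Inj}\text{-}X)^{\leq n}$, which I must match with $\mathbf{D}^{b}(\operatorname{Inj}\text{-}X)=\mathbf{K}^{b}(\operatorname{Inj}\text{-}X)\cap\mathbf{D}^{b}_{\operatorname{coh}}(X)$. The inclusion $\supseteq$ is immediate, since a bounded complex of injectives has finite injective dimension, hence lies in some $\mathbf{K}(\operatorname{Inj}\text{-}X)^{\leq n}$, and is bounded below. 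For $\subseteq$: if $F\in\mathfrak{S}'$ then, being in $\mathbf{D}^{+}_{\operatorname{coh}}(X)$, it is homotopy equivalent to a bounded-below complex of injectives, so $F\cong p^{!}p(F)$; and $F\in\mathbf{K}(\operatorname{Inj}\text{-}X)^{\leq N}$ translates via $\Hom{\mathbf{K}(\operatorname{Inj}\text{-}X)}{\Sigma^{-m}\mathcal{F}}{F}\cong\operatorname{Ext}^{m}_{\mathcal{O}_X}(\mathcal{F},p(F))$ into the vanishing $\operatorname{Ext}^{m}_{\mathcal{O}_X}(\mathcal{F},p(F))=0$ for all $\mathcal{F}\in\operatorname{Coh}(X)$ and $m>N$; by Bass's characterisation of injective dimension over a noetherian scheme this forces $p(F)$ to have finite injective dimension, so $F\cong p^{!}p(F)$ is homotopy equivalent to a bounded complex of injectives, i.e.\ $F\in\mathbf{D}^{b}(\operatorname{Inj}\text{-}X)$. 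Finally $\mathfrak{S}'\cong\mathfrak{S}$ by \Cref{Theorem Tc to T is a good extension}, and the remaining assertions of the theorem — the comparisons with $\mathbf{K}(\operatorname{Inj}\text{-}X)^{+}_{c}$ and $\mathbf{K}(\operatorname{Inj}\text{-}X)^{b}_{c}$ under the hypothesis $\mathbf{D}(\operatorname{Qcoh}(X))=\overline{\langle G\rangle}_N$ — follow from \Cref{Corollary Closure of compact for weak G-approximability} together with \Cref{Remark Kinj standard co-t-structure in preferred equiv class}.

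The hard part will be this last identification. The subtlety is that a brutal truncation of a bounded-below complex of injectives leaves an ``injective-resolution tail'' which need not be contractible, so ``$F$ has bounded coherent cohomology'' is not by itself enough to put $F$ in $\mathbf{K}^{b}(\operatorname{Inj}\text{-}X)$; what does the job is recognising that membership in $\bigcup_n\mathbf{K}(\operatorname{Inj}\text{-}X)^{\leq n}$ is exactly the $\operatorname{Ext}$-vanishing that makes $p(F)$ of finite injective dimension, and hence kills the tail. The metric check and parts (1)--(2) are then essentially bookkeeping, modulo the standard structural facts about $\mathbf{K}(\operatorname{Inj}\text{-}X)$ recorded in \Cref{Lemma Standard co-t-structure is compactly generated} and \Cref{Lemma coaisle of derived category and "aisle" of homotopy category}; note that (1)--(3) require only that $X$ be noetherian, the approximability hypothesis entering only in the parenthetical comparisons.
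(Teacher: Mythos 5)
Your proof is correct and follows essentially the same route as the paper's: canonical truncations supply the Cauchy sequences for one inclusion in (1), the equivalence $\mathbf{K}^{+}(\operatorname{Inj}\text{-}X)\cong\mathbf{D}^{+}(\operatorname{Qcoh}(X))$ of \Cref{Lemma coaisle of derived category and "aisle" of homotopy category} handles the converse, and (2) reduces to \Cref{Lemma Standard co-t-structure is compactly generated} exactly as you describe. The one place you go beyond the paper is part (3), where the paper simply asserts that it ``follows from parts (1) and (2)''; your Ext-vanishing / finite-injective-dimension argument identifying $\mathbf{D}^{+}_{\operatorname{coh}}(X)\cap\bigcup_{n}\mathbf{K}(\operatorname{Inj}\text{-}X)^{\leq n}$ with $\mathbf{D}^{b}(\operatorname{Inj}\text{-}X)$ is precisely the verification the paper leaves implicit, and it is correct.
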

\begin{proof}
    \begin{enumerate}
        \item Let $F \in \mathfrak{L}'(\mathbf{D}^b_{\operatorname{coh}}(X)$. Then, by definition, we can choose a sequence  $E_1 \to E_2 \to E_3 \to E_4 \to \cdots$ in $\mathbf{D}^b_{\operatorname{coh}}(X)$ mapping to $F$ such that $\operatorname{Cone}(E_n \to E_{n + i}) \in \mathbf{K}(\operatorname{Inj}\text{-}X)^{\geq n + 1} $ for all $n,i \geq 1$, and $\hocolim E_n \cong F$. And so, $\operatorname{Cone}(E_n \to F) \in \mathbf{K}(\operatorname{Inj}\text{-}X)^{\geq n}$ for all $n \geq 1$. This shows that $F \in \mathbf{K}^{+}(\operatorname{Inj}\text{-}X) = \bigcup_{n \in \mathbb{Z}}\mathbf{K}(\operatorname{Inj}\text{-}X)^{\geq n}$. Now, we use the equivalence of categories $p : \mathbf{D}^{+}(\operatorname{Qcoh}(X)) \to \mathbf{K}^{+}(\operatorname{Inj}\text{-}X)$, see \Cref{Lemma coaisle of derived category and "aisle" of homotopy category}. As we have $\operatorname{Cone}(E_n \to F) \in \mathbf{K}(\operatorname{Inj}\text{-}X)^{\geq n} = \mathbf{D}(\operatorname{Qcoh}(X))^{\geq n}$ for all $n \geq 1$, we get that $H^i(E_n) \cong H^i(F)$ for all $i < n$. So, $F \in \mathbf{D}^{+}_{\operatorname{coh}}(X)$. 
        
        Conversely, for any $F \in \mathbf{D}^{+}_{\operatorname{coh}}(X)$, the canonical truncations give us give us a Cauchy sequence in $\mathbf{D}^b_{\operatorname{coh}}(X)$,
        \[\tau^{\leq 0}F \to \tau^{\leq 1}F \to \tau^{\leq 2}F \to \tau^{\leq 3}F \to \cdots\]
        Note that $F \cong \colim \tau^{\leq n}F$ as complexes. So, we have that by \cite[Remark 2.2]{Bkstedt/Neeman:1993}, $F \cong \hocolim \tau^{\leq n}F$ in the derived category.

        \item Note that by \Cref{Theorem Tc to T is a good extension}(2),
        \[\mathcal{Y}^{-1}(\mathfrak{C}(\mathbf{D}^b_{\operatorname{coh}}(X))) =  \bigcup_{n \geq 1} \left(\mathbf{D}^b_{\operatorname{coh}}(X)\cap \mathbf{K}(\operatorname{Inj}\text{-}X)^{\geq n}\right)^{\perp}\]
        But, note that $\left(\mathbf{D}^b_{\operatorname{coh}}(X)\cap \mathbf{K}(\operatorname{Inj}\text{-}X)^{\geq n}\right)^{\perp} = \left(\bigcup_{i \geq n}\Sigma^{-i}\operatorname{Coh}(X)\right)^{\perp}$, which is the same as $\mathbf{K}(\operatorname{Inj}\text{-}X)^{\leq n-1}$ by \Cref{Lemma Standard co-t-structure is compactly generated}.
        \item Note that $\mathcal{L}'(\mathbf{D}^b_{\operatorname{coh}}(X)) \cap \mathcal{Y}^{-1}(\mathfrak{C}(\mathbf{D}^b_{\operatorname{coh}}(X))) \cong \mathfrak{S}'(\mathbf{D}^b_{\operatorname{coh}}(X)) \cong \mathfrak{S}(\mathbf{D}^b_{\operatorname{coh}}(X))$, see \Cref{Theorem Tc to T is a good extension}(3). So, (3) follows from parts (1) and (2).
    \end{enumerate}
\end{proof}

We note that the metric used in \Cref{Theorem completion of compacts for for Kinj} can be intrinsically defined up to equivalence. 
\begin{theorem}
    Let $X$ be a noetherian scheme, then the metric $\{\mathbf{D}^b_{\operatorname{coh}}(X)\cap \mathbf{K}(\operatorname{Inj}\text{-}X)^{\geq n}\}$ is equivalent to the intrinsically defined metric $\{\Sigma^{-n}\mathcal{Q}(\mathbf{D}^b_{\operatorname{coh}}(X))\}$, where $\mathcal{Q}(\mathbf{D}^b_{\operatorname{coh}}(X))$ is a full subcategory of $\mathbf{D}^b_{\operatorname{coh}}(X)$ such that $\{\Sigma^{-n}\mathcal{Q}(\mathbf{D}^b_{\operatorname{coh}}(X))\}_{n \in \mathbb{Z}}$ is the finest metric (up to equivalence) in the following collection,
    \[\left\{\{\Sigma^{-n}\mathcal{P}\}_{n \in \mathbb{Z}} : \mathcal{P} = G(-\infty,0]^{\perp} \text{ for some } G \in \mathbf{D}^b_{\operatorname{coh}}(X)\right\}\]
    see \Cref{Definition equivalence relation on extended good metrics} for the order relation on metrics.
\end{theorem}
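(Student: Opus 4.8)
The plan is to show the two metrics $\{\mathbf{D}^b_{\operatorname{coh}}(X)\cap \mathbf{K}(\operatorname{Inj}\text{-}X)^{\geq n}\}$ and $\{\Sigma^{-n}\mathcal{Q}(\mathbf{D}^b_{\operatorname{coh}}(X))\}$ are equivalent by exhibiting the former as a member of the collection over which $\mathcal{Q}$ is defined to be finest, and then showing the former is in fact a coarsest (= finest-up-to-equivalence) such member. The main input is that $\mathbf{D}^b_{\operatorname{coh}}(X)\cap\mathbf{K}(\operatorname{Inj}\text{-}X)^{\geq n}$ is of the form $G(-\infty,0]^{\perp}$ shifted, which one extracts from \Cref{Lemma Standard co-t-structure is compactly generated}: there $\mathbf{K}(\operatorname{Inj}\text{-}X)^{\leq 0} = (\bigcup_{n\geq 1}\Sigma^{-n}\operatorname{Coh}(X))^{\perp}$, and intersecting the co-t-structure data with the compacts $\mathbf{D}^b_{\operatorname{coh}}(X)$ yields that $\mathbf{D}^b_{\operatorname{coh}}(X)\cap\mathbf{K}(\operatorname{Inj}\text{-}X)^{\geq n}$ is the orthogonal, inside $\mathbf{D}^b_{\operatorname{coh}}(X)$, of $\bigcup_{i\geq -n+1}\Sigma^{-i}\operatorname{Coh}(X)$. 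Since $\operatorname{Coh}(X)$ has a classical generator (any ample family, or simply using that $X$ is noetherian so $\operatorname{coh}(X)$ is generated by finitely many sheaves up to the relevant closure operations), and more to the point $\mathbf{D}^b_{\operatorname{coh}}(X)$ has a classical generator $G_0$ with $\langle G_0\rangle = \mathbf{D}^b_{\operatorname{coh}}(X)$, one checks that $\bigl(\bigcup_{i\geq 1}\Sigma^{-i}\operatorname{Coh}(X)\bigr)^{\perp}$ agrees with $G_0(-\infty,0]^{\perp}$ up to a finite shift; this uses that $\operatorname{Coh}(X)$ and $G_0$ generate the same thick subcategory and a bounded-shift comparison as in \Cref{Examples of N-equivalent generating sequences}.

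Concretely, first I would record that $\{\Sigma^{-n}(G_0(-\infty,0]^{\perp})\}_{n\in\mathbb{Z}}$ lies in the displayed collection (take $G = G_0$), and that by \Cref{Lemma Standard co-t-structure is compactly generated} together with \Cref{Remark Kinj standard co-t-structure in preferred equiv class} this metric is equivalent to $\{\mathbf{D}^b_{\operatorname{coh}}(X)\cap\mathbf{K}(\operatorname{Inj}\text{-}X)^{\geq n}\}$; indeed \Cref{Remark Kinj standard co-t-structure in preferred equiv class} already states the equivalence of $\{\mathbf{D}^b(\operatorname{coh}(X))^{\geq n}\}$ with $\{\langle G\rangle^{[n,\infty)}\}$ in the weakly co-approximable case, and the $\geq n$ aisle of the standard co-t-structure restricted to compacts is exactly $\mathbf{D}^b(\operatorname{coh}(X))^{\geq n}$ by the brutal-truncation description. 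So $\{\mathbf{D}^b_{\operatorname{coh}}(X)\cap\mathbf{K}(\operatorname{Inj}\text{-}X)^{\geq n}\}$ is a member of the collection up to equivalence. Second, I would show it is the finest member up to equivalence: given any $G\in\mathbf{D}^b_{\operatorname{coh}}(X)$, the metric $\{\Sigma^{-n}(G(-\infty,0]^{\perp})\}$ is coarser than (or equivalent to) $\{\Sigma^{-n}(G_0(-\infty,0]^{\perp})\}$. This is because $G\in\langle G_0\rangle = \mathbf{D}^b_{\operatorname{coh}}(X)$, so $G\in\langle G_0\rangle^{[-m,m]}$ for some $m$, whence $G(-\infty,0] \subseteq G_0(-\infty,m]$ and therefore $G_0(-\infty,m]^{\perp}\subseteq G(-\infty,0]^{\perp}$; shifting gives the required inclusion $\Sigma^{-(n+m)}(G_0(-\infty,0]^{\perp}) \subseteq \Sigma^{-n}(G(-\infty,0]^{\perp})$, i.e. the $G_0$-metric is $\mathbb{N}$-finer than the $G$-metric.

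The hard part, and the one requiring care rather than cleverness, is the bookkeeping of the two orthogonality operations: $\mathbf{D}^b_{\operatorname{coh}}(X)\cap\mathbf{K}(\operatorname{Inj}\text{-}X)^{\geq n}$ is an orthogonal computed in the big category $\mathbf{K}(\operatorname{Inj}\text{-}X)$ and then intersected with $\mathbf{D}^b_{\operatorname{coh}}(X)$, whereas $G(-\infty,0]^{\perp}$ in the definition of $\mathcal{Q}$ is presumably the orthogonal taken inside $\mathbf{D}^b_{\operatorname{coh}}(X)$ itself; I would need to verify that these coincide, which follows from \Cref{Lemma Standard co-t-structure is compactly generated} (the key point being that a bounded complex of coherent sheaves lies in $\mathbf{K}(\operatorname{Inj}\text{-}X)^{\geq n}$ iff it is orthogonal to $\bigcup_{i\geq -n+1}\Sigma^{-i}\operatorname{coh}(X)$ \emph{within} $\mathbf{D}^b_{\operatorname{coh}}(X)$, using that the relevant Hom groups agree) together with \cite[Proposition 1.9]{Neeman:2021a} or the analogous statement in \Cref{Remark big categories intersection compacts gives small categories} that intersecting big orthogonals with compacts recovers the small construction. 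Once this identification is in place, combining the two inclusions from the previous paragraph shows $\{\mathbf{D}^b_{\operatorname{coh}}(X)\cap\mathbf{K}(\operatorname{Inj}\text{-}X)^{\geq n}\}$ is both a member of the collection and finest among members up to equivalence, hence equivalent to $\{\Sigma^{-n}\mathcal{Q}(\mathbf{D}^b_{\operatorname{coh}}(X))\}$, and we are done.
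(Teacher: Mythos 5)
Your proposal takes a genuinely different route from the paper: the paper's proof is two lines long --- it observes that the metric equals $\{\mathbf{D}^b_{\operatorname{coh}}(X)\cap \mathbf{D}_{\operatorname{Qcoh}}(X)^{\geq n}\}$ and then invokes \cite[Theorem 6.3]{Neeman:2018}, using that $\mathbf{D}^b_{\operatorname{coh}}(X)=\mathsf{T}^b_c$ for the weakly approximable category $\mathsf{T}=\mathbf{D}_{\operatorname{Qcoh}}(X)$ (\Cref{Theorem Tc- Tbc for schemes Neeman}) --- whereas you attempt to reprove the intrinsic description by hand. The shape of your argument is right: show the standard metric is finer than every member of the collection, and coarser (up to a uniform shift) than some member, so that it is equivalent to the finest member. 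The first half is genuinely easy: if $G\in\mathbf{D}^{[a,b]}$ then $\mathbf{D}^{b,\geq b+1}_{\operatorname{coh}}(X)\subseteq G(-\infty,0]^{\perp}$, since $\operatorname{Hom}(\Sigma^jG,F)=\operatorname{Hom}(G,\Sigma^{-j}F)$ vanishes for $j\geq 0$ and $F$ concentrated in degrees $\geq b+1$. Your orthogonality bookkeeping in the last paragraph is also the correct thing to worry about, and is unproblematic because $\mathbf{D}^b_{\operatorname{coh}}(X)$ is a full subcategory on which the Hom groups of $\mathbf{K}(\operatorname{Inj}\text{-}X)$ and $\mathbf{D}_{\operatorname{Qcoh}}(X)$ agree.

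The genuine gap is in the other half: you never establish that some member of the collection is finer than the standard metric, i.e.\ that there exist $G$ and a \emph{uniform} constant $c$ with $G(-\infty,0]^{\perp}\subseteq\mathbf{D}^{b,\geq -c}_{\operatorname{coh}}(X)$. This is the entire nontrivial content of the theorem, and the citations you offer for it do not deliver it. \Cref{Remark Kinj standard co-t-structure in preferred equiv class} assumes $\mathbf{D}(\operatorname{Qcoh}(X))=\overline{\langle G\rangle}_N$, a strong-generation hypothesis that is not part of the present theorem (which assumes only that $X$ is noetherian); and even where it applies, it compares $\{\mathbf{D}^b(\operatorname{coh}(X))^{\geq n}\}$ with the \emph{generated} subcategories $\{\langle G\rangle^{[n,\infty)}\}$, not with the right-orthogonals $\{G(-\infty,0]^{\perp}\}$ over which the theorem's collection ranges; these are not interchangeable. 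A pointwise argument (``$H^{i_0}(F)\neq 0$ gives a nonzero map from a shift of a coherent sheaf, which lies in $\langle G_0\rangle^{[-m,m]}$'') fails because $m$ depends on the sheaf, not just on $G_0$. The correct source of the uniform bound is the weak approximability of $\mathbf{D}_{\operatorname{Qcoh}}(X)$ itself: take $G$ to be a perfect complex that compactly generates $\mathbf{D}_{\operatorname{Qcoh}}(X)$; then $G(-\infty,0]^{\perp}=\big(\mathsf{T}^{\leq 0}_G\big)^{\perp}=\mathsf{T}^{\geq 1}_G$, and since the t-structure generated by $G$ is equivalent to the standard one, $\mathsf{T}^{\geq 1}_G\cap\mathbf{D}^b_{\operatorname{coh}}(X)\subseteq\mathbf{D}^{b,\geq -c}_{\operatorname{coh}}(X)$ for some $c$. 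With this in place your sandwiching argument closes, and as a bonus you no longer need a classical generator $G_0$ of $\mathbf{D}^b_{\operatorname{coh}}(X)$ at all --- which is just as well, since your parenthetical justification for its existence (``$\operatorname{coh}(X)$ is generated by finitely many sheaves'') is not correct, and the existence of such a $G_0$ for an arbitrary noetherian scheme is not something you may assume for free.
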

\begin{proof}
    Observe that the metric $\{\mathbf{D}^b_{\operatorname{coh}}(X)\cap \mathbf{K}(\operatorname{Inj}\text{-}X)^{\geq n}\}$ is the same as the metric $\{\mathbf{D}^b_{\operatorname{coh}}(X)\cap \mathbf{D}_{\operatorname{Qcoh}}(X)^{\geq n}\}$. Then, this follows by \cite[Theorem 6.3]{Neeman:2018}, as $\mathbf{D}^b_{\operatorname{coh}}(X) = \mathsf{T}^b_c$ for the noetherian triangulated category $\mathbf{D}_{\operatorname{Qcoh}}(X)$, see \Cref{Theorem Tc- Tbc for schemes Neeman}.
\end{proof}
From this, we get the following corollary
\begin{corollary}
    For a noetherian scheme $X$, there is an intrinsic way to construct $\mathbf{D}^{b}(\operatorname{Inj}\text{-}X)$ from $\mathbf{D}^b_{\operatorname{coh}}(X)$.
\end{corollary}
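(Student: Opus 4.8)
The plan is to assemble the statement from the immediately preceding theorem together with \Cref{Theorem completion of compacts for for Kinj} and the general formalism of completions in \Cref{Section Metrics and Completions}. The key observation is that the triangulated category $\mathfrak{S}(\mathsf{S})$ attached to an essentially small triangulated category $\mathsf{S}$ with a good metric (\Cref{Definition completions}) depends only on the pair $(\mathsf{S}, \{\mathcal{M}_n\})$, and in fact only on $\mathsf{S}$ together with the \emph{equivalence class} of the metric, since equivalent metrics produce the same $\mathfrak{L}(\mathsf{S})$, $\mathfrak{C}(\mathsf{S})$, and hence the same $\mathfrak{S}(\mathsf{S})$. So it suffices to exhibit $\mathbf{D}^b(\operatorname{Inj}\text{-}X)$ as $\mathfrak{S}$ of $\mathbf{D}^b_{\operatorname{coh}}(X)$ with respect to a metric whose equivalence class is pinned down purely by the triangulated structure of $\mathbf{D}^b_{\operatorname{coh}}(X)$.

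First I would recall from \Cref{Theorem completion of compacts for for Kinj}(3) that, with $\mathbf{D}^b_{\operatorname{coh}}(X)$ equipped with the metric $\{\mathbf{D}^b_{\operatorname{coh}}(X)\cap \mathbf{K}(\operatorname{Inj}\text{-}X)^{\geq n}\}_{n\in\mathbb{Z}}$, one has a canonical equivalence $\mathbf{D}^b(\operatorname{Inj}\text{-}X) \cong \mathfrak{S}(\mathbf{D}^b_{\operatorname{coh}}(X))$. Next I would invoke the theorem just stated, which identifies this metric, up to equivalence, with the metric $\{\Sigma^{-n}\mathcal{Q}(\mathbf{D}^b_{\operatorname{coh}}(X))\}_{n\in\mathbb{Z}}$, where $\mathcal{Q}(\mathbf{D}^b_{\operatorname{coh}}(X))$ is the full subcategory singled out as giving the finest metric (up to equivalence) among those of the form $\{\Sigma^{-n}\mathcal{P}\}$ with $\mathcal{P} = G(-\infty,0]^{\perp}$ for some classical generator $G$ of $\mathbf{D}^b_{\operatorname{coh}}(X)$. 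Since the notions appearing here --- shifts $\Sigma^{-n}$, the right orthogonal $(-)^{\perp}$, and the collection of classical generators --- are all formulated entirely inside the triangulated category $\mathbf{D}^b_{\operatorname{coh}}(X)$, and the refinement order on metrics (\Cref{Definition equivalence relation on extended good metrics}) is likewise intrinsic, the equivalence class of $\{\Sigma^{-n}\mathcal{Q}(\mathbf{D}^b_{\operatorname{coh}}(X))\}$ is an invariant of $\mathbf{D}^b_{\operatorname{coh}}(X)$ as an abstract triangulated category.

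Combining these, I would conclude: given only the triangulated category $\mathbf{D}^b_{\operatorname{coh}}(X)$, one constructs intrinsically the good metric $\{\Sigma^{-n}\mathcal{Q}(\mathbf{D}^b_{\operatorname{coh}}(X))\}$, forms the completion $\mathfrak{S}(\mathbf{D}^b_{\operatorname{coh}}(X))$ via \Cref{Definition completions}, and identifies the result with $\mathbf{D}^b(\operatorname{Inj}\text{-}X)$ using equivalence-invariance of $\mathfrak{S}$ together with \Cref{Theorem completion of compacts for for Kinj}(3). I do not expect a genuine obstacle here; the only point requiring mild care is the bookkeeping that ``intrinsic'' is meant in the sense of depending solely on the triangulated structure of $\mathbf{D}^b_{\operatorname{coh}}(X)$ (so that, a priori, $X$ need not be recoverable), and that the finest element of the displayed collection of metrics exists and is well-defined up to equivalence --- both of which are already contained in the preceding theorem and its proof.
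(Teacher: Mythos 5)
Your argument is correct and is exactly the intended one: the paper gives no explicit proof of this corollary, which is meant to follow immediately from the preceding theorem (intrinsic identification of the metric up to equivalence), the invariance of $\mathfrak{S}$ under equivalence of metrics noted in \Cref{Definition completions}, and \Cref{Theorem completion of compacts for for Kinj}(3). The only nit is that the collection of metrics in the preceding theorem ranges over all objects $G \in \mathbf{D}^b_{\operatorname{coh}}(X)$, not just classical generators, but this does not affect the intrinsic nature of the construction or the validity of your argument.
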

Under the assumption of the existence of a dualizing complex, $\mathbf{D}^{b}(\operatorname{Inj}\text{-}X)$ is equivalent to the category of perfect complexes.

\begin{theorem}\label{Theorem Dbinj and Dperf equivalent with dualizing Complex}
    Let $X$ be a noetherian scheme with a dualizing complex $\omega \in \mathbf{D}^b_{\operatorname{coh}}(X)$. Then, we have an equivalence $\mathbf{D}^{b}(\operatorname{Inj}\text{-}X) \cong \mathbf{D}^{\operatorname{perf}}(X)$
\end{theorem}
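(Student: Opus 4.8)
The plan is to realize both sides as full subcategories of $\mathbf{D}^b_{\operatorname{coh}}(X)$ and produce the equivalence via Grothendieck duality against $\omega$. First I would record the two identifications. On a noetherian scheme a perfect complex has bounded coherent cohomology, and $\mathbf{D}^{\operatorname{perf}}(X) = \{P \in \mathbf{D}^b_{\operatorname{coh}}(X) : P \text{ has finite flat (Tor-)dimension}\}$; on the other side, a bounded complex of injective quasicoherent sheaves with coherent cohomology is precisely an object of $\mathbf{D}^b_{\operatorname{coh}}(X)$ admitting a bounded injective resolution, so $\mathbf{D}^{b}(\operatorname{Inj}\text{-}X) = \{F \in \mathbf{D}^b_{\operatorname{coh}}(X) : F \text{ has finite injective dimension}\}$. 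Thus both categories sit inside $\mathbf{D}^b_{\operatorname{coh}}(X) \hookrightarrow \mathbf{D}(\operatorname{Qcoh}(X))$, so the comparison takes place in a single ambient category (automatic for $X$ noetherian, where $\mathbf{D}_{\operatorname{Qcoh}}(X) \simeq \mathbf{D}(\operatorname{Qcoh}(X))$ identifies $\mathbf{D}^{\operatorname{perf}}(X)$ with a subcategory of $\mathbf{D}(\operatorname{Qcoh}(X))$).

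Next, since $X$ carries a dualizing complex $\omega$, the functor $\mathbb{D} \colonequals R\mathcal{H}om_{\mathcal{O}_X}(-,\omega)$ is an anti-auto-equivalence of $\mathbf{D}^b_{\operatorname{coh}}(X)$ with $\mathbb{D}\circ\mathbb{D} \cong \operatorname{id}$. The heart of the argument is to check that $\mathbb{D}$ carries $\mathbf{D}^{\operatorname{perf}}(X)$ into $\mathbf{D}^{b}(\operatorname{Inj}\text{-}X)$ and $\mathbf{D}^{b}(\operatorname{Inj}\text{-}X)$ into $\mathbf{D}^{\operatorname{perf}}(X)$; granting both, the idempotence $\mathbb{D}^2 \cong \operatorname{id}$ forces the two restrictions to be mutually inverse anti-equivalences $\mathbf{D}^{\operatorname{perf}}(X) \simeq \mathbf{D}^{b}(\operatorname{Inj}\text{-}X)^{\operatorname{op}}$. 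The first inclusion is easy and local: on an affine open $P$ is a bounded complex of finite free modules, so $\mathbb{D}(P)$ is built by finitely many shifts and extensions from copies of $\omega$, hence has finite injective dimension because $\omega$ does. Finally I would upgrade this to a covariant equivalence by precomposing with the perfect-complex duality $(-)^{\vee} \colonequals R\mathcal{H}om_{\mathcal{O}_X}(-,\mathcal{O}_X)$, which is an anti-auto-equivalence of $\mathbf{D}^{\operatorname{perf}}(X)$: then $\mathbb{D}\circ(-)^{\vee}$ and $(-)^{\vee}\circ\mathbb{D}$ are mutually inverse covariant equivalences, and $\mathbb{D}\circ(-)^{\vee}$ admits the concrete description $P \mapsto \mathbb{D}(P^\vee) \cong P\otimes^{\mathbf{L}}_{\mathcal{O}_X}\omega$ (checked locally), with quasi-inverse $R\mathcal{H}om_{\mathcal{O}_X}(\omega,-)$, the relevant identities being biduality and $R\mathcal{H}om_{\mathcal{O}_X}(\omega,\omega)\cong\mathcal{O}_X$.

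The main obstacle is the reverse inclusion $\mathbb{D}\big(\mathbf{D}^{b}(\operatorname{Inj}\text{-}X)\big)\subseteq\mathbf{D}^{\operatorname{perf}}(X)$: that dualizing a complex of finite injective dimension yields a perfect complex. This is not formal from the local structure of injective modules; it is the classical statement that over a noetherian ring with dualizing complex $\omega_R$ the functor $R\operatorname{Hom}_R(-,\omega_R)$ interchanges finite injective dimension and finite projective dimension (equivalently, for bounded complexes with finitely generated cohomology, perfectness). I would invoke this from the standard duality literature (Hartshorne's \emph{Residues and Duality}, Ch.~V, or the Stacks Project treatment of dualizing complexes) rather than reprove it. Everything else — the two local computations, the passage between the $\mathbb{D}$-version and the $-\otimes^{\mathbf{L}}\omega$-version, and the bookkeeping with the ambient categories — is routine.
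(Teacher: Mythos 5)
Your argument is correct, and it lands on exactly the same functor as the paper: $-\otimes^{\mathbf{L}}_{\mathcal{O}_X}\omega$ with quasi-inverse $R\mathcal{H}om_{\mathcal{O}_X}(\omega,-)$. The difference is in how the equivalence is justified. The paper's proof is a one-line reduction to the affine case followed by a citation of \cite[Proposition 4.7]{Iyengar/Krause:2006}, which directly gives that tensoring with a dualizing complex matches bounded complexes of finitely generated projectives with bounded complexes of injectives having finitely generated cohomology. You instead identify $\mathbf{D}^{\operatorname{perf}}(X)$ and $\mathbf{D}^{b}(\operatorname{Inj}\text{-}X)$ with the finite-flat-dimension and finite-injective-dimension objects of $\mathbf{D}^b_{\operatorname{coh}}(X)$, run the contravariant duality $\mathbb{D}=R\mathcal{H}om(-,\omega)$, invoke the classical fact that $\mathbb{D}$ interchanges these two finiteness conditions, and then untwist by $(-)^{\vee}$ to recover the covariant functor. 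This is a legitimate and somewhat more self-contained route (modulo the Hartshorne/Stacks Project input you cite for the hard inclusion $\mathbb{D}(\mathbf{D}^{b}(\operatorname{Inj}\text{-}X))\subseteq\mathbf{D}^{\operatorname{perf}}(X)$, which is indeed the non-formal step and is correctly flagged as such); what the paper's citation buys is brevity, since Iyengar--Krause package the covariant statement directly. Two small points of hygiene: your identification of $\mathbf{D}^{b}(\operatorname{Inj}\text{-}X)$ with the finite-injective-dimension objects uses that maps in $\mathbf{K}(\operatorname{Inj}\text{-}X)$ into bounded-below complexes of injectives agree with derived-category maps, so an isomorphism in $\mathbf{K}(\operatorname{Inj}\text{-}X)$ onto a bounded complex of injectives really is a bounded injective resolution --- worth saying explicitly; and the parenthetical claim that $\mathbf{D}_{\operatorname{Qcoh}}(X)\simeq\mathbf{D}(\operatorname{Qcoh}(X))$ for all noetherian $X$ is stated by the paper only for separated $X$ in the unbounded case, though this is harmless here since your entire argument lives in $\mathbf{D}^b_{\operatorname{coh}}(X)$ and $\mathbf{D}^{+}$, where the comparison is unproblematic.
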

\begin{proof}
    The map $-\otimes^{\mathbb{L}}\omega : \mathbf{D}^{\operatorname{perf}}(R) \to \mathbf{D}^{b}(\operatorname{Inj}\text{-}X)$ gives us the required equivalence. We can check this locally. But the local result is immediate from \cite[Proposition 4.7]{Iyengar/Krause:2006} using the fact that the functors in the statement preserve coherent cohomology.
\end{proof}

We have analogous result for the mock homotopy category of projectives.

\begin{theorem}\label{Theorem completion of compacts for for Kmproj}
    Let $X$ be a noetherian, separated scheme. Consider the mock homotopy category of projectives $\mathbf{K}_m(\operatorname{Proj}\text{-}X)$.
    We endow the category of compacts $ \mathbf{K}_m(\operatorname{Proj}\text{-}X)^{c} \cong  \mathbf{D}^b_{\operatorname{coh}}(X)^{\operatorname{op}}$ (see \cite[Theorem 7.4]{Murfet:2008}) with the metric which is given by $\{\mathbf{D}^b_{\operatorname{coh}}(X)^{\leq -n}\}_{n \in \mathbb{Z}}$. Then, with notation as in \cite{Murfet:2008}, 
    \begin{enumerate}
        \item $\mathfrak{L}'(\mathbf{D}^b_{\operatorname{coh}}(X)) = U_\lambda(D^{-}(\operatorname{coh}(X))^{\circ}$, see \Cref{Theorem Tc to T is a good extension} for the definition of $\mathfrak{L}'(\mathbf{D}^b_{\operatorname{coh}}(X))$. Note that this agrees with $\mathbf{K}_{m}(\operatorname{Proj}\text{-}X)^+_c$ if $\mathbf{D}(\operatorname{Qcoh}(X)) = \overline{\langle G \rangle}_N$ for an object $G \in \mathbf{D}^b_{\operatorname{coh}}(X)$ and $N \geq 1$ by \Cref{Corollary Closure of compact for weak G-approximability}, see \Cref{Definition T^+_c and T^-}, and \Cref{Remark 2 on co-approx and G-approx }, \Cref{Remark Kmproj standard co-t-structure in preferred equiv class}.
        \item $\mathcal{Y}^{-1}(\mathfrak{C}(\mathbf{D}^b_{\operatorname{coh}}(X))) = \bigcup_{n \in \mathbb{Z}} \Sigma^{n} \mathsf{V}$ where $(\mathsf{U},\mathsf{V})$ is the co-t-structure of \Cref{Convention Mock homotopy category of projectives}.
        \item  $\mathbf{D}^{\operatorname{perf}}(X) = \mathfrak{S}'(\mathbf{D}^b_{\operatorname{coh}}(X)) \cong \mathfrak{S}({\mathbf{D}^b_{\operatorname{coh}}(X)})$, see \Cref{Definition completions for Chapter on completions}(3) and \Cref{Theorem Tc to T is a good extension}(3).  Note that this agrees with $\mathbf{K}_{m}(\operatorname{Proj}\text{-}X)^b_c$ if we have that $\mathbf{D}(\operatorname{Qcoh}(X)) = \overline{\langle G \rangle}_N$ for an object $G \in \mathbf{D}^b_{\operatorname{coh}}(X)$ and $N \geq 1$ by \Cref{Corollary Closure of compact for weak G-approximability}, see \Cref{Definition T^+_c and T^-}, \Cref{Remark 2 on co-approx and G-approx }, and \Cref{Remark Kmproj standard co-t-structure in preferred equiv class}.
    \end{enumerate}
\end{theorem}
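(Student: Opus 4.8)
The plan is to run the argument exactly parallel to the proof of \Cref{Theorem completion of compacts for for Kinj}, replacing the use of \Cref{Lemma coaisle of derived category and "aisle" of homotopy category} and the brutal-truncation structure on $\mathbf{K}(\operatorname{Inj}\text{-}X)$ with the corresponding facts for $\mathbf{K}_m(\operatorname{Proj}\text{-}X)$, most of which are already assembled in \Cref{Convention Mock homotopy category of projectives} and \Cref{Closure of compacts for the mock homotopy category of projectives}. First I would fix the metric on $\mathbf{K}_m(\operatorname{Proj}\text{-}X)^c \cong \mathbf{D}^b_{\operatorname{coh}}(X)^{\operatorname{op}}$ to be $\{\mathbf{D}^b_{\operatorname{coh}}(X)^{\leq -n}\}_{n \in \mathbb{Z}}$; by \Cref{Convention Mock homotopy category of projectives} and \cite[Theorem 2.3.4]{Bondarko:2022} this is $\mathbb{N}$-equivalent to $\mathcal{R} \cap \mathsf{T}^c$ where $\mathcal{R}_n = \Sigma^{-n}\mathsf{U}$, and hence by \Cref{Definition completions for Chapter on completions} all three of $\mathfrak{L}$, $\mathfrak{C}$, $\mathfrak{S}$ are computed with respect to an equivalent metric, so we may freely pass between the two.

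For part (1): an object $F \in \mathfrak{L}'(\mathbf{D}^b_{\operatorname{coh}}(X))$ is, by \Cref{Theorem Tc to T is a good extension}, a homotopy colimit of a Cauchy sequence $E_1 \to E_2 \to \cdots$ in the compacts with $\operatorname{Cone}(E_n \to F) \in \Sigma^{-n}\mathsf{U} = \mathcal{R}_n$ for all $n$. The functor $U_\lambda(-)^{\circ}$ identifies $\mathbf{D}^b_{\operatorname{coh}}(X)^{\operatorname{op}} \xrightarrow{\sim} \mathbf{K}_m(\operatorname{Proj}\text{-}X)^c$, so each $E_n = U_\lambda(\widetilde{E}_n)^{\circ}$ with $\widetilde{E}_n \in \mathbf{D}^b_{\operatorname{coh}}(X)$; the argument already given in the proof of \Cref{Closure of compacts for the mock homotopy category of projectives}, working affine-locally and comparing the triangles obtained by applying $\operatorname{\mathcal{H}\textit{om}}(-,\mathcal{O}_X)$ and $U(-^{\circ})$, shows that $\hocolim E_n = U_\lambda(\widetilde{F})^{\circ}$ for a complex $\widetilde{F}$ that is locally a complex of vector bundles with coherent cohomology and bounded above, i.e.\ $\widetilde{F} \in \mathbf{D}^{-}(\operatorname{coh}(X))$; this gives the inclusion $\mathfrak{L}'(\mathbf{D}^b_{\operatorname{coh}}(X)) \subseteq U_\lambda(\mathbf{D}^{-}(\operatorname{coh}(X)))^{\circ}$. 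Conversely, for $\widetilde{F} \in \mathbf{D}^{-}(\operatorname{coh}(X))$ the canonical (soft) truncations $\tau^{\geq -n}\widetilde{F}$ form a Cauchy sequence in $\mathbf{D}^b_{\operatorname{coh}}(X)$ with colimit $\widetilde{F}$, and applying $U_\lambda(-)^{\circ}$ turns this into a Cauchy sequence in the compacts of $\mathbf{K}_m(\operatorname{Proj}\text{-}X)$ whose homotopy colimit is $U_\lambda(\widetilde{F})^{\circ}$ (using that $U_\lambda$ is exact and coproduct-preserving, together with the $\hocolim$-versus-$\colim$ comparison \cite[Remark 2.2]{Bkstedt/Neeman:1993}); this is exactly \Cref{Closure of compacts for the mock homotopy category of projectives} reproved, giving the reverse inclusion. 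The parenthetical identification with $\mathbf{K}_m(\operatorname{Proj}\text{-}X)^+_c$ under the hypothesis $\mathbf{D}(\operatorname{Qcoh}(X)) = \overline{\langle G\rangle}_N$ then follows from \Cref{Corollary Closure of compact for weak G-approximability}, \Cref{Remark 2 on co-approx and G-approx }, and \Cref{Remark Kmproj standard co-t-structure in preferred equiv class}.

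For part (2): by \Cref{Theorem Tc to T is a good extension}(2), $\mathcal{Y}^{-1}(\mathfrak{C}(\mathbf{D}^b_{\operatorname{coh}}(X))) = \bigcup_{n\geq 1}\big(\mathbf{D}^b_{\operatorname{coh}}(X)^{\leq -n}\big)^{\perp}$, where the orthogonal is taken in $\mathbf{K}_m(\operatorname{Proj}\text{-}X)$. By the definition of the compactly generated co-t-structure in \Cref{Convention Mock homotopy category of projectives}, $\mathsf{V} = \{U_\lambda(\mathbf{D}^b_{\operatorname{coh}}(X)^{\leq -m})^{\circ} : m \geq 1\}^{\perp}$ up to the shift conventions, and unwinding this shows $\big(\mathbf{D}^b_{\operatorname{coh}}(X)^{\leq -n}\big)^{\perp} = \Sigma^{n'}\mathsf{V}$ for the appropriate index $n'$; taking the union over $n$ gives $\bigcup_{n\in\mathbb{Z}}\Sigma^n\mathsf{V}$. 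Part (3) is then formal: by \Cref{Theorem Tc to T is a good extension}(3) we have $\mathfrak{S}'(\mathbf{D}^b_{\operatorname{coh}}(X)) = \mathfrak{L}'(\mathbf{D}^b_{\operatorname{coh}}(X)) \cap \mathcal{Y}^{-1}(\mathfrak{C}(\mathbf{D}^b_{\operatorname{coh}}(X))) \cong \mathfrak{S}(\mathbf{D}^b_{\operatorname{coh}}(X))$, and combining (1) and (2) identifies this intersection as $U_\lambda(\mathbf{D}^{-}(\operatorname{coh}(X)))^{\circ} \cap \big(\bigcup_n \Sigma^n\mathsf{V}\big)$; a local computation (a complex of vector bundles that is also in the coaisle of the weight structure is, affine-locally, a bounded complex of finitely generated projectives, i.e.\ perfect) identifies this with $\mathbf{D}^{\operatorname{perf}}(X)$, and the last clause again invokes \Cref{Corollary Closure of compact for weak G-approximability}. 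The main obstacle I anticipate is the local argument in part (1) (and its mild reprise in part (3)): one must check carefully that the ``locally a complex of vector bundles'' condition is exactly what characterises the essential image of $U_\lambda(\mathbf{D}^{-}(\operatorname{coh}(X)))^{\circ}$ and that passing to $\hocolim$ over a Cauchy sequence does not take one outside it; fortunately this is precisely the content of the proof of \Cref{Closure of compacts for the mock homotopy category of projectives}, which we are entitled to cite, so the theorem should reduce to bookkeeping of the shift indices in \Cref{Convention Mock homotopy category of projectives} together with the truncation argument from \Cref{Theorem completion of compacts for for Kinj}.
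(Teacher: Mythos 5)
Your proposal is correct, and for parts (1) and (2) it is essentially the paper's argument: the paper disposes of (1) by declaring it ``analogous to \Cref{Theorem completion of compacts for for Kinj}(1)'' via \cite[Lemma 7.3]{Murfet:2008}, and of (2) by ``obvious from definition'', which is exactly the truncation-plus-unwinding you carry out (your reliance on the already-proved \Cref{Closure of compacts for the mock homotopy category of projectives} for the local vector-bundle characterisation is legitimate, since that lemma is established independently earlier in the paper). Where you genuinely diverge is part (3): the paper deduces the identification $\mathfrak{S}'(\mathbf{D}^b_{\operatorname{coh}}(X)) = \mathbf{D}^{\operatorname{perf}}(X)$ by citing \cite[Proposition 5.6]{Neeman:2018} together with \cite[Lemma 7.3]{Murfet:2008}, whereas you compute the intersection $\mathfrak{L}' \cap \mathcal{Y}^{-1}(\mathfrak{C})$ directly from your parts (1) and (2) and then argue affine-locally that a bounded-above complex of vector bundles lying in some $\Sigma^{n}\mathsf{V}$ is perfect. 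Your route is more self-contained but shifts the burden onto that local finiteness argument, which is precisely the content of Neeman's cited proposition; if you want to avoid redoing it, note that the external reference does the work for you. One small point to keep straight in write-up: the functor $U_\lambda(-)^{\circ}$ is contravariant, so the canonical truncations $\tau^{\geq -n}\widetilde{F}$ form an inverse system in $\mathbf{D}^{-}(\operatorname{coh}(X))$ whose image is the direct (Cauchy) system in $\mathbf{K}_m(\operatorname{Proj}\text{-}X)^c$ --- your phrase ``with colimit $\widetilde{F}$'' should be ``with homotopy limit $\widetilde{F}$'', becoming a homotopy colimit only after applying the duality. This is a matter of bookkeeping, not a gap.
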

\begin{proof}
    \begin{enumerate}
        \item The proof is analogous to \Cref{Theorem completion of compacts for for Kinj}(1) by using \cite[Lemma 7.3]{Murfet:2008}
    
        \item Obvious from definition.
        
        \item Follows by \cite[Proposition 5.6]{Neeman:2018} using \cite[Lemma 7.3]{Murfet:2008}.
    \end{enumerate}
\end{proof}

We have the following result for the derived category of $B\mathbb{G}_m$.
\begin{theorem}\label{Theorem completion for BGm}
    For all $i \in \mathbb{Z}$, let $G_i$ be the quasicoherent sheaf corresponding to the 1-dimensional weight $i$ representation. Let $\mathsf{T} = \mathbf{D}_{\operatorname{Qcoh}}(B\mathbb{G}_m)$. We equip $\mathsf{T}$ with the generating sequence $\mathcal{G}$ of \Cref{Remark on Derived category of BG_m}, and $\mathsf{T}^c$ with the metric $\mathcal{M}^{\mathcal{G}}$, see \Cref{Definition of generating sequence}(2). Then,
    \begin{enumerate}
        \item  $ \overline{\mathsf{T}^c} =\mathfrak{L}'(\mathsf{T}^c) =  \left\{\bigoplus_{i \in \mathbb{Z}} \bigoplus_{j \in \mathbb{Z}} \Sigma^i G_j^{\oplus n_{ij}} : n_{ij} \in \mathbb{N} \text{ for all i,j}\right\}$
        \item $\mathsf{T}^b_c = \mathfrak{S}({\mathsf{T}^c}) \cong \mathfrak{S}'(\mathsf{T}^c) = \mathsf{T}^c$
    \end{enumerate}
\end{theorem}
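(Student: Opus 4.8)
The plan is to derive both statements by combining the general completion machinery with an explicit hands-on description of $\overline{\mathsf{T}^c}$ and of $\big(\mathcal{M}^{\mathcal{G}}\big)^{\perp}$ that exploits the semisimplicity of $\operatorname{Qcoh}(B\mathbb{G}_m)$ recorded in \Cref{Lemma on quasicoherent sheaves on BG_m} and \Cref{Remark on Derived category of BG_m}. For the abstract input, $\mathsf{T} = \mathbf{D}_{\operatorname{Qcoh}}(B\mathbb{G}_m)$ is $\mathcal{G}$-approximable by \Cref{Theorem G-approximability for BG_m}, hence in particular weakly $\mathcal{G}$-quasiapproximable, so \Cref{Corollary Closure of compact for weak G-approximability} applies: it gives $\mathfrak{L}'(\mathsf{T}^c) = \overline{\mathsf{T}^c}$, where the closure of the compacts is formed with respect to $\mathcal{R}^{\mathcal{G}}$ on $\mathsf{T}$ while $\mathsf{T}^c$ carries the equivalent metric $\mathcal{M}^{\mathcal{G}}$, together with $\mathfrak{S}'(\mathsf{T}^c) = \mathsf{T}^b_c$; invoking \Cref{Theorem Tc to T is a good extension} we also have $\mathfrak{S}'(\mathsf{T}^c) \cong \mathfrak{S}(\mathsf{T}^c)$. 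Thus (1) reduces to computing $\overline{\mathsf{T}^c}$ and (2) to showing $\mathsf{T}^b_c = \overline{\mathsf{T}^c} \cap \big(\mathcal{M}^{\mathcal{G}}\big)^{\perp} = \mathsf{T}^c$.

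For the computation I record three facts. First, by \Cref{Remark on Derived category of BG_m} every object of $\mathsf{T}$ is isomorphic to $\bigoplus_{i,j \in \mathbb{Z}} \Sigma^i G_j^{(I_{ij})}$ for index sets $I_{ij}$; moreover each $\Sigma^i G_j$ is compact and $\HomT{\Sigma^a G_b}{\Sigma^i G_j}$ equals $\mathbb{C}$ if $(a,b) = (i,j)$ and $0$ otherwise (\Cref{Lemma on quasicoherent sheaves on BG_m}), whence $\HomT{\Sigma^a G_b}{\bigoplus_{i,j}\Sigma^i G_j^{(I_{ij})}} \cong \mathbb{C}^{(I_{ab})}$. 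Second, for $n \geq 1$ one has $\mathcal{R}^{\mathcal{G}}_n = \operatorname{Add}(\{\Sigma^i G_j : |i| + |j| \geq n\})$ and $\mathcal{M}^{\mathcal{G}}_n = \operatorname{add}(\{\Sigma^i G_j : |i| + |j| \geq n\})$, while $\mathcal{R}^{\mathcal{G}}_n = \mathsf{T}$ and $\mathcal{M}^{\mathcal{G}}_n = \mathsf{T}^c$ for $n \leq 0$ (\Cref{Remark on Derived category of BG_m}). Third, since extensions between the pairwise $\operatorname{Hom}$-orthogonal objects $\Sigma^i G_j$ split and summands of finite direct sums of them are again of the same form, $\mathsf{T}^c$ consists exactly of the $\bigoplus_{i,j}\Sigma^i G_j^{(I_{ij})}$ with every $I_{ij}$ finite and only finitely many nonempty.

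To prove (1) I argue by two inclusions. If $F \cong \bigoplus_{i,j} \Sigma^i G_j^{(I_{ij})}$ with every $I_{ij}$ finite, then for each $n \geq 1$ the object $E_n \colonequals \bigoplus_{|i|+|j| < n} \Sigma^i G_j^{(I_{ij})}$ is compact and $F \cong E_n \oplus D_n$ with $D_n \colonequals \bigoplus_{|i|+|j| \geq n} \Sigma^i G_j^{(I_{ij})} \in \mathcal{R}^{\mathcal{G}}_n$; the split triangle $E_n \to F \to D_n \to \Sigma E_n$ exhibits $F \in \mathsf{T}^c \star \mathcal{R}^{\mathcal{G}}_n$, and since this holds for every $n$ (trivially for $n \leq 0$) we get $F \in \overline{\mathsf{T}^c}$. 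Conversely, let $F \cong \bigoplus_{i,j}\Sigma^i G_j^{(I_{ij})} \in \overline{\mathsf{T}^c}$ and fix $(a,b)$; with $n = |a|+|b|+1$, choose a triangle $E \to F \to D \to \Sigma E$ with $E \in \mathsf{T}^c$ and $D \in \mathcal{R}^{\mathcal{G}}_n$. As $D$ lies in $\operatorname{Add}$ of the $\Sigma^i G_j$ with $|i|+|j| \geq n$, none of which is $\Sigma^a G_b$ (because $|a|+|b| < n$), we have $\HomT{\Sigma^a G_b}{D} = 0$, so the map $\HomT{\Sigma^a G_b}{E} \to \HomT{\Sigma^a G_b}{F} \cong \mathbb{C}^{(I_{ab})}$ is surjective with finite-dimensional source; hence $I_{ab}$ is finite. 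Therefore $\overline{\mathsf{T}^c}$ is precisely the collection of $\bigoplus_{i,j} \Sigma^i G_j^{(I_{ij})}$ with all $I_{ij}$ finite, which is the description asserted in (1) upon setting $n_{ij} = |I_{ij}|$.

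For (2), the same $\operatorname{Hom}$-computation shows that for $F \cong \bigoplus_{i,j}\Sigma^i G_j^{(I_{ij})} \in \overline{\mathsf{T}^c}$ and $n \geq 1$ one has $F \in (\mathcal{M}^{\mathcal{G}}_n)^{\perp}$ if and only if $I_{ij}$ is empty whenever $|i|+|j| \geq n$; thus $F \in \big(\mathcal{M}^{\mathcal{G}}\big)^{\perp} = \bigcup_{n} (\mathcal{M}^{\mathcal{G}}_n)^{\perp}$ exactly when only finitely many $I_{ij}$ are nonempty. Combined with the finiteness of each $I_{ij}$ from (1), this says precisely $F \in \mathsf{T}^c$, so $\mathsf{T}^b_c = \overline{\mathsf{T}^c} \cap \big(\mathcal{M}^{\mathcal{G}}\big)^{\perp} = \mathsf{T}^c$; together with $\mathsf{T}^b_c = \mathfrak{S}'(\mathsf{T}^c) \cong \mathfrak{S}(\mathsf{T}^c)$ from the first paragraph this yields (2). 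I do not expect a genuine obstacle: the only delicate point is bookkeeping — keeping straight which metric governs which of $\mathfrak{L}'(\mathsf{T}^c)$, $\mathfrak{S}'(\mathsf{T}^c)$, $\overline{\mathsf{T}^c}$, $\mathsf{T}^b_c$ when passing through \Cref{Corollary Closure of compact for weak G-approximability} and \Cref{Theorem Tc to T is a good extension}, for which one uses that $\mathcal{M}^{\mathcal{G}}$ and $\mathcal{R}^{\mathcal{G}} \cap \mathsf{T}^c$ are $\mathbb{N}$-equivalent (\Cref{Proposition approximable implies preferred equivalence class}) so that the completion is insensitive to the choice; everything else reduces to the semisimplicity of $\operatorname{Qcoh}(B\mathbb{G}_m)$.
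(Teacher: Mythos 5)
Your proposal is correct and follows essentially the same route as the paper: invoke the $\mathcal{G}$-approximability of $\mathbf{D}_{\operatorname{Qcoh}}(B\mathbb{G}_m)$ together with \Cref{Corollary Closure of compact for weak G-approximability} and \Cref{Theorem Tc to T is a good extension} to identify $\mathfrak{L}'(\mathsf{T}^c)$ with $\overline{\mathsf{T}^c}$ and $\mathfrak{S}'(\mathsf{T}^c)$ with $\mathsf{T}^b_c$, then compute these explicitly from the semisimple structure of the derived category recorded in \Cref{Lemma on quasicoherent sheaves on BG_m} and \Cref{Remark on Derived category of BG_m}. The only cosmetic difference is that you compute $\overline{\mathsf{T}^c}$ and $\mathsf{T}^b_c$ directly and transport the answer to $\mathfrak{L}'$ and $\mathfrak{S}'$, whereas the paper phrases the explicit computation on the $\mathfrak{L}'$/$\mathfrak{S}'$ side; your write-up supplies the details the paper calls ``straightforward.''
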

\begin{proof}
    The computation of $\mathfrak{L}'(\mathsf{T}{\mathsf{T}^c})$ and $\mathfrak{S}'(\mathsf{T}^c)$ is straightforward from the definitions, and the structure of the derived category, see \Cref{Lemma on quasicoherent sheaves on BG_m} and \Cref{Remark on Derived category of BG_m}. Further, as $\mathbf{D}_{\operatorname{Qcoh}}(B\mathbb{G}_m)$ is $\mathcal{G}$-approximable, see \Cref{Theorem G-approximability for BG_m}, we have that $\overline{\mathsf{T}^c} = \mathfrak{L}'(\mathsf{T}^c) $ and $\mathsf{T}^b_c = \mathfrak{S}'(\mathsf{T}^c) $ by \Cref{Corollary Closure of compact for weak G-approximability}.
\end{proof}

\subsection{Applications of the representability theorems}
We now state some applications of the abstract representability results of the previous chapter, using the computations of the closure of the compacts done above. We don't mention the examples involving approximable triangulated categories here, as those are already covered in \cite[Theorem 0.3]{Neeman:2021b}. 
\begin{theorem}\label{Theorem on representability on DbcohX}
    Let $X$ be a proper scheme over a noetherian ring $R$ such that $\mathbf{K}(\operatorname{Inj}\text{-}X)$ is co-approximable, see \Cref{Theorem Examples of co-approximability} for examples. Let $\mathcal{Y} : \mathbf{D}^{+}_{\operatorname{coh}}(X) \to \Hom{}{\mathbf{D}^b_{\operatorname{coh}}(X)^{\operatorname{op}}}{\operatorname{Mod}(R)}$ be the restricted Yoneda functor. Then,
    \begin{enumerate}[label=(\roman*)]
        \item $\mathcal{Y}$ is full, and the essential image is the cohomological functors $H$ such that $H(\Sigma^{i} F) = 0$ for $i > > 0$ for any $F \in \mathbf{D}^b_{\operatorname{coh}}(X)$. 
        \item $\mathcal{Y}$ gives an equivalence of categories between $\mathbf{D}^{b}(\operatorname{Inj}\text{-}X)$ and the finite cohomological functors, that is, cohomological functors $H$ such that $H(\Sigma^{i} F) = 0$ for $|i| > > 0$ for any $F \in \mathbf{D}^b_{\operatorname{coh}}(X)$. Recall that $\mathbf{D}^{b}(\operatorname{Inj}\text{-}X) = \mathbf{D}^b_{\operatorname{coh}}(X) \cap \mathbf{K}^{b}(\operatorname{Inj}\text{-}X)$.
    \end{enumerate}
\end{theorem}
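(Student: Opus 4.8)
The plan is to deduce the statement from the abstract representability theorems \Cref{Main Theorem on Tc-} and \Cref{Main Theorem on Tbc}, applied to $\mathsf{T} \colonequals \mathbf{K}(\operatorname{Inj}\text{-}X)$. Since $X$ is proper over a noetherian ring it is in particular noetherian and separated, so $\mathsf{T}^c \cong \mathbf{D}^{b}_{\operatorname{coh}}(X)$, see \Cref{Notation triangulated categories for schemes}. By hypothesis $\mathsf{T}$ is co-approximable, so there is a compact generator $G$, a co-t-structure and an integer $A>0$ as in \Cref{Definition of co-approx}; I would take the finite generating sequence $\mathcal{G}$ with $\mathcal{G}^n \colonequals \{\Sigma^n G\}$ and the orthogonal metric $\mathcal{R}^{\mathcal{G}}$, which by \Cref{Remark 1 on co-approx and G-approx} is $\mathcal{R}^{\mathcal{G}}_n = \Sigma^{-n}\mathsf{U}_G$ and lies in the preferred $\mathbb{N}$-equivalence class of orthogonal metrics. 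By \Cref{Theorem Co-Approximability and G-Approximability}, co-approximability of $\mathsf{T}$ is the same as $\mathcal{G}$-approximability for this $\mathcal{G}$, and hence $\mathsf{T}$ is $\mathcal{G}$-quasiapproximable by \Cref{Proposition 3 of section 5}.

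Next I would verify that we are in the situation of \Cref{Setup}. The category $\mathsf{T}$ is $R$-linear, $\mathcal{G}$ is finite, and $\mathcal{R}^{\mathcal{G}}$ lies in the preferred $\mathbb{N}$-equivalence class; condition (2) of \Cref{Setup} is precisely \Cref{Definition of G-approximability}(2), which holds since $\mathsf{T}$ is $\mathcal{G}$-approximable. The one genuinely geometric input is condition (1) of \Cref{Setup}: the $R$-modules $\Hom{\mathsf{T}}{\Sigma^{m}G}{\Sigma^{m'}G} = \operatorname{Ext}^{m'-m}_{X}(G,G)$ must be finitely generated for all $m,m'\in\mathbb{Z}$. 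This is Grothendieck's finiteness theorem for proper morphisms: $G \in \mathbf{D}^{b}_{\operatorname{coh}}(X)$ and $X$ is proper over the noetherian ring $R$, so all these $\operatorname{Ext}$-modules are finitely generated. Therefore both \Cref{Main Theorem on Tc-} and \Cref{Main Theorem on Tbc} apply.

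It then remains to translate the conclusions. For the closure of the compacts, \Cref{Remark 2 on co-approx and G-approx } gives $\overline{\mathsf{T}^c} = \mathsf{T}^{+}_c$ and identifies its bounded part with $\mathsf{T}^{b}_c$ (see \Cref{Definition T^+_c and T^-}), while \Cref{Theorem completion of compacts for for Kinj}, via \Cref{Corollary Closure of compact for weak G-approximability}, identifies $\mathsf{T}^{+}_c \cong \mathbf{D}^{+}_{\operatorname{coh}}(X)$ and $\mathsf{T}^{b}_c \cong \mathbf{D}^{b}(\operatorname{Inj}\text{-}X) = \mathbf{D}^{b}_{\operatorname{coh}}(X)\cap\mathbf{K}^{b}(\operatorname{Inj}\text{-}X)$. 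For the essential images, \Cref{Example finite bounded cohomological functors}(2) shows that, for $\mathcal{G}^n=\{\Sigma^n G\}$, a $\mathsf{T}^c$-cohomological functor $H$ is $\mathcal{G}$-semifinite exactly when $H(F)$ is finitely generated for all $F\in\mathbf{D}^{b}_{\operatorname{coh}}(X)$ and $H(\Sigma^i F)=0$ for $i\gg 0$, and $\mathcal{G}$-finite exactly when in addition $H(\Sigma^i F)=0$ for $i\ll 0$, i.e. for $|i|\gg 0$. Feeding these identifications into \Cref{Main Theorem on Tc-} gives (i) and into \Cref{Main Theorem on Tbc} gives (ii).

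The substantive part of the argument is the bookkeeping in this last step: one must be sure that the metric on $\mathsf{T}^c=\mathbf{D}^{b}_{\operatorname{coh}}(X)$ used in \Cref{Theorem completion of compacts for for Kinj}, namely the restriction of the standard (brutal-truncation) co-t-structure on $\mathbf{K}(\operatorname{Inj}\text{-}X)$, is $\mathbb{N}$-equivalent to the metric $\mathcal{R}^{\mathcal{G}}\cap\mathsf{T}^c$ governing \Cref{Setup} and the abstract theorems, so that the categories $\overline{\mathsf{T}^c}$ and $\mathsf{T}^{b}_c$ computed in the two places genuinely coincide; this is exactly \Cref{Remark Kinj standard co-t-structure in preferred equiv class}. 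Once that compatibility is recorded, the proof is just the assembly of the cited results, and I do not expect any further obstacle.
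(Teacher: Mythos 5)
Your proposal is correct and follows essentially the same route as the paper: reduce to \Cref{Main Theorem on Tc-} and \Cref{Main Theorem on Tbc} via \Cref{Theorem Co-Approximability and G-Approximability}, identify $\overline{\mathsf{T}^c}$ and $\mathsf{T}^b_c$ using \Cref{Theorem completion of compacts for for Kinj} together with \Cref{Remark 2 on co-approx and G-approx } and \Cref{Remark Kinj standard co-t-structure in preferred equiv class}, and read off the essential images from \Cref{Example finite bounded cohomological functors}(2). The only difference is that you spell out the verification of \Cref{Setup} (in particular the $\operatorname{Ext}$-finiteness via properness) in more detail than the paper, which disposes of it in a single sentence.
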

\begin{proof}
    Recall that a co-approximable triangulated category is an example of a $\mathcal{G}$-approximable triangulated category, see \Cref{Theorem Co-Approximability and G-Approximability}. We have computed the closure of the compacts $\mathsf{T}^+_c$ (see \Cref{Remark 2 on co-approx and G-approx }) and the bounded objects in the closure of the compacts $\mathsf{T}^b_c$ for $\mathsf{T} = \mathbf{K}(\operatorname{Inj}\text{-}X)$  in \Cref{Theorem completion of compacts for for Kinj}. We have that,
    \[\mathbf{K}(\operatorname{Inj}\text{-}X)^+_c \cong \mathbf{D}^{+}_{\operatorname{coh}}(X) \text{ and } \mathbf{K}(\operatorname{Inj}\text{-}X)^b_c \cong \mathbf{D}^{b}(\operatorname{Inj}\text{-}X)\]
    see \Cref{Theorem completion of compacts for for Kinj}(1) and (3).
    
    Then (i) and (ii) follow directly from \Cref{Main Theorem on Tc-} and \Cref{Main Theorem on Tbc} respectively. Note that the properness of the scheme ensures that $\Hom{}{D}{F}$ is a finitely generated $R$-module for any $D,F \in \mathbf{D}^b_{\operatorname{coh}}(X)$. Further, this description of the $\mathcal{G}$-semifinite and $\mathcal{G}$-finite $\mathsf{T}^c$-cohomological functors comes from \Cref{Example finite bounded cohomological functors}(2). 
\end{proof}
\begin{remark}
    If in \Cref{Theorem on representability on DbcohX}, $X$ has a dualizing complex, then there is an equivalence between the category of finite cohomological functors on $\mathbf{D}^b_{\operatorname{coh}}(X)$ and the category $\mathbf{D}^{\operatorname{perf}}(X)$ by \Cref{Theorem Dbinj and Dperf equivalent with dualizing Complex}.
\end{remark}

We have results analogous to \Cref{Theorem on representability on DbcohX} for coherent algebras and stacks with the same proofs using \Cref{Remark Kinj co-approx for stacks and algebras}, which we write down in the remarks below.

\begin{remark}\label{Remark on representability on DbcohA for algebra}
     Let $X$ be a proper scheme over a noetherian ring $R$, and $\mathcal{A}$ an Azumaya algebra such that $\mathbf{K}(\operatorname{Inj}\text{-}\mathcal{A})$ is co-approximable, see \Cref{Theorem Examples of co-approximability} for examples. Let $\mathcal{Y} : \mathbf{D}^{+}_{\operatorname{coh}}(\mathcal{A}) \to \Hom{}{\mathbf{D}^b_{\operatorname{coh}}(\mathcal{A})^{\operatorname{op}}}{\operatorname{Mod}(R)}$ be the restricted Yoneda functor. Then,
    \begin{enumerate}[label=(\roman*)]
        \item $\mathcal{Y}$ is full, and the essential image is the cohomological functors $H$ such that $H(\Sigma^{i} F) = 0$ for $i > > 0$ for any $F \in \mathbf{D}^b_{\operatorname{coh}}(\mathcal{A})$. 
        \item $\mathcal{Y}$ gives an equivalence of categories between $\mathbf{D}^{b}(\operatorname{Inj}\text{-}\mathcal{A})$ and the finite cohomological functors, that is, cohomological functors $H$ such that $H(\Sigma^{i} F) = 0$ for $|i| > > 0$ for any $F \in \mathbf{D}^b_{\operatorname{coh}}(\mathcal{A})$. Recall that $\mathbf{D}^{b}(\operatorname{Inj}\text{-}\mathcal{A}) = \mathbf{D}^b_{\operatorname{coh}}(\mathcal{A}) \cap \mathbf{K}^{b}(\operatorname{Inj}\text{-}\mathcal{A})$.
    \end{enumerate}
\end{remark}

\begin{remark}\label{Remark on representability on DbcohX for stacks}
     Let $\mathcal{X}$ be a proper stack over a noetherian ring $R$ such that $\mathbf{K}(\operatorname{Inj}\text{-}\mathcal{X})$ is co-approximable, see \Cref{Theorem Examples of co-approximability for stacks} for examples. Let $\mathcal{Y} : \mathbf{D}^{+}_{\operatorname{coh}}(\mathcal{X}) \to \Hom{}{\mathbf{D}^b_{\operatorname{coh}}(\mathcal{X})^{\operatorname{op}}}{\operatorname{Mod}(R)}$ be the restricted Yoneda functor. Then,
    \begin{enumerate}[label=(\roman*)]
        \item $\mathcal{Y}$ is full, and the essential image is the cohomological functors $H$ such that $H(\Sigma^{i} F) = 0$ for $i > > 0$ for any $F \in \mathbf{D}^b_{\operatorname{coh}}(\mathcal{X})$. 
        \item $\mathcal{Y}$ gives an equivalence of categories between $\mathbf{D}^{b}(\operatorname{Inj}\text{-}\mathcal{X})$ and the finite cohomological functors, that is, cohomological functors $H$ such that $H(\Sigma^{i} F) = 0$ for $|i| > > 0$ for any $F \in \mathbf{D}^b_{\operatorname{coh}}(\mathcal{X})$. Recall that $\mathbf{D}^{b}(\operatorname{Inj}\text{-}\mathcal{X}) = \mathbf{D}^b_{\operatorname{coh}}(\mathcal{X}) \cap \mathbf{K}^{b}(\operatorname{Inj}\text{-}\mathcal{X})$.
    \end{enumerate}
\end{remark}

Now we state a representability theorem which follows from the co-quasiapproximability of the mock homotopy category of projectives.

\begin{theorem}\label{Theorem on representability on DbcohXop}
    Let $X$ be a proper scheme over a noetherian ring $R$ such that $\mathbf{K}_{m}(\operatorname{Proj}\text{-}X)$ is co-quasiapproximable, see \Cref{Theorem Examples of co-approximability} for examples. We have the restricted Yoneda functor $\mathcal{Y} : \mathbf{D}^{-}(\operatorname{coh}\text{-}X)^{\operatorname{op}} \to \Hom{}{\mathbf{D}^b_{\operatorname{coh}}(X)}{\operatorname{Mod}(R)}$. Then,
    \begin{enumerate}[label=(\roman*)]
        \item $\mathcal{Y}$ is full, and the essential image is the homological functors $H$ such that $H(\Sigma^{i} F) = 0$ for $i > > 0$ for any $F \in \mathbf{D}^b_{\operatorname{coh}}(X)$. 
        \item $\mathcal{Y}$ gives an equivalence of categories between $\mathbf{D}^{\operatorname{perf}}(X)$ and the finite homological functors, that is, homological functors $H$ such that $H(\Sigma^{i} F) = 0$ for $|i| > > 0$ for any $F \in \mathbf{D}^b_{\operatorname{coh}}(X)$. 
    \end{enumerate}
\end{theorem}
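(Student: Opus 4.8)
The plan is to treat this statement as the projective counterpart of \Cref{Theorem on representability on DbcohX}: where that result used the homotopy category of injectives with its standard t-structure, here I would work with $\mathsf{T} \colonequals \mathbf{K}_m(\operatorname{Proj}\text{-}X)$, the mock homotopy category of projectives, equipped with the standard co-t-structure of \Cref{Convention Mock homotopy category of projectives}. The substance of the argument is to verify the hypotheses of the abstract representability theorems \Cref{Main Theorem on Tc-} and \Cref{Main Theorem on Tbc} for $\mathsf{T}$, apply them, and then translate the conclusions through Murfet's identification $\mathsf{T}^c \cong \mathbf{D}^b_{\operatorname{coh}}(X)^{\operatorname{op}}$ together with the completion computations of \Cref{Theorem completion of compacts for for Kmproj}.

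First I would assemble the input for the abstract theorems. Since $X$ is proper over $R$ it is in particular noetherian and separated, so $\mathsf{T}$ is co-quasiapproximable (\Cref{Definition of co-quasiapprox}) by hypothesis; see \Cref{Theorem Kmproj co-approx} and \Cref{Theorem Examples of co-approximability} for when this holds. Hence $\mathsf{T}$ is $\mathcal{G}$-quasiapproximable by \Cref{Theorem Co-Approximability and G-Approximability}, where $\mathcal{G}$ is the finite generating sequence $\mathcal{G}^n = \{\Sigma^n G\}$ with $G = U_{\lambda}(G_0)^{\circ}$ for a classical generator $G_0$ of $\mathbf{D}^b_{\operatorname{coh}}(X)$. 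The category $\mathsf{T}$ is $R$-linear, and by \cite[Theorem 7.4]{Murfet:2008} the functor $U_{\lambda}(-)^{\circ}$ is a triangulated equivalence $\mathbf{D}^b_{\operatorname{coh}}(X)^{\operatorname{op}} \xrightarrow{\sim} \mathsf{T}^c$; since $X$ is proper over $R$, $\Hom{}{A}{B}$ is a finitely generated $R$-module for all $A, B \in \mathbf{D}^b_{\operatorname{coh}}(X)$, and therefore $\HomT{G'}{G''}$ is a finitely generated $R$-module for all $G', G''$ in $\bigcup_{n \in \mathbb{Z}} \mathcal{G}^n$. Thus $\mathsf{T}$ satisfies all the hypotheses of \Cref{Main Theorem on Tc-} and \Cref{Main Theorem on Tbc}.

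Next I would identify the categories occurring in the conclusions. By \Cref{Corollary Closure of compact for weak G-approximability} (applicable since $\mathsf{T}$ is $\mathcal{G}$-quasiapproximable) together with \Cref{Theorem completion of compacts for for Kmproj}(1) and \Cref{Closure of compacts for the mock homotopy category of projectives}, the closure of the compacts $\overline{\mathsf{T}^c}$ is $U_{\lambda}(\mathbf{D}^{-}(\operatorname{coh}(X)))^{\circ}$, which $U_{\lambda}(-)^{\circ}$ identifies with $\mathbf{D}^{-}_{\operatorname{coh}}(X)^{\operatorname{op}}$; and by \Cref{Theorem completion of compacts for for Kmproj}(3) the bounded part $\mathsf{T}^b_c$ is identified with $\mathbf{D}^{\operatorname{perf}}(X)$. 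Under $\mathsf{T}^c \cong \mathbf{D}^b_{\operatorname{coh}}(X)^{\operatorname{op}}$, a $\mathsf{T}^c$-cohomological functor is precisely a homological functor on $\mathbf{D}^b_{\operatorname{coh}}(X)$, and the abstract restricted Yoneda functor $F \mapsto \HomT{-}{F}|_{\mathsf{T}^c}$ becomes, after these identifications, the functor $\mathcal{Y}$ of the statement. Transporting \Cref{Example finite bounded cohomological functors}(2) through $U_{\lambda}(-)^{\circ}$ — in particular keeping track of how this equivalence interacts with the suspension — identifies the $\mathcal{G}$-semifinite (resp.\ $\mathcal{G}$-finite) $\mathsf{T}^c$-cohomological functors with the homological functors $H$ on $\mathbf{D}^b_{\operatorname{coh}}(X)$ such that $H(\Sigma^n F) = 0$ for $n \gg 0$ (resp.\ $|n| \gg 0$), for every $F \in \mathbf{D}^b_{\operatorname{coh}}(X)$. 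Granting this dictionary, part (i) becomes a restatement of \Cref{Main Theorem on Tc-} and part (ii) a restatement of \Cref{Main Theorem on Tbc}.

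I expect the only real difficulty to be bookkeeping rather than mathematics, since everything substantive is already packaged into \Cref{Theorem Kmproj co-approx}, \Cref{Main Theorem on Tc-}, \Cref{Main Theorem on Tbc}, and \Cref{Theorem completion of compacts for for Kmproj}. The delicate point is to check that the equivalence $U_{\lambda}(-)^{\circ}$ genuinely intertwines the restricted Yoneda functor on $\mathsf{T}$ with the homological-functor–valued $\mathcal{Y}$ in the statement, and to track carefully how the suspension of $\mathsf{T}$ corresponds under this equivalence to the suspension of $\mathbf{D}^b_{\operatorname{coh}}(X)$, so that the one-sided vanishing condition in (i) comes out in the asserted direction. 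As these are precisely the conventions already fixed in \Cref{Closure of compacts for the mock homotopy category of projectives} and \Cref{Theorem completion of compacts for for Kmproj}, this should amount to unwinding notation, with no new homological input required.
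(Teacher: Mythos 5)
Your proposal matches the paper's proof: the paper likewise invokes \Cref{Theorem Co-Approximability and G-Approximability} to pass from co-quasiapproximability to $\mathcal{G}$-quasiapproximability, uses \Cref{Theorem completion of compacts for for Kmproj}(1) and (3) to identify $\mathbf{K}_m(\operatorname{Proj}\text{-}X)^+_c \cong \mathbf{D}^-_{\operatorname{coh}}(X)^{\operatorname{op}}$ and $\mathbf{K}_m(\operatorname{Proj}\text{-}X)^b_c \cong \mathbf{D}^{\operatorname{perf}}(X)$, notes that properness gives the finite-generation hypothesis on Hom-modules, and then applies \Cref{Main Theorem on Tc-} and \Cref{Main Theorem on Tbc}. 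Your extra care about transporting the suspension conventions and the $\mathcal{G}$-(semi)finiteness dictionary through $U_{\lambda}(-)^{\circ}$ is only a more explicit version of what the paper leaves implicit.
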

\begin{proof}
    Recall that a co-quasiapproximable triangulated category is an example of a $\mathcal{G}$-quasiapproximable triangulated category, see \Cref{Theorem Co-Approximability and G-Approximability}. We have computed the closure of the compacts $\mathsf{T}^+_c$ (see \Cref{Remark 2 on co-approx and G-approx }) and the bounded objects in the closure of the compacts $\mathsf{T}^b_c$ for $\mathsf{T} = \mathbf{K}_{m}(\operatorname{Proj}\text{-}X)$  in \Cref{Theorem completion of compacts for for Kmproj}. We have that,
    \[\mathbf{K}_{m}(\operatorname{Proj}\text{-}X)^+_c \cong \mathbf{D}^{-}_{\operatorname{coh}}(X)^{\operatorname{op}} \text{ and } \mathbf{K}_{m}(\operatorname{Proj}\text{-}X)^b_c \cong \mathbf{D}^{\operatorname{perf}}(X)\]
    see \Cref{Theorem completion of compacts for for Kmproj}(1) and (3).
    
    Then, (i) and (ii) follow directly from \Cref{Main Theorem on Tc-} and \Cref{Main Theorem on Tbc} respectively. Note that the properness of the scheme ensures that $\Hom{}{D}{F}$ is a finitely generated $R$-module for any $D,F \in \mathbf{D}^b_{\operatorname{coh}}(X)$. 
\end{proof}

\begin{remark}
    Note that at the moment, the examples of co-approximable triangulated categories all come via strong generation. The reader may wonder on the relation between \Cref{Theorem on representability on DbcohX} and the representability results in the literature involving strong generation. 
    \begin{enumerate}
        \item One of the earliest and most famous of this form of results is \cite[Theorem 1.3]{Bondal/VanDenBergh:2003}. But, this does not apply here as this category is not Ext-finite in general.
        \item For a noetherian, separated scheme $X$ such that $\mathbf{D}(\operatorname{Qcoh}(X)) = \overline{\langle G \rangle}_N$ for some object $G \in \mathbf{D}^b_{\operatorname{coh}}(X)$ and a positive integer $N$, the conclusion of \Cref{Theorem on representability on DbcohX}(ii) follows from \cite[Theorem 0.3]{Neeman:2018b}.
        \item For a noetherian, separated scheme $X$ with a dualizing complex satisfying the condition $\mathbf{D}(\operatorname{Qcoh}(X)) = \overline{\langle G \rangle}_N$ for some object $G \in \mathbf{D}^b_{\operatorname{coh}}(X)$ and a positive integer $N$, the conclusion of \Cref{Theorem on representability on DbcohX}(ii) follows from \cite[Theorem 0.3]{Neeman:2018b}.
    \end{enumerate}
    
\end{remark}
We now apply the representability theorems to the derived category of the classifying stack of $\mathbb{G}_m$. Note that due to the simple nature of the derived category in this particular case, these result can be shown quite trivially using other methods. The main point of this result is to show we can apply the representability theorems to triangulated categories which do not have a single compact generator. That being said, it is important to note that the definition of $\mathcal{G}$-semifinite and $\mathcal{G}$-finite in this case depends on the generating sequence $\mathcal{G}$. Compare this with \Cref{Example finite bounded cohomological functors}, where these have an intrinsic description.
\begin{theorem}
    Let $\mathsf{T} = \mathbf{D}_{\operatorname{Qcoh}}(B\mathbb{G}_m)$ and denote the restricted Yoneda functor $\mathcal{Y} : \mathsf{T} \to \Hom{}{\mathsf{T}^c}{\operatorname{Mod}(\mathbb{C})}$. We endow $\mathsf{T}$ with the generating sequence $\mathcal{G}$ of \Cref{Remark on Derived category of BG_m}. Then, 
    \begin{enumerate}
        \item $\mathcal{Y}$ is full on the subcategory $\overline{\mathsf{T}^c}$, where, \[\overline{\mathsf{T}^c} = \left\{\bigoplus_{i \in \mathbb{Z}} \bigoplus_{j \in \mathbb{Z}} \Sigma^i G_j^{\oplus n_{ij}} : n_{ij} \in \mathbb{N} \text{ for all i,j}\right\}\]
        from \Cref{Theorem completion for BGm}(1), 
        and the essential image is the $\mathcal{G}$-semifinite cohomological functors. In this case, these are exactly the cohomological functors $H : [\mathsf{T}^c]^{\operatorname{op}} \to \operatorname{Mod}(\mathbb{C})$ such that for any $F \in \mathsf{T}^c$, $H(\Sigma^i F)$ is a finite vector space for all $i \in \mathbb{Z}$.
        \item  $\mathcal{Y}$ gives an equivalence of categories between $\mathsf{T}^c$ and the $\mathcal{G}$-finite cohomological functors on $\mathsf{T}^c$.
    \end{enumerate}
\end{theorem}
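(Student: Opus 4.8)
The plan is to deduce both parts from the abstract representability results \Cref{Main Theorem on Tc-} and \Cref{Main Theorem on Tbc} once we check that $\mathsf{T} = \mathbf{D}_{\operatorname{Qcoh}}(B\mathbb{G}_m)$, equipped with the finite generating sequence $\mathcal{G}$ of \Cref{Remark on Derived category of BG_m}, satisfies their hypotheses. First I would record that $\mathcal{G}$ is a finite generating sequence on the compactly generated category $\mathsf{T}$ (\Cref{Remark on Derived category of BG_m}), that $\mathsf{T}$ is $\mathbb{C}$-linear with $\mathbb{C}$ a commutative noetherian ring, and that for any $G,G' \in \bigcup_{n \in \mathbb{Z}}\mathcal{G}^n$ — each a shift of some $G_j$ — the module $\HomT{G}{G'}$ is either $0$ or $\mathbb{C}$ by \Cref{Lemma on quasicoherent sheaves on BG_m}, hence finitely generated. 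By \Cref{Theorem G-approximability for BG_m}, $\mathsf{T}$ is $\mathcal{G}$-approximable, hence $\mathcal{G}$-quasiapproximable by \Cref{Proposition 3 of section 5}. So the hypotheses of \Cref{Main Theorem on Tc-} and \Cref{Main Theorem on Tbc} all hold.

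For (1), \Cref{Main Theorem on Tc-} immediately gives that $\mathcal{Y}$ is full on $\overline{\mathsf{T}^c}$ with essential image exactly the $\mathcal{G}$-semifinite $\mathsf{T}^c$-cohomological functors, and the explicit form of $\overline{\mathsf{T}^c}$ is \Cref{Theorem completion for BGm}(1). The remaining task is to translate ``$\mathcal{G}$-semifinite'' into the stated concrete condition. By \Cref{Definition finite bounded cohomological functors} applied with $n = \infty$ (so $\mathcal{G}_\infty = \mathsf{T}^c$), a $\mathsf{T}^c$-cohomological functor $H$ is $\mathcal{G}$-semifinite iff $H(F)$ is finite-dimensional for all $F \in \mathsf{T}^c$ and $H(\mathcal{G}^i) = 0$ for $i \gg 0$. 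The point is that $\mathcal{G}^i = \varnothing$ for every $i \geq 1$ — directly from the defining formula in \Cref{Remark on Derived category of BG_m}, whose weight condition is unsatisfiable once the degree is positive — so the second requirement is automatic, and the first is equivalent, $\mathsf{T}^c$ being closed under $\Sigma^{\pm 1}$, to ``$H(\Sigma^i F)$ is a finite vector space for all $F \in \mathsf{T}^c$ and $i \in \mathbb{Z}$''. This establishes (1).

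For (2), \Cref{Main Theorem on Tbc} gives that $\mathcal{Y}$ restricted to $\mathsf{T}^b_c$ is fully faithful with essential image exactly the $\mathcal{G}$-finite $\mathsf{T}^c$-cohomological functors, so it only remains to note that $\mathsf{T}^b_c = \mathsf{T}^c$ here; this is \Cref{Theorem completion for BGm}(2) (through \Cref{Corollary Closure of compact for weak G-approximability}). Alternatively one sees it directly: by \Cref{Theorem completion for BGm}(1) an object of $\overline{\mathsf{T}^c}$ has the form $\bigoplus_{i,j}\Sigma^i G_j^{\oplus n_{ij}}$ with every $n_{ij}$ finite, and using \Cref{Lemma on quasicoherent sheaves on BG_m} such an object lies in $(\mathcal{M}^{\mathcal{G}}_n)^{\perp}$ (\Cref{Definition of generating sequence}) for some $n$ precisely when all but finitely many $n_{ij}$ vanish, i.e. precisely when it is a finite direct sum of shifts of the $G_j$, i.e. compact. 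Hence $\mathcal{Y}$ induces an equivalence between $\mathsf{T}^c = \mathsf{T}^b_c$ and the $\mathcal{G}$-finite cohomological functors on $\mathsf{T}^c$.

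The argument is essentially bookkeeping and carries no real obstacle; the only things to watch are the notational overloading of ``$\mathsf{T}^c$'' (the compacts versus, in \Cref{Main Theorem on Tc-} and \Cref{Definition closure of compacts}, their closure) and the fact that it is precisely the emptiness of $\mathcal{G}^i$ for $i \geq 1$ that collapses $\mathcal{G}$-semifiniteness to the elementary ``finite-dimensional values'' statement in the theorem.
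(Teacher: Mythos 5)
Your proposal is correct and follows the same route as the paper: the paper's own proof simply cites \Cref{Theorem G-approximability for BG_m} for $\mathcal{G}$-approximability, \Cref{Theorem completion for BGm} for the identifications of $\overline{\mathsf{T}^c}$ and $\mathsf{T}^b_c$, and then applies \Cref{Main Theorem on Tc-} and \Cref{Main Theorem on Tbc}. You additionally spell out the hypothesis checks and the reduction of $\mathcal{G}$-semifiniteness to the finite-dimensionality condition via $\mathcal{G}^i=\varnothing$ for $i\geq 1$, all of which is accurate and merely makes explicit what the paper leaves implicit.
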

\begin{proof}
    We know that $\mathbf{D}_{\operatorname{Qcoh}}(B\mathbb{G}_m)$ is $\mathcal{G}$-approximable from \Cref{Theorem G-approximability for BG_m}. Further, we have computed $\overline{\mathsf{T}^c}$ and $\mathsf{T}^b_c$ for $\mathsf{T} = \mathbf{D}_{\operatorname{Qcoh}}(B\mathbb{G}_m)$ in \Cref{Theorem completion for BGm}. So, (1) and (2) follow from from \Cref{Main Theorem on Tc-} and \Cref{Main Theorem on Tbc} respectively.
\end{proof}

\bibliographystyle{alpha}
\bibliography{mainbib}

\newcommand{\etalchar}[1]{$^{#1}$}
\begin{thebibliography}{DDLMR24b}

\bibitem[Aok21]{Aoki:2021}
Ko~Aoki.
\newblock Quasiexcellence implies strong generation.
\newblock {\em J. Reine Angew. Math.}, 780:133--138, 2021.

\bibitem[ATJLSS03]{AlonsoTarrio/JeremiasLopez/SoutoSalorio:2003}
Leovigildo Alonso~Tarr{\'\i}o, Ana Jerem{\'\i}as~L{\'o}pez, and Mar{\'\i}a~Jos{\'e} Souto~Salorio.
\newblock Construction of {$t$}-structures and equivalences of derived categories.
\newblock {\em Trans. Amer. Math. Soc.}, 355(6):2523--2543, 2003.

\bibitem[BBD82]{Beilinson/Bernstein/Deligne:1982}
Aleksandr~A. Be{\u\i}linson, Joseph~N. Bern{\v{s}}te{\u\i}n, and Pierre Deligne.
\newblock Faisceaux pervers.
\newblock In {\em Analyse et topologie sur les espaces singuliers {I}, {P}roc. of the {C}onf. ({L}uminy, 1981)}, volume 100 of {\em Ast{\'e}risque}, pages 5--171. Soc. Math. France, Paris, 1982.

\bibitem[BCMR{\etalchar{+}}24]{Biswas/Chen/ManaliRahul/Parker/Zheng:2024}
Rudradip Biswas, Hongxing Chen, Kabeer Manali~Rahul, Chris~J. Parker, and Junhua Zheng.
\newblock Bounded t-structures, finitistic dimensions, and singularity categories of triangulated categories.
\newblock arXiv e-prints, 2024.
\newblock \href{https://arxiv.org/abs/2401.00130}{arXiv:2401.00130}.

\bibitem[BN93]{Bkstedt/Neeman:1993}
Marcel B\"okstedt and Amnon Neeman.
\newblock Homotopy limits in triangulated categories.
\newblock {\em Compositio Math.}, 86(2):209--234, 1993.

\bibitem[Bon10]{Bondarko:2010}
Mikhail~V. Bondarko.
\newblock Weight structures vs. {$t$}-structures; weight filtrations, spectral sequences, and complexes (for motives and in general).
\newblock {\em J. K-Theory}, 6(3):387--504, 2010.

\bibitem[Bon22]{Bondarko:2022}
Mikhail~V. Bondarko.
\newblock On perfectly generated weight structures and adjacent {$t$}-structures.
\newblock {\em Math. Z.}, 300(2):1421--1454, 2022.

\bibitem[Bro62]{Brown:1962}
Edgar~H. Brown, Jr.
\newblock Cohomology theories.
\newblock {\em Ann. of Math. (2)}, 75:467--484, 1962.

\bibitem[BvdB03]{Bondal/VanDenBergh:2003}
Alexey~I. Bondal and Michel van~den Bergh.
\newblock Generators and representability of functors in commutative and noncommutative geometry.
\newblock {\em Mosc. Math. J.}, 3(1):1--36, 258, 2003.

\bibitem[CHNS24]{Canonaco/Neeman/Stellari:2024}
Alberto Canonaco, Christian Haesemeyer, Amnon Neeman, and Paolo Stellari.
\newblock The passage among the subcategories of weakly approximable triangulated categories.
\newblock arXiv e-prints, 2024.
\newblock \href{https://arxiv.org/abs/2402.04605}{arXiv:2402.04605}.

\bibitem[Chr98]{Christensen:1998}
J.~Daniel Christensen.
\newblock Ideals in triangulated categories: phantoms, ghosts and skeleta.
\newblock {\em Adv. Math.}, 136(2):284--339, 1998.

\bibitem[DDLMR24a]{DeDeyn/Lank/ManaliRahul:2024a}
Timothy De~Deyn, Pat Lank, and Kabeer Manali~Rahul.
\newblock Approximability and {R}ouquier dimension for noncommutative algebras over schemes.
\newblock arXiv e-prints, 2024.
\newblock \href{https://arxiv.org/abs/2408.04561}{arXiv:2408.04561}.

\bibitem[DDLMR24b]{DeDeyn/Lank/ManaliRahul:2024b}
Timothy De~Deyn, Pat Lank, and Kabeer Manali~Rahul.
\newblock Descent and generation for noncommutative coherent algebras over schemes.
\newblock arXiv e-prints, 2024.
\newblock \href{https://arxiv.org/abs/2410.01785}{arXiv:2410.01785}.

\bibitem[HP24]{Hall/Priver:2024}
Jack Hall and Kyle Priver.
\newblock A generalized {B}ondal-{O}rlov full faithfulness criterion for {D}eligne-{M}umford stacks.
\newblock arXiv e-prints, 2024.
\newblock \href{https://arxiv.org/abs/2405.06229}{arXiv:2405.06229}.

\bibitem[IK06]{Iyengar/Krause:2006}
Srikanth Iyengar and Henning Krause.
\newblock Acyclicity versus total acyclicity for complexes over {N}oetherian rings.
\newblock {\em Doc. Math.}, 11:207--240, 2006.

\bibitem[Kra05]{Krause:2005}
Henning Krause.
\newblock The stable derived category of a {N}oetherian scheme.
\newblock {\em Compos. Math.}, 141(5):1128--1162, 2005.

\bibitem[Kra22]{Krause:2022}
Henning Krause.
\newblock {\em Homological theory of representations}, volume 195 of {\em Cambridge Studies in Advanced Mathematics}.
\newblock Cambridge University Press, Cambridge, 2022.

\bibitem[KS75]{Kielpinski/Simson:1975}
R.~Kie{\l}pi\'{n}ski and D.~Simson.
\newblock On pure homological dimension.
\newblock {\em Bull. Acad. Polon. Sci. S\'{e}r. Sci. Math. Astronom. Phys.}, 23:1--6, 1975.

\bibitem[Law73]{Lawvere:1973}
F.~William Lawvere.
\newblock Metric spaces, generalized logic, and closed categories.
\newblock {\em Rend. Sem. Mat. Fis. Milano}, 43:135--166, 1973.

\bibitem[Law02]{Lawvere:2002}
F.~William Lawvere.
\newblock Metric spaces, generalized logic, and closed categories [{R}end. {S}em. {M}at. {F}is. {M}ilano {\bf 43} (1973), 135--166 (1974); {MR}0352214 (50 \#4701)].
\newblock {\em Repr. Theory Appl. Categ.}, (1):1--37, 2002.
\newblock With an author commentary: Enriched categories in the logic of geometry and analysis.

\bibitem[MR25]{ManaliRahul:2025}
Kabeer Manali~Rahul.
\newblock Metric techniques and admissible subcategories.
\newblock In preparation, 2025.

\bibitem[Mur08]{Murfet:2008}
Daniel Murfet.
\newblock The mock homotopy category of projectives and grothendieck duality.
\newblock \href{http://www.therisingsea.org/thesis.pdf}{PhD thesis}, Australian National University, 2008.

\bibitem[Nee96]{Neeman:1996}
Amnon Neeman.
\newblock The {G}rothendieck duality theorem via {B}ousfield's techniques and {B}rown representability.
\newblock {\em J. Amer. Math. Soc.}, 9(1):205--236, 1996.

\bibitem[Nee01]{Neeman:2001}
Amnon Neeman.
\newblock {\em Triangulated categories}, volume 148 of {\em Ann. of Math. Stud.}
\newblock Princeton University Press, Princeton, NJ, 2001.

\bibitem[Nee18a]{Neeman:2018}
Amnon Neeman.
\newblock The categories $\mathcal{T}^c$ and $\mathcal{T}^b_c$ determine each other.
\newblock arXiv e-prints, 2018.
\newblock \href{https://arxiv.org/abs/1806.06471}{arXiv:1806.06471}.

\bibitem[Nee18b]{Neeman:2018b}
Amnon Neeman.
\newblock The category $[\mathsf{T}^c]^{\text{op}}$ as functors on $\mathsf{T}^b_c$.
\newblock arXiv e-prints, 2018.
\newblock \href{https://arxiv.org/abs/1806.05777}{arXiv:1806.05777}.

\bibitem[Nee20]{Neeman:2020}
Amnon Neeman.
\newblock Metrics on triangulated categories.
\newblock {\em J. Pure Appl. Algebra}, 224(4):106206, 13, 2020.

\bibitem[Nee21a]{Neeman:2021a}
Amnon Neeman.
\newblock Strong generators in {$\mathbf{D}^{\mathrm{perf}}(X)$} and {$\mathbf{D}^b_{\mathrm{coh}}(X)$}.
\newblock {\em Ann. of Math. (2)}, 193(3):689--732, 2021.

\bibitem[Nee21b]{Neeman:2021b}
Amnon Neeman.
\newblock Triangulated categories with a single compact generator and a {B}rown representability theorem.
\newblock arXiv e-prints, 2021.
\newblock \href{https://arxiv.org/abs/1804.02240}{arXiv:1804.02240}.

\bibitem[Nee21c]{Neeman:2021c}
Amnon Neeman.
\newblock Approximable triangulated categories.
\newblock In {\em Representations of algebras, geometry and physics}, volume 769 of {\em Contemp. Math.}, pages 111--155. Amer. Math. Soc., [Providence], RI, [2021] \copyright 2021.

\bibitem[Nee24]{Neeman:2024}
Amnon Neeman.
\newblock Bounded {$t$}-structures on the category of perfect complexes.
\newblock {\em Acta Math.}, 233(2):239--284, 2024.

\bibitem[Pau08]{Pauksztello:2008}
David Pauksztello.
\newblock Compact corigid objects in triangulated categories and co-{$t$}-structures.
\newblock {\em Cent. Eur. J. Math.}, 6(1):25--42, 2008.

\bibitem[Pau12]{Pauksztello:2012}
David Pauksztello.
\newblock A note on compactly generated co-{$t$}-structures.
\newblock {\em Comm. Algebra}, 40(2):386--394, 2012.

\bibitem[Rou08]{Rouquier:2008}
Rapha\"el Rouquier.
\newblock Dimensions of triangulated categories.
\newblock {\em J. K-Theory}, 1(2):193--256, 2008.

\bibitem[Sch18]{Schwarz:2018}
R.~M. Schwarz.
\newblock Gromov-{W}itten invariants of the classifying stack of principal $\mathbb{G}_m$-bundles.
\newblock \href{https://www.universiteitleiden.nl/binaries/content/assets/science/mi/scripties/master/2017-2018/masterthesisrosaschwarz.pdf}{Masters thesis}, 2018.

\bibitem[{Sta}24]{StacksProject}
The {Stacks Project Authors}.
\newblock \textit{Stacks Project}.
\newblock \url{https://stacks.math.columbia.edu}, 2024.

\end{thebibliography}

\end{document}